\documentclass[11pt,openright,twoside,a4paper,reqno]{amsbook}
\usepackage[a4paper,top=3cm,bottom=3cm,inner=4cm,outer=3cm]{geometry}
\usepackage{mathrsfs,amsmath,amsfonts,amsthm,amssymb,graphicx,pdfpages,tikz}
\usepackage{setspace,fancyhdr,calc,verbatim,bbm,enumerate}
\usepackage[all,2cell]{xy}\UseAllTwocells\SilentMatrices

\newcommand{\ovl}{\overline}
\newcommand{\ca}{\mathcal}
\newcommand{\set}{\mathbf{Set}}
\newcommand{\pt}[1]{(#1_0,#1_1,#1_2)}

\newcommand{\Sum}{\sum\limits_{i=0}^2 }

\newtheorem*{thrm*}{Theorem}
\theoremstyle{remark}
\newtheorem*{rmk*}{Remark}
\newtheorem*{lem*}{Lemma}
\theoremstyle{definition}
\newtheorem*{dfn*}{Definition}
\newtheorem*{cor*}{Corollary}
\theoremstyle{definition}
\newtheorem*{egs*}{Examples}
\newtheorem{prop*}{Proposition}

\theoremstyle{plain}
\newtheorem{thrm}{Theorem}[section]
\theoremstyle{plain}
\newtheorem{prop}[thrm]{Proposition}
\theoremstyle{remark}
\newtheorem{rmk}[thrm]{Remark}
\theoremstyle{plain}
\newtheorem{lem}[thrm]{Lemma}
\theoremstyle{plain}
\newtheorem{cor}[thrm]{Corollary}
\theoremstyle{definition}
\newtheorem{defn}[thrm]{Definition}
\theoremstyle{definition}
\newtheorem{egs}[thrm]{Examples}

\numberwithin{equation}{chapter}
\numberwithin{section}{chapter}

\makeatletter
\@addtoreset{thrm}{chapter}
\makeatother

\onehalfspace

\setcounter{tocdepth}{1}
\makeatletter
\renewcommand{\l@chapter}{\@tocline{0}{12pt}{0pt}{}{\bfseries}}
\renewcommand{\l@subsection}{\@tocline{2}{0pt}{2pc}{5pc}{}}
\makeatother


\begin{document}

\mathcode`\:="603A

\frontmatter

\thispagestyle{empty}

\begin{center}
\mbox{}
{\huge{\bfseries A constructive approach to affine and projective planes}}
\vskip 1.2in
\includegraphics[height=2in]{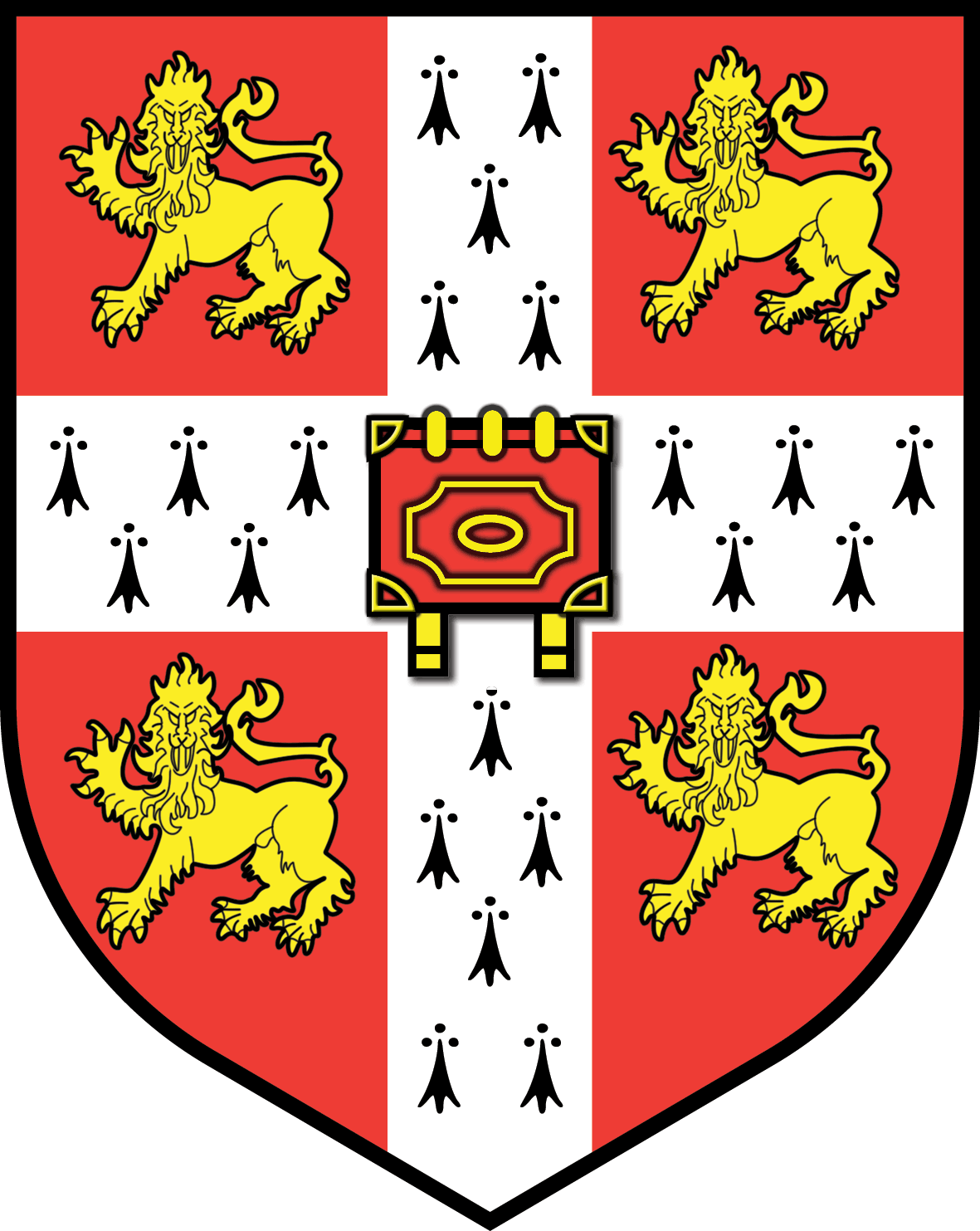}
\vskip 1.2in
{\large Achilleas Kryftis}
\vskip 0.4in
Trinity College \\
and \\
Department of Pure Mathematics and Mathematical Statistics \\
University of Cambridge
\vskip 0.4in
This dissertation is submitted for the degree of \\
\emph{Doctor of Philosophy}
\vskip 0.4in
May 19, 2015
\end{center}

\cleardoublepage

\pagestyle{fancy}
\fancyhf{} 
\renewcommand{\headrulewidth}{0pt} 
\pagenumbering{roman}
\fancyfoot[C]{{\footnotesize\thepage}}

\newpage
\mbox{}
\vskip 2in
\begin{quote}
This dissertation is the result of my own work and includes nothing that is the outcome of work done in collaboration except where specifically indicated in the text. No part of this dissertation has been submitted for any other qualification.
\vskip 1in

\hfill Achilleas Kryftis

\hfill May 19, 2015
\end{quote}

\cleardoublepage
\newpage
\begin{center}
{\large{\bfseries A constructive approach to affine and projective planes\\}}
\vskip 0.1in
Achilleas Kryftis
\vskip 0.3in
{\sc Abstract}
\vskip 0.1in
\end{center}
\begin{quote}
\begin{singlespace}
In classical geometric algebra, there have been several treatments of affine and projective planes based on fields. In this thesis we approach affine and projective planes from a constructive point of view and we base our geometry on local rings instead of fields.

We start by constructing projective and affine planes over local rings and establishing forms of Desargues' Theorem and Pappus' Theorem which hold for these. From this analysis we derive coherent theories of projective and affine planes. 

The great Greek mathematicians of the classical period used geometry as the basis for their theory of quantities. The modern version of this idea is the reconstruction of algebra from geometry. We show how we can construct a local ring whenever we are given an affine or a projective plane. This enables us to describe the classifying toposes of our theories of affine and projective planes as extensions of the Zariski topos by certain group actions.

Through these descriptions of the classifying toposes, the links between the theories of local rings, affine and projective planes become clear. In particular, the geometric morphisms between these classifying toposes are all induced by group homomorphisms even though they demonstrate complicated constructions in geometry.

In this thesis, we also prove results in topos theory which are applied to these geometric morphisms to give Morita equivalences between some further theories.
\end{singlespace}
\end{quote}

\cleardoublepage
\newpage
\mbox{}
\vskip 1in
\begin{center}
{\sc Acknowledgements}
\vskip 0.1in
\end{center}
\begin{singlespace}
I would like thank my supervisor, Professor Martin Hyland, for his constant support and encouragement throughout my PhD and for sharing his wisdom with me.

The category theory group of Cambridge has been a very important part of my life in Cambridge. My mathematical interactions with the group have been very fruitful and we have also shared fun moments in Cambridge and at conferences. I would like to thank Christina, Tamara, Guilherme, Zhen Lin, Sori, Enrico, Paige and all the members of the category theory group.

I would like to thank Trinity College and DPMMS for financially supporting me during my PhD.

I have been very fortune during my PhD to be surrounded by great friends. I would like to thank John, Gabriele, Ugo, Evangelia, Moses, Juhan, Giulio, Eleni, Dimitris, Anastasia, Richard and Anna for being such a great company.

I would also like to thank my parents Georgios and Theano, and my siblings Maria and Yiannos for their constant love and support.
\end{singlespace}

\cleardoublepage
\tableofcontents
\cleardoublepage

\mainmatter

\pagestyle{headings}
\pagenumbering{arabic}

\chapter{Introduction} \label{ch:intro}
Algebra and Geometry are two central subjects in mathematics, and the links between them are of fundamental importance. Already, in \emph{The Elements}, Euclid used Geometry to express algebraic properties of the natural numbers. Descartes founded coordinate geometry, the ancestor of modern algebraic geometry. However, it was not until Hilbert's \emph{The Foundations of Geometry} \cite{Hilbert} that systematic connections were made between axiomatic geometry and the algebraic structure.

In this thesis, the particular part of algebra we are interested in is the notion of fields and its constructive cousin: local rings. While developing the theory, we shall also meet symmetry groups and endomorphism monoids.


In the classical approach (as described in \cite{Harts}, \cite{Artin} and \cite{Seidenberg}), the affine plane and the projective plane can be approached from a synthetic and an analytic point of view. The analytic approach involves constucting the affine and the projective planes over a given field. Alternatively, we can think of affine and projective planes synthetically as models of a first order theory with Points and Lines as sorts. The basic foundational result is that the two approaches are essentially the same.


We are interested in a constructive version of these results. In constructive mathematics, models of the geometric theory of fields have decidable equality, while the equality of  quantities with natural geometric significance is not decidable. In particular, the ring of real numbers from a constructive point of view is not necessarily a field but it is a local ring. Specifically, in a topos with a natural number object the ring of Dedekind reals is a local ring but not necessarily a field \cite[D4.7]{Elephant2}. Moreover, \cite[Theorem 4.1]{Kockproj} shows how in constructive logic, the geometric sequents which are true for local rings are exactly the ones intuitionistically derivable from the field axioms given in the same paper. Hence, one can claim that in constructive mathematics, local rings should play the role that fields play in classical mathematics. Therefore, we choose to base our geometry on local rings instead of (geometric) fields.

We present coherent theories of affine and projective planes and construct their classifying toposes. Our theories of affine and projective planes are written in extensions of the languages of the corresponding theories in their classical presentation. In particular, the language is extended by an apartness relation on lines and points and an outside relation between points and lines.








Some papers on affine and projective planes treated constructively are \cite{Heytinggerm}, \cite{Heytingproj}, \cite{Heytingaff}, \cite{Kockproj}, \cite{Dalenoutside}, \cite{Mandcoo}, \cite{Mandcom}, \cite{Mandext}. None of these treatments of affine and projective planes is in terms of coherent (or geometric) theories.  In particular, the classifying toposes of the theories of affine and projective planes have not been constructed elsewhere. Also, apart from \cite{Kockproj} the geometries considered were based on fields and not on local rings as they are here.


There has been some work done on affine and projective planes over local rings in \cite{Kling1}, \cite{Kling2}, \cite{Hj1}, \cite{Hj2}. The theories of Klingenberg affine and projective planes are written in a language with a neighbouring relation on points and lines instead of an apartness relation which is used here. In classical logic, this neighbouring relation would be the complement of the apartness relation of our axiomatization. The theories of Hjelmslev planes have some further axioms. The underlying logic of these papers is not constructive and the axiomatization is again not in coherent logic.

The thesis is divided into two parts. In the first part, we describe the structures of projective and affine planes over given local rings and we present the theories of projective and affine planes. We describe constructions which relate projective planes, affine planes  and local rings.

In Chapter \ref{chaprojplanes}, we construct the projective plane over a local ring. We then describe the coherent theory of preprojective planes which consists of axioms satisfied by projective planes over local rings. This is followed by a discussion on morphisms of preprojective planes. We give a complete description of morphisms between projective planes over local rings. We present Desargues' theorem on the projective plane in a different and more symmetric form than the classical one. The theorem is self dual. The classical presentations of Desargues' theorem rely on the fact that in classical projective planes (over fields) there is always a line passing through two given points which is not always the case for projective planes over local rings.  That made classical versions of Desargues' unsuitable for our theory of projective planes. Furthermore, our version of Desargues' theorem can be stated as a geometric sequent and it is used as an axiom of the (coherent) theory of projective planes. Our version of Pappus' theorem is quite similar to classical presentations of it.

In Chapter \ref{chaaffplanes}, we first construct the affine plane over a local ring and then give a more general construction of a preaffine plane from a preprojective plane with a chosen line. We use the propositions from Chapter \ref{chaprojplanes} to prove several propositions for affine planes over local rings and then we present a coherent theory of preaffine planes. As before, we continue with a discussion on morphisms between preaffine planes followed by a few theorems on how morphisms of preprojective planes with a line interact with the morphims of the induced preaffine planes. We give a complete description of morphisms between affine planes over local rings. Finally, we give Desargues' small and big axioms, and Pappus' axiom on the affine plane and prove them for analytic planes and show some of their consequences.

Chapter \ref{chalocal} gives a construction of a local ring from a synthetic affine plane. We use methods similar to the ones used in  \cite{Artin} and \cite{Harts}. The definition of dilatations is very similar to the one in \cite{Harts}. The definition of translations had to be modified and the proofs of theorems about dilatations and translations were very different. For example, proving that translations are closed under composition requires Desargues' theorem while in \cite{Harts} it is a simple consequence of the definition of translations. These proofs have been very instructive in understanding which versions of Desargues' axioms we would need in our theory of affine planes. The local ring we construct is the ring of trace preserving homorphisms of the group of translations. We demonstrate how this is the coordinate ring of our affine plane. We then revisit the construction of the local ring to show that it is preserved by inverse images of geometric morphisms. We also show that an alternative construction of a local ring gives an isomorphic ring.


In Chapter \ref{chaprojcoo}, we use the results of Chapter \ref{chalocal} to construct a local ring from a given projective plane. We again show that the constructed ring is in a sense the coordinate ring of the projective plane. We also show that any such a ring is unique up to isomorphism.

In the second part of the thesis, we throw light on the constructive theory developed in the first part from the point of view of classifying toposes. In particular, we give more concrete descriptions of the classifying toposes for projective and affine planes.

Sometimes classifying toposes are identified with Grothendieck toposes over the base category of $\mathbf{Sets}$. That enables comparisons with traditional model theory, as for example is done by Olivia Caramello in a series of papers (see for particularly telling instances \cite{Oliviafraisse} and \cite{Oliviafields}). In particular, one can make use of the conceptual completeness for coherent theories (which is essentially the completeness theorem for first order logic). However, though we may often write as if we were working over $\set$, we believe that the arguments of the thesis go through for Grothendieck toposes (that is for any bounded topos) over an arbitrary base topos with a natural number object. In that reading we are frequently arguing in the first part in the internal logic of a topos. We do not draw explicit attention to this.

In Chapter \ref{chaDiac}, we prove a new version of Diaconescu's theorem. While strictly speaking this is not needed for the construction of the classifying toposes of the theories of affine and projective planes, it throws light on what we do later and it can be used in an approach using conceptual completeness on the $\set$-based case. Given an internal category $\mathbb C$ in a topos $\ca S$ (which is a classifying topos over $\set$), Diaconescu's theorem describes what $[\mathbb C, \ca S]$ classifies as an $\ca S$-topos while our version explains what $[\mathbb C,\ca S]$ classifies as a $\set$-topos.

The main goal of Chapter \ref{chaEG} is to prove that when $G$ is a subgroup of $H$ in a topos $\ca E$, then the group homomorphism $G\to H$ induces a local homeomorphism $\ca E[G]\to \ca E[H]$. This result is used in the final chapter to explain how the theories of local rings, affine planes and projective planes interact with each other.

In Chapters \ref{chaZG} and \ref{chaZH}, we construct the classifying toposes of the theories of affine and projective planes, identifying them with extensions of the Zariski topos by certain groups. 


Chapter \ref{chageom} describes the geometric morphisms between the classifying toposes of the theories of local rings, affine planes and projective planes using their descriptions from Chapters \ref{chaZG} and \ref{chaZH}. We use these presentations and general results about toposes to get a better understanding of how these theories are related. For example, Theorem \ref{thrmlocal} applied to the geometric morphisms between the classifying toposes demonstrates Morita equivalences between further theories.


Just to fix terminology, in this thesis by ring we mean a commutative, unital ring.

\cleardoublepage

\part{Constructive geometry}

\chapter{Projective planes}  \label{chaprojplanes}

In this chapter we approach projective planes from an analytic and a synthetic point of view. We construct the projective plane $\mathbb P(R)$ over a given local ring $R$ and demonstrate a few propositions satisfied by this structure. We present the coherent theory of preprojective planes whose axioms are satisfied by projective planes over local rings. This is followed by results on morphisms of preprojective planes and morphisms between projective planes over local rings. We present Desargues' and Pappus' axioms and show that they are satisfied by projective planes over local rings. The coherent theory of projective planes is then given as the theory of preprojective planes with the addition of Desargues' and Pappus' axiom.

\section{Points and lines}

\begin{defn}
Given a ring $R$ we define the set of \emph{points} of the projective plane over $R$ to be 
$$\mathbb P_{\text{pt}}(R)=\{(a_0,a_1,a_2)\in R^3| \text{inv}(a_0)\vee \text{inv}(a_1)\vee \text{inv}(a_2)\}/\sim$$
where $(a_0,a_1,a_2)\sim (b_0,b_1,b_2)$ iff there exists $r\in R$ such that $b_0=r a_0$, $b_1=ra_1$ and $b_2=ra_2$.
\end{defn}

Note that $r$ is necessarily invertible because one of the $b_i$'s is invertible. Since $r$ must be invertible, $\sim$ is an equivalence relation.

\begin{defn}
We say that two points $A$ and $B$ are \emph{apart} from each other and we write
$$A\# B$$
when for some representatives $(a_0,a_1,a_2)$ of $A$ and $(b_0,b_1,b_2)$ of $B$ the determinant of one of the three minors of the matrix
$$\begin{pmatrix}
a_0 & b_0 \\
a_1 & b_1 \\
a_2 & b_2 \end{pmatrix}$$
is invertible.
\end{defn}

Let $A\# B$ and let $(a_0, a_1,a_2)$ and $(b_0,b_1,b_2)$ be representatives of $A$ and $B$ respectively such that the matrix described above has an invertible $2\times 2$ minor. Suppose that $\pt{a'}$ and $\pt{b'}$ are also representatives of $A$ and $B$ respectively. Then, $\mathbf{a'}=r \mathbf a$ and $\mathbf{b'}=s \mathbf b$ for some invertible $r$ and $s$. Without loss of generality, suppose that $a_0 b_1-a_1 b_0=\lambda$ is invertible. Then, $a'_0 b'_1-a'_1 b'_0=\lambda r s$ is also invertible. Hence, $A\# B$ iff for \emph{any} representatives $(a_0,a_1,a_2)$ of $A$ and $(b_0,b_1,b_2)$ of $B$ the determinant of one of the three minors of the matrix
$$\begin{pmatrix}
a_0 & b_0 \\
a_1 & b_1 \\
a_2 & b_2 
\end{pmatrix}$$
is invertible.


\begin{defn}
We define the set of \emph{lines} of the projective plane over a ring $R$ in the same way and denote it by $\mathbb P_{\text{li}}(R)$. We also define a $\#$ relation on the set of lines in the same way we did for the set of points and we also denote it by $\#$.
\end{defn}

We usually denote points by capital Latin letters and their representatives by the corresponding lower case letters. For example, $(a_0,a_1,a_2)$ will usually be a representation of the point $A$. Lines are usually denoted by the lower case letters $k$, $l$, $m$, $n$ and their representatives by the corresponding Greek letters. For example, $(\kappa_0,\kappa_1,\kappa_2)$ is usually a representation of the line $k$.

\section{Incidence}

\begin{defn}
Given a point $A$ and a line $l$, we say that $A$ lies on $l$, and we write
$$A\in l$$
when for some representations $(a_0,a_1,a_2)$ of $A$ and $(\lambda_0, \lambda_1, \lambda_2)$ of $l$, $\sum\limits_{i=0}^2  \lambda_i a_i=0$. 
\end{defn}

Note that if $\mathbf a$ and $\mathbf{a'}$ are in the same equivalence class (i.e. $\mathbf{a'}=r \mathbf a$ for some invertible $r$) and $\boldsymbol{\lambda}$ and $\boldsymbol{\lambda '}$ are in the same equivalence class (i.e. $\boldsymbol{\lambda '}=t \boldsymbol{\lambda}$ for some invertible $t$) then $\sum\limits_{i=0}^2  \lambda_i a_i=rt\sum\limits_{i=0}^2  \lambda'_i a'_i$. therefore $\sum\limits_{i=0}^2  \lambda_i a_i=0$ iff $\sum\limits_{i=0}^2  \lambda'_i a'_i=0$. Hence $A\in l$ iff for \emph{any} representations $(a_0,a_1,a_2)$ of $A$ and $(\lambda_0, \lambda_1, \lambda_2)$ of $l$, $\sum\limits_{i=0}^2  \lambda_i a_i=0$.

\begin{defn}
Given a point $A$ and a line $l$, we say that $A$ lies outside of $l$, and we write
$$A \notin l$$
when for some representations $(a_0,a_1,a_2)$ of $A$ and $(\lambda_0, \lambda_1, \lambda_2)$ of $l$, $\sum\limits_{i=0}^2  \lambda_i a_i$ is invertible. 
\end{defn}

For similar reasons as above, $A\notin l$ iff for \emph{any} representations $(a_0,a_1,a_2)$ of $A$ and $(\lambda_0, \lambda_1, \lambda_2)$ of $l$, $\sum\limits_{i=0}^2  \lambda_i a_i$ is invertible.

It is common practice in projective geometry to abuse notation and write $(a,b,c)$ for the equivalence class represented by $(a,b,c)$ for both points and lines. From now on, we shall also adopt this notation.

\begin{defn}
Given a ring $R$, the \emph{projective plane over} $R$, denoted by $\mathbb P(R)$ is the structure consisting of the sets $\mathbb P_{\text{pt}} (R)$, $\mathbb P_{\text{li}} (R)$, the two $\#$ relations, and the $\in$ and $\notin$ relations.
\end{defn}

Note that for $R$ a geometric field in $\set$ (i.e. a ring which satisfies $\text{inv}(0)\vdash \bot$ and $\top\vdash_x (x=0)\vee \text{inv}(x)$), the above construction gives the classical projective plane over the field $R$: $\in$ becomes the incidence relation, $\#$ becomes the inequality relation, and $\notin$ becomes the complement of $\in$.

\section{Duality}

Given a ring $R$, the set of points of its projective plane is isomorphic to its set of lines via the isomorphism which sends a point represented by $\pt{a}$ to the line represented by $\pt{a}$. This isomorphism and its inverse preserve the relation $\#$. Also, a point represented by $\pt{a}$ lies on/outside a line represented by $\pt{\lambda}$ iff the point represented by $\pt{\lambda}$ lies on/outside the line represented by $\pt{a}$. Hence, given a theorem that holds on the projective plane over a ring $R$, we automatically know that its \emph{dual theorem} also holds. The dual theorem is the theorem we acquire when in the statement of the theorem, we replace points with lines, lines with points, and reverse the order of incidence and non-incidence relations. So, for example in the statement of a theorem we would replace the phrase ``the point $A$ lies on the line $l$'' with the phrase ``the line $A$ passes through the point $l$''.

\section{A few propositions and remarks} \label{projprop}

Note that in this section, we do not make any assumptions about the ring $R$. In particular, the propositions we prove here also hold when $R$ is the zero ring. However, we begin this section with the following proposition.

\begin{prop} 
Given the projective plane over a ring $R$, the following are equivalent:
\begin{enumerate}
\item $R$ is a non-trivial ring (it satisfies $\text{inv}(0)\vdash \bot$).
\item For $A$ a point of the projective plane, $A\#A\vdash_{A} \bot$.
\item For $l$ a line of the projective plane, $l\#l\vdash_{l} \bot$.
\item For $A$ a point and $l$ a line of the projective plane, $A\in l \wedge A\notin l \vdash_{A,l} \bot$.
\end{enumerate}
\end{prop}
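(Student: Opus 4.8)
The plan is to show that each of conditions (2), (3), (4) reduces, by a direct unpacking of the definitions, to the single assertion $\text{inv}(0)$, and then to cycle the equivalences through condition (1). The key computational observation is that the antecedents of (2), (3) and (4) all force $0$ to be invertible. For a point $A$ with representative $\pt{a}$, the matrix witnessing $A\#A$ has two equal columns, so each of its three $2\times 2$ minors equals $a_ia_j-a_ja_i=0$; hence $A\#A$ holds precisely when some minor, i.e. $0$, is invertible. The same holds verbatim for lines, since lines are defined exactly as points, so (2)$\Leftrightarrow$(3) will follow at once from the duality principle of the previous section. Likewise $A\in l$ says $\Sum \lambda_i a_i=0$ while $A\notin l$ says $\Sum \lambda_i a_i$ is invertible, so their conjunction again asserts exactly $\text{inv}(0)$.

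Granting this, I would prove (1)$\Rightarrow$(2) by assuming the sequent $\text{inv}(0)\vdash\bot$ and noting that $A\#A$ entails $\text{inv}(0)$ and hence $\bot$; the identical argument gives (1)$\Rightarrow$(4) from the observation above. For the converse directions I must exhibit explicit witnesses so that the geometric sequents in (2), (3), (4) can be instantiated. For (2)$\Rightarrow$(1) I would take the point $A=(1,0,0)$, which is a genuine point because $\text{inv}(1)$ holds with no assumption on $R$; then, assuming $\text{inv}(0)$, the vanishing minors become invertible, so $A\#A$, whence (2) yields $\bot$, establishing $\text{inv}(0)\vdash\bot$. For (4)$\Rightarrow$(1) I would take $A=(1,0,0)$ together with the line $l=(0,1,0)$, both legitimate by invertibility of $1$; here $\Sum \lambda_i a_i=0$, so $A\in l$ holds outright, and assuming $\text{inv}(0)$ delivers $A\notin l$ as well, so (4) yields $\bot$. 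Condition (3) is then obtained symmetrically, or simply by applying duality to the already established equivalence (1)$\Leftrightarrow$(2).

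The only point requiring care—rather than a genuine obstacle—is the bookkeeping of the geometric-logic framing: conditions (2)--(4) are sequents, so the backward implications are not pointwise arguments but require producing witnessing points and lines that provably lie in $\mathbb P(R)$ and then instantiating the sequent to derive $\bot$ from $\text{inv}(0)$. The choices $(1,0,0)$ and $(0,1,0)$ discharge the side condition $\text{inv}(a_0)\vee\text{inv}(a_1)\vee\text{inv}(a_2)$ uniformly, which is exactly what makes the witnesses available without any hypothesis on $R$. Once the witnesses are in place every remaining step is a one-line verification, so I expect no substantive difficulty beyond this.
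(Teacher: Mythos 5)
Your proof is correct and follows essentially the same route as the paper's: reduce each of (2)--(4) to the single assertion $\text{inv}(0)$ (equal columns make every minor vanish, and $A\in l\wedge A\notin l$ makes $\sum\lambda_i a_i$ both zero and invertible), exhibit explicit witnesses for the converse directions, and dispatch (3) by duality. The only difference is the choice of witnesses --- you use $(1,0,0)$ and $(0,1,0)$ where the paper uses $(0,0,1)$ for both the point and the line in (4) --- which is immaterial.
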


\begin{proof}
1 implies 2, 3 and 4 because $0$ is not invertible in a non-trivial ring.

Suppose $0=1$ in $R$. Then, every point of the projective plane is apart from itself. Therefore, if we assume 2 then $R$ is a non-trivial ring. Dually, if we assume 3 again $R$ is a non-trivial ring.

Also, if we suppose $0=1$ in $R$, then the point $(0,0,1)$ lies on and apart from the line $(0,0,1)$. Hence, if we assume 4 then $R$ is a non-trivial ring.
\end{proof}

\begin{prop} \label{propexistsuniqueline}
 On the projective plane over a ring $R$, let $A$ and $B$ be points such that $A\# B$. Then, there exists a unique line passing through both $A$ and $B$. Dually, for $k$ and $l$ lines on the projective plane such that $k\# l$, there exists a unique point lying on both $k$ and $l$.
\end{prop}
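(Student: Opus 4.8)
The plan is to construct the line explicitly as the ``cross product'' of the two points and then verify incidence and uniqueness by direct computation, relying only on the fact that one $2\times 2$ minor is invertible.

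First I would fix representatives $\pt{a}$ of $A$ and $\pt{b}$ of $B$ and define a candidate line $l$ with representative $\pt{\lambda}$ given by $\lambda_0 = a_1 b_2 - a_2 b_1$, $\lambda_1 = a_2 b_0 - a_0 b_2$, $\lambda_2 = a_0 b_1 - a_1 b_0$, whose entries are, up to sign, exactly the three $2\times 2$ minors of the defining matrix. Since $A\# B$, one of these minors is invertible, so $\pt{\lambda}$ is a genuine line representative (it has an invertible coordinate). Moreover, replacing $\mathbf a,\mathbf b$ by $r\mathbf a, s\mathbf b$ multiplies each $\lambda_i$ by the invertible element $rs$, so the line $l$ is well defined independently of the chosen representatives.

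For existence, I would expand $\sum_{i=0}^2 \lambda_i a_i = a_0(a_1b_2-a_2b_1)+a_1(a_2b_0-a_0b_2)+a_2(a_0b_1-a_1b_0)$ and observe that the six terms cancel in pairs, giving $0$; the computation for $\sum_{i=0}^2 \lambda_i b_i$ is identical, so $A\in l$ and $B\in l$. For uniqueness, suppose $m$ has representative $\pt{\mu}$ with $A\in m$ and $B\in m$, i.e. $\sum_{i=0}^2 \mu_i a_i = 0$ and $\sum_{i=0}^2 \mu_i b_i = 0$. I would split into three cases according to which minor is invertible; by the cyclic symmetry of the indices it suffices to treat the case where $\lambda_2 = a_0 b_1 - a_1 b_0$ is invertible. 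Reading the two incidence equations as a linear system in $(\mu_0,\mu_1)$ whose coefficient matrix has the invertible determinant $\lambda_2$, Cramer's rule yields $\mu_0 = (\mu_2/\lambda_2)\lambda_0$ and $\mu_1 = (\mu_2/\lambda_2)\lambda_1$, while $\mu_2 = (\mu_2/\lambda_2)\lambda_2$ trivially. Thus $\boldsymbol\mu = t\boldsymbol\lambda$ with $t = \mu_2\lambda_2^{-1}$, and since $\pt{\mu}$ is a line representative it has some invertible coordinate, which forces $t$ to be invertible; hence $m = l$.

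The main obstacle is the uniqueness argument: the disjunctive nature of the apartness relation requires the case analysis on the invertible minor, and the delicate point is confirming that the scalar $t$ produced by Cramer's rule is genuinely invertible, which is exactly where the requirement that a line representative always carries an invertible coordinate is used. Finally, the dual statement — that apart lines $k\# l$ meet in a unique point — is immediate from the duality established for $\mathbb P(R)$, since it interchanges the roles of points and lines while preserving $\#$ and $\in$.
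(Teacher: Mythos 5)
Your proposal is correct and takes essentially the same route as the paper: the same cross-product representative $\pt{\lambda}$, the same incidence check (your pairwise cancellation is just the paper's vanishing determinant with a repeated column), and a uniqueness argument that is the paper's elimination computation recast as Cramer's rule, with the invertibility of the scalar $t=\mu_2\lambda_2^{-1}$ secured exactly as in the paper's remark that a comparison scalar between line representatives is automatically a unit. The dual statement is obtained by the duality principle in both proofs, so there is nothing to add.
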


\begin{proof}
 Let $A=(a_0,a_1,a_2)$, $B=(b_0,b_1,b_2)$  be points  such that $A\#B$. Without loss of generality, we assume that $a_0 b_1 - a_1 b_0$ is invertible. Let $l$ be the line $(\lambda_0,\lambda_1,\lambda_2)$, where $\lambda_0=a_1 b_2 -a_2 b_1$, $\lambda_1= a_2 b_0 - a_0 b_2$ and $\lambda_2= a_0 b_1 - a_1 b_0$ and notice that $\lambda_2$ is invertible. For  $x_0$, $x_1$, $x_2$ in $R$, 
$\sum\limits_{i=0}^2  \lambda_i x_i= \det \begin{pmatrix}
x_0 & a_0 & b_0 \\
x_1 & a_1 & b_1 \\
x_2 & a_2 & b_2  
\end{pmatrix}$.
Hence, $\sum\limits_{i=0}^2 \lambda_i a_i=0$ and $\sum\limits_{i=0}^2  \lambda_i b_i= 0$, therefore $A,B\in l$.

Suppose $m$ represented by $(\mu_0,\mu_1,\mu_2)$ is a line such that $A,B\in m$, so that $\sum\limits_{i=0}^2 \mu_i a_i=0$ and $\sum\limits_{i=0}^2  \mu_i b_i= 0$. Let $\lambda'_i= \mu_2 \lambda_2^{-1} \lambda_i$ for $i=0,1,2$. Note that $\lambda'_2=\mu_2$, $\sum\limits_{i=0}^2 \lambda'_i a_i=0$ and $\sum\limits_{i=0}^2  \lambda'_i b_i= 0$, therefore we have the equations
\[
(\lambda'_0-\mu_0)a_0 + (\lambda'_1-\mu_1)a_1=\sum\limits_{i=0}^2 \lambda'_i a_i-\sum\limits_{i=0}^2 \mu_i a_i=0,
\]
\[
(\lambda'_0-\mu_0)b_0 + (\lambda'_1-\mu_1)b_1=\sum\limits_{i=0}^2 \lambda'_i b_i-\sum\limits_{i=0}^2 \mu_i b_i=0.
\]

Taking $b_1$ times the first equation minus $a_1$ times the second we see that $(\lambda'_0-\mu_0)(a_0 b_1-a_1 b_0)=0$. By assumption, $a_0 b_1-a_1 b_0$ is invertible, therefore $\mu_0=\lambda'_0$. By a symmetric argument, $\mu_1=\lambda'_1$ and we've already seen that $\mu_2=\lambda'_2$. Therefore, for each $i$, $\mu_i= \mu_2 \lambda_2^{-1} \lambda_i$, hence $\pt{\lambda}$ and $\pt{\mu}$ represent the same line, and therefore $l=m$.

Therefore, given points $A$, $B$ that are apart from each other, there is a unique line passing through both $A$ and $B$. By the duality principle, given lines $k$, $l$ that are apart from each other, there is a unique point lying on both $k$ and $l$.
\end{proof}

Given points $A$, $B$ of a projective plane over a ring such that $A\#B$, we denote the unique line through $A$ and $B$ by $\ovl{AB}$. Dually, given lines $k$ and $l$ of a projective plane over a ring such that $k\#l$, we denote the unique point lying on both $k$ and $l$ by $k\cap l$ and we call it the \emph{intersection} of $k$ and $l$.

\begin{prop}
 For any ring $R$, the following hold for points and lines of its projective plane:

\begin{enumerate}
\item For any line $l$ there exist points $A$, $B$, $C$ lying on $l$ such that $A\#B\#C\#A$.

\item $\top \vdash \exists A,B,C,l. A\#B \wedge A,B\in l\wedge C\notin l$.

\item $\top \vdash_l \exists A. A\notin l$.
\end{enumerate}

Dually, the following hold:
\begin{enumerate}
\item For any point $A$ there exist lines $k$, $l$, $m$ passing through $A$ such that $k\#l\#m\#k$.

\item $\top \vdash \exists k,l,m,A. k\#l \wedge A\in k,l \wedge A\notin m$.

\item $\top \vdash_A \exists l. A\notin l$.
\end{enumerate}
\end{prop}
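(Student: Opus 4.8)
The plan is to prove all three statements by exhibiting explicit witnesses, invoking the duality principle of the previous section for the three dual statements. The only structural subtlety is that the definitions of points and lines are disjunctive, so whenever the argument needs to use that a coordinate of $l$ is invertible I will perform a genuine case split on $\text{inv}(\lambda_0)\vee\text{inv}(\lambda_1)\vee\text{inv}(\lambda_2)$ rather than a bare ``without loss of generality''. Throughout I use that a triple is a valid point or line precisely when one of its coordinates is invertible, that $A\in l$ means $\sum_{i=0}^2\lambda_i a_i=0$, that $A\notin l$ means $\sum_{i=0}^2\lambda_i a_i$ is invertible, and that $A\# B$ means one of the three $2\times2$ minors of the coordinate matrix is invertible.

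For (3), given $l=(\lambda_0,\lambda_1,\lambda_2)$ I split into the three cases provided by the defining disjunction. In the case $\text{inv}(\lambda_j)$ I take $A$ to be the point whose $j$-th coordinate is $1$ and whose other coordinates are $0$; then $\sum_{i=0}^2\lambda_i a_i=\lambda_j$ is invertible, so $A\notin l$, and $A$ is a legitimate point since one coordinate equals $1$. Statement (2) carries no hypotheses, so I simply exhibit one fixed configuration: take $l=(0,0,1)$, $A=(1,0,0)$, $B=(0,1,0)$, $C=(0,0,1)$. Each is a valid point or line, $A,B\in l$ since the relevant sums vanish, $C\notin l$ since its sum equals $1$, and $A\# B$ because the minor formed by the first two rows of their coordinate matrix is $1$.

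For (1), fix $l=(\lambda_0,\lambda_1,\lambda_2)$ and split once more on which $\lambda_j$ is invertible; the three cases are interchanged by permuting coordinates, so I treat the case $\text{inv}(\lambda_2)$, the others being formally identical. Here the points on $l$ are exactly those of the form $(a_0,a_1,-\lambda_2^{-1}(\lambda_0 a_0+\lambda_1 a_1))$, and I take the three arising from $(a_0,a_1)=(1,0),(0,1),(1,1)$, namely
\[
A=(1,0,-\lambda_2^{-1}\lambda_0),\qquad B=(0,1,-\lambda_2^{-1}\lambda_1),\qquad C=(1,1,-\lambda_2^{-1}(\lambda_0+\lambda_1)).
\]
Each lies on $l$ by construction and is a valid point since its first or second coordinate is $1$. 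Pairwise apartness is read off from the minors formed by the first two rows of the coordinate matrices, which equal $1$, $1$ and $-1$ for the pairs $(A,B)$, $(A,C)$ and $(B,C)$ respectively; all are invertible, so $A\# B\# C\# A$.

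Each verification is routine, so the only real work is bookkeeping: confirming that every exhibited triple is a genuine point by pointing to an invertible (indeed unit) coordinate, and, in (1) and (3), organizing the reasoning as a true case analysis on the defining disjunction, which is what the coherent setting requires since no coordinate may be singled out for free. The symmetry of the definitions under permuting the three coordinates then makes the remaining cases verbatim repetitions. Finally, the dual assertions follow at once from the duality principle, under which points and lines are interchanged together with the relations $\in$, $\notin$ and $\#$.
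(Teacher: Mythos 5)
Your proof is correct and follows essentially the same route as the paper: exhibit explicit witness points for each of the three statements (handling the disjunction $\text{inv}(\lambda_0)\vee\text{inv}(\lambda_1)\vee\text{inv}(\lambda_2)$ by symmetric cases, which the paper compresses into a ``without loss of generality'') and then dispatch the dual statements by the duality principle. Your concrete witnesses differ slightly from the paper's — e.g.\ in (1) you use $\lambda_2^{-1}$ explicitly where the paper scales to avoid inverses, and in (2) you pick a different fixed configuration — but these are cosmetic variations on the identical argument.
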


\begin{proof}
We only prove the first three listed results, since by the duality principle the second list of results hold.
\begin{enumerate}
\item Let $l$ be the line represented by $(\lambda_0,\lambda_1,\lambda_2)$, and without loss of generality assume that $\lambda_0$ is invertible. Then, the points $(\lambda_2,0,-\lambda_0)$, $(\lambda_1,-\lambda_0,0)$, $(\lambda_1+\lambda_2,-\lambda_0,-\lambda_0)$ lie on $l$ and they are all apart from each other.

\item The points $(0,0,1)$ and $(1,0,1)$ are apart from each other and they lie on the line $(0,1,0)$ and the point $(0,1,1)$ lies outside the line $(0,1,0)$.

\item Let $l$ be the line represented by $\pt{\lambda}$ and without loss of generality assume that $\lambda_0$ is invertible. Then, $(1,0,0)\notin l$.
 \end{enumerate}
\end{proof}

\begin{prop} \label{proplastaxiom}
Let $A$ and $B$ be points and let $k$ and $l$ be lines of the projective plane over a ring $R$. Then, the following sequent holds:
$$A\# B \wedge l\#m \wedge A\in l \wedge B\in l \wedge B\in m  \vdash_{A,B,l,m} A\notin m.$$
\end{prop}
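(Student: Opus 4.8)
The plan is to choose good representatives and reduce the whole statement to a single algebraic identity. Since $A\# B$ together with $A\in l$ and $B\in l$ forces $l=\ovl{AB}$ by Proposition \ref{propexistsuniqueline}, I would fix representatives $\pt a$ of $A$ and $\pt b$ of $B$ and take as representative of $l$ the ``cross product'' $\boldsymbol\lambda$ given by $\lambda_0=a_1 b_2-a_2 b_1$, $\lambda_1=a_2 b_0-a_0 b_2$, $\lambda_2=a_0 b_1-a_1 b_0$, exactly as in the proof of that proposition; it is precisely the hypothesis $A\# B$ that guarantees this triple is a legitimate representative (one of its entries is invertible). Let $\pt\mu$ be any representative of $m$ and write $p=\Sum \mu_i a_i$, so that the goal $A\notin m$ is exactly the assertion that $p$ is invertible. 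The last hypothesis $B\in m$ records that $\Sum \mu_i b_i=0$, and by the representative-independence already established for $\#$, $\in$ and $\notin$ I am free to work with these particular representatives throughout.

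The key step is to expand the three $2\times 2$ minors of the matrix $\begin{pmatrix}\lambda_0 & \mu_0 \\ \lambda_1 & \mu_1 \\ \lambda_2 & \mu_2\end{pmatrix}$, substitute the definitions of the $\lambda_i$, and regroup. After doing so each minor is, up to sign, of the form $(\Sum \mu_i a_i)\,b_j-(\Sum \mu_i b_i)\,a_j$ for the appropriate index $j$. This is a polynomial identity valid over any commutative ring --- it is just the expansion of the vector triple product $(\mathbf a\times\mathbf b)\times\boldsymbol\mu=(\mathbf a\cdot\boldsymbol\mu)\mathbf b-(\mathbf b\cdot\boldsymbol\mu)\mathbf a$ --- so I would verify it by a direct computation for one minor and invoke symmetry for the remaining two. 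Feeding in $\Sum \mu_i b_i=0$, the three minors collapse to $p\,b_0$, $p\,b_1$, $p\,b_2$ (up to sign).

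To finish, I invoke $l\# m$, which says precisely that one of these three minors is invertible; hence $p\,b_j$ is invertible for some $j$, and since a product is invertible in a commutative ring only when each factor is, $p=\Sum \mu_i a_i$ is invertible, which is exactly $A\notin m$. The main obstacle to watch for is a tempting shortcut: $l\# m$ yields the intersection point $l\cap m$, which must coincide with $B$ by the dual of Proposition \ref{propexistsuniqueline}, so that $A\in m$ would force $A=B$ and contradict $A\# B$. Constructively, however, this only refutes $A\in m$; it never produces the positive ``outside'' witness that $\Sum \mu_i a_i$ be invertible. The real content is therefore the explicit factorisation of the minors as $p\,b_j$, which simultaneously delivers the required witness and reveals why $l\# m$ is the hypothesis that makes it invertible. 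The only other point needing care is coherent bookkeeping: both $A\# B$ and $l\# m$ are disjunctions over which minor is invertible, so the argument is strictly a (routine) case analysis, absorbed into the ``without loss of generality'' reductions above.
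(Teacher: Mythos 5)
Your proof is correct, and it is a genuine reorganization of the paper's computation rather than a reproduction of it. The paper keeps an arbitrary representative $\pt{\lambda}$ of $l$, assumes WLOG that the minor $\lambda_0\mu_1-\lambda_1\mu_0$ is invertible, and then splits into three cases according to which $\lambda_i$ is invertible, verifying in each case an ad hoc identity such as $\lambda_0 b_2\Sum\mu_i a_i=(\lambda_0\mu_1-\lambda_1\mu_0)(a_1b_2-a_2b_1)$; it extracts the invertibility of $a_1b_2-a_2b_1$ from that of $\lambda_0$ via the same fact you use, namely that $\pt{\lambda}$ is a unit multiple of the cross product of $\mathbf a$ and $\mathbf b$ (Proposition \ref{propexistsuniqueline}, made explicit in the remark after Lemma \ref{lemptslines}). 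You instead normalize once and for all, taking $\boldsymbol{\lambda}=\mathbf a\times\mathbf b$ itself as the representative of $l$ — legitimate since $A\#B$ makes one of its coordinates invertible and $\#$, $\in$, $\notin$ are representative-independent — and then the single triple-product identity $(\mathbf a\times\mathbf b)\times\boldsymbol{\mu}=(\Sum\mu_i a_i)\,\mathbf b-(\Sum\mu_i b_i)\,\mathbf a$, valid over any commutative ring, collapses all three minors of the matrix formed from $\boldsymbol{\lambda}$ and $\boldsymbol{\mu}$ to $\pm(\Sum\mu_i a_i)\,b_j$ once $B\in m$ kills the second term; $l\#m$ then delivers invertibility of some $(\Sum\mu_i a_i)\,b_j$, hence of $\Sum\mu_i a_i$. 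What your route buys: both of the paper's case analyses (on which $(\lambda,\mu)$-minor and on which $\lambda_i$ is invertible) vanish, and the paper's separate identities are exposed as specializations of one polynomial identity; like the paper's argument, yours needs no locality assumption on $R$. What the paper's version buys is only that it never changes the representative of $l$, at the cost of dragging the unit $r$ in $\lambda_0=r(a_1b_2-a_2b_1)$ through each case. Your closing caution is also well placed: the classical shortcut (identifying $B$ with $l\cap m$ so that $A\in m$ would contradict $A\#B$) merely refutes $A\in m$ and cannot constructively produce the invertibility witness that constitutes $A\notin m$, which is exactly why the explicit factorization is the real content here.
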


\begin{proof}
Let $A=\pt{a}$, $B=\pt{b}$ be two points and let $l=\pt{\lambda}$ and $m=\pt{\mu}$ be lines, such that $A\#B$, $l\#m$ and $A,B\in l$ and $B\in m$.
$l\#m$, so without loss of generality we assume that $\lambda_0\mu_1-\lambda_1\mu_0$ is invertible. One of the $\lambda_i$'s is invertible, therefore we consider each of the three cases.

In the case where $\lambda_0$ is invertible, and by the construction of the unique line passing through points that are apart from each other, $\lambda_0 = r(a_1b_2-a_2b_1)$ for some invertible element $r$, hence $(a_1b_2-a_2b_1)$ is invertible. Since $A,B\in l$ and $B\in m$ we have the following:
\[
\lambda_0 b_2 \Sum\mu_i a_i = \lambda_0 b_2 \Sum\mu_i a_i+\mu_0a_2\Sum\lambda_ib_i -\mu_0b_2\Sum\lambda_ia_i -\lambda_0a_2\Sum\mu_ib_i
\]
\[
=(\lambda_0\mu_1-\lambda_1\mu_0)(a_1b_2-a_2b_1).
\]
The right hand side is invertible by assumption, hence $\Sum\mu_i a_i$ is invertible, and therefore $A\notin m$.

The case where $\lambda_1$ is invertible is symmetric to the above case where $\lambda_0$ is invertible.

In the case where $\lambda_2$ is invertible, $a_0b_1 - a_1b_0$ is also invertible. Since $A,B\in l$ and $B\in m$ we have the following:
$$
(\lambda_0b_0 +\lambda_1b_1 )\Sum\mu_ia_i=
$$

$$=(\lambda_0b_0 +\lambda_1b_1 )\Sum\mu_ia_i+\mu_2b_2\Sum\lambda_ia_i -\mu_2a_2 \Sum\lambda_ib_i-(\lambda_0a_0+\lambda_1a_1)\Sum\mu_ib_i
$$

$$
=-(\lambda_0 \mu_1 -\lambda_1 \mu_0)(a_0b_1 - a_1b_0).
$$
The right hand side is invertible by assumption, hence $\Sum\mu_ia_i$ is invertible, and therefore $A\notin m$. 
\end{proof}

\begin{prop} \label{propdetcol}
Let $A=(a_0,a_1,a_2)$, $B=(b_0,b_1,b_2)$ and $C=(c_0,c_1,c_2)$ be points of the projective plane over a ring $R$, such that there exists a line $l$  passing through all $A$, $B$ and $C$. Then, 
$$\det \begin{pmatrix}
a_0 & b_0 & c_0 \\
a_1 & b_1 & c_1 \\
a_2 & b_2 & c_2  
\end{pmatrix}
=0.$$

Dually, let $k=\pt{\kappa}$, $l=\pt{\lambda}$ and $m=\pt{\mu}$ be lines of $\mathbb P(R)$ such that there exists a point $A$  lying on all $k$, $l$ and $m$. Then, 
$$\det \begin{pmatrix}
\kappa_0 & \lambda_0 & \mu_0 \\
\kappa_1 & \lambda_1 & \mu_1 \\
\kappa_2 & \lambda_2 & \mu_2  
\end{pmatrix}
=0.$$
\end{prop}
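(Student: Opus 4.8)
The plan is to turn the incidence conditions into the statement that the row vector of the line's coordinates lies in the left kernel of the given matrix, and then to exploit the invertibility of one of its entries to collapse the determinant by multilinearity. First I would fix a representative $\pt{\lambda}$ of the line $l$; since $l$ is a line of $\mathbb P(R)$, its defining condition guarantees that at least one of $\lambda_0,\lambda_1,\lambda_2$ is invertible. Writing out $A,B,C\in l$ via the incidence relation yields
$$\Sum \lambda_i a_i=0,\qquad \Sum \lambda_i b_i=0,\qquad \Sum \lambda_i c_i=0,$$
which say precisely that $\pt{\lambda}$ annihilates each column of
$$M=\begin{pmatrix} a_0 & b_0 & c_0 \\ a_1 & b_1 & c_1 \\ a_2 & b_2 & c_2 \end{pmatrix}.$$

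By the symmetry of the three coordinates I may assume without loss of generality that $\lambda_2$ is invertible, the other two cases being identical after permuting rows. The equations then give $a_2=-\lambda_2^{-1}(\lambda_0 a_0+\lambda_1 a_1)$ and similarly for $b_2$ and $c_2$, so the third row of $M$ is $-\lambda_2^{-1}\lambda_0$ times the first row plus $-\lambda_2^{-1}\lambda_1$ times the second. Since over any commutative ring the determinant is multilinear and alternating in the rows, a row that is an $R$-linear combination of the others contributes nothing and $\det M=0$ follows at once. The dual statement is then immediate from the duality principle.

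The step I expect to need the most care is precisely this vanishing, and in particular the role of invertibility. Over a general ring it is \emph{not} enough that $\pt{\lambda}$ be a nonzero left-kernel vector of $M$: in the presence of zero divisors a nontrivial kernel element need not force $\det M=0$. What saves the argument is that $l$ is a genuine line, so some $\lambda_i$ is invertible; this is exactly what licenses writing one row as an honest linear combination of the other two and then invoking the alternating property to finish.
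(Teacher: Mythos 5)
Your proof is correct and follows essentially the same route as the paper's: the paper likewise assumes (after relabelling) that one coordinate of $\pt{\lambda}$ is invertible, solves the incidence equations to express the corresponding row of the matrix as an $R$-linear combination of the other two, and concludes $\det=0$ by multilinearity, with the dual statement handled by the duality principle. Your closing remark on why invertibility (rather than mere nonvanishing of $\pt{\lambda}$) is essential is a sound observation, though the paper does not dwell on it.
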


\begin{proof}
Let $A=(a_0,a_1,a_2)$, $B=(b_0,b_1,b_2)$ and $C=(c_0,c_1,c_2)$ be points of $\mathbb P(R)$. Let $l=(\lambda_0,\lambda_1,\lambda_2)$ be a line of $\mathbb P(R)$ such that $A,B,C\in l$. Without loss of generality, assume that $\lambda_0$ is invertible. Then $a_0=\lambda_0^{-1}(-\lambda_1 a_1 -\lambda_2 a_2)$, $b_0=\lambda_0^{-1}(-\lambda_1 b_1 -\lambda_2 b_2)$ and $c_0=\lambda_0^{-1}(-\lambda_1 c_1 -\lambda_2 c_2)$, therefore 
\[
\det \begin{pmatrix}
a_0 & b_0 & c_0 \\
a_1 & b_1 & c_1 \\
a_2 & b_2 & c_2  
\end{pmatrix}
=-\lambda_0^{-1}
\det 
\begin{pmatrix}
\lambda_1 a_1 +\lambda_2 a_2 & \lambda_1 b_1 +\lambda_2 b_2 & \lambda_1 c_1 +\lambda_2 c_2 \\
a_1 & b_1 & c_1 \\
a_2 & b_2 & c_2  
\end{pmatrix}
=0.
\]

The second part of the proposition is true by the duality principle.
\end{proof}

\begin{prop} \label{propiffcollinear}
Let $A=\pt a$, $B=\pt b$ and $C=\pt c$ be points of the projective plane over a ring $R$ such that $A\#B$. Then $C\in \ovl{AB}$ iff
$$\det
 \begin{pmatrix}
a_0 & b_0 & c_0 \\
a_1 & b_1 & c_1 \\
a_2 & b_2 & c_2  
\end{pmatrix}=0.
$$

Dually, let $k=\pt{\kappa}$, $l=\pt{\lambda}$ and $m=\pt{\mu}$ be lines of $\mathbb P(R)$ such that $k\# l$. Then $k\cap l\in m$ iff 
$$\det \begin{pmatrix}
\kappa_0 & \lambda_0 & \mu_0 \\
\kappa_1 & \lambda_1 & \mu_1 \\
\kappa_2 & \lambda_2 & \mu_2  
\end{pmatrix}
=0.$$
\end{prop}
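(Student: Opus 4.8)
The plan is to reduce both implications to the explicit description of $\ovl{AB}$ already obtained in Proposition \ref{propexistsuniqueline}. Since $A\#B$, that proposition produces a representative $\pt{\lambda}$ of $\ovl{AB}$ given by the three $2\times2$ cofactors
\[
\lambda_0=a_1b_2-a_2b_1,\qquad \lambda_1=a_2b_0-a_0b_2,\qquad \lambda_2=a_0b_1-a_1b_0,
\]
together with the identity
\[
\Sum \lambda_i x_i=\det\begin{pmatrix} x_0 & a_0 & b_0 \\ x_1 & a_1 & b_1 \\ x_2 & a_2 & b_2 \end{pmatrix}
\]
valid for all $x_0,x_1,x_2\in R$. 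First I would note that $\pt{\lambda}$ is a legitimate representative of a line: the three minors of $\left(\begin{smallmatrix} a_0 & b_0 \\ a_1 & b_1 \\ a_2 & b_2 \end{smallmatrix}\right)$ are exactly $\pm\lambda_0,\pm\lambda_1,\pm\lambda_2$, so $A\#B$ forces one of the $\lambda_i$ to be invertible. Substituting $x=a$ and then $x=b$ into the identity gives determinants with a repeated column, whence $\Sum\lambda_i a_i=\Sum\lambda_i b_i=0$; thus $A$ and $B$ lie on the line represented by $\pt{\lambda}$, and by the uniqueness half of Proposition \ref{propexistsuniqueline} that line is precisely $\ovl{AB}$, whichever minor happens to be invertible.

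With this in hand the biconditional is immediate. By the definition of the incidence relation (and the remark that it may be tested on \emph{any} representatives), $C\in\ovl{AB}$ holds iff $\Sum\lambda_i c_i=0$. Setting $x=c$ in the identity above,
\[
\Sum\lambda_i c_i=\det\begin{pmatrix} c_0 & a_0 & b_0 \\ c_1 & a_1 & b_1 \\ c_2 & a_2 & b_2 \end{pmatrix}=\det\begin{pmatrix} a_0 & b_0 & c_0 \\ a_1 & b_1 & c_1 \\ a_2 & b_2 & c_2 \end{pmatrix},
\]
the last equality being a cyclic permutation of columns, which is even and hence leaves the determinant unchanged. Therefore $C\in\ovl{AB}$ iff the displayed $3\times3$ determinant vanishes, which is exactly the claim. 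The forward direction alone could instead be read off from Proposition \ref{propdetcol}, but routing both directions through the single identity is cleaner.

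There is no genuine obstacle here; the only points demanding care are bookkeeping ones. I must confirm that the cofactor vector is a valid representative in \emph{every} case, so that the convenient normalisation made in the proof of Proposition \ref{propexistsuniqueline} is harmless, and I must track the sign of the column permutation so that the determinant appearing in the statement, rather than its negative, is the one tested for vanishing. Finally, the dual statement about three lines $k,l,m$ with $k\#l$ and the intersection point $k\cap l$ follows with no extra work from the duality principle of the earlier section, exchanging points with lines and reinterpreting $\in$ accordingly.
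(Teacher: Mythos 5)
Your proof is correct and takes essentially the same approach as the paper: both evaluate the incidence pairing of $C$ against the cofactor representative of $\ovl{AB}$ supplied by Proposition \ref{propexistsuniqueline} and recognise it, up to sign, as the stated $3\times 3$ determinant, with the dual half following from the duality principle. The only difference is cosmetic: the paper uses the negated representative, so its inner product equals $-\det$ rather than $+\det$, which does not affect the vanishing condition.
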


\begin{proof}
Let the line $\ovl{AB}$ have coordinates $\pt r=(a_2b_1-a_1b_2, a_0b_2 -a_2b_0,a_1b_0-a_0b_1)$. $C\in \ovl{AB}$ iff $r_0 c_0 +r_1 c_1 +r_2 c_2 =0$.

$r_0 c_0 +r_1 c_1 +r_2 c_2= - \det \begin{pmatrix}
a_0 & b_0 & c_0 \\
a_1 & b_1 & c_1 \\
a_2 & b_2 & c_2  
\end{pmatrix}$, hence $C\in \ovl{AB}$ iff the determinant is $0$.
\end{proof}

\begin{lem} \label{lemprojcol}
Let $R$ be a ring and let $A$, $B$ and $C$ be points of $\mathbb P(R)$ represented by $\mathbf{a}=(a_0,a_1,a_2)$, $\mathbf{b}=(b_0,b_1,b_2)$ and $\mathbf{c}=(c_0,c_1,c_2)$ respectively, such that $A\# B$. Then, $C\in \ovl{AB}$ iff there exist $x$ and $y$ in $R$ such that $\mathbf{c}=x \mathbf{a}+ y\mathbf{b}$. 
\end{lem}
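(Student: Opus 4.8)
The plan is to reduce both directions of the biconditional to Proposition~\ref{propiffcollinear}, which already identifies $C\in\ovl{AB}$ with the vanishing of the determinant $D=\det\begin{pmatrix} a_0 & b_0 & c_0 \\ a_1 & b_1 & c_1 \\ a_2 & b_2 & c_2 \end{pmatrix}$.

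The easy direction is to show that $\mathbf c = x\mathbf a + y\mathbf b$ implies $C\in\ovl{AB}$. Here the third column of the matrix is a linear combination of the first two, so $D=0$ by multilinearity and alternation of the determinant (an identity valid over any commutative ring), and Proposition~\ref{propiffcollinear} gives $C\in\ovl{AB}$ at once.

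The content is in the converse. Invoking $A\# B$, I would assume without loss of generality that the minor $\lambda = a_0 b_1 - a_1 b_0$ is invertible, the remaining two cases being symmetric. Invertibility of this $2\times 2$ block lets me solve the first two coordinate equations $c_0 = x a_0 + y b_0$ and $c_1 = x a_1 + y b_1$ by Cramer's rule over $R$, giving $x = \lambda^{-1}(c_0 b_1 - c_1 b_0)$ and $y = \lambda^{-1}(a_0 c_1 - a_1 c_0)$. It then remains only to check that these same $x,y$ satisfy the third equation $c_2 = x a_2 + y b_2$.

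The key computation, and the only real obstacle, is the identity $\lambda(c_2 - x a_2 - y b_2) = D$. Clearing the $\lambda^{-1}$'s, the left side becomes $(a_0 b_1 - a_1 b_0)c_2 - (c_0 b_1 - c_1 b_0)a_2 - (a_0 c_1 - a_1 c_0)b_2$, which is exactly the cofactor expansion of $D$ along the third row. Granting this, the hypothesis $C\in\ovl{AB}$ forces $D=0$ via Proposition~\ref{propiffcollinear}, so $\lambda(c_2 - x a_2 - y b_2)=0$; since $\lambda$ is invertible we conclude $c_2 = x a_2 + y b_2$, and therefore $\mathbf c = x\mathbf a + y\mathbf b$. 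The verification of the identity is a finite matching of monomials, error-prone rather than deep, and needs no division, so it holds over the arbitrary ring $R$.
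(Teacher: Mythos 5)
Your proof is correct and takes essentially the same route as the paper's: both directions reduce to Proposition \ref{propiffcollinear}, and in the converse you use the invertible minor $a_0b_1-a_1b_0$ to solve the first two coordinate equations for $x,y$ exactly as the paper does (its matrix-inverse formula is just your Cramer's rule). The only cosmetic difference is the final step — the paper introduces the auxiliary point $\mathbf{c'}=x\mathbf{a}+y\mathbf{b}$ and compares the line equations for $\mathbf{c}$ and $\mathbf{c'}$ to deduce $c'_2=c_2$, whereas you verify the cofactor identity $\lambda(c_2-xa_2-yb_2)=\det\begin{pmatrix} a_0 & b_0 & c_0 \\ a_1 & b_1 & c_1 \\ a_2 & b_2 & c_2 \end{pmatrix}$ directly, which checks out.
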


\begin{proof}
Let $A$, $B$ and $C$ be points of $\mathbb P(R)$ represented by $\mathbf{a}=(a_0,a_1,a_2)$, $\mathbf{b}=(b_0,b_1,b_2)$ and $\mathbf{c}=(c_0,c_1,c_2)$ with $A\#B$. The line $\ovl{AB}$ can be represented by $\pt{\lambda}=(a_1 b_2 -a_2 b_1, a_2 b_0 - a_0 b_2, a_0 b_1 - a_1 b_0)$. Without loss of generality, let us assume that $\lambda_2= a_0 b_1 - a_1 b_0$ is invertible.

Suppose that $C\in \ovl{AB}$. Then, $c_2=\lambda_2^{-1}(\lambda_0 c_0+\lambda_1 c_1)$.
The matrix $\begin{pmatrix}
a_0 & a_1 \\ b_0 & b_1
       \end{pmatrix}$ is invertible because its determinant is invertible. We define $x$ and $y$ by 
$$\begin{pmatrix} x \\ y \end{pmatrix}
=
\begin{pmatrix} a_0 & a_1 \\ b_0 & b_1 \end{pmatrix}
^{-1}
\begin{pmatrix} c_0 \\c_1 \end{pmatrix}.$$
Therefore,
$\begin{pmatrix} c_0 \\c_1 \end{pmatrix}
=x
\begin{pmatrix} a_0 \\a_1 \end{pmatrix}
+y
\begin{pmatrix} b_0 \\b_1 \end{pmatrix}
$.
Let $\mathbf{c'}= x\mathbf{a}+y\mathbf{b}$. Note that $\Sum \lambda_i c'_i=0$ by Proposition \ref{propiffcollinear}, because $\det \begin{pmatrix}
c'_0 & a_0 & b_0 \\
c'_1 & a_1 & b_1 \\
c'_2 & a_2 & b_2  
\end{pmatrix}=0$. Therefore, $c'_2=\lambda_2^{-1}(\lambda_0 c'_0+\lambda_1 c'_1)=\lambda_2^{-1}(\lambda_0 c_0+\lambda_1 c_1)=c_2$. Hence, $\mathbf c= \mathbf{c'}= x\mathbf{a}+y\mathbf{b}$.

Conversely, suppose that $\mathbf{c}=x \mathbf{a}+ y\mathbf{b}$.  Then, $\det \begin{pmatrix}
a_0 & b_0 & c_0 \\
a_1 & b_1 & c_1 \\
a_2 & b_2 & c_2  
\end{pmatrix}=0$, and therefore $C\in\ovl{AB}$ by Proposition \ref{propiffcollinear}.  
\end{proof}

\begin{lem} \label{lemptslines}
Let $A=\pt{a}$, $B=\pt{b}$ be points and let $l=\pt{\lambda}$ be a line of a projective plane over a ring $R$. If $A$, $B$ lie on $l$ then 
$$(a_1 b_2 -a_2 b_1, a_2 b_0 - a_0 b_2, a_0 b_1 - a_1 b_0)$$
is a multiple of $\pt{\lambda}$.
\end{lem}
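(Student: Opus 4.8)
The plan is to recognise the triple $(a_1b_2-a_2b_1,\;a_2b_0-a_0b_2,\;a_0b_1-a_1b_0)$ as the cross product $\mathbf{a}\times\mathbf{b}$, call it $\mathbf{r}=(r_0,r_1,r_2)$, and to show directly that $\mathbf{r}=\rho\,\boldsymbol{\lambda}$ for some $\rho\in R$, where $\boldsymbol{\lambda}=(\lambda_0,\lambda_1,\lambda_2)$. The decisive tool is the vector triple product identity
\[
\boldsymbol{\lambda}\times(\mathbf{a}\times\mathbf{b})=\mathbf{a}\,(\boldsymbol{\lambda}\cdot\mathbf{b})-\mathbf{b}\,(\boldsymbol{\lambda}\cdot\mathbf{a}),
\]
which is a polynomial identity in the nine entries and therefore holds over any commutative ring $R$. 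I note that, in contrast to Lemma \ref{lemprojcol}, the hypothesis $A\#B$ is \emph{not} assumed here, so the argument cannot proceed by first exhibiting $\ovl{AB}$; the cross product must be handled on its own.

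The incidence hypotheses $A\in l$ and $B\in l$ say precisely, using the representation-independent form of $\in$ established above, that $\boldsymbol{\lambda}\cdot\mathbf{a}=\sum_{i=0}^{2}\lambda_i a_i=0$ and $\boldsymbol{\lambda}\cdot\mathbf{b}=0$. Substituting these into the identity makes the right-hand side vanish, so $\boldsymbol{\lambda}\times\mathbf{r}=\mathbf{0}$. Writing this out componentwise yields the three relations
\[
\lambda_0 r_1=\lambda_1 r_0,\qquad \lambda_1 r_2=\lambda_2 r_1,\qquad \lambda_2 r_0=\lambda_0 r_2.
\]

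To pass from these relations to an actual scalar multiple I would use that $l$ is a line: by definition one of $\lambda_0,\lambda_1,\lambda_2$ is invertible, and constructively this disjunction is exactly what I case-split on. Suppose $\lambda_0$ is invertible (the other two cases are symmetric) and set $\rho=\lambda_0^{-1}r_0$. Then $r_0=\rho\lambda_0$ trivially, while the first and third relations give $r_1=\lambda_0^{-1}\lambda_1 r_0=\rho\lambda_1$ and $r_2=\lambda_0^{-1}\lambda_2 r_0=\rho\lambda_2$. Hence $\mathbf{r}=\rho\,\boldsymbol{\lambda}$, as required.

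I do not expect a genuine obstacle here; the only points to watch are that no division is used except by the single invertible $\lambda_i$, so the argument stays valid over an arbitrary commutative ring, and that the three-fold case split matches the definition of a line, keeping the proof constructive. If one prefers to avoid quoting the triple-product identity, the same three relations can instead be obtained by a direct expansion of $\lambda_i r_j-\lambda_j r_i$ followed by substitution from $\boldsymbol{\lambda}\cdot\mathbf{a}=\boldsymbol{\lambda}\cdot\mathbf{b}=0$; but I would present the triple-product version for brevity.
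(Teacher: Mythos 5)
Your proof is correct, and it takes a genuinely different route from the paper's. Writing $\mathbf{r}=\mathbf{a}\times\mathbf{b}$, the triple-product identity $\boldsymbol{\lambda}\times(\mathbf{a}\times\mathbf{b})=\mathbf{a}\,(\boldsymbol{\lambda}\cdot\mathbf{b})-\mathbf{b}\,(\boldsymbol{\lambda}\cdot\mathbf{a})$ is indeed an integer polynomial identity in the nine entries, so it holds over any commutative ring; with $\boldsymbol{\lambda}\cdot\mathbf{a}=\boldsymbol{\lambda}\cdot\mathbf{b}=0$ it yields exactly the relations $\lambda_i r_j=\lambda_j r_i$, and your three-fold case split on which $\lambda_i$ is invertible — which matches the definition of a line and stays constructive — gives $\mathbf{r}=\rho\boldsymbol{\lambda}$ with $\rho=\lambda_i^{-1}r_i$. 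The paper instead works geometrically: it picks auxiliary points $P\# Q$ on $l$ (every line carries a triple of pairwise apart points), uses the explicit construction in Proposition \ref{propexistsuniqueline} to get $\mathbf{p}\times\mathbf{q}=r\boldsymbol{\lambda}$ with $r$ invertible, applies Lemma \ref{lemprojcol} to write $\mathbf{a}=x\mathbf{p}+y\mathbf{q}$ and $\mathbf{b}=x'\mathbf{p}+y'\mathbf{q}$, and then computes each minor to obtain $\mathbf{r}=r(xy'-x'y)\boldsymbol{\lambda}$. Note your remark that $A\# B$ is not assumed is only half the story: the paper also cannot apply Lemma \ref{lemprojcol} to $A$, $B$ directly, but it sidesteps this by applying it to the auxiliary apart pair on $l$ rather than handling the cross product intrinsically as you do. What each approach buys: the paper's proof reuses the structural machinery developed earlier and exhibits the scalar in the factored form $r(xy'-x'y)$, which makes transparent why it can fail to be invertible (cf. the counterexample over $\mathbb Z/(4)$ in the following remark); yours is shorter, self-contained, avoids any choice of auxiliary points, and isolates the single algebraic fact doing the work.
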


\begin{proof}
Let $A$, $B$ and $l$ be as in the statement of the lemma. The line $l$ passes through some points $P=\pt{p}$ and $Q=\pt{q}$ which are apart from each other. Then, $(p_1 q_2 -p_2 q_1, p_2 q_0 - p_0 q_2, p_0 q_1 - p_1 q_0)=r\pt{\lambda}$ for some invertible $r$ in $R$ by Proposition \ref{propexistsuniqueline}. $A$ and $B$ lie on $\ovl{PQ}$, therefore by Lemma \ref{lemprojcol}, $\mathbf{a}=x\mathbf{p}+y\mathbf{q}$, and $\mathbf{b}=x'\mathbf{p}+y'\mathbf{q}$ for some $x$, $y$, $x'$, $y'$ in $R$. Hence, 
\begin{displaymath}
\begin{split}
a_0 b_1 - a_1 b_0 & =\det\begin{pmatrix}
a_0 & b_0 \\
a_1 & b_1
\end{pmatrix}\\
& =
\det\begin{pmatrix}
xp_0+yq_0 & x'p_0+y'q_0 \\
xp_0+yq_0 & x'p_1+y'q_1
\end{pmatrix} \\
& =(xy'-x'y)(p_0 q_1 - p_1 q_0) \\
& =r(xy'-x'y)\lambda_2.
\end{split}
\end{displaymath}
Similarly, $a_1 b_2 -a_2 b_1=r(xy'-x'y)\lambda_0$ and $a_2 b_0 - a_0 b_2=r(xy'-x'y)\lambda_1$. Hence, $(a_1 b_2 -a_2 b_1, a_2 b_0 - a_0 b_2, a_0 b_1 - a_1 b_0)= r(xy'-x'y)\pt{\lambda}$ as required.
\end{proof}

\begin{rmk}
The converse is not true. For a counterexample, consider the projective plane over $\mathbb Z/(4)$ and let $A=(2,0,1)$, $B=(0,2,1)$ and $l=(1,1,0)$. When $A\#B$ then $(a_1 b_2 -a_2 b_1, a_2 b_0 - a_0 b_2, a_0 b_1 - a_1 b_0)$ is a multiple of $\pt{\lambda}$ by a unit by the construction of the unique line through two points that are apart from each other in Proposition \ref{propexistsuniqueline}.
\end{rmk}

\begin{prop} 
Given the projective plane over a ring $R$ the following are equivalent:
\begin{enumerate}
\item  $R$ satisfies $\text{inv}(x+y) \vdash_{x,y} \text{inv}(x) \vee \text{inv}(y)$.
\item For $A$, $B$ and $C$ points of the projective plane, $A\# B \vdash_{A,B,C} A\# C \vee B\# C$.
\item For $k$, $l$ and $m$ lines of the projective plane, $k\# l \vdash_{k,l,m} k\# m \vee l\# m$.
\item $A\notin l \vdash_{A,B,l} A\# B \vee B\notin l$.
\item $A\notin l \vdash_{A,l,m} l\#m \vee A\notin m$.
\item $A\# B \wedge l\#m \vdash_{A,B,l,m} A\notin l \vee B\notin m \vee A\notin m \vee B \notin l$.
\end{enumerate}
\end{prop}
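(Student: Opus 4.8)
The plan is to treat condition (1) as the hub: I will prove $1\Leftrightarrow 2$, $1\Leftrightarrow 4$ and $1\Leftrightarrow 6$ directly, and obtain $2\Leftrightarrow 3$ and $4\Leftrightarrow 5$ for free from the duality principle of the previous section, since statement 3 is literally the dual sequent of statement 2, statement 5 is the dual of statement 4, and statement 6 is self-dual. Throughout I write $A\times B$ for the triple of minors $(a_1b_2-a_2b_1,\,a_2b_0-a_0b_2,\,a_0b_1-a_1b_0)$, so that $A\#B$ holds iff $A\times B$ has an invertible coordinate (and dually for lines), and I use that $\lambda\cdot a:=\Sum\lambda_i a_i$ detects $A\notin l$ by its invertibility; $\det(A,B,C)$ denotes the determinant of the matrix with columns $\pt a,\pt b,\pt c$. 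In each forward direction the method is uniform: from the hypotheses produce one element known to be invertible, expand it by a vector identity into a sum of products of the relevant coordinates, and feed that sum to axiom (1), which forces a summand, hence each of its factors, to be invertible.

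For $1\Rightarrow 2$, given $A\#B$ I may assume $\lambda_2:=a_0b_1-a_1b_0$ is invertible, so $\begin{pmatrix}a_0 & b_0\\ a_1 & b_1\end{pmatrix}$ is invertible and I may write $c_0=xa_0+yb_0$, $c_1=xa_1+yb_1$ for unique $x,y$, setting $w=c_2-xa_2-yb_2$. As $C$ is a point I split on which $c_i$ is invertible. If $c_0$ or $c_1$ is invertible, axiom (1) applied to $xa_i+yb_i$ makes $x$ or $y$ invertible, and since the last coordinates of $A\times C$ and $B\times C$ are $y\lambda_2$ and $-x\lambda_2$, this gives $A\#C$ or $B\#C$. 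If only $c_2$ is invertible, axiom (1) reduces either to the previous case or to $\mathrm{inv}(w)$; a column operation gives $\det(A,B,C)=\lambda_2 w$, so $q:=\det(A,B,C)$ is invertible, and the identity $(A\times C)\times(B\times C)=q\,C$ (a $\mathrm{BAC}$–$\mathrm{CAB}$ computation using $(A\times C)\cdot C=0$ and $(A\times C)\cdot B=-q$) then has an invertible coordinate, so a minor of the pair $A\times C,\ B\times C$ is a unit and axiom (1) forces $A\#C$ and $B\#C$. For $1\Rightarrow 4$ the identity is $(\lambda\cdot a)\,b_j-(\lambda\cdot b)\,a_j=\sum_{k\neq j}\lambda_k(a_kb_j-a_jb_k)$: if $A\notin l$ then $\lambda\cdot a$ is invertible, and choosing $j$ with $b_j$ invertible makes the left side a unit, so axiom (1) makes either $\lambda\cdot b$ a unit ($B\notin l$) or some minor $a_kb_j-a_jb_k$ a unit ($A\#B$).

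For $1\Rightarrow 6$ I first use condition (2), already shown equivalent to (1), to split $A\#B$ as $A\#P\vee B\#P$, where $P=l\cap m$ is the intersection point provided by $l\#m$. In the case $A\#P$ I invoke $(\lambda\times\mu)\times a=(\lambda\cdot a)\mu-(\mu\cdot a)\lambda$; its left side is $P\times A$, which has an invertible coordinate exactly because $A\#P$, so some $(\lambda\cdot a)\mu_k-(\mu\cdot a)\lambda_k$ is a unit and axiom (1) yields $\mathrm{inv}(\lambda\cdot a)$ or $\mathrm{inv}(\mu\cdot a)$, i.e.\ $A\notin l$ or $A\notin m$; the case $B\#P$ is symmetric, so the four-fold disjunction holds. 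For the three converses I exhibit configurations over $x,y$ with $\mathrm{inv}(x+y)$ and read off the disjunction. For $2\Rightarrow 1$ take $A=(x,1,0)$, $B=(-y,1,0)$, $C=(0,1,0)$, so $A\#B\Leftrightarrow\mathrm{inv}(x+y)$, $A\#C\Leftrightarrow\mathrm{inv}(x)$, $B\#C\Leftrightarrow\mathrm{inv}(y)$. For $4\Rightarrow 1$ take $A=(1,0,x+y)$, $B=(1,0,y)$, $l=(0,0,1)$, so $A\notin l\Leftrightarrow\mathrm{inv}(x+y)$, $B\notin l\Leftrightarrow\mathrm{inv}(y)$, $A\#B\Leftrightarrow\mathrm{inv}(x)$. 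For $6\Rightarrow 1$ take $A=(1,0,0)$, $B=(0,1,0)$, $l=(x,0,-1)$, $m=(y,0,1)$: then $A\#B$ holds outright, $l\#m\Leftrightarrow\mathrm{inv}(x+y)$, two of the four outside-relations are vacuous, and the other two are $\mathrm{inv}(x)$ and $\mathrm{inv}(y)$.

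I expect the genuine difficulty to lie in $1\Rightarrow 2$, precisely in the sub-case where $C$ is not known to lie on $\ovl{AB}$: direct manipulation of the $2\times2$ minors does not terminate, and the argument only closes once one observes that $(A\times C)\times(B\times C)$ collapses to $q\,C$. The companion identity $(\lambda\times\mu)\times a=(\lambda\cdot a)\mu-(\mu\cdot a)\lambda$ is what unlocks $1\Rightarrow 6$ after the reduction through condition (2); spotting these two cross-product identities, rather than grinding the minors, is the crux, while the converse directions are routine coordinate checks.
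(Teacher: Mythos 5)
Your proof is correct, and its global skeleton matches the paper's: condition (1) as hub, duality disposing of (3) and (5), explicit coordinate configurations for the converses (yours differ from the paper's, e.g.\ $(x,1,0)$ versus $(x,0,1)$, but work just as well), and the same ``expand a known unit and feed it to axiom (1)'' method. Where you genuinely diverge is in the two forward directions you flagged as the crux. For $1\Rightarrow 2$ in the case $\text{inv}(c_2)$, you invoke $\det(A,B,C)=\lambda_2 w$ and the triple-product identity $(A\times C)\times(B\times C)=\det(A,B,C)\,C$; the paper instead closes this case with a two-line locality trick: since $a_0$ may be taken invertible, $a_0c_2$ is a unit, so by (1) either $a_0c_2-a_2c_0$ is a unit (a minor, giving $A\#C$) or $a_2c_0$ is, whence $c_0$ is invertible and one falls back to the first case. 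Your identity is heavier machinery but yields the stronger conclusion $A\#C\wedge B\#C$ in that branch; the paper's reduction is shorter. For $\Rightarrow 6$, the paper proves $2,3\Rightarrow 6$ synthetically: with $C=l\cap m$ it uses (2) to get $A\#C$, forms $k=\ovl{AC}$, uses (3) to get $k\#l$, and then applies Proposition 2.4.4 (the sequent $A\#B\wedge l\#m\wedge A,B\in l\wedge B\in m\vdash A\notin m$, valid over any ring). You replace the second half by the algebraic identity $(\lambda\times\mu)\times a=(\lambda\cdot a)\mu-(\mu\cdot a)\lambda$, avoiding both condition (3) and that proposition; this is more direct and arguably cleaner, at the cost of leaving the synthetic machinery unused.

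Two small repairs are needed in your write-up, neither a real gap. First, in $1\Rightarrow 4$ the displayed left side $(\lambda\cdot a)b_j-(\lambda\cdot b)a_j$ is \emph{not} known to be a unit; you must rearrange to $(\lambda\cdot a)b_j=(\lambda\cdot b)a_j+\sum_{k\neq j}\lambda_k(a_kb_j-a_jb_k)$ so that the unit $(\lambda\cdot a)b_j$ stands alone before applying axiom (1) --- this is exactly the paper's identity, written with the unit isolated. Second, in $6\Rightarrow 1$ the two disjuncts you call ``vacuous'' (the dot products equal to $0$) are not vacuous constructively, since the proposition assumes no nontriviality of $R$: they assert $\text{inv}(0)$, and you should say explicitly, as the paper does, that $\text{inv}(0)$ entails $\text{inv}(x)$, so the conclusion holds in those branches too.
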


\begin{proof}
\begin{description}
 \item [$1\Rightarrow 2$: ]

Let $A=(a_0,a_1,a_2)$, $B=(b_0,b_1,b_2)$ and $C=(c_0,c_1,c_2)$ be points such that $A\#B$.
Without loss of generality, assume that $a_0 b_1 - a_1 b_0$ is invertible.
By 1, $a_0 b_1$ is invertible or $a_1 b_0$ is.
Without loss of generality, assume that $a_0 b_1$ is invertible, i.e. $a_0$ and $b_1$ are invertible.
At least one of $c_0$, $c_1$ and $c_2$ is invertible.

In the case where $c_0$ is invertible, $b_0(a_0 c_1-a_1 c_0)-a_0(b_0 c_1 - b_1 c_0)= c_0 (a_0 b_1 -a_1 b_0)$ is also invertible.
By 1, at least one of $b_0(a_0 c_1-a_1 c_0)$ and $a_0(b_0 c_1 - b_1 c_0)$ is invertible.
In the first case $A\#C$ and in the second case $B\#C$.

The case where $c_1$ is invertible is symmetric.

Finally, in the case where $c_2$ is invertible, $a_0 c_2$ is invertible. Therefore, either $a_0 c_2 - a_2 c_0$ is invertible or $a_2 c_0$ is invertible. In the first case $A\# C$. In the second case $c_0$ is also invertible, and therefore it reduces to a case considered above.

\item [$2\Rightarrow 1$]

Let $x$, $y$ be in $R$ such that $x+y$ is invertible. Then, $(x,0,1)\#(-y,0,1)$, therefore $(0,0,1)$ is apart from $(x,0,1)$ or $(0,0,1)$ is apart from $(-y,0,1)$. In the first case $x$ is invertible and in the second case $y$ is invertible. Hence, 1 is satisfied.

\item [$1\Leftrightarrow 3$]

3 is dual to 2, and since 1 iff 2, then also 1 iff 3.

\item [$1\Rightarrow 4$]

Let $A=\pt{a}$, $B=\pt{b}$ be two points and let $l=\pt{\lambda}$ be a line, such that $A\notin l$, i.e. $\sum\limits_{i=0}^2 \lambda_i a_i$ is invertible. Without loss of generality, let us assume that $b_0$ is invertible. Then, the sum
$a_0(\lambda_0 b_0 + \lambda_1b_1 +\lambda_2 b_2) + \lambda_2(a_2 b_0 -a_0 b_2)+\lambda_1 (a_1b_0-a_0b_1)$ is invertible because it is equal to $b_0\sum\limits_{i=0}^2 \lambda_ia_i$. By 1, at least one of the three summands $a_0(\lambda_0 b_0 + \lambda_1b_1 +\lambda_2 b_2)$, $\lambda_2(a_2 b_0 -a_0 b_2)$ and $\lambda_1 (a_1b_0-a_0b_1)$ is invertible. If the first one is invertible, then $B\notin l$. If the second or the third one is invertible, then $A\#B$.

\item [$4\Rightarrow 1$]

Let $x$, $y$ be in $R$ such that $x+y$ is invertible. Then, $(x,0,1)\notin (1,0,y)$. By 4, the point $(0,0,1)$ is either apart from $(x,0,1)$ or lies outside $(1,0,y)$. In the first case $x$ is invertible and in the second case $y$ is invertible. Therefore, 1 is satisfied.

\item [$1 \Leftrightarrow 5$]

5 is dual to 4, and since 1 iff 4, then also 1 iff 5.

\item [$2,3 \Rightarrow 6$]

Let $A=\pt{a}$, $B=\pt{b}$ be points and let $l=\pt{\lambda}$, $m=\pt{\mu}$ be lines such that $A\# B$ and $l\# m$. Let $C$ be the unique intersection point of $l$ and $m$. Then by 2, at least one of $A$ and $B$ is apart from $C$. Without loss of generality, let us assume that $A\#C$ and let $k$ be the line $\ovl{AC}$. By 3, $k$ is apart from at least one of $l$ and $m$. Without loss of generality, let us assume that $k\#l$. Therefore, 
$A\# C \wedge k\#l \wedge A\in k \wedge C\in k \wedge C\in l$. Hence, by Proposition \ref{proplastaxiom} $A\notin l$.

\item [$6 \Rightarrow 1$]

Let $x$, $y$ be in $R$ such that $x+y$ is invertible. Then, the points $(x,0,1)$ and $(-y,0,1)$ are apart from each other and the lines $(1,0,0)$ and $(0,1,0)$ are apart from each other. By the four cases given in the conclusion of 6, at least one of $x$, $y$ and $0$ is invertible. In the case where $0$ is invertible, also $x$ is invertible. Hence, in all of the four cases at least one of $x$ and $y$ is invertible.
\end{description}
\end{proof}

\begin{rmk}
Not all rings satisfy the condition given in 1. For example in $\mathbb Z/(6)$, $3+2$ is invertible but neither $2$ nor $3$ are invertible.
\end{rmk}

We shall use the notion of local ring as used in the formulation of topos theory. It is the following.

\begin{defn}
A \emph{local ring} is a commutative ring that is a non-trivial ring ($\text{inv}(0)\vdash \bot$), and  satisfies the sequent $\text{inv}(x+y) \vdash_{x,y} \text{inv}(x) \vee \text{inv}(y)$.
\end{defn}

The following statements hold for projective planes over a field but not in general for projective planes over local rings:

\begin{enumerate}
 \item $\top \vdash_{A,B} \exists l. A,B\in l$

The above sequent is not satisfied by the projective plane over the local ring $\mathbb Z [X,Y]_{(X,Y)}$. Consider the points $(0,0,1)$ and $(X,Y,1)$. Suppose that they both lie on the line $(\lambda_0,\lambda_1, \lambda_2)$. Then $\lambda_2=0$ and $\lambda_0 X+\lambda_1 Y=0$. Without loss of generality suppose that $\lambda_0$ is invertible. Then $X=-\lambda_0^{-1}\lambda_1Y$ which implies that there exist  $a,b\in \mathbb Z[X,Y]$ with $a\notin (X,Y)$ and $aX=bY$, contradicting the fact that $(Y)$ is a prime ideal.

\item $A,B\in l,m \vdash_{A,B,l,m} (A=B)\vee (l=m)$

The above sequent is not satisfied by the projective plane over the local ring $\mathbb Z/(4)$. Consider the points $A=(2,2,1)$ and $B=(2,0,1)$, and the lines $l=(1,0,2)$ and $m=(1,2,2)$.

\item Let $A=(a_0,a_1,a_2)$, $B=(b_0,b_1,b_2)$ and $C=(c_0,c_1,c_2)$ be points, such that $\det \begin{pmatrix}
a_0 & b_0 & c_0 \\
a_1 & b_1 & c_1 \\
a_2 & b_2 & c_2  
\end{pmatrix}=0$.
Then, there exists a line $l$ containing the points $A$, $B$ and $C$.

The above statement is not satisfied by the projective plane over the local ring $\mathbb Z [X,Y]_{(X,Y)}$. Consider the points from (1). $\det \begin{pmatrix}
0 & X & X\\
0 & Y & Y\\
1 & 1 & 1
\end{pmatrix}=0$ but there exists no line containing $(0,0,1)$ and $(X,Y,1)$.

\item $A\in l \wedge B\#A\vdash_{A,B,l} B\notin l \vee B \in l$

The above sequent is not satisfied by the projective plane over the local ring $\mathbb Z/(4)$. Consider the points $A=(1,0,0)$ and $B=(0,2,1)$, and the line $l=(0,1,0)$.
\end{enumerate}
In fact, we have the following proposition:

\begin{prop}
 \begin{enumerate}
  \item A projective plane over a local ring $R$ satisfies $\top \vdash_{A,B} \exists l. A,B\in l$ iff $R$ satisfies $\top\vdash_{a,b} \exists x. ax=b \vee a=bx$.

\item A projective plane over a local ring $R$ satisfies 
$$A,B\in l,m \vdash_{A,B,l,m} (A=B)\vee (l=m)$$
iff $R$ is an integral domain (a ring satisfying $ab=0\vdash_{a,b} a=0 \vee b=0$).

\item A local ring $R$ is an integral domain and it satisfies 
$$\top\vdash_{a,b} \exists x. ax=b \vee a=bx$$
iff its projective plane satisfies that for any three points $A=(a_0,a_1,a_2)$, $B=(b_0,b_1,b_2)$ and $C=(c_0,c_1,c_2)$, such that $\det \begin{pmatrix}
a_0 & b_0 & c_0 \\
a_1 & b_1 & c_1 \\
a_2 & b_2 & c_2  
\end{pmatrix}=0$,
 there exists a line $l$ containing the points $A$, $B$ and $C$.

\item  A projective plane over a local ring $R$ satisfies
$$A\in l \wedge B\#A\vdash_{A,B,l} B\notin l \vee B \in l$$
iff $R$ is a (geometric) field.
 \end{enumerate}

\end{prop}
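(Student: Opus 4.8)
The plan is to prove each equivalence by its two implications, using tiny explicit configurations for the direction that extracts a ring condition, and explicit line constructions for the direction that produces a geometric property. For (1), in the backward direction I take two points $A,B$ and case-split on which coordinate of $A$ is invertible; assuming $a_0$ invertible and eliminating $\lambda_0$, a line $\pt\lambda$ through both must satisfy $\lambda_1 u+\lambda_2 v=0$ with $u=a_0b_1-a_1b_0$ and $v=a_0b_2-a_2b_0$, and the hypothesis $\exists x.\,ux=v\vee u=vx$ lets me set $\lambda_2$ or $\lambda_1$ equal to a unit and solve, yielding a valid line. In the forward direction I take $A=(1,0,0)$ and $B=(1,a,b)$: any common line has $\lambda_0=0$ and $\lambda_1a+\lambda_2b=0$ with $\lambda_1$ or $\lambda_2$ invertible (locality), and the two cases give $a=bx$ or $ax=b$.

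For (2), the forward direction is a single configuration: given $ab=0$ put $A=(1,a,0)$, $B=(1,a,b)$, $l=(-a,1,0)$, $m=(-a,1,a)$; all four incidences hold (the incidence $B\in m$ is exactly $ab=0$), while $A=B$ forces $b=0$ and $l=m$ forces $a=0$, so the sequent gives $a=0\vee b=0$. For the backward direction, with $R$ a domain, I use Lemma \ref{lemptslines} and its dual to write $\mathbf a\times\mathbf b$ as a multiple of both $\pt\lambda$ and $\pt\mu$, and $\pt\lambda\times\pt\mu$ as a multiple of both $\mathbf a$ and $\mathbf b$; passing to the fraction field $K$ of $R$, the vectors $\pt\lambda,\pt\mu$ are either $K$-proportional, forcing $l=m$, or $K$-independent, forcing $A=B$, and in each case the relating scalar is a unit of $R$ because each representative has an invertible coordinate, so the conclusion descends from $K$ to $R$.

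Part (3) is the crux and marries (1) with the domain axiom. In the forward direction, divisibility gives, via (1), a line $l=\pt\lambda$ through $A$ and $B$; by Lemma \ref{lemptslines}, $\mathbf a\times\mathbf b=t\pt\lambda$, so expanding the triple product gives $\det(\mathbf a,\mathbf b,\mathbf c)=t\,\Sum\lambda_ic_i=0$. The integral-domain axiom now yields the decisive disjunction $t=0\vee\Sum\lambda_ic_i=0$: in the latter case $C\in l$ and $l$ works; in the former $\mathbf a\times\mathbf b=0$ forces $A=B$, and a line through $A$ and $C$ (again by (1)) contains all three. For the backward direction, taking $C=A=(1,0,0)$ and $B=(1,a,b)$ makes the determinant vanish and the forced common line recovers divisibility exactly as in (1); taking $A=(1,a,0)$, $B=(1,0,b)$, $C=(1,a,b)$ with $ab=0$ again makes the determinant vanish, and the forced common line has $\lambda_0=0$ and $a\lambda_1=b\lambda_2=0$, so invertibility of $\lambda_1$ or $\lambda_2$ gives $a=0\vee b=0$.

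Part (4) is immediate one way — over a field $\Sum\lambda_ib_i$ is always $0$ or invertible, so $B\in l\vee B\notin l$ holds outright — and the converse uses $A=(1,0,0)$, $l=(0,1,0)$, $B=(0,x,1)$, for which $A\in l$, $B\#A$ (the minor $1$ is a unit) and $\Sum\lambda_ib_i=x$, so the conclusion $B\notin l\vee B\in l$ is exactly $\mathrm{inv}(x)\vee x=0$. The step I expect to be hardest is the non-apart case underlying (2) and (3): when no pair among the relevant points or lines is apart, Proposition \ref{propexistsuniqueline} and Proposition \ref{propiffcollinear} do not apply directly, and one must instead exploit the absence of zero divisors to cancel a common factor — cleanly through the disjunction coming from $t\,\Sum\lambda_ic_i=0$ in (3), and through the passage to the fraction field (or an equivalent cancellation argument) in (2).
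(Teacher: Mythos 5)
Your proposal is correct as classical mathematics, and I checked the details: the witness configurations in (1), (2) and (4), both directions of (3), and the unit-descent claims all go through. Parts (1) forward, (2) forward, (3) plane-to-ring and (4) are essentially the paper's argument with different (equally valid) configurations: the paper uses $(0,0,1),(a,b,1)$ for (1); the points $(0,0,1),(a,0,1)$ on the lines $(0,1,0),(b,1,0)$ for (2); the points $(x,0,1),(0,y,1),(0,0,1)$ for the domain extraction in (3); and $(0,0,1),(x,1,1)$ with the line $(1,0,0)$ for (4). In two places your route is genuinely cleaner than the paper's. In (1) backward you handle arbitrary points uniformly by eliminating $\lambda_0$ and applying the divisibility hypothesis to $u=a_0b_1-a_1b_0$, $v=a_0b_2-a_2b_0$, whereas the paper first reduces both points to affine representatives through an apartness detour. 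In (3) ring-to-plane, your triple-product identity $\det(\mathbf a,\mathbf b,\mathbf c)=t\,\Sum\lambda_i c_i$ via Lemma \ref{lemptslines}, followed by the domain axiom's disjunction, replaces the paper's normalization of all three points and a line plus a row reduction; it is coordinate-free and applies the domain axiom's disjunction positively, which fits the constructive setting well. Do spell out the two-line verification that $\mathbf a\times\mathbf b=0$ forces $A=B$: all $2\times 2$ minors vanish, a unit coordinate $a_i$ gives $\mathbf b=(a_i^{-1}b_i)\mathbf a$, and a unit coordinate of $\mathbf b$ makes the scalar a unit.

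The one step you should replace is the fraction field in (2) backward. Classically it is sound (and the scalars do descend to units of $R$ exactly as you argue), but the thesis works constructively, in the internal logic of a topos: the axiom $ab=0\vdash a=0\vee b=0$ does not yield decidable equality, so $\mathrm{Frac}(R)$ is not available as an internal construction, and the dichotomy ``$\pt{\lambda},\pt{\mu}$ are $K$-proportional or $K$-independent'' is an excluded-middle split on the vanishing of minors. The paper's mechanism avoids this: it normalizes representatives to $A=(a_0,a_1,1)$, $B=(b_0,b_1,1)$, $l=(1,\lambda_1,\lambda_2)$, $m=(1,\mu_1,\mu_2)$ (the complementary cases are discharged because $A\#B$ makes the line through them unique and $l\#m$ makes the intersection unique, by Proposition \ref{propexistsuniqueline}), subtracts the incidence equations to obtain $(\lambda_1-\mu_1)(a_1-b_1)=0$, and then the domain axiom supplies the disjunction $l=m\vee A=B$ as a single positive step; the residual case splits can be run constructively on the positive disjunctions of invertible coordinates together with locality. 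You anticipated this with your parenthetical ``equivalent cancellation argument'', but note that your cross-product setup $\mathbf a\times\mathbf b=s\pt{\lambda}=t\pt{\mu}$ does not by itself feed the domain axiom --- neither $s$ nor $t$ appears in a product known to vanish --- so here the direct coordinate computation really is needed.
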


\begin{proof}
\begin{enumerate}
 \item Suppose the projective plane over $R$ satisfies $\top \vdash_{A,B} \exists l. A,B\in l$. Let $a,b \in R$ and consider the points $(0,0,1)$ and $(a,b,1)$ of the projective plane. There exists a line $\pt{\lambda}$ which contains both points. $(0,0,1)$ lies on the line therefore $\lambda_2=0$. $(a,b,1)$ lies on the line, therefore 
$$\lambda_0 a + \lambda_1 b=0.$$
One of $\lambda_0$ and $\lambda_1$ is invertible, hence $a=\lambda_0^{-1}\lambda_1 b$ or $b=\lambda_0\lambda_1^{-1} b$.

Suppose that $R$ satisfies $\top\vdash_{a,b} \exists x. ax=b \vee a=bx$. Let $A$ and $B$ be two points of the projective plane over $R$ represented by $\pt{a}$ and $\pt{b}$ respectively. Without loss of generality suppose $a_2$ is invertible, and consider a representative of $A$ of the form $(a_0,a_1,1)$. If $b_2$ is not invertible, then $A\#B$, therefore there exists a unique line containing $A$ and $B$. Therefore, we consider the case where $b_2$ is invertible and consider a representative of $B$ of the form $(b_0,b_1,1)$. There exists $x$ in $R$ such that $(a_0-b_0)x=(a_1-b_1)$ or $(a_0-b_0)=(a_1-b_1)x$. Without loss of generality suppose the first case. Then $A$ and $B$ lie on the line $(x,1,-xa_0-a_1)$.

\item Suppose the projective plane over $R$ satisfies $A,B\in l,m \vdash_{A,B,l,m} (A=B)\vee (l=m)$. Let $a,b \in R$ such that $ab=0$ and consider the points $(0,0,1)$ and $(a,0,1)$ of the projective plane which lie on both lines $(0,1,0)$ and $(b,1,0)$. Then, either the two points are the same or the two lines are the same. In the first case $a=0$ and in the second case $b=0$.

Suppose that $R$ is an integral domain. Let $A$ and $B$ be two points of the projective plane over $R$ lying on both lines $l$ and $m$. As in the proof of 1, if the two points are apart from each other then they can only belong to a unique line, therefore without loss of generality we only consider the case where $A$ and $B$ are represented by $(a_0,a_1,1)$ and $(b_0,b_1,1)$ respectively. $A\in l$, therefore the first or second coordinate of $l$ must be invertible. Without loss of generality suppose that $l$ is represented by $(1,\lambda_1,\lambda_2)$. If $l\# m$ then $A=B$, therefore we consider the case where $l$ and $m$ are not apart from each other. Hence, $m$ is represented by $(1,\mu_0,\mu_1)$. $A$ and $B$ lie on both $l$ and $m$, therefore $a_0+\lambda_1a_1+\lambda_2=b_0+\lambda_1 b_1+\lambda_2=0$, and $a_0+\mu_1 a_1+\mu_2=b_0+\mu_1 b_1+\mu_2=0$. Therefore, $(\lambda_1-\mu_1)(a_1-b_1)=0$ and since $R$ is an integral domain, one of the two factors must be 0. If $\lambda_1=\mu_1$, then $l=m$ and if $a_1=b_1$ then $A=B$.

\item Suppose the projective plane over a local ring $R$ satisfies the specified condition. Then, given two points $A$ and $B$ represented by $\pt{a}$ and $\pt{b}$ respectively then $\det
\begin{pmatrix}
  a_0 & a_0 & b_0\\
a_1 & a_1 & b_1 \\
a_2 & a_2 & b_2
 \end{pmatrix}=0$.Therefore there exists a line $l$ which contains both $A$ and $B$. Therefore, the conditions of 1 are satisfied, hence $R$ satisfies $\top\vdash_{a,b} \exists x. ax=b \vee a=bx$.
Also, given $x, y$ in $R$ such that $xy=0$, then $\det
\begin{pmatrix}
  x & 0 & 0\\
0 & y & 0 \\
1 & 1 & 1
 \end{pmatrix}=0$, therefore there exists a line $\pt{\lambda}$ which contains the points $(x,0,1), (0,y,1)$ and $(0,0,1)$. $\lambda_2$ must be zero. Therefore, $\lambda_0 x=0$ and $\lambda_1 y=0$. One of the $\lambda_i$'s must be invertible therefore $x=0$ or $y=0$. Hence, $R$ is an integral domain.

Suppose now that $R$ is a local ring which is an integral domain and satisfies $\top\vdash_{a,b} \exists x. ax=b \vee a=bx$. Suppose $A=(a_0,a_1,a_2)$, $B=(b_0,b_1,b_2)$ and $C=(c_0,c_1,c_2)$ are points of the projective plane over $R$, such that $\det \begin{pmatrix}
a_0 & b_0 & c_0 \\
a_1 & b_1 & c_1 \\
a_2 & b_2 & c_2  
\end{pmatrix}=0$. If any two of the points are apart from each other the third point lies on the unique line defined by them, therefore we consider the case where they are not and without loss of generality we assume that $A$, $B$ and $C$ are represented by $A=(a_0,a_1,1)$, $B=(b_0,b_1,1)$ and $C=(c_0,c_1,1)$ respectively. By the proof of 1 there exists a line $l=\pt{\lambda}$ containing $A$ and $B$ whose first or second coordinate is invertible. Without loss of generality, suppose $l=(1,\lambda_1,\lambda_2)$. Then,
$$\det \begin{pmatrix}
0 & 0 & c_0-\lambda_1c_1-\lambda_2 \\
a_1 & b_1 & c_1 \\
1 & 1 & 1  
\end{pmatrix}=
\begin{pmatrix}
a_0 & b_0 & c_0 \\
a_1 & b_1 & c_1 \\
1 & 1 & 1  
\end{pmatrix}=0.$$
Therefore, $(\lambda_1c_1-\lambda_2)(a_1-b_1)=0$ and since $R$ is an integral domain one of the two factors must be zero. If $(\lambda_1c_1-\lambda_2)=0$, then $C\in l$. If $(a_1-b_1)=0$, then also $a_0=b_0$, therefore $A=B$, and then by 1 there exists some line containing both $A$ and $C$.

\item Suppose the projective plane over a local ring $R$ satisfies $A\in l \wedge B\#A\vdash_{A,B,l} B\notin l \vee B \in l$. Then given $x$ in $R$, the points $(0,0,1)$ and $(x,1,1)$ are apart from each other and $(0,0,1)$ lies on $(1,0,0)$. Therefore either $(x,1,1)$ lies on $(1,0,0)$ or outside $(1,0,0)$. In the first case, $x=0$ and in the second case $x$ is invertible. Therefore, $R$ is a field.

Suppose that $R$ is a field. Given a point $A$ and a line $l$ represented by $\pt{a}$ and $\pt{\lambda}$ respectively, $\Sum\lambda_i a_i$ is either $0$ or invertible, therefore either $A\in l$ or $A\notin l$.
\end{enumerate}

\end{proof}

\section{The theory of preprojective planes}

\begin{defn} 
The theory of \emph{preprojective planes} is written in a language with two sorts: points and lines. It has a binary relation $\#$ on points, a binary relation $\#$ on lines, and two relations $\in$ and $\notin$ between points and lines. The axioms of the theory of preprojective planes are the following:
\begin{itemize}

\item $\#$ is an apartness relation on points, i.e. for $A$, $B$, $C$ points the following hold:
\begin{enumerate}
\item $A\# A \vdash_A \bot$, 
\item $A\# B \vdash_{A,B} B\# A$,
\item $A\# B \vdash_{A,B,C} A\# C \vee B\# C$.
\end{enumerate}

\item $\#$ is an apartness relation on lines, i.e. for $k$, $l$, $m$ lines the following hold:
\begin{enumerate}
\item $k\# k \vdash_k \bot$, 
\item $k\# l \vdash_{k,l} l\# k$,
\item $k\# l \vdash_{k,l,m} k\# m \vee l\# m$.
\end{enumerate}

\item $\notin$ is in some sense a constructive complement of $\in$, i.e. for $A$, $B$ points and $k$, $l$ lines the following hold:
\begin{enumerate}
\item $A\in l \wedge A\notin l \vdash_{A,l} \bot$,
\item $A\notin k \vdash_{A,B,k} A\# B \vee B\notin k$,
\item $A\notin k \vdash_{A,k,l} k\# l \vee A\notin l$.
\end{enumerate}

\item There exists a unique function from the set of pairs of points that are apart from each other to lines such that the image of the pair contains both points, i.e. the following hold:
\begin{enumerate}
\item $A\# B \vdash_{A,B} \exists k. A\in k \wedge B\in k $,
\item $A\# B \wedge A,B\in k \wedge A,B\in l \vdash_{A,B,k,l} k=l$.
\end{enumerate}

\item Dually, there exists a unique function from the set of pairs of lines that are apart from each other to points such that the image of the pair lies on both lines, i.e. the following hold:
\begin{enumerate}
\item $k\# l \vdash_{k,l} \exists A. A\in k \wedge A\in l $,
\item $k\# l \wedge A,B\in k \wedge A,B\in l \vdash_{A,B,k,l} A=B$.
\end{enumerate}

\item We have the following three axioms which say that we have enough points and lines:
\begin{enumerate}
 \item $\top \vdash_l \exists A,B,C.A\#B\#C\#A \wedge A,B,C\in l$,
\item $\top \vdash_l \exists A. A\notin l$
\item $\top \vdash \exists A,B,C,l. A\#B \wedge A,B\in l\wedge C\notin l$.
\end{enumerate}

\item We also have the dual of the above axioms:
\begin{enumerate}
 \item $\top \vdash_A \exists k,l,m.k\#l\#m\#k \wedge A\in k,l,m$,
\item $\top \vdash_A \exists l. A\notin l$,
\item $\top \vdash \exists A,k,l,m. k\#l \wedge A\in k,l \wedge A\notin m$.
\end{enumerate}

\item The following self-dual axiom which says that given two points that are apart from each other and two lines that are apart from each other, then at least one of the two points lies outside at least one of the two lines: $A\# B \wedge l\#m \vdash_{A,B,l,m} A\notin l \vee B\notin m \vee A\notin m \vee B \notin l$.
\end{itemize}

\end{defn}

\begin{rmk} The apartness relation on lines is not necessary to describe this theory because from the above axioms we can prove that $k\# l$ iff $\exists A. A\in k \wedge A\notin l$. Hence, we can replace occurrences of the $\#$ relation on lines by the above relation and get a Morita equivalent theory.

Also, by the duality of the theory we can also replace the $\#$ relation on points with a relation which includes the $\in$ and $\notin$ relations. Hence, we can describe the theory of projective planes using only the incidence and non-incidence relations. 
\end{rmk}

\begin{rmk} Notice that 
$$A\notin l \text{ iff } \exists B, m. A \in m \wedge B \in m \wedge B\in l \wedge A\# B \wedge l\# m.$$
Thus, we can also describe the theory of projective planes without the non-incidence relation, i.e. by only using the incidence relation and the two $\#$ relations.
\end{rmk}

\begin{rmk} \label{rmkthrylines}
We can also consider a theory Morita equivalent to the theory of preprojective planes which has only one sort: points. The language of this theory has an apartness relation on points, a binary relation on points (which is an equivalence relation on pairs of points that are apart from each other) and two ternary relations on points. The first ternary relation (in our formulation of the theory) states that two of the three points are apart from each other and they all lie on a common line. The second ternary relation states that the three points are non-collinear (Definition \ref{defnnoncoll}).

The lines of the preprojective plane become equivalence classes of pairs of points which are apart from each other. The incidence and non-incidence relations of the preprojective planes can be retrieved using the two ternary relation on points.

Dually, we have a Morita equivalent theory whose only sort is the sort of lines. 
\end{rmk}

For every axiom of the theory of preprojective planes, its dual (in the sense described for the projective plane over a ring) is also an axiom. Hence, the \emph{duality principle} holds for preprojective planes: given a theorem for preprojective planes, the dual theorem also holds.

Here is some notation and conventions for points and lines of a preprojective plane:

\begin{itemize}
\item We say that a point $A$ is apart from a point $B$ whenever $A\# B$ and we say that a line $k$ is apart from a line $l$ whenever $k\# l$. We say that a point $A$ lies on a line $l$ or that $l$ passes through $A$ whenever $A\in l$. We say that a point $A$ lies outside a line $l$ or that $l$ passes outside $A$ whenever $A\notin l$.

\item For $A$, $B$ points such that $A\#B$, by the axioms of preprojective planes there exists a unique line that passes through $A$ and $B$. We denote this line by $\ovl{AB}$.

\item For $l$, $m$ lines such that $l\#m$, by the axioms of preprojective planes there exists a unique point that lies on both lines. We call this point the intersection of $l$ and $m$ and denote it by $l\cap m$.
\end{itemize}

\begin{lem}
The projective plane over a local ring satisfies all the above conditions, therefore it is a preprojective plane. 
\end{lem}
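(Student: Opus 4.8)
The plan is to treat the lemma as a bookkeeping exercise: every axiom of the theory of preprojective planes has already been verified, in one form or another, among the propositions of this section, and the two defining properties of a local ring---non-triviality ($\mathrm{inv}(0)\vdash\bot$) and the disjunction $\mathrm{inv}(x+y)\vdash_{x,y}\mathrm{inv}(x)\vee\mathrm{inv}(y)$---are precisely the ring-theoretic hypotheses those propositions require. So I would run through the axiom list and, for each axiom, point to the earlier result that supplies it.

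For the apartness axioms on points I would argue as follows. Irreflexivity $A\#A\vdash_A\bot$ is condition~2 of the first proposition of the section, which holds because a local ring is non-trivial. Symmetry $A\#B\vdash_{A,B}B\#A$ is the one clause with no earlier citation, but it is immediate from the definition of $\#$: interchanging the roles of $A$ and $B$ swaps the two columns of the defining $3\times2$ matrix and hence negates each of its $2\times2$ minors, and the negation of an invertible element is invertible. Cotransitivity $A\#B\vdash_{A,B,C}A\#C\vee B\#C$ is exactly condition~2 of the proposition listing six equivalents of $\mathrm{inv}(x+y)\vdash\mathrm{inv}(x)\vee\mathrm{inv}(y)$; since a local ring satisfies that condition~1, it satisfies condition~2. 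The three apartness axioms on lines follow either by the duality principle or, individually, from condition~3 of the first proposition (irreflexivity), the same column-swap observation (symmetry), and condition~3 of the six-equivalents proposition (cotransitivity).

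The remaining axioms are handled the same way. The incompatibility $A\in l\wedge A\notin l\vdash_{A,l}\bot$ is condition~4 of the first proposition (again using non-triviality), while the two ``constructive complement'' clauses $A\notin k\vdash_{A,B,k}A\#B\vee B\notin k$ and $A\notin k\vdash_{A,k,l}k\#l\vee A\notin l$ are conditions~4 and~5 of the six-equivalents proposition. Existence and uniqueness of the joining line, and dually of the intersection point, are exactly Proposition~\ref{propexistsuniqueline}. The three ``enough points and lines'' axioms, together with their duals, are the three clauses of the proposition asserting that every line carries three mutually apart points, that every line is missed by some point, and that configurations of the required kind exist. Finally the self-dual axiom $A\#B\wedge l\#m\vdash_{A,B,l,m}A\notin l\vee B\notin m\vee A\notin m\vee B\notin l$ is condition~6 of the six-equivalents proposition, whose proof in turn rests on Proposition~\ref{proplastaxiom}.

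I do not expect any genuine obstacle: all the mathematical content has been front-loaded into the two characterization propositions and into Propositions~\ref{propexistsuniqueline} and~\ref{proplastaxiom}, so the lemma collapses to the observation that a local ring is exactly a ring satisfying the two hypotheses those results require. The only point that still needs an explicit (if trivial) argument is the symmetry of $\#$, which no earlier statement records; everything else is a direct citation, and the proof is essentially just the assembly of these references.
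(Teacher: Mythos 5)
Your proposal is correct and matches the paper, whose entire proof is the one-line remark that the lemma follows from the results of Section \ref{projprop}; your axiom-by-axiom citation is just that argument made explicit, and each citation is accurate. Your observation that symmetry of $\#$ is nowhere recorded and your column-swap argument for it (swapping columns negates the $2\times2$ minors, and negatives of invertibles are invertible) is a small but genuine gap-filler that the paper leaves implicit.
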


\begin{proof}
This is true by the results of  Section \ref{projprop}.
\end{proof}

\section{Non-collinear points, non-concurrent lines}

\begin{defn} \label{defnnoncoll}
We say that three points $A$, $B$, $C$ of a preprojective plane are \emph{non-collinear} when $A\#B\#C\#A$ and $A\notin \ovl{BC}$, $B\notin \ovl{CA}$ and $C\notin \ovl{AB}$.

Let us also consider the dual of the above definition. Three lines $k$, $l$, $m$ of the preprojective plane are \emph{non-concurrent} when $k\#l\#m\#k$ and $(l\cap m) \notin k$, $(m\cap k) \notin l$ and $(k\cap l) \notin m$.
\end{defn}

It is clear from the definitions that both of the above relations are symmetric on the three points and the three lines respectively.

\begin{prop} \label{propnonc}
Let $A$, $B$ and $C$ be points of a preprojective plane such that $B\#C$ and $A\notin \ovl{BC}$. Then, $A\# B$, $\ovl{AB}\#\ovl{BC}$ and $C\notin \ovl{AB}$.

Dually, let $k$, $l$ and $m$ be lines of a preprojective plane such that $l\#m$ and $(l\cap m)\notin k$, then $k\# l$, $(k\cap l) \# (l \cap m)$ and $(k\cap l) \notin m$.
\end{prop}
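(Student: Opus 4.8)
The plan is to derive the three conclusions in the order they are listed, since each step feeds the next, and to dispose of the dual statement by invoking the duality principle for preprojective planes. Throughout I would work purely from the axioms in the definition of a preprojective plane, keeping in mind the defining incidences $B\in\ovl{BC}$, $C\in\ovl{BC}$, $A\in\ovl{AB}$ and $B\in\ovl{AB}$ coming from the construction of the join lines.

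First, to get $A\#B$ I would apply the constructive-complement axiom $A\notin k \vdash_{A,B,k} A\#B \vee B\notin k$ with $k=\ovl{BC}$. The hypothesis $A\notin\ovl{BC}$ then yields $A\#B \vee B\notin\ovl{BC}$; but $B\in\ovl{BC}$, so the second disjunct contradicts $A\in l\wedge A\notin l\vdash\bot$, leaving $A\#B$. This in particular makes $\ovl{AB}$ well defined. For $\ovl{AB}\#\ovl{BC}$ I would instead use the other complement axiom $A\notin k \vdash_{A,k,l} k\#l \vee A\notin l$ with $k=\ovl{BC}$ and $l=\ovl{AB}$. Since $A\in\ovl{AB}$, the disjunct $A\notin\ovl{AB}$ is impossible, so $\ovl{BC}\#\ovl{AB}$, and symmetry of $\#$ on lines gives $\ovl{AB}\#\ovl{BC}$.

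The last conclusion $C\notin\ovl{AB}$ is the delicate one, and the crux is to feed the self-dual axiom $P\#Q \wedge l\#m \vdash P\notin l \vee Q\notin m \vee P\notin m \vee Q\notin l$ the correct four arguments. A naive choice involving $A$ and $\ovl{BC}$ fails, because the already-established $A\notin\ovl{BC}$ would satisfy the four-fold disjunction trivially and yield nothing. Instead I would take $P=C$, $Q=B$, $l=\ovl{BC}$, $m=\ovl{AB}$, which is legitimate since $C\#B$ (from $B\#C$) and $\ovl{BC}\#\ovl{AB}$ (just proved). The resulting disjunction $C\notin\ovl{BC} \vee B\notin\ovl{AB} \vee C\notin\ovl{AB} \vee B\notin\ovl{BC}$ has its first, second and fourth disjuncts excluded by $C\in\ovl{BC}$, $B\in\ovl{AB}$ and $B\in\ovl{BC}$ respectively, forcing $C\notin\ovl{AB}$.

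Finally, the dual statement concerning lines $k,l,m$ is precisely the image of the point statement under the duality of the theory of preprojective planes, so it holds automatically by the duality principle and requires no separate argument. The main obstacle, as indicated, is recognizing the right instantiation of the self-dual axiom in the third step: once the arguments are chosen so that three incidence facts annihilate three of the four disjuncts, the desired non-incidence drops out, whereas the obvious instantiation is a dead end.
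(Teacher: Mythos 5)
Your proof is correct and follows essentially the same route as the paper's: both use the axiom $A\notin k \vdash A\#B \vee B\notin k$ to get $A\#B$, the axiom $A\notin k \vdash k\#l \vee A\notin l$ to get $\ovl{AB}\#\ovl{BC}$, and then the self-dual axiom applied to $B\#C$ and $\ovl{AB}\#\ovl{BC}$, eliminating three disjuncts by the incidences $B\in\ovl{AB}$, $B\in\ovl{BC}$, $C\in\ovl{BC}$, to force $C\notin\ovl{AB}$, with the dual statement handled by the duality principle in both cases.
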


\begin{proof}
 We only prove the first part of the proposition since the second one is dual to it.

Let $A$, $B$ and $C$ be as in the statement of the proposition.
$A\notin \ovl{BC}$, therefore $B$ is either apart from $A$ or it lies outside $\ovl{BC}$, and since $B\in\ovl{BC}$ we conclude that $A\#B$. By a symmetric argument, $C$ is apart from $A$.

$A\notin \ovl{BC}$, therefore $\ovl{AB}$ is either apart from $\ovl{BC}$ or it passes outside $A$, and since $A\in \ovl{AB}$ we conclude that $\ovl{AB} \# \ovl{BC}$. Now, $B\# C$ and $\ovl{AB}\# \ovl{BC}$, hence at least one of  $B$ and $C$ lies outside at least one of $\ovl{AB}$ and $\ovl{BC}$. Since $B$ lies on both $\ovl{AB}$ and $\ovl{BC}$, and $C\in \ovl{BC}$ we conclude that $C\notin \ovl{AB}$.
\end{proof}

\begin{lem} \label{lemnonc}
Let $A$, $B$ and $C$ be points of a preprojective plane. Then, the following are equivalent:
\begin{enumerate}
\item $A$, $B$, $C$ are non-collinear,
\item $B\#C$ and $A\notin \ovl{BC}$,
\item $A\# B\# C$ and $\ovl{AB}\#\ovl{BC}$.
\end{enumerate}

Dually, let $k$, $l$ and $m$ be lines of a preprojective plane. Then the following are equivalent
\begin{enumerate}
\item $k$, $l$, $m$ are non-concurrent,
\item $l\# m$ and $(l\cap m) \notin k$,
\item $k\#l\#m$ and $(k\cap l) \# (l \cap m)$
\end{enumerate}
\end{lem}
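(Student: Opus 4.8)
The plan is to prove the three-way equivalence $(1)\Leftrightarrow(2)\Leftrightarrow(3)$ for non-collinear points, since the dual statement for non-concurrent lines follows immediately by the duality principle for preprojective planes. The main tool will be Proposition \ref{propnonc}, which already packages most of the symmetry-breaking work, together with the definition of non-collinearity and the basic apartness and incidence axioms.

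First I would establish $(1)\Rightarrow(2)$, which is essentially immediate: by Definition \ref{defnnoncoll}, non-collinearity of $A$, $B$, $C$ directly asserts $A\#B\#C\#A$ (so in particular $B\#C$) together with $A\notin\ovl{BC}$. Next, for $(2)\Rightarrow(3)$, I would apply Proposition \ref{propnonc} to the hypothesis $B\#C$ and $A\notin\ovl{BC}$; this yields $A\#B$ and $\ovl{AB}\#\ovl{BC}$ directly. Combining $A\#B$ with the hypothesis $B\#C$ gives $A\#B\#C$, and $\ovl{AB}\#\ovl{BC}$ is exactly the remaining clause, so $(3)$ holds.

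The step that requires genuine care is $(3)\Rightarrow(1)$, recovering full non-collinearity from $A\#B\#C$ and $\ovl{AB}\#\ovl{BC}$. Here I would first note that from $\ovl{AB}\#\ovl{BC}$ and the incidence axiom $A\notin k\vdash k\#l\vee A\notin l$ (applied contrapositively, or rather by using that $C\in\ovl{BC}$ and the apartness-of-lines axioms), I can derive $C\notin\ovl{AB}$: indeed $B$ lies on both $\ovl{AB}$ and $\ovl{BC}$, and since $\ovl{AB}\#\ovl{BC}$ the intersection of these two lines is unique and equals $B$, so $C\#B$ forces $C\notin\ovl{AB}$ using Proposition \ref{proplastaxiom}. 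The hard part will be extracting $A\notin\ovl{BC}$ from $\ovl{AB}\#\ovl{BC}$: I would argue that $A\in\ovl{AB}$ and $\ovl{AB}\#\ovl{BC}$, so by the axiom $A\notin k\vdash_{A,k,l}k\#l\vee A\notin l$ read in the appropriate direction—more precisely, using that if $A\in\ovl{BC}$ held then both $A$ and $B$ would lie on $\ovl{BC}$ with $A\#B$, making $\ovl{BC}=\ovl{AB}$ and contradicting $\ovl{AB}\#\ovl{BC}$ via irreflexivity—I obtain $A\notin\ovl{BC}$.

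Once $A\notin\ovl{BC}$ is in hand I am back in the hypotheses of Proposition \ref{propnonc}, which then delivers the remaining non-incidence $B\notin\ovl{CA}$ (by relabelling) and, together with the symmetry of the configuration noted after Definition \ref{defnnoncoll}, all three non-incidences required by $(1)$. The overall strategy is therefore to route everything through Proposition \ref{propnonc} and the uniqueness of the join line $\ovl{\cdot\,\cdot}$, with the only delicate point being the careful use of line-apartness irreflexivity to rule out $A\in\ovl{BC}$ in the implication $(3)\Rightarrow(1)$.
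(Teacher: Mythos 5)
Your overall architecture is fine — (1)$\Rightarrow$(2) is definitional, (2)$\Rightarrow$(3) via Proposition \ref{propnonc} matches the paper, and routing (3)$\Rightarrow$(1) back through Proposition \ref{propnonc} once $A\notin\ovl{BC}$ is established is essentially the paper's (2)$\Rightarrow$(1) step. But there is a genuine gap in the one step you yourself flagged as delicate: your derivation of $A\notin\ovl{BC}$ is a proof by contradiction. You argue that \emph{if} $A\in\ovl{BC}$ held, then $\ovl{BC}$ and $\ovl{AB}$ would coincide by uniqueness of the join, contradicting $\ovl{AB}\#\ovl{BC}$ and irreflexivity — and you conclude $A\notin\ovl{BC}$. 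In this setting that inference is invalid. The relation $\notin$ is a \emph{primitive positive} relation of the coherent theory, not the classical complement of $\in$: the axiom $A\in l\wedge A\notin l\vdash\bot$ gives $A\notin l\Rightarrow\neg(A\in l)$, but not the converse. Your reductio establishes only $\neg(A\in\ovl{BC})$, which is strictly weaker. And this is not a pedantic distinction: the paper exhibits models (e.g.\ over $\mathbb Z/(4)$) in which a point may satisfy neither $A\in l$ nor $A\notin l$, and indeed shows that $A\in l\wedge B\#A\vdash B\in l\vee B\notin l$ forces the coordinate ring to be a field. So in a general preprojective plane the step "impossible that $A\in\ovl{BC}$, hence $A\notin\ovl{BC}$" simply does not go through.

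The repair is short, and it is exactly what the paper does in its (3)$\Rightarrow$(2) step — and, notably, it is the same move you already make correctly when deriving $C\notin\ovl{AB}$: apply the self-dual axiom $A\#B\wedge l\#m\vdash_{A,B,l,m} A\notin l\vee B\notin m\vee A\notin m\vee B\notin l$ to the points $A$, $B$ and the lines $\ovl{AB}$, $\ovl{BC}$. Since $A,B\in\ovl{AB}$ and $B\in\ovl{BC}$, three of the four disjuncts are refuted by $A\in l\wedge A\notin l\vdash\bot$, and constructive disjunction elimination leaves the positive conclusion $A\notin\ovl{BC}$. (This eliminates the only case analysis that fails; ruling out disjuncts by contradiction \emph{within} a given disjunction is constructively fine.) One further small point: your appeal to Proposition \ref{proplastaxiom} for $C\notin\ovl{AB}$ cites a statement proved analytically for projective planes over a ring; for a synthetic preprojective plane you should instead derive the corresponding sequent directly from the self-dual axiom as above — which is also cleaner, since once $A\notin\ovl{BC}$ is obtained the $C\notin\ovl{AB}$ step becomes redundant: Proposition \ref{propnonc} and the symmetry of the hypotheses already deliver all three non-incidences.
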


\begin{proof}
We only prove the first part of the lemma because the second one is dual to it. 

Clearly, (1) implies (2), and by Proposition \ref{propnonc} (2) implies (3).

(2) $\Rightarrow$ (1): $B\#C$ and $A\notin \ovl{BC}$, therefore by Proposition \ref{propnonc}, $A\#B$ and $C\notin \ovl{AB}$. By the symmetry of the proposition it is also true that $C\# A$ and $B\notin \ovl{AC}$. Hence, $A$, $B$ and $C$ are non-collinear.

(3) $\Rightarrow$ (2): $A\# B$ and $\ovl{AB}\#\ovl{BC}$, therefore at least one of $A$ and $B$  lies outside at least one of $\ovl{AB}$ and $\ovl{BC}$. Since both $A$ and $B$ lie on $\ovl{AB}$ and $B\in \ovl{BC}$, we conclude that $A\notin \ovl{BC}$.
\end{proof}

\begin{rmk}
In the statement of the above lemma, we only list three equivalent conditions for non-collinear points (and non-concurrent lines). The definitions of non-collinear (and non-concurrent) are symmetric, therefore the list of equivalent conditions may be extended to include all the permutations of $A$, $B$, $C$ (and $k$, $l$, $m$)  in conditions (2) and (3). 
\end{rmk}

\begin{lem}
Let $A$, $B$, $C$ be non-collinear points of a preprojective plane. Then, the lines $\ovl{AB}$, $\ovl{BC}$, $\ovl{CA}$ are non-concurrent.

Dually, for $k$, $l$, $m$ non-concurrent lines of a preprojective plane, the points $k\cap l$, $l\cap m$, $m\cap k$ are non-collinear.
\end{lem}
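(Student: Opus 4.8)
The plan is to name the three lines and verify the two ingredients demanded by the definition of non-concurrence: pairwise apartness of the lines, and the three non-incidence conditions. Since $A$, $B$, $C$ are non-collinear we have $A\#B$, $B\#C$ and $C\#A$, so the lines $\ovl{AB}$, $\ovl{BC}$ and $\ovl{CA}$ are all well-defined; I set $k=\ovl{AB}$, $l=\ovl{BC}$, $m=\ovl{CA}$.

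First I would establish that $k$, $l$, $m$ are pairwise apart. By Lemma \ref{lemnonc}, non-collinearity of $A$, $B$, $C$ is equivalent to ``$A\#B\#C$ and $\ovl{AB}\#\ovl{BC}$'', which already yields $k\#l$. Applying the same equivalence to the permutations $(B,C,A)$ and $(C,A,B)$ of the points — legitimate by the symmetry of non-collinearity noted in the remark following the lemma — gives $\ovl{BC}\#\ovl{CA}$ and $\ovl{CA}\#\ovl{AB}$, that is $l\#m$ and $m\#k$. Hence all three pairwise apartness relations hold, and in particular the intersection points $l\cap m$, $m\cap k$, $k\cap l$ are all defined.

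Next I would identify these intersection points. Since $C\in\ovl{BC}$ and $C\in\ovl{CA}$, the point $C$ lies on both $l$ and $m$; as $l\#m$, the intersection point is unique by the preprojective plane axioms, so $l\cap m=C$. Symmetrically $m\cap k=A$ and $k\cap l=B$. With these identifications the three non-incidence clauses defining non-concurrence, namely $(l\cap m)\notin k$, $(m\cap k)\notin l$ and $(k\cap l)\notin m$, become $C\notin\ovl{AB}$, $A\notin\ovl{BC}$ and $B\notin\ovl{CA}$ — which are exactly the conditions appearing in the definition of non-collinearity of $A$, $B$, $C$, and so hold by hypothesis. This shows $\ovl{AB}$, $\ovl{BC}$, $\ovl{CA}$ are non-concurrent, and the dual statement then follows at once from the duality principle for preprojective planes.

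I do not expect a serious obstacle: the only genuine content is the pairwise apartness of the three lines, supplied by Lemma \ref{lemnonc} and its symmetry, after which everything reduces to unwinding definitions and invoking uniqueness of intersections. The single point requiring care is the order of the argument — each intersection $l\cap m$ etc.\ may only be formed once the two lines involved are known to be apart, which is precisely why the apartness step must be carried out before identifying the intersection points.
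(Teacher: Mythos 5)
Your proof is correct, and it takes a slightly different route from the paper's. Both arguments get the pairwise apartness of the lines from the equivalence in Lemma \ref{lemnonc} (condition (3), together with the symmetry of non-collinearity noted in the remark), but they diverge after that. The paper invokes the \emph{dual} equivalence of Lemma \ref{lemnonc} a second time: having $\ovl{AB}\#\ovl{BC}\#\ovl{CA}$, it reduces non-concurrence to the single condition that $\ovl{AB}\cap\ovl{BC}$ be apart from $\ovl{BC}\cap\ovl{CA}$, i.e.\ $B\#C$, which is immediate. (The paper cites ``condition (2)'' but is in fact using condition (3) of the dual list — a slip in the reference, not in the argument.) You instead verify the full definition of non-concurrence directly: you establish all three apartness relations $k\#l\#m\#k$, identify the intersection points $l\cap m=C$, $m\cap k=A$, $k\cap l=B$ via the uniqueness-of-intersection axiom, and read off the three $\notin$ clauses from the definition of non-collinearity. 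The paper's reduction is slicker — the $\notin$ conditions come for free from the already-proved equivalence — while your version is more self-contained, needing only the apartness step and definitional unwinding rather than the dual implication $(3)\Rightarrow(1)$. Your closing remark about the order of the argument is exactly right: each intersection may only be formed once the two lines are known to be apart, and you correctly sequence the apartness step before the identifications.
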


\begin{proof}
 We only prove the first part of the lemma since the second one is dual to it.

Let $A$, $B$, $C$ be non-collinear points of the projective plane. By the equivalent conditions (and their symmetric ones) in Lemma \ref{lemnonc}, we see that $\ovl{AB}\# \ovl{BC} \# \ovl{CA}$. Therefore, by condition (2) of Lemma \ref{lemnonc} for $\ovl{AB}$, $\ovl{BC}$, $\ovl{CA}$ to be non-concurrent it is sufficient to prove that $\ovl{AB}\cap \ovl{BC}$ is apart from $\ovl{BC} \cap \ovl{CA}$, i.e. that $B\# C$. This is true because $A$, $B$, $C$ are non-concurrent.
\end{proof}

\begin{lem} \label{lemnoncd}
Let $A$, $B$, $C$, $D$ be points of a preprojective plane and let $A$, $B$, $C$ be non-collinear. Then, at least one combination of $D$ with two of the points $A$, $B$, $C$ gives a non-collinear triple.

Dually, let $k$, $l$, $m$, $n$ be lines of a preprojective plane and let $k$, $l$, $m$ be non-concurrent. Then, at least one combination of $n$ with two of the lines $k$, $l$, $m$ gives a non-concurrent triple.
\end{lem}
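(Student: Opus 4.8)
The plan is to reduce the statement to a claim about the single point $D$ and the three sides of the triangle, using the convenient characterisation of non-collinearity from Lemma \ref{lemnonc}. By that lemma (and its permutations), for any two apart points $X\#Y$ the triple $D,X,Y$ is non-collinear precisely when $D\notin\ovl{XY}$. Since $A,B,C$ non-collinear gives $A\#B$, $B\#C$ and $C\#A$, the three candidate triples built from $D$ and two of $A,B,C$ are non-collinear exactly when $D\notin\ovl{AB}$, $D\notin\ovl{BC}$, $D\notin\ovl{CA}$ respectively. Thus it suffices to prove the disjunction $D\notin\ovl{AB}\vee D\notin\ovl{BC}\vee D\notin\ovl{CA}$.

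First I would feed the hypothesis $A\notin\ovl{BC}$, which is available from non-collinearity, into the complement axiom $A\notin k\vdash_{A,B,k} A\#B\vee B\notin k$, taking the extra point to be $D$. This yields $A\#D\vee D\notin\ovl{BC}$. The branch $D\notin\ovl{BC}$ immediately finishes the proof, so the only real work lies in the branch $A\#D$.

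In that branch I would exploit the two sides through $A$. From Lemma \ref{lemnonc} applied to a permutation of $A,B,C$ we have $\ovl{AB}\#\ovl{AC}$, while $A\in\ovl{AB}$ and $A\in\ovl{AC}$ by the construction of joins. Now apply the self-dual axiom to the apart points $A\#D$ and the apart lines $\ovl{AB}\#\ovl{AC}$; it delivers the four-fold disjunction $A\notin\ovl{AB}\vee D\notin\ovl{AC}\vee A\notin\ovl{AC}\vee D\notin\ovl{AB}$. The two disjuncts $A\notin\ovl{AB}$ and $A\notin\ovl{AC}$ contradict $A\in\ovl{AB}$ and $A\in\ovl{AC}$ via the axiom $A\in l\wedge A\notin l\vdash\bot$ and are therefore absurd, whereas the remaining two, $D\notin\ovl{AB}$ and $D\notin\ovl{AC}$, each land in the target disjunction. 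This completes the coherent derivation, and the dual statement about lines then follows by the duality principle.

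The main obstacle is that constructively one cannot argue by supposing that $D$ lies on all three sides and deriving a contradiction from the uniqueness of intersections; one must instead produce the disjunction positively. The device that makes this work is the self-dual axiom, which is precisely the tool that turns a pair of apartnesses (of two points and of two lines) into a disjunction of outside-relations, with the degenerate disjuncts killed off by the incompatibility of $\in$ and $\notin$. Choosing the two sides through the common vertex $A$ is exactly what guarantees that those degenerate disjuncts are contradictory rather than merely unhelpful.
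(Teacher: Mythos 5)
Your proof is correct and takes essentially the same approach as the paper: the decisive step in both is the self-dual axiom applied to $D$ apart from a vertex together with the two sides through that vertex, with the degenerate disjuncts eliminated because the vertex lies on both lines. The only difference is cosmetic — the paper obtains the initial case split from cotransitivity of $\#$ on points ($A\#B\vdash D\#A\vee D\#B$, then WLOG $D\#B$ and lines $\ovl{AB}\#\ovl{BC}$), whereas you use the complement axiom on $A\notin\ovl{BC}$, which lets one branch conclude immediately before applying the self-dual axiom to $\ovl{AB}\#\ovl{AC}$ in the other.
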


\begin{proof}
We only prove the first part of the lemma, since the second part is dual to it.

Let $A$, $B$, $C$ be non-collinear points. Then, $A\#B$, hence $D$ is apart from at least one of $A$ and $B$. Without loss of generality, let us assume that $D\#B$. By Lemma \ref{lemnonc}, $\ovl{AB}\#\ovl{BC}$, therefore at least one of $D$ and $B$ lies outside at least one of $\ovl{AB}$ and $\ovl{BC}$. Hence $D$ lies outside at least one of $\ovl{AB}$ and $\ovl{BC}$. In the first case $D$, $A$, $B$ are non-collinear, and in the second case $D$, $B$, $C$ are non-collinear.
\end{proof}

\begin{lem} \label{lemomega39}
Let $A$, $B$ be points of a preprojective plane and let $A\#B$. Then, there exists a point $C$ on the line $\ovl{AB}$ such that $A\#C\#B$.
\end{lem}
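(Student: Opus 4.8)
The plan is to argue purely synthetically, using only the apartness axioms together with the axiom guaranteeing three mutually apart points on any line; I will not appeal to coordinates, since the statement is about an arbitrary preprojective plane. Write $l=\ovl{AB}$ for the unique line through $A$ and $B$, which exists because $A\#B$. Applying the axiom $\top \vdash_l \exists P,Q,R.\,P\#Q\#R\#P \wedge P,Q,R\in l$ to this particular line, I obtain three points $P,Q,R$ on $l$ that are pairwise apart. The underlying idea is that among three pairwise-apart points at least two must be apart from any fixed point, and a final application of co-transitivity against $B$ then isolates the point I want.

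First I would show that at least two of $P,Q,R$ are apart from $A$, producing them explicitly rather than arguing by contradiction. I apply co-transitivity of $\#$ (the axiom $X\#Y \vdash X\#A \vee Y\#A$) to the pair $P\#Q$, splitting into the cases $P\#A$ and $Q\#A$; in the first case I apply it again to $Q\#R$ and in the second to $R\#P$. The resulting four branches each exhibit a pair drawn from $\{P,Q,R\}$ both of whose members are apart from $A$. Calling these two points $U$ and $V$, I note that, being two of the pairwise-apart points $P,Q,R$, they satisfy $U\#V$, and moreover $U,V\in l$ with $U\#A$ and $V\#A$.

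Next I would finish by applying co-transitivity once more. From $U\#V$, taking the third argument to be $B$, I obtain $U\#B \vee V\#B$. In the first case I set $C=U$ and in the second $C=V$. In either case $C\in l$ and $C$ is apart from both $A$ and $B$; using symmetry of $\#$ on points this rearranges to $A\#C\#B$, as required.

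The main obstacle is the constructive bookkeeping in the step ``at least two of $P,Q,R$ are apart from $A$'': one cannot simply say ``at most one is close to $A$'' and argue classically, so the pair $U,V$ must be extracted explicitly through the disjunctions supplied by co-transitivity. Everything else is a routine application of the apartness axioms, and the three-point axiom is used essentially here—with only two guaranteed points on $l$ I could secure a single point apart from $A$, but not the overlap that also forces apartness from $B$.
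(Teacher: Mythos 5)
Your proof is correct and takes essentially the same route as the paper: invoke the axiom giving three pairwise-apart points $P,Q,R$ on $\ovl{AB}$, then use co-transitivity of $\#$ to show $A$ is apart from at least two of them. The only (immaterial) difference is the final step: the paper symmetrically shows $B$ is also apart from at least two of $P,Q,R$ and concludes by overlap, whereas you apply co-transitivity once more to the pair $U\#V$ against $B$, which is a slightly more economical way to extract the same point $C$.
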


\begin{proof}
Let $A$, $B$ be points of the preprojective plane such that $A\#B$. Then, there exist points $P$, $Q$, $R$ lying on $\ovl{AB}$ such that $P\#Q\#R\#P$.
$P\#Q$ implies that $A\#P \vee A\#Q$.
$Q\#R$ implies that $A\#Q \vee A\#R$.
$R\#P$ implies that $A\#R \vee A\#P$.
By combining the three, we see that $A$ is apart from at least two of the points $P$, $Q$ and $R$. Similarly, $B$ is apart from at least two of the points $P$, $Q$ and $R$. Therefore, both $A$ and $B$ are apart from at least one of the points $P$, $Q$ and $R$, and hence the result.
\end{proof}

\begin{lem} \label{lemomega4}
In a preprojective plane, there exist points $O$, $A$, $B$, $I$ such that every subset of three of them is non-collinear.

Dually, a preprojective plane contains lines $k$, $l$, $m$, $n$ such that every subset of three of them is non-concurrent.
\end{lem}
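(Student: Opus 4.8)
The four triples I must make non-collinear are $\{O,A,B\}$, $\{O,A,I\}$, $\{O,B,I\}$, $\{A,B,I\}$. By condition (2) of Lemma \ref{lemnonc}, once $O,A,B$ are non-collinear (so in particular pairwise apart), the triple $\{O,A,I\}$ is non-collinear exactly when $I\notin\ovl{OA}$, and similarly for the other two. So the whole statement reduces to a single task: starting from a non-collinear triple $O,A,B$, produce a point $I$ lying \emph{outside all three sides} $\ovl{OA}$, $\ovl{OB}$, $\ovl{AB}$. The triangle itself comes for free from the ``enough points'' axiom $\top\vdash\exists A,B,C,l.\,A\#B\wedge A,B\in l\wedge C\notin l$: since $A\#B$ and $A,B\in l$ force $l=\ovl{AB}$, the point off $l$ is off $\ovl{AB}$, and Lemma \ref{lemnonc}(2) makes the three points non-collinear (relabel the off-line point as $O$).

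\textbf{The construction.} The plan is to build $I$ on a cevian through $O$. First, applying Lemma \ref{lemomega39} to $A\#B$ gives a point $C\in\ovl{AB}$ with $A\#C\#B$. I claim $C$ already lies off the two sides through $O$: using that $\ovl{AB}\#\ovl{OA}$ and $\ovl{AB}\#\ovl{OB}$ (the sides of a non-collinear triple are non-concurrent, hence pairwise apart), Proposition \ref{proplastaxiom} applied with the apart points $C,A$ on $\ovl{AB}$ and $A\in\ovl{OA}$ yields $C\notin\ovl{OA}$, and symmetrically $C\notin\ovl{OB}$. From $C\notin\ovl{OA}$ together with $O\in\ovl{OA}$, the constructive-complement axiom $C\notin\ovl{OA}\vdash C\#O\vee O\notin\ovl{OA}$ gives $O\#C$ (the second disjunct is excluded). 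Now the line $\ovl{OC}$ exists, and a second application of Lemma \ref{lemomega39} produces $I\in\ovl{OC}$ with $O\#I\#C$.

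\textbf{Verifying $I$ is off all three sides.} The key observation is that $C$ is the intersection of $\ovl{OC}$ with $\ovl{AB}$, while $O$ is the intersection of $\ovl{OC}$ with each of $\ovl{OA}$ and $\ovl{OB}$; these apartnesses of lines follow again from the complement axiom ($C\notin\ovl{OA}$ with $C\in\ovl{OC}$ gives $\ovl{OA}\#\ovl{OC}$, etc.). I would then invoke Proposition \ref{proplastaxiom} three times along $\ovl{OC}$: from $I\#C$ with $I,C\in\ovl{OC}$, $C\in\ovl{AB}$ and $\ovl{OC}\#\ovl{AB}$ I get $I\notin\ovl{AB}$; from $I\#O$ with $I,O\in\ovl{OC}$, $O\in\ovl{OA}$ and $\ovl{OC}\#\ovl{OA}$ I get $I\notin\ovl{OA}$, and likewise $I\notin\ovl{OB}$. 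Thus $I$ is outside all three sides, and by Lemma \ref{lemnonc}(2) each of the four triples is non-collinear. The dual statement then follows by the duality principle.

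\textbf{Main obstacle.} The delicate point is precisely this last step: constructively one cannot simply pick a generic point and case-split on which sides it meets. What makes the argument go through is that the single ``betweenness'' condition $O\#I\#C$ (two apartnesses) is exactly strong enough to push $I$ off all three sides simultaneously via Proposition \ref{proplastaxiom}; identifying the two apartnesses $I\#O$ and $I\#C$ with the three required non-incidences is the crux, and is what replaces the classical ``choose a point in general position'' move.
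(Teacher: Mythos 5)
Your proof is correct, but it follows a genuinely different construction from the paper's. The paper starts from non-collinear $O$, $X$, $Y$, uses Lemma \ref{lemomega39} to place $A$ on $\ovl{OX}$ and $B$ on $\ovl{OY}$, and then obtains $I$ as the \emph{intersection of two auxiliary lines} $\ovl{YA}\cap\ovl{XB}$ (in the local-ring model this is exactly the standard quadrangle $A=(1,0,0)$, $B=(0,1,0)$, $O=(0,0,1)$, $I=(1,1,1)$); the verification then chases non-collinearity through the auxiliary points $X$, $Y$ — e.g.\ first showing $A\notin\ovl{XB}$ so that the meet exists, then $I\#A$, $I\#B$, and the remaining triples. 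You instead keep the original triangle $O$, $A$, $B$ and apply Lemma \ref{lemomega39} a second time along a single cevian: first to get $C\in\ovl{AB}$ with $A\#C\#B$, then to get $I\in\ovl{OC}$ with $O\#I\#C$, after which the three required non-incidences $I\notin\ovl{AB}$, $I\notin\ovl{OA}$, $I\notin\ovl{OB}$ drop out of three uniform applications of Proposition \ref{proplastaxiom} (your intermediate steps — $C\notin\ovl{OA}$, $C\notin\ovl{OB}$ via Proposition \ref{proplastaxiom}, then $O\#C$ and the line apartnesses $\ovl{OC}\#\ovl{OA},\ovl{OB},\ovl{AB}$ via the constructive-complement axioms — are all sound). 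What each buys: your route needs only one auxiliary point rather than two and never invokes the dual axiom producing intersection points of apart lines, so it is shorter and its verification is more symmetric; the paper's route, at the cost of the meet construction, lands directly on the reference quadrangle that mirrors $(1,0,0),(0,1,0),(0,0,1),(1,1,1)$, which is the configuration actually used later when introducing coordinates. Your closing observation is also accurate: the two apartnesses $O\#I\#C$ supplied by Lemma \ref{lemomega39} are precisely what Proposition \ref{proplastaxiom} converts into the three non-incidences, which is the constructive substitute for the classical ``pick a point in general position.''
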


It is easier to visualize the following proof by considering the projective plane over a local ring and letting $O=(0,0,1)$, $X=(1,0,1)$, $Y=(0,1,1)$, $A=(1,0,0)$ and $B=(0,1,0)$, and then $I=(1,1,1)$. 

\begin{proof}
By the axioms of preprojective planes, a preprojective plane contains three non-collinear points $O$, $X$, $Y$. By Lemma \ref{lemomega39}, there exists a point $A$ on the line $\ovl{OX}$ such that $O\#A\#Y$, and there exists a point $B$ on the line $\ovl{OY}$ such that $O\#B\#X$. Notice that $\ovl{OA}=\ovl{OX}\#\ovl{OY}=\ovl{OB}$, therefore $O$, $A$, $B$ are non-collinear, and also $O$, $Y$, $A$ are non-collinear.

Therefore, $A\notin \ovl{OB}=\ovl{XB}$, hence $A\notin \ovl{XB}$ which implies that $\ovl{YA}\#\ovl{XB}$. Let $I$ be the intersection of $\ovl{YA}$ and $\ovl{XB}$.

$I\in \ovl{XB}$ and $A\notin \ovl{XB}$, therefore $I\#A$. By a symmetric argument, $I\#B$. Therefore, $A\notin \ovl{XB}=\ovl{IB}$. Hence, $A$, $B$, $I$ are non-collinear.

$O$, $Y$, $A$ are non-collinear, therefore $Y\notin \ovl{OA}=\ovl{XA}$. Hence $X\notin \ovl{YA}=\ovl{IA}$ which implies that $I\notin \ovl{XA}=\ovl{OA}$. Hence, $O$, $I$, $A$ are non-collinear.

By a symmetric argument, $O$, $I$, $B$ are non-collinear, therefore $O$, $A$, $B$, $I$ are points of the preprojective plane such that every subset of three of them is non-collinear.

By the duality principle, the preprojective plane also contains lines $k$, $l$, $m$, $n$ such that every subset of three of them is non-concurrent.
\end{proof}

\section{Morphisms of preprojective planes} \label{secprojmorph}


\begin{defn}
Given two preprojective planes, a morphism between them is a structure-preserving homomorphism: It consists of a function $f_P$ from the set of points of the first to the set of points of the second and a function $f_L$  from the set of lines of the first to the set of lines of the second, such that they preserve the two $\#$ relations, $\in$ and $\notin$.
\end{defn}

The identity morphism on points and lines of a preprojective plane is always an endomorphism of the preprojective plane.

\begin{lem} \label{lemprojptsen}
 A morphism of preprojective planes is uniquely determined by the morphism on points.

Dually, a morphism of preprojective planes is uniquely determined by the morphism on lines.
\end{lem}

\begin{proof}
Let $P$, $L$ be the set of points and the set of lines respectively of a preprojective plane, and let $P'$, $L'$ be the set of points and the set of lines respectively of a second preprojective plane. Suppose we are given a morphism from the first to the second preprojective plane, such that $f_P:P\to P'$ is the morphism on points and $f_L:L\to L'$ is the morphism on lines.

Given a line $l$ in $L$ there exist points $A$ and $B$ in $P$ that are apart from each other and lie on $l$. A morphism of preprojective planes preserves the $\#$ relation on points and the incidence relation, therefore $f_P(A)$ and $f_P(B)$ are apart from each other and lie on $f_L(l)$. $\ovl{f_P(A)f_P(B)}$ is the unique line through $f_P(A)$ and $f_P(B)$, therefore $f_L(l)=\ovl{f_P(A)f_P(B)}$. Hence, $f_L$ is uniquely determined by $f_P$.

Dually, $f_P$ is uniquely determined by $f_L$.
\end{proof}


\begin{prop} \label{propptslines}
Let $\ca P$ and $\ca P'$ be preprojective planes with sets of points $P$ and $P'$ respectively. Let $f_P:P\to P'$ be a function such that:
\begin{enumerate}
\item $f_P$ preserves the $\#$ relation on points,
\item for $A$, $B$, $C$ points of $\ca P$ such that $A\# B$ and $C\in \ovl{AB}$, then ($f_P(A)\# f_P(B)$ and) $f_P(C)\in \ovl{f_P(A)f_P(B)}$,
\item given three non-collinear points $A$, $B$, $C$ of $\ca P$, then the points $f_P(A)$, $f_P(B)$, $f_P(C)$ are also non-collinear.
\end{enumerate}
Then, there is a unique morphism of preprojective planes $\ca P\to \ca P'$ whose morphism on points is $f_P$.
\end{prop}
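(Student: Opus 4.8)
The plan is to dispose of uniqueness immediately and spend all the effort on existence. Uniqueness is Lemma \ref{lemprojptsen}: a morphism of preprojective planes is determined by its action on points, so there is at most one morphism extending $f_P$. What remains is to build the action on lines $f_L$ and check that $(f_P,f_L)$ preserves every piece of structure.

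To define $f_L$: given a line $l$, the ``enough points'' axiom supplies (at least) two points $A,B\in l$ with $A\# B$, so that $l=\ovl{AB}$, and by hypothesis (1) we have $f_P(A)\# f_P(B)$; I set $f_L(l)=\ovl{f_P(A)f_P(B)}$. The conceptual heart of the argument is that this is well defined, and this is exactly what hypothesis (2) is for: if $A',B'$ is any other apart pair on $l$, then $A',B'\in\ovl{AB}$, so (2) places both $f_P(A')$ and $f_P(B')$ on $\ovl{f_P(A)f_P(B)}$; since $f_P(A')\# f_P(B')$, the uniqueness axiom for the line through two apart points forces $\ovl{f_P(A')f_P(B')}=\ovl{f_P(A)f_P(B)}$. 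Thus $f_L(l)$ does not depend on the chosen pair.

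Preservation of $\in$ is then a direct consequence of hypothesis (2): if $A\in l$, choose apart $P,Q\in l$, so $l=\ovl{PQ}$ and $f_L(l)=\ovl{f_P(P)f_P(Q)}$, and (2) applied to $A\in\ovl{PQ}$ gives $f_P(A)\in f_L(l)$. Preservation of $\notin$ is where hypothesis (3) enters: if $A\notin l$, choose apart $P,Q\in l$; then $P\# Q$ and $A\notin\ovl{PQ}$, so by Lemma \ref{lemnonc} the triple $A,P,Q$ is non-collinear, hence by (3) so is $f_P(A),f_P(P),f_P(Q)$, and reading Lemma \ref{lemnonc} in the reverse direction yields $f_P(A)\notin\ovl{f_P(P)f_P(Q)}=f_L(l)$.

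The step I expect to need the most care is preservation of the apartness relation on lines, since $f_L$ is not assumed a priori to respect $\#$ and must be shown to do so purely from incidence data. Given $k\# l$, the naive candidate witness, the intersection point, is useless because it lies on \emph{both} lines; instead I extract a witness from the self-dual axiom. Taking two apart points $A,B\in k$ (from the ``enough points'' axiom) and applying the self-dual axiom $A\# B\wedge k\# l\vdash A\notin k\vee B\notin l\vee A\notin l\vee B\notin k$, the alternatives $A\notin k$ and $B\notin k$ are killed by $A\in k\wedge A\notin k\vdash\bot$, leaving a point $X\in k$ with $X\notin l$. By the two preservation facts already established, $f_P(X)\in f_L(k)$ and $f_P(X)\notin f_L(l)$. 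Finally the $\notin$-axiom $A\notin k\vdash_{A,k,l} k\# l\vee A\notin l$, applied to $f_P(X)\notin f_L(l)$ with the two image lines in the roles of $k$ and $l$, gives $f_L(l)\# f_L(k)\vee f_P(X)\notin f_L(k)$; the second disjunct contradicts $f_P(X)\in f_L(k)$, so $f_L(k)\# f_L(l)$. This completes the verification that $(f_P,f_L)$ is a morphism of preprojective planes extending $f_P$.
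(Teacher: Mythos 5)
Your proof is correct, and it follows the paper's own decomposition almost exactly --- same definition of $f_L$ via an apart pair on the line, same use of hypothesis (2) for well-definedness and for preservation of $\in$, same use of hypothesis (3) via Lemma \ref{lemnonc} for preservation of $\notin$, with uniqueness correctly delegated to Lemma \ref{lemprojptsen} (which the paper leaves implicit). The one step where you genuinely diverge is preservation of $\#$ on lines. The paper argues directly at the level of points in $\ca P$: given $k\#l$, it takes the intersection point $A=k\cap l$ (which exists in a preprojective plane), picks $B\in k$ and $C\in l$ both apart from $A$, observes that $A,B,C$ are non-collinear, pushes this triple through hypothesis (3), and reads off $f_L(k)=\ovl{f_P(A)f_P(B)}\#\ovl{f_P(A)f_P(C)}=f_L(l)$ from Lemma \ref{lemnonc}. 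You instead never construct the intersection: you use the self-dual axiom to extract a witness $X\in k$ with $X\notin l$ (correctly discarding the two contradictory disjuncts constructively), then transport $X$ by the already-established preservation of $\in$ and $\notin$, and finish with the $\notin$-introduction axiom $A\notin k\vdash_{A,k,l} k\#l\vee A\notin l$ in $\ca P'$. Both arguments are sound; the paper's is shorter and more directly geometric, invoking hypothesis (3) once more through a single non-collinear triple, while yours is more modular, reducing the line-level statement entirely to the point-level preservation facts you have already proved --- a reduction that also works verbatim in the affine setting (compare the paper's proof of Proposition \ref{proppreaffmor}, which uses essentially your witness-point argument since affine lines that are apart need not intersect). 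As a side remark, your version avoids the small typo in the paper's proof, where $f_L(l)$ is written as $\ovl{f_P(A)f_P(B)}$ instead of $\ovl{f_P(A)f_P(C)}$.
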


\begin{proof}
Let $\ca P$ and $\ca P'$ be preprojective planes with sets of points $P$ and $P'$ respectively and with sets of lines $L$ and $L'$ respectively. Let $f_P:P\to P'$ be a function satisfying the above conditions.

We define a function $f_L:L\to L'$ in the following way. Given a line $k$ in $L$, there exist points $A$ and $B$ on $k$ such that $A\#B$, and by 1 $f_P(A)\# f_P(B)$, hence we define $f_L(l)$ to be $\ovl{f_P(A)f_P(B)}$. By 2, the definition of $f_L$ does not depend on the choice of $A$ and $B$. Also given a point $C$ of $\ca P$, if $C\in k$ then $f_P(C)\in f_L(k)$. $f_P$ sends non-collinear points to non-collinear points, therefore given $C\notin k$ in $(P,L)$, then $f_P(C)\notin f_L(k)$.

Given lines $k$ and $l$ in $L$ such that $k\# l$, let $A$ be their intersection point. There exist $B$ and $C$ lying on $k$ and $l$ respectively such that they are both apart from $A$. Therefore, the points $A$, $B$, $C$ are non-collinear. Hence, $f_P(A)$, $f_P(B)$, $f_P(C)$ are non-collinear. $f_L(k)=\ovl{f_P(A)f_P(B)}\#\ovl{f_P(A)f_P(B)}=f_L(l)$. Therefore, $f_L$ preserves the $\#$ relation on lines, and $(f_P, f_L)$ is a homomorphism of preprojective planes.
\end{proof}

\begin{rmk}
Notice that the above lemma and proposition may be deduced directly by using the alternative formulation of the theory of projective planes which only has one sort and which is mentioned in Remark \ref{rmkthrylines}.
\end{rmk}

\section{Morphisms between projective planes over rings} \label{secmorprojring}

Let $\alpha:R \to S$ be a ring homomorphism. Note that $\alpha$ sends invertible elements to invertible elements. Given $\mathbf{a}=\pt{a}\in R^3$ with one invertible coordinate then $(\alpha(a_0),\alpha(a_1),\alpha(a_2))$ also has an invertible coordinate. We will write $\alpha(\mathbf{a})$ to mean $(\alpha(a_0),\alpha(a_1),\alpha(a_2))$. Given $\mathbf{a}$ and $\mathbf{b}$ in $R^3$ that represent the same point/line in $\mathbb P(R)$, then $\mathbf{b}=r\mathbf{a}$ for some $r$ in $R$. Therefore, $\alpha(\mathbf{b})=\alpha(r)\alpha(\mathbf{a})$ which implies that $\alpha(\mathbf{a})$ and $\alpha(\mathbf{b})$ represent the same point/line in $\mathbb P(S)$. Therefore, $\alpha$ determines a morphism from the points of $\mathbb P(R)$ to the points of $\mathbb P(S)$ and a morphism from the lines of $\mathbb P(R)$ to the lines of $\mathbb P(S)$.

\begin{prop} \label{propringproj}
A ring homomorphism $\alpha:R \to S$ determines a morphism from $\mathbb P(R)$ to $\mathbb P(S)$ (in the way described above).
\end{prop}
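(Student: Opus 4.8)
The plan is to verify that the point-map and line-map induced by $\alpha$ genuinely form a morphism of preprojective planes, i.e. that they preserve the two $\#$ relations, $\in$ and $\notin$. The preamble to the proposition has already checked that $\alpha$ sends a representative with an invertible coordinate to another such, and that it respects the equivalence relation $\sim$, so the two set-maps $\mathbb P_{\text{pt}}(R)\to \mathbb P_{\text{pt}}(S)$ and $\mathbb P_{\text{li}}(R)\to \mathbb P_{\text{li}}(S)$ are well defined. What remains is purely relational, and the key observation throughout is that $\alpha$, being a ring homomorphism, commutes with all the polynomial expressions ($2\times 2$ minors, the sum $\sum_{i=0}^2 \lambda_i a_i$) that define these relations, and sends invertible elements to invertible elements.

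Concretely, I would proceed relation by relation. For $\#$ on points: if $A\# B$ in $\mathbb P(R)$, then for representatives $\mathbf a$, $\mathbf b$ one of the three $2\times 2$ minors, say $a_0 b_1 - a_1 b_0$, is invertible in $R$; applying $\alpha$ and using that it is a homomorphism gives $\alpha(a_0)\alpha(b_1)-\alpha(a_1)\alpha(b_0)=\alpha(a_0 b_1 - a_1 b_0)$, which is invertible in $S$, so $\alpha(A)\# \alpha(B)$. The argument for $\#$ on lines is identical. For $\in$: if $A\in l$ then for representatives $\sum_{i=0}^2 \lambda_i a_i = 0$ in $R$, hence $\sum_{i=0}^2 \alpha(\lambda_i)\alpha(a_i)=\alpha\bigl(\sum_{i=0}^2 \lambda_i a_i\bigr)=\alpha(0)=0$ in $S$, so $\alpha(A)\in \alpha(l)$. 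For $\notin$: if $A\notin l$ then $\sum_{i=0}^2 \lambda_i a_i$ is invertible in $R$, and since $\alpha$ carries invertibles to invertibles, $\sum_{i=0}^2 \alpha(\lambda_i)\alpha(a_i)$ is invertible in $S$, giving $\alpha(A)\notin \alpha(l)$. Since all four relations are characterised by \emph{any} (equivalently, \emph{some}) representative, as established in the remarks following the definitions, these verifications do not depend on the chosen representatives.

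Having checked all four relations, the pair $(f_P,f_L)$ with $f_P(A)=\alpha(\mathbf a)$ and $f_L(l)=\alpha(\boldsymbol\lambda)$ satisfies the definition of a morphism of preprojective planes, which completes the proof. Alternatively, one could invoke Proposition \ref{propptslines} and merely check the three hypotheses on the point-map $f_P$ (preservation of $\#$, preservation of collinearity $C\in\ovl{AB}$, and preservation of non-collinearity), letting that proposition manufacture $f_L$ automatically; but since each relation is so directly controlled by $\alpha$, the direct verification is shorter and more transparent.

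There is essentially no serious obstacle here: every step reduces to the single fact that a ring homomorphism commutes with the finitely many ring operations appearing in each defining expression and preserves invertibility. The only point requiring a little care is the well-definedness at the level of equivalence classes, but this is exactly what the discussion preceding the proposition already handles, so in the proof it suffices to compute with arbitrary representatives.
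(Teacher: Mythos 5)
Your proposal is correct and follows essentially the same route as the paper's proof: both verify relation by relation that $\alpha$ preserves the two $\#$ relations (via the $2\times 2$ minors), $\in$ (via $\alpha(0)=0$), and $\notin$ (via preservation of invertibility), with well-definedness handled by the discussion preceding the proposition. The alternative via Proposition \ref{propptslines} that you mention is a valid but unnecessary detour, as you yourself note.
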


\begin{proof}
Given $\pt{a},\pt{b}\in R^3$ such that $\mathbf{a}\#\mathbf{b}$, assume without loss of generality that $a_0b_1 -a_1b_0$ is invertible. $\alpha(a_0b_1 -a_1b_0)$ is invertible, therefore $\alpha(a_0)\alpha(b_1) -\alpha(a_1)\alpha(b_0)$ is invertible, hence $\alpha(\mathbf{a})\#\alpha(\mathbf{b})$. Hence, these morphisms preserve $\#$ on points and lines.

Given a point $\pt{a}$ belonging to a line $\pt{\lambda}$, $\Sum \lambda_i a_i=0$, therefore $\Sum \alpha(\lambda_i)\alpha(a_i)=0$. Therefore, $\alpha(\mathbf{a})$ lies on $\alpha(\boldsymbol{\lambda})$. Given a point $\pt{a}$ lying outside a line $\pt{\lambda}$, $\Sum \lambda_i a_i$ is invertible, therefore $\Sum \alpha(\lambda_i)\alpha(a_i)$ is invertible. Therefore, $\alpha(\mathbf{a})$ lies on $\alpha(\boldsymbol{\lambda})$. Hence, these morphisms preserve the relations $\in$ and $\notin$.

Hence, these morphisms on points and lines give a homomorphism between the two projective planes.
\end{proof}

\begin{lem} \label{lemringpart}
 Let $\phi: \mathbb P(R) \to \mathbb P(S)$ be a morphism of projective planes that sends the points $(1,0,0)$, $(0,1,0)$, $(0,0,1)$, $(1,1,1)$ to the points $(1,0,0)$, $(0,1,0)$, $(0,0,1)$, $(1,1,1)$ respectively. Then, there exists a unique ring homomorphism $\sigma :R \to S$ such that $\phi=\mathbb P(\sigma)$.
\end{lem}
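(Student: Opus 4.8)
The plan is to coordinatise via the given frame $E_0=(1,0,0)$, $E_1=(0,1,0)$, $E_2=(0,0,1)$, $U=(1,1,1)$ and to recover the ring structure of $R$ from projective incidence. First I would record which data are fixed by $\phi$: since $\phi$ fixes each $E_i$ and $U$, and these are pairwise apart, $\phi$ fixes every line and point built from them by $\ovl{\cdot\,\cdot}$ and $\cap$ (a morphism preserves $\#,\in,\notin$, hence joins and meets, by uniqueness of the line through two apart points as in Lemma \ref{lemprojptsen}). In particular $\phi$ fixes the axes $\ovl{E_0E_2}=(0,1,0)$ and $\ovl{E_1E_2}=(1,0,0)$, the line at infinity $\ovl{E_0E_1}=(0,0,1)$, the units $E=(1,0,1)=\ovl{E_1U}\cap\ovl{E_0E_2}$ and $J=(0,1,1)=\ovl{E_0U}\cap\ovl{E_1E_2}$, and the centre $C=(1,-1,0)=\ovl{EJ}\cap\ovl{E_0E_1}$. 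I then define $\sigma\colon R\to S$ by $\phi(a,0,1)=(\sigma(a),0,1)$; this is well defined because $\phi(a,0,1)$ lies on the fixed axis $(0,1,0)$ and is apart from the fixed point $E_0$ (as $(a,0,1)\# E_0$ is preserved), which forces an invertible last coordinate and so determines the representative $(\sigma(a),0,1)$.

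Next I would show $\sigma$ is a ring homomorphism. Immediately $\sigma(0)=0$ and $\sigma(1)=1$, since $E_2$ and $E$ are fixed. For addition I use, with $A=(a,0,1)$ and $B=(b,0,1)$, the construction
\begin{align*}
J_a&=\ovl{A E_1}\cap\ovl{J E_0}=(a,1,1), & P&=\ovl{JB}\cap\ovl{E_0E_1}=(b,-1,0),\\
(a+b,0,1)&=\ovl{J_aP}\cap\ovl{E_0E_2}. &&
\end{align*}
For multiplication I first transfer $a$ to the $y$-axis through the fixed centre, $A_y=\ovl{A\,C}\cap\ovl{E_1E_2}=(0,a,1)$, and then set $Q=\ovl{JB}\cap\ovl{E_0E_1}=(b,-1,0)$ and $(ab,0,1)=\ovl{A_yQ}\cap\ovl{E_0E_2}$. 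Every join and meet here is between genuinely apart points or lines for arbitrary $a,b\in R$ — the side-conditions reduce to invertibility of $\pm 1$ and are the routine checks — so each step is legitimate over any ring. Since $\phi$ preserves $\#,\in,\notin$, hence $\ovl{\cdot\,\cdot}$ and $\cap$, and fixes every auxiliary datum $E_0,E_1,J,C$, applying $\phi$ to these constructions for $a,b$ yields the identical constructions for $\sigma(a),\sigma(b)$; thus $\phi(a+b,0,1)=(\sigma(a)+\sigma(b),0,1)$ and $\phi(ab,0,1)=(\sigma(a)\sigma(b),0,1)$, i.e. $\sigma$ respects $+$ and $\times$.

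It remains to identify $\phi$ with $\mathbb P(\sigma)$ on all of $\mathbb P(R)$; by Lemma \ref{lemprojptsen} it suffices to treat points. The same transfer gives $\phi(0,y,1)=(0,\sigma(y),1)$, and an affine point factors as $(x,y,1)=\ovl{(x,0,1)E_1}\cap\ovl{(0,y,1)E_0}$, whence $\phi(x,y,1)=(\sigma(x),\sigma(y),1)=\mathbb P(\sigma)(x,y,1)$. For points outside this chart one repeats the argument in the charts where the first or the second coordinate is invertible: the frame $\{E_0,E_1,E_2\}$ together with the symmetric point $U$ is invariant under permuting homogeneous coordinates, so the construction is verbatim the same, and the homomorphism it produces agrees with $\sigma$ on the overlaps (which exhaust $R$), so all charts yield $\mathbb P(\sigma)$. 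As the three charts cover $\mathbb P_{\text{pt}}(R)$, we obtain $\phi=\mathbb P(\sigma)$ on points, hence everywhere. Uniqueness is forced: any $\sigma'$ with $\phi=\mathbb P(\sigma')$ satisfies $(\sigma'(a),0,1)=\phi(a,0,1)=(\sigma(a),0,1)$, so $\sigma'=\sigma$.

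The main obstacle is assembling sum- and product-constructions that simultaneously involve only auxiliary points and lines fixed by $\phi$ and remain valid over an arbitrary local ring, meaning every invoked join and meet is between truly apart points or lines without presupposing that $a$ or $b$ is invertible. Discharging these apartness side-conditions, together with the chart-consistency needed for the global identification $\phi=\mathbb P(\sigma)$, is where the real care lies; the algebra itself is then routine.
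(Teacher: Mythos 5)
Your proof is correct and takes essentially the same route as the paper's: you fix the frame, define $\sigma$ on the axis $(0,1,0)$, verify $+$ and $\cdot$ by synthetic constructions whose joins and meets all have unit ($\pm 1$) minors as side-conditions — your addition figure is literally the paper's, and your multiplication figure (transferring $a$ to the $y$-axis through the fixed direction $(1,-1,0)$ and then taking the parallel through $(0,1,1)$) is an equivalent variant of the paper's diagonal construction via $(a,a,1)$ and $(1,1,1)$ — and you then glue the three coordinate charts using overlap points such as $(1,a,1)$, exactly as the paper does. The only cosmetic difference is that you force invertibility of the last coordinate of $\phi(a,0,1)$ via apartness from $(1,0,0)$ where the paper uses $(a,0,1)\notin(0,0,1)$; both work, and your side-condition checks are all sound.
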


\begin{proof}
First, notice that any such ring homomorphism $\sigma$ must be unique because $\mathbb{P}(\sigma)(a,0,1)=(\sigma(a),0,1)$.

In the proof that follows, we will repeatedly use arguments of the form: given points $A\#B$, and $l$ the unique line through $A$ and $B$, then $\phi(A)\#\phi(B)$ and $\phi(l)$ is the unique line through $\phi(A)$ and $\phi(B)$ (and the dual version of this).

By using the above arguments to the unique lines through any pair of the points $(1,0,0)$, $(0,1,0)$, $(0,0,1)$ and $(1,1,1)$ we conclude that $\phi$ maps the lines $(1,0,0)$, $(0,1,0)$, $(0,0,1)$, $(0,1,-1)$, $(1,0,-1)$, $(1,-1,0)$ to the lines $(1,0,0)$, $(0,1,0)$, $(0,0,1)$, $(0,1,-1)$, $(1,0,-1)$, $(1,-1,0)$ respectively.

By considering further intersections of pairs of the above lines, we conclude that $\phi$ maps the points $(1,0,1)$, $(0,1,1)$ to $(1,0,1)$, $(0,1,1)$ respectively.

For $a\in R$, $(a,0,1)\in(0,1,0)$ and $(a,0,1)\notin (0,0,1)$ therefore $\phi(a,0,1)\in (0,1,0)$ and $\phi(a,0,1)\notin (0,0,1)$. Hence, $\phi(a,0,1)=(\sigma(a),0,1)$, for some $\sigma(a)\in S$. We shall think of $\sigma$ as a function $\sigma:R\to S$.
 
The points $(a,0,1)$ and $(0,1,0)$ are mapped to the points $(\sigma(a),0,1)$ and $(0,1,0)$ respectively, therefore the line $(1,0,-a)$ is mapped to $(1,0,-\sigma(a))$.

$(a,a,1)$ is the unique intersection point of the lines $(1,0,-a)$ and $(1,-1,0)$, therefore it is mapped to $(\sigma(a),\sigma(a),1)$.

$(0,1,-a)$ is the unique line through $(a,a,1)$ and $(1,0,0)$, therefore it is mapped to $(0,1,-\sigma(a))$.

$(0,a,1)$ is the unique intersection point of the lines $(0,1,-a)$ and $(1,0,0)$ therefore it is mapped to $(0,\sigma(a),1)$.

$(a,b,1)$ is the unique intersection point of the lines $(1,0,-a)$ and $(0,1,-b)$, therefore it is mapped to $(\sigma(a),\sigma(b),1)$ (the intersection of $(1,0,-\sigma(a))$ and $(0,1,-\sigma(b))$.

\vspace{0.1 in}
\textbf{Hence, we have constructed a function $\sigma:R\to S$, such that $\phi(a,b,1)=(\sigma(a),\sigma(b),1)$ and we shall now prove that $\sigma$ is a ring homomorphism.}
\vspace{0.1 in}

Notice that $\sigma(0)=0$ because $\phi(0,0,1)=(0,0,1)$ and $\sigma(1)=1$ because $\phi(1,1,1)=(1,1,1)$.

\vspace{0.1 in}
\textbf{Proving that $\sigma$ commutes with addition:}

Given the points $(a,0,1)$ and $(b,0,1)$, we construct the point $(a+b,0,1)$ geometrically in the following way.
The line $(1,0,-a)$ is the unique line through the points $(a,0,1)$ and $(0,1,0)$. 
The point $(a,1,1)$ is the unique intersection point of the lines $(1,0,-a)$ and $(0,1,-1)$. 

The line $(1,b,-b)$ is the unique line through $(b,0,1)$ and $(0,1,1)$.
The point $(-b,1,0)$ is the unique intersection of $(1,b,-b)$ and $(0,0,1)$.

The line $(1,b,-a-b)$ is the unique line through $(a,1,1)$ and $(-b,1,0)$.
And finally, $(a+b,0,1)$ is the unique intersection point of $(0,1,0)$ and $(1,b,-a-b)$.

\begin{center}
  \begin{tikzpicture}
    \draw 
    (0,0) node[left] {$(0,0,1)$} -- (0,3)
    (0,0) -- (6,0)
    (0,2.5) node[left] {$(0,1,1)$} -- (6,2.5)

    (2,0) -- (2,2.5) node[above] {$(a,1,1)$}
    (3,0) -- (0,2.5)
    (2,2.5)-- (5,0) node[below] {$(a+b,0,1)$}
    (1.75,0) node[below] {$(a,0,1)$}
    (3.25,0) node[below] {$(b,0,1)$};
  \end{tikzpicture}
\end{center}

If we replace $a$ with $\sigma(a)$, and $b$ with $\sigma(b)$ in the above construction we end up with the point $(\sigma(a)+\sigma(b),0,1)$ of $\mathbb P(S)$. And if we replace all the lines and points mentioned above with their images through $\phi$ we end up with the point $\phi((a+b,0,1))=(\sigma(a+b),0,1)$ because $\phi$  preserves intersection points of lines that are apart from each other and lines through points that apart from each other. But the effect of $\phi$ in the above construction is exactly replacing $a$ with $\sigma(a)$ and $b$ with $\sigma(b)$ (because of the points that $\phi$ preserves). Hence, $(\sigma(a+b),0,1)=(\sigma(a)+\sigma(b),0,1)$, i.e. $\sigma(a+b)=\sigma(a)+\sigma(b)$.

\vspace{0.1 in}
\textbf{Proving that $\sigma$ commutes with multiplication:}

Similarly, it suffices to give an appropriate geometric construction of $(ab,0,1)$ from the points $(a,0,1)$ and $(b,0,1)$ to prove that $\sigma(ab)=\sigma(a)\sigma(b)$. It is done in the following way.

$(1,0,-a)$ is the unique line through $(a,0,1)$ and $(0,1,0)$.
$(a,a,1)$ is the unique intersection point of $(1,-1,0)$ and $(1,0,-a)$.

$(1,b-1,-b)$ is the unique line through $(1,1,1)$ and $(b,0,1)$.
$(1-b,1,0)$ it the unique intersection point of $(1,b-1,-b)$ and $(0,0,1)$.

$(1,b-1,-ab)$ is the unique line through $(1-b,1,0)$ and $(a,a,1)$.
And finally, $(ab,0,1)$ is the unique intersection point of $(0,1,0)$ and $(1,b-1,-ab)$.

\begin{center}
  \begin{tikzpicture}
    \draw 
    (0,0) node[left] {$(0,0,1)$} -- (0,3)
    (0,0) -- (7,0)
    (0,0) -- (3,3)

    (2,0) -- (2,2) node[above] {$(a,a,1)$}
    (3,0) -- (1,1) node[above] {$(1,1,1)$}
    (2,2)-- (6,0) node[below] {$(ab,0,1)$}
    (1.75,0) node[below] {$(a,0,1)$}
    (3.25,0) node[below] {$(b,0,1)$};
  \end{tikzpicture}
\end{center}

Hence, $\sigma(ab)=\sigma(a)\sigma(b)$.

\vspace{0.1 in}
\textbf{Proving that $\phi=\mathbb P(\sigma)$:}

By arguments symmetric to the ones above, we can prove that $\phi(a,1,b))=(\tau(a),1,\tau(b))$ for some function $\tau:R\to S$. By symmetry and the results about $\sigma$, we conclude that $\tau(0)=0$ and $\tau(1)=1$. Also, $\phi(1,a,1)=(1,\sigma(a),1)=(1,\tau(a),1)$, hence $\sigma(a)=\tau(a)$ for all $a$ in $R$. Similarly, $\phi(1,a,b)=(1,\sigma(a),\sigma(b))$. Finally, using the fact that $\sigma$ preserves multiplication, we conclude that for every point $\pt{a}$ of the projective plane, $\phi\pt{a}=(\sigma(a_0),\sigma(a_1),\sigma(a_2))$.
\end{proof}

\begin{rmk}
Notice that using the above proof we can also prove the following: Given the projective plane over a ring $R$ and the points $(1,0,0)$, $(0,1,0)$, $(0,0,1)$, $(1,1,1)$ and forgetting the coordinates of any points we can reconstruct a ring isomorphic to $R$. First, we construct the line $(0,1,0)$ and the point $(1,0,1)$. The underlying set of the ring is going to be the set of points on the line $(0,1,0)$  which are apart from the point $(1,0,0)$. We can prove that this set is $\{ (x,0,1) | x\in R\}$ which is clearly isomorphic to $R$. We define the additive unit of the ring to be $(0,0,1)$ and we define the multiplicative unit to be $(1,0,1)$ which matches the two units of $R$ via the isomorphism.  Given two points of this set we construct their sum and their product in a synthetic/geometric way, as in the above proof. This addition and multiplication commute with the isomorphism to $R$. Hence, the structure we have defined is isomorphic to the ring $R$.
\end{rmk}

Notice that up to this point we have not made any assumptions on the ring $R$.

Let us now consider a local ring $R$ and let $M$ be an invertible $3\times 3$ matrix over $R$. Let $\mathbf{a}=(a_0,a_1,a_2)\in R^3$ have an invertible coordinate. $M^{-1}M \mathbf{a}=\mathbf{a}$ has an invertible coordinate. $R$ is a local ring, therefore when a sum is invertible one of its summands must be invertible, hence $M\mathbf{a}$ must have an invertible coordinate.

Given $\mathbf{a}$ and $\mathbf{a'}$ in $R^3$ representing the same point, then $\mathbf{a'}=r\mathbf{a}$ for some $r\in R$. Hence, $M\mathbf{a'}=r M\mathbf{a}$, and therefore $M\mathbf{a'}$ and $M\mathbf{a}$ represent the same point in $\mathbb P(R)$.

Hence $M$ determines a morphism from the set of points of the projective plane to itself.

$(M^{-1})^T$ is also an invertible $3\times 3$ matrix, therefore by the duality principle it determines a morphism from the set of lines of the projective plane to itself.

\begin{prop} \label{propmatrproj}
 Any invertible $3\times 3$ matrix $M$ over a \emph{local} ring $R$ determines an automorphism of the projective plane over $R$ (in the way described above). 
\end{prop}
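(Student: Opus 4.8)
The plan is to verify that the pair consisting of the point map induced by $M$ and the line map induced by $(M^{-1})^T$ preserves all four pieces of structure (the $\#$ relation on points, the $\#$ relation on lines, $\in$ and $\notin$), and then to observe that $M^{-1}$ supplies an inverse morphism. Well-definedness of the two maps has already been arranged in the discussion preceding the statement, so I would take that for granted.

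First I would dispose of incidence and non-incidence in one stroke. If a point $A$ is represented by $\mathbf{a}$ and a line $l$ by $\boldsymbol{\lambda}$, then $MA$ is represented by $M\mathbf{a}$ and the image line by $(M^{-1})^T\boldsymbol{\lambda}$, so the defining sum for incidence of the images is $((M^{-1})^T\boldsymbol{\lambda})^T(M\mathbf{a}) = \boldsymbol{\lambda}^T M^{-1} M \mathbf{a} = \boldsymbol{\lambda}^T\mathbf{a} = \Sum \lambda_i a_i$. Since this value is literally unchanged, it is $0$ exactly when $A\in l$ and invertible exactly when $A\notin l$; hence both $\in$ and $\notin$ are preserved, and I note that this step does not use locality.

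The main work is the apartness relation on points. Here I would use the polynomial identity $(M\mathbf{a})\times(M\mathbf{b}) = \mathrm{cof}(M)(\mathbf{a}\times\mathbf{b})$, valid over any commutative ring, where $\mathrm{cof}(M) = \det(M)(M^{-1})^T$ is the cofactor matrix and $\mathbf{a}\times\mathbf{b}=(a_1b_2-a_2b_1, a_2b_0-a_0b_2, a_0b_1-a_1b_0)$ is the vector whose entries are exactly the three minors figuring in the definition of $\#$; thus $A\#B$ says precisely that $\mathbf{a}\times\mathbf{b}$ has an invertible coordinate. Since $\det(M)$ is a unit, the matrix $N = \det(M)(M^{-1})^T$ is invertible, and the argument given immediately before the proposition—applied to $N$ and $\mathbf{w}=\mathbf{a}\times\mathbf{b}$, where from $\mathbf{w} = N^{-1}(N\mathbf{w})$ and locality one sees that $N\mathbf{w}$ has an invertible coordinate whenever $\mathbf{w}$ does—shows that $(M\mathbf{a})\times(M\mathbf{b}) = N(\mathbf{a}\times\mathbf{b})$ again has an invertible coordinate. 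Hence $MA\#MB$. Dually, or equivalently by rerunning the same cofactor argument with the invertible matrix $(M^{-1})^T$ in place of $M$, the line map preserves $\#$ on lines. (This identity also records the pleasant fact that $\ovl{MA\,MB} = (M^{-1})^T\ovl{AB}$, confirming that the point and line maps fit together.)

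Together these verifications show that $M$ and $(M^{-1})^T$ constitute a morphism $\mathbb P(R)\to\mathbb P(R)$. Finally, $M^{-1}$ is again an invertible matrix, so the very same verifications apply to it and produce a morphism whose point map is inverse to that of $M$; by Lemma \ref{lemprojptsen} a morphism is determined by its point map, so the two composites, having identity point maps, must be the identity morphism. Hence the morphism induced by $M$ is an automorphism. I expect the only genuinely delicate point to be the apartness step, where the cofactor identity together with the locality hypothesis on $R$ is exactly what is needed; incidence preservation, by contrast, is a one-line computation valid over an arbitrary ring.
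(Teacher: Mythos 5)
Your proof is correct, and on the one genuinely delicate step it takes a different route from the paper. Where you invoke the cofactor identity $(M\mathbf{a})\times(M\mathbf{b})=\mathrm{cof}(M)\,(\mathbf{a}\times\mathbf{b})$ with $\mathrm{cof}(M)=\det(M)(M^{-1})^T$ invertible, and then reuse the locality argument from the discussion preceding the proposition (an invertible matrix applied to a vector with an invertible coordinate again yields a vector with an invertible coordinate), the paper instead argues by cases: assuming without loss of generality that $a_0b_1-a_1b_0$ is invertible, it augments $(\mathbf{a}\ \mathbf{b})$ to the $3\times 3$ matrix with third column $(0,0,1)^T$, whose determinant is then invertible; multiplying by $M$ preserves invertibility of the determinant, and expanding that determinant and using locality (a sum being invertible forces a summand to be invertible) extracts an invertible $2\times 2$ minor of $(M\mathbf{a}\ M\mathbf{b})$. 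Both arguments use locality in the same essential way, but yours avoids the WLOG case split, treats points and lines by literally the same computation applied to $(M^{-1})^T$, and yields the compatibility $\ovl{MA\,MB}=(M^{-1})^T\,\ovl{AB}$ as a byproduct; the paper's augmented-matrix trick, in exchange, needs nothing beyond multiplicativity of the determinant, whereas you should at least remark why the cofactor identity holds over an arbitrary commutative ring (it is a polynomial identity in the entries of $M$, $\mathbf{a}$, $\mathbf{b}$, verifiable by direct expansion, so the appeal is constructively harmless). Your treatment of $\in$ and $\notin$ is the paper's computation verbatim, and your closing argument that $M^{-1}$ induces the inverse matches the paper's, with the appeal to Lemma \ref{lemprojptsen} being mildly superfluous since the composite line maps are already literally $(M^{-1}M)^T=I$.
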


\begin{proof}
Let $M$ be an invertible $3\times 3$ matrix over $R$. Given $\pt{a}$, $\pt{b}$ in $R^3$ such that $\mathbf{a}\#\mathbf{b}$, assume without loss of generality that $a_0b_1 -a_1b_0$ is invertible. Therefore, the determinant of $\begin{pmatrix}
a_0 & b_0 & 0 \\
a_1 & b_1 & 0 \\
a_2 & b_2 & 1  
\end{pmatrix}$ is invertible. 
$$\begin{pmatrix}
  M\begin{pmatrix} a_0 \\ a_1 \\a_2 \end{pmatrix} & M\begin{pmatrix}b_0 \\ b_1 \\b_2 \end{pmatrix} & M\begin{pmatrix} 0 \\ 0 \\1 \end{pmatrix}
   \end{pmatrix}=
M
\begin{pmatrix}
a_0 & b_0 & 0 \\
a_1 & b_1 & 0 \\
a_2 & b_2 & 1  
\end{pmatrix},$$ therefore its determinant is also invertible. Hence,
$$\begin{pmatrix}
  M\begin{pmatrix} a_0 \\ a_1 \\a_2 \end{pmatrix} & M\begin{pmatrix}b_0 \\ b_1 \\b_2 \end{pmatrix}
\end{pmatrix}$$
has a $2\times 2$ minor with invertible determinant because $R$ is a local ring. Therefore, $M\mathbf{a}\# M\mathbf{b}$.

The same proof shows that lines that are apart from each other are mapped to lines that are apart from each other.

Given a point $A$ represented by $\pt{a}$ and a line $l$ represented by $\pt{\lambda}$, $A$ lies on $l$ iff $\boldsymbol{\lambda}^T \mathbf{a}=0$. Suppose $A\in l$, then \begin{displaymath}
\begin{split}
((M^{-1})^T\boldsymbol{\lambda})^T M\mathbf{a}= \boldsymbol{\lambda}^T M^{-1} M\mathbf{a}= \boldsymbol{\lambda}^T \mathbf{a}=0,
\end{split}
\end{displaymath}
therefore $M\mathbf{a}$ lies on $M\boldsymbol{\lambda}$.
 
$A$ lies outside $l$ iff $\boldsymbol{\lambda}^T \mathbf{a}$ is invertible. Suppose $A\notin l$, then 
$$((M^{-1})^T\boldsymbol{\lambda})^T M\mathbf{a}= \boldsymbol{\lambda}^T M^{-1} M\mathbf{a}= \boldsymbol{\lambda}^T \mathbf{a}$$ 
is invertible, therefore $M\mathbf{a}$ lies outside $M\boldsymbol{\lambda}$.

Hence, the described morphisms preserve the structure of projective plane over $R$. Furthermore, this is an automorphism of $\mathbb P(R)$ because $M^{-1}$ induces the inverse of this morphism.
\end{proof}

\begin{rmk}
Note that the above proposition is not necessarily true for rings that are not local. For example, consider the invertible matrix $M=\begin{pmatrix}
3 & 0 & 1 \\
0 & 1 & 0 \\
2 & 0 & 1  
\end{pmatrix}$ in $\mathbb Z/(6)$. This does not define a function from the set of points of the plane to itself (in the way explained above) because for example $M(1,0,0)=(3,0,2)$ that does not have an invertible coordinate.
\end{rmk}

Notice that for $M$ a $3\times 3$ matrix over a local ring $R$ and $\lambda$ an invertible element of $R$, the matrices $M$ and $\lambda M$ induce the same automorphism of $\mathbb P(R)$.

\begin{defn}
The \emph{projective general linear group} over a local ring $R$ is the group of invertible $3\times 3$ matrices quotiented by scalar multiplication by an invertible element of $R$. In this thesis, we denote this group as $H(R)$.
\end{defn}

\begin{defn}
Four points of a preprojective plane or a projective plane over a ring are \emph{in general position} when any  subset of three of them is a non-collinear triple.
\end{defn}

\begin{egs}
For $R$ a ring, the points $(1,0,0)$, $(0,1,0)$, $(0,0,1)$ and $(1,1,1)$ are points in general position.
\end{egs}

\begin{lem} \label{lemmatrixauto}
Let $R$ be a local ring and let $A$, $B$, $C$, $D$ be points in general position of $\mathbb P(R)$. Then, there exists a unique element of $H(R)$ inducing an automorphism of $\mathbb P(R)$ sending $(1,0,0)$ to $A$, $(0,1,0)$ to $B$, $(0,0,1)$ to $C$ and $(1,1,1)$ to $D$.
\end{lem}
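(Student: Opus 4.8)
The plan is to handle existence and uniqueness separately, reducing both to linear algebra over $R$ whose key steps are controlled by local-ring invertibility. For existence I would first choose representatives $\mathbf a$, $\mathbf b$, $\mathbf c$ of $A$, $B$, $C$ and form $N=[\mathbf a\ \mathbf b\ \mathbf c]$, and the first task is to show $\det N$ is invertible (not merely non-zero). Since $A$, $B$, $C$ are non-collinear we have $C\notin\ovl{AB}$; writing $\boldsymbol\lambda=(a_1b_2-a_2b_1,\,a_2b_0-a_0b_2,\,a_0b_1-a_1b_0)$ for the coordinates of $\ovl{AB}$ from Proposition \ref{propexistsuniqueline}, the computation in Proposition \ref{propiffcollinear} identifies $\det N$ with $\pm\Sum\lambda_i c_i$, which is invertible precisely because $C\notin\ovl{AB}$. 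Hence $N$ is invertible and, by Proposition \ref{propmatrproj}, induces an automorphism sending $(1,0,0),(0,1,0),(0,0,1)$ to $A,B,C$; all that remains is to correct the image of $(1,1,1)$.

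To do this I would rescale columns. Pick a representative $\mathbf d$ of $D$ and set $(x,y,z)^T=N^{-1}\mathbf d$, so $\mathbf d=x\mathbf a+y\mathbf b+z\mathbf c$. The crucial claim is that $x$, $y$, $z$ are all invertible, and this is exactly where the full strength of general position is used. Pairing this identity with $\boldsymbol\lambda$ and using $\Sum\lambda_i a_i=\Sum\lambda_i b_i=0$ gives $\Sum\lambda_i d_i=z\,\Sum\lambda_i c_i$; now $A,B,D$ non-collinear gives $D\notin\ovl{AB}$, so $\Sum\lambda_i d_i$ is invertible, and $\Sum\lambda_i c_i$ is invertible as above, whence $z$ is invertible. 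The symmetric arguments pairing with $\ovl{BC}$ and $\ovl{CA}$ show $x$ and $y$ invertible. Then $M=[x\mathbf a\ y\mathbf b\ z\mathbf c]$ has invertible determinant $xyz\det N$, so by Proposition \ref{propmatrproj} it induces an automorphism; it sends $(1,0,0),(0,1,0),(0,0,1)$ to $A,B,C$ (rescaling a representative does not change the point) and $(1,1,1)$ to $\mathbf d$, i.e.\ to $D$, as required.

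For uniqueness, suppose $M$ and $M'$ represent elements of $H(R)$ both sending the standard frame to $A,B,C,D$. Then $M'^{-1}M$ induces an automorphism fixing each of $(1,0,0),(0,1,0),(0,0,1),(1,1,1)$, and it suffices to show any such matrix is a scalar multiple of the identity, hence trivial in $H(R)$. Fixing $(1,0,0)$ forces the first column to be $\alpha(1,0,0)^T$ with $\alpha$ invertible, and likewise the second and third columns are invertible scalar multiples of $(0,1,0)^T$ and $(0,0,1)^T$, so the matrix is $\mathrm{diag}(\alpha,\beta,\gamma)$. Fixing $(1,1,1)$ then forces $(\alpha,\beta,\gamma)$ to be an invertible scalar multiple of $(1,1,1)$, i.e.\ $\alpha=\beta=\gamma$, so the matrix is scalar and $M=M'$ in $H(R)$.

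The main obstacle I anticipate is the invertibility of the scalars $x,y,z$: over an arbitrary ring the linear-algebraic argument only produces some solution, and it is precisely the local-ring hypothesis together with the non-incidence conditions packaged into general position that upgrades each $x,y,z$ to a genuine unit. The passage from ``non-collinear'' to ``$\det N$ invertible'' for the base triple is the analogous delicate point, and both rest on reading off invertibility of the relevant $\Sum\lambda_i(\cdot)$ directly from the outside relation $\notin$.
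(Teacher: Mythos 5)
Your proof is correct, and it follows the same overall skeleton as the paper's: both arguments observe that a qualifying matrix must have columns $x\mathbf a$, $y\mathbf b$, $z\mathbf c$ for invertible scalars, obtain those scalars from $N^{-1}\mathbf d$ with $N=[\mathbf a\ \mathbf b\ \mathbf c]$, and settle uniqueness by pinning the matrix down up to a global unit of $R$. Where you genuinely diverge is in the mechanism for the key invertibility steps. For the scalars, the paper rearranges $\mathbf d = x\mathbf a + y\mathbf b + z\mathbf c$ into an identity of the form $[\mathbf b\ \mathbf c\ \mathbf d]\,(y,z,-1)^T = \pm x\,\mathbf a$ and invokes the fact that the matrix $[\mathbf b\ \mathbf c\ \mathbf d]$ is invertible (because $B$, $C$, $D$ are non-collinear) and hence carries the point $(y,z,-1)$ to a point of the plane, so $x\mathbf a$ has an invertible coordinate and $x$ is a unit; you instead pair the identity with the line coordinates $\boldsymbol{\lambda}$ of $\ovl{AB}$ to obtain $\Sum\lambda_i d_i = z\Sum\lambda_i c_i$ and read the invertibility of both sides directly from $D\notin\ovl{AB}$ and $C\notin\ovl{AB}$. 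Your route makes the role of the outside relation completely explicit, and it is also the honest justification of the step the paper states without detail, namely that $\det N$ is invertible when $A$, $B$, $C$ are non-collinear; the cost is three separate pairings where the paper uses one matrix rearrangement per scalar. For uniqueness, the paper recomputes the scalar solution for a second matrix $M'$ and exhibits $M'=\xi M$ directly, whereas you pass to $M'^{-1}M$ and run the standard stabilizer argument: fixing the three coordinate points forces a diagonal matrix with unit entries, and fixing $(1,1,1)$ forces it to be scalar. The two are equivalent, but yours isolates the group-theoretic content, avoids rechoosing representatives, and simultaneously shows the resulting element of $H(R)$ is independent of all representative choices.
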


\begin{proof}
Let $(a_0,a_1,a_2)$, $(b_0,b_1,b_2)$, $(c_0,c_1,c_2)$ and $(d_0,d_1,d_2)$ be representatives of $A$, $B$, $C$ and $D$. Then, $M$ must be of the form
\[
\begin{pmatrix}
\lambda a_0 & \mu b_0 & \nu c_0 \\
\lambda a_1 & \mu b_1 & \nu c_1 \\
\lambda a_2 & \mu b_2 & \nu c_2  
\end{pmatrix}
\]
for some invertible $\lambda, \mu$ and $\nu$ in $R$. Note that the determinant of $M$ is invertible because $A$, $B$, $C$ are non-collinear (and $\lambda$, $\mu$ and $\nu$ are invertible).

It is now sufficient to prove that there exist invertible $\lambda, \mu$ and $\nu$ in $R$ such that $M(1,1,1)=\lambda A +\mu B +\nu C=D$, i.e. 
\[
\begin{pmatrix}
a_0 & b_0 & c_0 \\
a_1 & b_1 & c_1 \\
a_2 & b_2 & c_2
\end{pmatrix}
\begin{pmatrix}
\lambda \\
\mu \\
\nu
\end{pmatrix}
=D.
\]
Therefore,  
\[
 \begin{pmatrix}
\lambda \\
\mu \\
\nu
\end{pmatrix}
=
\begin{pmatrix}
a_0 & b_0 & c_0 \\
a_1 & b_1 & c_1 \\
a_2 & b_2 & c_2
\end{pmatrix}^{-1}
D.
\]

Notice that
$
\begin{pmatrix}
b_0 & c_0 & d_0 \\
b_1 & c_1 & d_1 \\
b_2 & c_2 & d_2
\end{pmatrix} 
\begin{pmatrix}
 \mu \\
\nu \\
-1
\end{pmatrix}
=\lambda
\begin{pmatrix}
a_0 \\ a_1 \\ a_2
\end{pmatrix}
$. The matrix is invertible, therefore it maps the point $(\mu,\nu, -1)$ to a point of the projective plane. Hence, $\lambda A$ has an invertible component, therefore $\lambda$ is invertible. By symmetric arguments, $\mu$ and $\nu$ are also invertible.

Suppose a matrix $M'$ represents the same automorphism. Then, again by considering the images of $(1,0,0)$, $(0,1,0)$ and $(0,0,1)$ , we see that $M'$ is of the form 
\[
\begin{pmatrix}
\lambda' a_0 & \mu' b_0 & \nu' c_0 \\
\lambda' a_1 & \mu' b_1 & \nu' c_1 \\
\lambda' a_2 & \mu' b_2 & \nu' c_2  
\end{pmatrix}
\]
$N'(1,1,1)=\xi D$, therefore as before
\[
 \begin{pmatrix}
\lambda' \\
\mu' \\
\nu'
\end{pmatrix}
=
\begin{pmatrix}
a_0 & b_0 & c_0 \\
a_1 & b_1 & c_1 \\
a_2 & b_2 & c_2
\end{pmatrix}^{-1}
\xi D
=
\xi
 \begin{pmatrix}
\lambda\\
\mu \\
\nu
\end{pmatrix}
\]
Hence, 
\[
N=
\begin{pmatrix}
\xi \lambda a_0 & \xi \mu b_0 & \xi \nu c_0 \\
\xi \lambda a_1 & \xi \mu b_1 & \xi \nu c_1 \\
\xi \lambda a_2 & \xi \mu b_2 & \xi \nu c_2  
\end{pmatrix}
=\xi M
\]
and therefore $M$ and $N$ represent the same element of $H(R)$ and this element does not depend on the choice of representatives of $A$, $B$, $C$ and $D$.
\end{proof}

\begin{rmk}
Some condition on the ring $R$ is necessary for the theorem to hold, since it does not hold on the projective plane over $\mathbb Z/(6)$. Consider the points $A=(1,0,0)$, $B=(3,-1,0)$, $C=(3,2,1)$ and $D=(1,1,1)$ of $\mathbb P(\mathbb Z/(6))$ and notice that they are in general position. By going through the proof above we see that the matrix $M$ is necessarily (a multiple by a unit of) 
$\begin{pmatrix}
1 & 3 &3 \\
0 & -1 & 2 \\
0 & 0 & 1
\end{pmatrix}$. But this matrix sends the point $(3,1,2)$ to $(0,0,2)$ which is not a point, therefore it does not even represent an endomorphism of the projective plane over $\mathbb Z/(6)$.
\end{rmk}

Given a local ring $R$, let $\omega_4(R)$ be the set of quadruples of points in general position of $\mathbb P(R)$. The left $H(R)$-action on points of $\mathbb P(R)$ sends non-collinear points to non-collinear points, therefore it extends to a left action on $\omega_4(R)$.

\begin{thrm}
Given a local ring $R$, $\omega_4(R)$ is a left $H(R)$-torsor via the action described above.
\end{thrm}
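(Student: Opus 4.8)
The plan is to reduce the whole statement to Lemma \ref{lemmatrixauto}, which already contains all the geometric content. Recall that to say $\omega_4(R)$ is a left $H(R)$-torsor means that $\omega_4(R)$ is inhabited and that the shear map
\[
\theta: H(R)\times \omega_4(R) \longrightarrow \omega_4(R)\times \omega_4(R), \qquad (g,x)\mapsto (g\cdot x,\, x),
\]
is an isomorphism; equivalently, that the action is free and transitive. Since the statement is meant to hold internally over an arbitrary base topos, I would avoid verifying freeness and transitivity by case analysis and instead exhibit an explicit two-sided inverse of $\theta$, so that the argument is manifestly choice-free.

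First I would fix the base quadruple $E=\big((1,0,0),(0,1,0),(0,0,1),(1,1,1)\big)$, which is in general position, so $\omega_4(R)$ is inhabited. Lemma \ref{lemmatrixauto} asserts precisely that the orbit map $H(R)\to\omega_4(R)$, $g\mapsto g\cdot E$, is a bijection: existence of the realizing matrix gives surjectivity and its uniqueness in $H(R)$ gives injectivity. I would write $\psi:\omega_4(R)\to H(R)$ for the inverse, so that $\psi(x)\cdot E = x$ for all $x$ and $\psi(g\cdot E)=g$ for all $g$.

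The one identity that needs proving is the equivariance $\psi(g\cdot x)=g\,\psi(x)$: the element $g\,\psi(x)$ sends $E$ to $g\cdot x$, so by the uniqueness clause of Lemma \ref{lemmatrixauto} it coincides with $\psi(g\cdot x)$. Granting this, I would define
\[
\theta^{-1}:\omega_4(R)\times\omega_4(R)\longrightarrow H(R)\times\omega_4(R),\qquad (y,x)\mapsto \big(\psi(y)\psi(x)^{-1},\,x\big),
\]
and verify the two composites by direct substitution: $\theta\circ\theta^{-1}=\mathrm{id}$ follows from $\psi(x)\cdot E=x$ and $\psi(y)\cdot E=y$, while $\theta^{-1}\circ\theta=\mathrm{id}$ follows from the equivariance identity. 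This single computation packages transitivity (surjectivity of $\theta$) and freeness (injectivity of $\theta$) at once, so $\omega_4(R)$ is an $H(R)$-torsor.

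I do not expect a serious obstacle, precisely because the difficult part is already isolated: that an invertible matrix exists sending the four standard points to any prescribed quadruple in general position, and that it is unique modulo the scalar identification defining $H(R)$, is exactly Lemma \ref{lemmatrixauto}, which itself relies on $R$ being local (so that $M\mathbf{a}$ always has an invertible coordinate, cf. Proposition \ref{propmatrproj}). The only point requiring genuine care is to keep everything constructive: because $\psi$ is an honest function picking out the \emph{unique} solving matrix, the formula for $\theta^{-1}$ involves no choices, and the verification of the two composites goes through verbatim in the internal logic of any base topos with a natural number object.
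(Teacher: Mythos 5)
Your proof is correct and takes essentially the same route as the paper: both arguments reduce entirely to Lemma \ref{lemmatrixauto}, your transporter $\psi(y)\psi(x)^{-1}$ being exactly the paper's $g'g^{-1}$ and your uniqueness-based equivariance identity being the paper's freeness step. Packaging the two checks as an explicit two-sided inverse of the shear map is only a cosmetic reformulation of the paper's transitive-plus-free verification, and your bijection $g\mapsto g\cdot E$ is precisely the isomorphism $H(R)\cong\omega_4(R)$ recorded in the paper's remark immediately after the theorem.
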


\begin{proof}
Given $(A,B,C,D)$ and $(A',B',C',D')$ in $\omega_4(R)$ by Lemma \ref{lemmatrixauto} there exist unique $g$ and $g'$ in $H(R)$ such that $g$  and $g'$ send $\begin{pmatrix}
\begin{pmatrix} 1 \\ 0 \\ 0 \end{pmatrix}, \begin{pmatrix} 0 \\ 1 \\ 0 \end{pmatrix},\begin{pmatrix} 0 \\ 0 \\1 \end{pmatrix}, \begin{pmatrix} 1 \\ 1 \\ 1 \end{pmatrix}
\end{pmatrix}$ to $(A,B,C,D)$ and $(A',B',C',D')$ respectively.

Then, $g'  g^{-1}$ sends $(A,B,C,D)$ to $(A',B',C',D')$, therefore this $H(R)$-action is transitive.

Suppose that $h$ in $H(R)$ sends $(A,B,C,D)$ to $(A',B',C',D')$. Then, $h  g$ sends $\begin{pmatrix}
\begin{pmatrix} 1 \\ 0 \\ 0 \end{pmatrix},  \begin{pmatrix} 0 \\ 1 \\ 0 \end{pmatrix},  \begin{pmatrix} 0 \\ 0 \\1 \end{pmatrix},  \begin{pmatrix} 1 \\ 1 \\ 1 \end{pmatrix}
\end{pmatrix}$ to $(A',B',C',D')$. Hence, $h  g=g'$  by Lemma \ref{lemmatrixauto} and therefore $h=g'  g^{-1}$. Thus, $\omega_4(R)$ is an $H(R)$-torsor under this action.
\end{proof}

\begin{rmk}
Notice that we have an isomorphism $H(R)\to \omega_4$ which maps $h$ to the quadruple of points $\begin{pmatrix}
h\begin{pmatrix} 1 \\ 0 \\ 0 \end{pmatrix},& h\begin{pmatrix} 0 \\ 1 \\ 0 \end{pmatrix},& h\begin{pmatrix} 0 \\ 0 \\1 \end{pmatrix},&  h\begin{pmatrix} 1 \\ 1 \\ 1 \end{pmatrix}
\end{pmatrix}$.
Moreover, this isomorphism commutes with left $H(R)$-action on $H(R)$ via group multiplication and the left $H(R)$-action on $\omega_4(R)$ described above.
\end{rmk}

The following theorem completely describes the morphisms between projective planes over local rings.

\begin{thrm} \label{thrmprojmorph}
 Let $\phi: \mathbb P(R) \to \mathbb P(S)$ be a morphism of projective planes over the rings $R$ and $S$, where $S$ is local. Then, there exists a unique $g$ in $H(S)$ and a unique ring homomorphism $\alpha: R\to S$ such that $\phi =g \circ \mathbb P(\alpha)$.
\end{thrm}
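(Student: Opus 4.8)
The plan is to reduce $\phi$ to a map fixing the four reference points $(1,0,0),(0,1,0),(0,0,1),(1,1,1)$, where Lemma \ref{lemringpart} already does all the work. The first step is to observe that $\phi$ carries these four points, which are in general position by the Examples above, to four points in general position of $\mathbb P(S)$. For this I would check that $\phi$ preserves non-collinearity: if $A$, $B$, $C$ are non-collinear then in particular $A\#B\#C\#A$, which is preserved since morphisms respect $\#$, and moreover (by Lemma \ref{lemprojptsen} and Proposition \ref{propptslines}) the point-map determines the line-map so that $\phi(\ovl{BC})=\ovl{\phi(B)\phi(C)}$ whenever $B\#C$; combined with preservation of $\notin$, the relations $A\notin\ovl{BC}$, etc., transfer to their images. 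Hence the quadruple $\bigl(\phi(1,0,0),\phi(0,1,0),\phi(0,0,1),\phi(1,1,1)\bigr)$ lies in $\omega_4(S)$.

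Since $S$ is local, Lemma \ref{lemmatrixauto} then supplies a unique $g\in H(S)$ whose induced automorphism (which is a genuine automorphism of $\mathbb P(S)$ by Proposition \ref{propmatrproj}) sends $(1,0,0),(0,1,0),(0,0,1),(1,1,1)$ to $\phi(1,0,0),\phi(0,1,0),\phi(0,0,1),\phi(1,1,1)$ respectively. Composing $\phi$ with the automorphism induced by $g^{-1}$ gives a morphism $g^{-1}\circ\phi:\mathbb P(R)\to\mathbb P(S)$ which now fixes each of the four reference points, since $(g^{-1}\circ\phi)(1,0,0)=g^{-1}(\phi(1,0,0))=(1,0,0)$ and similarly for the other three. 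Applying Lemma \ref{lemringpart} to $g^{-1}\circ\phi$ yields a unique ring homomorphism $\alpha:R\to S$ with $g^{-1}\circ\phi=\mathbb P(\alpha)$, and therefore $\phi=g\circ\mathbb P(\alpha)$, establishing existence.

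For uniqueness, suppose $\phi=g\circ\mathbb P(\alpha)$ for some $g\in H(S)$ and ring homomorphism $\alpha$. Because $\alpha(0)=0$ and $\alpha(1)=1$, the map $\mathbb P(\alpha)$ fixes all four reference points, so $g$ must send the standard quadruple to $\bigl(\phi(1,0,0),\phi(0,1,0),\phi(0,0,1),\phi(1,1,1)\bigr)$; by the uniqueness clause of Lemma \ref{lemmatrixauto} this pins down $g$ completely. Once $g$ is determined, $\mathbb P(\alpha)=g^{-1}\circ\phi$ is determined, and the uniqueness of $\sigma$ in Lemma \ref{lemringpart} forces $\alpha$ to be unique as well.

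I expect the only genuinely substantive step to be the preservation of general position, and in particular the passage $A\notin\ovl{BC}\Rightarrow\phi(A)\notin\ovl{\phi(B)\phi(C)}$, which hinges on the identity $\phi(\ovl{BC})=\ovl{\phi(B)\phi(C)}$ for apart $B,C$; everything afterwards is a formal combination of Lemmas \ref{lemringpart} and \ref{lemmatrixauto}. It is worth emphasizing that locality of $S$ enters exactly twice and only through $S$: once so that $g$ and $g^{-1}$ give honest automorphisms of $\mathbb P(S)$ (Proposition \ref{propmatrproj}), and once so that the torsor/uniqueness statement of Lemma \ref{lemmatrixauto} is available; no hypothesis on $R$ is needed.
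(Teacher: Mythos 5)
Your proposal is correct and follows essentially the same route as the paper's proof: transport the standard quadruple $(1,0,0),(0,1,0),(0,0,1),(1,1,1)$ along $\phi$, use Lemma \ref{lemmatrixauto} to peel off a unique $g\in H(S)$, apply Lemma \ref{lemringpart} to $g^{-1}\circ\phi$, and derive uniqueness of the pair $(g,\alpha)$ from the uniqueness clauses of those two lemmas. The only difference is that you spell out the preservation of general position (via $\phi(\ovl{BC})=\ovl{\phi(B)\phi(C)}$ and preservation of $\notin$), which the paper leaves implicit in the phrase ``$\phi$ is a morphism of projective planes''; that elaboration is sound and fills in exactly the right detail.
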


\begin{proof}
$\phi$ sends the points $(1,0,0)$, $(0,1,0)$, $(0,0,1)$ and $(1,1,1)$ to the points $A$, $B$, $C$ and $D$ respectively. $\phi$ is a morphism of projective planes, therefore any three of the points $A$, $B$, $C$ and $D$  are non-collinear. By Lemma \ref{lemmatrixauto}, there exists a unique $g$ in $H(S)$ which induces an automorphism of the projective plane over $S$ which sending $(1,0,0)$, $(0,1,0)$, $(0,0,1)$ and $(1,1,1)$ to the points $A$, $B$, $C$ and $D$ respectively. $g^{-1}$ induces the inverse automorphism.

$g^{-1}\phi$ satisfies the conditions of Lemma \ref{lemringpart}, so it is of he form $\mathbb{P}(\alpha)$ for a unique ring homomorphism $\alpha:R\to S$. Hence, $\phi=g \circ \mathbb{P}(\alpha)$.

Suppose $\phi$ is also equal to $g'\circ \mathbb{P}(\alpha')$. Then, $g'$ sends $(1,0,0)$, $(0,1,0)$, $(0,0,1)$ and $(1,1,1)$ to the points $A$, $B$, $C$ and $D$, therefore $g=g'$. $g$ is an isomorphism, therefore $\mathbb P(\alpha)=\mathbb P(\alpha')$ and hence $\alpha=\alpha'$.
\end{proof}

\begin{rmk}
Let $\phi:\mathbb P(R)\to \mathbb P(S)$ be a morphism of projective planes such that $\phi=g\circ \mathbb P(\alpha)$ where $g$ is in $\mathbb H(S)$ and $\alpha:R\to S$ a ring homomorphisms. Let $\psi: \mathbb P(S) \to \mathbb P(T)$ be a second morphism of projective planes such that and $\psi=k\circ \mathbb P(\beta)$ where $g$ is in $H(T)$ and $\beta:S\to T$ is a ring homomorphism.Then, 
\begin{displaymath}
\begin{split}
\psi\circ \phi & = k\circ \mathbb P(\beta)\circ g \circ \mathbb P(\alpha) \\
& = k \circ (\beta(g)) \circ \mathbb P(\beta) \circ \mathbb P(\alpha) \\
& = (k\circ \beta(g)) \circ \mathbb P(\beta\circ \alpha),
\end{split}
\end{displaymath}
where $\beta(g)$ is the image of $g$ of $H(S)$ under $\beta$.
\end{rmk}

\section{Desargues' theorem on the projective plane} \label{secdes}

In this section we present our version Desargues' theorem which holds on projective planes over local rings. This version of Desargues' theorem can be written as a geometric sequent (in the language of preprojective planes) and is added as an axiom of the theory of projective planes in Section \ref{secthryproj}. In Theorem \ref{thrmsmallDes} and Theorem \ref{thrmbigDes} we use Desargues' theorem on the projective plane to prove the small and the big Desargues' theorems on the affine plane. The proofs of these theorems might be useful to a reader who wishes to understand the connection between this new version of Desargues' theorem and the ones that appears in classical treatments of the subject as in \cite{Harts}.

\begin{defn}
Given two lines $k$, $l$ and two points $A$, $B$ of a preprojective plane we say that they satisfy $\delta(k,l,A,B)$ when there exists a line $r$ and a point $X$ such that $X$ lies on each of the lines $k$, $l$ and $r$, and each of the points $A$, $B$ and $X$ lie on $r$.
\end{defn}

$\delta(k,l,A,B)$ can be written as a geometric formula in the following way 
$$\exists r \exists X. (A,B,X\in r) \wedge (X \in k,l,r).$$

\begin{center}
\begin{tikzpicture}
 \draw 
(0,0) node[left] {$A$} 
-- (2,2) node[above] {$B$} 

(0.5,0.5) node[left] {$r$}

(0.8,-0.8)
-- (1,1) node[above] {$X$}
-- (2.4,0.8)
(2,1) node {$l$}
(1,0) node {$k$} ;
\end{tikzpicture}
\end{center}

\begin{prop}
For $k$, $l$ lines and $A$, $B$ points of a preprojective plane the following hold:
\begin{enumerate}
\item $\delta(k,l,A,B) \vdash_{k,l,A,B} \delta(l,k,A,B)$.
\item $\delta(k,l,A,B) \vdash_{k,l,A,B} \delta(k,l,B,A)$.
 \item $\top \vdash_{k,A} \delta(k,k,A,A)$.
\item $A\#B \wedge (A\notin k \vee B\notin k) \vdash_{k,A,B} \delta(k,k,A,B)$.
\item $k\#l \wedge (A\notin k \vee A\notin l) \vdash_{k,l,A} \delta(k,l,A,A)$.
\end{enumerate}
\end{prop}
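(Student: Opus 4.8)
The plan is to treat the five sequents separately, noting that (1) and (2) are immediate from the symmetry of the defining formula $\exists r\exists X.(A,B,X\in r)\wedge(X\in k,l,r)$: its conjunct $X\in k\wedge X\in l$ is symmetric in $k,l$, so any witnessing pair $(r,X)$ for $\delta(k,l,A,B)$ also witnesses $\delta(l,k,A,B)$, giving (1); likewise the conjunct $A\in r\wedge B\in r$ is symmetric in $A,B$, giving (2). For the remaining three I will exhibit an explicit line $r$ and point $X$ and then verify the required apartness using the constructive-complement axioms of preprojective planes.

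For (3) the goal is a line $r$ through $A$ meeting $k$ in a point $X$, for then $A,A,X\in r$ and $X\in k,k,r$. Using the ``enough points'' axiom I take points $P,Q\in k$ with $P\#Q$; the apartness axiom on points yields $A\#P\vee A\#Q$, so in either case $A$ is apart from one of $P,Q$. Calling this point $X$ (which lies on $k$), the line $\ovl{AX}$ exists, and $r:=\ovl{AX}$ works. The disjunction is discharged by coherent case analysis.

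For (4), since $A\#B$ I set $r:=\ovl{AB}$, which contains $A,B$; it then suffices to find a common point of $k$ and $r$, i.e. to establish $k\#\ovl{AB}$ and take $X:=k\cap\ovl{AB}$. From the hypothesis $A\notin k\vee B\notin k$, in the case $A\notin k$ the axiom $A\notin k\vdash k\#l\vee A\notin l$ applied with $l=\ovl{AB}$, together with $A\in\ovl{AB}$ to discard the second disjunct, yields $k\#\ovl{AB}$; the case $B\notin k$ is symmetric. For (5), since $k\#l$ I set $X:=k\cap l$, which lies on both $k$ and $l$; it suffices to establish $A\#X$ and take $r:=\ovl{AX}$. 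From $A\notin k\vee A\notin l$, in the case $A\notin k$ the axiom $A\notin k\vdash A\#B\vee B\notin k$ with $B=X$ gives $A\#X$, since $X\in k$ rules out $X\notin k$; the case $A\notin l$ is symmetric.

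The main obstacle is the constructive bookkeeping in (3)--(5): one cannot decide whether $A$ lies on $k$, so each apartness conclusion must be routed through the complement-axioms $A\notin k\vdash_{A,B,k}A\#B\vee B\notin k$ and $A\notin k\vdash_{A,k,l}k\#l\vee A\notin l$, with the unwanted disjunct eliminated via the incompatibility $A\in l\wedge A\notin l\vdash\bot$. Carrying the hypothesis disjunctions through to the conclusion by coherent disjunction-elimination, rather than invoking any decidability, is the delicate point; once the relevant apartness is secured, the witnesses $r$ and $X$ and the final incidence verifications are routine.
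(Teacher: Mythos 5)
Your proof is correct and follows essentially the same route as the paper: symmetry of the defining formula for (1)--(2), a point on $k$ apart from $A$ for (3), and $r=\ovl{AB}$ with $X=k\cap\ovl{AB}$ for (4), with the apartness steps routed through the axioms $A\notin k\vdash k\#l\vee A\notin l$ and $A\notin k\vdash A\#B\vee B\notin k$ exactly as the paper implicitly does. The only cosmetic difference is that the paper dispatches (5) by the duality principle, whereas you write out the dual argument directly --- which is the same proof.
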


\begin{proof}
 1 and 2 are clearly true by the symmetry in the definition of $\delta$.

3. Let $k$ be a line and let $A$ be a point of a preprojective plane. There exist points $X$, $Y$ that are apart from each other and lie on $k$. $A$ is apart from at least one of $X$ and $Y$. Without loss of generality let us assume that $A\# X$. Then, $A$ and $X$ lie on $\ovl{AX}$ and $X$ lies on both $k$ and $\ovl{AX}$, therefore $\delta(k,k,A,A)$ is satisfied.

4. Let $A$, $B$ be points of the preprojective plane that are apart from each other and let $k$ be a line such that at least one of $A$ and $B$ lies outside $k$. Then, $\ovl{AB}$ is apart from $k$. Let $X$ be the intersection of $k$ and $\ovl{AB}$. $A$, $B$ and $X$ lie on $\ovl{AB}$, and $X$ lies on both $k$ and $\ovl{AB}$, hence $\delta(k,k,A,B)$ is satisfied. 

5 is true by the duality principle because it is dual to 4.
\end{proof}

\begin{rmk}
 Notice that given a line $k$ and two points $A$ and $B$ of a preprojective plane, $\delta(k,k,A,B)$ is not necessarily true because there might not be a line passing through both $A$ and $B$, and even in the case where $A\#B$ there might be no point lying on both $k$ and $\ovl{AB}$.

Dually, given two lines $k$ and $l$, and a point $A$ of a preprojective plane, $\delta(k,l,A,A)$ is not necessarily true because there might not be a point lying on both $k$ and $l$, and even in the case where $k\#l$ there might be no line through both $k\cap l$ and $A$.
\end{rmk}

\begin{lem} \label{lemS}
Given a local ring $R$, let $k=\pt{\kappa}$, $l=\pt{\lambda}$ be lines and let $A=\pt{a}$, $B=\pt{b}$ be points of $\mathbb P(R)$. If $\delta(k,l,A,B)$ is satisfied, then the determinant of the product

$$\begin{pmatrix}
 \kappa_0 & \kappa_1 & \kappa_2
\\ \lambda_0 & \lambda_1 & \lambda_2
\end{pmatrix}
\begin{pmatrix}
 a_0 & b_0 \\
a_1 & b_1 \\
a_2 & b_2
\end{pmatrix}$$
is $0$.
\end{lem}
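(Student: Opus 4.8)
The plan is to reduce the vanishing of this $2\times 2$ determinant to one collinearity fact and one concurrency fact, both supplied directly by the hypothesis $\delta(k,l,A,B)$. First I would unwind the product. Writing $\mathbf a=\pt a$, $\mathbf b=\pt b$, $\boldsymbol\kappa=\pt\kappa$, $\boldsymbol\lambda=\pt\lambda$, the product is the $2\times 2$ matrix with rows $(\Sum\kappa_i a_i,\ \Sum\kappa_i b_i)$ and $(\Sum\lambda_i a_i,\ \Sum\lambda_i b_i)$, so its determinant equals $(\Sum\kappa_i a_i)(\Sum\lambda_i b_i)-(\Sum\kappa_i b_i)(\Sum\lambda_i a_i)$. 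The key preliminary observation is the Lagrange/Binet--Cauchy identity, a polynomial identity valid over any commutative ring, which rewrites this determinant as $(\boldsymbol\kappa\times\boldsymbol\lambda)\cdot(\mathbf a\times\mathbf b)$. Here, for $\mathbf u,\mathbf v\in R^3$, I abbreviate $\mathbf u\times\mathbf v=(u_1v_2-u_2v_1,\ u_2v_0-u_0v_2,\ u_0v_1-u_1v_0)$, which is exactly the triple of $2\times 2$ minors already appearing in Lemma \ref{lemptslines} and Proposition \ref{propiffcollinear}.

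Next I would invoke the hypothesis. By the definition of $\delta(k,l,A,B)$ there are a line $r=\pt\rho$ and a point $X=\pt x$ with $X\in k,l,r$ and $A,B,X\in r$. Since $A$ and $B$ lie on $r$, Lemma \ref{lemptslines} gives that $\mathbf a\times\mathbf b$ is a multiple of $\boldsymbol\rho$, say $\mathbf a\times\mathbf b=t\boldsymbol\rho$ for some $t\in R$. Since the point $X$ lies on all three lines $k$, $l$, $r$, the dual of Proposition \ref{propdetcol} (available by the duality principle for projective planes over a ring) gives that the determinant of the matrix with columns $\boldsymbol\kappa,\boldsymbol\lambda,\boldsymbol\rho$ is $0$; this determinant is precisely the scalar triple product $(\boldsymbol\kappa\times\boldsymbol\lambda)\cdot\boldsymbol\rho$, so $(\boldsymbol\kappa\times\boldsymbol\lambda)\cdot\boldsymbol\rho=0$.

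Combining the two facts finishes the argument: the determinant in question equals $(\boldsymbol\kappa\times\boldsymbol\lambda)\cdot(\mathbf a\times\mathbf b)=(\boldsymbol\kappa\times\boldsymbol\lambda)\cdot(t\boldsymbol\rho)=t\,\big((\boldsymbol\kappa\times\boldsymbol\lambda)\cdot\boldsymbol\rho\big)=t\cdot 0=0$. Notice that locality of $R$ is not actually used; the result holds over any ring, and the statement is restricted to local rings only because that is the ambient setting of the section. I expect no serious obstacle: the only genuine content is setting up the Binet--Cauchy rewriting correctly and, crucially, routing everything through the auxiliary line $r$ rather than trying to assert that $\mathbf x$ is proportional to $\boldsymbol\kappa\times\boldsymbol\lambda$ (which would require $k\# l$, an assumption we do not have). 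A slightly more symmetric alternative, if preferred, is to apply Lemma \ref{lemptslines} and its dual to obtain both $\mathbf a\times\mathbf b=t\boldsymbol\rho$ and $\boldsymbol\kappa\times\boldsymbol\lambda=s\mathbf x$, and then conclude from $\Sum\rho_i x_i=0$ (i.e. $X\in r$) that $(\boldsymbol\kappa\times\boldsymbol\lambda)\cdot(\mathbf a\times\mathbf b)=st\,(\mathbf x\cdot\boldsymbol\rho)=0$.
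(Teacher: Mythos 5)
Your proof is correct, and it shares the paper's skeleton — both unwind $\delta(k,l,A,B)$ to obtain the point $X$ and the auxiliary line $r=\pt{\rho}$, and both use the dual half of Proposition \ref{propdetcol} to conclude that the $3\times 3$ determinant with columns $\boldsymbol{\kappa},\boldsymbol{\lambda},\boldsymbol{\rho}$ vanishes — but your finishing mechanism is genuinely different. The paper multiplies the singular matrix with rows $\boldsymbol{\kappa},\boldsymbol{\lambda},\boldsymbol{\rho}$ on the right by the matrix with columns $\mathbf{a},\mathbf{b},e_i$, where $e_i$ is a standard basis vector chosen by a three-way case split so that the coordinate $r_i$ is invertible; the rows $\boldsymbol{\rho}\cdot\mathbf{a}=\boldsymbol{\rho}\cdot\mathbf{b}=0$ then show that $r_i$ times the target $2\times 2$ determinant vanishes, and the unit $r_i$ is cancelled. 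You instead contract the two pieces of singular data directly: Binet--Cauchy rewrites the target determinant as $(\boldsymbol{\kappa}\times\boldsymbol{\lambda})\cdot(\mathbf{a}\times\mathbf{b})$, Lemma \ref{lemptslines} converts $A,B\in r$ into $\mathbf{a}\times\mathbf{b}=t\boldsymbol{\rho}$, and the triple product $(\boldsymbol{\kappa}\times\boldsymbol{\lambda})\cdot\boldsymbol{\rho}$ is exactly the vanishing $3\times 3$ determinant. This buys you a cleaner argument with no case split and no invertibility anywhere, so your closing remark is right: the lemma holds verbatim over an arbitrary commutative ring, since every ingredient you cite (Lemma \ref{lemprojcol}, Lemma \ref{lemptslines}, Proposition \ref{propdetcol}) is proved in the paper without hypotheses on $R$; your symmetric variant via the dual of Lemma \ref{lemptslines} and $\boldsymbol{\rho}\cdot\mathbf{x}=0$ is equally valid. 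What the paper's more pedestrian computation buys is reusability: the identical matrix product is run backwards in Lemma \ref{lemSiff}, where invertibility of $r_0$ — genuinely available there because $r=\ovl{AB}$ with $A\# B$, by the construction in Proposition \ref{propexistsuniqueline} — is precisely what makes the argument reversible, whereas on your route the converse would require the multiplier $t$ from Lemma \ref{lemptslines} to be a unit, which is the same fact in different clothing.
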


\begin{proof}
In the following proof we write $\kappa_a$ for the inner product $\pt{\kappa}\cdot \pt{a}$ and similarly $\lambda_a=\pt{\lambda}\cdot \pt{a}$, $\kappa_b= \pt{\kappa} \cdot \pt{b}$, and $\lambda_b=\pt{\lambda} \cdot \pt{b}$.

Let $k$, $l$, $A$ and $B$ be as above and let $X$ be a point lying on both $k$ and $l$, and let $r$ be a line passing through all $A$, $B$ and $X$.  Let $X=\pt{x}$ and $r=\pt{r}$.

$k$, $l$ and $r$ pass through a common point $X$ therefore by Proposition \ref{propdetcol} the determinant of the matrix
$\begin{pmatrix}
\kappa_0 & \kappa_1 & \kappa_2 \\
\lambda_0 & \lambda_1 & \lambda_2 \\
r_0 & r_1 & r_2
\end{pmatrix}$ is $0$.

One of the coordinates of $\pt{r}$ is invertible. Without loss of generality, let us assume that $r_0$ is invertible since the other two cases are symmetric. Consider the product
$$\begin{pmatrix}
\kappa_0 & \kappa_1 & \kappa_2 \\
\lambda_0 & \lambda_1 & \lambda_2 \\
r_0 & r_1 & r_2
\end{pmatrix}
\begin{pmatrix}
 a_0 & b_0 & 1 \\
a_1 & b_1  & 0\\
a_2 & b_2  & 0 
\end{pmatrix}
= \begin{pmatrix}
\kappa_a & \kappa_b & \kappa_0 \\
\lambda_a & \lambda_b & \lambda_0 \\
0 & 0 & r_0
\end{pmatrix}$$
and observe that the determinant of the left hand side is $0$, therefore the determinant of the right hand side is also $0$. Hence, $r_0 (\kappa_a \lambda_b - \kappa_b \lambda_a)=0$, and since $r_0$ is invertible we conclude that $\kappa_a \lambda_b - \kappa_b \lambda_a = 0$. $\kappa_a \lambda_b - \kappa_b \lambda_a$ is the determinant of the product 
$\begin{pmatrix}
 \kappa_0 & \kappa_1 & \kappa_2
\\ \lambda_0 & \lambda_1 & \lambda_2
\end{pmatrix}
\begin{pmatrix}
 a_0 & b_0 \\
a_1 & b_1 \\
a_2 & b_2
\end{pmatrix}$,
hence the result.
\end{proof}

\begin{lem} \label{lemSiff}
Given a local ring $R$, let $k=\pt{\kappa}$, $l=\pt{\lambda}$ be lines and let $A=\pt{a}$, $B=\pt{b}$ be points of $\mathbb P(R)$, such that either $k\#l$ or $A\#B$, and at least one of the points $A$ and $B$ lies outside at least one of the lines $k$ and $l$. Then $\delta(k,l,A,B)$ is satisfied iff the determinant of the product

$$\begin{pmatrix}
 \kappa_0 & \kappa_1 & \kappa_2
\\ \lambda_0 & \lambda_1 & \lambda_2
\end{pmatrix}
\begin{pmatrix}
 a_0 & b_0 \\
a_1 & b_1 \\
a_2 & b_2
\end{pmatrix}$$
is $0$.
\end{lem}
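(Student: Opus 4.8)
The forward implication is already Lemma \ref{lemS} (and it needs none of the extra hypotheses), so my plan is to prove only the converse. I would first fix notation by writing $\kappa_a=\sum_i\kappa_i a_i$ and likewise $\kappa_b,\lambda_a,\lambda_b$, so that the displayed product is $\left(\begin{smallmatrix}\kappa_a & \kappa_b\\ \lambda_a & \lambda_b\end{smallmatrix}\right)$ and the determinant hypothesis reads $\kappa_a\lambda_b-\kappa_b\lambda_a=0$. Both the statement $\delta(k,l,A,B)$ and the vanishing of this determinant are invariant under the swaps $k\leftrightarrow l$ and $A\leftrightarrow B$ (each swap only flips the sign of the $2\times 2$ determinant), as are the standing hypotheses; since the ``outside'' assumption is the disjunction $A\notin k\vee A\notin l\vee B\notin k\vee B\notin l$, I may assume without loss of generality that $A\notin k$, i.e. that $\kappa_a$ is invertible. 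I then split on the remaining disjunctive hypothesis $A\# B$ versus $k\# l$, which turn out to be point--line dual to one another.

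In the case $A\# B$ I would take $r:=\overline{AB}$, so that $A,B\in r$ automatically, and produce the point $X$ as an intersection. From $A\notin k$, $A\in r$ and the complementarity axiom $A\notin k\vdash k\#l\vee A\notin l$ (read at $l:=r$) I get $k\# r$, the alternative $A\notin r$ being excluded by $A\in r$; hence $X:=k\cap r$ exists and lies on $k$ and $r$. Since $X\in\overline{AB}$ with $A\# B$, Lemma \ref{lemprojcol} writes $\mathbf{x}=s\mathbf{a}+t\mathbf{b}$, and $X\in k$ becomes $s\kappa_a+t\kappa_b=0$; multiplying by $\lambda_a$ and using $\kappa_b\lambda_a=\kappa_a\lambda_b$ gives $\kappa_a(s\lambda_a+t\lambda_b)=0$, whence $s\lambda_a+t\lambda_b=0$ because $\kappa_a$ is invertible. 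That says $X\in l$, so $\delta(k,l,A,B)$ holds with this $r$ and $X$.

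In the case $k\# l$ I would dualise: take $X:=k\cap l$ (coordinates $\kappa\times\lambda$ by the dual of Proposition \ref{propexistsuniqueline}), obtain $A\# X$ from $A\notin k$ and $X\in k$ via the axiom $A\notin k\vdash A\# B\vee B\notin k$ (read at $B:=X$), and set $r:=\overline{AX}$. The only thing left is $B\in r$, which by Proposition \ref{propiffcollinear} amounts to $\det[\mathbf{a}\mid\mathbf{x}\mid\mathbf{b}]=0$; expanding the scalar triple product with the polynomial identity $(\kappa\times\lambda)\times\mathbf{b}=\lambda\kappa_b-\kappa\lambda_b$ (valid over any commutative ring) turns this determinant into $\lambda_a\kappa_b-\kappa_a\lambda_b$, which vanishes by hypothesis. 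Alternatively this whole case follows formally from the previous one by the duality principle.

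The one place I expect to have to be careful is the use of the ``outside'' hypothesis: invertibility of $\kappa_a$ is needed twice over --- once to guarantee that the auxiliary intersection exists at all (that $k\# r$, resp. $A\# X$) and once to cancel $\kappa_a$ at the final step --- and it is precisely this double use that explains why the hypothesis cannot be dropped, since otherwise the determinant can vanish for degenerate reasons while $\delta$ fails.
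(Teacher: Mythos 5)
Your proof is correct, and its skeleton coincides with the paper's: the forward direction is delegated to Lemma \ref{lemS}, the converse splits into the two dual cases, and in the case $A\# B$ you take $r=\ovl{AB}$ and extract $k\# r$ from $A\notin k$ exactly as the paper does. Where you genuinely diverge is in certifying the final incidence. The paper first transfers the $2\times 2$ hypothesis to the three-line statement $\det(\boldsymbol\kappa,\boldsymbol\lambda,\mathbf r)=0$, by a $3\times 3$ matrix-product computation in which the unit being cancelled is a WLOG-invertible \emph{coordinate} $r_0$ of $r$ (free, since $r$ is a line), and then invokes the concurrency criterion of Proposition \ref{propiffcollinear} to produce the common point; you instead construct $X=k\cap r$ first, parametrize it as $s\mathbf a+t\mathbf b$ via Lemma \ref{lemprojcol}, and verify $X\in l$ by cancelling the invertible $\kappa_a$ directly. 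Your route is slightly more elementary (no $3\times 3$ determinant identity, no appeal to Proposition \ref{propiffcollinear} in this case), at the price of consuming the outside hypothesis twice; note that in the paper's version $A\notin k$ is used only once, to get $k\# r$, so your closing claim that the double use of $\kappa_a$ is forced is an artifact of your argument rather than intrinsic --- though the hypothesis itself is of course indispensable, since without it $k\# r$ can fail and no common point need exist. Your explicit handling of the $k\# l$ case via $\mathbf x=\boldsymbol\kappa\times\boldsymbol\lambda$, the axiom $A\notin k\vdash A\# B\vee B\notin k$, and the triple-product identity is also sound, but, as you note and as the paper does, the duality principle disposes of that case at once. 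Finally, your upfront reduction to $A\notin k$ is legitimate: the swaps $k\leftrightarrow l$ and $A\leftrightarrow B$ preserve every hypothesis and the conclusion (each merely flips the sign of the determinant), and their orbit covers all four disjuncts of the outside hypothesis.
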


\begin{proof}
The direct implication of the statement was already proved in the previous lemma. 

For the converse implication, let us suppose that the determinant of the product of matrices described above is $0$.  Let us define $\kappa_a$, $\lambda_a$, $\kappa_b$ and $\lambda_b$ as in the proof of the previous lemma. We only consider the case where $A\#B$ since the case where $k\# l$ is dual. Given that $A$ is apart from $B$, there exists a unique line $r=\ovl{AB}$ passing through both $A$ and $B$, with coordinates $\pt{r}=(a_2b_1-a_1b_2, a_0b_2 -a_2b_0,a_1b_0-a_0b_1)$. Without loss of generality, let us assume that $r_0=a_2b_1-a_1b_2$ is invertible. Let us consider the product 
$$\begin{pmatrix}                                                                                                                                                                                                                                                                                                                                                                                                                                                                                                                                                                                                                                                                                                                                                                                                                                                                                            
\kappa_0 & \kappa_1 & \kappa_2 \\
\lambda_0 & \lambda_1 & \lambda_2 \\
r_0 & r_1 & r_2
\end{pmatrix}
\begin{pmatrix}
a_0 & b_0 & 1\\
a_1 & b_1 & 0\\
a_2 & b_2 & 0
\end{pmatrix}
=
\begin{pmatrix}
 \kappa_a & \kappa_b & \kappa_0 \\
\lambda_a & \lambda_b & \lambda_0 \\
0 & 0 & r_0
\end{pmatrix}.
$$
Determinants commute with matrix multiplication therefore 
$$r_0 \det
\begin{pmatrix}                                                                                                                                                                                                                                                                                                                                                                                                                                                                                                                                                                                                                                                                                                                                                                                                                                                                                            
\kappa_0 & \kappa_1 & \kappa_2 \\
\lambda_0 & \lambda_1 & \lambda_2 \\
r_0 & r_1 & r_2
\end{pmatrix}
= r_0
\det
\begin{pmatrix}
 \kappa_a & \kappa_b \\
\lambda_a & \lambda_b 
\end{pmatrix}$$
and since $r_0$ is invertible we conclude that
$$\det
\begin{pmatrix}                                                                                                                                                                                                                                                                                                                                                                                                                                                                                                                                                                                                                                                                                                                                                                                                                                                                                            
\kappa_0 & \kappa_1 & \kappa_2 \\
\lambda_0 & \lambda_1 & \lambda_2 \\
r_0 & r_1 & r_2
\end{pmatrix}
= \det
\begin{pmatrix}
 \kappa_a & \kappa_b \\
\lambda_a & \lambda_b 
\end{pmatrix}.$$
The right hand side above is equal to the determinant of the product $$\begin{pmatrix}
 \kappa_0 & \kappa_1 & \kappa_2
\\ \lambda_0 & \lambda_1 & \lambda_2
\end{pmatrix}
\begin{pmatrix}
 a_0 & b_0 \\
a_1 & b_1 \\
a_2 & b_2
\end{pmatrix}$$
and therefore is equal to $0$. Hence,
$$\det
\begin{pmatrix}                                                                                                                                                                                                                                                                                                                                                                                                                                                                                                                                                                                                                                                                                                                                                                                                                                                                                            
\kappa_0 & \kappa_1 & \kappa_2 \\
\lambda_0 & \lambda_1 & \lambda_2 \\
r_0 & r_1 & r_2
\end{pmatrix}=0.$$

At least one of $A$ and $B$ lies outside at least one of $k$ and $l$. Without loss of generality, let us assume that $A\notin k$. Then, either $A\notin r$ or $k\# r$. $A$ lies on $r$, therefore $k$ is apart from $r$. Hence, by Proposition \ref{propiffcollinear}, there exists a point lying on all three lines, and therefore $\delta(k,l,A,B)$.
\end{proof}

\begin{rmk}
The extra conditions we have added above, make sure of the uniqueness of $X$ and $r$ such that all $A$, $B$ and $X$ lie on  all $k$, $l$ and $r$.
\end{rmk}

Before presenting the full version of Desargues' theorem we prove the following lemma which includes the cases we need for the full version of Desargues' theorem.

\begin{lem} \label{lempredes}
Let $R$ be a local ring and let $A$, $B$, $C$, $D$ be points, and $k$, $l$, $m$, $n$ lines of $\mathbb P(R)$ such that $\delta(k,l,A,B)$, $\delta(l,m,B,C)$, $\delta(m,n,C,D)$, $\delta(n,k,D,A)$, $\delta(k,m,B,D)$, and such that either $l\#n$ or $A\#C$, and at least one of the points $A$, $C$ lies outside at least one of the lines $l$, $n$. Then $\delta(l,n,A,C)$ is satisfied as long as any one of the following conditions hold:
\begin{enumerate}
 \item $B\notin k$ and $D\notin m$, \label{itemdescase1}
\item $D\notin k$, $D\notin m$ and $B\notin l$, \label{itemdescase2}
\item $B\notin k$, $D\notin k$ and $C\notin m$, \label{itemdescase3}
\item $D\notin k$, $C\notin m$ and $B\notin l$, \label{itemdescase4}
\item $A\notin k$, $B\notin l$, $D\notin n$ and $C\notin m$. \label{itemdescase5}
\end{enumerate}

\end{lem}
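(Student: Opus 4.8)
The plan is to turn the five $\delta$-hypotheses into scalar equations, reduce the goal to one more such equation, and then dispatch the five cases by solving those equations using the available invertible inner products. Following the notation of Lemma~\ref{lemS}, write $\kappa_a=\pt{\kappa}\cdot\pt{a}$ and likewise $\kappa_b,\lambda_a,\dots,\nu_d$ for the inner products of the coordinate vectors of $k,l,m,n$ with those of $A,B,C,D$; recall that ``$P\notin q$'' says exactly that the corresponding inner product is invertible. Applying Lemma~\ref{lemS} to $\delta(k,l,A,B)$, $\delta(l,m,B,C)$, $\delta(m,n,C,D)$, $\delta(n,k,D,A)$, $\delta(k,m,B,D)$ yields
\[
\kappa_a\lambda_b=\kappa_b\lambda_a,\quad \lambda_b\mu_c=\lambda_c\mu_b,\quad \mu_c\nu_d=\mu_d\nu_c,\quad \kappa_a\nu_d=\kappa_d\nu_a,\quad \kappa_b\mu_d=\kappa_d\mu_b.
\]
The standing hypotheses on $(l,n,A,C)$ are precisely the precondition of Lemma~\ref{lemSiff}, so to obtain $\delta(l,n,A,C)$ it suffices to prove the single identity $\lambda_a\nu_c=\lambda_c\nu_a$.

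First I would record the \emph{master identity}. Chaining the five equations in the order (1),(3),(2),(4),(5) transforms $\kappa_b\mu_d\,\lambda_a\nu_c$ successively into $\kappa_a\lambda_b\mu_d\nu_c$, $\kappa_a\lambda_b\mu_c\nu_d$, $\kappa_a\lambda_c\mu_b\nu_d$, $\kappa_d\lambda_c\mu_b\nu_a$, and finally $\kappa_b\mu_d\,\lambda_c\nu_a$, giving
\[
\kappa_b\mu_d\,(\lambda_a\nu_c-\lambda_c\nu_a)=0.
\]
In case~\ref{itemdescase1} both $\kappa_b$ and $\mu_d$ are invertible, so $\kappa_b\mu_d$ is a unit and cancelling yields $\lambda_a\nu_c=\lambda_c\nu_a$. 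Case~\ref{itemdescase5} reduces to this one: the first equation shows $\kappa_b\lambda_a=\kappa_a\lambda_b$ is invertible, whence $\kappa_b$ is a unit, and the third shows $\mu_d\nu_c=\mu_c\nu_d$ is invertible, whence $\mu_d$ is a unit, so again $\kappa_b\mu_d$ may be cancelled.

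For cases~\ref{itemdescase2}, \ref{itemdescase3}, \ref{itemdescase4} the factor $\kappa_b\mu_d$ need not be a unit, so instead of cancelling I would solve the equations for the off-diagonal inner products and substitute directly into $\lambda_a\nu_c-\lambda_c\nu_a$. For instance, in case~\ref{itemdescase4} the units $\kappa_d,\mu_c,\lambda_b$ let one solve the first, third and fourth equations for $\kappa_a$, $\nu_d$ and then $\nu_a$, so that $\lambda_c\nu_a=\lambda_a\nu_c\cdot\bigl[\lambda_c\kappa_d^{-1}\mu_c^{-1}\mu_d\lambda_b^{-1}\kappa_b\bigr]$; the fifth equation rewrites $\kappa_d^{-1}\mu_d\kappa_b$ as $\mu_b$ and the second rewrites $\lambda_c\mu_b$ as $\lambda_b\mu_c$, collapsing the bracket to $1$ and giving $\lambda_a\nu_c-\lambda_c\nu_a=0$. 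Cases~\ref{itemdescase2} and \ref{itemdescase3} follow the same template with the appropriate equations chosen according to which inner products are invertible. In every case, once $\lambda_a\nu_c=\lambda_c\nu_a$ is in hand, Lemma~\ref{lemSiff} delivers $\delta(l,n,A,C)$.

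The \textbf{main obstacle} is the bookkeeping of cases~\ref{itemdescase2}--\ref{itemdescase4}: one must verify, for each case separately, that every division performed in the elimination is by an element the case hypotheses genuinely force to be invertible in the local ring, and that the equations not used for elimination are exactly the ones that make the resulting bracket collapse. The underlying reason all five cases succeed is that, over a local ring, the given non-incidence data always suffice to pin down enough of the off-diagonal products; verifying this uniformly, rather than case by case, is the delicate point of the argument.
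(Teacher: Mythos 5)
Your proposal is correct and follows essentially the same route as the paper's proof: both apply Lemma~\ref{lemS} to extract the five inner-product equations, reduce the goal via Lemma~\ref{lemSiff} to the single identity $\lambda_a\nu_c=\lambda_c\nu_a$, and then handle the five cases by multiplying through by (or cancelling) the inner products that the case hypotheses make invertible. Your ``master identity'' is exactly the paper's case~\ref{itemdescase1} computation, your reduction of case~\ref{itemdescase5} to case~\ref{itemdescase1} matches the paper's observation that $\kappa_b$ and $\mu_d$ become invertible there, and your elimination in cases~\ref{itemdescase2}--\ref{itemdescase4} (which I checked goes through as claimed) is algebraically the same chain the paper writes out, with case~\ref{itemdescase3} obtainable either by your template or, as the paper notes, by duality from case~\ref{itemdescase2}.
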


\begin{center}
\begin{tikzpicture}
 \draw 
(0,0) node[left] {$A$} 
-- (2,2) node[above] {$B$} 
-- (4,0) node[right] {$C$}
-- (2,-2) node[below] {$D$}
-- (0,0)

(1.5,1.5)
-- (3,1)
-- (3.5,-0.5)
-- (1,-1)
-- (1.5,1.5)
(2,1) node {$l$}
(3,0) node {$m$}
(2,-1) node {$n$}
(1,0) node {$k$} ;
\end{tikzpicture}
\end{center}

\begin{proof}
In the following proof we write $\kappa_a$ for the inner product $\pt{\kappa}\cdot \pt{a}$ and similarly 
$\lambda_a=\pt{\lambda}\cdot \pt{a}$,
$\nu_a=\pt{\nu}\cdot \pt{a}$,
$\kappa_b= \pt{\kappa} \cdot \pt{b}$,
$\lambda_b=\pt{\lambda} \cdot \pt{b}$,
$\mu_b=\pt{\mu}\cdot \pt{b}$,
$\lambda_c=\pt{\lambda} \cdot \pt{c}$,
$\mu_c=\pt{\mu}\cdot \pt{c}$,
$\nu_c=\pt{\nu}\cdot \pt{c}$,
$\mu_d=\pt{\mu}\cdot \pt{d}$,
$\nu_d=\pt{\nu}\cdot \pt{d}$ and
$\kappa_d= \pt{\kappa} \cdot \pt{d}$.

 The quadruples satisfying the relation $S$ give us the following five equations by Lemma \ref{lemS}:
\begin{displaymath}
\begin{split}
\kappa_a \lambda_b & = \kappa_b \lambda_a \\
\lambda_b \mu_c & = \lambda_c \mu_b \\
\mu_c \nu_d & = \mu_d \nu_c \\
\nu_d \kappa_a & = \nu_a \kappa_d \\
\kappa_b \mu_d & = \kappa_d \mu_b
\end{split}
\end{displaymath}

By Lemma \ref{lemSiff}, to prove that $\delta(l,n,A,C)$ holds it is sufficient to prove that $\lambda_a \nu_c= \nu_a \lambda_c$.

\begin{enumerate}
 \item  In case \ref{itemdescase1} where $B\notin k$ and $D\notin m$, $\kappa_b$ and $\mu_d$ are invertible. By the above equations 
$$\lambda_a \nu_c \kappa_b \mu_d =
\lambda_b \nu_d \kappa_a \mu_c =
\lambda_c \nu_a \kappa_d \mu_b =
\lambda_c \nu_a \kappa_b \mu_d.$$
$\lambda_a \nu_c= \nu_a \lambda_c$ because $\kappa_b$ and $\mu_d$ are invertible, therefore $\delta(l,n,A,C)$.


\item In case \ref{itemdescase2} where $D\notin k$, $D\notin m$ and $B\notin l$, $\kappa_d$, $\mu_d$ and $\lambda_b$  are invertible. By the above equations we conclude that
\begin{displaymath}
\begin{split}
\lambda_a \nu_c \mu_d \kappa_d \lambda_b = \lambda_a \nu_d \mu_c \kappa_d \lambda_b  = \lambda_a \nu_d \mu_b \kappa_d \lambda_c  = \lambda_a \nu_d \mu_d \kappa_b \lambda_c = \\
= \lambda_b \nu_d \mu_d \kappa_a \lambda_c = \lambda_b \nu_a \mu_d \kappa_d \lambda_c  =\lambda_c \nu_a \mu_d \kappa_d \lambda_b .
\end{split}
\end{displaymath}
$\lambda_a \nu_c= \nu_a \lambda_c$ because $\mu_d$ $\kappa_d$ and $\lambda_b$ are invertible, therefore $\delta(l,n,A,C)$



\item Case \ref{itemdescase3} is dual to case \ref{itemdescase2}.

\item In case  \ref{itemdescase4} : $D\notin k$, $C\notin m$ and $B\notin l$, therefore $\kappa_d$, $\mu_c$, $\lambda_b$ are invertible.

\begin{displaymath}
\begin{split}
\lambda_a \nu_c \kappa_d \mu_c \lambda_b =
\lambda_a \nu_c \kappa_d \mu_b \lambda_c =
\lambda_a \nu_c \kappa_b \mu_d \lambda_c =
\lambda_b \nu_c \kappa_a \mu_d \lambda_c = \\
=\lambda_b \nu_d \kappa_a \mu_c \lambda_c =
\lambda_b \nu_a \kappa_d \mu_c \lambda_c =
\lambda_c \nu_a \kappa_d \mu_c \lambda_b.
\end{split}
\end{displaymath}
$\kappa_d$, $\mu_c$ and $\lambda_b$ are invertible, therefore $\lambda_a \nu_c = \lambda_c \nu_a$. Hence, $\delta(l,n,A,C)$.

\item In case \ref{itemdescase5}: $A\notin k$, $B\notin l$, $D\notin n$ and $C\notin m$, therefore $\kappa_a$, $\lambda_b$, $\nu_d$ and $\mu_c$ are invertible. Hence,  $\kappa_b$, $\lambda_a$, $\nu_c$ and $\mu_d$ are also invertible.

\begin{displaymath}
\begin{split}
\lambda_a \nu_c \mu_d \kappa_a \lambda_b =
\lambda_a \nu_d \mu_c \kappa_a \lambda_b =
\lambda_a \nu_a \mu_c \kappa_d \lambda_b =
\lambda_a \nu_a \mu_b \kappa_d \lambda_c = \\
=\lambda_a \nu_a \mu_d \kappa_b \lambda_c =
\lambda_b \nu_a \mu_d \kappa_a \lambda_c =
\lambda_c \nu_a \mu_d \kappa_a \lambda_b.
\end{split}
\end{displaymath}
$\mu_d$, $\kappa_a$ and $\lambda_b$ are invertible, therefore $\lambda_a \nu_c = \lambda_c \nu_a$. Hence, $\delta(l,n,A,C)$.

\end{enumerate}
\end{proof}

\begin{thrm} \label{thrmnewdes}
(Desargues' theorem)
Let $R$ be a local ring and let $A$, $B$, $C$, $D$ be points, and $k$, $l$, $m$, $n$ lines of $\mathbb P(R)$ such that $\delta(k,l,A,B)$, $\delta(l,m,B,C)$, $\delta(m,n,C,D)$, $\delta(n,k,D,A)$, $\delta(k,m,B,D)$, and such that $l\#n$ or $A\#C$, and at least one of the points $A$, $C$ lies outside at least one of the lines $l$, $n$. Then $\delta(l,n,A,C)$ is satisfied when all of the following conditions hold:
\begin{enumerate}
 \item $B$ lies outside at least one of $k$, $l$ and $m$,
 \item $D$ lies outside at least one of $m$, $n$ and $k$,
 \item at least one of $D$, $A$ and $B$ lies outside $k$,
 \item at least one of $B$, $C$ and $D$ lies outside $m$.
\end{enumerate}
\end{thrm}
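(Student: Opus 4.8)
The plan is to deduce the theorem from Lemma \ref{lempredes} by a case analysis on the four disjunctive hypotheses, after first enlarging the lemma's list of sufficient conditions by exploiting the symmetry of the Desargues configuration. Throughout, the translation to algebra is the one already set up: by Lemma \ref{lemS} the five hypotheses $\delta(k,l,A,B)$, $\delta(l,m,B,C)$, $\delta(m,n,C,D)$, $\delta(n,k,D,A)$, $\delta(k,m,B,D)$ record five vanishing $2\times 2$ determinants, and by Lemma \ref{lemSiff} (whose side conditions are exactly the standing assumptions ``$l\#n$ or $A\#C$, and at least one of $A$, $C$ lies outside at least one of $l$, $n$'') the conclusion $\delta(l,n,A,C)$ is equivalent to the vanishing of the remaining determinant. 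Lemma \ref{lempredes} already supplies five conjunctive conditions on outside-relations under which this holds.

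First I would record the symmetry. The five $\delta$-hypotheses, the two side conditions, and the conclusion $\delta(l,n,A,C)$ are all invariant under the Klein four-group of relabellings generated by $g_1$, which interchanges $A$ with $C$, $B$ with $D$, $k$ with $m$ and $l$ with $n$, and $g_2$, which interchanges $B$ with $D$ and $l$ with $n$ while fixing $A$, $C$, $k$, $m$. Using the symmetry of $\delta$ in its two point-arguments and in its two line-arguments, one checks that each generator permutes the four cyclic hypotheses among themselves, fixes the diagonal hypothesis $\delta(k,m,B,D)$, fixes both side conditions, and fixes the conclusion. Hence Lemma \ref{lempredes} may be applied to each of the four relabelled copies of the configuration, and its conclusion is always the same $\delta(l,n,A,C)$. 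This yields, in addition to conditions \ref{itemdescase1}--\ref{itemdescase5}, all of their images under these relabellings as further sufficient conditions; among the new ones are ``$B\notin m$ and $D\notin k$'', ``$B\notin k$, $C\notin m$ and $D\notin n$'', ``$A\notin k$, $B\notin m$ and $D\notin n$'', and ``$A\notin k$, $B\notin l$ and $D\notin m$'', none of which is on the original list.

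Then I would run the case analysis on the enlarged list. I would split first on hypothesis 3 (that is, $D\notin k$, or $A\notin k$, or $B\notin k$), then on hypothesis 4 ($B\notin m$, $C\notin m$, or $D\notin m$), using hypotheses 1 and 2 to settle the remaining choices. For instance, in the branch $D\notin k$ the three subcases of hypothesis 4 fall, after also fixing the relevant disjunct of hypothesis 1 or 2, to condition \ref{itemdescase1}, conditions \ref{itemdescase2}--\ref{itemdescase4} and their relabellings, or to the new condition ``$B\notin m$ and $D\notin k$''; the branches governed by $A\notin k$ and by $B\notin k$ are treated the same way. In every one of the finitely many branches the outside-relations collected contain one of the original or relabelled sufficient conditions, so Lemma \ref{lempredes} applies and delivers $\delta(l,n,A,C)$.

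The bulk of the work, and the main obstacle, is the bookkeeping of this case analysis together with the check that the enlargement is genuinely necessary and correct. One must verify that the relabellings really do preserve all five $\delta$-hypotheses and both side conditions, so that Lemma \ref{lempredes} may legitimately be invoked on each relabelled configuration; and one must confirm that the enlarged family of conditions meets every branch of the four threefold disjunctions. The latter is the delicate point: the branch $B\notin k$, $C\notin m$, $D\notin n$, for example, satisfies all four hypotheses yet meets none of the five original conditions \ref{itemdescase1}--\ref{itemdescase5}, and is caught only by the relabelled condition ``$B\notin k$, $C\notin m$ and $D\notin n$'' coming from \ref{itemdescase4} via $g_2$. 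This is precisely why the symmetry enlargement, rather than a direct appeal to the five cases, is needed.
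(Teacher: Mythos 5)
Your proposal is correct and is essentially the paper's own argument: the paper likewise proves the theorem by reducing every branch of the four disjunctive hypotheses to one of the five conditions of Lemma \ref{lempredes} or to a symmetric (relabelled) version of one of them, the relevant relabellings being exactly your Klein four-group generated by $g_1$ and $g_2$ (the paper's ``symmetric versions''). The only difference is bookkeeping: the paper first consolidates the hypotheses into the disjunction $B\notin k \vee B\notin m \vee D\notin k \vee D\notin m \vee (B\notin l \wedge D\notin n \wedge A\notin k \wedge C\notin m)$, treats the conjunctive case by case \ref{itemdescase5}, and sub-analyzes the representative case $D\notin k$, whereas you enumerate the symmetry-enlarged list of sufficient conditions up front before splitting on the hypotheses.
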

\begin{center}
\begin{tikzpicture}
 \draw 
(0,0) node[left] {$A$} 
-- (2,2) node[above] {$B$} 
-- (4,0) node[right] {$C$}
-- (2,-2) node[below] {$D$}
-- (0,0)

(1.5,1.5)
-- (3,1)
-- (3.5,-0.5)
-- (1,-1)
-- (1.5,1.5)
(2,1) node {$l$}
(3,0) node {$m$}
(2,-1) node {$n$}
(1,0) node {$k$} ;
\end{tikzpicture}
\end{center}

\begin{proof}
To prove this theorem we list all the cases and show that we have already considered them (or a symmetric version of them) in Lemma \ref{lempredes}.

When the four conditions above hold, then the following is also true:

$$B\notin k \vee B\notin m \vee D\notin k \vee D\notin m \vee (B\notin l \wedge D\notin n \wedge A\notin k \wedge C\notin m).$$

Therefore, we consider the five above cases.

The case where $B\notin l \wedge D\notin n \wedge A\notin k \wedge C\notin m$ is case \ref{itemdescase5} of Lemma \ref{lempredes}.

The other four cases ($B\notin k$, $B\notin m$, $D\notin k$ and $D\notin m$) are symmetric so we shall consider the case where $D\notin k$.

By condition 4, $B$ or $C$ or $D$ lie outside $m$:
\begin{itemize}
 \item $B\notin m$. This is symmetric to case \ref{itemdescase1} of Lemma \ref{lempredes}.
\item $C\notin m$

By condition 1, $B$ lies outside $k$, $l$ or $m$
\begin{itemize}
\item $B\notin k$. This is case \ref{itemdescase3} of Lemma \ref{lempredes}.
\item $B\notin l$. This is case \ref{itemdescase4} of Lemma \ref{lempredes}.
\item $B\notin m$. This case was already covered above ($D\notin k$ and $B\notin m$).
\end{itemize}

\item $D\notin m$

By condition 1, $B$ lies outside $k$, $l$ or $m$.
\begin{itemize}
\item $B\notin k$. This is case \ref{itemdescase1} of Lemma \ref{lempredes}.
\item $B\notin l$. This is case \ref{itemdescase2} of Lemma \ref{lempredes}.
\item $B\notin m$. This case was already covered above ($D\notin k$ and $B\notin m$).
\end{itemize}

\end{itemize}
\end{proof}

\begin{rmk}
Notice that Desargues' theorem given in the above form is self-dual.

The four conditions in the statement of Desargues' theorem are necessary. Consider the case of the projective plane over the rational numbers and the following configuration:
$A=(1,0,1)$,
$B=(0,0,1)$,
$C=(0,1,1)$,
$D=(1,1,1)$,
$k=(1,-2,0)$,
$l=(2,1,0)$,
$m=(-2,1,0)$
$n=(2,2,-3)$.
All the conditions are satisfied except from $B$ lying outside one of the lines $k$, $l$ or $m$ and in this case $\delta(l,n,A,C)$ does not hold.

\begin{center}
\begin{tikzpicture}
 \draw
(0,0) node[left] {$B$}
--(2,0) node[below] {$A$}
--(2,2) node[right] {$D$}
--(0,2) node[left] {$C$}
--(0,0)
--(2,1)
--(1,2)
--(0,0)
(0.5,-1)
--(-1,2)
(-0.5,1) node[left] {$l$}
(1,0.5) node[below] {$k$}
(0.7125,1.25) node[left] {$m$}
(1.5,1.5) node[right] {$n$}
;
\end{tikzpicture}
\end{center}

Note that the second condition in Desargues' theorem is symmetric to the first one and the last two are dual to the first two.
\end{rmk}

\section{Pappus' theorem on the projective plane} \label{secpap}

\begin{lem} \label{lemS3}
Let $A=\pt{a}$, $B=\pt{b}$, $X=\pt{x}$, $Y=\pt{x}$, $Z=\pt{z}$, $W=\pt{w}$ be points and let $k=\pt{\kappa}$ and $l=\pt{\lambda}$ be lines of a projective plane over a ring.
If $(X, Y\in k)$, $(Z, W\in l)$, and $\delta(k,l,A,B)$, then
$$\det
\begin{pmatrix}
x_0 & x_1 & x_2 \\
y_0 & y_1 & y_2 \\
a_0 & a_1 & a_2
\end{pmatrix}
\det
\begin{pmatrix}
z_0 & z_1 & z_2 \\
w_0 & w_1 & w_2 \\
b_0 & b_1 & b_2
\end{pmatrix}
=\det
\begin{pmatrix}
x_0 & x_1 & x_2 \\
y_0 & y_1 & y_2 \\
b_0 & b_1 & b_2
\end{pmatrix}
\det
\begin{pmatrix}
z_0 & z_1 & z_2 \\
w_0 & w_1 & w_2 \\
a_0 & a_1 & a_2
\end{pmatrix}.$$
\end{lem}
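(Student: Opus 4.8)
The plan is to reduce the entire identity to the single scalar equation $\kappa_a \lambda_b = \kappa_b \lambda_a$ furnished by Lemma \ref{lemS}, by observing that each of the four $3\times 3$ determinants factors as a scalar triple product through the coordinates of $k$ or of $l$. Throughout I write $\kappa_a$ for the inner product $\pt{\kappa}\cdot\pt{a}$, and similarly $\kappa_b$, $\lambda_a$, $\lambda_b$.

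First I would invoke Lemma \ref{lemptslines}. Since $X,Y\in k$, the vector $(x_1 y_2 - x_2 y_1,\, x_2 y_0 - x_0 y_2,\, x_0 y_1 - x_1 y_0)$ is a multiple $s\,\pt{\kappa}$ of $\pt{\kappa}$ for some $s\in R$; likewise, since $Z,W\in l$, the vector $(z_1 w_2 - z_2 w_1,\, z_2 w_0 - z_0 w_2,\, z_0 w_1 - z_1 w_0)$ is a multiple $t\,\pt{\lambda}$ of $\pt{\lambda}$ for some $t\in R$. Next, expanding each determinant along its third row displays it as the corresponding scalar triple product; for instance
\[
\det\begin{pmatrix} x_0 & x_1 & x_2 \\ y_0 & y_1 & y_2 \\ a_0 & a_1 & a_2 \end{pmatrix}
= a_0(x_1 y_2 - x_2 y_1) + a_1(x_2 y_0 - x_0 y_2) + a_2(x_0 y_1 - x_1 y_0)
= s\,\kappa_a,
\]
and in exactly the same way the remaining three determinants equal $s\,\kappa_b$, $t\,\lambda_b$ and $t\,\lambda_a$.

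Substituting these four expressions, the asserted equality becomes $(s\,\kappa_a)(t\,\lambda_b) = (s\,\kappa_b)(t\,\lambda_a)$, i.e. $st\,(\kappa_a\lambda_b - \kappa_b\lambda_a)=0$. Finally, applying Lemma \ref{lemS} to the hypothesis $\delta(k,l,A,B)$ gives that the determinant of
\[
\begin{pmatrix} \kappa_0 & \kappa_1 & \kappa_2 \\ \lambda_0 & \lambda_1 & \lambda_2 \end{pmatrix}
\begin{pmatrix} a_0 & b_0 \\ a_1 & b_1 \\ a_2 & b_2 \end{pmatrix}
= \begin{pmatrix} \kappa_a & \kappa_b \\ \lambda_a & \lambda_b \end{pmatrix}
\]
vanishes, that is $\kappa_a\lambda_b = \kappa_b\lambda_a$; this at once forces the two sides to agree. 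I expect no real obstacle here: the entire content is carried by Lemmas \ref{lemptslines} and \ref{lemS}, and the only care required is the bookkeeping of signs and indices in the cofactor expansions. It is worth noting that the scalars $s$ and $t$ need not be invertible, which is precisely why the argument proceeds by multiplying up rather than by cancelling.
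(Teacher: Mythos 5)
Your proposal is correct and follows the paper's own argument essentially verbatim: both reduce the identity via Lemma \ref{lemptslines} (the cross-product of two points on a line is a scalar multiple of the line's coordinates), identify each $3\times 3$ determinant with $s\kappa_a$, $s\kappa_b$, $t\lambda_a$, $t\lambda_b$ by cofactor expansion, and conclude from the relation $\kappa_a\lambda_b=\kappa_b\lambda_a$ supplied by Lemma \ref{lemS}. Your closing remark that $s$ and $t$ need not be invertible, so one multiplies up rather than cancels, is exactly the right observation (invertibility is only needed for the converse, Lemma \ref{lemS4}), and you even cite Lemma \ref{lemptslines} explicitly where the paper's ``by the above lemma'' is slightly ambiguous.
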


\begin{proof}
$\delta(k,l,A,B)$, hence by Lemma \ref{lemS} the determinant of
$$\begin{pmatrix}
 \kappa_0 & \kappa_1 & \kappa_2
\\ \lambda_0 & \lambda_1 & \lambda_2
\end{pmatrix}
\begin{pmatrix}
 a_0 & b_0 \\
a_1 & b_1 \\
a_2 & b_2
\end{pmatrix}$$
is $0$, or equivalently 
$$(\kappa_0 a_0+\kappa_1 a_1 +\kappa_2 a_2)(\lambda_0 b_0 + \lambda_1 b_1 +\lambda_2 b_2)=(\kappa_0 b_0+\kappa_1 b_1 +\kappa_2 b_2)(\lambda_0 a_0 + \lambda_1 a_1 +\lambda_2 a_2).$$
By the above lemma, $(x_1 y_2 -x_2 y_1, x_2 y_0 - x_0 y_2, x_0 y_1 - x_1 y_0)= r\pt{\kappa}$, and $(z_1 w_2 - z_2 w_1, z_2 w_0 - z_0 w_2, z_0 w_1 - z_1 w_0)= s\pt{\lambda}$.

Hence,
\begin{displaymath}
\begin{split}
\det
\begin{pmatrix}
x_0 & x_1 & x_2 \\
y_0 & y_1 & y_2 \\
a_0 & a_1 & a_2
\end{pmatrix}
& \det
\begin{pmatrix}
z_0 & z_1 & z_2 \\
w_0 & w_1 & w_2 \\
b_0 & b_1 & b_2
\end{pmatrix}= \\
& =r(\kappa_0 a_0+\kappa_1 a_1 +\kappa_2 a_2)s(\lambda_0 b_0 + \lambda_1 b_1 +\lambda_2 b_2) \\
& =r(\kappa_0 b_0+\kappa_1 b_1 +\kappa_2 b_2)s(\lambda_0 a_0 + \lambda_1 a_1 +\lambda_2 a_2) \\
& =\det
\begin{pmatrix}
x_0 & x_1 & x_2 \\
y_0 & y_1 & y_2 \\
b_0 & b_1 & b_2
\end{pmatrix}
\det
\begin{pmatrix}
z_0 & z_1 & z_2 \\
w_0 & w_1 & w_2 \\
a_0 & a_1 & a_2
\end{pmatrix}.
\end{split}
\end{displaymath}
\end{proof}

\begin{lem} \label{lemS4}
Let $A=\pt{a}$, $B=\pt{b}$, $X=\pt{x}$, $Y=\pt{x}$, $Z=\pt{z}$, $W=\pt{w}$ be points and let $k=\pt{\kappa}$ and $l=\pt{\lambda}$ be lines of the projective plane over a local ring, such that $X$, $Y$ lie on $k$ and $Z$, $W$ lie on $l$ . If $X\#Y$ and $Z\#W$, at least one of the points $A$ and $B$ lies outside at least one of the lines $k$ and $l$ and
$$\det
\begin{pmatrix}
x_0 & x_1 & x_2 \\
y_0 & y_1 & y_2 \\
a_0 & a_1 & a_2
\end{pmatrix}
\det
\begin{pmatrix}
z_0 & z_1 & z_2 \\
w_0 & w_1 & w_2 \\
b_0 & b_1 & b_2
\end{pmatrix}
=\det
\begin{pmatrix}
x_0 & x_1 & x_2 \\
y_0 & y_1 & y_2 \\
b_0 & b_1 & b_2
\end{pmatrix}
\det
\begin{pmatrix}
z_0 & z_1 & z_2 \\
w_0 & w_1 & w_2 \\
a_0 & a_1 & a_2
\end{pmatrix},$$
then  $\delta(k,l,A,B)$ holds.
\end{lem}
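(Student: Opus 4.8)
The plan is to prove this as the converse of Lemma \ref{lemS3}, in exactly the way Lemma \ref{lemSiff} is the converse of Lemma \ref{lemS}: collapse the four $3\times 3$ determinants into a single scalar identity $\kappa_a\lambda_b=\kappa_b\lambda_a$ (writing $\kappa_a=\pt{\kappa}\cdot\pt{a}$, and similarly $\kappa_b,\lambda_a,\lambda_b$), and then feed this into Lemma \ref{lemSiff}. This is where the hypotheses $X\#Y$ and $Z\#W$ are used. Since $X,Y\in k$ with $X\#Y$, Lemma \ref{lemptslines} together with the remark following it gives $(x_1y_2-x_2y_1,\,x_2y_0-x_0y_2,\,x_0y_1-x_1y_0)=r\pt{\kappa}$ for a unit $r$, and likewise $(z_1w_2-z_2w_1,\,z_2w_0-z_0w_2,\,z_0w_1-z_1w_0)=s\pt{\lambda}$ for a unit $s$.

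Expanding each $3\times 3$ determinant along its last row identifies it with the corresponding inner product, so that the determinant with rows $X,Y,A$ equals $r\kappa_a$, that with rows $X,Y,B$ equals $r\kappa_b$, and those with rows $Z,W,A$ and $Z,W,B$ equal $s\lambda_a$ and $s\lambda_b$. The assumed determinant equation then reads $rs\,\kappa_a\lambda_b=rs\,\kappa_b\lambda_a$, and cancelling the unit $rs$ yields $\kappa_a\lambda_b=\kappa_b\lambda_a$. This is precisely the statement that the determinant of the product
\[
\begin{pmatrix}\kappa_0&\kappa_1&\kappa_2\\\lambda_0&\lambda_1&\lambda_2\end{pmatrix}\begin{pmatrix}a_0&b_0\\a_1&b_1\\a_2&b_2\end{pmatrix}
\]
vanishes, so that Lemma \ref{lemSiff} delivers $\delta(k,l,A,B)$.

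The subtle point — and the step I expect to be the real obstacle — is that Lemma \ref{lemSiff} also requires the apartness ``$k\#l$ or $A\#B$'', which is \emph{not} among the present hypotheses; indeed the bare hypotheses are compatible with the degenerate situation $A=B$, $k=l$, in which $\delta(k,l,A,B)$ holds only by the elementary identity $\top\vdash_{k,A}\delta(k,k,A,A)$. To handle this constructively (with no case split on whether $A\#B$ is available) I would instead extract a witness directly from $\kappa_a\lambda_b=\kappa_b\lambda_a$. Using the $A\leftrightarrow B$ and $k\leftrightarrow l$ symmetries of the statement I may assume $A\notin k$, so $\kappa_a$ is a unit; then $\mathbf{v}=\kappa_a\mathbf{b}-\kappa_b\mathbf{a}$ is annihilated by $\pt{\kappa}$ and by $\pt{\lambda}$ (the latter being exactly $\kappa_a\lambda_b-\kappa_b\lambda_a=0$), hence lies on both $k$ and $l$. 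When $\mathbf{v}$ represents a genuine point $V$, the axiom $A\notin k\vdash A\#V\vee V\notin k$ forces $A\#V$ (as $V\in k$), so $\ovl{AV}$ exists; and since $\mathbf{b}=\kappa_a^{-1}\mathbf{v}+\kappa_a^{-1}\kappa_b\mathbf{a}$ is a linear combination of $\mathbf{a}$ and $\mathbf{v}$, Lemma \ref{lemprojcol} places $B$ on $\ovl{AV}$, so $r=\ovl{AV}$ and $V$ witness $\delta(k,l,A,B)$ outright. The genuinely delicate part is then to verify that $\mathbf{v}$ fails to be a genuine point only in the degenerate range already covered by the elementary properties of $\delta$ proved above (parts 3--5 of that proposition), and to fuse these strands into one constructively valid argument rather than a classical disjunction on apartness.
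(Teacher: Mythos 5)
Your first two paragraphs reproduce the paper's proof exactly: via Proposition \ref{propexistsuniqueline} (equivalently Lemma \ref{lemptslines} and the remark after it) the two cross-product vectors are unit multiples $r\pt{\kappa}$ and $s\pt{\lambda}$, each $3\times 3$ determinant collapses to $r\kappa_a$, $r\kappa_b$, $s\lambda_a$, $s\lambda_b$ respectively, the unit $rs$ cancels to give $\kappa_a\lambda_b=\kappa_b\lambda_a$, and Lemma \ref{lemSiff} is invoked. Your worry in the third paragraph is also accurate as a criticism of that argument: the paper itself cites Lemma \ref{lemSiff} without securing its hypothesis ``$k\#l$ or $A\#B$'', and this hypothesis genuinely does not follow from the hypotheses of the present lemma.

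However, your proposed repair cannot be completed, and the step you defer at the end (``verify that $\mathbf{v}$ fails to be a genuine point only in the degenerate range already covered'') is exactly where the proof breaks: the lemma \emph{as stated} is false, so no fusing argument exists. Take $R=\mathbb Z/(4)$, $k=(0,0,1)$, $l=(0,2,1)$, $X=Z=(1,0,0)$, $Y=(0,1,0)$, $W=(0,1,2)$, $A=(0,0,1)$, $B=(0,2,1)$. Then $X\#Y$ and $Z\#W$, all stated incidences hold, in fact all four relations $A\notin k$, $A\notin l$, $B\notin k$, $B\notin l$ hold, and all four determinants equal $1$, so the determinant identity is satisfied; yet $\delta(k,l,A,B)$ fails: any point lying on both $k$ and $l$ has the form $(u,t,0)$ with $u$ a unit and $t\in\{0,2\}$, any line through both $A$ and $B$ has the form $(u',s,0)$ with $u'$ a unit and $s\in\{0,2\}$, and their pairing $u'u+st=u'u$ is a unit, never $0$. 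Here $A\neq B$ and $k\neq l$, so the degenerate range is \emph{not} exhausted by the trivial instances such as $\delta(k,k,A,A)$; and your candidate witness $\mathbf v=\kappa_a\mathbf b-\kappa_b\mathbf a=(0,2,0)$ is indeed not unimodular precisely in a case where $\delta$ genuinely fails. The statement therefore needs the extra hypothesis ``$k\#l$ or $A\#B$'' (as in Lemma \ref{lemSiff}), and with it your first two paragraphs already constitute a complete proof, identical to the paper's; the same apartness then has to be checked wherever the lemma is applied, e.g.\ in the proof of Pappus' theorem, where it is not explicitly verified either.
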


\begin{proof}
By the construction of a unique line through two points that are apart from each other in Proposition \ref{propexistsuniqueline},
$(x_1 y_2 -x_2 y_1, x_2 y_0 - x_0 y_2, x_0 y_1 - x_1 y_0)= r\pt{\kappa}$, and $(z_1 w_2 - z_2 w_1, z_2 w_0 - z_0 w_2, z_0 w_1 - z_1 w_0)= s\pt{\lambda}$ for invertible $r$ and $s$.

Hence,
\begin{displaymath}
\begin{split}
\det
\begin{pmatrix}
x_0 & x_1 & x_2 \\
y_0 & y_1 & y_2 \\
a_0 & a_1 & a_2
\end{pmatrix}
& \det
\begin{pmatrix}
z_0 & z_1 & z_2 \\
w_0 & w_1 & w_2 \\
b_0 & b_1 & b_2
\end{pmatrix}= \\
& =r(\kappa_0 a_0+\kappa_1 a_1 +\kappa_2 a_2)s(\lambda_0 b_0 + \lambda_1 b_1 +\lambda_2 b_2)
\end{split}
\end{displaymath}
 and
\begin{displaymath}
\begin{split}
\det
\begin{pmatrix}
x_0 & x_1 & x_2 \\
y_0 & y_1 & y_2 \\
b_0 & b_1 & b_2
\end{pmatrix}
& \det
\begin{pmatrix}
z_0 & z_1 & z_2 \\
w_0 & w_1 & w_2 \\
a_0 & a_1 & a_2
\end{pmatrix}= \\
& =r(\kappa_0 b_0+\kappa_1 b_1 +\kappa_2 b_2)s(\lambda_0 a_0 + \lambda_1 a_1 +\lambda_2 a_2).
\end{split}
\end{displaymath}
Therefore, since $r$ and $s$ are invertible
$$(\kappa_0 a_0+\kappa_1 a_1 +\kappa_2 a_2)(\lambda_0 b_0 + \lambda_1 b_1 +\lambda_2 b_2)=(\kappa_0 b_0+\kappa_1 b_1 +\kappa_2 b_2)(\lambda_0 a_0 + \lambda_1 a_1 +\lambda_2 a_2),$$
or equivalently the determinant of the product
$$\begin{pmatrix}
 \kappa_0 & \kappa_1 & \kappa_2
\\ \lambda_0 & \lambda_1 & \lambda_2
\end{pmatrix}
\begin{pmatrix}
 a_0 & b_0 \\
a_1 & b_1 \\
a_2 & b_2
\end{pmatrix}$$
is $0$. Hence by Lemma \ref{lemSiff}, $\delta(k,l,A,B)$ holds.
\end{proof}

\begin{thrm}
Given six points $A$, $B$, $C$, $D$, $E$, $F$ and six lines  $k_A$, $k_B$, $k_C$, $k_D$, $k_E$, $k_F$ of the projective plane over a local ring such that:
\begin{itemize}
\item $A$, $B$ lie on $k_A$,
\item $B$, $C$ lie on $k_B$,
\item $C$, $D$ lie on $k_C$,
\item $D$, $E$ lie on $k_D$,
\item $E$, $F$ lie on $k_E$,
\item $F$, $A$ lie on $k_F$.
\end{itemize}
Then, if $\delta(k_C,k_F, B,E)$ and $\delta(k_B, k_E, A,D)$ hold and:
\begin{itemize}
\item $A\#B \wedge D\#E$ or $k_B \# k_C \wedge k_F \# k_A$,
\item at least one of the points $C$ and $F$ lies outside at least one of the lines $k_A$ and $k_D$,
\end{itemize}
then $\delta(k_A,k_D, F, C)$ also holds.
\begin{center}
\begin{tikzpicture}
\draw
(2,-0.8)--(6,-2.4)
(2,1.2)--(8,4.8)
(2,1.2) node[above] {$E$}--(4,-1.6)  node[below] {}
(4,2.4)  node[above] {} --(6,-2.4) node[below] {$D$}
(4,2.4)--(2,-0.8) node[below] {$A$}
(8,4.8)  node[above] {$B$}--(4,-1.6)
(2,1.2) -- (6,-2.4)
(2,-0.8) -- (8,4.8)
(2.3,0.2) node {$F$}
(5.5,0.1) node {$C$};

\draw[red, thick, densely dotted]
(2.667, 0.267)--(5,0);
\end{tikzpicture}
\end{center}
\end{thrm}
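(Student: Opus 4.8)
The plan is to translate the incidence statements into determinantal identities via Lemma~\ref{lemS3}, combine those identities by elementary algebra, and then read the conclusion back off through the converse Lemma~\ref{lemS4}. Throughout write $[PQR]$ for the determinant of the $3\times 3$ matrix whose rows are representatives of the points $P$, $Q$, $R$; being alternating, it is in particular invariant under cyclic permutation of its three arguments.

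First I would feed the two hypotheses into Lemma~\ref{lemS3}. Since $C$, $D$ lie on $k_C$ and $F$, $A$ lie on $k_F$, the relation $\delta(k_C,k_F,B,E)$ gives (taking the pair $C,D$ on $k_C$ and the pair $F,A$ on $k_F$)
\[
[CDB]\,[FAE]=[CDE]\,[FAB].
\]
Since $B$, $C$ lie on $k_B$ and $E$, $F$ lie on $k_E$, the relation $\delta(k_B,k_E,A,D)$ gives
\[
[BCA]\,[EFD]=[BCD]\,[EFA].
\]
Crucially, Lemma~\ref{lemS3} requires only the incidences and the $\delta$-relations, no apartness, so both identities are available in either of the two nondegeneracy cases of the hypothesis.

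I would then combine them using cyclic invariance. Rewriting the first identity as $[ABF]\,[CDE]=[BCD]\,[EFA]$ and the second as $[ABC]\,[DEF]=[BCD]\,[EFA]$, the two right-hand sides agree, whence
\[
[ABF]\,[DEC]=[ABC]\,[DEF].
\]
This is precisely the determinantal identity demanded by the converse Lemma~\ref{lemS4} for the quadruple $(k_A,k_D,F,C)$, with $A,B$ serving as the apart pair on $k_A$ and $D,E$ as the apart pair on $k_D$.

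Finally I would invoke the converse, splitting on the nondegeneracy disjunction. In the first case $A\#B$ and $D\#E$ hold, so $A,B$ (resp.\ $D,E$) really is an apart pair of points on $k_A$ (resp.\ $k_D$); together with the standing hypothesis that at least one of $C$, $F$ lies outside at least one of $k_A$, $k_D$, Lemma~\ref{lemS4} delivers $\delta(k_A,k_D,F,C)$ at once. The second case $k_B\#k_C$ and $k_F\#k_A$ is the dual situation, which I would dispose of by the duality principle for projective planes, turning it into an instance of the first. The genuinely structural point is thus recognising that the converse needs an apart pair, forcing this two-case split; the remaining difficulty is bookkeeping rather than depth, namely tracking the alternating signs through the cyclic relabellings so that the six determinants line up as claimed, and checking that the precise ``lies outside'' clause in the hypotheses is exactly what secures the uniqueness of the auxiliary point and line used to pass between $\delta$ and the vanishing determinant in Lemmas~\ref{lemS3} and~\ref{lemS4}.
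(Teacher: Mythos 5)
Your proposal is correct and matches the paper's own proof essentially step for step: the paper likewise reduces the line-apartness case to the point-apartness case by duality, applies Lemma \ref{lemS3} to the two $\delta$-hypotheses to get the same two determinantal identities, combines them by row permutations into $[ABF][DEC]=[ABC][DEF]$, and concludes via Lemma \ref{lemS4} using $A\#B$, $D\#E$ and the hypothesis that $C$ or $F$ lies outside $k_A$ or $k_D$. Your side remarks (that Lemma \ref{lemS3} needs no apartness, and that the converse is where the nondegeneracy disjunction is consumed) are accurate as well.
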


\begin{proof}
Notice that the case where $k_B \# k_C \wedge k_F \# k_A$ is dual to the case where $A\#B \wedge D\#E$. Hence, it suffices to consider the second case.

Let $A=\pt{a}$, $B=\pt{b}$, $C=\pt{c}$, $D=\pt{d}$, \newline
$E=\pt{e}$ and $F=\pt{f}$.

$\delta(k_C,k_F, B,E)$ and $\delta(k_B, k_E, A,D)$ hold, hence by Lemma \ref{lemS3}:
$$\det
\begin{pmatrix}
c_0 & c_1 & c_2 \\
d_0 & d_1 & d_2 \\
b_0 & b_1 & b_2
\end{pmatrix}
\det
\begin{pmatrix}
f_0 & f_1 & f_2 \\
a_0 & a_1 & a_2 \\
e_0 & e_1 & e_2
\end{pmatrix}
=\det
\begin{pmatrix}
c_0 & c_1 & c_2 \\
d_0 & d_1 & d_2 \\
e_0 & e_1 & e_2
\end{pmatrix}
\det
\begin{pmatrix}
f_0 & f_1 & f_2 \\
a_0 & a_1 & a_2 \\
b_0 & b_1 & b_2
\end{pmatrix}$$
and
$$\det
\begin{pmatrix}
b_0 & b_1 & b_2 \\
c_0 & c_1 & c_2 \\
a_0 & a_1 & a_2
\end{pmatrix}
\det
\begin{pmatrix}
e_0 & e_1 & e_2 \\
f_0 & f_1 & f_2 \\
d_0 & d_1 & d_2
\end{pmatrix}
=\det
\begin{pmatrix}
b_0 & b_1 & b_2 \\
c_0 & c_1 & c_2 \\
d_0 & d_1 & d_2
\end{pmatrix}
\det
\begin{pmatrix}
e_0 & e_1 & e_2 \\
f_0 & f_1 & f_2 \\
a_0 & a_1 & a_2
\end{pmatrix}. $$

By combining the two above equalities and interchanging the same number of rows on both sides we conclude that
$$\det
\begin{pmatrix}
a_0 & a_1 & a_2 \\
b_0 & b_1 & b_2 \\
f_0 & f_1 & f_2 
\end{pmatrix}
\det
\begin{pmatrix}
e_0 & e_1 & e_2 \\
d_0 & d_1 & d_2 \\
c_0 & c_1 & c_2
\end{pmatrix}
=
\det
\begin{pmatrix}
a_0 & a_1 & a_2 \\
b_0 & b_1 & b_2 \\
c_0 & c_1 & c_2 
\end{pmatrix}
\det
\begin{pmatrix}
e_0 & e_1 & e_2 \\
d_0 & d_1 & d_2 \\
f_0 & f_1 & f_2
\end{pmatrix}.$$
Hence, by Lemma \ref{lemS4}  $\delta(k_A,k_D, F, C)$ also holds.
\end{proof}

\section{The theory of projective planes} \label{secthryproj}

\subsection*{Desargues' axiom}

A preprojective plane satisfies \emph{Desargues' axiom} when given $A$, $B$, $C$, $D$ points and $k$, $l$, $m$, $n$ lines such that:
\begin{enumerate}
\item $\delta(k,l,A,B)$, $\delta(l,m,B,C)$, $\delta(m,n,C,D)$, $\delta(n,k,D,A)$, $\delta(k,m,B,D)$ hold,
 \item $l\#n$ or $A\#C$,
 \item at least one of the points $A$, $C$ lies outside at least one of the lines $l$, $n$,
 \item $B$ lies outside at least one of $k$, $l$ and $m$,
 \item $D$ lies outside at least one of $m$, $n$ and $k$,
 \item at least one of $D$, $A$ and $B$ lies outside $k$,
 \item at least one of $B$, $C$ and $D$ lies outside $m$,
\end{enumerate}
then $\delta(l,n,A,C)$ holds.
\begin{center}
\begin{tikzpicture}
 \draw 
(0,0) node[left] {$A$} 
-- (2,2) node[above] {$B$} 
-- (4,0) node[right] {$C$}
-- (2,-2) node[below] {$D$}
-- (0,0)

(1.5,1.5)
-- (3,1)
-- (3.5,-0.5)
-- (1,-1)
-- (1.5,1.5)
(2,1) node {$l$}
(3,0) node {$m$}
(2,-1) node {$n$}
(1,0) node {$k$} ;
\end{tikzpicture}
\end{center}

\subsection*{Pappus' axiom}
A preprojective plane satisfies \emph{Pappus' axiom} when given six points $A$, $B$, $C$, $D$, $E$, $F$ and six lines  $k_A$, $k_B$, $k_C$, $k_D$, $k_E$, $k_F$ such that:
\begin{itemize}
\item $A$, $B$ lie on $k_A$,
\item $B$, $C$ lie on $k_B$,
\item $C$, $D$ lie on $k_C$,
\item $D$, $E$ lie on $k_D$,
\item $E$, $F$ lie on $k_E$,
\item $F$, $A$ lie on $k_F$.
\item $\delta(k_C,k_F, B,E)$ and $\delta(k_B, k_E, A,D)$ hold,
\item $A\#B \wedge D\#E$ or $k_B \# k_C \wedge k_F \# k_A$
\item at least one of the points $C$ and $F$ lies outside at least one of the lines $k_A$ and $k_D$,
\end{itemize}
then $\delta(k_A,k_D, F, C)$ also holds.
\begin{center}
\begin{tikzpicture}
\draw
(2,-0.8)--(6,-2.4)
(2,1.2)--(8,4.8)
(2,1.2) node[above] {$E$}--(4,-1.6)  node[below] {}
(4,2.4)  node[above] {} --(6,-2.4) node[below] {$D$}
(4,2.4)--(2,-0.8) node[below] {$A$}
(8,4.8)  node[above] {$B$}--(4,-1.6)
(2,1.2) -- (6,-2.4)
(2,-0.8) -- (8,4.8)
(2.3,0.2) node {$F$}
(5.5,0.1) node {$C$};

\draw[red, thick, densely dotted]
(2.667, 0.267)--(5,0);
\end{tikzpicture}
\end{center}

\begin{defn}
A \emph{projective plane} is a preprojective plane that satisfies Desargues' axiom and Pappus' axiom.
\end{defn}

For $R$ a local ring, $\mathbb P(R)$ is a projective plane by the results of Section \ref{secdes} and Section \ref{secpap}.

Both Desargues' axiom and Pappus' axiom can be expressed as geometric sequents in the language of preprojective planes, therefore the theory of projective planes is a geometric theory. Both Desargues' axiom and Pappus' axiom are self-dual, therefore the theory of projective planes satisfies the duality principle.

\chapter{Affine planes} \label{chaaffplanes}

In this chapter, we approach affine planes from an analytic and a syntactic point of view and we show some of their links to projective planes. We first construct the affine plane $\mathbb A(R)$ over a given local ring $R$. We also construct an affine plane structure $\mathfrak A(\ca P,l)$ from a given projective plane $\ca P$ with a chosen line $l$ and we demonstrate that affine planes over local rings are always of this form. We prove a few results satisfied by these structures. We present the coherent theory of preaffine planes whose axioms are satisfied by both the structures mentioned here. We continue with results on morphisms of preaffine planes, morphism between preaffine planes of the form $\mathfrak A(\ca P,l)$ and morphisms between projective planes over local rings. We present Desargues' big and small axioms, and Pappus' axiom on the affine plane and show that they are satisfied by affine planes over local rings. We also prove some further versions of Desargues' theorem which are used in proofs of Chapter \ref{chalocal}.  The coherent theory of affine planes is given as the theory of preaffine planes with the addition of Desargues' big and small axioms, and Pappus' axiom.

\section{Points}

\begin{defn}
Given a ring $R$ we define the set of \emph{points} of its affine plane to be the set of points of the projective plane over $R$ which lie outside the line $(0,0,1)$ and we denote it by $\mathbb A_{\text{pt}}(R)$.
\end{defn}

Since each point lying outside the line $(0,0,1)$ has an invertible third coordinate, it can be represented by $(a_0,a_1,1)$ for unique $a_0$ and $a_1$. So, we will write $(a_0,a_1)$ for a point of the affine plane meaning the point represented by $(a_0,a_1,1)$. Henceforth, we think of $\mathbb A_{\text{pt}}(R)$ as $R^2$.

\begin{defn}
We say that two points $A=(a_0,a_1)$ and $B=(b_0,b_1)$ are \emph{apart} from each other and we write
$$A\# B$$
when at least one of $(a_0-a_1)$ and $(b_0-b_1)$ is invertible.
\end{defn}

\begin{lem} \label{lemapartaffine}
For $R$ a \emph{local ring}, the $\#$ relation on $\mathbb A_{\text{pt}}(R)$ as described above is the restriction of the $\#$ relation on $\mathbb P_{\text{pt}}(R)$ from the previous chapter: two points $(a_0,a_1)$ and $(b_0,b_1)$ of the affine plane are apart from each other iff the points $(a_0,a_1,1)$ and $(b_0,b_1,1)$ of the projective plane are apart from each other.
\end{lem}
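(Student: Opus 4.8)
The plan is to normalise both points and reduce everything to an invertibility computation on $2\times 2$ minors. Since each affine point lies outside $(0,0,1)$, its third coordinate is invertible, so write $A=(a_0,a_1,1)$ and $B=(b_0,b_1,1)$. By the definition of $\#$ on $\mathbb P_{\text{pt}}(R)$, the projective points $A,B$ are apart precisely when one of the three $2\times 2$ minors of
$$\begin{pmatrix} a_0 & b_0 \\ a_1 & b_1 \\ 1 & 1 \end{pmatrix}$$
is invertible, and these minors are $a_0b_1-a_1b_0$, $a_0-b_0$ and $a_1-b_1$. The affine apartness $A\#B$ asks exactly that one of $a_0-b_0$ and $a_1-b_1$ be invertible, so the whole task is to compare the condition ``one of the three minors is a unit'' with the condition ``one of the last two minors is a unit''.

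One implication needs no hypothesis on $R$ whatsoever: if $a_0-b_0$ or $a_1-b_1$ is invertible, then a minor of the matrix above is invertible, so affine apartness immediately gives projective apartness. I would dispose of this direction in a single line.

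For the converse I would argue by cases on which minor witnesses the projective apartness. If the witness is $a_0-b_0$ or $a_1-b_1$, then affine apartness holds outright. The only real work is the remaining case, in which we know only that $a_0b_1-a_1b_0$ is a unit; this is where locality of $R$ enters. The key is the identity
$$a_0b_1-a_1b_0=(a_0-b_0)b_1-b_0(a_1-b_1),$$
which displays the unit $a_0b_1-a_1b_0$ as a two-term sum of $(a_0-b_0)b_1$ and $-b_0(a_1-b_1)$. Since $R$ is local, a unit expressed as such a sum must have an invertible summand; and any factor of a unit is itself a unit. Hence either $a_0-b_0$ or $a_1-b_1$ is invertible, which is precisely $A\#B$ in the affine sense.

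The main obstacle is nothing more than spotting the correct decomposition of the determinant $a_0b_1-a_1b_0$ into a sum whose factors are exactly the affine coordinate differences; once this identity is recorded, the local-ring axiom together with the elementary remark that a factor of a unit is a unit closes the argument. No properties of $\mathbb P(R)$ beyond the minor description of $\#$ recalled above are required.
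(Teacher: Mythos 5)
Your proof is correct and follows essentially the same route as the paper: both directions reduce to the minor description of projective apartness, the easy direction is immediate, and the hard case is settled by decomposing the determinant $a_0b_1-a_1b_0$ into a two-term sum of multiples of the coordinate differences and invoking locality together with the fact that a factor of a unit is a unit. Your decomposition $(a_0-b_0)b_1-b_0(a_1-b_1)$ is a trivial variant of the paper's $a_0(b_1-a_1)+a_1(a_0-b_0)$; the arguments are otherwise identical.
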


\begin{proof}
 Suppose that $(a_0,a_1)$ is apart from $(b_0,b_1)$. One of the $2\times 2$ minors of the matrix
$\begin{pmatrix}
  a_0 & b_0\\
a_1 & b_1\\
1 &1
 \end{pmatrix}$ has invertible determinant. Therefore, $a_0-b_0$, $a_1-b_1$ or $a_0b_1-a_1b_0$ is invertible. The first two cases prove our claim, therefore we consider the third case. Notice that $a_0b_1-a_1b_0=a_0(b_1-a_1)+a_1(a_0-b_0)$, therefore at least one of the two summands is invertible. Hence, $a_0-b_0$ or $a_1-b_1$ is invertible.

The converse is clear. 
\end{proof}

\section{Lines}

\begin{defn} We define the set of \emph{lines} of the affine plane over a ring $R$ to be the set of lines of its projective plane that are apart from the line $(0,0,1)$ and we denote it by $\mathbb A_{\text{li}} (R)$.
\end{defn}

Hence, a line represented by $\pt{\lambda}$ of the projective plane belongs to the affine plane iff at least one of $\lambda_0$ and $\lambda_1$ is invertible. We write $\pt{\lambda}$ to  mean the line represented by $\pt{\lambda}$, as we did for lines of projective planes.

\begin{defn}
The $\#$ relation on lines of the projective plane over a ring $R$ restricts to a relation on lines of the affine plane which we also denote by $\#$.
\end{defn}

Therefore, two lines represented by $\pt{\lambda}$ and $\pt{\kappa}$ of the affine plane over a ring are apart from each other when the determinant one of the three minors of the matrix
$$\begin{pmatrix}
\kappa_0 & \lambda_0 \\
\kappa_1 & \lambda_1 \\
\kappa_2 & \lambda_2
\end{pmatrix}$$
is invertible.

\begin{defn}
We say that two lines $k$ and $l$ of the affine plane are \emph{parallel} and we write
$$k\parallel l$$
 when for some representatives $\pt{\kappa}$ and $\pt{\lambda}$ of $k$ and $l$ respectively, there exists $r\in R$ such that $(\mu_0,\mu_1)=(r \lambda_0,r \lambda_1)$.
\end{defn}

\begin{lem} \label{lemparallel}
Given two lines of the affine plane over a ring $R$, represented by $(\lambda_0,\lambda_1,\lambda_2)$ and $(\mu_0,\mu_1,\mu_2)$, the following are equivalent:
\begin{enumerate}
\item  the intersections of the two lines with the line $(0,0,1)$ on the projective plane coincide,
\item there exists $r\in R$ such that $(\mu_0,\mu_1)=(r\lambda_0,r\lambda_1)$,
\item
$\det \begin{pmatrix}
\lambda_0 & \mu_0 \\
\lambda_1 & \mu_1\\
\end{pmatrix}
=0$.
\end{enumerate}
\end{lem}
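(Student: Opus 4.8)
The plan is to reduce all three conditions to concrete coordinate statements by first computing, for each of the two lines, its intersection point with the line $(0,0,1)$ on the projective plane. Since a line $\pt{\lambda}$ of the affine plane has at least one of $\lambda_0,\lambda_1$ invertible, it is apart from $(0,0,1)$, so Proposition \ref{propexistsuniqueline} guarantees a unique intersection point. Using the cross-product construction underlying that proposition, the intersection of $\pt{\lambda}$ with $(0,0,1)$ is the point represented by $(\lambda_1,-\lambda_0,0)$, and likewise the intersection of $\pt{\mu}$ with $(0,0,1)$ is $(\mu_1,-\mu_0,0)$; both are genuine points because at least one of $\lambda_0,\lambda_1$ (resp.\ $\mu_0,\mu_1$) is invertible. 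Condition (1) then says precisely that these two representatives determine the same point of $\mathbb{P}(R)$.

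I would then prove $(1)\Leftrightarrow(2)$ and $(2)\Leftrightarrow(3)$. For $(1)\Rightarrow(2)$, equality of the two projective points furnishes $s\in R$ with $(\mu_1,-\mu_0,0)=s(\lambda_1,-\lambda_0,0)$, whence $\mu_0=s\lambda_0$ and $\mu_1=s\lambda_1$, so $r=s$ witnesses (2). For $(2)\Rightarrow(3)$, substituting $(\mu_0,\mu_1)=(r\lambda_0,r\lambda_1)$ into the determinant gives $\lambda_0 r\lambda_1-\lambda_1 r\lambda_0=0$ by commutativity. For $(3)\Rightarrow(2)$, I use that $\pt{\lambda}$ is an affine line, so without loss of generality $\lambda_0$ is invertible; setting $r=\lambda_0^{-1}\mu_0$ gives $r\lambda_0=\mu_0$ directly, and the relation $\lambda_0\mu_1=\lambda_1\mu_0$ coming from (3) yields $r\lambda_1=\lambda_0^{-1}\mu_0\lambda_1=\lambda_0^{-1}\lambda_0\mu_1=\mu_1$; the case where $\lambda_1$ is invertible is symmetric.

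The step $(2)\Rightarrow(1)$ is where the one genuine subtlety lies, and it is the step I expect to be the main obstacle. Condition (2) only asserts the existence of some $r\in R$, not an invertible one, whereas equality of two points of $\mathbb{P}(R)$ requires a scalar that is a unit. The resolution is to observe that $\pt{\mu}$ is an affine line, so at least one of $\mu_0=r\lambda_0$ and $\mu_1=r\lambda_1$ is invertible; in either case $r$ has an invertible multiple, forcing $r$ itself to be invertible. Once $r$ is known to be a unit, $(\mu_1,-\mu_0,0)=r(\lambda_1,-\lambda_0,0)$ exhibits the two intersection points as equal in $\mathbb{P}(R)$, giving (1). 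This invertibility bridge is exactly what connects the purely algebraic conditions (2) and (3) to the projective-geometric condition (1), and it is the point that must be argued rather than asserted.
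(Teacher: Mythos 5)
Your proposal is correct and follows essentially the same route as the paper: compute the intersections with $(0,0,1)$ as $(\lambda_1,-\lambda_0,0)$ and $(\mu_1,-\mu_0,0)$, deduce $(1)\Leftrightarrow(2)$ from equality of these points, and prove $(2)\Leftrightarrow(3)$ by direct substitution and by inverting whichever of $\lambda_0,\lambda_1$ is invertible. The invertibility subtlety you flag in $(2)\Rightarrow(1)$ is already absorbed by the paper's definition of point equality in $\mathbb P(R)$, where the scalar $r$ is not assumed to be a unit but is automatically invertible because some coordinate is — exactly the observation you make — so your extra argument is correct but the paper dispenses with it in one line.
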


\begin{proof}
$1 \Leftrightarrow 2:$
The intersection of the lines $\pt{\lambda}$ and $(0,0,1)$ is the point $(\lambda_1,-\lambda_0,0)$ and the intersection of the lines $\pt{\mu}$ and $(0,0,1)$ is the point $(\mu_1,-\mu_0,0)$.
These two points are the same iff there exists $r\in R$ such that $\mu_0=r\lambda_0$ and $\mu_1=r\lambda_1$. 

$2\Rightarrow 3:$
Suppose that $r$ in $R$ is such that $(\mu_0,\mu_1)=(r\lambda_0,r\lambda_1)$. Then,
$$\det \begin{pmatrix}
\lambda_0 & \mu_0 \\
\lambda_1 & \mu_1\\
\end{pmatrix}=
\det \begin{pmatrix}
r\mu_0 & \mu_0 \\
r\mu_1 & \mu_1\\
\end{pmatrix}
=0.$$

$3\Rightarrow 2:$
Suppose that $\det \begin{pmatrix}
\lambda_0 & \mu_0 \\
\lambda_1 & \mu_1\\
\end{pmatrix}=0$. Then, $\lambda_0\mu_1=\lambda_1\mu_0$. $\pt{\lambda}$ is a line of the affine plane therefore $\lambda_0$ or $\lambda_1$ is invertible. In the first case, $(\mu_0,\mu_1)=(r\lambda_0,r\lambda_1)$ for $r=\lambda_0^{-1}\mu_0$. In the second case $(\mu_0,\mu_1)=(r\lambda_0,r\lambda_1)$ for $r=\lambda_1^{-1}\mu_1$.
\end{proof}

Notice that by the above, two lines $k$ and $l$ of the affine plane are parallel iff for \emph{any} representatives $\pt{\kappa}$ and $\pt{\lambda}$ of $k$ and $l$ respectively, there exists $r\in R$ such that $(\mu_0,\mu_1)=(r\lambda_0,r\lambda_1)$.

\section{Incidence}

\begin{defn}
The $\in$ and $\notin$ relations on the projective plane over a ring $R$ restrict to relations on the corresponding affine plane. We use the same names and notation for the restrictions of these relations to the affine plane over $R$.
\end{defn}


Therefore, a point $(x,y)$ of the affine plane lies on the line represented by $\pt{\lambda}$ iff $\lambda_0 x +\lambda_1 y + \lambda_2=0$. Also, a point $(x,y)$ of the affine plane lies outside the line represented by $\pt{\lambda}$ iff $\lambda_0 x +\lambda_1 y + \lambda_2$ is invertible.

We say that two lines $k$ and $l$ \emph{intersect} at a point $A$, when $A$ lies on both lines $k$ and $l$.

\begin{defn}
Given a ring $R$, the \emph{affine plane} over $R$, denoted by $\mathbb A(R)$ is the structure consisting of the sets $\mathbb A_{\text{pt}} (R)$, $\mathbb A_{\text{li}} (R)$, the two $\#$ relations, the $\parallel$ relation, and the $\in$ and $\notin$ relations.
\end{defn}

Note that for $R$ a geometric field in $\set$, the above construction gives the classical affine plane over the field $R$: $\in$ becomes the incidence relation, $\#$ becomes the inequality relation, $\notin$ becomes the complement of $\in$ and $\parallel$ is the usual parallel relation for affine planes.

On affine planes over a field, two lines are either parallel or they intersect but this is no longer true for affine planes over local rings. For example, on the affine plane over $\mathbb Z/(4)$, the lines $(1,0,2)$ and $(1,2,1)$ are apart from each other and they are not parallel but they have no intersection point. Their unique intersection point on the projective plane is $(0,1,2)$.

\section{Preaffine planes from preprojective planes with a line} \label{secprojtoaff}

Instead of proving propositions for affine planes over \emph{local rings} directly as we did for projective planes, we continue by describing an alternative construction of the affine plane over a local ring. We will later use this construction and the results about preprojective planes to prove statements about the affine plane over a local ring.

\begin{defn}
Given a preprojective plane $\ca P$ and a line $l_{\infty}$ of $\ca P$ we define the preaffine plane induced by them to be the following structure (consisting of two sets and five relations):
\begin{itemize}
 \item Its set of points is the set of points of $\ca P$  that lie outside the line $l_{\infty}$.
\item Its set of lines are the lines that are apart from $l_\infty$.
\item The $\#$ relations on lines and points, the $\in$ and the $\notin$ relations of the affine plane are the restrictions of the ones on $\ca P$.
\item Two lines of the preaffine plane are parallel ($\parallel$) when their (unique) intersections with $l_\infty$ coincide.
\end{itemize}
We denote this structure by $\mathfrak A(\ca P,l_\infty)$.
\end{defn}

Notice that the projective plane over a local ring is a preprojective plane. The affine plane over a local ring $R$ is isomorphic to the preaffine plane induced by the projective plane over $R$ and the line $(0,0,1)$. This is true by the definitions and results of the previous section and in particular, by Lemma \ref{lemapartaffine} and Lemma \ref{lemparallel}.

\begin{lem}
Let $\ca P$ be a preprojective plane with a line $l_\infty$. Let $A$ be a point of $\mathfrak A(\ca P,l_\infty)$, and let $l$ be a line of $\ca P$ such that $A\in l$ in $\ca P$. Then, $l$ is also a line of $\mathfrak A(\ca P,l_\infty)$.
\end{lem}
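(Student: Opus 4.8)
The plan is to unwind the definition of $\mathfrak A(\ca P,l_\infty)$ and reduce the claim to a single apartness assertion. By definition, the lines of $\mathfrak A(\ca P,l_\infty)$ are precisely the lines of $\ca P$ that are apart from $l_\infty$, and the points are precisely the points of $\ca P$ lying outside $l_\infty$. Thus the hypothesis that $A$ is a point of $\mathfrak A(\ca P,l_\infty)$ unpacks to $A\notin l_\infty$, and the conclusion that $l$ is a line of $\mathfrak A(\ca P,l_\infty)$ unpacks to $l\# l_\infty$. So the whole lemma amounts to the implication: if $A\notin l_\infty$ and $A\in l$, then $l\# l_\infty$.

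First I would invoke the third of the axioms governing the non-incidence relation $\notin$ in a preprojective plane, namely $A\notin k \vdash_{A,k,l} k\# l \vee A\notin l$. Applying it with $k=l_\infty$ (and the given line $l$ in the role of the generic third line), the hypothesis $A\notin l_\infty$ yields the disjunction $l_\infty\# l \vee A\notin l$.

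Next I would eliminate the second disjunct using the hypothesis $A\in l$. The first non-incidence axiom, $A\in l \wedge A\notin l \vdash_{A,l} \bot$, shows that the case $A\notin l$ is contradictory and hence (ex falso) also yields $l_\infty\# l$. Arguing by cases on the disjunction, both branches deliver $l_\infty\# l$. Finally, the symmetry of the apartness relation on lines, $k\# l \vdash_{k,l} l\# k$, gives $l\# l_\infty$, which is exactly the statement that $l$ is a line of $\mathfrak A(\ca P,l_\infty)$.

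There is no substantial obstacle here: the entire content is the recognition that the needed implication is precisely the third non-incidence axiom, with the unwanted disjunct ruled out by the incompatibility of $\in$ and $\notin$. The only point demanding a little care is that, since we work in coherent logic, one must argue by cases on the disjunction rather than appeal to excluded middle; but as both cases yield the same conclusion $l\# l_\infty$, the reasoning is entirely constructive.
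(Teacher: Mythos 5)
Your proof is correct and follows the same route as the paper's: the paper's one-line step ``$A\in l$, therefore $l\#l_\infty$'' is exactly your application of the axiom $A\notin k \vdash_{A,k,l} k\#l \vee A\notin l$ with the second disjunct eliminated by $A\in l \wedge A\notin l \vdash \bot$. Your version simply makes the case analysis and the appeal to symmetry of $\#$ explicit, which the paper leaves implicit.
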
 

\begin{proof}
$A$ is a point of $\mathfrak A(\ca P,l_\infty)$, therefore it is a point of $\ca P$ such that $A\notin l_\infty$. $A\in l$, therefore $l\# l_\infty$. Hence $l$ is a line of $\mathfrak A(\ca P,l_\infty)$.
\end{proof}


\begin{lem} \label{lemuniv}
Given a preprojective plane $\ca P$ with a line $l_\infty$, the structure $\mathfrak A(\ca P,l_\infty)$ without the $\parallel$ relation is a substructure of $\ca P$. Let $\phi$ and $\psi$ be geometric formulae in the language of projective planes with no quantifiers. Suppose that $\phi\vdash_{\mathbf{a}}\psi$ holds for $\ca P$. Then, $\phi\vdash_{\mathbf{a}}\psi$ also holds for $\mathfrak A(\ca P,l_\infty)$.
\end{lem}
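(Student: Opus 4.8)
The plan is to separate the statement into its two assertions: the (essentially definitional) claim that $\mathfrak A(\ca P,l_\infty)$, with $\parallel$ forgotten, is a substructure of $\ca P$, and the substantive claim that quantifier-free sequents transfer from $\ca P$ to it. The whole point will be a standard absoluteness argument for substructures, with care taken about exactly where quantifier-freeness is used.

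First I would dispose of the substructure claim. The language of (pre)projective planes is purely relational: its only non-logical symbols are the two apartness relations, $\in$, and $\notin$, and it has no function symbols. By the definition of $\mathfrak A(\ca P,l_\infty)$ its sort of points is the subobject $\{A : A\notin l_\infty\}$ of the points of $\ca P$, its sort of lines is the subobject $\{l : l\# l_\infty\}$ of the lines of $\ca P$, and each of the four relations is declared to be the restriction of the corresponding relation of $\ca P$. Since there are no operations to be closed under, this already exhibits $\mathfrak A(\ca P,l_\infty)$ (with $\parallel$ discarded) as a substructure of $\ca P$.

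For the second part, write $m\colon \mathfrak A(\ca P,l_\infty)\hookrightarrow \ca P$ for the inclusion on each sort, and for the context $\mathbf a$ let $m_{\mathbf a}$ be the induced monomorphism on the product sort; denote by $\phi^{\ca P}$ and $\phi^{\mathfrak A}$ the subobjects interpreting a formula $\phi$ in $\ca P$ and in $\mathfrak A(\ca P,l_\infty)$. The heart of the matter is the claim that $\phi^{\mathfrak A}=m_{\mathbf a}^{*}\bigl(\phi^{\ca P}\bigr)$ for every quantifier-free geometric $\phi$, which I would prove by induction on the formula. For an atomic formula this is exactly the statement that the relations of $\mathfrak A(\ca P,l_\infty)$ are restrictions of those of $\ca P$ (together with absoluteness of equality); for $\top$ and $\bot$ it is preservation of the top and bottom subobjects under pullback; and for $\wedge$ and $\bigvee$ it follows because $m_{\mathbf a}^{*}$, being pullback of subobjects in a topos, preserves finite meets and arbitrary joins. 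Since a quantifier-free geometric formula is built solely from atomic formulae using $\top,\bot,\wedge,\bigvee$, this yields the claim. The sequent statement is then immediate: if $\phi\vdash_{\mathbf a}\psi$ holds in $\ca P$ then $\phi^{\ca P}\le\psi^{\ca P}$, and applying the monotone operation $m_{\mathbf a}^{*}$ together with the commutation just established gives $\phi^{\mathfrak A}\le\psi^{\mathfrak A}$, i.e. $\phi\vdash_{\mathbf a}\psi$ in $\mathfrak A(\ca P,l_\infty)$. In the $\set$-based reading this is just the familiar fact that a quantifier-free formula takes the same truth value at a tuple of the substructure as in the ambient structure.

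The main point that must be respected — rather than a genuine obstacle — is the hypothesis that $\phi$ and $\psi$ carry no quantifiers: the argument works precisely because $m_{\mathbf a}^{*}$ commutes with the connectives $\top,\bot,\wedge,\bigvee$ but \emph{not} with $\exists$ (equivalently, substructures reflect and preserve quantifier-free formulae but not existential ones). I would therefore flag clearly where quantifier-freeness enters, and remark that this is exactly why the partial join and meet operations $\ovl{AB}$ and $k\cap l$, and the existential axioms of the theory of preprojective planes, are not inherited for free by $\mathfrak A(\ca P,l_\infty)$ and have to be established separately.
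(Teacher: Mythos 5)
Your proof is correct and takes essentially the same route as the paper: both reduce the sequent transfer to the absoluteness of quantifier-free geometric formulae along the substructure inclusion $\mathfrak A(\ca P,l_\infty)\hookrightarrow \ca P$, noting that the relations of $\mathfrak A(\ca P,l_\infty)$ are by definition restrictions of those of $\ca P$. The only difference is presentational: the paper outsources the key absoluteness step to \cite[1.1.8]{Marker} restricted to geometric formulae (with a remark that the argument is constructive), whereas you prove it by an explicit induction via pullback of subobjects, correctly isolating that $m_{\mathbf a}^*$ preserves $\top,\bot,\wedge,\bigvee$ but not $\exists$ — which makes the claim self-contained and manifestly valid in the internal logic of an arbitrary topos.
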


\begin{proof}
Let $\phi$ and $\psi$ be as above and suppose that $\phi\vdash_{\mathbf{a}}\psi$ holds for $\ca P$. 

Given $\mathbf{a}$ in $\mathfrak A(\ca P,l_\infty)$, then $\mathbf a$ satisfies $\phi$ in $\mathfrak A(\ca P,l_\infty)$ iff $\mathbf a$ satisfies $\phi$ in $\ca P$. This can be seen by \cite[1.1.8]{Marker} restricted to geometric formulae (and in that case the proof is in constructive logic). Similarly, $\mathbf a$ satisfies $\psi$ in $\mathfrak A(\ca P,l_\infty)$ iff $\mathbf a$ satisfies $\psi$ in $\ca P$.

Hence, given $\mathbf{a}$ in $\mathfrak A(\ca P,l_\infty)$ satisfying $\phi$ in $\mathfrak A(\ca P,l_\infty)$, then $\mathbf{a}$ satisfies $\phi$ in $\ca P$. Therefore, $\mathbf a$ satisfies $\psi$ in $\ca P$, and hence $\mathbf a$ also satisfies $\psi$ in $\mathfrak A(\ca P,l_\infty)$. Therefore, $\phi\vdash_{\mathbf{a}}\psi$ is satisfied in $\mathfrak A(\ca P,l_\infty)$.
\end{proof}

\begin{prop}
Let $\ca P$ be a preprojective plane with a line $l_\infty$. Then, $\#$ on points and $\#$ on lines are both apartness relations on $\mathfrak A(\ca P,l_\infty)$.
\end{prop}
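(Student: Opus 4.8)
The plan is to reduce everything to Lemma \ref{lemuniv}. Recall that, by the axioms listed in the definition of preprojective planes, asserting that $\#$ is an apartness relation on points amounts to verifying exactly three sequents: irreflexivity $A\# A \vdash_A \bot$, symmetry $A\# B \vdash_{A,B} B\# A$, and cotransitivity $A\# B \vdash_{A,B,C} A\# C \vee B\# C$; and similarly for lines with $k$, $l$, $m$. So it suffices to check these six sequents in $\mathfrak A(\ca P,l_\infty)$.

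The key observation is that each of these six sequents is of the form $\phi \vdash_{\mathbf a} \psi$ where $\phi$ and $\psi$ are \emph{quantifier-free} geometric formulae in the language of projective planes: the antecedents ($A\#A$, $A\#B$, $k\#k$, $k\#l$) and succedents ($\bot$, $B\#A$, $A\#C\vee B\#C$, $l\#k$, $k\#m\vee l\#m$) involve only the $\#$ relations joined by disjunction, with no quantifiers. Since $\mathfrak A(\ca P,l_\infty)$ is defined so that its $\#$ relation on points and its $\#$ relation on lines are precisely the restrictions of the corresponding relations on $\ca P$, the structure $\mathfrak A(\ca P,l_\infty)$ (without the $\parallel$ relation) is a substructure of $\ca P$ in the language of projective planes, which is exactly the hypothesis under which Lemma \ref{lemuniv} operates.

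I would then argue as follows. Because $\ca P$ is itself a preprojective plane, all six apartness sequents hold in $\ca P$ by the very axioms of preprojective planes. Applying Lemma \ref{lemuniv} to each of them — with $\phi$ and $\psi$ the quantifier-free formulae identified above — transfers each sequent from $\ca P$ to $\mathfrak A(\ca P,l_\infty)$. This yields irreflexivity, symmetry and cotransitivity for $\#$ on points and, by the same reasoning applied to the line versions, for $\#$ on lines, establishing that both are apartness relations on $\mathfrak A(\ca P,l_\infty)$.

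I do not anticipate a genuine obstacle: the proof is a mechanical instantiation of Lemma \ref{lemuniv}. The only point requiring a moment's care is confirming that the cotransitivity succedent $A\# C \vee B\# C$ is still a legitimate quantifier-free geometric formula (it is, since geometric formulae are closed under finite disjunction), so that Lemma \ref{lemuniv}'s hypothesis is met; and noting explicitly that the $\#$ relations on $\mathfrak A(\ca P,l_\infty)$ are restrictions of those on $\ca P$, so that satisfaction of these $\#$-only formulae genuinely agrees between the substructure and $\ca P$ on points and lines of $\mathfrak A(\ca P,l_\infty)$.
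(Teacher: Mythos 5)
Your proof is correct and is exactly the paper's argument: the paper likewise observes that the six apartness sequents are quantifier-free geometric sequents satisfied by $\ca P$ and transfers them to $\mathfrak A(\ca P,l_\infty)$ via Lemma \ref{lemuniv}. You have merely spelled out the instantiation in more detail than the paper's one-line proof.
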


\begin{proof}
This is true by Lemma \ref{lemuniv} because the sequents expressing that $\#$ is an apartness relation are geometric, quantifier free and they are satisfied by $\ca P$.
\end{proof}

\begin{prop}
Let $\ca P$ be a preprojective plane with a line $l_\infty$. For $A$, $B$ points and $k$, $l$ lines of $\mathfrak A(\ca P,l_\infty)$ the following hold:
\begin{enumerate}
\item $A\in l \wedge A\notin l \vdash_{A,l} \bot$,
\item $A\notin k \vdash_{A,B,k} A\# B \vee B\notin k$,
\item $A\notin k \vdash_{A,k,l} k\# l \vee A\notin l$.
\end{enumerate}
\end{prop}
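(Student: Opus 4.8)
The plan is to apply Lemma~\ref{lemuniv} directly, in exactly the same way as the preceding proposition about apartness relations was handled. First I would observe that each of the three sequents 1--3 is a geometric sequent of the form $\phi\vdash_{\mathbf a}\psi$ in which both $\phi$ and $\psi$ are quantifier-free and are built only from the relations $\in$, $\notin$ and the two $\#$ relations; in particular they live entirely in the language of projective planes and never mention $\parallel$. Concretely, in 1 one has $\phi=(A\in l\wedge A\notin l)$ and $\psi=\bot$; in 2 one has $\phi=(A\notin k)$ and $\psi=(A\#B\vee B\notin k)$; and in 3 one has $\phi=(A\notin k)$ and $\psi=(k\#l\vee A\notin l)$. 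None of these formulae contains a quantifier.

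Next I would note that all three sequents already hold in $\ca P$: they are precisely the three axioms of the theory of preprojective planes expressing that $\notin$ is a constructive complement of $\in$, and $\ca P$ is a preprojective plane by hypothesis. Since $\mathfrak A(\ca P,l_\infty)$, with $\parallel$ forgotten, is a substructure of $\ca P$ for the language of projective planes, Lemma~\ref{lemuniv} transfers each of these quantifier-free geometric sequents from $\ca P$ down to $\mathfrak A(\ca P,l_\infty)$, which yields 1--3 at once.

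There is essentially no obstacle here. The only point requiring any care is the verification that each sequent is genuinely quantifier-free and does not invoke $\parallel$, so that the hypotheses of Lemma~\ref{lemuniv} are met; this is immediate by inspection. The one slightly delicate conceptual ingredient — that satisfaction of quantifier-free geometric formulae is absolute between the affine substructure and the ambient preprojective plane — has already been isolated and established in Lemma~\ref{lemuniv} (via the substructure lemma \cite[1.1.8]{Marker} restricted to geometric formulae), so in the present proof it need only be cited rather than reproved.
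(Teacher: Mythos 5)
Your proposal is correct and is exactly the paper's own proof: the three sequents are quantifier-free geometric sequents in the language of projective planes (indeed, the ``constructive complement'' axioms of preprojective planes), they hold in $\ca P$ by hypothesis, and Lemma~\ref{lemuniv} transfers them to $\mathfrak A(\ca P,l_\infty)$. Nothing further is needed.
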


\begin{proof}
This is true because $\ca P$ satisfies the above sequents and they are geometric and quantifier free. Hence, $\mathfrak A(\ca P,l_\infty)$ also satisfies them by Lemma \ref{lemuniv}.
\end{proof}

\begin{prop}
Let $\ca P$ be a preprojective plane with a line $l_\infty$. Then, in $\mathfrak A(\ca P,l_\infty)$ there exists a unique function from the set of pairs of points that are apart from each other to lines such that the image of the pair contains both points. Equivalently $\mathfrak A(\ca P,l_\infty)$ satisfies:
\begin{enumerate}
\item $A\# B \vdash_{A,B} \exists k. A\in k \wedge B\in k $,
\item $A\# B \wedge A,B\in k \wedge A,B\in l \vdash_{A,B,k,l} k=l$.
\end{enumerate}
\end{prop}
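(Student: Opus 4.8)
The plan is to dispose of the uniqueness clause (2) immediately and to concentrate all the real work on the existence clause (1). Statement (2), namely $A\# B \wedge A,B\in k \wedge A,B\in l \vdash_{A,B,k,l} k=l$, is a \emph{quantifier-free} geometric sequent; it is one of the uniqueness axioms that the preprojective plane $\ca P$ satisfies. Hence I would obtain it for $\mathfrak A(\ca P,l_\infty)$ at once from Lemma \ref{lemuniv}, with no extra argument. The entire content of the proposition therefore sits in the existence statement (1).

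For (1) the key point to flag is that Lemma \ref{lemuniv} is of no help: the formula $\exists k.\,A\in k\wedge B\in k$ contains a quantifier, so its satisfaction does not transfer automatically to the substructure. What could go wrong is that the line witnessing the existential in $\ca P$ might not be a line of $\mathfrak A(\ca P,l_\infty)$, i.e.\ might fail to be apart from $l_\infty$. So the approach is to produce the witness in $\ca P$ and then verify that it descends. Concretely, given points $A$, $B$ of $\mathfrak A(\ca P,l_\infty)$ with $A\# B$, I would first note that apartness and incidence on the preaffine plane are by definition the restrictions of those on $\ca P$; thus $A$ and $B$ are points of $\ca P$ with $A\# B$, and the preprojective-plane axioms supply the unique line $k=\ovl{AB}$ with $A,B\in k$ in $\ca P$.

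It then remains to check $k\# l_\infty$. This is exactly the earlier lemma of this section (a line of $\ca P$ through a point of $\mathfrak A(\ca P,l_\infty)$ is itself a line of $\mathfrak A(\ca P,l_\infty)$), so I would simply invoke it; if one prefers an inline argument, apply the constructive-complement axiom $A\notin l_\infty \vdash_{A,l_\infty,k} l_\infty\# k \vee A\notin k$, which applies because $A$ is an affine point and so $A\notin l_\infty$, and then discharge the second disjunct using $A\in k$ together with $A\in k\wedge A\notin k\vdash\bot$. Either way $k$ is a line of $\mathfrak A(\ca P,l_\infty)$, and the incidences $A,B\in k$ restrict to incidences in the preaffine plane, establishing (1).

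The main obstacle, and really the only subtlety, is precisely this descent of the witness: existence cannot be inherited from the ambient preprojective plane through the substructure lemma, so one must argue by hand that $\ovl{AB}$ is apart from $l_\infty$. Everything else---the identification of the restricted relations and the derivation of uniqueness from Lemma \ref{lemuniv}---is routine bookkeeping.
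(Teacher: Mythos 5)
Your proposal is correct and follows essentially the same route as the paper: the existence witness $\ovl{AB}$ is produced in $\ca P$ and shown to be apart from $l_\infty$ via $A\notin l_\infty$ and $A\in\ovl{AB}$ (the paper even has the same preliminary lemma you cite), while uniqueness is inherited as a quantifier-free sequent through Lemma \ref{lemuniv}. Your remark that Lemma \ref{lemuniv} cannot handle the existential is exactly the point the paper's proof implicitly addresses.
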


\begin{proof}
For the first sequent, let $A$ and $B$ be points of $\mathfrak A(\ca P,l_\infty)$ such that $A\# B$. In $\ca P$ there exists a (unique) line $l$ which contains both of them. In $\ca P$, $A\in l$ and $A\notin l_\infty$, therefore $l\# l_\infty$, hence $l$ is a line of the substructure $\mathfrak A(\ca P,l_\infty)$.

The second sequent of the proposition holds because it is a quantifier free sequent which holds for $\ca P$ therefore it also holds for $\mathfrak A(\ca P,l_\infty)$ by Lemma \ref{lemuniv}.
\end{proof}

\begin{prop}
Let $\ca P$ be a preprojective plane with a line $l_\infty$. Then, in $\mathfrak A(\ca P,l_\infty)$ two lines that are apart from each other have at most one intersection point:
$$k\# l \wedge A,B\in k \wedge A,B\in l \vdash_{A,B,k,l} A=B.$$
\end{prop}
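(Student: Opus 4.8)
The plan is to follow exactly the pattern established by the preceding propositions in this section, each of which transfers a quantifier-free geometric sequent from $\ca P$ to $\mathfrak A(\ca P,l_\infty)$ by invoking Lemma \ref{lemuniv}. The key observation is that the sequent in question,
$$k\# l \wedge A,B\in k \wedge A,B\in l \vdash_{A,B,k,l} A=B,$$
is \emph{precisely} the second clause of the (dual) uniqueness axiom in the definition of the theory of preprojective planes. Since $\ca P$ is a preprojective plane, it satisfies this sequent.

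First I would note that both the antecedent and the succedent are built only from the atomic relations $\#$, $\in$ and equality, with no quantifiers and no existential or infinitary connectives; hence the whole sequent is geometric and quantifier-free in the language of projective planes. The antecedent $k\# l \wedge A,B\in k \wedge A,B\in l$ is a finite conjunction of atomic formulae and the succedent $A=B$ is atomic. This is exactly the hypothesis required by Lemma \ref{lemuniv}.

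I would then apply Lemma \ref{lemuniv} directly: since $\mathfrak A(\ca P,l_\infty)$ (forgetting the $\parallel$ relation) is a substructure of $\ca P$, and the $\#$ and $\in$ relations on $\mathfrak A(\ca P,l_\infty)$ are by definition the restrictions of those on $\ca P$, satisfaction of quantifier-free geometric formulae is inherited by the substructure. Because $\ca P$ satisfies the sequent and the sequent is quantifier-free and geometric, Lemma \ref{lemuniv} yields that $\mathfrak A(\ca P,l_\infty)$ satisfies it as well.

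There is essentially no obstacle here; the only thing worth checking carefully is the formal applicability of Lemma \ref{lemuniv}, namely that the free variables $A,B,k,l$ range over points and lines of the substructure and that the relevant relations are genuine restrictions (rather than newly-defined relations such as $\parallel$). This is guaranteed by the definition of $\mathfrak A(\ca P,l_\infty)$, so the transfer goes through immediately. A concise proof would simply read: the sequent is a quantifier-free geometric sequent satisfied by $\ca P$, hence it holds in $\mathfrak A(\ca P,l_\infty)$ by Lemma \ref{lemuniv}.
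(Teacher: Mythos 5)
Your proposal is correct and matches the paper's own proof, which also consists of a single appeal to Lemma \ref{lemuniv}, the sequent being a quantifier-free geometric sequent (indeed the dual uniqueness axiom of preprojective planes) satisfied by $\ca P$. Your additional check that $\#$ and $\in$ on $\mathfrak A(\ca P,l_\infty)$ are genuine restrictions, unlike the newly-defined $\parallel$, is exactly the right point of care and is implicit in the paper's argument.
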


\begin{proof}
This is true by Lemma \ref{lemuniv}.
\end{proof}

\begin{prop}
Let $\ca P$ be a preprojective plane with a line $l_\infty$. Then, $\mathfrak A(\ca P,l_\infty)$ satisfies the following sequents:
\begin{enumerate}
 \item $\top \vdash_l \exists A,B.A\#B \wedge A,B\in l$,
\item $\top \vdash_A \exists k,l,m .k\#l\#m\#k \wedge A\in k, l, m$,
\item $\top \vdash \exists A,B,C,l. A\#B \wedge A,B\in l\wedge C\notin l$,
\item $\top \vdash_l \exists A. A\notin l$.
\end{enumerate}
\end{prop}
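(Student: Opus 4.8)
The four sequents are all existential, so Lemma~\ref{lemuniv} does not apply directly; instead the plan is to work inside $\ca P$ with the preprojective axioms, always checking that the witnesses produced actually live in $\mathfrak A(\ca P,l_\infty)$. Two observations will do almost all the work. First, \emph{any line through an affine point is an affine line}: if $A\notin l_\infty$ and $A\in k$, then the third $\notin$-axiom gives $l_\infty\# k\vee A\notin k$, and $A\in k$ kills the second disjunct, so $k\# l_\infty$. Second, Proposition~\ref{proplastaxiom} is the workhorse converting apartness from an intersection point into lying outside $l_\infty$: if $l\# l_\infty$, $P=l\cap l_\infty$, and $X\in l$ with $X\# P$, then applying the proposition with the two points $X,P$ and the two lines $l,l_\infty$ (using $X\#P$, $l\#l_\infty$, $X\in l$, $P\in l$, $P\in l_\infty$) yields $X\notin l_\infty$, i.e.\ $X$ is an affine point.

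For (2), given an affine point $A$ (so $A\notin l_\infty$), the dual ``enough lines'' axiom supplies lines $k,l,m$ through $A$ with $k\# l\# m\# k$. By the first observation each of $k,l,m$ is apart from $l_\infty$, hence is an affine line, and they pass through $A$ and are pairwise apart; this is exactly (2). For (1), fix an affine line $l$, so $l\# l_\infty$ and $P=l\cap l_\infty$ exists. The ``enough points'' axiom gives $A,B,C\in l$ with $A\#B\#C\#A$. A routine co-transitivity case split on $\#$ (from $A\#B$ deduce $A\#P\vee B\#P$, and so on) shows that at least two of $A,B,C$ are apart from $P$; by the second observation these two lie outside $l_\infty$, hence are affine points, they lie on $l$, and being among a pairwise-apart triple they are apart from each other. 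This proves (1).

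For (4), again fix an affine line $l$ with $P=l\cap l_\infty$. Using (1) choose an affine point $S\in l$. Through $S$ take two pairwise-apart lines from the dual ``enough lines'' axiom; co-transitivity of $\#$ on lines gives one such line $n$ with $n\# l$, and $n$ is affine because it passes through the affine point $S$. Since $S$ lies on both $n$ and $l$, uniqueness of intersections gives $n\cap l=S$. Now apply (1) to the affine line $n$ to obtain an affine point on $n$ apart from $S$ (a further co-transitivity step selects, among the two affine points produced, one apart from $S$); call it $U$. Then $U\in n$, $U\# S$, $S=n\cap l$ and $n\# l$, so Proposition~\ref{proplastaxiom} gives $U\notin l$, while $U$ is already an affine point; this is (4). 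Finally (3) is assembled: the $\ca P$-axiom $\top\vdash_{l_\infty}\exists Q.\,Q\notin l_\infty$ yields an affine point $Q$, any line $l$ through $Q$ is affine by the first observation, and then (1) furnishes affine $A,B\in l$ with $A\#B$ while (4) furnishes an affine $C\notin l$.

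The genuinely substantive steps are (1) and (4); (2) and (3) reduce to them together with the ``every line through an affine point is affine'' observation. The main obstacle is purely constructive bookkeeping: one must never invoke a trichotomy or a complement, but instead repeatedly feed apartness hypotheses into the co-transitivity axioms to \emph{extract} a witness apart from a designated point or line, and then route it through Proposition~\ref{proplastaxiom} to certify that it lies outside $l_\infty$. Lemma~\ref{lemomega39} offers an alternative way to manufacture the point $U$ in (4) (as a third point on $n$ strictly between $S$ and $n\cap l_\infty$) should the co-transitivity selection be preferred in that form.
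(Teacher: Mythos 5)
Your proof is correct. Parts (1) and (2) coincide with the paper's argument: the paper also extracts two points on $l$ apart from $D=l\cap l_\infty$ via co-transitivity and certifies them outside $l_\infty$ with the self-dual axiom, and also shows lines through an affine point are apart from $l_\infty$ exactly as in your first observation. For (3) and (4), however, you take a genuinely different route. The paper proves (3) first, by taking a non-concurrent triple of lines in $\ca P$ and invoking the dual of Lemma \ref{lemnoncd} to swap $l_\infty$ into the triple, and then proves (4) by applying Lemma \ref{lemnoncd} again to the triangle $\ovl{AB}$, $\ovl{BC}$, $\ovl{CA}$ spanned by three non-collinear affine points — so in the paper (4) depends on (3). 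You reverse the dependency: you prove (4) directly (affine $S\in l$; an affine line $n$ through $S$ with $n\#l$ by co-transitivity; an affine $U\in n$ with $U\#S$ via part (1); then the outside-certificate sequent yields $U\notin l$), and then assemble (3) from (1), (4), and your first observation. Your route avoids the non-collinearity/non-concurrence machinery entirely, at the cost of a couple of extra co-transitivity selections; the paper's route is shorter once Lemma \ref{lemnoncd} is in hand and keeps the proof uniform with the surrounding development of non-concurrent triples.

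One attribution caveat, not a gap: Proposition \ref{proplastaxiom} is stated and proved only for projective planes over a ring, so you cannot cite it verbatim in an abstract preprojective plane. This is harmless, because its sequent is an immediate consequence of the preprojective axioms — the self-dual axiom $A\#B\wedge l\#m\vdash A\notin l\vee B\notin m\vee A\notin m\vee B\notin l$ together with $A\in l\wedge A\notin l\vdash\bot$ — which is exactly how the paper argues at the analogous spots (e.g.\ in the proof of Proposition \ref{propnonc} and in its own proof of part (1) here). With that substitution, your workhorse step and hence the whole argument go through verbatim.
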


\begin{proof}
\begin{enumerate}
\item  A line $l$ of $\mathfrak A(\ca P,l_\infty)$ is a line of $\ca P$ which is apart from $l_\infty$. Let $D$ be the unique intersection point of $l$ and $l_\infty$ in $\ca P$. In $\ca P$, there exist points $A$, $B$, $C$ on $l$ such that $A\# B\# C \# A$. $\#$ is an apartness relation, therefore $D$ is apart from at least two of $A$, $B$ and $C$. Without loss of generality, let us assume that $D$ is apart from both $A$ and $B$. $A\#D$ and $l\#l_\infty$, therefore at least one of $A$ and $D$ lies outside from at least one of $l$ and $l_\infty$. Hence, $A\notin l_\infty$, and similarly $B\notin l_\infty$, and therefore both $A$ and $B$ are points of $\mathfrak A(\ca P,l_\infty)$.

\item A point $A$ of $\mathfrak A(\ca P,l_\infty)$ is a point of $\ca P$ which lies outside $l_\infty$. In $\ca P$, there exist lines $k$, $l$, $m$ that pass through $A$ and that are all apart from each other. $A$ lies on each one of them and lies outside $l_\infty$, hence $k$, $l$ and $m$ are all apart from $l_\infty$, and therefore they are lines of $\mathfrak A(\ca P,l_\infty)$.

\item A preprojective plane $\ca P$ contains at triple of non-concurrent lines $k$, $l$, $m$. By Lemma \ref{lemnoncd}, at least one combination of $l_\infty$ with two of the lines $k$, $l$, $m$ gives a triple of non-concurrent lines. Without loss of generality let us assume that $k$, $l$ and $l_\infty$ are non-concurrent. Then $k$ and $l$ are in $\mathfrak A(\ca P,l_\infty)$. Moreover, $A=k\cap l$ lies outside $l_\infty$, and therefore is in $\mathfrak A(\ca P,l_\infty)$.  By 1 of this proposition (and using that $\#$ is an apartness relation on points), there exists a point $B$ on $l$ and $C$ on $k$ such that $A\#B$ and $A\#C$ (and such that $B$ and $C$ are points of $\mathfrak A(\ca P,l_\infty)$). By the results on non-concurrent lines of preprojective planes, we see that $C\notin \ovl{AB}=l$. Hence, $A$, $B$, $C$ and $l$ are as required.

\item Given a line $l$, let $A$, $B$, $C$ be non-collinear points of $\mathfrak A(\ca P,l_\infty)$. Then $\ovl{AB}$, $\ovl{BC}$, $\ovl{CA}$ are non-concurrent lines of $\ca P$. Therefore, by Lemma  \ref{lemnoncd}, at least one combination of $l$ with two of $\ovl{AB}$, $\ovl{BC}$, $\ovl{CA}$ gives a triple of non-concurrent lines. Without loss of generality, let us assume that the lines $l$, $\ovl{AB}$, $\ovl{CA}$ are non-concurrent. Then, $A\notin l$ as required.
\end{enumerate}

\end{proof}

\begin{prop}
Let $\ca P$ be a preprojective plane with a line $l_\infty$. Then, in $\mathfrak A(\ca P,l_\infty)$ $\parallel$ is an equivalence relation:
\begin{enumerate}
\item $\top \vdash_k k\parallel k$,
\item $k\parallel l\vdash_{k,l} l\parallel k$,
\item $k\parallel l \wedge l \parallel m\vdash_{k,l,m} k\parallel m$.
\end{enumerate}
\end{prop}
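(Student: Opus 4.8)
The plan is to reduce all three axioms to the observation that $\parallel$ is precisely the kernel of a well-defined function on lines, and a kernel of any function is automatically an equivalence relation. The one fact that genuinely uses the structure of $\ca P$ is the existence and uniqueness of the relevant intersection points, so I would establish that first and then treat reflexivity, symmetry, and transitivity as consequences of the corresponding properties of equality.

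First I would note that by definition every line $k$ of $\mathfrak A(\ca P,l_\infty)$ is a line of $\ca P$ with $k\# l_\infty$. By the axioms of preprojective planes (the dual existence-and-uniqueness axiom for intersections of lines that are apart), there is therefore a unique point lying on both $k$ and $l_\infty$, namely $k\cap l_\infty$. Hence the assignment $k\mapsto k\cap l_\infty$ is a well-defined (partial) function from the lines of $\mathfrak A(\ca P,l_\infty)$ to the points of $\ca P$ that lie on $l_\infty$. By the definition of $\parallel$, for lines $k$ and $l$ of $\mathfrak A(\ca P,l_\infty)$ we have $k\parallel l$ if and only if $k\cap l_\infty = l\cap l_\infty$.

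Given this reformulation, the three sequents follow immediately. For reflexivity, $k\cap l_\infty = k\cap l_\infty$ holds because equality is reflexive, so $\top\vdash_k k\parallel k$. For symmetry, if $k\cap l_\infty = l\cap l_\infty$ then $l\cap l_\infty = k\cap l_\infty$ by symmetry of equality, giving $k\parallel l\vdash_{k,l} l\parallel k$. For transitivity, if $k\cap l_\infty = l\cap l_\infty$ and $l\cap l_\infty = m\cap l_\infty$ then $k\cap l_\infty = m\cap l_\infty$ by transitivity of equality, giving $k\parallel l \wedge l\parallel m\vdash_{k,l,m} k\parallel m$.

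I do not expect any serious obstacle here: the only substantive step is confirming that $k\cap l_\infty$ is a genuinely well-defined point, which rests entirely on the fact that lines of the preaffine plane are apart from $l_\infty$ together with the uniqueness-of-intersection axiom. Everything else is the routine verification that the kernel relation of a function inherits reflexivity, symmetry, and transitivity from equality. One small point I would be careful about is phrasing the argument so that it is valid in the internal (constructive) logic, but since all three sequents reduce to properties of the equality predicate on points and to the already-established existence of intersection points, no non-constructive reasoning is needed.
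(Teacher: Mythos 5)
Your proposal is correct and matches the paper's argument: the paper dismisses this proposition with ``clear from the definition of $\parallel$'', and since $\parallel$ is defined exactly as coincidence of the (unique, by $k\# l_\infty$ and the intersection axiom) points $k\cap l_\infty$, your kernel-of-a-function elaboration is precisely what that remark is leaving implicit. Your extra care about well-definedness and constructive validity is sound but adds nothing beyond what the definition already guarantees.
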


\begin{proof}
 This is clear from the definition of $\parallel$.
\end{proof}

\begin{prop}
Let $\ca P$ be a preprojective plane with a line $l_\infty$. Let $A$ be a point and $k$ a line in $\mathfrak A(\ca P,l_\infty)$. Then, there exists a unique line through $A$ parallel to $k$:
\begin{enumerate}
\item $\top\vdash_{A,k} \exists l. A\in l \wedge k\parallel l$,
\item $A\in k \wedge A\in l\wedge k\parallel l\vdash_{A,k,l} k=l$.
\end{enumerate}
\end{prop}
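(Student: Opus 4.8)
The plan is to reduce both sequents to facts about the ambient preprojective plane $\ca P$, organised around the single point $D=k\cap l_\infty$. The one nontrivial ingredient, which I would isolate first as a standing observation, is the following: \emph{if $A$ is a point of $\mathfrak A(\ca P,l_\infty)$ and $D$ is any point with $D\in l_\infty$, then $A\# D$.} Indeed, $A$ being a point of the preaffine plane means $A\notin l_\infty$, and the constructive-complement axiom $A\notin l_\infty \vdash A\# D \vee D\notin l_\infty$ applies; since $D\in l_\infty$, the disjunct $D\notin l_\infty$ together with $D\in l_\infty$ yields $\bot$ by the axiom $D\in l_\infty \wedge D\notin l_\infty \vdash \bot$, leaving $A\# D$. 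This is the step I expect to be the crux, and everything else is bookkeeping with the preprojective-plane axioms already available for $\ca P$.

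For the existence sequent, I would argue as follows. A line $k$ of $\mathfrak A(\ca P,l_\infty)$ satisfies $k\# l_\infty$, so there is a unique intersection point $D=k\cap l_\infty$, and $D\in l_\infty$. By the standing observation $A\# D$, hence the line $l:=\ovl{AD}$ exists in $\ca P$ and passes through $A$, giving $A\in l$. To see $l$ is a line of the preaffine plane I apply the axiom $A\notin l_\infty \vdash l_\infty\# l \vee A\notin l$; since $A\in l$ forbids $A\notin l$, we obtain $l\# l_\infty$. Finally, $D\in l$ (as $l=\ovl{AD}$) and $D\in l_\infty$ with $l\# l_\infty$ force $D=l\cap l_\infty$ by uniqueness of intersections, so $l\cap l_\infty = D = k\cap l_\infty$, which is exactly $k\parallel l$.

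For the uniqueness sequent, suppose $A\in k$, $A\in l$ and $k\parallel l$. Being parallel, both $k$ and $l$ are apart from $l_\infty$ and share their intersection with it: writing $D:=k\cap l_\infty = l\cap l_\infty$ we have $D\in k$, $D\in l$ and $D\in l_\infty$. Again by the standing observation $A\# D$, and now both $k$ and $l$ pass through the two apart points $A$ and $D$; the uniqueness clause for the line through a pair of apart points gives $k=l$. Since $\parallel$ has already been shown to be an equivalence relation, this sequent indeed delivers uniqueness of the parallel: if $l$ and $m$ both pass through $A$ and are parallel to $k$, then $l\parallel m$ by transitivity and symmetry, whence $l=m$ by the displayed sequent.

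In summary, after establishing the apartness $A\# D$, existence is the construction $l=\ovl{AD}$ with three short verifications, and uniqueness is an immediate appeal to the uniqueness of the connecting line through $A$ and $D$; no new geometric input beyond the preprojective-plane axioms (all valid in $\mathfrak A(\ca P,l_\infty)$) is required.
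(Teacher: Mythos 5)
Your proof is correct and follows essentially the same route as the paper's: both identify $D=k\cap l_\infty$, observe $A\# D$, and reduce the statement to the existence and uniqueness of the line of $\ca P$ through the apart pair $A$, $D$. You merely spell out the steps the paper leaves implicit (the derivation of $A\# D$ from the constructive-complement axioms and the verification that $\ovl{AD}$ is a line of $\mathfrak A(\ca P,l_\infty)$ parallel to $k$), which is a faithful expansion rather than a different argument.
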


\begin{proof}
Given a point $A$ and a line $k$ of $\mathfrak A(\ca P,l_\infty)$, let $D$ be $k\cap l_\infty$ in $\ca P$ and notice that $A\# D$. A line in $\mathfrak A(\ca P,l_\infty)$ which passes through $A$ and is parallel to $k$ is exactly a line of $\ca P$ through $A$ and $D$. $A\# D$ hence there exists a unique such line.
\end{proof}

\begin{prop}
Let $\ca P$ be a preprojective plane with a line $l_\infty$. Then, $\mathfrak A(\ca P,l_\infty)$ satisfies:
\begin{enumerate}
\item  $A\# B \wedge l\#m \vdash_{A,B,l,m} A\notin l \vee B\notin m \vee A\notin m \vee B \notin l$.
\item  $k\#l \wedge k\parallel l \vdash_{A,k,l} A\notin k \vee A\notin l$.
\end{enumerate}
\end{prop}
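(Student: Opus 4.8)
The plan is to treat the two sequents separately, since the first lives entirely in the language of projective planes while the second genuinely involves the parallel relation, which is not part of that language.

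For the first sequent I would simply invoke Lemma \ref{lemuniv}. The antecedent $A\# B \wedge l\#m$ and the succedent $A\notin l \vee B\notin m \vee A\notin m \vee B \notin l$ are both quantifier-free geometric formulae in the language of projective planes, and the sequent between them is precisely the self-dual axiom of preprojective planes, which $\ca P$ satisfies. Hence by Lemma \ref{lemuniv} it descends to the substructure $\mathfrak A(\ca P,l_\infty)$, giving the claim.

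For the second sequent the relation $\parallel$ is not projective, so I would instead argue directly inside $\ca P$, using that $\#$, $\in$ and $\notin$ on $\mathfrak A(\ca P,l_\infty)$ are all restrictions of the corresponding relations on $\ca P$. Let $A$ be a point and $k\# l$ parallel lines of $\mathfrak A(\ca P,l_\infty)$. First I would locate the intersection point: since $k$ and $l$ are affine lines they are apart from $l_\infty$, so by the dual of Proposition \ref{propexistsuniqueline} each meets $l_\infty$ in a unique point, and $k\parallel l$ says these coincide in a single point $D=k\cap l_\infty=l\cap l_\infty$; in particular $D\in k$, $D\in l$ and $D\in l_\infty$. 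Next, because $A$ is a point of $\mathfrak A(\ca P,l_\infty)$ we have $A\notin l_\infty$, and applying the complement axiom $A\notin l_\infty \vdash A\# D \vee D\notin l_\infty$ together with $D\in l_\infty$ (which, via $D\in l_\infty \wedge D\notin l_\infty \vdash \bot$, rules out the second disjunct) yields $A\# D$.

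With $A\# D$ and $k\# l$ in hand I would apply the self-dual axiom of $\ca P$ to the points $A,D$ and the lines $k,l$, obtaining $A\notin k \vee D\notin l \vee A\notin l \vee D\notin k$. Since $D\in k$ and $D\in l$, the disjuncts $D\notin k$ and $D\notin l$ each contradict the first $\notin$-axiom and so collapse, leaving $A\notin k \vee A\notin l$, as required. The one point I would be most careful about — and the main subtlety of the argument — is that the auxiliary point $D$ lies on $l_\infty$ and is therefore \emph{not} a point of the affine plane, so the self-dual axiom must be invoked in $\ca P$ rather than in $\mathfrak A(\ca P,l_\infty)$; the observation that the affine relations are restrictions of the projective ones is exactly what allows the resulting disjunction to be read back off in the substructure.
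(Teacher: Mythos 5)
Your proof is correct and takes essentially the same route as the paper's: part (1) by invoking Lemma \ref{lemuniv} on the quantifier-free self-dual axiom, and part (2) by locating the common point $D$ on $l_\infty$ (the paper takes $B=k\cap l$ and notes it lies on $l_\infty$, which is the same point), deriving $A\# D$ from $A\notin l_\infty$, and applying the self-dual axiom in $\ca P$ with the $D$-disjuncts killed by $D\in k$ and $D\in l$. One small citation fix: for an abstract preprojective plane the existence and uniqueness of $k\cap l_\infty$ follows from the axioms of preprojective planes themselves, not from Proposition \ref{propexistsuniqueline}, which is stated only for projective planes over rings.
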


\begin{proof}
\begin{enumerate}
\item This sequent is identical to an axiom of preprojective planes. It is quantifier free therefore it also holds on $\mathfrak A(\ca P,l_\infty)$ by Lemma \ref{lemuniv}.
\item Let $k$ and $l$ be lines that are parallel and apart from each other. Let $B$ be their intersection point in $\ca P$. $k$ and $l$ are parallel, hence $B$ lies on $l_\infty$.  Let $A$ be a point of $\mathfrak A(\ca P,l_\infty)$. Then, $A$ lies outside $l_\infty$, and therefore $A\# B$. By the axioms of preprojective planes, at least one of $A$ and $B$ lies outside at least one of $k$ and $l$. $B$ lies on both $k$ and $l$, therefore $A$ lies outside at least one of the lies $k$ and $l$. 
\end{enumerate} 
\end{proof}

\begin{prop}
Let $\ca P$ be a preprojective plane with a line $l_\infty$. Then, $\mathfrak A(\ca P,l_\infty)$ satisfies $$(A\in l,m) \wedge (k\parallel l) \wedge (l\#m) \vdash_{A,k,l,m} (k\#m) \wedge (\exists B. B\in k \wedge B\in m).$$
\end{prop}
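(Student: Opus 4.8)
The plan is to work inside $\ca P$ and reduce everything to the behaviour of the three intersection points on $l_\infty$. Write $P$ for the intersection of $l$ with $l_\infty$: since $l$ is a line of $\mathfrak A(\ca P,l_\infty)$ we have $l\# l_\infty$, so $P=l\cap l_\infty$ is a well-defined point of $\ca P$ with $P\in l$ and $P\in l_\infty$, and because $k\parallel l$ the same point satisfies $P=k\cap l_\infty$, so in particular $P\in k$. Likewise $m\# l_\infty$, so I set $Q=m\cap l_\infty$, giving $Q\in m$ and $Q\in l_\infty$. Since $A$ is a point of the affine plane, $A\notin l_\infty$; as $P\in l_\infty$, the complement axiom $A\notin l_\infty\vdash A\# P\vee P\notin l_\infty$ together with $P\in l_\infty\wedge P\notin l_\infty\vdash\bot$ yields $A\# P$.

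First I would obtain $P\notin m$. I apply Proposition \ref{proplastaxiom} with $P,A$ in the roles of the two points and $l,m$ in the roles of the two lines: the hypotheses $P\# A$, $l\# m$, $P\in l$, $A\in l$, $A\in m$ all hold, so the conclusion gives $P\notin m$. From this the apartness $k\# m$ is immediate: the axiom $P\notin m\vdash m\# k\vee P\notin k$, combined with $P\in k$ (so that $P\notin k$ is absurd), forces $m\# k$, i.e. $k\# m$.

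It remains to produce the affine intersection point. Since $k\# m$ in $\ca P$ there is a unique $B=k\cap m$, and the real content is that $B\notin l_\infty$, so that $B$ is a point of $\mathfrak A(\ca P,l_\infty)$. For this I would first note $P\# Q$: from $P\notin m$ and $Q\in m$, the axiom $P\notin m\vdash P\# Q\vee Q\notin m$ rules out the second disjunct, giving $P\# Q$. Now the three lines $k,l_\infty,m$ satisfy $k\# l_\infty$, $l_\infty\# m$ (both $k$ and $m$ are apart from $l_\infty$ by definition of affine line) and $(k\cap l_\infty)\#(l_\infty\cap m)$, i.e. $P\# Q$; this is condition (3) of the dual of Lemma \ref{lemnonc}, so $k,l_\infty,m$ are non-concurrent. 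By Definition \ref{defnnoncoll} this yields $(m\cap k)\notin l_\infty$, that is $B\notin l_\infty$. Hence $B$ is a point of $\mathfrak A(\ca P,l_\infty)$ lying on both $k$ and $m$, which together with $k\# m$ establishes the sequent.

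The main obstacle is staying constructive: one is tempted to argue that if $B$ lay on $l_\infty$ then $B=P=Q$, contradicting $P\# Q$, but this reasoning is unavailable intuitionistically. The clean way around it is to recognise $B\notin l_\infty$ as precisely one of the non-concurrence conditions for the triple $k,l_\infty,m$, and to certify that non-concurrence through the positive apartness $P\# Q$ of the two intersection points on $l_\infty$, which is exactly what the dual of Lemma \ref{lemnonc} supplies. The other point needing care is to feed Proposition \ref{proplastaxiom} the points and lines in the right roles so that $P\notin m$ drops out directly, since $P\notin m$ is the single fact from which both $k\# m$ and $P\# Q$ follow.
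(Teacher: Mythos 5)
Your proof is correct and takes essentially the same route as the paper's: both arguments work inside $\ca P$, identify the common intersection point $P = l\cap l_\infty = k\cap l_\infty$ (the paper calls it $D$), show that it lies outside $m$, and then certify via the dual of Lemma \ref{lemnonc} that the triple $k$, $m$, $l_\infty$ is non-concurrent, from which $k\# m$ and $(k\cap m)\notin l_\infty$ follow at once. The one cosmetic point is your citation of Proposition \ref{proplastaxiom}, which the paper states only for projective planes over a ring; this is harmless, since the sequent you invoke holds in any preprojective plane — it follows immediately from the self-dual axiom $A\# B \wedge l\#m \vdash A\notin l \vee B\notin m \vee A\notin m \vee B\notin l$ together with $A\in l \wedge A\notin l \vdash \bot$ — whereas the paper instead extracts $P\notin m$ from the non-concurrence of $l$, $m$, $l_\infty$, obtained from $A = l\cap m \notin l_\infty$.
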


\begin{proof}
Let $l$ and $m$ be lines of $\mathfrak A(\ca P,l_\infty)$ that are apart from each other and intersect at a point $A$ of $\mathfrak A(\ca P,l_\infty)$. $A\notin l_\infty$, therefore $l$, $m$, $l_\infty$ is a triple of non-concurrent lines in $\ca P$. Let $D=l\cap l_\infty$, and notice that $D\notin m$. $k$ is parallel to $l$, therefore $D=k\cap l_\infty$, hence the triple of lines $(k,m,l_\infty)$ is also non-concurrent. Hence, the lines $k$ and $m$ are apart from each other and their intersection lies outside $l_\infty$, or equivalently $k\#m$ and they intersect on $\mathfrak A(\ca P,l_\infty)$.
\end{proof}

\section{The theory of preaffine planes}

\begin{defn} 
The theory of \emph{preaffine planes} is written in a language with two sorts: points and lines. It has a binary relation $\#$ on points, the binary relations $\#$ and $\parallel$ on lines, and two relations $\in$ and $\notin$ between points and lines. The axioms of the theory of preaffine planes are the following:

\begin{itemize}

 \item $\#$ is an apartness relation on points, i.e. for $A$, $B$, $C$ points the following hold:
\begin{enumerate}
\item $A\# A \vdash_A \bot$, 
\item $A\# B \vdash_{A,B} B\# A$,
\item $A\# B \vdash_{A,B,C} A\# C \vee B\# C$.
\end{enumerate}

\item $\#$ is an apartness relation on lines, i.e. for $k$, $l$, $m$ lines the following hold:
\begin{enumerate}
\item $k\# k \vdash_k \bot$, 
\item $k\# l \vdash_{k,l} l\# k$,
\item $k\# l \vdash_{k,l,m} k\# m \vee l\# m$.
\end{enumerate}

\item $\notin$ is in some sense a constructive complement of $\in$, i.e. for $A$, $B$ points and $k$, $l$ lines the following hold:
\begin{enumerate}
\item $A\in l \wedge A\notin l \vdash_{A,l} \bot$,
\item $A\notin k \vdash_{A,B,k} A\# B \vee B\notin k$,
\item $A\notin k \vdash_{A,k,l} k\# l \vee A\notin l$.
\end{enumerate}

\item There exists a unique function from the set of pairs of points that are apart from each other to lines such that the image of the pair contains both points, i.e. the following hold:
\begin{enumerate}
\item $A\# B \vdash_{A,B} \exists k. A\in k \wedge B\in k $,
\item $A\# B \wedge A,B\in k \wedge A,B\in l \vdash_{A,B,k,l} k=l$.
\end{enumerate}

\item The dual of the above does not hold for affine planes, but it is still true that two lines that are apart from each other have at most one intersection point:
$k\# l \wedge A,B\in k \wedge A,B\in l \vdash_{A,B,k,l} A=B$.

\item We have the following three axioms which say that we have enough points and lines:
\begin{enumerate}
 \item $\top \vdash_l \exists A,B.A\#B \wedge A,B\in l$,
\item $\top \vdash \exists A,B,C,l. A\#B \wedge A,B\in l\wedge C\notin l$,
\item $\top \vdash_l \exists A. A\notin l$.
\end{enumerate}

\item The following three axioms say that $\parallel$ is an equivalence relation:
\begin{enumerate}
\item $\top \vdash_k k\parallel k$,
\item $k\parallel l\vdash_{k,l} l\parallel k$,
\item $k\parallel l \wedge l \parallel m\vdash_{k,l,m} k\parallel m$.
\end{enumerate}

\item The following two axioms say that given a point $A$ and a line $k$ there exists a unique line through $A$ parallel to $k$:
\begin{enumerate}
\item $\top\vdash_{A,k} \exists l. A\in l \wedge k\parallel l$,
\item $A\in k \wedge A\in l\wedge k\parallel l\vdash_{A,k,l} k=l$.
\end{enumerate}

\item These two axioms give us a way of introducing the $\notin$ relation:
\begin{enumerate}
\item  $A\# B \wedge l\#m \vdash_{A,B,l,m} A\notin l \vee B\notin m \vee A\notin m \vee B \notin l$,
\item  $k\#l \wedge k\parallel l \vdash_{A,k,l} A\notin k \vee A\notin l$.
\end{enumerate}

\item Finally this axiom gives us a condition for two lines to intersect on the affine plane: $A\in l,m \wedge k\parallel l \wedge l\#m \vdash_{A,k,l,m} k\#m \wedge (\exists B. B\in k \wedge B\in m)$.
\end{itemize}
\end{defn}

Given two points $P$ and $Q$ that are apart from each other we write $\ovl{PQ}$ for the unique line through $P$ and $Q$.

\begin{defn}
We say that three points $A$, $B$, $C$ of a preaffine plane are \emph{non-collinear} when $A\#B\#C\#A$ and $A\notin \ovl{BC}$, $B\notin \ovl{CA}$ and $C\notin \ovl{AB}$. 
\end{defn}

As in the case of preprojective planes, the definition of non-collinear points is symmetric on the three points. We do not give a definition of non-concurrent lines on preaffine planes because there is not a notion of non-concurrent lines which has similar properties as the one of non-concurrent lines of a preprojective plane. The reason for that is that lines of a preaffine plane that are apart from each other do not necessarily intersect and that breaks the symmetry satisfied by non-concurrent lines on a preprojective plane.

Lemma \ref{lemnonc} and  Lemma \ref{lemnoncd} also hold for non-collinear points of a preaffine plane and we restate them here.

\begin{lem} \label{lemnoncaff}
Let $A$, $B$ and $C$ be points of a preaffine plane. Then, the following are equivalent:
\begin{enumerate}
\item $A$, $B$, $C$ are non-collinear,
\item $B\#C$ and $A\notin \ovl{BC}$,
\item $A\# B\# C$ and $\ovl{AB}\#\ovl{BC}$.
\end{enumerate}
\end{lem}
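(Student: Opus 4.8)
The plan is to follow verbatim the proof of Lemma \ref{lemnonc}, since the cycle of implications there invokes only axioms that are shared by the two theories. First I would confirm that Proposition \ref{propnonc} remains valid for preaffine planes. Its proof uses only that $\#$ is an apartness relation on points and on lines, that $\notin$ is a constructive complement of $\in$ (in particular the axioms $A\notin k\vdash_{A,B,k} A\#B\vee B\notin k$ and $A\notin k\vdash_{A,k,l} k\#l\vee A\notin l$), the existence and uniqueness of $\ovl{BC}$ for $B\#C$, and the self-dual axiom $A\#B\wedge l\#m\vdash_{A,B,l,m} A\notin l\vee B\notin m\vee A\notin m\vee B\notin l$. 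All of these occur unchanged in the axiomatization of preaffine planes, so the proposition holds here as well, and likewise its symmetric variants.

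Granting this, the implication $(1)\Rightarrow(2)$ is immediate, as the definition of non-collinearity lists $B\#C$ and $A\notin\ovl{BC}$ among its conditions. For $(2)\Rightarrow(3)$ I would apply Proposition \ref{propnonc} to $A,B,C$ to obtain $A\#B$ and $\ovl{AB}\#\ovl{BC}$, which together with the hypothesis $B\#C$ yields $(3)$. For $(2)\Rightarrow(1)$, Proposition \ref{propnonc} gives $A\#B$ and $C\notin\ovl{AB}$; applying it again with the roles of $B$ and $C$ interchanged (legitimate because the hypotheses $B\#C$ and $A\notin\ovl{BC}$ are symmetric in $B$ and $C$, as $\ovl{BC}=\ovl{CB}$) gives $A\#C$ and $B\notin\ovl{CA}$. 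Combining these with the hypotheses recovers all of $A\#B\#C\#A$, $A\notin\ovl{BC}$, $B\notin\ovl{CA}$ and $C\notin\ovl{AB}$, which is exactly $(1)$.

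Finally, for $(3)\Rightarrow(2)$ I would feed $A\#B$ and $\ovl{AB}\#\ovl{BC}$ into the self-dual axiom applied to the points $A,B$ and the lines $\ovl{AB},\ovl{BC}$, obtaining $A\notin\ovl{AB}\vee B\notin\ovl{BC}\vee A\notin\ovl{BC}\vee B\notin\ovl{AB}$; the first, second and fourth disjuncts are excluded since $A,B\in\ovl{AB}$ and $B\in\ovl{BC}$ (using that $\in$ and $\notin$ are jointly inconsistent), leaving $A\notin\ovl{BC}$, which together with $B\#C$ (read off from $A\#B\#C$) gives $(2)$. There is no substantial obstacle here: the single point requiring care, and hence the only thing I would argue explicitly, is the transfer of Proposition \ref{propnonc} to the preaffine setting; once that is recorded, the remainder is a direct reuse of the preprojective argument.
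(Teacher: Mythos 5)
Your proposal is correct and matches the paper's own argument, which simply observes that the proof of Lemma \ref{lemnonc} (and implicitly of Proposition \ref{propnonc}, on which it rests) invokes only axioms of preprojective planes that are also axioms of preaffine planes. Your explicit audit of the axioms used in Proposition \ref{propnonc} --- the apartness axioms, the $\in$/$\notin$ interaction axioms, existence and uniqueness of $\ovl{BC}$ for $B\#C$, and the self-dual axiom --- makes precise exactly the transfer the paper asserts, so the two arguments coincide in substance.
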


\begin{proof}
In the proof of the above statement for preprojective planes in \ref{lemnonc}, we only used axioms of preprojective planes which also hold for preaffine planes. Hence, the proof is still valid for points of preaffine planes.
\end{proof}

\begin{lem} \label{lemnoncdaff}
Let $A$, $B$, $C$, $D$ be points of a preaffine plane and let $A$, $B$, $C$ be non-collinear. Then, at least one combination of $D$ with two of the points $A$, $B$, $C$ gives a non-collinear triple.
\end{lem}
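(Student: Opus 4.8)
The plan is to mirror, essentially verbatim, the proof of Lemma \ref{lemnoncd} for preprojective planes, and to justify this transfer by checking that every ingredient that proof invoked is available in the preaffine setting. The crucial observation is that Lemma \ref{lemnoncaff} already furnishes the exact preaffine analogue of the characterization of non-collinearity (Lemma \ref{lemnonc}), and that the only other tools needed — the apartness axioms on points and lines, the axiom $A\in l \wedge A\notin l \vdash_{A,l}\bot$, and the axiom $A\# B \wedge l\#m \vdash_{A,B,l,m} A\notin l \vee B\notin m \vee A\notin m \vee B \notin l$ — are all among the axioms of preaffine planes. So no genuinely new argument is required; the work is in assembling the shared pieces in the right order.

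Concretely, I would proceed as follows. Since $A$, $B$, $C$ are non-collinear we have $A\#B$, and the apartness axiom $A\#B\vdash_{A,B,D} A\#D\vee B\#D$ lets me conclude that $D$ is apart from at least one of $A$ and $B$; without loss of generality assume $D\#B$. By Lemma \ref{lemnoncaff} applied to the non-collinear triple $A$, $B$, $C$, the lines $\ovl{AB}$ and $\ovl{BC}$ are apart, i.e.\ $\ovl{AB}\#\ovl{BC}$. Now I invoke the preaffine axiom on two apart points and two apart lines with the points $D$, $B$ and the lines $\ovl{AB}$, $\ovl{BC}$: at least one of the four non-incidences $D\notin\ovl{AB}$, $B\notin\ovl{BC}$, $D\notin\ovl{BC}$, $B\notin\ovl{AB}$ must hold.

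The final step is to eliminate the degenerate options and then read off non-collinearity. Since $B\in\ovl{AB}$ and $B\in\ovl{BC}$, the axiom $A\in l\wedge A\notin l\vdash_{A,l}\bot$ rules out $B\notin\ovl{AB}$ and $B\notin\ovl{BC}$, leaving $D\notin\ovl{AB}$ or $D\notin\ovl{BC}$. In the first case, together with $A\#B$, condition (2) of Lemma \ref{lemnoncaff} (for the triple $D$, $A$, $B$) shows that $D$, $A$, $B$ are non-collinear; in the second case, together with $B\#C$, the same lemma applied to $D$, $B$, $C$ shows that $D$, $B$, $C$ are non-collinear. Either way, $D$ together with two of $A$, $B$, $C$ forms a non-collinear triple, as required. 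I do not anticipate a serious obstacle here: the only point demanding care is confirming that each cited axiom is indeed a preaffine axiom (rather than one special to preprojective planes, such as the existence of intersection points of apart lines), and indeed all of them are, so the preprojective argument transfers without modification.
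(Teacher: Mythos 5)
Your proof is correct and is essentially the paper's own argument: the paper proves this lemma by observing that the proof of Lemma \ref{lemnoncd} for preprojective planes uses only axioms shared by preaffine planes, and your proposal simply spells out that transfer explicitly (same case split on $D\#A$ vs.\ $D\#B$, same use of the apartness/non-incidence axioms, same appeal to the non-collinearity characterization via Lemma \ref{lemnoncaff}).
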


\begin{proof}
The proof of the above for preprojective planes in Lemma \ref{lemnoncd} only uses axioms of preprojective planes which also hold for preaffine planes. Therefore, the proof is still valid for preaffine planes.
\end{proof}


\begin{thrm}
Given a preprojective plane $\ca P$ with a line $l_\infty$, the structure $\mathfrak A(\ca P, l_\infty)$ is a preaffine plane. In particular, for $R$ a local ring, $\mathbb A(R)$ is a preaffine plane.
\end{thrm}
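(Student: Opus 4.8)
The plan is to observe that the present section has already verified, proposition by proposition, every axiom in the definition of a preaffine plane for the structure $\mathfrak A(\ca P, l_\infty)$; the theorem is therefore purely a matter of assembling those results. Concretely, I would run down the axiom list and cite the corresponding proposition: the two apartness axioms (on points and on lines) come from the proposition stating that $\#$ on points and $\#$ on lines are apartness relations on $\mathfrak A(\ca P,l_\infty)$; the three axioms expressing that $\notin$ is a constructive complement of $\in$ come from the proposition establishing $A\in l \wedge A\notin l \vdash \bot$, $A\notin k \vdash A\# B \vee B\notin k$ and $A\notin k \vdash k\# l \vee A\notin l$; the existence and uniqueness of the join of two apart points is the proposition giving the two sequents for the line through a pair of apart points; and the ``at most one intersection'' axiom is the proposition on $k\# l \wedge A,B\in k \wedge A,B\in l \vdash A=B$.

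Continuing down the list, the three nondegeneracy (``enough points and lines'') axioms are supplied by the corresponding proposition, whose four sequents include the three required ones; the three axioms saying $\parallel$ is an equivalence relation, together with the two axioms giving existence and uniqueness of the parallel through a point, are the two propositions treating $\parallel$; the two axioms introducing $\notin$ are the proposition establishing $A\# B \wedge l\#m \vdash A\notin l \vee B\notin m \vee A\notin m \vee B \notin l$ together with $k\#l \wedge k\parallel l \vdash A\notin k \vee A\notin l$; and finally the intersection axiom is exactly the last proposition of the section. Since every axiom of the theory of preaffine planes is thereby satisfied, $\mathfrak A(\ca P, l_\infty)$ is a preaffine plane.

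For the final assertion I would invoke the earlier lemma that the projective plane $\mathbb P(R)$ over a local ring $R$ is a preprojective plane. Applying the first part of the theorem with $\ca P = \mathbb P(R)$ and $l_\infty = (0,0,1)$ shows that $\mathfrak A(\mathbb P(R),(0,0,1))$ is a preaffine plane. As remarked at the start of this section, Lemma \ref{lemapartaffine} and Lemma \ref{lemparallel} (together with the matching definitions of the $\in$, $\notin$ and $\#$ relations) provide an isomorphism $\mathbb A(R)\cong \mathfrak A(\mathbb P(R),(0,0,1))$ of structures; since the axioms are all geometric sequents and hence preserved under isomorphism, $\mathbb A(R)$ is a preaffine plane as well.

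There is no serious obstacle here, as the mathematical content lives entirely in the individual propositions of this section. The only point requiring a little care is the bookkeeping: checking that the cited propositions collectively exhaust the axiom list with no gaps (in particular that the nondegeneracy proposition really delivers all three required existence statements), and confirming that the identification $\mathbb A(R)\cong \mathfrak A(\mathbb P(R),(0,0,1))$ matches each of the five relations exactly, so that the axioms transfer across the isomorphism.
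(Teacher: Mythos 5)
Your proposal is correct and matches the paper's own proof, which likewise just asserts that $\mathfrak A(\ca P, l_\infty)$ satisfies the preaffine axioms ``by the results of the previous section'' and then handles $\mathbb A(R)$ via the facts that $\mathbb P(R)$ is a preprojective plane and that $\mathbb A(R)$ is isomorphic to $\mathfrak A(\mathbb P(R),(0,0,1))$ (by Lemma \ref{lemapartaffine} and Lemma \ref{lemparallel}). Your more explicit axiom-by-proposition bookkeeping is simply a fuller account of the same argument.
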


\begin{proof}
Given a preprojective plane $\ca P$ with a line $l_\infty$, $\mathfrak A(\ca P, l_\infty)$ satisfies the axioms of preaffine planes by the results of the previous section.

Given a local ring $R$, $\mathbb P(R)$ is a preprojective plane, $\mathbb A(R)$ is isomorphic to  $\mathfrak A(\mathbb P(R), (0,0,1))$ and therefore it also satisfies the axioms of preaffine planes.
\end{proof}

\section{Morphisms of preaffine planes}

\begin{defn}
Given two preaffine planes, a morphism between them is a structure-preserving homomorphism: It consists of a function $f_P$ from the set of points of the first to the set of points of the second and a function $f_L$  from the set lines of the first to the set of lines of the second, such that they preserve the two $\#$ relations, $\parallel$, $\in$ and $\notin$.
\end{defn}

\begin{lem} \label{lemaffptsen}
 A morphism of preaffine planes is uniquely determined by the morphism on points.
\end{lem}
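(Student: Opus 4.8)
The plan is to mimic the proof of Lemma \ref{lemprojptsen} for preprojective planes, with the important caveat that only one direction survives: because preaffine planes are not self-dual (lines apart from each other need not intersect), there is no companion statement asserting that a morphism is determined by its action on lines. So I only prove that $f_L$ is forced by $f_P$.

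First I would fix two preaffine planes with a morphism $(f_P, f_L)$ between them, and take an arbitrary line $l$ of the source. The key input is the ``enough points'' axiom $\top \vdash_l \exists A,B.\,A\#B \wedge A,B\in l$, which supplies points $A$, $B$ lying on $l$ with $A\# B$. Since a morphism of preaffine planes preserves $\#$ on points and the incidence relation $\in$, I would conclude that $f_P(A)\# f_P(B)$ and that both $f_P(A)$ and $f_P(B)$ lie on $f_L(l)$. In the target plane, $f_P(A)\# f_P(B)$ guarantees (by the existence axiom $A\# B \vdash_{A,B} \exists k.\,A\in k \wedge B\in k$ together with the uniqueness axiom $A\# B \wedge A,B\in k \wedge A,B\in l \vdash_{A,B,k,l} k=l$) a unique line $\ovl{f_P(A)f_P(B)}$ through these two points. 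Applying this uniqueness to $f_L(l)$ then forces $f_L(l)=\ovl{f_P(A)f_P(B)}$.

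This shows $f_L(l)$ is completely determined by $f_P$ (and is independent of the choice of $A$, $B$, precisely because the target line through two apart points is unique). Hence if $(f_P,f_L)$ and $(f_P,f_L')$ are two morphisms agreeing on points, then for every line $l$ both $f_L(l)$ and $f_L'(l)$ equal $\ovl{f_P(A)f_P(B)}$, so $f_L=f_L'$ and the morphism is uniquely determined by its action on points.

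I do not expect any genuine obstacle here: every axiom invoked (enough points on a line, existence and uniqueness of the join of two apart points) is literally among the axioms of preaffine planes, and the $\parallel$ relation plays no role in the argument. The only point requiring a moment's care is to confirm that these axioms are available for preaffine planes in the same form as for preprojective planes — which they are — so the reasoning transfers verbatim; no analogue of the dual argument should be attempted.
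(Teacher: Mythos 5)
Your proof is correct and matches the paper's own argument, which explicitly reuses the proof of Lemma \ref{lemprojptsen}: pick $A\# B$ on $l$ via the enough-points axiom, use preservation of $\#$ and $\in$, and invoke uniqueness of the join to force $f_L(l)=\ovl{f_P(A)f_P(B)}$. Your observation that no dual statement should be attempted for preaffine planes is also consistent with the paper, which drops the dual clause present in the preprojective version.
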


The proof is identical to the proof of the corresponding lemma for preprojective planes (Lemma \ref{lemprojptsen}).

\begin{proof}
Let $P$, $L$ be the set of point and the set of lines respectively of a preaffine plane, and let $P'$, $L'$ be the set of points and the set of lines respectively of a second preaffine plane. Suppose we are given a morphism from the first to the second preprojective plane, such that $f_P:P\to P'$ is the morphism on points and $f_L:L\to L'$ is the morphism on lines.

Given a line $l$ in $L$ there exist points $A$ and $B$ in $P$ that are apart from each other and lie on $l$. A morphism of preaffine planes preserves the $\#$ relation on points and the incidence relation, therefore $f_P(A)$ and $f_P(B)$ are apart from each other and lie on $f_L(l)$. $\ovl{f_P(A)f_P(B)}$ is the unique line through $f_P(A)$ and $f_P(B)$, therefore $f_L(l)=\ovl{f_P(A)f_P(B)}$. Hence, $f_L$ is uniquely determined by $f_P$.
\end{proof}


\begin{prop}  \label{proppreaffmor}
Let $\ca A$ and $\ca A'$ be preaffine planes with sets of points $P$ and $P'$ respectively. Let $f_P:P\to P'$ be a function such that:
\begin{enumerate}
\item $f_P$ preserves the $\#$ relation on points,
\item for $A$, $B$, $C$ points of $\ca P$ such that $A\# B$ and $C\in \ovl{AB}$, then ($f_P(A)\# f_P(B)$ and) $f_P(C)\in \ovl{f_P(A)f_P(B)}$,
\item given three non-collinear points $A$, $B$, $C$ of $\ca P$, then the points $f_P(A)$, $f_P(B)$, $f_P(C)$ are also non-collinear,
\item whenever $A\#B$ and $C\# D$ in $P$, such that $\ovl{AB}\parallel \ovl{CD}$, then $\ovl{f_P(A) f_P(B)}\parallel \ovl{f_P(C) f_P(D)}$.
\end{enumerate}
Then, there is a unique homomorphism between the two preaffine planes whose morphism on points is $f_P$.
\end{prop}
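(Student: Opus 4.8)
The plan is to follow the proof of the preprojective analogue, Proposition \ref{propptslines}, noting that almost every step transfers verbatim because it only invokes axioms common to preprojective and preaffine planes. The genuinely new input is the preservation of $\parallel$, supplied by hypothesis (4), and the one place where the preprojective argument breaks down is the preservation of $\#$ on lines: there the earlier argument used the intersection point of two apart lines, which need not exist in a preaffine plane.

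Uniqueness is immediate from Lemma \ref{lemaffptsen}, so it remains to construct a suitable $f_L$. By the preaffine axiom $\top\vdash_l \exists A,B.\, A\#B\wedge A,B\in l$, every line $k$ of $\ca A$ carries a pair of points $A\#B$ with $A,B\in k$, so that $k=\ovl{AB}$; I set $f_L(k):=\ovl{f_P(A)f_P(B)}$, which is legitimate because $f_P(A)\#f_P(B)$ by (1). This is independent of the chosen pair exactly as in Proposition \ref{propptslines}: if $A',B'$ is a second apart pair on $k$, then $A',B'\in\ovl{AB}$, so $f_P(A'),f_P(B')\in\ovl{f_P(A)f_P(B)}$ by (2), and since $f_P(A')\#f_P(B')$ the unique line through their images is $\ovl{f_P(A)f_P(B)}$.

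Preservation of $\in$ and $\notin$ then carries over unchanged. If $C\in k=\ovl{AB}$ then $f_P(C)\in\ovl{f_P(A)f_P(B)}=f_L(k)$ by (2); if $C\notin k=\ovl{AB}$ then, as $A\#B$, the triple $A,B,C$ is non-collinear by Lemma \ref{lemnoncaff}, hence so is $f_P(A),f_P(B),f_P(C)$ by (3), giving $f_P(C)\notin f_L(k)$. Preservation of $\parallel$ is direct from (4): writing $k=\ovl{AB}$ and $l=\ovl{CD}$ for apart pairs, the relation $k\parallel l$ is precisely $\ovl{AB}\parallel\ovl{CD}$, and (4) yields $\ovl{f_P(A)f_P(B)}\parallel\ovl{f_P(C)f_P(D)}$, that is, $f_L(k)\parallel f_L(l)$.

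The hard part will be showing $f_L$ preserves $\#$ on lines without recourse to intersection points. I would first record the characterization, valid already from the axioms shared with preprojective planes (as in the remark following the theory of preprojective planes), that $k\#l$ holds if and only if some point lies on $k$ but outside $l$. For the direction I need, given $k\#l$ I take apart points $A,B\in k$ and apply the self-dual axiom with $m=k$, namely $A\#B\wedge l\#k\vdash A\notin l\vee B\notin k\vee A\notin k\vee B\notin l$; the middle two disjuncts contradict $A,B\in k$, so I obtain a point of $k$, say $A$, with $A\notin l$. Writing $l=\ovl{CD}$ for apart $C,D\in l$, the triple $A,C,D$ is non-collinear by Lemma \ref{lemnoncaff}, whence $f_P(A)\notin\ovl{f_P(C)f_P(D)}=f_L(l)$ by (3), while $f_P(A)\in f_L(k)$ by the preservation of $\in$ just shown. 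A point lying on $f_L(k)$ and outside $f_L(l)$ forces $f_L(k)\#f_L(l)$ via the $\notin$-axiom $A\notin k\vdash_{A,k,l} k\#l\vee A\notin l$ (its second disjunct being excluded here). This establishes that $(f_P,f_L)$ preserves all the structure, completing the construction.
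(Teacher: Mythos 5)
Your proposal is correct and follows essentially the same route as the paper: define $f_L(k)=\ovl{f_P(A)f_P(B)}$ for an apart pair on $k$, check independence of the pair and preservation of $\in$, $\notin$, $\parallel$ via hypotheses (2)--(4), and handle $\#$ on lines by applying the self-dual axiom $A\#B\wedge k\#l\vdash A\notin l\vee B\notin k\vee A\notin k\vee B\notin l$ to an apart pair on $k$ to produce a point of $k$ outside $l$, whose image then witnesses $f_L(k)\#f_L(l)$. Your explicit remark that this sidesteps the intersection-point step of Proposition \ref{propptslines} (which fails for preaffine planes) is precisely the point of the paper's modification.
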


The proof is almost identical to the proof of the corresponding proposition for preprojective planes (Proposition \ref{propptslines}).

\begin{proof}
Let $\ca A$ and $\ca A'$ be preaffine planes with sets of points $P$ and $P'$ respectively and with sets of lines $L$ and $L'$ respectively. Let $f_P:P\to P'$ be a function satisfying the above conditions.

We define a function $f_L:L\to L'$ in the following way. Given a line $k$ in $L$, there exist points $A$ and $B$ on $k$ such that $A\#B$, and by 1 $f_P(A)\# f_P(B)$, hence we define $f_L(l)$ to be $\ovl{f_P(A)f_P(B)}$. By 2, the definition of $f_L$ does not depend on the choice of $A$ and $B$. Also, by 2 given a point $C$ of $\ca A$, if $C\in k$ then $f_P(C)\in f_L(k)$. $f_P$ sends non-collinear points to non-collinear points, therefore given $C\notin k$ in $(P,L)$, then $f_P(C)\notin f_L(k)$. If $A\#B$ and $C\# D$ in $P$, such that $\ovl{AB}\parallel \ovl{CD}$, then $\ovl{f_P(A) f_P(B)}\parallel \ovl{f_P(C) f_P(D)}$. Hence $f_L$ preserves the $\parallel$ relation.

Let $k$ and $l$ be lines of $\ca A$, such that $k\# l$. There exist points $A$ and $B$ lying on $k$ such that $A\#B$. $A\#B$ and $k\#l$, therefore at least one of $A$ and $B$ lies outside at least one of the lines $k$ and $l$. Both $A$ and $B$ lie on $k$, therefore at least one of them lies outside $l$. Without loss of generality, suppose $A\notin l$. Then $f_P(A)$ lies on $f_P(k)$ and outside $f_P(l)$. Hence, $f_P(k)\#f_P(l)$, and therefore $f_L$ preserves the apartness relation on lines. Hence $(f_P, f_L)$ is a morphism of preaffine planes.
\end{proof}

\begin{lem} \label{lemprojaffmorph}
Given two preprojective planes $\ca P$ and $\ca P'$ which contain the lines $l_\infty$ and $l'_\infty$ respectively, a morphism from $\ca P$ to $\ca P'$ that sends the line $l_\infty$ to the line $l'_\infty$ restricts to a morphism from the preaffine plane $\mathfrak A(\ca P,l_\infty)$ to $\mathfrak A(\ca P',l'_\infty)$.
\end{lem}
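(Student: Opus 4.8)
The plan is to write the given morphism as $f=(f_P,f_L)\colon\ca P\to\ca P'$ with $f_L(l_\infty)=l'_\infty$ and to verify that its restriction to the affine substructures is a well-defined morphism of preaffine planes. First I would check that $f$ actually carries the affine data into the affine data. A point $A$ of $\mathfrak A(\ca P,l_\infty)$ is by definition a point of $\ca P$ with $A\notin l_\infty$; since $f$ preserves $\notin$, we get $f_P(A)\notin f_L(l_\infty)=l'_\infty$, so $f_P(A)$ is a point of $\mathfrak A(\ca P',l'_\infty)$. Dually, a line $k$ of $\mathfrak A(\ca P,l_\infty)$ satisfies $k\# l_\infty$, and since $f$ preserves $\#$ on lines, $f_L(k)\# f_L(l_\infty)=l'_\infty$, so $f_L(k)$ is a line of $\mathfrak A(\ca P',l'_\infty)$. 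Thus the restriction of $(f_P,f_L)$ to affine points and lines is well-defined.

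Next, the relations $\#$ on points, $\#$ on lines, $\in$ and $\notin$ of the preaffine planes are by definition the restrictions of the corresponding relations on the preprojective planes, and $f$ preserves all of these on $\ca P$; hence the restricted maps preserve them automatically. The only relation requiring genuine attention is $\parallel$, since it is the one piece of preaffine structure not literally inherited from $\ca P$, and I expect this to be the main (though modest) obstacle. It is precisely here that the hypothesis $f_L(l_\infty)=l'_\infty$ does real work, together with the fact that a morphism of preprojective planes preserves intersection points of lines that are apart (which follows from preservation of $\in$ and $\#$ and the uniqueness of intersections).

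For $\parallel$, recall that two affine lines $k,l$ are parallel precisely when their unique intersections with $l_\infty$ in $\ca P$ coincide. Suppose $k\parallel l$ and let $D=k\cap l_\infty=l\cap l_\infty$, so that $D\in k$, $D\in l$ and $D\in l_\infty$ in $\ca P$. Applying $f$ and using preservation of $\in$ gives $f_P(D)\in f_L(k)$, $f_P(D)\in f_L(l)$ and $f_P(D)\in l'_\infty$. Since $f_L(k)\# l'_\infty$, the preprojective axioms provide a unique point lying on both $f_L(k)$ and $l'_\infty$, namely $f_L(k)\cap l'_\infty$; hence $f_L(k)\cap l'_\infty=f_P(D)$, and likewise $f_L(l)\cap l'_\infty=f_P(D)$. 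Therefore $f_L(k)$ and $f_L(l)$ have the same intersection with $l'_\infty$, i.e. $f_L(k)\parallel f_L(l)$, so the restriction preserves $\parallel$ and is a morphism of preaffine planes.
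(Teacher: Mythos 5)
Your proof is correct and follows essentially the same route as the paper's: check that points outside $l_\infty$ and lines apart from $l_\infty$ are carried to the corresponding affine data, note that $\#$, $\in$, $\notin$ are preserved automatically as restrictions, and verify $\parallel$ by showing the image of the common intersection point with $l_\infty$ is the unique intersection of each image line with $l'_\infty$. Your spelling out of the uniqueness-of-intersection argument is just a slightly more explicit version of the paper's statement that a morphism of preprojective planes preserves intersections of lines that are apart.
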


\begin{proof}
Let $\phi$ be a morphism from $\ca P$ to $\ca P'$. $\phi$ preserves the two $\#$ relations, and the $\in$ and $\notin$ relations.

The points of $\mathfrak A(\ca P,l_\infty)$ are the points of $\ca P$ which lie outside $l_\infty$. Given a point $A$ of $\ca P$ which lies outside $l_\infty$, $\phi(A)$ lies outside $\phi(l_\infty)=l'_\infty$, hence $\phi(A)$ is a point of $\mathfrak A(\ca P',l'_\infty)$.

The lines of $\mathfrak A(\ca P,l_\infty)$ are lines of $\ca P$ which are apart from $l_\infty$. Given a line $k$ of $\ca P$ which is apart from $l_\infty$, $\phi(k)$ is apart from $\phi(l_\infty)=l'_\infty$, hence $\phi(k)$ is a line of $\mathfrak A(\ca P',l'_\infty)$.

The restriction of $\phi$ to $\mathfrak A(\ca P,l_\infty)$ still preserves the two $\#$ relations, and the $\in$ and $\notin$ relations, therefore it is sufficient to prove that it preserves the $\parallel$ relation on lines to prove that it is a morphism of preaffine planes.

Suppose $k$ and $m$ are parallel lines of $\mathfrak A(\ca P,l_\infty)$. Let $D$ be $k\cap l_\infty$ on $\ca P$. Then $D$ is also the intersection of $m$ with $l_\infty$. $\phi$ is a morphism of preprojective planes therefore $\phi(D)$ is the intersection of $\phi(k)$ with $\phi(l_\infty)$ and also the intersection of $\phi(m)$ with $\phi(l_\infty)$. $\phi(l_\infty)=l'_\infty$, therefore $\phi(k)\parallel \phi(m)$.
\end{proof}

\section{Morphisms of projective planes from morphisms of preaffine planes} \label{secmorproaff}

In this section, $\ca P$ and $\ca Q$ are \emph{projective} planes containing the lines $k_\infty$ and $l_\infty$ respectively. $\phi$ is a morphism of affine planes from $\mathfrak A(\ca P,k_\infty)$ to $\mathfrak A(\ca Q,l_\infty)$. The goal of this section is to prove Theorem \ref{thrmaffprojmorph} which states that $\phi$ can be extended uniquely to a morphism of projective planes $\psi: \ca P \to \ca Q$. The following lemma will enable us to define $\psi$ on points.

\begin{lem} \label{lempsiwd}
Let $A$ be a point and let $k$, $l$, $m$ be lines of $\ca P$ such that the lines $k$, $l$ and $m$ pass through $A$, are apart from $k_\infty$, and $k\#l$. Then, ($\phi(k)\#\phi(l)$ and) the intersection of the lines $\phi(k)$ and $\phi(l)$  lies on $\phi(m)$.
\end{lem}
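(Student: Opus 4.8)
The plan is to prove the two assertions separately. The parenthetical $\phi(k)\#\phi(l)$ is immediate: $k$ and $l$ are apart from $k_\infty$, hence are lines of $\mathfrak A(\ca P,k_\infty)$, and a morphism of preaffine planes preserves the apartness relation on lines, so $k\#l$ forces $\phi(k)\#\phi(l)$. Since $\ca Q$ is a projective plane, two apart lines have a unique common point (Proposition \ref{propexistsuniqueline}); write $P':=\phi(k)\cap\phi(l)$. It remains to show $P'\in\phi(m)$.

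Conceptually the geometry splits according to whether the common point $A=k\cap l$ is an affine point ($A\notin k_\infty$) or lies at infinity ($A\in k_\infty$). If $A\notin k_\infty$, then $A$ is a point of $\mathfrak A(\ca P,k_\infty)$ lying on each of $k,l,m$; as $\phi$ preserves incidence, $\phi(A)$ lies on $\phi(k),\phi(l),\phi(m)$, and by uniqueness of the intersection of the apart lines $\phi(k),\phi(l)$ we get $\phi(A)=P'$, so $P'\in\phi(m)$. If instead $A\in k_\infty$, then since each of $k,l,m$ is apart from $k_\infty$ and passes through $A\in k_\infty$, the unique intersection of each with $k_\infty$ equals $A$; hence $k\parallel l$ and $k\parallel m$ in $\mathfrak A(\ca P,k_\infty)$. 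As $\phi$ preserves $\parallel$, the affine lines $\phi(k),\phi(l),\phi(m)$ are pairwise parallel, so they meet $l_\infty$ in a common point $P''$; lying on both $\phi(k)$ and $\phi(l)$ this $P''$ must be $P'$, and lying on $\phi(m)$ it gives $P'\in\phi(m)$.

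The main obstacle is that this dichotomy is \emph{not} constructively available: over a local ring, $A\in k_\infty$ is undecidable, and the disjunction ``$A\notin k_\infty$ or $k\parallel l$'' is exactly the statement that two apart affine lines must either meet affinely or be parallel, which the text has noted fails for local rings. To obtain a uniform argument I would replace $A$ by genuine affine witnesses. Using that each of $k,l,m$ carries three mutually apart points, together with Proposition \ref{proplastaxiom}, I would choose affine points $P\in k$, $Q\in l$, $R\in m$ (apart, respectively, from $k\cap k_\infty$, $l\cap k_\infty$, $m\cap k_\infty$, which by Proposition \ref{proplastaxiom} forces them outside $k_\infty$). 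The cotransitivity axiom $P\notin k_\infty\vdash P\#A\vee A\notin k_\infty$ (and its analogues for $Q,R$) yields covering disjunctions; in every branch that produces $A\notin k_\infty$ one finishes exactly as above, leaving only the residual branch in which one knows merely $P\#A$, $Q\#A$, $R\#A$.

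In that residual branch I would transfer the concurrency synthetically rather than by a forbidden case split, using that $\mathfrak A(\ca P,k_\infty)$ and $\mathfrak A(\ca Q,l_\infty)$ are genuine affine planes (Desargues' axioms hold, both arising from projective planes). Concretely, I would encode the hypothesis that $m$ passes through $k\cap l$ via parallel-transport at the affine witness $P$ — drawing through $P$ the parallels to $l$ and to $m$ and invoking the affine intersection axiom $A\in l,m\wedge k\parallel l\wedge l\#m\vdash k\#m\wedge\exists B.(B\in k\wedge B\in m)$ to realise the relevant meetings at affine points — and then push this finite configuration through $\phi$, which preserves all the incidences and parallelisms in sight. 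This recognises $P'$ as the image of an affine intersection lying manifestly on $\phi(m)$, uniformly in the position of $A$. I expect the bookkeeping of these auxiliary parallels, and the verification that the chosen witnesses remain in sufficiently general position, to be the most technical part of the argument.
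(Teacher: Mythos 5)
Your opening moves are sound and in fact coincide with the start of the paper's proof: $\phi(k)\#\phi(l)$ because morphisms of preaffine planes preserve apartness of lines; the case $A\notin k_\infty$ is settled by incidence preservation plus uniqueness of intersections; and your cotransitivity reduction (from $P\notin k_\infty$ deduce $P\# A\vee A\notin k_\infty$, and likewise for the other witnesses) is exactly the paper's reduction. The genuine gap is the residual branch, where one knows only $P\# A$, $Q\# A$, $R\# A$. Your parallel-transport idea cannot close it: the parallels through $P$ to $l$ and to $m$ record only the points $l\cap k_\infty$ and $m\cap k_\infty$, i.e.\ the directions of $l$ and $m$, and entirely forget the hypothesis that $m$ passes through $A=k\cap l$. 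A parallel translate $m''$ of $m$ yields the identical configuration at $P$, yet the conclusion for $m''$ fails whenever $A$ happens to be affine --- a case the residual branch cannot exclude --- so no argument built from this data alone can succeed. Nor can the affine intersection axiom be invoked as you propose: its hypothesis demands an affine point lying on two of the lines, which is precisely what is unavailable. Finally, $\phi(k)\cap\phi(l)$ is a point of $\ca Q$ that need not lie outside $l_\infty$, so it cannot be exhibited as ``the image of an affine intersection'': $\phi$ is defined only on affine points, and $k$ and $l$ may have no affine meeting at all.

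What your sketch is missing is any Desarguesian input, and that is the heart of the paper's argument. The paper chooses affine witnesses $P\in k$, $Q,B\in l$, $R,R'\in m$ (suitably apart, with $Q\notin\ovl{PR}$), builds a translated triangle by parallels --- $a$ through $R'$ parallel to $\ovl{QR}$ meeting $l$ in $Q'$, then $b$ through $Q'$ parallel to $\ovl{PQ}$ meeting $k$ in $P'$ --- and applies the projective Desargues axiom in $\ca P$ to the quadrilateral $P,Q,R,A$ with lines $k_\infty,a,\ovl{P'R'},b$. Three of the five $\delta$-hypotheses there encode exactly the concurrency of $k$, $l$, $m$ at $A$ (e.g.\ $\delta(a,\ovl{P'R'},R,A)$ holds because $R'\in a\cap\ovl{P'R'}$ and $m$ passes through $R$, $A$ and $R'$), and the conclusion is $\ovl{PR}\parallel\ovl{P'R'}$. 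Pushing the configuration through $\phi$, which preserves all incidences and parallelisms in sight, and applying Desargues a second time in $\ca Q$ --- now with $l_\infty$ among the four lines and $C=\phi(k)\cap\phi(l)$ among the four points --- forces $C$, $\phi(R)$, $\phi(R')$ to be collinear, whence $C\in\phi(m)$ since $\phi(m)=\ovl{\phi(R)\phi(R')}$. In short, preservation of incidence and parallelism alone is provably insufficient to transfer concurrency; the lemma needs Desargues' axiom in both $\ca P$ and $\ca Q$, which is why it is stated for projective planes rather than preprojective ones.
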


\begin{proof}
Let us first consider the case where $A\notin k_\infty$. Given $k$ and $l$ as above, $\phi(A)$ is the unique intersection point of the lines $\phi(k)$ and $\phi(l)$ because $\phi$ preserves the $\in$ relation. $\phi(A)\in \phi(m)$, hence the intersection point of $\phi(k)$ and $\phi(l)$ lies on $\phi(m)$.

Whenever a point $B$ of $\ca P$ is such that $B\notin k_\infty$, then either $A\notin k_\infty$ or $A\#B$. In the first case, the result is proved by the above. Hence, in the rest of the proof, whenever we are given such a point we will assume that it is apart from $A$.

$k\# l$, therefore at least one of $k$ and $l$ is apart from $m$. Without loss of generality, we assume that $l\#m$. There exist points $R$, $R'$ lying on $m$ and outside $k_\infty$ such that $R\#R'$. There also exist a point $P$ lying on $k$ and outside $k_\infty$. $P$ is apart from at least one of $R$ and $R'$. Without loss of generality, we assume that $P\#R$. There exist points $Q$ and $B$ lying on $l$ and outside $k_\infty$ such that $Q\# B$. $P$, $Q$, $B$, $R$ and $R'$ all lie outside $k_\infty$ and therefore by an earlier argument we may assume that they are all apart from $A$. $k\# l$, therefore we can also conclude that $P\notin l$, and therefore $l\#\ovl{PR}$. Hence, at least one of $Q$ and $B$ lies outside (at least on of the lines $l$ and) $\ovl{PR}$. Without loss of generality, we assume that $Q\notin \ovl{PR}$. Then, $(A,Q,R)$, $(A,P,Q)$ and $(P,Q,R)$ are all triples of non-collinear points. Therefore, the line $\ovl{QR}$ is apart from both $l$ and $m$, and the line $\ovl{PQ}$ is apart from both $k$ and $l$.

\begin{center}
\begin{tikzpicture}
\draw 
(0,0) node[below] {$Q'$} -- (2,0)
(0,0) -- (0,2)
(2,0) node[below] {$R'$}
(4.25,2.25) -- (1,-1) node[left] {$m$}
(3.75,2.75)--(-1,1.8) node[left] {$k$}
(0,2) node[above] {$P'$}--
(2,0)
(3.75,2.25) -- (-1,-0.6) node[left] {$l$}
(2.5,2.5) node[above] {$P$} -- 
(2.5,1.5) node[below] {$Q$} --
(3.5,1.5) node[below] {$R$} --
(2.5,2.5)
(0,1) node[left] {$b$}
(1,0) node[below] {$a$} ;
\end{tikzpicture}
\end{center}

In $\mathfrak A(\ca P,k_\infty)$: Let $a$ be the line through $R'$ parallel to $\ovl{QR}$ and let $Q'$ be the intersection of $a$ and $l$ (the intersection exists because $a$ is parallel to $\ovl{QR}$ which is apart from and intersects with $l$). Let $b$ be the line through $Q'$ parallel to $\ovl{PQ}$, and let $P'$ be the intersection of $b$ and $k$ (the intersection exists because $b$ is parallel to $\ovl{PQ}$ which is apart from and intersects with $k$). $Q'$ and $P'$ (as points of $\ca P$) lie outside $k_\infty$ and therefore by an earlier argument we may assume that they are both apart from $A$. Notice that since $\ovl{PQ}$ and $\ovl{QR}$ are apart from each other (and they intersect), then their parallel line $\ovl{P'Q'}$ and $\ovl{Q'R'}$ are apart from each other and they intersect. Hence, the points $P'$, $Q'$ and $R'$ are non-collinear.

Consider the following configuration
\begin{center}
\begin{tikzpicture}
 \draw 
(0,0) node[left] {$P$} 
-- (2,2) node[above] {$Q$} 
-- (4,0) node[right] {$R$}
-- (2,-2) node[below] {$A$}
-- (0,0)

(1.5,1.5)
-- (3,1)
-- (3.5,-0.5) node[right] {$R'$}
-- (1,-1)
-- (1.5,1.5)
(0.9,-1.1) node[left] {$P'$}
(2,1) node {$k_\infty$}
(3,0) node {$a$}
(2,-1.1) node {$\ovl{P'R'}$}
(0.9,0) node {$b$} ;
\end{tikzpicture}
\end{center}
and notice that the conditions of Desargues' axiom are satisfied by the above discussion. By Desargues' axiom, $\delta(k_\infty,\ovl{P'R'},P,R)$ is satisfied in $\ca P$. Therefore in $\mathfrak A(\ca P,k_\infty)$, $\ovl{PR}$ is parallel to $\ovl{P'R'}$. 

Let $C$ be the unique intersection of $\phi(k)$ and $\phi(l)$ in $\ca P$. Consider the following configuration
\begin{center}
\begin{tikzpicture}
 \draw 
(0,0) node[left] {$C$} 
-- (2,2) node[above] {$\phi(P)$} 
-- (4,0) node[right] {$\phi(R)$}
-- (2,-2) node[below] {$\phi(Q)$}
-- (0,0)

(1.5,1.5)
-- (3,1)
-- (3.5,-0.5) 
-- (1,-1) node[left] {$\phi(Q')$}
-- (1.5,1.5) node[left] {$\phi(P')$}
(2.1,0.9) node {$\phi(\ovl{P'R'})$}
(3,0) node {$l_\infty$}
(2,-1.1) node {$\phi(a)$}
(0.8,0) node {$\phi(b)$} ;
\end{tikzpicture}
\end{center}
and notice that the conditions of Desargues' axiom are satisfied because $\phi$ is an affine plane homomorphism and because $\phi(P'R')\parallel \ovl{\phi(P)\phi(R)}$. By Desargues' axiom, $\delta(\phi(\ovl{P'R'}),\phi(a),C,\phi(R))$ is satisfied in $\ca P$. Hence, $\phi(R')$ (which is $\phi(\ovl{P'R'})\cap\phi(a)$), $\phi(R)$ and $C$ lie on a common line. $\phi(m)$ is the unique line through $\phi(R)$ and $\phi(R')$, therefore $C\in \phi(m)$ as required.
\end{proof}

\begin{lem} \label{lempsi}
Let $\phi:\mathfrak A(\ca P,k_\infty) \to \mathfrak A(\ca Q,l_\infty)$ be a morphism of preaffine planes as above. Then, there exists a unique function $\psi:\ca P_{\text{pt}}\to \ca Q_{\text{pt}}$ such that for any point $A$ and line $l$ of $\ca P$, such that $l\# k_\infty$, if $A\in l$ then $\psi(A)\in \phi(l)$. Moreover, $\psi$ extends $\phi$, i.e. whenever $A\notin k_\infty$, $\psi(A)=\phi(A)$.
\end{lem}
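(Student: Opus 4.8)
The plan is to define $\psi$ on an \emph{arbitrary} point $A$ of $\ca P$ (including points on $k_\infty$) by choosing two affine lines through it and intersecting their images, and then to let Lemma \ref{lempsiwd} do all the real work. Concretely, I would first show that through any point $A$ there pass two lines $k,l$ with $k\#l$ and both $k\#k_\infty$ and $l\#k_\infty$. By the axiom $\top\vdash_A\exists k,l,m.\,k\#l\#m\#k\wedge A\in k,l,m$ there are three lines through $A$ that are pairwise apart. Applying the cotransitivity axiom for $\#$ on lines to each pair against $k_\infty$ gives $k\#k_\infty\vee l\#k_\infty$, $l\#k_\infty\vee m\#k_\infty$ and $m\#k_\infty\vee k\#k_\infty$, and a short constructive case analysis (the argument used in Lemma \ref{lemomega39}) shows that at least two of $k,l,m$ are apart from $k_\infty$. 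Those two, being among a pairwise-apart triple, are automatically apart from each other, giving the desired pair.

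Having fixed such a pair $k,l$, I would set $\psi(A):=\phi(k)\cap\phi(l)$. This is meaningful because Lemma \ref{lempsiwd} yields $\phi(k)\#\phi(l)$, so the two image lines meet in a unique point of $\ca Q$. Well-definedness is then immediate from the same lemma: for any third affine line $m$ through $A$ the point $\phi(k)\cap\phi(l)$ lies on $\phi(m)$, so a second admissible pair $k',l'$ produces a point lying on both $\phi(k')$ and $\phi(l')$, which by uniqueness of intersection of apart lines equals $\phi(k')\cap\phi(l')$. The very same computation delivers the required incidence property: given any line $n$ with $n\#k_\infty$ and $A\in n$, Lemma \ref{lempsiwd} applied to the concurrent triple $k,l,n$ places $\psi(A)=\phi(k)\cap\phi(l)$ on $\phi(n)$.

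To see that $\psi$ extends $\phi$, suppose $A\notin k_\infty$. Then $A$ is a point of $\mathfrak A(\ca P,k_\infty)$ lying on the affine lines $k$ and $l$, so $\phi(A)\in\phi(k)$ and $\phi(A)\in\phi(l)$; since $\phi(k)\#\phi(l)$ meet uniquely, $\psi(A)=\phi(k)\cap\phi(l)=\phi(A)$. For uniqueness, I would note that any function $\psi'$ satisfying the stated incidence property must, applied to the two affine lines $k,l$ through $A$, send $A$ into both $\phi(k)$ and $\phi(l)$, and hence to their unique common point $\psi(A)$; thus $\psi'=\psi$.

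The bulk of the difficulty is already discharged in Lemma \ref{lempsiwd}, so the genuinely delicate point remaining is the constructive existence of two mutually-apart affine lines through an arbitrary point --- especially through a point of $k_\infty$, where there is no affine point available --- together with the bookkeeping needed to verify, at each invocation, that Lemma \ref{lempsiwd}'s hypotheses (three lines concurrent at $A$, all apart from $k_\infty$, with the first two apart from each other) are satisfied. Once that is in place, well-definedness, the incidence property, extension of $\phi$, and uniqueness all follow by repeated appeals to the uniqueness of the intersection of two apart lines.
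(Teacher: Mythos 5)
Your proposal is correct and follows essentially the same route as the paper: define $\psi(A)$ as $\phi(k)\cap\phi(l)$ for two lines through $A$ that are apart from each other and from $k_\infty$ (obtained from the axiom giving three pairwise-apart lines through $A$ plus cotransitivity), and let Lemma \ref{lempsiwd} supply well-definedness, the incidence property, and the extension/uniqueness claims. The only difference is that you spell out the combinatorial case analysis showing at least two of the three lines are apart from $k_\infty$, which the paper states without proof.
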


\begin{proof}
Given a point $A$ of $\ca P$, there exist lines $k$, $l$, $m$ passing through $A$ such that $k\#l\#m\#k$. At least two of $k$, $l$ and $m$ are apart from $k_\infty$. Without loss of generality, we assume that $k$ and $l$ that are apart from $k_\infty$. Note that by the requirements of $\psi$, $\psi(A)$ needs to lie on both lines $\phi(k)$ and $\phi(l)$. Hence, we define $\psi(A)$ to be the intersection of $\phi(k)$ and $\phi(l)$. Let $k'$ and $l'$ be a second pair of lines, such that $k'\#l'\#k_\infty \# k'$ and $A=k'\cap l'$. By Lemma \ref{lempsiwd} the intersection of $\phi(k)$ and $\phi(l)$ lies on both $\phi(k')$ and $\phi(l')$. Hence, $\phi(k)\cap \phi(l)=\phi(k')\cap \phi(l')$, and therefore $\psi$ does not depend on the choice of $k$ and $l$.

By Lemma \ref{lempsiwd}, given a line $m$ of $\ca P$ which is apart from $k_\infty$, $A\in m$ implies that $\psi(A)\in \phi(m)$. Moreover, if $A\notin k_\infty$ and $k$ and $l$ are as above, then $\phi(A)$ lies on both $\phi(k)$ and $\phi(l)$, hence $\psi(A)=\phi(k)\cap \phi(l)=\phi(A)$. 
\end{proof}

Let $\psi:\ca P_{\text{pt}}\to \ca Q_{\text{pt}}$ be the unique morphism satisfying the conditions of the above lemma. By Proposition \ref{propptslines}, the morphism $\psi$ extends to a unique morphism of projective planes iff:
\begin{enumerate}
\item $\psi$ preserves and reflects $\#$,
\item if $A$, $B$, $C$ are points such that $A\#B$ and $C\in \ovl{AB}$, then ($\psi(A)\#\psi(B)$ and) $\psi(C)\in \ovl{\psi(A)\psi(B)}$,
\item given three non-collinear points, then their images under $\psi$ are non-collinear.
\end{enumerate}

We shall prove the three above points to show that the morphism $\psi$ extends to a unique morphism of projective planes.

\begin{lem}
$\psi$ preserves the apartness relation on points. 
\end{lem}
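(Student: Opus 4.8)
The plan is to exhibit, for apart points $A\# B$ of $\ca P$, a line of $\ca Q$ through $\psi(A)$ that passes outside $\psi(B)$; apartness of the images then follows from the constructive-complement axiom relating $\in$ and $\notin$. The main obstacle is that we cannot case-split on whether $A$ or $B$ lies on or outside $k_\infty$ (these alternatives are not exhaustive in the constructive setting), so the argument must be uniform. The device for transferring apartness across $\psi$ will always be to introduce an auxiliary \emph{affine} point $Y$ (one lying outside $k_\infty$), for which $\psi(Y)=\phi(Y)$, and then exploit that $\phi$ preserves the $\notin$ relation on the affine substructure.

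First I would prove the following reduction: if $X\# Y$ and $Y\notin k_\infty$, then $\psi(X)\#\psi(Y)$. Set $t=\ovl{XY}$; since $Y\in t$ and $Y\notin k_\infty$, the axioms for $\notin$ give $t\# k_\infty$. Using that three pairwise apart lines pass through $X$ and applying cotransitivity of $\#$ repeatedly, choose a line $k$ through $X$ with $k\# t$ and $k\# k_\infty$. Then $X$ is the unique intersection $k\cap t$, so Proposition \ref{proplastaxiom} (applied to the apart points $Y\#X$ on $t$ and the line $k$ through $X$) yields $Y\notin k$. As $Y$ and $k$ both lie in $\mathfrak A(\ca P,k_\infty)$ and $\phi$ preserves $\notin$, we get $\phi(Y)\notin\phi(k)$; meanwhile $X\in k$ with $k\# k_\infty$ gives $\psi(X)\in\phi(k)$ by Lemma \ref{lempsi}. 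Since $\psi(Y)=\phi(Y)$, the complement axiom converts $\phi(Y)\notin\phi(k)$ and $\psi(X)\in\phi(k)$ into $\psi(X)\#\psi(Y)$.

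Next I would upgrade this to a separation statement: if $s\# k_\infty$ and $X\notin s$, then $\psi(X)\notin\phi(s)$. Writing $D:=s\cap k_\infty$, I would use the three pairwise apart points on $s$ together with cotransitivity against $D$ to select an affine point $Y\in s$, i.e. one with $Y\# D$, so that $Y\notin k_\infty$ by Proposition \ref{proplastaxiom}. From $X\notin s$ and $Y\in s$ the $\notin$-axiom forces $X\# Y$, hence $t:=\ovl{XY}$ exists, $t\# k_\infty$, $t\# s$, and $s\cap t=Y$. By the previous step $\psi(X)\#\phi(Y)$; moreover $\psi(X)\in\phi(t)$, $\phi(Y)\in\phi(t)\cap\phi(s)$, and $\phi(t)\#\phi(s)$ because $\phi$ preserves apartness of lines. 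Proposition \ref{proplastaxiom} then delivers $\psi(X)\notin\phi(s)$.

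Finally, for the lemma itself, given $A\# B$ I would pick a line $p$ through $A$ with $p\#\ovl{AB}$ and $p\# k_\infty$ (again by cotransitivity among three apart lines through $A$). Then $A=p\cap\ovl{AB}$, so Proposition \ref{proplastaxiom} gives $B\notin p$, while $A\in p$ with $p\# k_\infty$ gives $\psi(A)\in\phi(p)$ via Lemma \ref{lempsi}. Applying the separation step with $X=B$ and $s=p$ yields $\psi(B)\notin\phi(p)$, and the complement axiom turns $\psi(B)\notin\phi(p)$ together with $\psi(A)\in\phi(p)$ into $\psi(A)\#\psi(B)$. I expect the first (affine-reduction) step to be the crux, since it is precisely there that preservation of $\notin$ by $\phi$ is converted into apartness of images; the remaining steps are bookkeeping with Proposition \ref{proplastaxiom} and cotransitivity of the apartness relations.
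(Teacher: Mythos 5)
Your proof is correct, but it reaches the conclusion by a genuinely different route from the paper's. The paper first treats the mixed case $B\notin k_\infty$ entirely at the level of $\ca Q$: since $\psi(B)=\phi(B)\notin l_\infty$, cotransitivity of $\notin$ gives $\psi(A)\#\psi(B)$ or $\psi(A)\notin l_\infty$, and in the second branch it concludes $A\notin k_\infty$, so that both points are affine and preservation of $\#$ by $\phi$ finishes; the general case is then settled by choosing an affine point $C\notin\ovl{AB}$, observing that $\psi(A)\in\phi(\ovl{AC})$, $\psi(B)\in\phi(\ovl{BC})$, that $\phi(\ovl{AC})\#\phi(\ovl{BC})$ meet at $\psi(C)$, and converting $\psi(A)\#\psi(C)$ into $\psi(A)\notin\phi(\ovl{BC})$, hence $\psi(A)\#\psi(B)$. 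You prove the same mixed case differently: rather than the image-level dichotomy, you manufacture an affine line $k$ through $X$ apart from both $\ovl{XY}$ and $k_\infty$, extract $Y\notin k$ from Proposition \ref{proplastaxiom}, and push it through $\phi$; this sidesteps the inference ``$\psi(A)\notin l_\infty$ implies $A\notin k_\infty$'' which the paper asserts without elaboration, so your base case is arguably the more self-contained one constructively. Where the paper uses an auxiliary affine \emph{point}, you use auxiliary affine \emph{lines}, and along the way you isolate a reusable separation lemma ($s\#k_\infty$ and $X\notin s$ imply $\psi(X)\notin\phi(s)$) that the paper never states, though it anticipates the content of the subsequent lemma on non-collinear triples. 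The price is length: the paper's general case needs a single application of the self-dual axiom where your step 2 requires its own witness constructions. One citation nuance: Proposition \ref{proplastaxiom} is stated for projective planes over a ring, but its sequent is an immediate consequence of the self-dual axiom of preprojective planes (the incidences $A\in l$, $B\in l$, $B\in m$ eliminate three of the four disjuncts), so your repeated appeals to it inside the abstract planes $\ca P$ and $\ca Q$ are legitimate; it would be cleaner to cite the axiom directly.
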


\begin{proof}
Let $A$ and $B$ be points of $\mathfrak A(\ca P,k_\infty)$, such that $A\#B$.

Let us first consider the case where $B\notin k_\infty$. Then $\psi(B)\notin l_\infty$, therefore either $\psi(A)\# \psi(B)$ or $\psi(A) \notin l_\infty$. In the first case, our claim is proved. In the second case, we conclude that $A\notin k_\infty$, therefore $\psi(A)=\phi(A)\# \phi(B)=\psi(B)$.

For the general case, given two points $A$ and $B$ of $\ca P$ such that $A\# B$, there exists $C$ on the affine plane such that $C\notin \ovl{AB}$. $\ovl{AC}$ and $\ovl{BC}$ are both lines of the affine plane, and $\psi(A)\in \phi(\ovl{AC})$ and $\psi(B)\in \phi(\ovl{BC})$. Furthermore, $\phi(\ovl{AC}) \#\phi(\ovl{BC})$, and they intersect at $\psi(C)$. By the above $\psi(A)\#\psi(C)$, hence $\psi(A)\notin \phi(\ovl{BC})$,  and therefore $\psi(A)\# \psi(B)$.
\end{proof}

\begin{lem}
If $A$, $B$ and $C$ are points of $\ca P$ such that $A\#B$ and $C\in \ovl{AB}$, then ($\psi(A) \# \psi(B)$ and), $\psi(C)\in \ovl{\psi(A)\psi(B)}$.
\end{lem}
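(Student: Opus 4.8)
The plan is to reduce everything to the defining property of $\psi$ recorded in Lemma \ref{lempsi}, namely that $\psi(P)\in\phi(m)$ whenever $m\#k_\infty$ and $P\in m$. By the preceding lemma $\psi$ preserves apartness, so from $A\#B$ we already get $\psi(A)\#\psi(B)$; writing $n:=\ovl{\psi(A)\psi(B)}$, the whole content of the statement is the single assertion $\psi(C)\in n$. So I would first dispose of the apartness clause by citing the preceding lemma and then concentrate on $\psi(C)\in n$.

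The easy case is $\ovl{AB}\#k_\infty$, i.e. when the join is genuinely a line of $\mathfrak A(\ca P,k_\infty)$. There $A,B,C\in\ovl{AB}$, so Lemma \ref{lempsi} puts all three of $\psi(A),\psi(B),\psi(C)$ on $\phi(\ovl{AB})$; since $\psi(A)\#\psi(B)$, uniqueness of joins forces $\phi(\ovl{AB})=n$, whence $\psi(C)\in n$. I would point out that this already covers every situation in which $A$ or $B$ lies outside $k_\infty$, because then $\ovl{AB}\#k_\infty$ follows from the complementation axioms; so the only genuinely remaining trouble is when $A,B$ (and hence $C$) sit on or near $k_\infty$.

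For the general case I would pass to affine joins through an auxiliary point. As in the proof of the preceding lemma, choose an affine point $D\notin\ovl{AB}$. Using the complementation axioms one checks $\ovl{DA},\ovl{DB},\ovl{DC}$ are all apart from $k_\infty$ (each contains the affine point $D$) and all apart from $\ovl{AB}$; applying the easy case to these affine joins gives $\phi(\ovl{DA})=\ovl{\psi(D)\psi(A)}$, and similarly for $B$ and $C$. Thus $\psi(A),\psi(B),\psi(C)$ lie on three lines of $\ca Q$ concurrent at $\psi(D)$, and it remains to transport the collinearity of $A,B,C$ across $\psi$.

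The hard part will be exactly this transport, since the collinearity of $A,B,C$ is carried by the possibly non-affine line $\ovl{AB}$, which I cannot feed directly into Lemma \ref{lempsi}. My plan is to build, from affine points only, a Desargues configuration whose three diagonal (axis) points are $A,B,C$: two triangles with affine vertices whose pairs of corresponding sides meet in $A$, $B$, $C$ respectively, each such side being affine because it carries an affine vertex. Collinearity of $A,B,C$ in $\ca P$ then makes the two triangles perspective from a point by Desargues' axiom; applying $\phi$ to the affine vertices, sides and centre transports the concurrency to $\ca Q$, and Desargues' axiom in the projective plane $\ca Q$ (Theorem \ref{thrmnewdes}) returns the collinearity of the images $\psi(A),\psi(B),\psi(C)$, whose common line must be $n$. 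The delicate points, which I expect to be the real obstacle, are the constructive selection of the auxiliary vertices (carried out through apartness disjunctions rather than excluded middle, in the spirit of Lemma \ref{lemnoncdaff}) and the verification that all the non-degeneracy hypotheses ($\delta$-conditions and apartness of the relevant images) needed to invoke Desargues' axiom are met; the extreme subcase of points actually lying on $k_\infty$ is absorbed here by the fact, immediate from the definition of $\psi$ together with preservation of $\parallel$, that $\psi$ maps $k_\infty$ into $l_\infty$.
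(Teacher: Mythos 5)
Your route is the paper's route: the case $\ovl{AB}\#k_\infty$ is dispatched exactly as you say via Lemma \ref{lempsi} and uniqueness of joins, the apartness clause comes from the preceding lemma, and the general case is settled by a double application of Desargues' axiom through a two-triangle configuration with affine vertices whose corresponding sides meet in $A$, $B$, $C$. In the paper the triangles are $PQR$ and $P'Q'R'$, built from a line $m$ with $C\notin m$ (one first argues $m\#k_\infty$, or else $C\notin k_\infty$ and one is back in the easy case), points $P\#P'$ on $m$ outside $k_\infty$, $Q\in\ovl{PC}$, the line $m'$ through $Q$ parallel to $m$ cutting $\ovl{P'C}$ in $Q'$, and $R=\ovl{PA}\cap\ovl{QB}$, $R'=\ovl{P'A}\cap\ovl{Q'B}$; the hypothesis $C\in\ovl{AB}$ enters precisely as the diagonal condition $\delta(l,k,A,B)$ with $l=\ovl{P'C}$, $k=\ovl{PC}$, and the first Desargues yields $\delta(m,m',R',R)$, after which $\phi$ transports the data and Desargues in $\ca Q$ returns $\delta(\phi(k),\phi(l),\psi(A),\psi(B))$, forcing $\psi(C)=\phi(k)\cap\phi(l)$ onto $\ovl{\psi(A)\psi(B)}$.

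One concrete correction to your transport step: you propose ``applying $\phi$ to the \dots\ centre'', but the centre of perspectivity need not be an affine point, so $\phi$ of it is undefined. The paper avoids this by \emph{forcing} the centre onto $k_\infty$ --- since $m'$ is chosen parallel to $m$, the output of the first Desargues is the parallelism $\ovl{RR'}\parallel m\parallel m'$, and parallelism of affine lines is exactly the datum that the affine-plane morphism $\phi$ preserves. (Your plan could also be repaired without this trick by applying $\psi$ to the centre and invoking Lemma \ref{lempsi} for each of the three joins, which are affine lines through it; but the parallel-line construction is what the paper actually does.) The ``delicate points'' you defer --- constructive selection of the vertices and the non-degeneracy bookkeeping --- constitute the bulk of the paper's argument and are discharged by repeated use of the dichotomy you already identified: any point outside $k_\infty$ either lies outside $\ovl{AB}$ or else $\ovl{AB}\#k_\infty$ and the easy case applies (this is how the paper may assume $R,R'\notin k_\infty$, $P,P',Q,Q'\notin\ovl{AB}$, etc.). Your auxiliary paragraph about a single point $D$ and the three concurrent joins $\ovl{DA},\ovl{DB},\ovl{DC}$ is harmless but plays no role in the paper's proof and can be dropped.
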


\begin{proof}
Let us first consider the case where $\ovl{AB}\# k_\infty$. Then $\psi(A)$, $\psi(B)$ and $\psi(C)$ lie on $\phi(\ovl{AB})$, therefore $\psi(C)\in \phi(\ovl{AB})=\ovl{\psi(A)\psi(B)}$.

Whenever we have a point $D$ of $\ca P$ such that $D\notin k_\infty$, then either $\ovl{AB}\# k_\infty$ or $D\notin \ovl{AB}$. In the first case $\psi(C)\in \ovl{\psi(A) \psi(B)}$. Hence, in the rest of the proof, whenever we have such a $D$ we shall assume that it lies outside $\ovl{AB}$ (hence $D$ is apart from each of $A$, $B$ and $C$). $C$ is apart from at least one of $A$ and $B$. Without loss of generality, we assume that $A\#C$.

There exists a line $m$ in $\ca P$ such that $C\notin m$. Then, either $C\notin k_\infty$ or $m \# k_\infty$. In the first case, $\ovl{AB}\# k_\infty$ which is a case we have covered above, therefore we may assume that $m\# k_\infty$. There exist points $P$ and $P'$ lying on $m$ and outside $k_\infty$ such that $P\#P'$. Notice that $P$ and $P'$ are both apart from $C$. Let $k=\ovl{PC}$ and $l=\ovl{P'C}$. There exists a point $Q$ on $\ovl{PC}$ that lies outside $k_\infty$ such that $P\# Q$. Let $m'$ be the line through $Q$ parallel to $m$, and let $Q'$ be the intersection of $m'$ and $l$.

By the running assumptions, $P$, $Q$, $P'$ and $Q'$ are all apart from each of $A$, $B$ and $C$ and they all lie outside from $\ovl{AB}$. Also, the lines $k$, $l$, $m$ and $m'$ are all apart from $\ovl{AB}$.

\begin{center}
\begin{tikzpicture}
\draw
(4,4)-- (5,4)
(-1,4)--(0,4) node[above] {$C$}
--(0,0) node[left] {$P$}
--(2,0) node[right] {$P'$}
--(0,4) -- (4,4) node[above] {$B$}
--(0,1) node[left] {$Q$}
--(1.5,1) node[right] {$Q'$}
--(4,4)
(0,0)-- (2,4) node[above] {$A$}
--(2,0)
(0,0) -- (0,-1) node[left] {$k$}
(2,0) -- (2.5,-1) node[left] {$l$}
(0.6,1.7) node {$R$}
(2, 1.5) node[right] {$R'$}
(0.75,1) node[below] {$m'$}
(1,0) node[below] {$m$}
;
\end{tikzpicture}
\end{center}


$P\notin \ovl{AB}$, therefore $B\notin \ovl{PA}$, hence $\ovl{PA}\# \ovl{QB}$. Let $R$ be the intersection of $\ovl{PA}$ and $\ovl{QB}$. $R$ lies outside $\ovl{AB}$, hence either $R\notin k_\infty$ or $\ovl{AB}\# k_\infty$. We have already proved the result for the case where $\ovl{AB}\# k_\infty$, therefore we may assume that $R\notin k_\infty$. Similarly, $\ovl{P'A}$ is apart from $\ovl{Q'B}$, and we define their intersection to be $R'$. As before, $R'\notin \ovl{AB}$ and we may assume that $R'\notin k_\infty$.

The following configuration
\begin{center}
\begin{tikzpicture}
 \draw 
(0,0) node[left] {$R'$} 
-- (2,2) node[above] {$A$} 
-- (4,0) node[right] {$R$}
-- (2,-2) node[below] {$B$}
-- (0,0)

(1.5,1.5)
-- (3,1) node[right] {$P$}
-- (3.5,-0.5) node[right] {$Q$}
-- (1,-1) node[left] {$Q'$}
-- (1.5,1.5) node[left] {$P'$}
(2,1) node {$m$}
(3,0) node {$k$}
(2,-1) node {$m'$}
(1,0) node {$l$} ;
\end{tikzpicture}
\end{center}
satisfies the conditions of Desargues' axiom, therefore $\delta(m,m',R',R)$ is satisfied in $\ca P$. Hence, $\ovl{RR'}$ is parallel to both $\ovl{PP'}$ and $\ovl{QQ'}$ in $\mathfrak A(\ca P,k_\infty)$.

Let us consider the images of the points mentioned above via $\psi$. The following configuration
\begin{center}
\begin{tikzpicture}
  \draw 
(0,0) node[left] {$\psi(A)$} 
-- (2,2) node[above] {$\phi(R)$} 
-- (4,0) node[right] {$\psi(B)$}
-- (2,-2) node[below] {$\phi(R')$}
-- (0,0)

(1.5,1.5)
-- (3,1) node[right] {$\phi(Q)$}
-- (3.5,-0.5) node[right] {$\phi(Q')$}
-- (1,-1) node[left] {$\phi(P')$}
-- (1.5,1.5) node[left] {$\phi(P)$}
(2,1) node {$\phi(k)$}
(2.8,0) node {$\phi(m')$}
(2,-1.1) node {$\phi(l)$}
(0.75,0) node {$\phi(m)$} ;
\end{tikzpicture}
\end{center}
satisfies the conditions of Desargues' axiom because $\phi$ preserves the $\parallel$ relation. Hence, by Desargues' axiom $\delta(\phi(k),\phi(l),\psi(A),\psi(B))$ is satisfied in $\ca Q$. Therefore, $\psi(C)$ (which is the intersection of $\phi(k)$ and $\phi(l)$) lies on $\ovl{\psi(A)\psi(B)}$.
\end{proof}

\begin{lem}
If $A$, $B$, $C$ are non-collinear points of $\ca P$, then $\psi(A)$, $\psi(B)$, $\psi(C)$ are non-collinear points of $\ca Q$.
\end{lem}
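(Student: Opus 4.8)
The plan is to reduce, via Lemma \ref{lemnoncaff}, the non-collinearity of $\psi(A),\psi(B),\psi(C)$ to the two assertions $\psi(B)\#\psi(C)$ and $\psi(A)\notin\ovl{\psi(B)\psi(C)}$. The first is immediate from the preceding lemma that $\psi$ preserves apartness of points. Everything therefore rests on the single non-incidence $\psi(A)\notin\ovl{\psi(B)\psi(C)}$, and the difficulty is precisely that $A$, $B$, $C$ are arbitrary points of $\ca P$: none of them need lie outside $k_\infty$, so we cannot simply appeal to the fact that $\phi$ preserves $\notin$ on the affine plane.

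The key device I would isolate first is the following sub-lemma $(\star)$: \emph{for any line $n$ of $\ca P$ with $n\# k_\infty$ and any point $X$ of $\ca P$ with $X\notin n$, one has $\psi(X)\notin\phi(n)$.} To prove $(\star)$ I would choose an affine point $P\in n$, which exists by the same pigeonhole argument as in Lemma \ref{lemomega39}: among the three pairwise-apart points on $n$ at least two are apart from $n\cap k_\infty$, and such a point lies outside $k_\infty$ by Proposition \ref{proplastaxiom}. Since $X\notin n\ni P$ we get $X\# P$, and since $X\in\ovl{XP}$ we get $\ovl{XP}\# n$; moreover $\ovl{XP}$ is affine because it carries the affine point $P$. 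Now $\psi(X)\in\phi(\ovl{XP})$ by the defining property of $\psi$ (Lemma \ref{lempsi}), $\psi(P)=\phi(P)$ lies on both $\phi(\ovl{XP})$ and $\phi(n)$, these two image lines are apart because $\phi$ preserves $\#$ on lines, and $\psi(X)\#\psi(P)$. Proposition \ref{proplastaxiom} then yields $\psi(X)\notin\phi(n)$. The point worth stressing is that this argument never assumes $X$ is affine, which is exactly what lets it cope with points at infinity.

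With $(\star)$ in hand I would prove $\psi(A)\notin\ovl{\psi(B)\psi(C)}$ as follows. First choose a line $k$ through $A$ that is apart from both $k_\infty$ and $\ovl{AB}$; such a line exists by pigeonhole applied to the three pairwise-apart concurrent lines through $A$ (at least two are apart from $k_\infty$ and at least two are apart from $\ovl{AB}$, so one is apart from both). Since $A\notin\ovl{BC}$ and $A\in k$, every line through $A$, in particular $k$, is apart from $\ovl{BC}$, so the intersection $P:=k\cap\ovl{BC}$ exists. Because $B\in\ovl{AB}$, $A=\ovl{AB}\cap k$ and $A\# B$, Proposition \ref{proplastaxiom} gives $B\notin k$; then $(\star)$ applied to $X=B$ and $n=k$ gives $\psi(B)\notin\phi(k)$. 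Writing $m:=\ovl{\psi(B)\psi(C)}$ and noting $\psi(B)\in m$, the characterization of line-apartness (a line is apart from another as soon as it carries a point lying outside it) yields $m\#\phi(k)$.

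Finally I would assemble the pieces through Proposition \ref{proplastaxiom} once more. We have $\psi(A),\psi(P)\in\phi(k)$, the former by Lemma \ref{lempsi} and the latter since $P$ is affine and $\psi(P)=\phi(P)$; we have $\psi(P)\in m$ because $P\in\ovl{BC}$ and $\psi$ sends collinear triples to collinear triples by the preceding lemma; and $\psi(A)\#\psi(P)$ since $A\# P$ (as $A\notin\ovl{BC}\ni P$). Applying Proposition \ref{proplastaxiom} to the apart points $\psi(A),\psi(P)$ and the apart lines $\phi(k),m$ gives $\psi(A)\notin m=\ovl{\psi(B)\psi(C)}$, as required. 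The only genuinely delicate step is the sub-lemma $(\star)$; once it is in place the whole argument runs synthetically and, pleasantly, needs no appeal to Desargues' axiom.
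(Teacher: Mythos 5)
Your argument is correct in substance and reaches the conclusion by a genuinely different route from the paper's, but it contains one false justification along the way, which happily supports a step that is true for a different reason.

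The paper's proof is much shorter and avoids both your auxiliary line $k$ and your sub-lemma $(\star)$: since the sides $\ovl{AB}$, $\ovl{BC}$, $\ovl{CA}$ are pairwise apart, at least two of them --- without loss of generality $\ovl{BC}$ and $\ovl{AC}$ --- are apart from $k_\infty$ and hence are affine lines; then $\psi(C)$ is the intersection of $\phi(\ovl{BC})$ and $\phi(\ovl{AC})$, while $\psi(B)$ lies on $\phi(\ovl{BC})$ and is apart from $\psi(C)$, so $\psi(B)\notin\phi(\ovl{AC})=\ovl{\psi(A)\psi(C)}$, and Lemma \ref{lemnonc} concludes. In other words, the paper exploits the permutation symmetry of non-collinearity to \emph{choose} which side of the triangle to work with, whereas you insist on the fixed side $\ovl{BC}$ --- which may fail to be affine, indeed may equal $k_\infty$ --- and compensate by constructing an affine transversal $k$ through $A$ and proving the transfer statement $(\star)$. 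Your route is longer, but $(\star)$ is a clean reusable fact (that $\psi$ preserves non-incidence against affine lines for \emph{arbitrary} points of $\ca P$, not just affine ones), and your pigeonhole arguments are exactly those the paper uses in Lemma \ref{lemomega39} and in the construction of $\mathfrak A(\ca P,l_\infty)$, so they are sound. Two citation quibbles: Proposition \ref{proplastaxiom} is proved only for planes over rings, but the sequent it expresses follows in any preprojective plane from the self-dual axiom together with $A\in l\wedge A\notin l\vdash\bot$, so your repeated appeals to it in $\ca P$ and $\ca Q$ are legitimate; and at the final reduction you should cite Lemma \ref{lemnonc} rather than Lemma \ref{lemnoncaff}, since $\ca Q$ is projective (the statements coincide, so this is harmless).

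The genuine error is the clause justifying $\psi(P)\in\phi(k)$ by ``$P$ is affine and $\psi(P)=\phi(P)$''. Nothing in your construction makes $P=k\cap\ovl{BC}$ affine: you arranged $k\# k_\infty$ and $k\#\ovl{AB}$, but if $B$ and $C$ both lie on $k_\infty$ (so that $\ovl{BC}=k_\infty$), then $P$ is precisely the point at infinity of $k$, and $P\notin k_\infty$ fails. The repair is immediate and is the same justification you gave for $\psi(A)\in\phi(k)$ in the very same sentence: $P\in k$ and $k\# k_\infty$, so $\psi(P)\in\phi(k)$ directly by the defining property of $\psi$ in Lemma \ref{lempsi}, with no affineness needed. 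Your other two uses of $P$ --- that $\psi(P)\in\ovl{\psi(B)\psi(C)}$ via the collinearity-preservation lemma, and that $\psi(A)\#\psi(P)$ via apartness-preservation --- already avoid any affineness assumption, so with this one substitution the proof goes through.
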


\begin{proof}
The lines $\ovl{AB}$, $\ovl{BC}$ and $\ovl{AC}$ are apart from each other, hence at least two of them are apart from $k_\infty$. Without loss of generality, let us assume that $\ovl{BC}$ and $\ovl{AC}$ are apart from $k_\infty$. Then, $\psi(C)$ lies on the intersection of $\phi(\ovl{BC})$ and $\phi(\ovl{AC})$. $\psi(B)$ lies on $\ovl{BC}$ and is apart from $\psi(C)$ by an earlier lemma. Hence, $\psi(B)$ lies outside the line $\phi(\ovl{AC})$ which is the unique line through $\psi(A)$ and $\psi(C)$. Therefore, the points $\psi(A)$, $\psi(B)$ and $\psi(C)$ are non-collinear. 
\end{proof}

\begin{thrm} \label{thrmaffprojmorph}
Let $\ca P$ and $\ca Q$ be two projective planes containing the lines $k_\infty$ and $l_\infty$ respectively. A morphism of preaffine planes from $\mathfrak A(\ca P,k_\infty)$ to $\mathfrak A(\ca Q,l_\infty)$ can be extended uniquely to a morphism of projective planes from $\ca P$ to $\ca Q$.
\end{thrm}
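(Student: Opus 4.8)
The plan is to assemble the preceding lemmas and feed them into Proposition \ref{propptslines}. By Lemma \ref{lempsi} the preaffine morphism $\phi$ already determines a unique point map $\psi\colon\ca P_{\text{pt}}\to\ca Q_{\text{pt}}$ which extends $\phi$ and satisfies $A\in l\Rightarrow\psi(A)\in\phi(l)$ for every line $l$ with $l\#k_\infty$. The three lemmas immediately preceding the theorem establish exactly the three hypotheses of Proposition \ref{propptslines} for $\psi$: preservation of the apartness relation on points, the implication that $A\#B$ and $C\in\ovl{AB}$ force $\psi(C)\in\ovl{\psi(A)\psi(B)}$, and preservation of non-collinearity. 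Hence Proposition \ref{propptslines} supplies a unique morphism of preprojective planes $\Psi\colon\ca P\to\ca Q$ whose point map is $\psi$; since $\ca P$ and $\ca Q$ are projective planes, $\Psi$ is automatically a morphism of projective planes.

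It then remains to check that $\Psi$ genuinely extends $\phi$, and to prove uniqueness. For the extension, the one fact not yet recorded is that $\Psi(k_\infty)=l_\infty$. I would verify this by showing $\psi(P)\in l_\infty$ for every point $P\in k_\infty$. Choose two lines $l_1\#l_2$ through $P$ that are apart from $k_\infty$ (such lines exist by the axioms, since among three mutually apart lines through $P$ at least two are apart from $k_\infty$). Both meet $k_\infty$ at $P$, so $l_1\parallel l_2$ in $\mathfrak A(\ca P,k_\infty)$; as $\phi$ preserves $\parallel$ we get $\phi(l_1)\parallel\phi(l_2)$, and being also apart in $\ca Q$ these two lines meet at a unique point of $l_\infty$. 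By the defining property of $\psi$ that intersection point is precisely $\psi(P)$, so $\psi(P)\in l_\infty$. Taking $P\#Q$ on $k_\infty$ then gives $\Psi(k_\infty)=\ovl{\psi(P)\psi(Q)}=l_\infty$. By Lemma \ref{lemprojaffmorph} the morphism $\Psi$ therefore restricts to a morphism $\mathfrak A(\ca P,k_\infty)\to\mathfrak A(\ca Q,l_\infty)$, and since its point map agrees with $\phi$ on affine points (Lemma \ref{lempsi}), Lemma \ref{lemaffptsen} forces this restriction to equal $\phi$.

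For uniqueness, suppose $\Psi'\colon\ca P\to\ca Q$ is any morphism of projective planes restricting to $\phi$. Then $\Psi'(l)=\phi(l)$ for every affine line $l\#k_\infty$, so for a point $A\in l$ we have $\Psi'(A)\in\Psi'(l)=\phi(l)$; that is, the point map of $\Psi'$ satisfies the characterizing condition of Lemma \ref{lempsi}, and by the uniqueness clause there it must coincide with $\psi$. Since by Lemma \ref{lemprojptsen} a morphism of preprojective planes is determined by its action on points, we conclude $\Psi'=\Psi$.

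The only genuinely new step is the verification that $\Psi(k_\infty)=l_\infty$ through the parallelism argument above; everything else is bookkeeping, the real geometric content (two applications of Desargues' axiom) having already been spent in Lemma \ref{lempsiwd} and in the lemma showing $\psi(C)\in\ovl{\psi(A)\psi(B)}$. I expect no further obstacle, since the constructions of $\psi$ and the three structural conditions have all been arranged precisely to match the input required by Proposition \ref{propptslines}.
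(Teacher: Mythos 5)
Your proposal is correct and follows essentially the same route as the paper: Lemma \ref{lempsi} together with the three subsequent lemmas is fed into Proposition \ref{propptslines}, exactly as in the paper's proof. The only difference is that you additionally verify $\Psi(k_\infty)=l_\infty$ (via the sound parallelism argument) and that $\Psi$ genuinely restricts to $\phi$ with the stated uniqueness — careful bookkeeping that the paper leaves implicit rather than a different approach.
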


\begin{proof}
By Lemma \ref{lempsi},  $\phi$ can be extended to $\psi$ on points and $\psi$ is the unique such extension which preserves collinear points. The next lemmas prove that $\psi$ preserves the apartness relation on points, collinear points and non-collinear points. Hence, by Proposition \ref{propptslines} it determines a unique morphism of projective planes from $\ca P$ to $\ca Q$.
\end{proof}

\section{Morphisms between affine planes over rings}

\begin{prop}
Given a ring homomorphism $\alpha:R \to S$, the induced morphism from the projective plane over $R$ to the projective plane over $S$ described in \ref{secprojmorph} restricts to one from the affine plane over $R$ to the affine plane over $S$.
\end{prop}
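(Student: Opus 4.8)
The plan is to take the morphism $\mathbb{P}(\alpha):\mathbb{P}(R)\to\mathbb{P}(S)$ furnished by Proposition \ref{propringproj} and verify that it carries the affine data over $R$ to the affine data over $S$, after which preservation of most of the structure comes for free. The key first observation is that $\mathbb{P}(\alpha)$ fixes the distinguished line $(0,0,1)$: since $\alpha$ is a ring homomorphism we have $\alpha(0)=0$ and $\alpha(1)=1$, so $\mathbb{P}(\alpha)(0,0,1)=(\alpha(0),\alpha(0),\alpha(1))=(0,0,1)$. Because $\mathbb{P}(\alpha)$ preserves $\notin$ and the $\#$ relation on lines, it follows at once that a point lying outside $(0,0,1)$ is sent to a point lying outside $(0,0,1)$, and a line apart from $(0,0,1)$ is sent to a line apart from $(0,0,1)$. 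Concretely, a point of $\mathbb{A}(R)$ has a representative $(a_0,a_1,1)$ whose image $(\alpha(a_0),\alpha(a_1),1)$ again has invertible third coordinate, and a line of $\mathbb{A}(R)$ has a representative $\pt{\lambda}$ with $\lambda_0$ or $\lambda_1$ invertible whose image has $\alpha(\lambda_0)$ or $\alpha(\lambda_1)$ invertible. Hence the restriction lands in $\mathbb{A}_{\text{pt}}(S)$ on points and in $\mathbb{A}_{\text{li}}(S)$ on lines.

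It then remains to check that the five relations of the affine plane are preserved. Three of them---the $\#$ relation on lines, the relation $\in$, and the relation $\notin$---are by definition the restrictions of the corresponding projective relations, so their preservation is inherited immediately from Proposition \ref{propringproj}. The $\#$ relation on affine points is defined intrinsically, so I would verify it directly: if $(a_0,a_1)\#(b_0,b_1)$ then $a_0-b_0$ or $a_1-b_1$ is invertible in $R$, and since $\alpha$ sends invertible elements to invertible elements, $\alpha(a_0)-\alpha(b_0)=\alpha(a_0-b_0)$ or $\alpha(a_1)-\alpha(b_1)=\alpha(a_1-b_1)$ is invertible in $S$, giving the required apartness of the images.

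The one relation genuinely peculiar to affine planes, with no projective counterpart, is parallelism, and this is the step I expect to require the most (though still modest) care. Here I would use the explicit characterisation of $\parallel$: if $k\parallel l$ with representatives $\pt{\kappa}$ and $\pt{\lambda}$, there is $r\in R$ with $(\kappa_0,\kappa_1)=(r\lambda_0,r\lambda_1)$; applying $\alpha$ yields $(\alpha(\kappa_0),\alpha(\kappa_1))=(\alpha(r)\alpha(\lambda_0),\alpha(r)\alpha(\lambda_1))$, so the image lines are parallel with witness $\alpha(r)$ (equivalently, $\alpha$ carries the vanishing $2\times 2$ determinant of Lemma \ref{lemparallel} to a vanishing determinant). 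Having shown that the restricted maps land in the correct sets and preserve all five relations, I conclude that $\mathbb{P}(\alpha)$ restricts to a morphism $\mathbb{A}(R)\to\mathbb{A}(S)$ of affine planes. I note that none of these verifications needs $R$ or $S$ to be local: locality entered only in Lemma \ref{lemapartaffine}, to identify the intrinsic affine $\#$ with the restricted projective $\#$, and that identification is not used in the argument above.
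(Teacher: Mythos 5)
Your proof is correct, but it takes a genuinely different route from the paper's. The paper disposes of this proposition in one line: Proposition \ref{propringproj} gives the morphism $\mathbb P(\alpha):\mathbb P(R)\to \mathbb P(S)$, which fixes the line $(0,0,1)$, and Lemma \ref{lemprojaffmorph} --- the general fact that any morphism of preprojective planes sending $l_\infty$ to $l'_\infty$ restricts to a morphism of the induced preaffine planes --- then yields the restriction, via the identification of $\mathbb A(R)$ with $\mathfrak A(\mathbb P(R),(0,0,1))$. You instead verify everything in coordinates: that the restriction lands in $\mathbb A_{\text{pt}}(S)$ and $\mathbb A_{\text{li}}(S)$, that the inherited relations come for free, and that the two intrinsically defined relations ($\#$ on points and $\parallel$) are preserved by hand. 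Each route buys something. The paper's is modular: Lemma \ref{lemprojaffmorph} applies to any projective morphism fixing a line, not only those of the form $\mathbb P(\alpha)$, and is reused elsewhere (e.g.\ in Proposition \ref{prop382} for the matrix-induced automorphisms). Yours is more elementary and, as your closing remark correctly observes, strictly more general: Lemma \ref{lemprojaffmorph} is stated for preprojective planes, and the identification $\mathbb A(R)\cong \mathfrak A(\mathbb P(R),(0,0,1))$ rests on Lemma \ref{lemapartaffine}, both of which require locality, whereas your direct checks (point-apartness via invertibility of $\alpha(a_0-b_0)$ or $\alpha(a_1-b_1)$, parallelism via the witness $\alpha(r)$, equivalently the determinant criterion of Lemma \ref{lemparallel}) use nothing beyond the fact that ring homomorphisms preserve units, so they establish the proposition at its stated level of generality for arbitrary rings. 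One cosmetic point: you silently use the evidently intended reading of the paper's definition of affine apartness (invertibility of $a_0-b_0$ or $a_1-b_1$, where the text has a typo), which is the correct interpretation.
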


\begin{proof}
By Proposition \ref{propringproj} and Lemma \ref{lemprojaffmorph}.
\end{proof}

\begin{defn}
The \emph{affine group} over a local ring $R$, denoted by $G(R)$ is the group of invertible  $3\times 3$ matrices over $R$ of the form
$$\begin{pmatrix}
a_0 & b_0 & c_0 \\
a_1 & b_1 & c_1 \\
0 & 0 & 1
\end{pmatrix}.$$
\end{defn}

Given a local ring $R$, $G(R)$ is a subgroup of the projective general linear group $H(R)$. The inclusion $G(R)\hookrightarrow H(R)$ sends a matrix $M$ of $G(R)$ to the element of $H(R)$ represented by $M$. Therefore, an element of $G(R)$ induces an automorphism of the projective plane $\mathbb P(R)$ as described in \ref{secprojmorph}.

\begin{prop} \label{prop382}
Let $R$ be a local ring and let $M$ be an invertible matrix over $R$ of the form 
$\begin{pmatrix}
a_0 & b_0 & c_0 \\
a_1 & b_1 & c_1 \\
0 & 0 & 1
\end{pmatrix}$.
Then the induced automorphism of $\mathbb P(R)$ described in \ref{secprojmorph} restricts to an automorphism of $\mathbb A(R)$.
\end{prop}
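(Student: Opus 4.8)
The plan is to recognise the induced automorphism as a morphism of preprojective planes that fixes the line $(0,0,1)$, and then to apply Lemma~\ref{lemprojaffmorph}. Recall that $\mathbb A(R)$ is isomorphic to $\mathfrak A(\mathbb P(R),(0,0,1))$, and that by Proposition~\ref{propmatrproj} the invertible matrix $M$ induces an automorphism $\phi$ of $\mathbb P(R)$ (acting on points by $\mathbf a\mapsto M\mathbf a$ and on lines by $\boldsymbol\lambda\mapsto (M^{-1})^T\boldsymbol\lambda$). Since $\phi$ is in particular a morphism of preprojective planes $\mathbb P(R)\to\mathbb P(R)$, Lemma~\ref{lemprojaffmorph} will immediately give a restriction to $\mathfrak A(\mathbb P(R),(0,0,1))=\mathbb A(R)$ as soon as I check that $\phi$ sends the line $(0,0,1)$ to itself.

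The one computation to carry out is therefore that $\phi$ fixes $(0,0,1)$. Writing $M$ in block form with top-left $2\times 2$ block $A=\begin{pmatrix} a_0 & b_0 \\ a_1 & b_1\end{pmatrix}$, third column $(c_0,c_1,1)^T$ and bottom row $(0,0,1)$, one expands along the bottom row to get $\det M=\det A$, so $A$ is invertible; a direct check then shows $M^{-1}=\begin{pmatrix} A^{-1} & -A^{-1}c \\ 0 & 1\end{pmatrix}$ again has bottom row $(0,0,1)$. Consequently the last column of $(M^{-1})^T$ is $(0,0,1)^T$, whence $(M^{-1})^T(0,0,1)^T=(0,0,1)^T$, i.e. $\phi$ fixes the line $(0,0,1)$. (A coordinate-free variant, which I may prefer for readability, is to note that $(0,0,1)$ passes through the apart points $(1,0,0)$ and $(0,1,0)$, that $M$ sends these to $(a_0,a_1,0)$ and $(b_0,b_1,0)$, both lying on $(0,0,1)$, and that these images remain apart since $\phi$ preserves $\#$; so the unique line through them, namely $\phi$ of $(0,0,1)$, is again $(0,0,1)$.)

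Given this, Lemma~\ref{lemprojaffmorph} yields that $\phi$ restricts to an endomorphism of $\mathbb A(R)$. To upgrade this to an automorphism, I would observe that $M^{-1}$ has the same lower-triangular shape, hence lies in $G(R)$ and induces the inverse automorphism $\phi^{-1}$ of $\mathbb P(R)$; by the identical argument $\phi^{-1}$ also restricts to an endomorphism of $\mathbb A(R)$, and the two restrictions are mutually inverse, so $\phi|_{\mathbb A(R)}$ is an automorphism. There is no serious obstacle here: the only point requiring attention is the passage from $M$ invertible to $A$ invertible (handled by $\det M=\det A$), and the fact that $\phi$ is a well-defined automorphism of $\mathbb P(R)$ in the first place, which is exactly where the locality of $R$ was already used in Proposition~\ref{propmatrproj}.
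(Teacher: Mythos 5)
Your proposal is correct and follows essentially the same route as the paper: invoke Proposition~\ref{propmatrproj} for the automorphism of $\mathbb P(R)$, check via $(M^{-1})^T$ that the line $(0,0,1)$ is fixed, and then apply Lemma~\ref{lemprojaffmorph} together with the identification $\mathbb A(R)\cong \mathfrak A(\mathbb P(R),(0,0,1))$. Your block-matrix computation of $M^{-1}$ is just a cleaner packaging of the explicit formula for $(M^{-1})^T$ given in the paper, and your closing argument that $M^{-1}$ induces the inverse restriction merely spells out what the paper leaves implicit.
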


\begin{proof}
By Proposition \ref{propmatrproj}, $M$ induces an automorphism of the projective plane $\mathbb P(R)$ which acts on points via left matrix multiplication by $M$ and acts on lines via left matrix multiplication by the matrix  
$$(M^{-1})^T=
( a_0b_1-a_1b_0)^{-1}
\begin{pmatrix}
b_1 & -a_1 & 0 \\
-b_0 & a_0 & 0 \\
 b_0 c_1 -b_1c_0& c_0a_1-c_1a_0 & a_0b_1-a_1b_0
\end{pmatrix},$$
and therefore it sends the line $(0,0,1)$ to the line $(0,0,1)$. By Lemma \ref{lemprojaffmorph}, this restricts to an endomorphism of preaffine planes $\mathfrak A(\mathbb P(R),(0,0,1))$ which is an automorphism. $\mathbb A(R)$ is isomorphic to $\mathfrak A(\mathbb P(R),(0,0,1))$, therefore $M$ also induces an automorphism of $\mathbb A(R)$. 
\end{proof}

As we did in the beginning of the chapter, we can express points of $\mathbb A(R)$ in the form $(x,y,1)$ where $x$, $y$ are in $R$. Then, the morphism of affine planes induced by a matrix $M$ of $G(R)$ acts on points via left matrix multiplication, and on lines via left matrix multiplication by the matrix $(M^{-1})^T$.


\begin{lem} \label{lemmatrixautoaf}
Let $A=(a_0,a_1)$, $B=(b_0,b_1)$ and $C=(c_0,c_1)$ be points of the affine plane over a local ring $R$ that are non-collinear. Then, the matrix
$$M=\begin{pmatrix}
a_0-c_0 & b_0-c_0 & c_0 \\
a_1-c_1 & b_1-c_1 & c_1 \\
0 & 0 & 1
\end{pmatrix}$$
is the unique element of $G(R)$ which induces (in the way described above) an automorphism sending $(1,0)$ to $A$, $(0,1)$ to $B$ and $(0,0)$ to $C$.
\end{lem}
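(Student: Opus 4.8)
The plan is to verify three things in turn: that the displayed matrix $M$ lies in $G(R)$, that it induces an automorphism sending the three given points correctly, and that it is the unique such element. The overall strategy exploits the machinery already built up, in particular Proposition \ref{prop382}, which tells us that an invertible matrix of the prescribed lower-row form $(0,0,1)$ induces an automorphism of $\mathbb A(R)$ acting on points by left multiplication (after writing affine points as $(x,y,1)$).

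First I would check that $M$ is invertible, hence a member of $G(R)$. Since $M$ already has the correct bottom row $(0,0,1)$, by the formula for the determinant expanded along the last row we get
\[
\det M=\det\begin{pmatrix} a_0-c_0 & b_0-c_0 \\ a_1-c_1 & b_1-c_1 \end{pmatrix}.
\]
The points $A$, $B$, $C$ are non-collinear in $\mathbb A(R)$; by Lemma \ref{lemapartaffine} and the characterisation of apartness, non-collinearity forces this $2\times 2$ determinant to be invertible. Concretely, non-collinearity of $A$, $B$, $C$ means (via Lemma \ref{lemnoncaff}) that $A\#C$ and $B\notin\ovl{AC}$, and translating through the coordinate description of $\#$ and $\notin$ on the affine plane yields invertibility of $a_0-c_0$ (or $a_1-c_1$) together with the full $2\times2$ minor; alternatively one uses Proposition \ref{propiffcollinear} applied to the projective representatives $(a_0,a_1,1)$, $(b_0,b_1,1)$, $(c_0,c_1,1)$, whose $3\times3$ determinant equals exactly the above $2\times2$ determinant. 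Thus $M\in G(R)$ and Proposition \ref{prop382} applies.

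Next I would compute the images of the three standard points. Writing each affine point $(x,y)$ as the column $(x,y,1)^T$ and multiplying on the left by $M$, a direct calculation gives $M(1,0,1)^T=(a_0,a_1,1)^T$, $M(0,1,1)^T=(b_0,b_1,1)^T$, and $M(0,0,1)^T=(c_0,c_1,1)^T$; these are exactly $A$, $B$, $C$. This is the routine verification and presents no obstacle.

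The main work is uniqueness. Suppose $N\in G(R)$ induces an automorphism with the same action on $(1,0)$, $(0,1)$, $(0,0)$. The honest way to handle this is to pass to the projective plane: $G(R)\hookrightarrow H(R)$, and an element of $G(R)$ fixing the line $(0,0,1)$ extends to a projective automorphism. I expect the cleanest route is to observe that the four points $(1,0)$, $(0,1)$, $(0,0)$ in $\mathbb A(R)$, together with a suitable fourth point at infinity (or equivalently the requirement that lines to $l_\infty=(0,0,1)$ are preserved), correspond to four projective points in general position, so that uniqueness follows from Lemma \ref{lemmatrixauto} (the projective uniqueness of the matrix sending four points in general position to four prescribed points, up to scalar). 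The subtle point—and the place I anticipate the main difficulty—is that $G(R)$ is the group of matrices with bottom row exactly $(0,0,1)$, not merely up to scalar: one must check that the scalar ambiguity inherent in $H(R)$ is killed by normalising the bottom-right entry to $1$. I would argue that any $N$ agreeing with $M$ on affine points must equal $\lambda M$ in $H(R)$ for some unit $\lambda$ (since they agree on enough points in general position), but then the bottom rows force $\lambda=1$, whence $N=M$ as matrices. Care is needed here because fixing three affine points and the preservation of parallelism pins down the action on $\mathbb P(R)$ only after one confirms that these data determine four points in general position; I would spell out which fourth point (a point at infinity determined by the image of a parallel class) supplies the missing constraint so that Lemma \ref{lemmatrixauto} can be invoked legitimately.
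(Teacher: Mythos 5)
Your existence half is fine and matches the paper: $\det M$ equals $\det\begin{pmatrix} a_0 & b_0 & c_0\\ a_1 & b_1 & c_1\\ 1 & 1 & 1\end{pmatrix}$, which is invertible by non-collinearity, and the images of the three standard points are a direct computation via Proposition \ref{prop382}. The genuine gap is in your uniqueness argument. You plan to complete the projective points $(1,0,1)$, $(0,1,1)$, $(0,0,1)$ by a fourth point at infinity coming from a preserved parallel class, and then invoke Lemma \ref{lemmatrixauto}. This cannot be carried out in general: the natural candidates are collinear with two of the base points (e.g.\ $(1,0,0)$ lies on the line $(0,1,0)$ together with $(1,0,1)$ and $(0,0,1)$), and in fact for a general local ring \emph{no} point $(d_0,d_1,0)$ completes the triple to a quadruple in general position. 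Computing the three relevant $3\times 3$ determinants, general position forces $d_0$, $d_1$ and $d_0+d_1$ all to be invertible; over $R=\mathbb Z/(4)$, whose residue field is $\mathbb F_2$, the sum of two units is never a unit, so these conditions are jointly unsatisfiable. Hence the step ``$N=\lambda M$ in $H(R)$ since they agree on enough points in general position'' — which you correctly flagged as the delicate point — is exactly what is missing, and your proposed source of the fourth point cannot supply it.

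The gap is avoidable, and the paper's actual uniqueness argument shows the projective detour is unnecessary: an element of $G(R)$ is an honest matrix (no quotient by scalars), acting on points written as $(x,y,1)^T$ by genuine left multiplication, so its values at the three points read off all its entries. If $M'=\begin{pmatrix} x_0 & y_0 & z_0\\ x_1 & y_1 & z_1\\ 0 & 0 & 1\end{pmatrix}$ sends $(0,0)$, $(1,0)$, $(0,1)$ to $C$, $A$, $B$, then $(z_0,z_1)=(c_0,c_1)$, $(x_0+z_0,x_1+z_1)=(a_0,a_1)$ and $(y_0+z_0,y_1+z_1)=(b_0,b_1)$, forcing $M'=M$ outright — no fourth point and no scalar normalization. (If you insist on your route, the correct fourth point is the \emph{affine} point $(1,1)$: the quadruple $(1,0)$, $(0,1)$, $(0,0)$, $(1,1)$ is in general position, and $N(1,1,1)=N(1,0,1)+N(0,1,1)-N(0,0,1)$ because the bottom row $(0,0,1)$ pins the normalization of the image vectors; but note that this appeal to linearity is already the paper's direct argument, making Lemma \ref{lemmatrixauto} redundant.)
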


\begin{proof}
The points $A=(a_0,a_1)$, $B=(b_0,b_1)$ and $C=(c_0,c_1)$ are non-collinear, therefore the matrix
$\begin{pmatrix}
a_0 & b_0 & c_0 \\
a_1 & b_1 & c_1 \\
1 & 1 & 1
  \end{pmatrix}$
has an invertible determinant. $M=\begin{pmatrix}
a_0-c_0 & b_0-c_0 & c_0 \\
a_1-c_1 & b_1-c_1 & c_1 \\
0 & 0 & 1
\end{pmatrix}$ has the same determinant, therefore it is also invertible, and therefore is a member $G(R)$. By Proposition \ref{prop382}, $M$ induces an automorphism of the affine plane which sends $(1,0)$ to $A$, $(0,1)$ to $B$ and $(0,0)$ to $C$.

Suppose that a matrix $M'=\begin{pmatrix}
x_0 & y_0 & z_0 \\
x_1 & y_1 & z_1 \\
0 & 0 & 1
\end{pmatrix}$
of $G(R)$ also induces an automorphism of $\mathbb A(R)$ which sends $(1,0)$ to $A$, $(0,1)$ to $B$ and $(0,0)$ to $C$. Then,
$(a_0,a_1)=M'(1,0)=(x_0+z_0,x_1+z_1)$,
$(b_0,b_1)=M'(1,0)=(y_0+z_0,y_1+z_1)$ and
$(c_0,c_1)=M'(0,0)=(z_0,z_1)$. By these equations, we can see that $M=M'$, and therefore conclude that $M$ is the unique member of $G(R)$ sending $(1,0)$ to $A$, $(0,1)$ to $B$ and $(0,0)$ to $C$.
\end{proof}

Given a local ring $R$, let $\omega(R)$ be the set of triples of points that are non-collinear. The left $G(R)$-action on points of $\mathbb A(R)$ sends non-collinear points to non-collinear points, therefore it extends to a left action on $\omega(R)$.

\begin{thrm} \label{thrmleftGtors}
Given a local ring $R$, $\omega(R)$ is a left $G(R)$-torsor via the action described above.
\end{thrm}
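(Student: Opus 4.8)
The plan is to prove that the action is \emph{free} and \emph{transitive} on a non-empty set, which is precisely what it means for $\omega(R)$ to be a left $G(R)$-torsor. The entire argument runs parallel to the earlier proof that $\omega_4(R)$ is an $H(R)$-torsor, with Lemma \ref{lemmatrixautoaf} playing the role that Lemma \ref{lemmatrixauto} played there. The key observation is that Lemma \ref{lemmatrixautoaf} singles out a canonical ``base point'' triple $((1,0),(0,1),(0,0))$ and asserts that for every triple of non-collinear points there is a \emph{unique} element of $G(R)$ carrying the base point to it.

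First I would record that $\omega(R)$ is non-empty: the triple $((1,0),(0,1),(0,0))$ is non-collinear, since the determinant of $\begin{pmatrix} 1 & 0 & 0 \\ 0 & 1 & 0 \\ 1 & 1 & 1 \end{pmatrix}$ is $1$, hence invertible, which is exactly the condition guaranteeing non-collinearity of these affine points. The remark preceding the theorem already notes that the $G(R)$-action restricts from points to $\omega(R)$, so the action is well-defined.

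For transitivity, given two triples $(A,B,C)$ and $(A',B',C')$ in $\omega(R)$, Lemma \ref{lemmatrixautoaf} supplies unique elements $g, g' \in G(R)$ with $g$ sending $((1,0),(0,1),(0,0))$ to $(A,B,C)$ and $g'$ sending it to $(A',B',C')$. Then $g' g^{-1}$ carries $(A,B,C)$ to $(A',B',C')$, establishing transitivity. For freeness, suppose $h \in G(R)$ sends $(A,B,C)$ to $(A',B',C')$; then $h g$ sends the base point $((1,0),(0,1),(0,0))$ to $(A',B',C')$, so by the uniqueness clause of Lemma \ref{lemmatrixautoaf} we must have $h g = g'$, whence $h = g' g^{-1}$. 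In particular the transporter between any two triples is unique, and taking $(A',B',C') = (A,B,C)$ forces $h$ to be the identity, so the stabilizer of every point is trivial.

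I do not expect a genuine obstacle here: the result is essentially a formal consequence of Lemma \ref{lemmatrixautoaf}, just as the projective torsor statement was a formal consequence of Lemma \ref{lemmatrixauto}. The only point requiring a little care is to phrase transitivity and freeness entirely through the fixed base point $((1,0),(0,1),(0,0))$, since that is the form in which the uniqueness in Lemma \ref{lemmatrixautoaf} is stated; once everything is routed through this base point, both properties drop out immediately.
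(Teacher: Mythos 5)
Your proof is correct and follows essentially the same route as the paper's: both obtain transitivity and uniqueness of the transporting element by routing everything through the base triple $((1,0),(0,1),(0,0))$ and invoking the existence-and-uniqueness statement of Lemma \ref{lemmatrixautoaf}, exactly as was done for $\omega_4(R)$ and $H(R)$ via Lemma \ref{lemmatrixauto}. Your explicit check that $\omega(R)$ is non-empty (well-supported) is a small addition the paper leaves implicit, and it does no harm.
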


\begin{proof}
Given $(A,B,C)$ and $(A',B',C')$ in $\omega(R)$ by Lemma \ref{lemmatrixautoaf} there exist unique $g$ and $g'$ in $G(R)$ such that $g$  and $g'$ send $\begin{pmatrix}
\begin{pmatrix} 1 \\ 0  \end{pmatrix}, & \begin{pmatrix} 0 \\ 1 \end{pmatrix}, & \begin{pmatrix} 0 \\ 0 \end{pmatrix}
\end{pmatrix}$ to $(A,B,C)$ and $(A',B',C')$ respectively.

Then, $g'\circ g^{-1}$ sends $(A,B,C)$ to $(A',B',C')$, therefore this $G(R)$-action is transitive.

Suppose that $h$ in $G(R)$ sends $(A,B,C)$ to $(A',B',C')$. Then, $h\circ g$ sends $\begin{pmatrix}
\begin{pmatrix} 1 \\ 0  \end{pmatrix}, & \begin{pmatrix} 0 \\ 1 \end{pmatrix}, & \begin{pmatrix} 0 \\ 0 \end{pmatrix}
\end{pmatrix}$ to $(A',B',C')$. Hence, $h\circ g=g'$  by Lemma \ref{lemmatrixautoaf} and therefore $h=g'\circ g^{-1}$. Thus, $\omega(R)$ is a $G(R)$-torsor under this action.
\end{proof}

\begin{rmk}
Notice that we have an isomorphism $G(R)\to \omega(R)$ which maps $g$ to the triple of points $(g(1,0), g(0,1), g(0,0))$. Moreover this isomorphism commutes with left $G(R)$-action on $G(R)$ via group multiplication and the left $G(R)$-action on $\omega(R)$ described above.
\end{rmk}

\begin{lem} \label{lemringpartaf}
Let $R$ and $S$ be local rings and let $\phi: \mathbb A(R) \to \mathbb A(S)$ be a morphism of affine planes that sends $(1,0)$, $(0,1)$ and $(0,0)$ to $(1,0)$, $(0,1)$ and $(0,0)$ respectively then there exists a unique ring homomorphism $\sigma :R \to S$ such that $\phi=\mathbb A(\sigma)$.
\end{lem}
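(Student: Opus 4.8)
The plan is to reduce the statement to its projective counterpart, Lemma~\ref{lemringpart}, by extending $\phi$ to the projective plane. Since $R$ and $S$ are local, $\mathbb P(R)$ and $\mathbb P(S)$ are genuine projective planes, and $\mathbb A(R)$ is isomorphic to $\mathfrak A(\mathbb P(R),(0,0,1))$ and $\mathbb A(S)$ to $\mathfrak A(\mathbb P(S),(0,0,1))$. Hence $\phi$ is a morphism of the associated preaffine planes, and by Theorem~\ref{thrmaffprojmorph} it extends uniquely to a morphism of projective planes $\psi:\mathbb P(R)\to\mathbb P(S)$ that agrees with $\phi$ on affine points and on affine lines. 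Writing $k_\infty=(0,0,1)$ for the line at infinity, the first step is to verify that $\psi(k_\infty)=k_\infty$: any point $P$ of $k_\infty$ is the common point at infinity of two affine lines $k$, $l$ through $P$, which are therefore parallel in $\mathfrak A(\mathbb P(R),k_\infty)$; since $\phi$ preserves $\parallel$, the lines $\phi(k)$, $\phi(l)$ meet on $k_\infty$, so $\psi(P)\in k_\infty$. Choosing two points of $k_\infty$ that are apart and using that $\psi$ preserves apartness and the unique-line function then forces $\psi(k_\infty)=k_\infty$.

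Next I would show that $\psi$ fixes the frame $(1,0,0)$, $(0,1,0)$, $(0,0,1)$, $(1,1,1)$. The point $(0,0,1)=(0,0)$ is fixed because $\phi$ fixes it. The point $(1,0,0)$ is $k_\infty$ intersected with the $x$-axis $\ovl{(0,0)(1,0)}=(0,1,0)$; as $\phi$ fixes $(0,0)$ and $(1,0)$ it fixes this line, and since $\psi$ preserves intersections of lines that are apart and fixes $k_\infty$, it fixes $(1,0,0)$. Symmetrically, the $y$-axis $\ovl{(0,0)(0,1)}=(1,0,0)$ is fixed, so $\psi(0,1,0)=(0,1,0)$. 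Finally $(1,1,1)$ is the intersection of the line $(0,1,-1)$, which passes through the fixed points $(1,0,0)$ and $(0,1,1)=(0,1)$, with the line $(1,0,-1)$, which passes through the fixed points $(0,1,0)$ and $(1,0,1)=(1,0)$; each of these two lines joins a pair of fixed points that are apart, so $\psi$ fixes both lines and hence their (unique) intersection $(1,1,1)$.

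With all four frame points fixed, Lemma~\ref{lemringpart} supplies a unique ring homomorphism $\sigma:R\to S$ with $\psi=\mathbb P(\sigma)$. Restricting to the affine plane gives $\phi=\psi|_{\mathbb A(R)}=\mathbb P(\sigma)|_{\mathbb A(R)}=\mathbb A(\sigma)$, and uniqueness of $\sigma$ is immediate since $\mathbb A(\sigma)(a,0)=(\sigma(a),0)$ lets one read $\sigma$ off from $\phi$ on the points $(a,0)$. The only genuinely delicate part is the bookkeeping that converts the three fixed affine points into the four fixed projective frame points; in particular, recognising $(1,1,1)$ as an intersection of two lines each spanned by an already-fixed affine point and an already-fixed point at infinity is the step that needs care, and it relies on first having established $\psi(k_\infty)=k_\infty$.
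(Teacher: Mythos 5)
Your proposal is correct and follows essentially the same route as the paper's proof: extend $\phi$ to a projective morphism $\psi$ via Theorem~\ref{thrmaffprojmorph}, show $\psi$ fixes the frame $(1,0,0)$, $(0,1,0)$, $(0,0,1)$, $(1,1,1)$ by joins of fixed apart points and intersections of fixed apart lines, and then invoke Lemma~\ref{lemringpart}, with uniqueness read off from $\phi(a,0)=(\sigma(a),0)$. Your explicit parallel-lines verification that $\psi$ sends the line at infinity to the line at infinity is a worthwhile addition, since the paper asserts this as part of the extension without spelling out that it follows from $\phi$ preserving $\parallel$ and the construction of $\psi$ in Lemma~\ref{lempsi}.
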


\begin{proof}
Let $\phi: \mathbb A(R) \to \mathbb A(S)$ be a morphism of affine planes that sends $(1,0)$, $(0,1)$ and $(0,0)$ to $(1,0)$, $(0,1)$ and $(0,0)$. By Theorem \ref{thrmaffprojmorph}, it extends uniquely to a morphism of projective planes $\psi:\mathbb P(R)\to \mathbb P(S)$ which sends the line $(0,0,1)$ to $(0,0,1)$. $\psi$ sends the unique line through $(0,0,1)$ and $(1,0,1)$ to the unique line through $(0,0,1)$ and $(1,0,1)$, therefore $\psi$ sends the line $(0,1,0)$ to the line $(0,1,0)$. $\psi$ sends the unique intersection of $(0,1,0)$ and $(0,0,1)$ to the unique intersection of $(0,1,0)$ and $(0,0,1)$, therefore it sends the point $(1,0,0)$ to the point $(1,0,0)$. By similar arguments, $\psi$ sends the points to $(0,1,0)$ and $(1,1,1)$ to $(0,1,0)$ and $(1,1,1)$ respectively.

Hence, by Lemma \ref{lemringpart}, there exists a ring homomorphism $\sigma :R\to S$ such that $\psi=\mathbb P(\sigma)$. Therefore, $\phi=\mathbb P(\sigma)$. The uniqueness of $\sigma$ follows from the fact that given $x$ in $R$, $\phi(x,0)=(\sigma(x),0)$.
\end{proof}

\begin{rmk}
Alternatively, we could have proven this lemma in a similar way to the way we proved Lemma \ref{lemringpart}.
\end{rmk}

\begin{thrm} \label{thrmaffmorph}
 Let $\phi: \mathbb A(R) \to \mathbb A(S)$ be a morphism of affine planes over the local rings $R$ and $S$. Then there exists a unique invertible matrix $M$ over $S$ of the form
$\begin{pmatrix}
a_0 & b_0 & c_0 \\
a_1 & b_1 & c_1 \\
0 & 0 & 1
\end{pmatrix}$
and a unique ring homomorphism $\alpha: R\to S$ such that $\phi =M \mathbb A(\alpha)$.
\end{thrm}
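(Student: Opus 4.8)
The plan is to follow the pattern of the proof of Theorem \ref{thrmprojmorph}, with the three non-collinear points $(1,0)$, $(0,1)$, $(0,0)$ playing the role that the four points in general position played in the projective case, and the affine group $G(S)$ playing the role of $H(S)$. First I would record the images $A=\phi(1,0)$, $B=\phi(0,1)$, $C=\phi(0,0)$. Since $(1,0)$, $(0,1)$, $(0,0)$ are non-collinear in $\mathbb A(R)$ and a morphism of affine planes preserves non-collinearity, the points $A$, $B$, $C$ are non-collinear in $\mathbb A(S)$.

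Next, by Lemma \ref{lemmatrixautoaf} there is a unique matrix $M\in G(S)$ of the prescribed form inducing an automorphism of $\mathbb A(S)$ that sends $(1,0)\mapsto A$, $(0,1)\mapsto B$, $(0,0)\mapsto C$; its inverse $M^{-1}$ (again an element of $G(S)$, inducing the inverse automorphism by Proposition \ref{prop382}) can then be composed with $\phi$. The composite $M^{-1}\phi$ is a composite of morphisms of affine planes, hence itself a morphism, and by construction it fixes each of $(1,0)$, $(0,1)$, $(0,0)$. Applying Lemma \ref{lemringpartaf} yields a unique ring homomorphism $\alpha\colon R\to S$ with $M^{-1}\phi=\mathbb A(\alpha)$, so that $\phi=M\,\mathbb A(\alpha)$, which gives existence.

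For uniqueness, suppose also $\phi=M'\,\mathbb A(\alpha')$ with $M'\in G(S)$ of the given form and $\alpha'\colon R\to S$ a ring homomorphism. Since every ring homomorphism fixes $0$ and $1$, the map $\mathbb A(\alpha')$ fixes $(1,0)$, $(0,1)$, $(0,0)$, so $M'$ must send these three points to $A$, $B$, $C$ respectively; by the uniqueness clause of Lemma \ref{lemmatrixautoaf} this forces $M'=M$. Cancelling the invertible $M$ then gives $\mathbb A(\alpha)=\mathbb A(\alpha')$, and evaluating on the points $(x,0)$ (whose image is $(\alpha(x),0)$) forces $\alpha=\alpha'$.

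I do not expect a genuine obstacle here, since every nontrivial ingredient has already been isolated in the preceding results: the existence and uniqueness of the framing matrix is Lemma \ref{lemmatrixautoaf}, and the recognition of frame-fixing morphisms as $\mathbb A(\sigma)$ is Lemma \ref{lemringpartaf} (which itself rests on the extension Theorem \ref{thrmaffprojmorph}). The only point requiring care is checking that $M^{-1}\phi$ is honestly a morphism of affine planes fixing the chosen frame, so that Lemma \ref{lemringpartaf} applies; this relies on $G(S)$ acting by affine automorphisms (Proposition \ref{prop382}) rather than on any new computation.
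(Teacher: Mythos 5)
Your proposal is correct and matches the paper's own proof essentially step for step: both obtain $M$ from Lemma \ref{lemmatrixautoaf} applied to the images of $(1,0)$, $(0,1)$, $(0,0)$ and then recognize $M^{-1}\phi$ as $\mathbb A(\alpha)$ via Lemma \ref{lemringpartaf}. Your uniqueness argument is in fact spelled out more explicitly than the paper's (which leaves it implicit, as in the projective analogue Theorem \ref{thrmprojmorph}), but it is the same argument.
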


\begin{proof}
$\phi$ sends the points $(1,0)$, $(0,1)$ and $(0,0)$ to the points $A$, $B$ and $C$ respectively. $\phi$ is a morphism of affine planes, therefore  $A$, $B$ and $C$ are non-collinear. By Lemma \ref{lemmatrixautoaf}, there exists a  unique matrix $M$ in $S$ of the appropriate form which induces an automorphism of the projective plane over $S$ which sends $(1,0)$, $(0,1)$ and $(0,0)$ to the points $A$, $B$ and $C$ respectively. $M^{-1}$ induces the inverse automorphism.

$M^{-1}\phi$ satisfies the conditions of Lemma \ref{lemringpartaf}, so it is of the form $\mathbb{A}(\alpha)$ for a unique ring homomorphism $\alpha:R\to S$.

Hence, $\phi=M\mathbb{A}(\alpha)$ for unique $M$ and $\alpha$.
\end{proof}

\begin{rmk}
Let $\phi:\mathbb A(R)\to \mathbb A(S)$ be a morphism of affine planes such that $\phi=g\circ \mathbb A(\alpha)$ where $g$ is in $\mathbb G(S)$ and $\alpha:R\to S$ a ring homomorphisms. Let $\psi: \mathbb A(S) \to \mathbb A(T)$ be a second morphism of affine planes such that and $\psi=k\circ \mathbb A(\beta)$ where $g$ is in $G(T)$ and $\beta:S\to T$ is a ring homomorphism.Then, 
\begin{displaymath}
\begin{split}
\psi\circ \phi & = k\circ \mathbb A(\beta)\circ g \circ \mathbb A(\alpha) \\
& = k \circ (\beta(g)) \circ \mathbb A(\beta) \circ \mathbb A(\alpha) \\
& = (k\circ \beta(g)) \circ \mathbb A(\beta\circ \alpha),
\end{split}
\end{displaymath}
where $\beta(g)$ is the image of $g$ of $G(S)$ under $\beta$.
\end{rmk}

\section{Desargues' axioms on the affine plane}


The classical theory of affine planes includes two versions of Desargues' axiom. Our theory of affine planes also includes two versions of Desargues' axiom. We state these two axioms and prove that they hold on preaffine planes constructed from projective planes with a line and therefore also on affine planes over local rings.

\subsection*{Desargues' small axiom}
A preaffine plane satisfies \emph{Desargues' small axiom} when given $k$, $l$, $m$, $n_A$, $n_A'$, $n_C$, $n_C'$ lines and $A$, $A'$, $B$, $B'$, $C$, $C'$ points of the preaffine plane such that:
\begin{enumerate}
\item $k\parallel l \parallel m$,
\item $n_A\parallel n_A'$,
\item $n_C\parallel n_C'$,
\item $A, B\in n_A$,
\item $B, C\in n_C$,
\item $A', B'\in n_A'$,
\item $B', C'\in n_C'$,
\item $A, A'\in k$,
\item $B, B'\in l$,
\item $C, C'\in m$,
\item $A\# C$,
\item $A'\#C'$,
\item $n_A\# l$,
\item $n_C\# l$,
\end{enumerate}
then $\ovl{AC}$ is parallel to $\ovl{A'C'}$.
\begin{center}
 \begin{tikzpicture}
\draw 
(0,-0.5) -- (0,6) node[right] {$k$} 
(3,-0.5) -- (3,6) node[right] {$l$} 
(5,-0.5) -- (5,6)  node[right] {$m$}

(0,0) node[ left] {$A$} 
--(3,1.5) node[ right] {$B$}
--(5,0) node[right] {$C$}
(1.5,0.75) node[above] {$n_A$}
(4,0.75) node[above] {$n_C$}

(0,4)  node[left] {$A'$} 
--(3,5.5) node[right] {$B'$}
--(5,4) node[right] {$C'$}
(1.5,4.75) node[above] {$n'_A$}
(4,4.75) node[above] {$n'_C$};

\draw[red, thick, densely dotted] 
(0,0) -- (5,0)
(0,4) -- (5,4);
\end{tikzpicture}
\end{center}

\begin{thrm} \label{thrmsmallDes}
Let $\ca P$ be a preprojective plane satisfying Desargues' axiom and let $l_\infty$ be a line of $\ca P$. Then, the preaffine plane $\mathfrak A(\ca P, l_\infty)$ satisfies Desargues' small axiom.
\end{thrm}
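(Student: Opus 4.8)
The plan is to deduce the affine statement by a single application of the projective Desargues' theorem (Theorem~\ref{thrmnewdes}) inside $\ca P$, realizing the affine configuration as two triangles in perspective from the infinite point of the parallel pencil $k\parallel l\parallel m$. First I would unwind the $\parallel$-hypotheses into incidences on $l_\infty$: since affine lines are apart from $l_\infty$, the point $P:=k\cap l_\infty=l\cap l_\infty=m\cap l_\infty$ exists and is common to all three (this is what $k\parallel l\parallel m$ means); likewise set $Q:=n_A\cap l_\infty=n_A'\cap l_\infty$ and $S:=n_C\cap l_\infty=n_C'\cap l_\infty$, the coincidences being exactly $n_A\parallel n_A'$ and $n_C\parallel n_C'$. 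The data then exhibit two triangles $A,B,C$ and $A',B',C'$ whose joining lines $k=\ovl{AA'}$, $l=\ovl{BB'}$, $m=\ovl{CC'}$ all pass through $P$, and the goal $\ovl{AC}\parallel\ovl{A'C'}$ becomes the assertion that $\ovl{AC}$ passes through $T':=\ovl{A'C'}\cap l_\infty$. Note $\ovl{AC}$ and $\ovl{A'C'}$ exist as lines because $A\# C$ and $A'\# C'$, and they are affine lines (apart from $l_\infty$) since each passes through an affine point.

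For the main step I would invoke Theorem~\ref{thrmnewdes} with the roles of its four points $A,B,C,D$ played by $A,B,C,P$ and the roles of its four lines $k,l,m,n$ played by $n_A',\,l_\infty,\,n_C',\,\ovl{A'C'}$ respectively. Each of the five $\delta$-hypotheses then holds with an explicit witness point $X$ and witness line $r$ read straight off the incidences: $\delta(n_A',l_\infty,A,B)$ via $X=Q$, $r=n_A$; $\delta(l_\infty,n_C',B,C)$ via $X=S$, $r=n_C$; $\delta(n_C',\ovl{A'C'},C,P)$ via $X=C'$, $r=m$ (using $C,C',P\in m$); $\delta(\ovl{A'C'},n_A',P,A)$ via $X=A'$, $r=k$ (using $A,A',P\in k$); and the diagonal relation $\delta(n_A',n_C',B,P)$ via $X=B'$, $r=l$ (using $B,B',P\in l$, which is where the hypothesis $B,B'\in l$ is consumed). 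The conclusion $\delta(l_\infty,\ovl{A'C'},A,C)$ supplied by the theorem says precisely that $T'\in\ovl{AC}$, so $\ovl{AC}$ and $\ovl{A'C'}$ share their point on $l_\infty$ and are therefore parallel in $\mathfrak A(\ca P,l_\infty)$, as required; the specialization to $\mathbb A(R)$ is then the case $\ca P=\mathbb P(R)$, $l_\infty=(0,0,1)$.

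What remains, and where I expect the real difficulty to lie, is discharging the non-degeneracy side conditions of Theorem~\ref{thrmnewdes}. The clause ``$l\# n$ or $A\# C$'' and the conditions placing a point outside one of a list containing $l_\infty$ are immediate, since $A\# C$ holds and affine points lie outside $l_\infty$ (so $A,B\notin l_\infty$); these dispose of the two preliminary clauses and of condition~(1). The genuinely delicate conditions are (2), (3) and (4), which demand that the center $P$ and a vertex of $ABC$ avoid the sides $n_A'$, $n_C'$ of the far triangle $A'B'C'$. Each is a \emph{disjunctive} ``lies outside'' statement, so it need not be resolved into a single case; I would establish them from the remaining hypotheses $n_A\# l$ and $n_C\# l$ by feeding the parallelisms into the line-apartness axiom $n_A\# l\vdash n_A\# n_A'\vee l\# n_A'$ and then applying Proposition~\ref{proplastaxiom} (together with the four-way axiom $A\# B\wedge l\# m\vdash A\notin l\vee B\notin m\vee A\notin m\vee B\notin l$) along each branch: for instance, in the branch $n_A\# n_A'$ one gets $A\notin n_A'$ directly, while in the branch $l\# n_A'$ one identifies $l\cap n_A'=B'$ and derives $B\notin n_A'$ via Proposition~\ref{proplastaxiom}. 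Marshalling exactly the right apartness facts, constructively and through this nested case analysis, is the crux of the argument; once conditions (2)--(4) are in hand, Theorem~\ref{thrmnewdes} applies verbatim and Desargues' small axiom follows.
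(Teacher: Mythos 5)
Your main construction is exactly the paper's proof: the paper also forms $P=k\cap l_\infty=l\cap l_\infty=m\cap l_\infty$ and applies Theorem \ref{thrmnewdes} to the quadrilateral with vertices $A$, $P$, $C$, $B$ and sides $n_A'$, $\ovl{A'C'}$, $n_C'$, $l_\infty$, concluding $\delta(\ovl{A'C'},l_\infty,A,C)$, i.e.\ $\ovl{AC}\parallel\ovl{A'C'}$. Your labelling differs from the paper's only by the swap $B\leftrightarrow D$, $l\leftrightarrow n$, under which the hypotheses and conclusion of Desargues' axiom are invariant (using the symmetries $\delta(k,l,A,B)\vdash\delta(l,k,A,B)$ and $\delta(k,l,A,B)\vdash\delta(k,l,B,A)$), so the two instantiations are the same. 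Your five $\delta$-witnesses ($Q$ with $n_A$; $S$ with $n_C$; $C'$ with $m$; $A'$ with $k$; $B'$ with $l$) are all correct, and are in fact more explicit than the paper, which simply asserts ``the conditions of Desargues' axiom are all satisfied'' without verification.

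The one genuine error is in your branch $l\#n_A'$: deriving $B\notin n_A'$ from Proposition \ref{proplastaxiom} requires $B\#B'$, which is not among the hypotheses of Desargues' small axiom and can fail. Since parallel lines may coincide, the degenerate instance $n_A=n_A'$, $n_C=n_C'$, $A=A'$, $B=B'$, $C=C'$ satisfies every hypothesis (take, e.g., in $\mathbb A(\mathbb Q)$ the points $A=(0,1)$, $B=(1,0)$, $C=(2,0)$ with $k,l,m$ vertical), the cotransitivity disjunction is then forced into your second branch, and there $B\notin n_A'$ is outright false because $B\in n_A=n_A'$. The repair is to aim at $P$ rather than $B$: $B'$ is an affine point, so $B'\notin l_\infty$ and hence $P\#B'$, and Proposition \ref{proplastaxiom} applied to $P$, $B'$ on $l$, $n_A'$ yields $P\notin n_A'$. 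Better still, no case split is needed: from $n_A\#l$ and $B=n_A\cap l$ one gets $B\#P$ and then $P\notin n_A$ by Proposition \ref{proplastaxiom}; writing $Q=n_A\cap l_\infty=n_A'\cap l_\infty$ (parallelism), this gives $P\#Q$, and Proposition \ref{proplastaxiom} applied to $P$, $Q$ on $l_\infty$, $n_A'$ yields $P\notin n_A'$ unconditionally, and symmetrically $P\notin n_C'$ from $n_C\#l$. These two facts discharge your conditions (2), (3) and (4) simultaneously, after which the application of Theorem \ref{thrmnewdes} goes through as you describe.
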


\begin{proof}
Let $\ca P$ be a preprojective plane satisfying Desargues' axiom and let $l_\infty$ be a line of $\ca P$. Let $k$, $l$, $m$ be lines and $A$, $A'$, $B$, $B'$, $C$, $C'$ points of the preaffine plane $\mathfrak A(\ca P, l_\infty)$ satisfying the conditions listed in Desargues' small axiom above.

Let $P$ be the intersection of $k$ (or $l$ or $m$) with $l_\infty$. We apply (the projective) Desargues' axiom on the following configuration of $\ca P$.

\begin{center}
\begin{tikzpicture}
 \draw 
(0,0) node[left] {$A$} 
-- (2,2) node[above] {$P$} 
-- (4,0) node[right] {$C$}
-- (2,-2) node[below] {$B$}
-- (0,0)

(1,1)
-- (3,1)
-- (3,-1)
-- (1,-1)
-- (1,1)
(2,1) node[above] {$\ovl{A'C'}$}
(3,0) node[right] {$n_C'$}
(2,-1) node[above] {$l_\infty$}
(1,0) node[right] {$n_A'$} ;
\end{tikzpicture}
\end{center}
The picture is a bit misleading because $P$ lies on $l_\infty$. The conditions of Desargues' axiom are all satisfied, therefore $\delta(\ovl{A'C'},l_\infty, A,C)$ is satisfied, i.e. $\ovl{AC}\parallel \ovl{A'C'}$.
\end{proof}

\begin{rmk}
Given a local ring $R$, the preaffine plane $\mathbb A(R)$ is isomorphic to $\mathfrak A(\mathbb P(R),(0,0,1))$. $\mathbb P(R)$ satisfies Desargues' axiom, therefore $\mathbb A(R)$ satisfies Desargues' small axiom.
\end{rmk}

\subsection*{Desargues' big axiom}
A preaffine plane satisfies \emph{Desargues' big axiom} when given $k$, $l$, $m$, $n_{AB}$, $n_{BC}$, $n_{AC}$ lines and $P$, $A$, $A'$, $B$, $B'$, $C$, $C'$ points of the preaffine plane such that:
\begin{enumerate}
\item $P$ lies on all three lines $k,l,m$,
\item $A, B \in n_{AB}$, 
\item $B, C \in n_{BC}$,
\item $A, C \in n_{AC}$,
\item $A, A'\in k$,
\item $B, B'\in l$,
\item $C, C'\in m$,
\item $P$ lies outside both $n_{AB}$ and $n_{BC}$,
\item $B'$ lies on the line through $A'$ parallel to $n_{AB}$,
\item $C'$ lies on the line through $B'$ parallel to $n_{BC}$,
\end{enumerate}
then $C'$ lies on the line through $A'$ parallel to $n_{AC}$.

\begin{center}
\begin{tikzpicture}
\draw 
(0,0) node[left] {$P$} -- (6,3) node[right] {$k$} 
(0,0) -- (7,0) node[right] {$l$} 
(0,0) -- (6,-1.5)  node[right] {$m$};

\draw 
(3, -0.75) node[ below] {$C'$} 
--(5,0) node[ below] {$B'$}
(1.5,-0.375)  node[below] {$C$} 
--(2.5,0) node[ below] {$B$};

\draw 
(5,0)--(4.5,2.25) node[above] {$A'$}
 (2.5,0)--(2.25,1.125) node[above] {$A$};

\draw[red, thick, densely dotted] 
(4.5,2.25)--(3,-0.75) 
(2.25,1.125)--(1.5,-0.375);
\end{tikzpicture}
\end{center}

\begin{rmk}
Notice that in the case where $A\#C$ and $A'\#C'$, the conclusion of the axiom can be changed to $\ovl{AC}\parallel \ovl{A'C'}$.
\end{rmk}

\begin{thrm} \label{thrmbigDes}
Let $\ca P$ be a preprojective plane satisfying Desargues' axiom and let $l_\infty$ be a line of $\ca P$. Then the preaffine plane $\mathfrak A(\ca P,l_\infty)$ satisfies Desargues' big axiom.
\end{thrm}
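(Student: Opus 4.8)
The plan is to reduce to the projective Desargues' axiom exactly as in the proof of Theorem \ref{thrmsmallDes}, the only difference being that the centre of perspectivity $P$ is now a genuine point of $\mathfrak A(\ca P,l_\infty)$ rather than a point of $l_\infty$. Throughout I would work inside the ambient preprojective plane $\ca P$, where the affine points and lines sit as a substructure and where $k\parallel l$ in $\mathfrak A(\ca P,l_\infty)$ means precisely that $k$ and $l$ meet on $l_\infty$. Writing $n_{AB}'$ for the line through $A'$ parallel to $n_{AB}$ (so $A',B'\in n_{AB}'$) and $n_{BC}'$ for the line through $B'$ parallel to $n_{BC}$ (so $B',C'\in n_{BC}'$), I would apply the projective Desargues' axiom (Theorem \ref{thrmnewdes}) in $\ca P$ to the four points $A,P,C,B$ in the roles of the cyclic quadruple and the four lines $n_{AB}'$, $\ovl{A'C'}$, $n_{BC}'$, $l_\infty$.

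The five $\delta$-hypotheses then all reduce to collinearities already present in the data. Since $k,l,m$ all pass through $P$ and carry the pairs $A,A'$; $B,B'$; $C,C'$ respectively, the three vertices of the primed triangle are recovered as the intersections $A'=n_{AB}'\cap\ovl{A'C'}$, $B'=n_{AB}'\cap n_{BC}'$, $C'=\ovl{A'C'}\cap n_{BC}'$, so that the ``spoke'' conditions $\delta(n_{AB}',\ovl{A'C'},A,P)$, $\delta(n_{AB}',n_{BC}',P,B)$ and $\delta(\ovl{A'C'},n_{BC}',P,C)$ hold because $A,A',P$ lie on $k$, while $B,B',P$ lie on $l$ and $C,C',P$ lie on $m$. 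The two ``side'' conditions $\delta(l_\infty,n_{AB}',B,A)$ and $\delta(n_{BC}',l_\infty,C,B)$ are exactly the parallelisms $n_{AB}\parallel n_{AB}'$ and $n_{BC}\parallel n_{BC}'$, transcribed as the statements that the common infinite point $n_{AB}\cap l_\infty$ lies on $n_{AB}'$ (and is collinear with $A,B$ via $n_{AB}$), and similarly for $n_{BC}\cap l_\infty$. The conclusion delivered by Desargues' axiom, $\delta(\ovl{A'C'},l_\infty,A,C)$, says that $\ovl{A'C'}\cap l_\infty$ is collinear with $A$ and $C$, i.e.\ $\ovl{AC}\parallel\ovl{A'C'}$; since $A,C\in n_{AC}$ forces $n_{AC}=\ovl{AC}$, uniqueness of parallels gives $n_{AC}'=\ovl{A'C'}$ and hence $C'\in n_{AC}'$, which is the desired conclusion.

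The real work, and the main obstacle, is discharging the constructive non-degeneracy side-conditions (2)--(7) of Theorem \ref{thrmnewdes} and ensuring that the joins $\ovl{A'C'}$ and $\ovl{AC}$ used above actually exist. The starting point is condition 8: from $P\notin n_{AB}$ together with $A,B\in n_{AB}$, and from $P\notin n_{BC}$ together with $B,C\in n_{BC}$, the apartness and outside-relation axioms yield $P\#A$, $P\#B$, $P\#C$ as well as $k\#n_{AB}$, $l\#n_{AB}$ and $m\#n_{BC}$, so that $A$, $B$, $C$ are genuinely the intersection points named in the configuration. From these one bootstraps the remaining apartnesses, in particular $A\#C$ and $A'\#C'$ needed to form $\ovl{AC}$ and $\ovl{A'C'}$, using Proposition \ref{proplastaxiom} and the outside-relation axioms. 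I expect the bookkeeping here to be delicate: constructively one cannot assume the triangle $A,B,C$ is non-degenerate, so the cases in which an apartness required to form a join fails must be dispatched separately, each either making the conclusion hold trivially (the relevant parallel already coincides with a known line) or reducing to the already-established small axiom. Organising this case analysis so that conditions (2)--(7) are met in the principal case is the technical heart of the argument.
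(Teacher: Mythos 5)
Your high-level idea---a single application of the projective Desargues' axiom in $\ca P$ with $l_\infty$ playing the role of one of the four lines---is exactly the paper's, but you have oriented the configuration the wrong way round, and this is a genuine gap rather than bookkeeping. Taking the quadruple $(A,P,C,B)$ against the lines $(n_{AB}',\ovl{A'C'},n_{BC}',l_\infty)$ forces the join $\ovl{A'C'}$ to appear as one of the four line \emph{arguments} of $\delta$, and your back-translation of the conclusion $\delta(\ovl{A'C'},l_\infty,A,C)$ needs $A\#C$ to identify the witnessing line with $n_{AC}$. Neither $A'\#C'$ nor $A\#C$ is a hypothesis of Desargues' big axiom, and the case analysis you propose to repair this is unavailable: the proof must be carried out constructively (the big axiom is an axiom of a geometric theory), apartness is not decidable, and the apartness axioms do not include tightness, so $\neg(A'\#C')$ yields nothing usable. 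Moreover the trouble is not confined to the joins: side-condition (4) of Theorem \ref{thrmnewdes} requires $P$ to lie outside one of $n_{AB}'$, $\ovl{A'C'}$, $n_{BC}'$, about which hypothesis 8 (which concerns the \emph{unprimed} lines) says nothing. A concrete degenerate model kills the scheme outright: over a field take $k=l=m$ a single line, $P$ on it, $A=B=C$ a second point of it, $A'=B'=C'=P$, and $n_{AB},n_{BC},n_{AC}$ three lines through $A$ missing $P$. All ten hypotheses of the big axiom hold and its conclusion is trivially true, yet $\ovl{A'C'}$ does not exist, $A\#C$ fails, and $P$ lies on both $n_{AB}'$ and $n_{BC}'$, so your instance of Theorem \ref{thrmnewdes} cannot even be assembled. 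Substituting the parallel $n_{AC}'$ through $A'$ for $\ovl{A'C'}$ does not rescue it: the spoke hypothesis at $C$ would then read $\delta(n_{AC}',n_{BC}',P,C)$, whose natural witness is $C'\in n_{AC}'$---the very conclusion. (Nor can degenerate cases ``reduce to the small axiom'': Theorem \ref{thrmsmallDes} concerns three parallel lines, not concurrent ones.)

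The paper runs the dual orientation, which dissolves every one of these problems: it applies Theorem \ref{thrmnewdes} to the quadruple $(A',P,C',B')$ and the lines $(n_{AB},n_{AC},n_{BC},l_\infty)$. Every $\delta$-hypothesis is then witnessed outright by given data: $\delta(n_{AB},n_{AC},A',P)$ with $X=A$ and $r=k$; $\delta(n_{AC},n_{BC},P,C')$ with $X=C$ and $r=m$; $\delta(n_{AB},n_{BC},P,B')$ with $X=B$ and $r=l$; and the two conditions involving $l_\infty$ with $X$ the infinite point of $n_{AB}$ (resp.\ $n_{BC}$) and $r$ the parallel through $A'$ (resp.\ $B'$), which exists by the parallel axiom and carries $B'$ (resp.\ $C'$) by hypotheses 9 and 10. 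The side-conditions (2)--(7) reduce to $n_{AC}\#l_\infty$ and $A',B'\notin l_\infty$ (automatic for affine lines and points) together with $P\notin n_{AB}$ and $P\notin n_{BC}$, i.e.\ hypothesis 8---precisely because the $k$- and $m$-slots are occupied by the unprimed sides. Finally, the conclusion $\delta(n_{AC},l_\infty,A',C')$ asserts verbatim that some line passes through $A'$, $C'$ and the infinite point of $n_{AC}$, i.e.\ that $C'$ lies on the line through $A'$ parallel to $n_{AC}$---no join of $A'$ with $C'$, no $A\#C$, no case split. In the $\delta$-formulation everything that must merely exist sits in the existentially quantified slots, while the line arguments must be supplied as data; exploiting this asymmetry is the whole point of the paper's version of Desargues, and your configuration places the data on the wrong side of it.
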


\begin{proof}
Let $\ca P$ be a preprojective plane satisfying Desargues' axiom and let $l_\infty$ be a line of $\ca P$. Let $k$, $l$, $m$, $n_{AB}$, $n_{BC}$, $n_{AC}$ be lines of $\mathfrak A(\ca P, l_\infty)$ and let $P$, $A$, $A'$, $B$, $B'$, $C$, $C'$ be points of $\mathfrak A(\ca P, l_\infty)$ satisfying the conditions of the above axiom.

We apply (the projective) Desargues' axiom on the following configuration of $\ca P$.

\begin{center}
\begin{tikzpicture}
 \draw 
(0,0) node[left] {$A'$} 
-- (2,2) node[above] {$P$} 
-- (4,0) node[right] {$C'$}
-- (2,-2) node[below] {$B'$}
-- (0,0)

(1,1)
-- (3,1)
-- (3,-1)
-- (1,-1)
-- (1,1)
(2,1) node[above] {$n_{AC}$}
(3,0) node[right] {$n_{BC}$}
(2,-1) node[above] {$l_\infty$}
(1,0) node[right] {$n_{AB}$} ;
\end{tikzpicture}
\end{center}
The conditions of Desargues' axiom are all satisfied, therefore $\delta(n_{AC},l_\infty, A',C')$ is satisfied, i.e. $C'$ lies on the line through $A'$ parallel to $n_{AC}$.
\end{proof}

\begin{rmk}
Given a local ring $R$, the preaffine plane $\mathbb A(R)$ is isomorphic to $\mathfrak A(\mathbb P(R),(0,0,1))$. $\mathbb P(R)$ satisfies Desargues' axiom, and therefore $\mathbb A(R)$ satisfies Desargues' big axiom.
\end{rmk}

\section{Further versions of Desargues' theorem} \label{secfurthdes}

In this section, we prove some further versions of Desargues' theorem on the affine plane which are consequences of Desargues' big and small axioms. These versions are going to be used in the proofs of results about dilatations and translations in Chapter \ref{chalocal}.

\begin{thrm} \label{thrmdesmall}
Let $\ca A$ be a preaffine plane satisfying Desargues' small axiom.
Let $k$, $l$, $m$ be lines of $\ca A$ and let $A$, $A'$, $B$, $B'$, $C$, $C'$ be points of $\ca A$. If the following are true:
\begin{enumerate}
\item $k\parallel l \parallel m$,
\item $k \# l \# m$,
\item $A\# C$,
\item $A'\#C'$,
\item $A, A'\in k$,
\item $B, B'\in l$,
\item $C, C'\in m$,
\end{enumerate}
we can conclude that $A\#B\#C$ and $A'\#B'\#C'$. Then if $\ovl{AB}$ is parallel to $\ovl{A'B'}$ and $\ovl{BC}$ parallel to $\ovl{B'C'}$, then $\ovl{AC}$ is parallel to $\ovl{A'C'}$.
\end{thrm}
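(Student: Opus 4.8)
The plan is to recognize this statement as a specialization of Desargues' small axiom, in which the four auxiliary lines $n_A,n_A',n_C,n_C'$ are instantiated as the connecting lines $\ovl{AB},\ovl{A'B'},\ovl{BC},\ovl{B'C'}$. The work then splits into two tasks: first, producing the apartness relations $A\#B\#C$ and $A'\#B'\#C'$ which guarantee that these four lines are well-defined; and second, verifying the two remaining side-conditions $n_A\#l$ and $n_C\#l$ of the axiom, after which the axiom applies directly and gives $\ovl{AC}\parallel\ovl{A'C'}$.

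For the apartness relations, I would proceed as follows. Since $k\parallel l$ and $k\#l$, the axiom $k\#l\wedge k\parallel l\vdash_{A,k,l} A\notin k\vee A\notin l$ applied to the point $A$ yields $A\notin k\vee A\notin l$; but $A\in k$ together with $A\in k\wedge A\notin k\vdash\bot$ excludes the first disjunct, so $A\notin l$. The axiom $A\notin l\vdash_{A,B,l} A\#B\vee B\notin l$ then forces $A\#B$, since $B\in l$ rules out $B\notin l$. The same argument applied to the pair $l\parallel m$, $l\#m$ with the points $B$ and $C$ gives $B\notin m$ and $C\notin l$, and hence $B\#C$; the primed points are treated identically, giving $A'\#B'$ and $B'\#C'$. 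Combined with the hypotheses $A\#C$ and $A'\#C'$ this establishes $A\#B\#C$ and $A'\#B'\#C'$, so that $\ovl{AB},\ovl{BC},\ovl{A'B'},\ovl{B'C'}$ all exist; by hypothesis $\ovl{AB}\parallel\ovl{A'B'}$ and $\ovl{BC}\parallel\ovl{B'C'}$.

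It then remains to check $\ovl{AB}\#l$ and $\ovl{BC}\#l$. From $A\notin l$ together with $A\in\ovl{AB}$, the axiom $A\notin k\vdash_{A,k,l} k\#l\vee A\notin l$ (reading its two line-variables as $l$ and $\ovl{AB}$) gives $l\#\ovl{AB}$, the alternative $A\notin\ovl{AB}$ being barred by $A\in\ovl{AB}$; symmetrically $C\notin l$ with $C\in\ovl{BC}$ gives $l\#\ovl{BC}$. All the hypotheses of Desargues' small axiom now hold with $n_A=\ovl{AB}$, $n_A'=\ovl{A'B'}$, $n_C=\ovl{BC}$, $n_C'=\ovl{B'C'}$, so the axiom delivers $\ovl{AC}\parallel\ovl{A'C'}$. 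No new geometric content is needed beyond the axiom itself; the only real effort is the constructive bookkeeping of the incidence and apartness facts, so the main (and fairly mild) obstacle is keeping straight which preaffine-plane axiom produces each intermediate statement $A\notin l$, $A\#B$, and $\ovl{AB}\#l$ without slipping into a classical argument.
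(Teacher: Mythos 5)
Your proof is correct and follows exactly the paper's route: the paper likewise treats the theorem as an immediate instance of Desargues' small axiom with $n_A=\ovl{AB}$, $n_A'=\ovl{A'B'}$, $n_C=\ovl{BC}$, $n_C'=\ovl{B'C'}$. The only difference is that you spell out the routine derivations of $A\notin l$, the apartness relations $A\#B\#C$, $A'\#B'\#C'$, and the side-conditions $\ovl{AB}\#l$, $\ovl{BC}\#l$, which the paper leaves implicit.
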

\begin{center}
\begin{tikzpicture}
\draw 
(0,0) -- (0,3.7) node[right] {$k$} 
(3,0) -- (3,3.7) node[right] {$l$} 
(4,0) -- (4,3.7)  node[right] {$m$}; 

\draw 
(0,0.4) node[ left] {$A'$} 
--(3,1.75) node[ right] {$B'$}
(0,1.5)  node[left] {$A$} 
--(3,2.85) node[right] {$B$};

\draw 
(3,1.75)--(4,1.1) node[right] {$C'$}
 (3,2.85)--(4,2.2) node[right] {$C$};

\draw[red, thick, densely dotted] 
(4,1.1)--(0,0.4) 
(4,2.2)--(0,1.5);
\end{tikzpicture}
\end{center}

\begin{proof}
 This theorem is an immediate consequence of Desargues' small axiom. The conditions listed here imply the ones of Desargues' small axiom with $n_A=\ovl{AB}$, $n_A'=\ovl{A'B'}$, $n_C=\ovl{BC}$ and $n_C'=\ovl{B'C'}$. Hence, $\ovl{AC}$ is parallel to $\ovl{A'C'}$.
\end{proof}

We now proceed to prove a few lemmas that will enable us to prove Theorem \ref{thrmdesmall4}. In the classical treatment of the affine plane, Theorem \ref{thrmdesmall4} is a consequence of the above theorem. The classical proof considers two cases. In the first case, there exists a line $k'$ on the affine plane which is parallel to $k$ and apart from the lines $k$ and $n$ and then the above theorem proves the result. In the second case no such line $k'$ exists, which in the classical case implies that the affine plane is isomorphic to the affine plane over the finite field of order 2, where the theorem still holds. This approach is used implicitly in \cite[Theorem 2.17]{Artin}. When working with affine planes over local rings it is no longer true that we can consider these two separate cases: for example the affine plane over $\mathbb Z/(n)$ where $n$ is a power of $2$ does not contain three parallel lines which are all apart from each other. Hence, the above argument would fail in our case. We can still prove Theorem \ref{thrmdesmall4} using Desargues' small and big axioms.




\begin{lem} \label{lempar1}
Let $\ca A$ be a preaffine plane satisfying Desargues' big and small axioms.
 Let $k$ and $l$ be lines of $\ca A$ and let $A_0$, $A_1$, $A_2$, $B_0$, $B_1$ and $B_2$ be points of $\ca A$. If the following are true:
 \begin{enumerate}
  \item $k\# l$,
\item $k\parallel l$,
\item $A_0\# A_1 \# A_2$,
\item $B_0 \# B_1 \# B_2$,
\item $A_0, A_1, A_2 \in k$,
\item $B_0, B_1, B_2 \in l$
\end{enumerate}
then all the points on $k$ are apart from all the points on $l$. If moreover $\ovl{A_0B_0}\parallel \ovl{A_1B_1} \parallel \ovl{A_2B_2}$ and $\ovl{A_1B_0}\parallel \ovl{A_2B_1}$, then $\ovl{A_0B_1}\parallel \ovl{A_1B_2}$.
\end{lem}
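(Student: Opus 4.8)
The statement splits into an apartness claim (independent of the parallelism hypotheses) and the parallelism conclusion, and I would treat them in turn.

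I would first prove that every point of $k$ is apart from every point of $l$, using only hypotheses (1) and (2). Given $P\in k$ and $Q\in l$, the preaffine axiom $k\#l\wedge k\parallel l\vdash_{A,k,l}A\notin k\vee A\notin l$ applied to $Q$ yields $Q\notin k\vee Q\notin l$; since $Q\in l$, the axiom $A\in l\wedge A\notin l\vdash_{A,l}\bot$ eliminates the right disjunct, so $Q\notin k$. Then $A\notin k\vdash_{A,B,k}A\#B\vee B\notin k$, applied with $A:=Q$ and $B:=P$, gives $Q\#P\vee P\notin k$, and $P\in k$ eliminates the right disjunct; hence $P\#Q$. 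In particular $A_i\#B_j$ for all $i,j$, so every joining line $\ovl{A_iB_j}$ used below exists and is unique.

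For the parallelism conclusion I would pass to the language of parallelograms. Write $\vec{XY}=\vec{ZW}$ to mean that $\ovl{XZ}\parallel\ovl{YW}$ and $\ovl{XY}\parallel\ovl{ZW}$ (so that $XYWZ$ is a parallelogram). Since $\ovl{A_iA_j}=k\parallel l=\ovl{B_iB_j}$, the three parallelism hypotheses translate into $\vec{A_0A_1}=\vec{B_0B_1}$ (from $\ovl{A_0B_0}\parallel\ovl{A_1B_1}$), $\vec{A_1A_2}=\vec{B_1B_2}$ (from $\ovl{A_1B_1}\parallel\ovl{A_2B_2}$), and $\vec{A_1A_2}=\vec{B_0B_1}$ (from $\ovl{A_1B_0}\parallel\ovl{A_2B_1}$); and the desired conclusion $\ovl{A_0B_1}\parallel\ovl{A_1B_2}$ is precisely $\vec{A_0A_1}=\vec{B_1B_2}$. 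Combining $\vec{A_0A_1}=\vec{B_0B_1}$ with the single remaining equality $\vec{B_0B_1}=\vec{B_1B_2}$ then yields the conclusion by a non-degenerate gluing of parallelograms along the common segment $B_0B_1$. Hence everything reduces to establishing $\vec{B_0B_1}=\vec{B_1B_2}$, which follows formally from $\vec{B_0B_1}=\vec{A_1A_2}$ and $\vec{A_1A_2}=\vec{B_1B_2}$.

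The equality $\vec{B_0B_1}=\vec{B_1B_2}$ is exactly the transitivity $\vec{B_0B_1}=\vec{A_1A_2}=\vec{B_1B_2}$ in which all three segments are parallel to $k$ and $l$ — equivalently, the assertion that the translation carrying $B_0$ to $B_1$ also carries $B_1$ to $B_2$, the collinear points $B_0,B_1,B_2$ forming one orbit. This same-direction (collinear) case is the whole difficulty, and it is the reason the hypotheses demand both Desargues axioms: it cannot be obtained by feeding the data into Theorem \ref{thrmdesmall} directly, because that theorem requires three pairwise-apart parallel lines and over a local ring there need be no line parallel to $k,l$ and apart from both — precisely the $\mathbb Z/(2^n)$ phenomenon noted before the lemma. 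My plan is to lift the collinear orbit through a transverse direction: choose, by the enough-points axioms, a point $O$ outside $l$ and outside the joining lines in play, transport $B_0,B_1,B_2$ by the parallelogram construction toward $O$ onto a second line, and thereby replace the single same-direction composition by two compositions in transverse directions, each of which is a genuine (non-degenerate) instance of Desargues' small axiom on three distinct parallel lines. Desargues' big axiom is then used to discharge the degenerate sub-configurations of the lift, where two of the auxiliary transverse lines fail to be apart. The main obstacle is exactly this step: isolating $\vec{B_0B_1}=\vec{B_1B_2}$ as the sole Desarguesian content and carrying out the transverse lift constructively, since one may not simply case-split on, nor produce, a third parallel line apart from $k$ and $l$.
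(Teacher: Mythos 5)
Your first half is fine: the derivation of $P\#Q$ for all $P\in k$, $Q\in l$ from the two preaffine axioms is exactly right, and it is the same (implicit) argument the paper relies on. The gap is in the second half, and it is structural. Under your own definition of $\vec{XY}=\vec{ZW}$, the intermediate assertion $\vec{B_0B_1}=\vec{B_1B_2}$ is \emph{vacuous}: all four points lie on $l$, so both defining parallelisms reduce to $l\parallel l$. Consequently the step ``$\vec{B_0B_1}=\vec{A_1A_2}$ and $\vec{A_1A_2}=\vec{B_1B_2}$ formally give $\vec{B_0B_1}=\vec{B_1B_2}$'' carries no content, and the subsequent ``gluing'' with $\vec{A_0A_1}=\vec{B_0B_1}$ is again an appeal to transitivity of the parallelogram relation --- but that transitivity is not available at this stage: it is precisely what Lemmas \ref{lempar1}--\ref{lempar4} and Theorems \ref{thrmdesmall}, \ref{thrmdesmall4} are in the business of establishing. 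Your fallback reading, ``the translation carrying $B_0$ to $B_1$ also carries $B_1$ to $B_2$,'' is circular: translations are defined only in Chapter \ref{chalocal}, and their uniqueness (Lemma \ref{lemlinetranslation}) and closure under composition (via Theorem \ref{thrmdesmall4}) are proved \emph{using} the present lemma.

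The ``transverse lift'' is the right instinct, but the one concrete instruction you give --- choose a point $O$ outside $l$ and outside ``the joining lines in play'' --- is exactly the move that fails constructively. The axioms produce a point outside one given line; escaping several lines simultaneously forces case splits through $A\notin k \vdash_{A,k,l} k\#l \vee A\notin l$, and these need not close because the relevant lines need not be apart (the same $\mathbb Z/(2^n)$ obstruction you cite for parallel lines). The paper's proof makes no free choice at all: it takes the \emph{forced} point $X_0=\ovl{A_2B_1}\cap\ovl{A_0B_0}$ (existence from $B_1\notin\ovl{A_1A_2}$, whence $\ovl{A_2B_1}\#\ovl{A_1B_1}$, plus the parallel-intersection axiom), lets $m$ be the line through $X_0$ parallel to $l$, and derives $l\#m$ from the hypothesis $\ovl{A_1B_0}\parallel\ovl{A_2B_1}$ --- and it never needs, nor can it get, $k\#m$. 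Your division of labour between the two axioms is also inverted relative to what actually works: in the paper the two substantive transports are \emph{big}-Desargues applications on concurrent pencils centred at $A_0$ (giving $X_2\in\ovl{A_2B_2}$) and at $B_1$ (giving $\ovl{X_1B_2}\parallel\ovl{A_1B_0}$), while \emph{small} Desargues enters exactly once, at the end, on the triple $k,l,m$ --- whose statement in this paper deliberately does not require the three parallels to be pairwise apart, so there is no residual degenerate configuration left for big Desargues to ``patch.'' Until the lift is specified at this level of detail, what you have is a correct diagnosis and a plan, not a proof.
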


\begin{center}
\begin{tikzpicture}
\draw 
(0,-0.5) -- (0,3.5) node[left] {$k$}
(2,-0.5) -- (2,3.5) node[left] {$l$}

(0,0) node[left] {$A_0$}--(2,0) node[right] {$B_0$}
(0,1.5) node[left] {$A_1$}--(2,1.5) node[right] {$B_1$}
(0,3) node[left] {$A_2$}--(2,3) node[right] {$B_2$}
(0,1.5)--(2,0)
(0,3) -- (2,1.5);

\draw[red, thick, densely dotted]
 (0,0)--(2,1.5)
(0,1.5) -- (2,3);
\end{tikzpicture}
\end{center}

\begin{proof}
 $B_1\notin \ovl{A_1A_2}$, therefore $\ovl{A_2B_1}$ is apart from $\ovl{A_1B_1}$. Hence, $\ovl{A_2B_1}$ is also apart and intersects $\ovl{A_0B_0}$, so let their intersection point be $X_0$.

Let $m$ be the line through $X_0$ parallel to $l$. $k$ and $l$ are apart from each other and parallel, hence $B_1\notin k$. Therefore, $B_1\notin \ovl{A_1A_2}$ which implies that $A_1\notin \ovl{A_2B_1}$. Hence, the lines $\ovl{A_1B_0}$ and $\ovl{A_2B_1}$ are apart from each other (and parallel), therefore $X_0\notin \ovl{A_1B_0}$. Hence, $X_0\#B_0$. $A_0\notin \ovl{B_0B_1}$, and therefore $B_1\notin \ovl{A_0B_0}=\ovl{B_0X_0}$. Hence, $B_1\notin \ovl{B_0X_0}$, and therefore $X_0\notin \ovl{B_0B_1}=l$. $X_0\notin l$ and $X_0\in m$, hence $l\# m$.

Let $X_1$ be the intersection of $\ovl{A_1B_1}$ and $m$, and let $X_2$ be the intersection of $\ovl{A_0B_1}$ and $m$.
\begin{center}
\begin{tikzpicture}
\draw 
(0,-0.5) -- (0,3.5) node[left] {$k$}
(2,-0.5) -- (2,3.5) node[left] {$l$}
(4,-0.5) -- (4,3.5) node[left] {$m$}

(0,0) node[left] {$A_0$}--(2,0) node[right] {$B_0$}--(4,0) node[right] {$X_0$}
(0,1.5) node[left] {$A_1$}--(2,1.5) node[right] {$B_1$} -- (4,1.5) node[right] {$X_1$}
(0,3) node[left] {$A_2$}--(2,3) node[right] {$B_2$}
(4,3) node[right] {$X_2$}
(2,1.5)--(4,0)
(0,1.5)--(2,0)
(0,3) -- (2,1.5)

 (0,0)--(2,1.5)--(4,3);

\end{tikzpicture}
\end{center}

We apply big Desargues' axiom on the concurrent lines $\ovl{A_0B_0}$, $\ovl{A_0B_1}$ and $\ovl{A_0A_1}$ and the triangles $B_0B_1A_1$ and $X_0X_2A_2$. $\ovl{A_1B_0} \parallel \ovl{A_2B_1}$ and $\ovl{B_0B_1}\parallel m$, therefore we conclude that $X_2$ lies on the line through $A_2$ parallel to $\ovl{A_1B_1}$. Hence, $X_2\in \ovl{A_2B_2}$.

By big Desargues' axiom on the concurrent lines $\ovl{B_1A_1}$, $\ovl{B_1A_0}$, $\ovl{B_1B_0}$ and the triangles $A_1A_0B_0$ and $X_1X_2B_1$. $\ovl{A_1A_0}\parallel \ovl{X_1X2}$ and $\ovl{A_0B_0}\parallel \ovl{X_2B_1}$, hence $\ovl{A_1B_0}\parallel \ovl{B_2X_1}$. Therefore $\ovl{B_2X_1}$ is also parallel to $\ovl{B_1X_0}$.

We are now applying small Desargues' axiom on the parallel lines $k$, $l$ and $m$, and the triangles $A_0X_0B_1$ and $A_1X_1B_2$. The appropriate lines are parallel and the conditions of the axiom are satisfied, hence we conclude that $\ovl{A_0B_1}\parallel \ovl{A_1B_2}$.
\end{proof}

\begin{lem} \label{lempar2}
Let $\ca A$ be a preaffine plane satisfying Desargues' big and small axioms.
Let $k$, $l$ be lines of $\ca A$ and let $A$, $B$, $C$, $D$, $A'$, $B'$, $C'$, $D'$ be points of $\ca A$. If the following hold:
\begin{enumerate}
\item $k\# l$,
\item  $k\parallel l$,
\item $A\# C$, $B\#D$, $A'\# C'$ and $B'\# D'$,
\item $A, A', C, C'\in k$,
\item $B, B', D, D'\in l$,
\end{enumerate}
then all the points on $k$ are apart from all the points on $l$. If $\ovl{AB}\parallel \ovl{CD} \parallel  \ovl{A'B'}\parallel \ovl{C'D'}$ and $\ovl{BC}\parallel \ovl{B'C'}$, then $\ovl{AD}$ is parallel to $\ovl{A'D'}$.
\end{lem}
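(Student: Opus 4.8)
The statement has two parts. The first, that every point of $k$ is apart from every point of $l$, I would deduce directly from the apartness axioms. Given $P\in k$ and $Q\in l$, apply the axiom $k\# l\wedge k\parallel l\vdash_{A,k,l} A\notin k\vee A\notin l$ to $Q$: since $Q\in l$, the disjunct $Q\notin l$ is impossible by $A\in l\wedge A\notin l\vdash_{A,l}\bot$, so $Q\notin k$. Now apply $A\notin k\vdash_{A,B,k} A\# B\vee B\notin k$ with $A:=Q$, $B:=P$; as $P\in k$ the disjunct $P\notin k$ is impossible, so $Q\# P$. In particular each of the segments appearing below joins a point of $k$ to a point of $l$, so they are all well-defined lines and the various uses of $\ovl{\;\cdot\;}$ make sense.

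For the main part, the guiding picture is that each hypothesised parallelism is one half of a parallelogram. Because $k\parallel l$, the ``vertical'' sides of the quadrilaterals $ABB'A'$, $BCC'B'$ and $CDD'C'$ lie along $k$ or $l$ and are therefore parallel, so together with the hypotheses $\ovl{AB}\parallel\ovl{A'B'}$, $\ovl{BC}\parallel\ovl{B'C'}$ and $\ovl{CD}\parallel\ovl{C'D'}$ each of these quadrilaterals is a parallelogram. Reading these as equalities of the transversal displacement from $k$ to $l$, the conclusion $\ovl{AD}\parallel\ovl{A'D'}$ is precisely the assertion that the composite displacement along the zig-zag $A\to B\to C\to D$ agrees with the one along $A'\to B'\to C'\to D'$; equivalently, that $ADD'A'$ is again a parallelogram.

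The plan is to realise this composition by transporting the parallelism step by step with Desargues' small axiom, in the form already packaged in Lemma \ref{lempar1} and Theorem \ref{thrmdesmall}. A single application of Lemma \ref{lempar1} to the given eight points does not suffice, since the common direction of the four transversals $\ovl{AB},\ovl{CD},\ovl{A'B'},\ovl{C'D'}$ differs from the direction of $\ovl{BC}$, so the hypothesis $\ovl{BC}\parallel\ovl{B'C'}$ cannot be fed in as the diagonal condition of that lemma. Instead I would split the transport into two stages bridged by an auxiliary parallelogram: first use the parallelograms $ABB'A'$ and $BCC'B'$ to produce an intermediate line whose endpoints on $k$ and $l$ carry the combined displacement of $A\to B\to C$, and then combine this with $CDD'C'$ to reach $\ovl{AD}\parallel\ovl{A'D'}$. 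Each stage is an instance of the two-parallel-line transport established in Lemma \ref{lempar1}, whose own proof introduces a third line parallel to $k$ and $l$ and invokes Desargues' big axiom twice and the small axiom once.

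The main obstacle is exactly this bridging construction together with the attendant non-degeneracy bookkeeping: I must produce the auxiliary point (and the third parallel line) so that it is apart from $k$ and from $l$, that the required intersections exist and lie off the relevant lines, and that all the apartness hypotheses needed to invoke Lemma \ref{lempar1} hold --- using here the data $A\# C$, $B\# D$, $A'\# C'$, $B'\# D'$ and the first part of the statement to guarantee that points of $k$ are apart from points of $l$. These side-conditions are the delicate point, since (unlike over a field) apartness is not the negation of equality and a suitable third parallel line must be constructed rather than assumed, precisely as in the proof of Lemma \ref{lempar1}.
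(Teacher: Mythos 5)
Your first part is correct and complete: applying the preaffine axiom $k\# l \wedge k\parallel l \vdash_{A,k,l} A\notin k \vee A\notin l$ to $Q\in l$, discharging the second disjunct via $A\in l\wedge A\notin l\vdash\bot$, and then using $A\notin k\vdash A\# B\vee B\notin k$ does give $Q\# P$ for every $P\in k$, exactly as you say. The main part, however, has a genuine gap, and you have named it yourself: the ``bridging construction'' you defer is the entire content of the lemma, and the tool you propose for it is the wrong one. Lemma \ref{lempar1} is a transport \emph{within a single chain}: from $(A,B,C,D)$ it can only produce the shifted configuration $(C,D,W,V)$, where $W$ is forced to be $k\cap(\text{line through } D \text{ parallel to } \ovl{BC})$, and iterating yields only further such forced shifts; it can never reach the independently given quadruple $(A',B',C',D')$, whose position on $k$ and $l$ is arbitrary. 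Connecting the two unrelated zig-zags requires a perspectivity, i.e.\ Desargues' \emph{big} axiom applied at a center, which is what the paper actually does: it sets $X=\ovl{B'C'}\cap\ovl{AB}$, takes $m$ through $X$ parallel to $k$ and $Y=m\cap\ovl{AD}$, and applies the big axiom twice --- with center $A$ on the concurrent lines $\ovl{AD},\ovl{AB},\ovl{AC}$ and triangles $DBC$, $YXC'$ (yielding $Y\in\ovl{C'D'}$), then with center $C'$ on $\ovl{C'D'},\ovl{C'B'},\ovl{C'A'}$ and triangles $A'B'D'$, $AXY$ (yielding $\ovl{A'D'}\parallel\ovl{AY}=\ovl{AD}$). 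Two Lemma-\ref{lempar1} stages cannot be repaired into this center-based comparison between the chains.

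There is also a specifically constructive omission. The perspectivity argument needs $A\# C'$ (equivalently $k\# m$): as the paper remarks after its proof, without it the last application of the big axiom only says $Y$ lies on the line through $A$ parallel to $\ovl{A'D'}$, which is vacuous when $A=Y$. Since $A\# C'$ is neither a hypothesis nor decidable, the paper uses Lemma \ref{lempar1} exactly once --- to build $(C,D,W,V)$ with $\ovl{AD}\parallel\ovl{CV}$ --- precisely so that cotransitivity on $A\# C$ gives the case split $A\# C' \vee C\# C'$, with the second case handled by rerunning the argument from the shifted base quadruple. Your proposal contains no such case split; and note too that your ``parallelograms'' $ABB'A'$, $BCC'B'$, $CDD'C'$ may degenerate, since $A\# A'$ and $B\# B'$ are not among the hypotheses.
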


\begin{center}
\begin{tikzpicture}
\draw 
(0,-0.5) -- (0,5) node[left] {$k$}
(2,-0.5) -- (2,5) node[left] {$l$}

(0,0) node[left] {$A$}--(2,0) node[right] {$B$}--
(0,1.5) node[left] {$C$}--(2,1.5) node[right] {$D$}

(0,3) node[left] {$A'$}--(2,3) node[right] {$B'$}--
(0,4.5) node[left] {$C'$} -- (2,4.5) node[right] {$D'$} ;

\draw[red, thick, densely dotted]
(0,0)--(2,1.5)
(0,3) -- (2, 4.5);
\end{tikzpicture}
\end{center}

\begin{proof}
Let $W$ be the intersection of $k$ and the line through $D$ parallel to $\ovl{BC}$. Let $V$ be the intersection of $l$ and the line through $W$ parallel to $\ovl{AB}$ as in the following picture.
\begin{center}
\begin{tikzpicture}
\draw 
(0,-0.5) -- (0,3.5) node[left] {$k$}
(2,-0.5) -- (2,3.5) node[left] {$l$}
(0,0) node[left] {$A$}--(2,0) node[right] {$B$}
(0,1.5) node[left] {$C$}--(2,1.5) node[right] {$D$}
(0,3) node[left] {$W$}--(2,3) node[right] {$V$}
(0,1.5)--(2,0)
(0,3) -- (2,1.5)
(0,0)--(2,1.5);
\end{tikzpicture}
\end{center}
Then, by Lemma \ref{lempar1}, $\ovl{AD}\parallel \ovl{CV}$.

$A\#C$, therefore $C'$ is apart from at least one of $A$ and $C$. Because of the symmetry of the two cases we shall only consider the case where $A\#C'$. (In particular to prove the lemma in the case where $C'\# C$ we just replace in the remaining proof $A$, $B$, $C$ and $D$ by $C$, $D$, $W$ and $V$ respectively and we use that $\ovl{AD}\parallel \ovl{CV}$).

 $B' \notin \ovl{A'C'}$, therefore $\ovl{B'C'}$ is apart from $\ovl{A'B'}$. Hence, $\ovl{B'C'}$ is also apart and intersects $\ovl{AB}$, so let their intersection point be $X$.

Let $m$ be the line through $X$ parallel to $k$. Let $Y$ be the intersection of $m$ and $\ovl{AD}$.

\begin{center}
\begin{tikzpicture}
\draw 
(0,-0.5) -- (0,5) node[left] {$k$}
(2,-0.5) -- (2,5) node[left] {$l$}
(6,-0.5) -- (6,5) node[left] {$m$}

(0,0) node[left] {$A$}--(2,0) node[right] {$B$}--
(0,1.5) node[left] {$C$}--(2,1.5) node[right] {$D$}

(0,3) node[left] {$A'$}--(2,3) node[right] {$B'$}--
(0,4.5) node[left] {$C'$} -- (2,4.5) node[right] {$D'$}

(2,0) -- (6,0) node[right] {$X$}
--(2,3)
(0,0)--(6,4.5) node[right]{$Y$};
\end{tikzpicture}
\end{center}
$k\#l$, therefore $B'\notin \ovl{AC'}$ which implies that $A\notin \ovl{B'C'}$ and therefore that $A\# X$. $B\notin \ovl{AC}$, therefore $C\notin \ovl{AB}=\ovl{AX}$. Hence, $X\notin \ovl{AC}$, and therefore $k\#m$.

By Desargues' big axiom on the concurrent lines $\ovl{AD}$, $\ovl{AB}$, $\ovl{AC}$ and the triangles $DBC$ and $YXC'$ we conclude that $Y$ lies on the line through $C'$ parallel to $\ovl{CD}$, i.e. $Y\in \ovl{C'D'}$.

We can now apply Desargues' big axiom on the concurrent lines $\ovl{C'D'}$, $\ovl{C'B'}$, $\ovl{C'A'}$, and the triangles $A'B'D'$ and $AXY$. Hence, we conclude that $\ovl{A'D'}$ is parallel to $\ovl{AY}$. Therefore, $\ovl{AD}\parallel \ovl{A'D'}$.
\end{proof}

\begin{rmk}
Notice that if we did not know that $k\#m$ (or equivalently that $A\#C'$) in the last application of Desargues' big axiom above the conclusion would have been that $Y$ lies on the line through $A$ parallel to $\ovl{A'D'}$. In the case where $A=Y$, this would not have given us any extra information and in particular we would not have been able to conclude that $\ovl{AD} \parallel \ovl{A'D'}$.
\end{rmk}

\begin{lem} \label{lempar3}
Let $\ca A$ be a preaffine plane satisfying Desargues' big and small axioms.
Let $A, B, C, D, A', B', C', D'$ be points of $\ca A$ and let $k$ and $l$ be lines of $\ca A$ such that:
\begin{enumerate}
 \item $k\parallel l$,
\item $k\# l$,
\item $B\#D$,
\item $A, A', C, C'\in k$,
\item $B, B', D, D'\in l$,
\end{enumerate}
Then, $A\# B\# C\# D \# A$ and $A'\# B'\# C'\# D' \# A'$. If $\ovl{AB}\parallel\ovl{A'B'}$, $\ovl{BC}\parallel\ovl{B'C'}$, and $\ovl{CD}\parallel \ovl{C'D'}$, then $\ovl{AD}$ is parallel to $\ovl{A'D'}$.
\end{lem}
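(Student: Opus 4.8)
The plan is to prove the two apartness assertions first and then reduce the parallelism statement to Lemma~\ref{lempar2} by completing the ``zig-zag'' $A,B,C,D$ to a trapezoidal figure. For the apartness part, since $k\parallel l$ and $k\#l$, the preaffine axiom $k\#l\wedge k\parallel l\vdash_{A,k,l} A\notin k\vee A\notin l$ together with the complement axioms $A\in l\wedge A\notin l\vdash\bot$ and $A\notin l\vdash_{A,B,l} A\#B\vee B\notin l$ forces every point of $k$ to be apart from every point of $l$: for $P\in k$ and $Q\in l$ the disjunction $P\notin k\vee P\notin l$ collapses (using $P\in k$) to $P\notin l$, and then $P\notin l$ with $Q\in l$ gives $P\#Q$. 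Applied to the relevant incidences this yields $A\#B\#C\#D\#A$ and $A'\#B'\#C'\#D'\#A'$, so all of $\ovl{AB},\ovl{BC},\ovl{CD},\ovl{AD}$ and their primed analogues are defined; moreover, since $C\in k$ and hence $C\notin l$, the hypothesis $B\#D$ makes $B,C,D$ (and likewise $B',C',D'$) a non-collinear triple, which I shall need to keep auxiliary constructions non-degenerate.

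For the reduction I would introduce the point $E$ on $l$ cut out by the parallel to $\ovl{CD}$ through $A$ (it meets $l$ because $\ovl{CD}$ is apart from and meets $l$, by the last axiom of preaffine planes), so that $\ovl{AE}\parallel\ovl{CD}$, and similarly $E'$ on $l$ with $\ovl{A'E'}\parallel\ovl{C'D'}$. From the given $\ovl{CD}\parallel\ovl{C'D'}$ and transitivity of $\parallel$ one gets $\ovl{AE}\parallel\ovl{A'E'}$, so the four lines $\ovl{AE},\ovl{CD},\ovl{A'E'},\ovl{C'D'}$ are mutually parallel and the quadrilaterals $A,E,C,D$ and $A',E',C',D'$ (with $A,C$ on $k$ and $E,D$ on $l$) are trapezoids of exactly the shape required by Lemma~\ref{lempar2}. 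That lemma then delivers the desired $\ovl{AD}\parallel\ovl{A'D'}$, \emph{provided} its middle-rung hypothesis $\ovl{EC}\parallel\ovl{E'C'}$ is available.

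The hard part will be establishing $\ovl{EC}\parallel\ovl{E'C'}$: the point $E$ was built only from $A,C,D$, so this single parallelism must absorb the two hypotheses $\ovl{AB}\parallel\ovl{A'B'}$ and $\ovl{BC}\parallel\ovl{B'C'}$ that the trapezoid construction has not yet used, and any attempt to read it off one more trapezoid loops straight back to the statement being proved. I would instead obtain it by the Desargues technique of Lemmas~\ref{lempar1} and \ref{lempar2} directly: draw an auxiliary line through an intersection point parallel to $k$ and $l$, transport the segments at $B$ (resp.\ $B'$) across the concurrent lines by Desargues' big axiom, and close the resulting triangles with Desargues' small axiom, with Lemma~\ref{lempar1} supplying the needed ``stepping'' of a diagonal once the rungs are made parallel. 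The genuine difficulty is not the incidence algebra but the constructive apartness bookkeeping. Over a local ring there need be no line parallel to and apart from \emph{both} $k$ and $l$ (as for $\mathbb Z/(2^n)$), so each auxiliary line must be shown apart from $l$ without being assumed apart from $k$, precisely as in the proof of Lemma~\ref{lempar2}; and since apartness of the pair $A,C$ on $k$ is \emph{not} among the data and cannot be obtained by a case split, the construction must draw its apart pairs on $k$ from the ``enough points on a line'' axiom rather than from $A,C$ themselves. It is here that the non-collinearity of $B,C,D$ secured above (hence the hypothesis $B\#D$) is used, to guarantee the auxiliary intersection points exist and carry the apartnesses the two Desargues applications require. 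Once $\ovl{EC}\parallel\ovl{E'C'}$ is in hand, Lemma~\ref{lempar2} closes the argument.
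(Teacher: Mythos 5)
Your preliminary apartness argument is fine (every point of $k$ is apart from every point of $l$, and $B\#D$ makes $B,C,D$ and $B',C',D'$ non-collinear triples), and the point $E$ does exist by the last preaffine axiom. But the reduction itself has a fatal flaw: Lemma \ref{lempar2} applied to the quadrilaterals $A,E,C,D$ and $A',E',C',D'$ requires, among its hypotheses, that the pair on $k$ be apart — i.e.\ $A\#C$ and $A'\#C'$ — and also $E\#D$, $E'\#D'$. None of these is available: the absence of $A\#C$ is precisely what distinguishes Lemma \ref{lempar3} from Lemma \ref{lempar2}, and $E\#D$ cannot be recovered either, since $\ovl{AE}\#\ovl{CD}$ would need $A\notin\ovl{CD}$, which again comes down to $A\#C$ (when $A$ is not apart from $C$, the line $\ovl{AE}$ may fail to be apart from $\ovl{CD}$, and $E$ may fail to be apart from $D$). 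You yourself observe that $A\#C$ ``is not among the data and cannot be obtained by a case split,'' but you do not notice that your own trapezoid forces exactly this hypothesis. Compounding this, the key parallelism $\ovl{EC}\parallel\ovl{E'C'}$ — which must absorb both unused hypotheses $\ovl{AB}\parallel\ovl{A'B'}$ and $\ovl{BC}\parallel\ovl{B'C'}$ — is never proved: the paragraph invoking ``the Desargues technique of Lemmas \ref{lempar1} and \ref{lempar2}'' is a statement of intent, not an argument, and that parallelism is one rung of the very zig-zag the lemma asserts, so as it stands the proposal is circular at its core.

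The correct choice of auxiliary quadrilateral is the one whose diagonal pairs are the apartnesses you actually have. The paper draws $n$ through $D$ parallel to $\ovl{BC}$, letting $X=n\cap\ovl{AB}$ and $Y=n\cap k$, plus the line $m$ through $X$ parallel to $k$; the hypothesis $B\#D$ (together with $C\notin l$) supplies exactly the apartness bookkeeping ($X\notin l$, hence $l\#m$, $X\#B$, $X\#D$, $C\#Y$) needed to apply Lemma \ref{lempar2} to $B,C,D,Y$ and $B',C',D',Y'$ — whose diagonal pairs are $B\#D$ and $C\#Y$, both available — yielding $\ovl{BY}\parallel\ovl{B'Y'}$. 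Two applications of Desargues' small axiom then finish: on $m,k,l$ with triangles $XYB$ and $X'Y'B'$ to get $\ovl{XB}\parallel\ovl{X'B'}$ (so $X'\in\ovl{A'B'}$), and on $k,m,l$ with triangles $AXD$ and $A'X'D'$ to get $\ovl{AD}\parallel\ovl{A'D'}$. Note that in this route the problematic pair $A,C$ never has to be apart; $A$ enters only in the final small-Desargues application, where the relevant apartnesses live on $m$ and $l$, not between $A$ and $C$.
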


\begin{center}
\begin{tikzpicture}
\draw 
(0,-0.5) -- (0,6) node[left] {$k$}
(2,-0.5) -- (2,6) node[left] {$l$}

(0,0) node[left] {$A$}--(2,0) node[right] {$B$}--
(0,1) node[left] {$C$}--(2,1.5) node[right] {$D$}

(0,3.5) node[left] {$A'$}--(2,3.5) node[right] {$B'$}--
(0,4.5) node[left] {$C'$} -- (2,5) node[right] {$D'$} ;

\draw[red, thick, densely dotted]
(0,0)--(2,1.5)
(0,3.5) -- (2, 5);
\end{tikzpicture}
\end{center}

\begin{proof}
Let $n$ be the line through $D$ parallel to $\ovl{BC}$. The conditions of the lemma make sure that the intersections we are going to mention exist and are unique. Let $X$ be the intersection of $n$ and $\ovl{AB}$ and let $Y$ be the intersection of $n$ and $k$. Let $m$ be the line through $X$ parallel to $k$.

Let $n'$ be the line through $D'$ parallel to $\ovl{BC}$. Let $X'$ be the intersection of $n'$ and $m$ and let $Y'$ be the intersection of $n'$ and $k$.

\begin{center}
\begin{tikzpicture}
\draw 
(0,-0.5) -- (0,6.5) node[left] {$k$}
(2,-0.5) -- (2,6.5) node[left] {$l$}
(5,-0.5) -- (5,6.5) node[left] {$m$}

(0,0) node[left] {$A$}--(2,0) node[right] {$B$}--
(0,1) node[left] {$C$}--(2,1.5) node[right] {$D$} --
(0,2.5) node[left] {$Y$}
(2,0) -- (5,0) node[right] {$X$}
-- (2,1.5) 

(3.5,0.75) node[above] {$n$}

(0,3.5) node[left] {$A'$}--(2,3.5) node[right] {$B'$}--
(0,4.5) node[left] {$C'$} -- (2,5) node[right] {$D'$} --
(0,6) node[left] {$Y'$} 
(2,5) -- (5, 3.5) node[right] {$X'$}

(3.5,4.25) node[above] {$n'$};
\end{tikzpicture}
\end{center}
$k$ and $l$ are parallel and apart from each other, hence $Y\notin l=\ovl{BD}$. Hence, $B\notin \ovl{YD}$, and therefore $\ovl{BC}\# \ovl{YD}$. Hence, $B\notin \ovl{YD}$ and $B\# X$. $A\notin \ovl{BD}$, therefore $D\notin \ovl{AB}=\ovl{BX}$. Hence, $X\notin \ovl{BD}=l$. $X$ lies on $m$ and outside $l$, therefore $l\#m$.

Since $l\#m$, $X$ is apart from both $B$ and $D$ and $X'$ is apart from both $B'$ and $D'$.

We are now going to apply Lemma \ref{lempar2} on the parallel lines $l$ and $k$, and on the quadrilaterals $BCDY$ and $B'C'D'Y'$. The appropriate conditions are satisfied therefore $\ovl{BY}\parallel \ovl{B'Y'}$.

Next, we apply Desargues' small axiom on the lines $m$, $k$, $l$, and the triangles $XYB$ and $X'Y'B'$. All the conditions of the axiom are satisfied, therefore $\ovl{XB}\parallel \ovl{X'B'}$, hence $X'\in \ovl{A'B'}$.

Finally, we apply Desargues' small axiom on the lines $k$, $m$, $l$, and the triangles $AXD$ and $A'X'D'$, and we conclude that $\ovl{AD}\parallel \ovl{A'D'}$.
\end{proof}

The following lemma is the same as Lemma \ref{lempar3} without the condition that $B\#D$.

\begin{lem} \label{lempar4}
Let $\ca A$ be a preaffine plane satisfying Desargues' big and small axioms.
Let $A, B, C, D, A', B', C', D'$ be points of $\ca A$ and $k$ and $l$ be lines of $\ca A$ such that:
\begin{enumerate}
 \item $k\parallel l$,
\item $k\# l$,
\item $A, A', C, C'\in k$,
\item $B, B', D, D'\in l$,
\end{enumerate}
Then, $A\# B\# C\# D \# A$ and $A'\# B'\# C'\# D' \# A'$. If $\ovl{AB}\parallel\ovl{A'B'}$, $\ovl{BC}\parallel\ovl{B'C'}$, and $\ovl{CD}\parallel \ovl{C'D'}$, then $\ovl{AD}$ is parallel to $\ovl{A'D'}$. 

\end{lem}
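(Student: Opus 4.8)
The first assertion, that $A\#B\#C\#D\#A$ and $A'\#B'\#C'\#D'\#A'$, is established exactly as in Lemma \ref{lempar3}: since $k\parallel l$ and $k\#l$, every point of $k$ is apart from every point of $l$. Indeed, for $P\in k$ the axiom $k\#l\wedge k\parallel l\vdash_{A,k,l} A\notin k\vee A\notin l$ gives $P\notin k\vee P\notin l$, and $P\in k$ rules out the first disjunct, so $P\notin l$; then for $Q\in l$ the axiom $A\notin k\vdash_{A,B,k}A\#B\vee B\notin k$ forces $P\#Q$. All of $A\#B$, $B\#C$, $C\#D$, $D\#A$ and their primed versions are of this cross-line type, so they hold automatically and all the lines $\ovl{AB},\dots$ are defined.

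For the main statement the plan is to reduce to Lemma \ref{lempar3}, whose only extra hypothesis is $B\#D$. First I would produce an auxiliary point: by the axiom $\top\vdash_l\exists P,Q.\,P\#Q\wedge P,Q\in l$ the line $l$ carries two points that are apart, and by cotransitivity one of them, call it $E$, satisfies $E\#B$. The decisive move is then to apply cotransitivity of apartness once more, in the form $E\#B\vdash_{E,B,D}E\#D\vee B\#D$, splitting the argument into two cases each of which feeds Lemma \ref{lempar3}.

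In the case $B\#D$, Lemma \ref{lempar3} applies verbatim and yields $\ovl{AD}\parallel\ovl{A'D'}$. In the case $E\#D$ (so that $E$ is now apart from \emph{both} $B$ and $D$), I would first construct the companion point $E'$ on $l$ as the intersection of $l$ with the line through $C'$ parallel to $\ovl{CE}$; this intersection exists and is unique by the axiom $A\in l,m\wedge k\parallel l\wedge l\#m\vdash_{A,k,l,m}k\#m\wedge(\exists B.\,B\in k\wedge B\in m)$ applied to the transversal $\ovl{CE}$ (which meets $l$ at $E$), and it satisfies $E'\in l$ and $\ovl{C'E'}\parallel\ovl{CE}$ since $C'\#E'$ is again cross-line. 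Now I apply Lemma \ref{lempar3} twice. The first application, to the zigzag $A,B,C,E$ together with $A',B',C',E'$ (using $B\#E$ and the parallelisms $\ovl{AB}\parallel\ovl{A'B'}$, $\ovl{BC}\parallel\ovl{B'C'}$, $\ovl{CE}\parallel\ovl{C'E'}$), yields $\ovl{AE}\parallel\ovl{A'E'}$. The second application, to the zigzag $A,E,C,D$ together with $A',E',C',D'$ (using $E\#D$ and the parallelisms $\ovl{AE}\parallel\ovl{A'E'}$, $\ovl{EC}\parallel\ovl{E'C'}$, $\ovl{CD}\parallel\ovl{C'D'}$), delivers the desired $\ovl{AD}\parallel\ovl{A'D'}$.

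The main obstacle is precisely the removal of the hypothesis $B\#D$. The naive fix, choosing a single auxiliary point of $l$ apart from both $B$ and $D$, is unavailable in general: over a local ring with residue field $\mathbb F_2$, for instance $\mathbb A(\mathbb Z/2)$ where a line has only two points, no such point need exist, and this is exactly the phenomenon flagged before Lemma \ref{lempar1}. Cotransitivity of apartness resolves the difficulty constructively, because it extracts the disjunction $E\#D\vee B\#D$ from the single apartness $E\#B$, and each disjunct is individually strong enough to push the configuration through Lemma \ref{lempar3}. Everything else is bookkeeping of incidences and parallelisms, all of which are inherited from the cross-line apartness established at the start.
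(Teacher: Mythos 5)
Your proposal is correct and is essentially the paper's own proof: your auxiliary point $E$ and companion $E'$ are exactly the paper's $Y$ and $Y'$, the cotransitivity split $E\#D \vee B\#D$ is the same case analysis, and your two applications of Lemma \ref{lempar3} to the quadrilaterals $ABCE$/$A'B'C'E'$ and $AECD$/$A'E'C'D'$ match the paper's applications to $ABCY$/$A'B'C'Y'$ and $AYCD$/$A'Y'C'D'$ verbatim. The only difference is that you spell out details the paper leaves implicit (the cross-line apartness at the start, and the existence and uniqueness of the intersection $E'$ via the parallel-intersection axiom), all of which you handle correctly.
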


\begin{center}
\begin{tikzpicture}
\draw 
(0,-0.5) -- (0,6) node[left] {$k$}
(2,-0.5) -- (2,6) node[left] {$l$}

(0,0) node[left] {$A$}--(2,0) node[right] {$B$}--
(0,1) node[left] {$C$}--(2,1.5) node[right] {$D$}

(0,3.5) node[left] {$A'$}--(2,3.5) node[right] {$B'$}--
(0,4.5) node[left] {$C'$} -- (2,5) node[right] {$D'$} ;

\draw[red, thick, densely dotted]
(0,0)--(2,1.5)
(0,3.5) -- (2, 5);
\end{tikzpicture}
\end{center}

\begin{proof}
 There exists a point $Y$ on $l$ such that $B\#Y$. Given such a point, either $B\#D$ or $Y\#D$. In the first case the lemma is proved by Lemma \ref{lempar3}. In the second case let $Y'$ be the intersection of $l$ and the line through $C'$ which is parallel to $\ovl{CY}$.

\begin{center}
\begin{tikzpicture}
\draw 
(0,-0.5) -- (0,6) node[left] {$k$}
(2,-0.5) -- (2,6) node[left] {$l$}

(0,0) node[left] {$A$}--(2,0) node[right] {$B$}--
(0,1) node[left] {$C$}--(2,1.5) node[right] {$D$}
(0,1) -- (2,0.75) node[right] {$Y$}

(0,3.5) node[left] {$A'$}--(2,3.5) node[right] {$B'$}--
(0,4.5) node[left] {$C'$} -- (2,5) node[right] {$D'$} 
(0,4.5) -- (2, 4.25) node[right] {$Y'$};
\end{tikzpicture}
\end{center}
By Lemma \ref{lempar3} on the parallel lines $k$, $l$ and the quadrilaterals $ABCY$ and $A'B'C'Y'$, we conclude that $\ovl{AY}\parallel \ovl{A'Y'}$. Finally, by Lemma \ref{lempar3} on the parallel lines $k$, $l$ and the quadrilaterals $AYCD$ and $A'Y'C'D'$, we conclude that $\ovl{AD}\parallel \ovl{A'D'}$.
\end{proof}

We are now ready to prove Desargues' theorem on four parallel lines.

\begin{thrm} \label{thrmdesmall4}
Let $\ca A$ be a preaffine plane satisfying Desargues' big and small axioms.
Let $k$, $l$, $m$, $n$ be lines of $\ca A$ and let $A$, $A'$, $B$, $B'$, $C$, $C'$, $D$, $D'$ be points of $\ca A$. If the following are true:
\begin{enumerate}
\item $k,l,m,n$ are parallel to each other,
\item $k\#l\#m\#n$,
\item $A\#D$,
\item $A'\#D'$,
\item $A, A'\in k$,
\item $B, B'\in l$,
\item $C, C'\in m$,
\item $D, D'\in n$,
\end{enumerate}
then $A\# B\# C\# D$ and $A'\# B'\# C'\# D'$.  If $\ovl{AB}\parallel\ovl{A'B'}$, $\ovl{BC}\parallel\ovl{B'C'}$, and $\ovl{CD}\parallel \ovl{C'D'}$, then $\ovl{AD}$ is parallel to $\ovl{A'D'}$.
\end{thrm}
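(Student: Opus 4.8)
The plan is to prove $\ovl{AD}\parallel\ovl{A'D'}$ by \emph{telescoping} the broken line $A\to B\to C\to D$ through an intermediate diagonal, so as to reduce this four-line statement to the two tools that immediately precede it: the three-parallel-line theorem (Theorem \ref{thrmdesmall}) and the two-line, four-point lemma (Lemma \ref{lempar4}).

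First I would dispose of the routine apartness conclusions. From $k\parallel l$ with $k\#l$ and the preaffine axiom $k\#l\wedge k\parallel l\vdash_{A,k,l} A\notin k\vee A\notin l$, together with $A\in k$ and the constructive-complement axioms for $\notin$, one gets $A\notin l$, and then $A\#B$ for $B\in l$. Running this along the consecutive apart pairs $k\#l$, $l\#m$, $m\#n$ yields $A\#B\#C\#D$ and, identically, $A'\#B'\#C'\#D'$; it also records the clean ``outside'' facts $A\notin l$, $D\notin m$, and so on, which I will reuse.

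For the main reduction, Theorem \ref{thrmdesmall} applied to $(A,B,C)$ on the parallel triple $(k,l,m)$ — whose line-hypotheses $k\#l$, $l\#m$ are \emph{exactly} the given consecutive apartnesses — produces $\ovl{AC}\parallel\ovl{A'C'}$ as soon as $A\#C$ and $A'\#C'$; symmetrically $(B,C,D)$ on $(l,m,n)$ gives $\ovl{BD}\parallel\ovl{B'D'}$ from $B\#D$, $B'\#D'$. The delicate point is the \emph{final} amalgamation into $\ovl{AD}$: a second naive use of the three-line theorem would demand a \textbf{skip} apartness of lines, either $k\#m$ or $l\#n$, and this is \emph{not} derivable constructively from $k\#l\#m\#n$ (cotransitivity only ever returns a disjunction one of whose disjuncts is an already-known consecutive apartness, which carries no information about the skip pair). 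Circumventing this is the heart of the argument. The way around it is to work on the middle pair $l,m$, which \emph{is} apart since $l\#m$ is given: because $A\notin l$ and $A\in\ovl{AB}$ force $\ovl{AB}\#l$, the last preaffine axiom (a transversal meeting one of two apart parallels meets the other) lets $\ovl{AB}$ meet $m$ in a point $A_m\in m$, and dually $\ovl{CD}$ meets $l$ in $D_l\in l$, with primed analogues $A'_m,D'_l$. The zigzag $A_m\to B\to C\to D_l$ then lies on the \emph{apart} parallel lines $m,l$, with sides $\ovl{A_mB}=\ovl{AB}$, $\ovl{BC}$, $\ovl{CD_l}=\ovl{CD}$ parallel to their primed counterparts, so Lemma \ref{lempar4} applies on $m,l$; combining its output with Theorem \ref{thrmdesmall} along the transversals transports the conclusion back to $\ovl{AD}\parallel\ovl{A'D'}$.

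Finally I would close off the degenerate branches produced by this case analysis. The auxiliary points require $A\#C$ and $B\#D$; these are secured by cotransitivity after inserting a point of $m$ (resp.\ $l$) apart from $C$ (resp.\ $B$) and splitting cases, precisely the manoeuvre used to pass from Lemma \ref{lempar3} to Lemma \ref{lempar4}, with collinearity detected via Lemma \ref{lemnoncaff}. In the residual case where every such diagonal collapses, $A,B,C,D$ are forced onto a common transversal $t$, so that $\ovl{AB}=\ovl{BC}=\ovl{CD}=\ovl{AD}=t$; transitivity of $\parallel$ then makes $A',B',C',D'$ collinear on a line $t'\parallel t$ (two parallels through a common point coincide), giving $\ovl{AD}=t\parallel t'=\ovl{A'D'}$ with no appeal to Desargues at all. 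The \textbf{main obstacle}, to be emphasised, is exactly the non-derivability of the skip line-apartness: all the genuine geometry already lives in Theorem \ref{thrmdesmall} and Lemma \ref{lempar4}, and the real labour is the constructive bookkeeping that keeps an apart, non-degenerate diagonal available in every branch.
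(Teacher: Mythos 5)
Your central construction is exactly the paper's proof of Theorem \ref{thrmdesmall4}: your $A_m$ and $D_l$ are the paper's $Y=\ovl{AB}\cap m$ and $X=\ovl{CD}\cap l$, your application of Lemma \ref{lempar4} to the zigzag $X, C, B, Y$ on the apart middle pair $l, m$ yields $\ovl{XY}\parallel\ovl{X'Y'}$, and the ``transport along the transversals'' is the paper's two closing applications of Desargues' small axiom (to $YXD$ on $m,l,n$, then to $AYD$), so in substance you have reproduced the intended argument, including the correct diagnosis that the skip apartnesses $k\#m$, $l\#n$ are unavailable and must be routed around through the given pair $l\#m$.

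One remark on the scaffolding you wrapped around this core: the opening detour through the diagonals $\ovl{AC}$, $\ovl{BD}$ and the closing ``degenerate branch'' are both unnecessary, and the latter is not constructively sound as stated. Lemma \ref{lempar4} was formulated precisely so that no apartness $B\#D$ (here $C\#Y$ or $B\#X$) is required; the intersections $Y$, $X$ exist by the last preaffine axiom applied at $B$ and $C$ with no auxiliary apartness; and the two small-Desargues steps need only $Y\#D$ and $Y'\#D'$ (automatic from $m\#n$ with $m\parallel n$) together with the given $A\#D$, $A'\#D'$. Consequently there is no residual case at all. Had there been one, your handling of it would be a genuine gap: failure to establish $A\#C$ etc.\ in a cotransitivity split does not ``force $A,B,C,D$ onto a common transversal'' --- the apartness relations here are not assumed tight, so non-apartness yields neither equality nor collinearity --- and cotransitivity never produces a branch in which all diagonals are known to collapse. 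Since your main route goes through unconditionally, you should simply delete that paragraph.
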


\begin{center}
 \begin{tikzpicture}
\draw 
(0,-0.5) -- (0,6) node[right] {$k$} 
(3,-0.5) -- (3,6) node[right] {$l$} 
(5,-0.5) -- (5,6)  node[right] {$m$} 
(6.5,-0.5) -- (6.5,6) node[right] {$n$}

(0,0) node[ left] {$A$} 
--(3,1.5) node[ right] {$B$}
--(5,0) node[right] {$C$}
--(6.5,1) node[right] {$D$}

(0,4)  node[left] {$A'$} 
--(3,5.5) node[right] {$B'$}
--(5,4) node[right] {$C'$}
-- (6.5,5) node[right] {$D'$};

\draw[red, thick, densely dotted] 
(0,0) -- (6.5,1)
(0,4) -- (6.5,5);
\end{tikzpicture}
\end{center}

\begin{proof}
Let $X$ be the intersection of $\ovl{CD}$ and $l$, and let $Y$ be the intersection of $\ovl{AB}$ and $m$. Also, let $X'$ be the intersection of $\ovl{C'D'}$ and $l$, and let $Y'$ be the intersection of $\ovl{A'B'}$ and $m$. All these are shown in the following picture.

\begin{center}
 \begin{tikzpicture}
\draw 
(0,-1.5) -- (0,8) node[right] {$k$} 
(3,-1.5) -- (3,8) node[right] {$l$} 
(5,-1.5) -- (5,8)  node[right] {$m$} 
(6.5,-1.5) -- (6.5,8) node[right] {$n$}

(0,0) node[left] {$A$} 
--(3,1.5) node[right] {$B$}
--(5,0) node[right] {$C$}
--(6.5,1) node[right] {$D$}
(5,0) -- (3,-1.3333) node[left] {$X$}
(3,1.5) -- (5,2.5) node[right] {$Y$}

(0,4)  node[left] {$A'$} 
--(3,5.5) node[right] {$B'$}
--(5,4) node[right] {$C'$}
-- (6.5,5) node[right] {$D'$}
(5,4) -- (3,2.6667) node[left] {$X'$}
(3,5.5) -- (5,6.5) node[right] {$Y'$};
\end{tikzpicture}
\end{center}

By applying Lemma \ref{lempar4} on the parallel lines $l$, $m$ and the quadrilaterals $XCBY$ and $X'C'B'Y'$, we conclude that $\ovl{XY}\parallel \ovl{X'Y'}$.

We now apply Desargues' small axiom on the parallel lines $m$, $l$, $n$ and the triangles $YXD$ and $Y'X'D'$. Hence, $\ovl{YD}\parallel \ovl{Y'D'}$.

Finally, we apply Desargues' small axiom on the parallel lines $k$, $l$, $n$ and the triangles $AYD$ and $A'Y'D'$. We conclude that $\ovl{AD}\parallel \ovl{A'D'}$. 
\end{proof}

The following theorem is an immediate consequence of Desargues' big axiom and it is in a form that it is convenient for later applications.

\begin{thrm} \label{thrmdesbig}
Let $\ca A$ be a preaffine plane satisfying Desargues' big axiom.
Let $k$, $l$, $m$ be lines of $\ca A$ and let $P$, $A$, $A'$, $B$, $B'$, $C$, $C'$ be points of  $\ca A$ such that:
\begin{enumerate}
\item $P$ lies on all three lines $k,l,m$,
\item $k\#l\#m$,
\item $P$ is apart from the points $A$, $A'$, $B$, $B'$, $C$ and $C'$,
\item $A, A'\in k$,
\item $B, B'\in l$,
\item $C, C'\in m$,
\item $A\# C$ and $A'\# C'$.
\end{enumerate}
Then, $A\# B\# C$ and $A'\# B'\# C'$. If $\ovl{AB}$ parallel to $\ovl{A'B'}$ and $\ovl{BC}$ parallel to $\ovl{B'C'}$, then $\ovl{AC}$ is parallel to $\ovl{A'C'}$.
\end{thrm}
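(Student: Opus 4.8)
The plan is to derive the missing apartness relations first and then feed the resulting configuration directly into Desargues' big axiom, which does the real geometric work.

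First I would reconstruct the three lines as joins. Since $P\#A$ (condition 3, using symmetry of $\#$) and both $P,A\in k$, the uniqueness of the line through two points that are apart (an axiom of preaffine planes) gives $k=\ovl{PA}$; likewise $l=\ovl{PB}$, $m=\ovl{PC}$, and by the same token $k=\ovl{PA'}$, $l=\ovl{PB'}$, $m=\ovl{PC'}$ using $P\#A'$, $P\#B'$, $P\#C'$. Hence the hypothesis $k\#l\#m$ reads $\ovl{PA}\#\ovl{PB}$ and $\ovl{PB}\#\ovl{PC}$ (and the same with primes). This reformulation is the key device: it turns the given line-apartness into exactly the data needed to invoke non-collinearity.

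Next I would establish $A\#B\#C$. I apply Lemma \ref{lemnoncaff} to the triple $(A,P,B)$: we have $A\#P$, $P\#B$ and $\ovl{AP}\#\ovl{PB}$ (the last being $k\#l$), which is precisely condition (3) of that lemma, so $A$, $P$, $B$ are non-collinear. By the definition of non-collinearity this yields both $A\#B$ and $P\notin\ovl{AB}$. Running the identical argument on $(B,P,C)$ (using $B\#P$, $P\#C$ and $\ovl{BP}\#\ovl{PC}=l\#m$) gives $B\#C$ and $P\notin\ovl{BC}$; combined with $C\#A$ from condition 7 this proves $A\#B\#C$. The same reasoning applied to the triples $(A',P,B')$ and $(B',P,C')$ proves $A'\#B'\#C'$.

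Finally I would apply Desargues' big axiom with $n_{AB}:=\ovl{AB}$, $n_{BC}:=\ovl{BC}$, $n_{AC}:=\ovl{AC}$, keeping $P,A,A',B,B',C,C',k,l,m$ unchanged. Conditions 1, 5, 6, 7 of the axiom are the hypotheses; conditions 2, 3, 4 hold because the three joins contain their defining points (the apartness relations $A\#B$, $B\#C$, $A\#C$ guaranteeing these joins exist); condition 8 is exactly $P\notin\ovl{AB}$ and $P\notin\ovl{BC}$, established above. For conditions 9 and 10, the hypotheses $\ovl{AB}\parallel\ovl{A'B'}$ and $\ovl{BC}\parallel\ovl{B'C'}$, together with the uniqueness of the parallel through a given point and transitivity of $\parallel$, identify $\ovl{A'B'}$ as the line through $A'$ parallel to $\ovl{AB}$ (so it contains $B'$) and $\ovl{B'C'}$ as the line through $B'$ parallel to $\ovl{BC}$ (so it contains $C'$). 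Desargues' big axiom then concludes that $C'$ lies on the line through $A'$ parallel to $\ovl{AC}$, which by the Remark following the axiom's statement (valid since $A\#C$ and $A'\#C'$) is precisely $\ovl{AC}\parallel\ovl{A'C'}$. The main obstacle is therefore not the Desargues application, which is pure bookkeeping, but the preliminary passage from the line-apartness $k\#l\#m$ to the point-apartness $A\#B\#C$ and the incidences $P\notin\ovl{AB}$, $P\notin\ovl{BC}$; rewriting $k,l,m$ as $\ovl{PA},\ovl{PB},\ovl{PC}$ is exactly what makes Lemma \ref{lemnoncaff} applicable.
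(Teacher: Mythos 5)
Your proposal is correct and takes essentially the approach the paper intends: the paper states this theorem without a written proof, remarking only that it ``is an immediate consequence of Desargues' big axiom,'' and your argument is exactly the careful expansion of that remark --- rewriting $k,l,m$ as $\ovl{PA},\ovl{PB},\ovl{PC}$, extracting $A\#B\#C$, $A'\#B'\#C'$ and $P\notin\ovl{AB}$, $P\notin\ovl{BC}$ via Lemma \ref{lemnoncaff}, and then instantiating the axiom with $n_{AB}=\ovl{AB}$, $n_{BC}=\ovl{BC}$, $n_{AC}=\ovl{AC}$, using uniqueness of parallels for conditions 9 and 10 and the remark following the axiom (valid since $A\#C$ and $A'\#C'$) for the conclusion. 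All steps check out against the paper's axioms and lemmas.
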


\begin{tikzpicture}
\draw 
(0,0) node[left] {$P$} -- (6,3) node[right] {$k$} 
(0,0) -- (7,0) node[right] {$l$} 
(0,0) -- (6,-1.5)  node[right] {$m$};

\draw 
(3, -0.75) node[ below] {$C'$} 
--(5,0) node[ below] {$B'$}
(1.5,-0.375)  node[below] {$C$} 
--(2.5,0) node[ below] {$B$};

\draw 
(5,0)--(4.5,2.25) node[above] {$A'$}
 (2.5,0)--(2.25,1.125) node[above] {$A$};

\draw[red, thick, densely dotted] 
(4.5,2.25)--(3,-0.75) 
(2.25,1.125)--(1.5,-0.375);
\end{tikzpicture}

\begin{thrm} \label{thrmdesbig4}
Let $\ca A$ be a preaffine plane satisfying Desargues' big axiom.
Let $k$, $l$, $m$, $n$ be lines of $\ca A$ and let $P$, $A$, $A'$, $B$, $B'$, $C$, $C'$, $D$, $D'$ be points of $\ca A$. If the following are true:
\begin{enumerate}
\item $P$ lies on all three lines $k,l,m,n$,
\item $k\#l\#m\#n$,
\item $A\#D$,
\item $A'\#D'$,
\item $P$ is apart from the points $A$, $A'$, $B$, $B'$, $C$, $C'$, $D$ and $D'$,
\item $A, A'\in k$,
\item $B, B'\in l$,
\item $C, C'\in m$,
\item $D, D'\in n$.
\end{enumerate}
Then, $A\# B\# C\# D$ and $A'\# B'\# C'\# D'$. If $\ovl{AB}$ parallel to $\ovl{A'B'}$, $\ovl{BC}$ parallel to $\ovl{B'C'}$ and $\ovl{CD}$ parallel to $\ovl{C'D'}$, then $\ovl{AD}$ is parallel to $\ovl{A'D'}$.
\end{thrm}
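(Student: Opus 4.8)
The plan is to reduce Theorem \ref{thrmdesbig4} to repeated use of the three-line big Desargues theorem (Theorem \ref{thrmdesbig}), exactly as Theorem \ref{thrmdesmall4} was reduced to the small axiom. The apartness part of the conclusion is the easy step: since $P$ is apart from all eight points and $k\#l\#m\#n$, I can write $k=\ovl{PA}$, $l=\ovl{PB}$, $m=\ovl{PC}$, $n=\ovl{PD}$ and apply the implication $(3)\Rightarrow(1)$ of Lemma \ref{lemnoncaff} to the triples $(A,P,B)$, $(B,P,C)$, $(C,P,D)$ to get $A\#B\#C\#D$, and likewise $A'\#B'\#C'\#D'$ from the primed points. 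A first, naive idea for the parallelism would be to apply Theorem \ref{thrmdesbig} to the lines $k,l,m$ (triangles $ABC$, $A'B'C'$) to pass from $\ovl{AB}\parallel\ovl{A'B'}$ and $\ovl{BC}\parallel\ovl{B'C'}$ to $\ovl{AC}\parallel\ovl{A'C'}$, and then to the lines $k,m,n$ (triangles $ACD$, $A'C'D'$) to combine this with $\ovl{CD}\parallel\ovl{C'D'}$ and obtain $\ovl{AD}\parallel\ovl{A'D'}$.

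This naive chaining fails, however, and isolating why pinpoints the real work. Theorem \ref{thrmdesbig} requires the two \emph{outer} vertices to be apart (here $A\#C$, $A'\#C'$) and the outer lines to be apart from the middle one (here $k\#m$); but $\#$ is only cotransitive, not transitive, so neither $A\#C$ nor $k\#m$ follows from $k\#l\#m\#n$ together with $A\#D$ and the $P$-apartness. To get around this I would introduce, as in the proof of Theorem \ref{thrmdesmall4}, the auxiliary intersection points $X=\ovl{CD}\cap l$ and $Y=\ovl{AB}\cap m$ (with $X'=\ovl{C'D'}\cap l$ and $Y'=\ovl{A'B'}\cap m$). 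The parallelism is then reached in three stages: first transfer the three hypotheses along the two concurrent lines $l$ and $m$ to prove $\ovl{XY}\parallel\ovl{X'Y'}$; then apply Theorem \ref{thrmdesbig} to the triangles $YXD$, $Y'X'D'$ to obtain $\ovl{YD}\parallel\ovl{Y'D'}$ (using $\ovl{XD}=\ovl{CD}$ and $\ovl{X'D'}=\ovl{C'D'}$); and finally apply it to the triangles $AYD$, $A'Y'D'$ to obtain $\ovl{AD}\parallel\ovl{A'D'}$ (using $\ovl{AY}=\ovl{AB}$ and $\ovl{A'Y'}=\ovl{A'B'}$).

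The main obstacle is therefore not the geometric idea but the systematic discharge of the non-degeneracy hypotheses at each application, which is genuinely harder than in the parallel case of Theorem \ref{thrmdesmall4}. There, two points on distinct apart parallel lines are automatically apart (Lemma \ref{lempar1}), so every ``outer'' apartness demanded by Desargues was free; here each such apartness, as well as the apartness of the auxiliary lines that replace $k\#m$ and $l\#n$, must be \emph{constructed} and verified from the standing hypotheses $P\#A,\dots,P\#D'$, $A\#D$, $A'\#D'$ and the line-apartnesses, via cotransitivity and Lemma \ref{lemnoncaff}; one must also check, using the preaffine intersection axiom, that $X,Y,X',Y'$ exist at all, since apart lines need not meet. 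Finally, the first stage, $\ovl{XY}\parallel\ovl{X'Y'}$, is itself a two-concurrent-line Desargues statement, and since only the big axiom is available I would prove it by a preliminary lemma obtained by the same auxiliary-point technique, i.e.\ a big-axiom-only analogue of Lemma \ref{lempar4}, rather than quoting the parallel-line lemmas whose proofs also invoke Desargues' small axiom.
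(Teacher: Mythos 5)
Your diagnosis of why the naive two-step chaining fails is exactly right, and your apartness argument via Lemma \ref{lemnoncaff} is fine, but the repair you propose has a genuine gap: the auxiliary points $X=\ovl{CD}\cap l$ and $Y=\ovl{AB}\cap m$ need not exist. In the parallel setting of Theorem \ref{thrmdesmall4} their existence is forced by the intersection axiom, because $\ovl{CD}$ crosses $m$ at $C$ with $m\parallel l$ and $m\# l$; in the concurrent setting no such mechanism is available, and indeed nothing in the hypotheses prevents $\ovl{CD}\parallel l$. Concretely, in $\mathbb A(\mathbb Q)$ take $P=(0,0)$, $k:y=0$, $l:y=x$, $m:x=0$, $n:y=-x$, with $C=(0,1)\in m$ and $D=(-\tfrac12,\tfrac12)\in n$ (and any admissible $A\in k$, $B\in l$): all hypotheses of the theorem can be satisfied, yet $\ovl{CD}$ has slope $1$, so it is parallel to and apart from $l$ and never meets it. Your construction therefore breaks at the very first step, and no case analysis you have set up recovers it. A second, independent gap is that even granting $X,Y,X',Y'$, you defer the crux --- your first stage $\ovl{XY}\parallel\ovl{X'Y'}$ --- to an unproved ``big-axiom-only analogue of Lemma \ref{lempar4}''; that lemma is not in the paper and is not obviously easier than the theorem itself.

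The paper avoids both problems by choosing its auxiliary object differently: instead of points off the pencil, it takes an auxiliary \emph{line} $l'$ through $P$. Since there exist three pairwise apart lines through $P$, cotransitivity yields $l'$ with $k\# l'\# n$; one then case-splits ($m\# k$ or $m\# l'$, and in the latter case $l\# n$ or $l\# l'$). The cases $m\# k$ and $l\# n$ reduce to two direct applications of Theorem \ref{thrmdesbig}, and in the remaining case $l'$ is apart from all four lines, so picking $X\in l'$ with $P\# X$ and letting $X'$ be the intersection of $l'$ with the line through $A'$ parallel to $\ovl{AX}$ (this intersection \emph{is} guaranteed by the intersection axiom, since $\ovl{AX}$ meets $l'$ at $X$ with apartness), one chains Theorem \ref{thrmdesbig} four times over the concurrent triples $(l,k,l')$, $(m,l,l')$, $(n,m,l')$, $(k,l',n)$. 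Because every auxiliary object lies in the pencil through $P$, all the outer-apartness side conditions come for free from $P$-apartness and cotransitivity --- precisely the bookkeeping your plan could not discharge.
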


\begin{center}
\begin{tikzpicture}
\draw 
(0,0) node[left] {$P$} -- (7,3.5) node[right] {$k$} 
(0,0) -- (7,0) node[right] {$l$} 
(0,0) -- (6,-1.5)  node[right] {$m$}
(0,0) -- (5,6.25) node[right] {$n$};

\draw 
(3, -0.75) node[ below] {$C'$} 
--(5,0) node[ below] {$B'$}
(1.5,-0.375)  node[below] {$C$} 
--(2.5,0) node[ below] {$B$};

\draw 
(5,0)--(6,3) node[above] {$A'$}
 (2.5,0)--(3,1.5) node[above] {$A$};

\draw
(3, -0.75)--(4,5) node[above] {$D'$}
(1.5,-0.375)--(2,2.5) node[above] {$D$};

\draw[red, thick, densely dotted] 
(6,3)--(4,5)
(3,1.5)--(2,2.5);
\end{tikzpicture}
\end{center}

\begin{proof}
On the affine plane $\ca A$, there exist three lines through $P$ that are apart from each other. Each of $k$ and $n$ is apart from at least two of these three lines, and therefore both $k$ and $n$ are apart from at least one of these three lines. Hence there exists a line $l'$ passing through $P$ such that $k\# l' \# n$. $k\# l'$, therefore $m\# k$ or $m\# l'$. 

Let us first consider the case where $m\# k$. Then, $A\# C$ and $A' \# C'$ and by Theorem \ref{thrmdesbig} on the concurrent lines $k$, $l$ and $m$, and since $\ovl{AB}\parallel \ovl{A'B'}$ and $\ovl{BC}\parallel \ovl{B'C'}$, we conclude that $\ovl{AC}\parallel \ovl{A'C'}$. By Theorem \ref{thrmdesbig} on the concurrent lines $k$, $m$ and $n$, and since $\ovl{AC}\parallel \ovl{A'C'}$ and $\ovl{CD}\parallel \ovl{C'D'}$, we conclude that $\ovl{AD}\parallel \ovl{A'D'}$.

Let us now consider the case where $m\# l'$. Since $n\# l'$, either $l\#n$ or $l\# l'$. In the first case, $\ovl{AD}\parallel \ovl{A'D'}$ by an argument symmetric to the one for $m\# k$ covered above. Hence, we shall consider the case where $k\#l'$, i.e. $l'$ is apart from all four lines $k$, $l$, $m$ and $n$. There exists a point $X$ on $l'$ such that $P\# X$ and notice that $\ovl{AX}\# l'$. Let $l_X$ be the line through $A'$ parallel to $ovl{AX}$. Then, $l_X$ is apart from $l'$ and they intersect, and let their intersection point be $X'$. By Theorem \ref{thrmdesbig}, on the concurrent lines $l$, $k$, $l'$, we see that $\ovl{BX}\parallel\ovl{B'X'}$. By Theorem \ref{thrmdesbig}, on the concurrent lines $m$, $l$, $l'$, we see that $\ovl{CX}\parallel\ovl{C'X'}$. By Theorem \ref{thrmdesbig}, on the concurrent lines $n$, $m$, $l'$, we see that $\ovl{DX}\parallel\ovl{D'X'}$. By Theorem \ref{thrmdesbig}, on the concurrent lines $k$, $l'$, $n$, and since $\ovl{AX}\parallel \ovl{A'X'}$ and $\ovl{XD}\parallel \ovl{X'D'}$ we conclude that $\ovl{AD}\parallel\ovl{A'D'}$.
\end{proof}

Finally, we state the following version of Desargues' theorem. We shall prove this version in Section \ref{secfurdes} once we have established a few results about translations and in particular Theorem \ref{thrmexistrans}. Notice that Section \ref{secfurdes} could have been added just after Theorem \ref{thrmexistrans} and in particular before Theorem \ref{thrmexistdil} whose proof uses Theorem \ref{thrmdes5}.

\begin{thrm} \label{thrmdes5}
Let $\ca A$ be a preaffine plane satisfying Desargues' big and small axioms.
Let $k$, $l$, $m$ be lines of $\ca A$ and let $P$, $A$, $A'$, $B$, $B'$, $C$, $C'$,  $D$, $D'$ be points of $\ca A$ such that:
\begin{enumerate}
\item $P$ lies on all three lines $k,l,m$,
\item $k\#l\#m$,
\item $P$ is apart from the points $A$, $A'$, $B$, $B'$, $C$ and $C'$,
\item $A, A'\in k$,
\item $B, B'\in l$,
\item $C, C'\in m$,
\item $D$ lies outside the lines $\ovl{AB}$ and $\ovl{BC}$,
\item $D'$ lies outside the lines $\ovl{A'B'}$ and $\ovl{B'C'}$.
\end{enumerate}
Then, $D\# A\# B\# C\# D \# B$ and $D'\# A'\# B'\# C'\# D' \# B'$. If $\ovl{AB}\parallel\ovl{A'B'}$, $\ovl{BC}\parallel \ovl{B'C'}$, $\ovl{BD}\parallel \ovl{B'D'}$ and $\ovl{CD}\parallel \ovl{C'D'}$, then $\ovl{AD}$ is parallel to $\ovl{A'D'}$.
\end{thrm}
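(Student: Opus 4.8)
The plan is to realise $D$ and $D'$ as points of a common fourth line through $P$ and then to invoke Desargues' big axiom in the form of Theorem \ref{thrmdesbig} twice. First I would settle the unconditional apartness claims. From $D\notin\ovl{AB}$ and the axiom $A\notin k\vdash_{A,B,k}A\# B\vee B\notin k$ (applied with $A$ and $B$ lying on $\ovl{AB}$) one reads off $D\#A$ and $D\#B$, and similarly $D\notin\ovl{BC}$ gives $D\#B$ and $D\#C$. The chain $A\#B\#C\#A$ comes from $k\#l\#m$ together with $P$ being apart from $A,B,C$: for instance $A\#P\wedge k\#l$ forces (via the two-point–two-line axiom, using $A\in k$, $P\in k\cap l$) that $A\notin l$, whence $A\#B$ since $B\in l$. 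The primed relations follow symmetrically, establishing $D\#A\#B\#C\#D\#B$ and its analogue for $A',B',C',D'$.

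Next I would carry out the construction. Set $n=\ovl{PD}$ and let $D^{*}$ be the intersection of $n$ with the unique line through $C'$ parallel to $\ovl{CD}$. Applying Theorem \ref{thrmdesbig} to the three concurrent lines $l,m,n$ (with $B,B'$ on $l$, $C,C'$ on $m$, $D,D^{*}$ on $n$, in the roles of the three pairs there) and to the parallelisms $\ovl{BC}\parallel\ovl{B'C'}$ and $\ovl{CD}\parallel\ovl{C'D^{*}}$ yields $\ovl{BD}\parallel\ovl{B'D^{*}}$. Since the parallel to $\ovl{BD}$ through $B'$ is unique and equals both $\ovl{B'D^{*}}$ and (by hypothesis $\ovl{BD}\parallel\ovl{B'D'}$) the line $\ovl{B'D'}$, we get $\ovl{B'D^{*}}=\ovl{B'D'}$; likewise $\ovl{C'D^{*}}=\ovl{C'D'}$. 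As $D'\notin\ovl{B'C'}$ and $B'\#C'$, the points $B',C',D'$ are non-collinear, so by Lemma \ref{lemnoncaff} the lines $\ovl{B'D'}$ and $\ovl{C'D'}$ are apart and hence meet in at most one point; therefore $D^{*}=D'$, and in particular $D'\in n=\ovl{PD}$.

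With $D,D'$ now on the common line $n$ through $P$, I would finish by a second application of Theorem \ref{thrmdesbig}, this time to the concurrent lines $k,l,n$ (pairs $A,A'$ on $k$; $B,B'$ on $l$; $D,D'$ on $n$) using the parallelisms $\ovl{AB}\parallel\ovl{A'B'}$ and $\ovl{BD}\parallel\ovl{B'D'}$, which delivers exactly $\ovl{AD}\parallel\ovl{A'D'}$.

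The hard part is not the Desargues applications themselves but the apartness-and-existence bookkeeping required to legitimise them. Theorem \ref{thrmdesbig} demands that the three lines used be mutually apart and that $P$ be apart from every point involved, which here reduces to knowing that $n=\ovl{PD}$ really is a \emph{fourth} line, i.e. that $P\#D$ and that $D$ lies on none of $k,l,m$ (and similarly for $D'$); moreover the construction of $D^{*}$ needs the relevant intersections actually to exist, which in a preaffine plane is not automatic for lines that are merely apart. None of $P\#D$ or $D\notin l$ is forced by the stated hypotheses — a priori $D$ could lie on $l$, collapsing $n$ onto $l$ — so these degenerate configurations must be dispatched separately, e.g. by choosing an auxiliary line through $P$ that is apart from $k,l,m$ (such a line exists by the axioms guaranteeing enough lines) and transferring the parallelisms to it before running the argument above. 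It is precisely to sidestep this case analysis that the authors postpone the proof to Section \ref{secfurdes}, where the group of translations provided by Theorem \ref{thrmexistrans} permits a uniform argument; the route sketched here is the elementary alternative resting only on Desargues' big axiom.
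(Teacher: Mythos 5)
You have correctly located the difficulty, but your proposed repair does not work, so the proof has a genuine gap. The obstruction is not merely that the degenerate configurations are fiddly: in this constructive setting they cannot be ``dispatched separately'' at all. Your construction needs $P\#D$ (to form $n=\ovl{PD}$), needs $n$ apart from $l$ and $m$ (equivalently $D\notin l$, $D\notin m$ in the relevant sense), and needs the intersection $D^{*}$ to exist. None of these follows from the hypotheses --- indeed $D=P$ is consistent with conditions (7)--(8), since typically $P\notin\ovl{AB}$ and $P\notin\ovl{BC}$ --- and, crucially, the case split ``$P\#D$ or the configuration is degenerate'' is not available: apartness and incidence are not decidable here, and a disjunction may only be invoked if it is derivable from the coherent axioms. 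The best the axioms yield is, e.g., $D\notin\ovl{AB}\vdash D\#P\vee P\notin\ovl{AB}$, whose second disjunct gives no grip on the position of $D$. Your auxiliary-line fix inherits the same problem: an auxiliary line $l'$ through $P$ apart from $k,l,m$ exists, but transferring the parallelisms onto $l'$ requires constructing an image of $D$ on $l'$, i.e.\ intersections whose existence again depends on the very apartness relations ($P\#D$, $\ovl{PD}\#\ovl{CD}$, \dots) that are unknown. This is exactly the phenomenon the paper flags in Section \ref{secfurthdes} when explaining why the classical two-case proof of Theorem \ref{thrmdesmall4} fails over local rings.

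The paper's proof evades the problem by a move your sketch misses: it \emph{re-centers the Desargues configuration at $B$} rather than at $P$. The point $B$ is the one point the hypotheses connect to $D$ by a known apartness ($D\notin\ovl{AB}$ and $D\notin\ovl{BC}$ give $B\#D$, alongside $B\#A$, $B\#C$, $B\#P$), so all the apartness bookkeeping becomes available. Concretely, the paper first proves the polygon version (Theorem \ref{thrmpoly}): the translation $\tau_{PP'}$ of Theorem \ref{thrmexistrans} transports the unprimed configuration to the primed apex, after which Theorems \ref{thrmdesbig} and \ref{thrmdesbig4} apply to genuinely concurrent lines; Theorem \ref{thrmdes5} is then the instance of Theorem \ref{thrmpoly} with $n=4$, center $P=B$, $P'=B'$, and vertices $A_1=A$, $A_2=Q$ (the original concurrency point), $A_3=C$, $A_4=D$, where the sides $\ovl{AQ}=k$ and $\ovl{QC}=m$ are trivially parallel to their primed counterparts. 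This is why the translation machinery is genuinely needed and the proof is deferred to Section \ref{secfurdes}: it is not a convenience to avoid case analysis, but the substitute for a case analysis that does not exist. Your unconditional apartness derivations ($D\#A\#B\#C\#D\#B$ etc.) are correct, and your second application of Theorem \ref{thrmdesbig} would be sound \emph{if} $D,D'$ were known to lie on a common fourth line through $P$ with the requisite apartnesses --- but establishing that is precisely what cannot be done along the route you propose.
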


\begin{center}
 \begin{tikzpicture}
\draw 
(0,0) node[left] {$P$} -- (7,3.5) node[right] {$k$} 
(0,0) -- (7,0) node[right] {$l$} 
(0,0) -- (6,-1.5)  node[right] {$m$};

\draw 
(3, -0.75) node[ below] {$C'$} 
--(5,0) node[ below] {$B'$}
(1.5,-0.375)  node[below] {$C$} 
--(2.5,0) node[ below] {$B$};

\draw 
(5,0)--(6,3) node[above] {$A'$}
 (2.5,0)--(3,1.5) node[above] {$A$};

\draw
(3, -0.75)--(4,5) node(D')[above] {$D'$}
(1.5,-0.375)--(2,2.5) node[above] {$D$};

\draw 
(5,0)--(4,5)
(2.5,0)--(2,2.5);

\draw[red, thick, densely dotted] 
(6,3)--(4,5)
(3,1.5)--(2,2.5);
\end{tikzpicture}
\end{center}

\section{Pappus' axiom on the affine plane}

\subsection*{Pappus' axiom}

A preaffine plane satisfies \emph{Pappus' axiom} when given  $k$, $l$ lines and $P$, $A$, $A'$, $B$, $B'$, $C$, $C'$ points of the preaffine plane such that:
\begin{enumerate}
\item $P, A, B, C \in k$,
\item $P, A', B', C' \in l$,
\item $k\# l$,
\item \label{aaaa} the points $A$, $A'$, $B$, $B'$, $C$, $C'$ are apart from $P$,
\item $\ovl{AB'}\parallel \ovl{BC'}$,
\item $\ovl{A'B}\parallel \ovl{B'C}$,
\end{enumerate}
then $\ovl{AA'}$ is parallel to $\ovl{CC'}$.

\begin{tikzpicture}
\draw
(0,0) node[left] {$P$} -- (5,3) node[right]{k}
(0,0)--(5,-2) node[right] {l};

\draw
(1,0.6) node[above] {$A$}--(2,-0.8)  node[below] {$B'$}
(2,1.2)  node[above] {$B$} --(4,-1.6) node[below] {$C'$};

\draw
(2,1.2)--(1,-0.4) node[below] {$A'$}
(4,2.4)  node[above] {$C$}--(2,-0.8);

\draw[red, thick, densely dotted]
(1,0.6)--(1,-0.4)
(4,2.4)--(4,-1.6);
\end{tikzpicture}

\begin{rmk}
Point \ref{aaaa} in the statement of the axiom is equivalent to the points $A$, $B$, $C$ lying outside $l$, and the points $A', B', C'$ lying outside $k$. 
\end{rmk}

\begin{thrm} \label{thrmpappu}
Let $\ca P$ be a preprojective plane satisfying Pappus' axiom, and let $l_\infty$ be a line of $\ca P$. Then, the preaffine plane $\mathfrak A(\ca P,l_\infty)$ satisfies Pappus' axiom.
\end{thrm}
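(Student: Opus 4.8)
The plan is to mimic the proofs of Theorem \ref{thrmsmallDes} and Theorem \ref{thrmbigDes}: I realise each affine parallelism as an incidence with $l_\infty$ inside $\ca P$ and then apply (the projective) Pappus' axiom to a single, carefully chosen hexagon. Throughout write $P=k\cap l$. Before anything else I would record the apartness facts that make all the relevant joining lines exist and be apart from $l_\infty$. Since each of $A,B,C$ lies on $k$ and is apart from $P=k\cap l$, the axiom $A\#B\wedge l\#m\vdash A\notin l\vee\dots$ (together with $P\in k,l$) forces $A,B,C\notin l$, and dually $A',B',C'\notin k$; combining these with the $\notin$-apartness axiom yields $A\#A'$, $C\#C'$, $A\#B'$, $B'\#C$ and $A'\#B$, so that $\ovl{AA'},\ovl{CC'},\ovl{AB'},\ovl{B'C},\ovl{A'B}$ are all genuine lines of $\mathfrak A(\ca P,l_\infty)$, hence apart from $l_\infty$.

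Next I would set up the hexagon. Put $U=\ovl{AB'}\cap l_\infty$ and $V=\ovl{B'C}\cap l_\infty$ (these exist because the two lines are apart from $l_\infty$). I take as the consecutive vertices of a hexagon in $\ca P$ the six points $C,\ C',\ A',\ V,\ U,\ A$. Its six sides are then $\ovl{CC'}$, $\ovl{C'A'}=l$, $\ovl{A'V}$, $\ovl{VU}=l_\infty$, $\ovl{UA}=\ovl{AB'}$ and $\ovl{AC}=k$, where $\ovl{UA}=\ovl{AB'}$ simply because $U$ was defined as the point of $\ovl{AB'}$ on $l_\infty$. The two affine hypotheses enter through the uniqueness of the point on $l_\infty$ through a given affine point: the hypothesis $\ovl{A'B}\parallel\ovl{B'C}$ says $\ovl{A'B}$ meets $l_\infty$ at $V$, so $\ovl{A'V}=\ovl{A'B}$; and the hypothesis $\ovl{AB'}\parallel\ovl{BC'}$ says $\ovl{BC'}$ meets $l_\infty$ at $U$, so the line through $C'$ and $U$ is $\ovl{BC'}$.

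Then I would verify the premises of Pappus' axiom for this hexagon and read off the conclusion. In the notation of the axiom the two required incidences are, after the identifications above, $\delta(\ovl{A'B},k,C',U)$ and $\delta(l,\ovl{AB'},C,V)$; the first collapses to $B=\ovl{A'B}\cap k\in\ovl{BC'}=\ovl{C'U}$ (witnessed by $X=B$, $r=\ovl{BC'}$, which contains $C',U,B$) and the second to $B'=l\cap\ovl{AB'}\in\ovl{B'C}=\ovl{CV}$, both true by construction. For the self-dual side condition I would use the second alternative, namely $l\#\ovl{A'B}$ and $k\#\ovl{CC'}$, which follow from $B\notin l$ and $C'\notin k$; the final genericity clause is immediate since $A\notin l_\infty$. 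Pappus' axiom then yields $\delta(\ovl{CC'},l_\infty,A,A')$, i.e. the point $\ovl{CC'}\cap l_\infty$ lies on $\ovl{AA'}$; as $\ovl{AA'}$ is itself apart from $l_\infty$, this says $\ovl{AA'}$ and $\ovl{CC'}$ meet $l_\infty$ in the same point, that is $\ovl{AA'}\parallel\ovl{CC'}$, the desired conclusion.

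The main obstacle is not the geometry but the constructive bookkeeping forced by working over a local ring: I cannot assume the points are in general position, so I must ensure every intersection and joining line invoked genuinely exists and that all the apartness premises of the projective Pappus' axiom are discharged from the meagre data of the affine axiom (only apartness from $P$ and $k\#l$). This is exactly why the configuration is arranged so that the two $\delta$-premises reduce to the trivial incidences $B\in\ovl{BC'}$ and $B'\in\ovl{B'C}$, and why the apartness clause is met through its second, dual alternative rather than through distinctness of $U$ and $V$ (which would require $A\#C$, not available). A secondary point to check is that reading parallelism off $\delta(\ovl{CC'},l_\infty,A,A')$ uses $A\#A'$ and $\ovl{AA'}\#l_\infty$ to guarantee uniqueness of the point at infinity, after which the case $k_B\#k_C\wedge k_F\#k_A$ of the projective theorem applies verbatim.
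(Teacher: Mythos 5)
Your proof is correct and takes essentially the same route as the paper's: the paper likewise realises the parallelisms as incidences with $l_\infty$, introducing $X=\ovl{AB'}\cap l_\infty$ and $Y=\ovl{A'B}\cap l_\infty$ (which coincide with your $U$ and $V$ by the two parallelism hypotheses) and applying the projective Pappus axiom once to the hexagon $Y,X,C',A',A,C$ with sides $l_\infty$, $\ovl{C'B}$, $l$, $\ovl{A'A}$, $k$, $\ovl{CB'}$, obtaining $\delta(l_\infty,\ovl{A'A},C,C')$. Your hexagon is the same configuration up to the relabeling $(A,A')\leftrightarrow(C,C')$ and a reversal of orientation (so your conclusion $\delta(\ovl{CC'},l_\infty,A,A')$ is the transposed form of the paper's), and your explicit bookkeeping of the apartness facts, the $\delta$-witnesses $B$ and $B'$, and the second disjunct of the side condition simply spells out steps the paper leaves implicit.
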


\begin{proof}
Let $\ca P$ be a preprojective plane satisfying Pappus' axiom and let $l_\infty$ be a line of $\ca P$. Let $k$, $l$ be lines and $P$, $A$, $A'$, $B$, $B'$, $C$, $C'$ points of the preaffine plane $\mathfrak A(\ca P, l_\infty)$ satisfying the conditions listed in Pappus' axiom above.  Let $X$ be the intersection of $\ovl{AB'}$ and $l_\infty$. $\ovl{BC'}$ is parallel to $\ovl{AB'}$, therefore $X\in \ovl{BC'}$. Let $Y$ be the intersection of $\ovl{A'B}$ and $l_\infty$. $\ovl{B'C}$ is parallel to $\ovl{A'B}$, therefore $Y\in \ovl{B'C}$. We apply (the projective) Pappus' axiom on the six points $Y$, $X$, $C'$, $A'$, $A$, $C$ and the six lines $l_\infty$, $\ovl{C'B}$, $l$, $\ovl{A'A}$, $k$, $\ovl{CB'}$, i.e. we apply Pappus' axiom  on the following configuration
\begin{center}
\begin{tikzpicture}
\draw
(8,4.8)--
(2,1.2) node[above] {$A$}--(4,-1.6)  node[below] {}
(4,2.4)  node[above] {$B'$} --(6,-2.4) node[below] {$A'$}
(4,2.4)--(2,-0.8) node[below] {$Y$}
-- (6,-2.4)
(8,4.8)  node[above] {$X$}--(4,-1.6) node[below] {$B$}
(2,1.2) -- (6,-2.4)
(2,-0.8) -- (8,4.8)
(2.3,0.2) node {$C$}
(5.5,0.1) node {$C'$};
\end{tikzpicture}
\end{center}
to conclude that $\delta(l_\infty,\ovl{A'A}, C, C')$ is satisfied in $\ca P$. Hence, in $\mathfrak A(\ca P,l_\infty)$ the line $\ovl{AA'}$ is parallel to $\ovl{CC'}$.
\end{proof}

\begin{rmk}
Given a local ring $R$, the preaffine plane $\mathbb A(R)$ is isomorphic to $\mathfrak A(\mathbb P(R),(0,0,1))$. $\mathbb P(R)$ satisfies Pappus' axiom, therefore $\mathbb A(R)$ satisfies Pappus' axiom.
\end{rmk}

\begin{defn}
An \emph{affine plane} is a preaffine plane that satisfies Desargues' big and small axioms, and Pappus' axiom.
\end{defn}

For $R$ a local ring, we have already proved that the preaffine plane $\mathbb A(R)$ satisfies Desargues' small and big axioms, and Pappus' axiom. Hence, $\mathbb A(R)$ is an affine plane.

Both versions of Desargues' axiom, and Pappus' axiom can be expressed as geometric sequents in the language of preaffine planes, therefore the theory of affine planes is a geometric theory.

\chapter{Constructing the local ring from an affine plane} \label{chalocal}

The goal of this chapter is to construct a local ring from a given affine plane. The construction uses the notions of dilatations and translations which we define here. The underlying set of the local ring $\text{Tp}$ we construct is the set of trace preserving homomorphisms of the group of translations. We show that given three non-collinear points we have an isomorphism from the affine plane over the local ring of trace preserving homomorphisms to the original affine plane. There is also an action on the set of three non-collinear points which makes it a right $G(\text{Tp})$-torsor. We then return to the construction of the local ring to show that it can be thought of as a construction involving finite limits and finite colimits, therefore it is preserved by inverse images of geometric morphisms. We also show that any alternative construction of the local ring gives an isomorphic ring as long as it satisfies certain properties. The final section of this chapter concerns a version of Desargues' theorem whose proof uses some results on translations and which is used earlier in the chapter.

The constructions and arguments in this and the next chapter sometimes lie outside geometric logic but they are still constructive and in particular they can be carried out in any topos with a natural number object.

\section{Dilatations} \label{secdil}

\begin{defn}
Let $\ca A$ be an affine plane and let $\ca A_{\text{pt}}$ be its set of points. A \emph{dilatation} of $\ca A$ is a morphism $\sigma:\ca A_{\text{pt}}\to \ca A_{\text{pt}}$ such that for any two points $P$ and $Q$ of $\ca A$, if $P\#Q$ then $\sigma(P)\#\sigma(Q)$ and $\ovl{PQ}\parallel \ovl{\sigma(P)\sigma(Q)}$. 
\end{defn}

\begin{prop} \label{propdilhom}
The following hold for a dilatation $\sigma$ of an affine plane $\ca A$:
\begin{enumerate}
 \item for any three points such that $P\#Q$ and $R\in \ovl{PQ}$, then $\sigma(R)\in \ovl{\sigma(P)\sigma(Q)}$,
\item for any three non-collinear points $P$, $Q$ and $R$, then $\sigma(P)$, $\sigma(Q)$, $\sigma(R)$ are non-collinear.
\end{enumerate}
\end{prop}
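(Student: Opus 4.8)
The plan is to treat the two claims separately, using throughout that a dilatation $\sigma$ preserves apartness of points and sends each join line to a parallel line, i.e.\ $\ovl{PQ}\parallel\ovl{\sigma(P)\sigma(Q)}$ whenever $P\#Q$.

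For (1), given $P\#Q$ and $R\in\ovl{PQ}$, I would first apply the apartness axiom $P\#Q\vdash P\#R\vee Q\#R$ to split into two symmetric cases, and assume $R\#P$. Since $R$ and $P$ both lie on $\ovl{PQ}$ and $R\#P$, uniqueness of the line through two apart points gives $\ovl{RP}=\ovl{PQ}$. Applying the dilatation property to the apart pairs $R\#P$ and $P\#Q$ yields $\ovl{\sigma(R)\sigma(P)}\parallel\ovl{RP}=\ovl{PQ}\parallel\ovl{\sigma(P)\sigma(Q)}$, so by transitivity of $\parallel$ the lines $\ovl{\sigma(R)\sigma(P)}$ and $\ovl{\sigma(P)\sigma(Q)}$ are parallel. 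Both pass through $\sigma(P)$, so the uniqueness-of-parallel axiom forces them equal; hence $\sigma(R)\in\ovl{\sigma(R)\sigma(P)}=\ovl{\sigma(P)\sigma(Q)}$, as required. The case $R\#Q$ is identical after swapping $P$ and $Q$.

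For (2), I would reduce to a single non-incidence via Lemma \ref{lemnoncaff}. Since $P,Q,R$ are non-collinear we have $Q\#R$ and $P\notin\ovl{QR}$, and the dilatation gives $\sigma(Q)\#\sigma(R)$ immediately; so by Lemma \ref{lemnoncaff} it suffices to establish $\sigma(P)\notin\ovl{\sigma(Q)\sigma(R)}$, equivalently the line-apartness $\ovl{\sigma(P)\sigma(Q)}\#\ovl{\sigma(Q)\sigma(R)}$ of two lines meeting at $\sigma(Q)$. Note that part (1) makes $\sigma$ act coherently on lines, sending $\ovl{PQ}$ and $\ovl{QR}$ to parallel lines through $\sigma(Q)$; moreover these two image directions cannot be parallel, since otherwise transitivity of $\parallel$ together with parallel-uniqueness at $Q$ would give $\ovl{PQ}=\ovl{QR}$, contradicting $\ovl{PQ}\#\ovl{QR}$.

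The hard part will be converting this ``different directions at $\sigma(Q)$'' into the \emph{positive} non-incidence $\sigma(P)\notin\ovl{\sigma(Q)\sigma(R)}$, since constructively the mere failure of parallelism does not yield apartness. My plan is to transport the genuine non-incidence $R\notin\ovl{PQ}$ rather than argue by contradiction: form the line $h$ through $R$ parallel to $\ovl{PQ}$, deduce $h\#\ovl{PQ}$ from $R\in h$ and $R\notin\ovl{PQ}$ via the axiom $A\notin k\vdash_{A,k,l} k\#l\vee A\notin l$, and then use the axiom $k\#l\wedge k\parallel l\vdash_{A,k,l} A\notin k\vee A\notin l$ to extract $\sigma(P)\notin\ovl{\sigma(Q)\sigma(R)}$ from a parallel-and-apart pair of image lines produced by part (1). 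The delicate step — the one I expect to require the most care — is precisely showing that $\sigma$ preserves the apartness of such a parallel-apart pair of lines; here I would fall back on the intersection axiom $A\in l,m\wedge k\parallel l\wedge l\#m\vdash_{A,k,l,m} k\#m\wedge(\exists B.\,B\in k\wedge B\in m)$ and, if that proves insufficient, on Desargues' axioms, which are available since $\ca A$ is an affine plane, to manufacture the required witness point lying outside the image line.
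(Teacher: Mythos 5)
Your proof of (1) is correct and is exactly the paper's argument: split on $R\#P \vee R\#Q$ using the apartness axiom, observe $\ovl{RP}=\ovl{PQ}$, and conclude by uniqueness of the parallel to $\ovl{PQ}$ through $\sigma(P)$. Your opening move in (2) — reducing via Lemma \ref{lemnoncaff} to $\sigma(P)\#\sigma(Q)\#\sigma(R)$ together with the line-apartness $\ovl{\sigma(P)\sigma(Q)}\#\ovl{\sigma(Q)\sigma(R)}$ — is also the right one, and your remark that mere failure of parallelism gives nothing constructively is well taken.

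The gap is in how you propose to obtain that line-apartness. Your plan routes through a \emph{parallel-apart} pair (the line $h$ through $R$ parallel to $\ovl{PQ}$, paired with image lines from part (1)) and then hopes to show $\sigma$ preserves the apartness of such a parallel pair, falling back on the intersection axiom or on Desargues. But the intersection axiom $A\in l,m \wedge k\parallel l \wedge l\#m \vdash_{A,k,l,m} k\#m \wedge (\exists B.\ B\in k \wedge B\in m)$ cannot be applied to a parallel-apart pair at all: it requires the two apart lines $l$, $m$ to share the point $A$, which parallel apart lines never do. So the step you flag as delicate would indeed fail as sketched (and transporting apartness of a parallel pair through $\sigma$ directly is circular, since that is essentially what you are trying to prove), while invoking Desargues is unnecessary. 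The missing idea is to apply that same intersection axiom twice to the \emph{intersecting} configuration at $Q$: since $\ovl{PQ}\#\ovl{QR}$ meet at $Q$ and $\ovl{\sigma(P)\sigma(Q)}\parallel\ovl{PQ}$, the axiom with $A=Q$, $l=\ovl{PQ}$, $m=\ovl{QR}$, $k=\ovl{\sigma(P)\sigma(Q)}$ yields $\ovl{\sigma(P)\sigma(Q)}\#\ovl{QR}$ together with a common point $B$; a second application with $A=B$, $l=\ovl{QR}$, $m=\ovl{\sigma(P)\sigma(Q)}$, $k=\ovl{\sigma(Q)\sigma(R)}$ then yields $\ovl{\sigma(P)\sigma(Q)}\#\ovl{\sigma(Q)\sigma(R)}$. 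Combined with $\sigma(P)\#\sigma(Q)\#\sigma(R)$ and condition (3) of Lemma \ref{lemnoncaff}, this is the paper's entire proof of (2) — no auxiliary line $h$, no transport of $R\notin\ovl{PQ}$, and no Desargues required.
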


\begin{proof}
\begin{enumerate}
\item Let $P\#Q$ be points of a preaffine plane and let $\sigma$ be a dilatation of the plane. Given a point $R$ on $\ovl{PQ}$, it is either apart from $P$ or apart from $Q$. Without loss of generality, let us assume it is apart from $P$. Then, $\sigma(P) \# \sigma(R)$ and $\ovl{\sigma(P)\sigma(R)}$ is parallel to $\ovl{PR}$. We know that $\ovl{PR}=\ovl{PQ}$ is also parallel to $\ovl{\sigma(P)\sigma(Q)}$. Hence, both $\ovl{\sigma(P)\sigma(Q)}$ and $\ovl{\sigma(P)\sigma(R)}$ are lines through $\sigma(P)$ parallel to $\ovl{PQ}$. There is a unique such line, hence the two lines are equal, and therefore $\sigma(R)\in \ovl{\sigma(P)\sigma(Q)}$.

\item  Let $P$, $Q$ and $R$ be three non-collinear points of an affine plane and let $\sigma$ be a dilatation of the plane. Then, $P\#Q\#R$ and $\ovl{PQ}\# \ovl{QR}$. Hence, $\sigma(P)\#\sigma(Q)\#\sigma(R)$. $\ovl{PQ}$, $\ovl{QR}$ are apart from each other and they intersect and $\ovl{PQ}\parallel \ovl{\sigma(P)\sigma(Q)}$ and $\ovl{QR}\parallel \ovl{\sigma(Q)\sigma(R)}$. Therefore, $\ovl{\sigma(P)\sigma(Q)}$ and $\ovl{\sigma(Q)\sigma(R)}$ are apart from each other. By results on non-collinear points, we conclude that $\sigma(P)$, $\sigma(Q)$ and $\sigma(R)$ are non-collinear because $\sigma(P)\#\sigma(Q)\#\sigma(R)$ and $\ovl{\sigma(P)\sigma(Q)}\# \ovl{\sigma(Q)\sigma(R)}$.
\end{enumerate}
\end{proof}

\begin{rmk}
We have now proved that all the conditions of Proposition \ref{proppreaffmor} are satisfied therefore a dilatation can be uniquely extended to an endomorphism of the affine plane such that every line is mapped to a parallel line. Conversely, given an endomorphism of an affine plane which maps every line to a parallel line, then its morphism on points is a dilatation.
\end{rmk}

\begin{egs}
 \begin{enumerate}
  \item  The identity on points is a dilatation.
\item Given a local ring $R$, consider the automorphism of $\mathbb A(R)$ induced by a matrix $\begin{pmatrix} r & 0 & a \\
0 & r & b \\
0 & 0 & 1
\end{pmatrix}$ where $r$, $a$ and $b$ in $R$ and $r$ is invertible. The morphism on lines is $\begin{pmatrix} 1 & 0 & 0 \\
0 & 1 & 0 \\
-a & -b & r
\end{pmatrix}$ hence it sends a line to a parallel line and therefore the morphism on points is a dilatation. We shall prove in Proposition \ref{propdilring} that these are all the dilatations of $\mathbb A(R)$.
 \end{enumerate}
\end{egs}

\begin{lem} \label{lemcompdil}
 The composite of two dilatations is a dilatation.
\end{lem}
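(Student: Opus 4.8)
The plan is to verify directly from the definition of dilatation that the composite $\tau \circ \sigma$ of two dilatations $\sigma, \tau$ of an affine plane $\ca A$ again satisfies the two defining properties: it is a morphism on points, and for any two points $P \# Q$ we have $(\tau\circ\sigma)(P) \# (\tau\circ\sigma)(Q)$ together with the parallelism $\ovl{PQ} \parallel \ovl{(\tau\circ\sigma)(P)(\tau\circ\sigma)(Q)}$.

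First I would take two points $P$ and $Q$ of $\ca A$ with $P \# Q$. Applying the definition of the dilatation $\sigma$, I get that $\sigma(P) \# \sigma(Q)$ and $\ovl{PQ} \parallel \ovl{\sigma(P)\sigma(Q)}$. Now I apply the definition of the dilatation $\tau$ to the pair $\sigma(P) \# \sigma(Q)$: since these two points are apart, $\tau$ gives $\tau(\sigma(P)) \# \tau(\sigma(Q))$ and $\ovl{\sigma(P)\sigma(Q)} \parallel \ovl{\tau(\sigma(P))\tau(\sigma(Q))}$. This already establishes that the composite preserves the apartness relation on the pair $P, Q$.

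For the parallelism condition I would chain the two parallelisms obtained above and invoke transitivity of $\parallel$, which is an axiom of preaffine planes (the axiom $k\parallel l \wedge l \parallel m\vdash_{k,l,m} k\parallel m$). From $\ovl{PQ} \parallel \ovl{\sigma(P)\sigma(Q)}$ and $\ovl{\sigma(P)\sigma(Q)} \parallel \ovl{\tau(\sigma(P))\tau(\sigma(Q))}$ I conclude $\ovl{PQ} \parallel \ovl{(\tau\circ\sigma)(P)(\tau\circ\sigma)(Q)}$, as required. Finally, the composite of two morphisms on points is again a morphism on points, so $\tau \circ \sigma$ is a dilatation.

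This argument is almost entirely routine, so I do not expect a genuine obstacle; the only point needing care is making sure that $\parallel$ is indeed an equivalence relation (in particular transitive) so that the chaining step is legitimate — but this is guaranteed by the preaffine plane axioms. A subtle point worth noting explicitly is that $\tau$'s defining property can only be applied once we know its input points $\sigma(P), \sigma(Q)$ are apart, which is exactly what $\sigma$ delivers; thus the order of applying the two hypotheses matters and must be $\sigma$ first, then $\tau$.
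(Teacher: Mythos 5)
Your proof is correct and follows essentially the same route as the paper's: apply the inner dilatation first to obtain apartness and a parallelism, apply the outer dilatation to the resulting apart pair, and chain the two parallelisms by transitivity of $\parallel$. The only cosmetic difference is that you write the composite as $\tau\circ\sigma$ where the paper writes $\sigma\circ\tau$; the argument is identical.
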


\begin{proof}
 Let $\sigma$ and $\tau$ be dilatations. Let $P$ and $Q$ be points of the affine plane such that $P\#Q$. $\tau$ is a dilatation, therefore $\tau(P)\#\tau(Q)$ and the line $\ovl{PQ}$ is parallel to $\ovl{\tau(P)\tau(Q)}$. Since $\sigma$ is also a dilatation, $\sigma (\tau (P))\#\sigma (\tau (Q))$. Also, the line $\ovl{\sigma (\tau (P))\sigma (\tau (Q))}$ is parallel to $\ovl{\tau(P)\tau(Q)}$, hence  it is also parallel to $\ovl{PQ}$, and therefore $\sigma \circ\tau$ is a dilatation.
\end{proof}

\begin{thrm} \label{thrmundil}
A dilatation is uniquely determined by the images of two points $P$ and $Q$ such that $P\#Q$.
\end{thrm}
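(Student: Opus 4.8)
The statement to establish is that if $\sigma$ and $\tau$ are dilatations of $\ca A$ with $\sigma(P)=\tau(P)$ and $\sigma(Q)=\tau(Q)$ for some points $P\#Q$, then $\sigma=\tau$. The plan is to first show that the image of any point \emph{non-collinear} with $P$ and $Q$ is forced by the two values $\sigma(P)$ and $\sigma(Q)$ alone, and then to bootstrap from this special case to all points by introducing an auxiliary point and invoking Lemma~\ref{lemnoncdaff}.

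For the core step, I would fix a point $X$ with $P$, $Q$, $X$ non-collinear. Since $\sigma$ is a dilatation and $P\#X$, $Q\#X$, the line $\ovl{\sigma(P)\sigma(X)}$ is the unique line through $\sigma(P)$ parallel to $\ovl{PX}$, and $\ovl{\sigma(Q)\sigma(X)}$ is the unique line through $\sigma(Q)$ parallel to $\ovl{QX}$; call these $l_1$ and $l_2$. Crucially, $l_1$ and $l_2$ depend only on $\sigma(P)$, $\sigma(Q)$ and $X$ (through the fixed directions $\ovl{PX}$, $\ovl{QX}$), and not otherwise on $\sigma$. By Proposition~\ref{propdilhom} the images $\sigma(P),\sigma(Q),\sigma(X)$ are non-collinear, so Lemma~\ref{lemnoncaff} gives $l_1=\ovl{\sigma(X)\sigma(P)}\#\ovl{\sigma(X)\sigma(Q)}=l_2$; two lines that are apart meet in at most one point, so $\sigma(X)$ is the \emph{unique} common point of $l_1$ and $l_2$. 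Running the same argument for $\tau$ (whose values at $P$ and $Q$ coincide with those of $\sigma$, hence producing the very same $l_1$ and $l_2$) shows that $\tau(X)$ also lies on $l_1\cap l_2$, whence $\sigma(X)=\tau(X)$.

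To pass to an arbitrary point $X$, I would choose an auxiliary point $Z$ with $P,Q,Z$ non-collinear: the line $\ovl{PQ}$ exists because $P\#Q$, the axiom $\top\vdash_l\exists A.\,A\notin l$ produces a point $Z\notin\ovl{PQ}$, and Lemma~\ref{lemnoncaff} then makes $P,Q,Z$ non-collinear. By the core step, $\sigma(Z)=\tau(Z)$, so $\sigma$ and $\tau$ now agree on all three of $P,Q,Z$. For any point $X$, Lemma~\ref{lemnoncdaff} guarantees that $X$ together with two of $P,Q,Z$ forms a non-collinear triple; applying the core step to that pair (whose images under $\sigma$ and $\tau$ already coincide) yields $\sigma(X)=\tau(X)$. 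As each branch of the resulting disjunction delivers the same equality $\sigma(X)=\tau(X)$, we may conclude $\sigma(X)=\tau(X)$ outright, and therefore $\sigma=\tau$.

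I expect the main obstacle to be the treatment of a point $X$ that is collinear with $P$ and $Q$: classically one would split into the cases $X\notin\ovl{PQ}$ and $X\in\ovl{PQ}$, but this dichotomy is unavailable constructively. The role of the auxiliary point $Z$ together with Lemma~\ref{lemnoncdaff} is precisely to replace that case split by the always-valid positive disjunction that $X$ is non-collinear with some pair drawn from $P,Q,Z$, each branch of which reduces to the already-established core step. The other point demanding care is the verification that $l_1$ and $l_2$ genuinely meet in a (necessarily unique) point; this is exactly what the non-collinearity of the images $\sigma(P),\sigma(Q),\sigma(X)$ secures, via Lemma~\ref{lemnoncaff} and the fact that apart lines have at most one intersection.
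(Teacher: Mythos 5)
Your proof is correct and follows essentially the same route as the paper's: the paper likewise picks an auxiliary point $R\notin\ovl{PQ}$, recovers its image as the unique intersection of the line through $\sigma(P)$ parallel to $\ovl{PR}$ and the line through $\sigma(Q)$ parallel to $\ovl{QR}$, and then handles an arbitrary point via the observation (your explicit appeal to Lemma~\ref{lemnoncdaff}) that it lies outside at least one of $\ovl{PQ}$, $\ovl{QR}$, $\ovl{RP}$. The only cosmetic difference is that you certify the apartness $l_1\#l_2$ via Proposition~\ref{propdilhom} and Lemma~\ref{lemnoncaff} (with $\sigma(X)$ itself witnessing the intersection), whereas the paper transfers apartness of $\ovl{PR}\#\ovl{QR}$ along parallels; both are sound.
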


\begin{proof}
Let $\sigma$ be a dilatation, and let $P$ and $Q$ be two points that are apart from each other. We pick a point $R$ not on the line $\ovl{PQ}$. Let the points $P'$, $Q'$ and $R'$ be the images of the points $P$, $Q$ and $R$. Then $R'$ lies on the line $l$ through $P'$ and parallel to $\ovl{PR}$. It also lies on the line $m$ through $Q'$ parallel to $\ovl{QR}$. The lines $\ovl{PR}$ and $\ovl{QR}$ are apart from each other and they intersect, therefore $l\#m$. Hence, $R'$ is the unique intersection of $l$ and $m$ and is uniquely determined by the images of $P$ and $Q$. Notice also that $R'\notin \ovl{P'Q'}$.

Any other point lies outside at least one of the lines $\ovl{PQ}$, $\ovl{QR}$ and $\ovl{RP}$, therefore we apply again the above argument to conclude that its image through $\sigma$ is uniquely determined by $P'$, $Q'$ and $R'$. Hence $\sigma$ is uniquely determined by the images of $P$ and $Q$.
 \end{proof}

\begin{cor}
 If a dilatation $\sigma$ has two fixed points $P$ and $Q$ such that $P\#Q$, then $\sigma$ is the identity.
\end{cor}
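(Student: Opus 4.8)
The plan is to deduce this directly from the uniqueness result just established, namely Theorem \ref{thrmundil}. The essential observation is that the hypotheses hand us two distinct dilatations which agree on a pair of points that are apart from each other, and Theorem \ref{thrmundil} says a dilatation is pinned down by exactly such data.

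Concretely, first I would recall that the identity on points is a dilatation (this is the first of the examples following the definition of dilatation). Suppose $\sigma$ is a dilatation with fixed points $P$ and $Q$ satisfying $P\#Q$, so that $\sigma(P)=P$ and $\sigma(Q)=Q$. Then both $\sigma$ and the identity dilatation send $P$ to $P$ and $Q$ to $Q$; that is, they have the same images on the two points $P$, $Q$ with $P\#Q$. By Theorem \ref{thrmundil}, a dilatation is uniquely determined by the images of two points that are apart from each other, so $\sigma$ must coincide with the identity dilatation. Hence $\sigma$ is the identity.

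There is no real obstacle here: the content is entirely carried by Theorem \ref{thrmundil}, and the only thing to check is that the identity genuinely is a dilatation (so that the uniqueness statement is being applied to two legitimate dilatations) and that "fixed point" indeed means the dilatation and the identity share the same value at $P$ and at $Q$. Both are immediate, so the corollary follows in a single line of reasoning once Theorem \ref{thrmundil} is invoked.
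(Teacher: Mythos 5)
Your proof is correct and is exactly the paper's argument: the paper likewise observes that the identity is a dilatation and applies Theorem \ref{thrmundil} to conclude $\sigma$ coincides with it on $P$ and $Q$, hence everywhere. Nothing further is needed.
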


\begin{proof}
 This is an immediate consequence of the above theorem because the identity is a dilatation.
\end{proof}

\begin{prop} \label{propdilring}
Let $R$ be a local ring. A dilatation of $\mathbb A(R)$ is exactly left multiplication by a matrix of the form $\begin{pmatrix} r & 0 & a \\
0 & r & b \\
0 & 0 & 1
\end{pmatrix}$.
\end{prop}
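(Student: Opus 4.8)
The forward direction is already covered by the Examples following Proposition \ref{propdilhom}: any matrix of the stated form induces an automorphism of $\mathbb A(R)$ whose action on lines sends each line to a parallel one, hence whose action on points is a dilatation. So the substance is the converse, and the plan is to reduce an arbitrary dilatation to a scalar one by composing with a translation, and then to pin it down using the uniqueness theorem.

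First I would normalize so that the dilatation fixes the origin. Given a dilatation $\sigma$, set $C=\sigma(0,0)=(c_0,c_1)$ and let $\tau$ be left multiplication by $\begin{pmatrix} 1 & 0 & -c_0 \\ 0 & 1 & -c_1 \\ 0 & 0 & 1 \end{pmatrix}$, which is a dilatation by the Examples. By Lemma \ref{lemcompdil} the composite $\tau\circ\sigma$ is again a dilatation, and it fixes $(0,0)$. It therefore suffices to show that any dilatation $\rho$ with $\rho(0,0)=(0,0)$ is left multiplication by a scalar matrix $\begin{pmatrix} r & 0 & 0 \\ 0 & r & 0 \\ 0 & 0 & 1 \end{pmatrix}$; undoing the translation then yields that $\sigma$ is left multiplication by $\begin{pmatrix} r & 0 & c_0 \\ 0 & r & c_1 \\ 0 & 0 & 1 \end{pmatrix}$, which is the asserted form.

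Next I would locate the images of $(1,0)$ and $(0,1)$. Since $(0,0)\#(1,0)$ and $\rho$ is a dilatation, the line $\ovl{\rho(0,0)\rho(1,0)}$ is parallel to the $x$-axis $\ovl{(0,0)(1,0)}=(0,1,0)$ and passes through $\rho(0,0)=(0,0)$; by uniqueness of the parallel to a line through a given point, this image line is the $x$-axis itself, so $\rho(1,0)=(r,0)$ for some $r$, and symmetrically $\rho(0,1)=(0,t)$. The apartness $\rho(0,0)\#\rho(1,0)$ forces $r$ to be invertible. To see $t=r$, I would use that $\ovl{\rho(1,0)\rho(0,1)}$ is parallel to $\ovl{(1,0)(0,1)}=(1,1,-1)$: the line through $(r,0)$ and $(0,t)$ is $(t,r,-rt)$, and parallelism gives, via Lemma \ref{lemparallel}, $\det\begin{pmatrix} 1 & t \\ 1 & r \end{pmatrix}=r-t=0$.

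Finally I would invoke uniqueness. The scalar matrix $\begin{pmatrix} r & 0 & 0 \\ 0 & r & 0 \\ 0 & 0 & 1 \end{pmatrix}$ is a dilatation (Examples) sending $(0,0)\mapsto(0,0)$ and $(1,0)\mapsto(r,0)$, i.e.\ agreeing with $\rho$ on the apart pair $(0,0),(1,0)$; by Theorem \ref{thrmundil} the two dilatations coincide, so $\rho$ is left multiplication by this scalar matrix. The one point needing care is the repeated appeal to the parallelism clause in the definition of dilatation: each time I must know that the relevant pair of points is apart, so that the joining line (and hence the parallelism assertion) is available. Each such apartness follows by applying $\rho$ to a pair of standard points that are manifestly apart, so no genuine obstacle arises, and beyond the two small determinant identities above there is no computation.
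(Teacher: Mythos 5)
Your proposal is correct and is essentially the paper's argument: both pin down the images of the apart pair $(0,0)$ and $(1,0)$ (using that a dilatation sends $\ovl{(0,0)(1,0)}$ to a parallel line and preserves apartness, forcing $\sigma(1,0)=(a+r,b)$ with $r$ invertible), exhibit a matrix of the stated form with the same effect on that pair, and conclude by the uniqueness Theorem \ref{thrmundil}. The paper simply skips your translation normalization and works with $\sigma$ directly, and note that your computation of $\rho(0,1)$ and the identity $t=r$ is redundant, since Theorem \ref{thrmundil} already determines $\rho$ from its values on the apart pair $(0,0),(1,0)$ alone.
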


\begin{proof}
Let $\sigma$ be a dilatation of $\mathbb A(R)$. Then, by Theorem \ref{thrmundil} $\sigma$ is uniquely determined by $\sigma(0,0)$ and $\sigma(1,0)$. Let $\sigma(0,0)$ be $(a,b)$ where $a$, $b$ in $R$. Then, $\sigma(1,0)$ is apart from $(a,b)$ and on the line through $(a,b)$ parallel to $(0,1,0)$. Hence, $\sigma(1,0)=(a+r,b)$ for some invertible $r$ in $R$. The dilatation represented by the matrix $\begin{pmatrix} r & 0 & a \\
0 & r & b \\
0 & 0 & 1
\end{pmatrix}$ is a dilatation which sends $(0,0)$ to $(a,b)$ and $(1,0)$ to $(a+r,b)$, hence it is the dilatation $\sigma$.
\end{proof}

\begin{rmk}
Notice that for dilatations given in the above form, composition coincides with matrix multiplication.
\end{rmk}

\section{Translations} \label{sectr}

\begin{defn}
Let $\tau$ be a dilatation of an affine plane $\ca A$. Let $P$, $Q$ and $D$ be three non-collinear points of $\ca A$. Let $l$ be the line through $D$ parallel to $\ovl{PQ}$, let $m$ be the line through $Q$ and parallel to $\ovl{PD}$, and let $D'$ be the (unique) intersection point of the lines $l$ and $m$.

$\tau$ is a \emph{translation} when for any $P$, $Q$, $D$, $l$, $m$ and $D'$ as above the following two conditions are satisfied:
\begin{itemize}
\item If $\ovl{\tau(P)\tau(Q)}$ is apart from $\ovl{PQ}$, then ($P\#\tau(P)$, $Q\# \tau(Q)$ and) $\ovl{P\tau(P)}$ is parallel to $\ovl{Q\tau(Q)}$.
\item If $\ovl{\tau(P)\tau(Q)}$ is apart from $\ovl{DD'}$, then ($D\#\tau(P)$, $D'\#\tau(Q)$ and) $\ovl{D\tau(P)}$ is parallel to $\ovl{D'\tau(Q)}$.
\end{itemize}
\end{defn}

\begin{center}
\begin{tikzpicture}
\draw
(0,5) node[above] {$P$}
--(3,5) node[above] {$D$}
(0,3) node[left] {$\tau(P)$}
(1,2) node[below] {$Q$}
--(4,2) node[below] {$D'$}
(1,0) node[left] {$\tau(Q)$}
(0,5)--(1,2)
(3,5)--(4,2)
(0,3)--(1,0);
\end{tikzpicture}
\end{center}

Notice that in the above definition $P\#D\#D'\#Q$, the lines $\ovl{PQ}$, $\ovl{DD'}$ are apart from each other and parallel. Hence $\ovl{\tau(P)\tau(Q)}$ is apart from at least one of the lines $\ovl{PQ}$ and $\ovl{DD'}$ and parallel to both of them because $\tau$ is a dilatation.

\begin{egs}
\begin{enumerate}
 \item The identity map is a translation.
\item Given a local ring $R$, translations of the affine plane over $R$ are exactly maps of points of the form $\tau(x,y)=(x+a,y+b)$ for fixed $a$ and $b$ in $R$. This will be proved in Proposition \ref{proptring}.
\end{enumerate}
\end{egs} 

\begin{lem}
The composite of two translations is a translation.
\end{lem}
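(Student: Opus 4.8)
The plan is to use that $\rho=\sigma\circ\tau$ is already a dilatation by Lemma \ref{lemcompdil}, and then to check the two conditions in the definition of a translation directly. So fix three non-collinear points $P$, $Q$, $D$ together with the fourth parallelogram vertex $D'$ as in the definition, and concentrate on the first condition; the second is handled in exactly the same way with the pair $(D,D')$ in place of $(P,Q)$. Thus suppose $\ovl{\rho(P)\rho(Q)}\#\ovl{PQ}$. Since $\rho$ is a dilatation these two lines are parallel, and using the preaffine axiom relating parallel lines that are apart (a point lies outside at least one of them) together with the apartness axioms one first extracts $P\#\rho(P)$ and $Q\#\rho(Q)$, so that $\ovl{P\rho(P)}$ and $\ovl{Q\rho(Q)}$ are defined; the goal is to show that they are parallel.

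The geometric heart of the argument is to view the two broken paths $P\to\tau(P)\to\rho(P)$ and $Q\to\tau(Q)\to\rho(Q)$ as two triangles in perspective from the point at infinity of $\ovl{PQ}$. Write $k=\ovl{PQ}$, $l=\ovl{\tau(P)\tau(Q)}$ and $m=\ovl{\rho(P)\rho(Q)}$; all three are parallel because $\sigma$ and $\tau$ are dilatations. When $k$, $l$, $m$ are pairwise apart we are in the clean case: applying the translation property of $\tau$ to the triple $P,Q,D$ (legitimate because $k\#l$) gives $\ovl{P\tau(P)}\parallel\ovl{Q\tau(Q)}$, and applying the translation property of $\sigma$ to $\tau(P),\tau(Q),\tau(D)$ (legitimate because $l\#m$) gives $\ovl{\tau(P)\rho(P)}\parallel\ovl{\tau(Q)\rho(Q)}$. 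Feeding $A=P$, $B=\tau(P)$, $C=\rho(P)$, $A'=Q$, $B'=\tau(Q)$, $C'=\rho(Q)$ and the lines $k,l,m$ into Desargues' small theorem (Theorem \ref{thrmdesmall}) then yields exactly $\ovl{P\rho(P)}\parallel\ovl{Q\rho(Q)}$, which is what the first condition demands.

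To reach the clean case constructively I would argue from $k\#m$ by cotransitivity of the apartness relation on lines, which gives $k\#l$ or $l\#m$; so at least one of the two parallelisms above is always available. The main obstacle is the degenerate situation in which the intermediate line $l$ fails to be apart from one of the outer lines (for instance $\sigma$ moves the segment $\tau(P)\tau(Q)$ along its own line, so that $l$ and $m$ are not apart), since then Theorem \ref{thrmdesmall}, which requires three distinct parallel lines, does not apply. I would resolve this by routing through an auxiliary point $R$ chosen in general position — such a point exists by the axioms guaranteeing enough points — so that both pairs $(P,R)$ and $(R,Q)$ fall into the generic case, establishing $\ovl{P\rho(P)}\parallel\ovl{R\rho(R)}$ and $\ovl{R\rho(R)}\parallel\ovl{Q\rho(Q)}$ and then concluding by transitivity of $\parallel$; the finer Desargues variants on several parallel lines (Theorems \ref{thrmdesmall4} and \ref{thrmdes5}) are precisely what is needed to splice the intermediate parallelisms when a chain of auxiliary points is unavoidable. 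Carrying out the same reasoning with $(D,D')$ verifies the second condition, whence $\rho$ is a translation.
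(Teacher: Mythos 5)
Your clean case (all three parallel lines $k=\ovl{PQ}$, $l=\ovl{\tau(P)\tau(Q)}$, $m=\ovl{\rho(P)\rho(Q)}$ pairwise apart, two applications of the translation property, then Theorem \ref{thrmdesmall}) is correct, but the degenerate case is where the whole constructive difficulty of this lemma lives, and your repair for it does not work. Routing through an auxiliary point $R$ cannot restore genericity: whether $\ovl{PR}$, $\ovl{\tau(P)\tau(R)}$ and $\ovl{\rho(P)\rho(R)}$ are pairwise apart is governed by the displacement vectors of $\tau$ and $\sigma$ transverse to those lines, and these are the same for every point. For instance, over $\mathbb Z/(4)$ a translation by $(2,0)$ moves every point a non-invertible amount, so the line $\ovl{R\,\tau(R)}$-type apartnesses fail simultaneously for all choices of $R$; and since apartness of lines is not decidable, you cannot even case-split on whether a generic $R$ exists. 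Transitivity of $\parallel$ is fine, but each intermediate parallelism $\ovl{P\rho(P)}\parallel\ovl{R\rho(R)}$ requires its own Desargues configuration afflicted by the identical degeneracy, so nothing is gained. Separately, citing Theorem \ref{thrmdes5} here would be circular: its proof in Section \ref{secfurdes} uses Theorem \ref{thrmexistrans}, whose proof already composes translations, i.e.\ uses the present lemma; it is also a statement about concurrent lines, not parallel ones, so it is the wrong tool regardless.

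What the paper does instead --- and what your proof never exploits --- is the second clause of the two-clause definition of a translation: the parallelogram point $D'$ is not mere scaffolding but the designed substitute for your inadmissible genericity assumption. Under the case hypothesis $\ovl{\rho(P)\rho(Q)}\#\ovl{PQ}$, the paper applies cotransitivity against the two parallel, mutually apart lines $\ovl{PQ}$ and $\ovl{DD'}$, obtaining: either $l\#\ovl{PQ}$, in which case the first clause for $\tau$ followed by the \emph{second} clause for $\sigma$ (taking $P$, $Q$ as the displaced pair relative to the base points $\tau(P)$, $\tau(Q)$, using $P\notin l$) gives $\ovl{P\rho(P)}\parallel\ovl{Q\rho(Q)}$ with no Desargues at all; or $l$ is apart from both $\ovl{DD'}$ and $\ovl{\rho(P)\rho(Q)}$, in which case the second clause for $\tau$, the first clause for $\sigma$, and Theorem \ref{thrmdesmall4} on the four parallel lines $\ovl{PQ}$, $\ovl{DD'}$, $l$, $\ovl{\rho(P)\rho(Q)}$ close the argument; the second condition of the definition is then verified by the mirror-image split against $\ovl{DD'}$. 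Note in particular that neither branch ever needs both $k\#l$ and $l\#m$ at once, which is precisely the requirement your Theorem \ref{thrmdesmall} route cannot escape. Without bringing the second clause (and Theorem \ref{thrmdesmall4}) into play in this way, your degenerate case remains open.
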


\begin{proof}
 Let $\sigma$ and $\tau$ be two translations. $\sigma\circ \tau$ is a dilatation by Lemma \ref{lemcompdil}. Let the points $P$, $Q$ and $D$ be such that $P\#Q$ and $D\notin \ovl{PQ}$. Define the lines $l$ and $m$ and the point $D'$ as in the definition of translations.

\vspace{0.1 in}

{\bf Consider the case where $\ovl{\sigma(\tau(P))\sigma(\tau(Q))}\# \ovl{PQ}$.}

Then, either $\ovl{\tau(P)\tau(Q)}\#\ovl{PQ}$ or $\ovl{\tau(P)\tau(Q)}$ is apart from both the lines $\ovl{DD'}$ and $\ovl{\sigma(\tau(P)\sigma(\tau(Q))}$.

In the case where $\ovl{\tau(P)\tau(Q)}\# \ovl{PQ}$, we know that $\ovl{P\tau(P)}$ is parallel to $\ovl{Q\tau(Q)}$ because $\tau$ is a translation. $\ovl{\tau(P)\tau(Q)}\# \ovl{PQ}$  also implies that $P\notin\ovl{\tau(P)\tau(Q)}$. So, since $\sigma$ is a translation $\ovl{P\sigma(\tau(P))}\parallel\ovl{Q\sigma(\tau(Q))}$.

In the case where $\ovl{\tau(P)\tau(Q)}$ is apart from both $\ovl{DD'}$ and $\ovl{\sigma(\tau(P)\sigma(\tau(Q))}$, we know that $\ovl{D\tau(P)}\parallel \ovl{D'\tau(Q)}$ because $\tau$ is a translation, and that $\ovl{\tau(P)\sigma(\tau(P))}\parallel \ovl{\tau(Q)\sigma(\tau(Q))}$ because $\sigma$ is a translation. We can now use Theorem \ref{thrmdesmall4}, on the four parallel lines $\ovl{PQ}$, $\ovl{DD'}$, $\ovl{\tau(P)\tau(Q)}$, $\ovl{\sigma(\tau(P)\sigma(\tau(Q))}$. $\ovl{PD}\parallel \ovl{QD'}$,  $\ovl{D\tau(P)}\parallel \ovl{D'\tau(Q)}$ and $\ovl{\tau(P)\sigma(\tau(P))}\parallel \ovl{\tau(Q)\sigma(\tau(Q))}$, hence $\ovl{P\sigma(\tau(P))}\parallel \ovl{Q\sigma(\tau(Q))}$.

\vspace{0.1 in}

{\bf Consider the case where $\ovl{\sigma(\tau(P))\sigma(\tau(Q))}\#\ovl{DD'}$.}

Then, either $\ovl{\tau(P)\tau(Q)}\# \ovl{DD'}$ or $\ovl{\tau(P)\tau(Q)}$ is apart from both the lines  $\ovl{PQ}$ and $\ovl{\sigma(\tau(P))\sigma(\tau(Q))}$.

In the case where $\ovl{\tau(P)\tau(Q)}\# \ovl{DD'}$, we know that $\ovl{D\tau(P)}\parallel\ovl{D'\tau(Q)}$ because $\tau$ is a translation. Hence, $\ovl{D\sigma(\tau(P))}\parallel\ovl{D\sigma(\tau(Q))}$ because $\sigma$ is a translation.

In the case where $\ovl{\tau(P)\tau(Q)}$ is apart from both $\ovl{PQ}$ and $\ovl{\sigma(\tau(P))\sigma(\tau(Q))}$, we know that $\ovl{P\tau(P)}$ is parallel to $\ovl{Q\tau(Q)}$ because $\tau$ is a translation. Also, $\ovl{\tau(P)\sigma(\tau(P))}$ is parallel to $\ovl{\tau(Q)\sigma(\tau(Q))}$ because $\sigma$ is a translation. Hence we can apply Theorem \ref{thrmdesmall4} on the four parallel lines $\ovl{DD'}$, $\ovl{PQ}$, $\ovl{\tau(P)\tau(Q)}$, $\ovl{\sigma(\tau(P)\sigma(\tau(Q))}$. $\ovl{DP}\parallel \ovl{D'Q}$,  $\ovl{P\tau(P)}\parallel \ovl{Q\tau(Q)}$ and $\ovl{\tau(P)\sigma(\tau(P))}\parallel \ovl{\tau(Q)\sigma(\tau(Q))}$, hence $\ovl{D\sigma(\tau(P))}$ is parallel to $\ovl{D'\sigma(\tau(Q))}$.

\vspace{0.1 in}

Therefore, $\sigma\circ \tau$ is a translation.
\end{proof}

\begin{thrm}
 A translation is uniquely determined by the image of one point.
\end{thrm}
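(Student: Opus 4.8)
The plan is to reduce the statement to Theorem \ref{thrmundil}. Every translation is in particular a dilatation, and a dilatation is pinned down by its values on two points that are apart from each other; so it suffices to show that the value of a translation $\tau$ at a single point $P$ already determines its value at \emph{some} second point $Q$ with $Q\# P$. Concretely, if two translations $\tau_1,\tau_2$ agree at $P$, I would exhibit a point $Q\#P$ at which both are forced to take the same value, and then conclude $\tau_1=\tau_2$ from Theorem \ref{thrmundil}. Since I get to \emph{choose} $Q$, I may take it in convenient general position.

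First I would record the two constraints that $\tau(Q)$ must satisfy, writing $P'=\tau(P)$. Because $\tau$ is a dilatation, $\ovl{\tau(P)\tau(Q)}\parallel\ovl{PQ}$, so $\tau(Q)$ lies on the unique line $n$ through $P'$ parallel to $\ovl{PQ}$. For a transversal constraint I would bring in an auxiliary point $D$ making $P,Q,D$ non-collinear and let $D'$ complete the parallelogram as in the definition of translation. The remark following that definition guarantees that $\ovl{\tau(P)\tau(Q)}$ is apart from at least one of $\ovl{PQ}$ and $\ovl{DD'}$. In the first alternative the translation axiom yields $\ovl{P\tau(P)}\parallel\ovl{Q\tau(Q)}$, placing $\tau(Q)$ on the line through $Q$ parallel to $\ovl{PP'}$; in the second it yields $\ovl{D\tau(P)}\parallel\ovl{D'\tau(Q)}$, placing $\tau(Q)$ on the line through $D'$ parallel to $\ovl{DP'}$. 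In either alternative $\tau(Q)$ is the intersection of $n$ with a line built entirely from the data $P,P',Q,D,D'$, hence it is determined.

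Then I would attend to the degenerate configurations. Two lines determine a point only when they are apart and actually meet, so I would verify, from the non-collinearity of $P,Q,D$ together with the apartness assertions built into the translation axiom (the parentheticals giving $P\#\tau(P)$, $Q\#\tau(Q)$, etc. precisely when the corresponding direction line is invoked), that the relevant lines are genuinely apart and intersect. I would use the affine-plane axioms asserting enough points to produce $Q\#P$ in the first place, and if the active direction line threatens to degenerate I would route the argument through an intermediate auxiliary point rather than dividing by a degenerate direction.

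The main obstacle I anticipate is the constructive bookkeeping of apartness: unlike the classical argument I cannot simply declare that the two lines meet, but must derive their apartness and intersection from the hypotheses, and I must check that the two alternatives supplied by the disjunction in the remark yield the \emph{same} point $\tau(Q)$, so that the reconstruction is well defined independently of which alternative holds. Establishing this coherence, and confirming that the chosen second point is really apart from $P$ in every case, is where the genuine work lies; once $\tau(Q)$ is shown determined for one $Q\#P$, Theorem \ref{thrmundil} finishes the proof at once.
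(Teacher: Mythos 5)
Your proposal is correct and follows essentially the same route as the paper: choose $Q\#P$ and an auxiliary $D\notin\ovl{PQ}$ with $D'$ completing the parallelogram, use the disjunction that $\ovl{\tau(P)\tau(Q)}$ is apart from $\ovl{PQ}$ or from $\ovl{DD'}$, pin down $\tau(Q)$ in each case as the unique intersection of two lines built from $P,P',Q,D,D'$, and finish with Theorem \ref{thrmundil}. The coherence check you flag as the main remaining work is in fact vacuous: since $\tau$ is a dilatation, $\ovl{\tau(P)\tau(Q)}$ is the unique line through $P'$ parallel to $\ovl{PQ}$ --- the same line $n$ for every translation sending $P$ to $P'$ --- so which disjunct holds depends only on the configuration and not on $\tau$, and the two alternatives never need to be reconciled.
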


\begin{proof}
 Let $\tau$ be a translation, and let $P'$ be the image of $P$. Let $Q$ be a point such that $P\#Q$, and $D$ a point not on the line $\ovl{PQ}$. Let the lines $l$, $m$ and the point $D'$ be as in the definition of translations.
 
 If $\ovl{P'\tau(Q)}$ is apart from $\ovl{DD'}$, then $\tau(Q)$ is the unique intersection point of the line through $D'$ parallel to $\ovl{DP}$ and the line through $P'$ parallel to $PQ$. $\tau(Q)$ lies on both these lines because $\tau$ is a translation (and a dilatation), and the two lines are apart from each other and have an intersection because they are parallel to lines that are apart from each other and have an intersection point.

 If $\ovl{P'\tau(Q)}$ is apart from the line $\ovl{PQ}$, then $\tau(Q)$ is the intersection of the line through $Q$ parallel to $\ovl{P\tau(P)}$, and the line through $P$ and parallel to $\ovl{PQ}$.

 Hence, we know the images of two points that are apart from each other, therefore we have uniquely determined the dilatation $\tau$.
\end{proof}

The following lemma describes a specific class of translations. Its proof is much longer than the corresponding proof for classical affine planes (over fields). The complication is due to having no way to deal separately with the case where the affine plane does not have three lines that are apart from each other. In the classical case, it is easy to prove that the only such plane is isomorphic to the affine plane over the field $\mathbb Z/(2)$.

\begin{lem}\label{lemlinetranslation}
A dilatation $\tau$ such that $P\#\tau(P)$ for some point $P$ is a translation iff $Q\#\tau(Q)$ and $\ovl{P\tau(P)}\parallel\ovl{Q \tau(Q)}$ for all points $Q$ of the affine plane.
\end{lem}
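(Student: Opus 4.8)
The plan is to handle the two implications separately, working throughout with the displacement configuration $X\mapsto(X,\tau(X))$ and writing $d=\ovl{P\tau(P)}$ for the fixed displacement line; recall that since $\tau$ is a dilatation, Proposition \ref{propdilhom} gives $\tau(X)\#\tau(Y)$ and $\ovl{XY}\parallel\ovl{\tau(X)\tau(Y)}$ whenever $X\#Y$. For the forward implication I would show that the \emph{first} clause of the definition of a translation already forces every displacement to be parallel to $d$ (the second clause is not needed here). That clause is stated only in terms of the two points named $P,Q$, so it holds for every pair $X\#Y$, since any such pair extends to a non-collinear triple. Now take an arbitrary $Q$. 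If $Q\notin d$ then $P\#Q$ and $\tau(P)\notin\ovl{PQ}$ (otherwise $\ovl{PQ}=d$ and $Q\in d$), hence $\ovl{\tau(P)\tau(Q)}\#\ovl{PQ}$, and the clause applied to $(P,Q)$ gives $Q\#\tau(Q)$ and $\ovl{Q\tau(Q)}\parallel d$. If instead $Q\in d$, pick $D\notin d$; the previous case gives $D\#\tau(D)$ and $\ovl{D\tau(D)}\parallel d$, so $\ovl{D\tau(D)}$ is a parallel line with $\ovl{D\tau(D)}\#d$, whence $Q\notin\ovl{D\tau(D)}$ and the previous case applied to $(D,Q)$ yields $\ovl{Q\tau(Q)}\parallel\ovl{D\tau(D)}\parallel d$. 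In either case $Q\#\tau(Q)$ and $\ovl{Q\tau(Q)}\parallel d$.

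For the converse I assume every displacement is defined and parallel to $d$, and I aim for the stronger statement that $\tau$ has \emph{constant} displacement, meaning $XY\tau(Y)\tau(X)$ is a parallelogram for all $X\#Y$. The engine is a parallelogram lemma: if $X\#Y$ and $\ovl{XY}\#\ovl{X\tau(X)}$, then $XY\tau(Y)\tau(X)$ is a parallelogram. Indeed $\tau(Y)$ lies on the line through $\tau(X)$ parallel to $\ovl{XY}$ (dilatation) and on the line through $Y$ parallel to $\ovl{X\tau(X)}$ (the standing hypothesis); as $\ovl{XY}\#\ovl{X\tau(X)}$ these two lines are apart, so they meet in the unique point completing the parallelogram on $X,Y,\tau(X)$, which must be $\tau(Y)$. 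To drop the transversality restriction I would, given arbitrary $X,Y$, choose an auxiliary point $Z$ lying outside both $\ovl{X\tau(X)}$ and $\ovl{Y\tau(Y)}$; then $\ovl{XZ}\#\ovl{X\tau(X)}$ and $\ovl{YZ}\#\ovl{Y\tau(Y)}$, so the parallelogram lemma gives that $X,Z$ and $Y,Z$ have equal displacement, and transitivity of the equal-and-parallel relation on segments (a Desargues phenomenon, e.g. via Theorem \ref{thrmdesmall4}) transfers this to $X,Y$.

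Once constant displacement is established, both clauses in the definition of a translation follow. The first is immediate, equal displacements being in particular parallel. For the second, one combines the parallelogram $RSD'D$ of the definition (which makes $\ovl{DR}$ and $\ovl{D'S}$ equal and parallel) with constant displacement for the pair $R,S$ (which makes $\ovl{R\tau(R)}$ and $\ovl{S\tau(S)}$ equal and parallel); adding these equal-and-parallel segments shows $\ovl{D\tau(R)}$ and $\ovl{D'\tau(S)}$ are equal and parallel, hence parallel. The additivity invoked here is again supplied by small Desargues (Theorem \ref{thrmdesmall}, or Theorem \ref{thrmdesmall4} when coincidences force it), and the clause hypothesis $\ovl{\tau(R)\tau(S)}\#\ovl{DD'}$ together with $D\notin\ovl{RS}$ provides exactly the apartness these theorems demand (in particular $D\#\tau(R)$ and $D'\#\tau(S)$).

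I expect the main obstacle to be the converse, and specifically the passage from \emph{all displacements parallel} to \emph{all displacements equal}. Over a field one could split according as the plane does or does not contain three pairwise-apart parallel lines, but—as the remark preceding the lemma stresses—this dichotomy is unavailable over a general local ring. Routing through an auxiliary point $Z$ transverse to the displacement direction is the device that replaces it, and the genuine labour is the attendant constructive apartness bookkeeping: producing a point outside two given lines, and verifying at each step that the relevant lines and points are apart so that the cited versions of Desargues' theorem actually apply.
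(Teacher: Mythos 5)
Your forward direction is in substance the paper's, but as written it contains a constructively invalid step: the case split ``$Q\notin d$ or $Q\in d$'' is not available, since incidence is not decidable in this theory (the paper shows that $A\in l\wedge B\#A\vdash B\in l\vee B\notin l$ forces the coordinate ring to be a field). The repair is exactly what the paper does: having produced one point $R\notin d$ with $\ovl{R\tau(R)}\parallel d$ and $\ovl{R\tau(R)}\#d$, invoke the preaffine axiom $k\#l\wedge k\parallel l\vdash_{A,k,l} A\notin k\vee A\notin l$ to split constructively on $Q\notin d$ versus $Q\notin\ovl{R\tau(R)}$; both branches then run as you describe.

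The converse, which you correctly identify as the hard part, has a genuine gap at its pivotal step: the auxiliary point $Z$ lying outside both $\ovl{X\tau(X)}$ and $\ovl{Y\tau(Y)}$ need not exist. If those two parallel displacement lines are apart, such a $Z$ would give a third line $\ovl{Z\tau(Z)}$ parallel to and apart from both, i.e.\ three pairwise apart parallel lines --- precisely the configuration the remark before the lemma warns can fail over local rings. Concretely, in $\mathbb A(\mathbb Z/(4))$ take $\tau(x,y)=(x,y+1)$, $X=(0,0)$, $Y=(1,0)$: the displacement lines are $x=0$ and $x=1$, which are apart, and a point outside both would need $x$ and $x-1$ simultaneously invertible in $\mathbb Z/(4)$, which is impossible. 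So your $Z$-device does not replace the classical dichotomy; it founders on the same obstruction, and the difficulty is not ``apartness bookkeeping'' but existence. Even in the regime where $Z$ does exist (displacement lines coinciding), detecting that regime requires the undecidable split $Y\in\ovl{X\tau(X)}$ versus $Y\notin\ovl{X\tau(X)}$. A secondary weakness compounds this: your target ``$XY\tau(Y)\tau(X)$ is a parallelogram'' trivializes when $X$, $Y$, $\tau(X)$, $\tau(Y)$ are collinear, so the reduction of the second translation clause to constant displacement carries no content in exactly the degenerate case; there the needed additivity is the two-parallel-line Desargues machinery (Lemmas \ref{lempar2}--\ref{lempar4}), not Theorems \ref{thrmdesmall} or \ref{thrmdesmall4} as cited. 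The paper sidesteps all of this by verifying the second clause directly: it notes the first clause is immediate from the hypothesis, completes the configuration with an auxiliary point $D''$ (the intersection of $\ovl{PD}$ with the line through $D'$ parallel to $\ovl{QD}$), splits constructively on $D\notin\ovl{P\tau(P)}$ versus $\ovl{P\tau(P)}\#\ovl{PQ}$ (the latter reducing to the transversal case your parallelogram lemma already handles), and in the main case applies Theorem \ref{thrmdesmall4} to four parallel lines followed by Desargues' small axiom. Any patch of your route --- routing through a point $W\notin\ovl{X\tau(X)}$ and splitting on $Y\notin\ovl{X\tau(X)}$ versus $Y\notin\ovl{W\tau(W)}$ --- ends up reconstructing essentially this case structure.
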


In the following proof we often use the fact that whenever $A$, $B$, $C$, $D$ are points such that $A\#B$, $C\#D$ and the lines $\ovl{AB}$, $\ovl{CD}$ are parallel and apart from each other then $A\#C$ and $B\#D$. If moreover $\ovl{AC}\parallel \ovl{BD}$, then $\ovl{AC}\#\ovl{BD}$.

\begin{proof}
Let us start with the direct implication. Let $\tau$ be a translation and let $P$ be a point of the affine plane such that $P\# \tau(P)$. There exists a point lying outside $\ovl{P\tau(P)}$, so let $R$ be such a point. Then, $\tau(P)\#\tau(R)$ and $\tau(P)\notin \ovl{PR}$, hence $\ovl{PR}\#\ovl{\tau(P)\tau(R)}$. Therefore by the definition of translations (since there exists $D$ lying outside $\ovl{PR}$), $\ovl{P\tau(P)}\parallel \ovl{R\tau(R)}$. Given any point $Q$, it lies outside at least one of the two (parallel and apart from each other) lines $\ovl{P\tau(P)}$ and $\ovl{R\tau(R)}$. By repeating the above argument we prove that $Q\#\tau(Q)$ and $\ovl{Q\tau(Q)}$ is parallel to both $\ovl{P\tau(P)}$ and $\ovl{R\tau(R)}$.




Let us now consider the converse implication. Let $\tau$ be a dilation such that for any two points $P$ and $Q$, $P\#\tau(P)$, $Q\#\tau(Q)$ and $\ovl{P\tau(P)}\parallel\ovl{Q \tau(Q)}$. Let $P$, $Q$ be points of the affine plane that are apart from each other. Let $D$ be a point lying outside $\ovl{PQ}$ and let $D'$ as in the definition of translations, i.e. $D\#D'\#Q$, $\ovl{PD}\parallel \ovl{QD'}$ and $\ovl{PQ}\parallel \ovl{DD'}$. To prove that $\tau$ is a translation we need to prove that if $\ovl{DD'}\#\ovl{\tau(P)\tau(Q)}$, then $\ovl{D\tau(P)}\parallel \ovl{D'\tau(Q)}$, therefore we assume that $\ovl{DD'}\#\ovl{\tau(P)\tau(Q)}$. The line through $D'$ parallel to $\ovl{QD}$ is apart from $\ovl{PD}$ and they intersect, and we name their intersection point $D''$ as in the following picture. In the picture, we also show the images of the eight points via $\tau$.

\begin{center}
\begin{tikzpicture}
\draw
(0,5) node[left] {$P$}
--(6,5) node[right] {$D''$}
(0,3) node[left] {$\tau(P)$}
--(6,3) node[right] {$\tau(D'')$}
(1,2) node[above] {$Q$}
--(4,2) node[above] {$D'$}
(1,0) node[left] {$\tau(Q)$}
--(4,0) node[right] {$\tau(D')$}

(0,3)--(0,5)
(3,3) node[below] {$\tau(D)$}
--(3,5) node[above] {$D$}
(6,3) --(6,5)
(1,0)--(1,2)
(4,0)--(4,2)

(0,5)--(1,2)
--(3,5)
-- (4,2)
-- (6,5)

(0,3)
--(1,0)
--(3,3)
--(4,0)
--(6,3);
\end{tikzpicture}
\end{center}

{\bf Let us first consider the case where $\ovl{PQ}\#\ovl{\tau(P)\tau(Q)}$.}

Then, by Desargues' small axiom on the parallel lines $\ovl{\tau(P)\tau(Q)}$, $\ovl{PQ}$, $\ovl{DD'}$ (which are all apart from each other), and since $\ovl{\tau(P)P}\parallel \ovl{\tau(Q)Q}$ and $\ovl{PD}\parallel \ovl{QD'}$, we conclude that $\ovl{\tau(P)D}\parallel \ovl{\tau(Q)D'}$ as required.

\vspace{0.1 in}

$D\notin \ovl{PQ}$, hence either $D\notin \ovl{P\tau(P)}$ or $\ovl{P\tau(P)}\#\ovl{PQ}$. In the second case, $Q\notin\ovl{P\tau(P)}$, which implies that ($Q\notin \ovl{P\tau(P)}$, $\ovl{Q\tau(Q)}\#\ovl{P\tau(P)}$ and therefore also that) $\ovl{PQ}\#\ovl{\tau(P)\tau(Q)}$. Hence, it reduces to a case we have considered above.

\vspace{0.1 in}

{\bf Therefore, we shall now consider the case where $D\notin \ovl{P\tau(P)}$.}

Then, $\ovl{D\tau(D)}\#\ovl{P\tau(P)}$, therefore $\ovl{PD}\#\ovl{\tau(P)\tau(D)}$. Hence, we can now use Theorem \ref{thrmdesmall4} on the four parallel lines $\ovl{\tau(P)\tau(D)}$, $\ovl{\tau(Q)\tau(D')}$, $\ovl{\tau(D)\tau(D'')}$, $\ovl{DD''}$, which in the written order each one of them is apart from the next. $\ovl{\tau(P)\tau(Q)}\parallel\ovl{\tau(D)\tau(D')}$, $\ovl{\tau(Q)\tau(D)}\parallel \ovl{\tau(D')\tau(D'')}$ and $\ovl{\tau(D)D}\parallel \ovl{\tau(D'')D''}$, hence $\ovl{\tau(P)D}\parallel \ovl{\tau(D)D''}$.

\vspace{0.1 in}
{\bf Hence to prove that $\ovl{\tau(Q)D'}$ is parallel to $\ovl{\tau(P)D}$ it suffices to prove that it is parallel to $\ovl{\tau(D)D''}$.}

(We wish to use Desargues' small axiom on the three parallel lines $\ovl{\tau(Q)\tau(D)}$, $\ovl{\tau(D')\tau(D'')}$, $\ovl{D'D''}$, but to do that we need to prove that both $\ovl{\tau(Q)\tau(D')}$ and $\ovl{\tau(D') D'}$ are apart from $\ovl{\tau(D')\tau(D'')}$. )

$D\notin \ovl{PQ}$, hence the lines $\ovl{PD}$ and $\ovl{QD}$ are apart from each other and they intersect.  $\ovl{\tau(Q)\tau(D')}$ is parallel to $\ovl{PD}$ and $\ovl{\tau(D')\tau(D'')}$ is parallel to $\ovl{QD}$, hence $\ovl{\tau(Q)\tau(D')}$ is apart from $\ovl{\tau(D')\tau(D'')}$. 

The line $\ovl{PQ}$ is apart from $\ovl{QD}$, hence $\ovl{Q\tau(Q)}$ is apart from at least one of the two lines. In the case where $\ovl{Q\tau(Q)}\#\ovl{PQ}$, we see that $\tau(Q)\notin \ovl{PQ}$ which implies that $\ovl{\tau(P)\tau(Q)}\#\ovl{PQ}$, and therefore reduces to a case we have considered above. Thus, it remains to consider the case where $\ovl{Q\tau(Q)}$ is apart from $\ovl{QD}$ (and they intersect at $Q$). $\ovl{D'\tau(D')}\parallel \ovl{Q\tau(Q)}$ and $\ovl{\tau(D')\tau(D'')}\parallel \ovl{QD}$, hence $\ovl{D'\tau(D')}\# \ovl{\tau(D')\tau(D'')}$.

Let us now apply Desargues' small axiom on the three parallel lines $\ovl{\tau(Q)\tau(D)}$, $\ovl{\tau(D')\tau(D'')}$, $\ovl{D'D''}$. Both $\ovl{\tau(Q)\tau(D')}$ and $\ovl{\tau(D') D'}$ are apart from $\ovl{\tau(D')\tau(D'')}$.  $\ovl{\tau(Q)\tau(D')}\parallel \ovl{\tau(D)\tau(D'')}$ and $\ovl{\tau(D') D'}\parallel \ovl{\tau(D'')D''}$, therefore $\ovl{\tau(Q)D'}\parallel \ovl{\tau(D)D''}$. $\ovl{\tau(D)D''}\parallel \ovl{\tau(P)D}$, hence $\ovl{\tau(Q)D'}\parallel \ovl{\tau(P)D}$, and that proves that $\tau$ is indeed a translation.
\end{proof}

\begin{prop} \label{proptring}
Let $R$ be a local ring. A dilatation $\begin{pmatrix} r & 0 & a \\
0 & r & b \\
0 & 0 & 1
\end{pmatrix}$ of $\mathbb A(R)$ is a translation iff $r=1$.
\end{prop}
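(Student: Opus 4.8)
The plan is to prove the two implications separately, using the explicit description of dilatations from Proposition \ref{propdilring} throughout, so that every dilatation in sight is left multiplication by a matrix $\begin{pmatrix} r & 0 & a \\ 0 & r & b \\ 0 & 0 & 1\end{pmatrix}$, acting on points by $(x,y)\mapsto (rx+a, ry+b)$.

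For the direction $r=1\Rightarrow$ translation, here $\tau(x,y)=(x+a,y+b)$, so the displacement $\tau(X)-X$ equals the constant vector $(a,b)$ for every point $X$. I would verify the two bullets of the definition of translation directly. Given non-collinear $P$, $Q$, $D$ and the associated $l$, $m$, $D'$, the parallelogram conditions defining $D'$ force $D'=D+(Q-P)$, whence $\tau(P)-P=\tau(Q)-Q=(a,b)$ and $\tau(P)-D=\tau(Q)-D'=(P-D)+(a,b)$. Thus in each of the two cases the two segments whose parallelism must be checked have equal direction vectors, so they are parallel by the determinant criterion of Lemma \ref{lemparallel}. The only subtlety is to justify the parenthetical apartness statements (that $P\#\tau(P)$, $D\#\tau(P)$, etc., so the lines are actually defined); these follow from the hypothesis of each bullet together with the elementary fact recorded in the proof of Lemma \ref{lemlinetranslation}: if two parallel lines are apart and each carries a pair of apart points, then corresponding vertices are apart.

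For the direction translation $\Rightarrow r=1$, I would first reduce to $a=b=0$. The pure translation $\tau'(x,y)=(x-a,y-b)$ is a translation by the case already proved, and the composite of two translations is a translation; since $\tau'\circ\tau$ is left multiplication by $\begin{pmatrix} r & 0 & 0 \\ 0 & r & 0 \\ 0 & 0 & 1\end{pmatrix}$, it suffices to show that $\rho_r\colon(x,y)\mapsto(rx,ry)$ being a translation forces $r=1$. I would then feed the explicit non-collinear triple $P=(1,0)$, $Q=(0,1)$, $D=(0,0)$ into the second bullet of the translation definition; here $D'=(-1,1)$ and $\ovl{DD'}$ is the line $(1,1,0)$. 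A short computation gives $\ovl{\rho_r(P)\rho_r(Q)}=(1,1,-r)$, which is apart from $\ovl{DD'}$ precisely because $r$ is invertible, so the hypothesis of the bullet is genuinely satisfied. Its conclusion asserts $\ovl{D\rho_r(P)}\parallel\ovl{D'\rho_r(Q)}$; computing these lines as $(0,1,0)$ and $(1-r,1,-r)$ and applying Lemma \ref{lemparallel} yields $\det\begin{pmatrix} 0 & 1-r \\ 1 & 1\end{pmatrix}=r-1=0$, hence $r=1$.

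The main obstacle I anticipate is organizing the backward direction so that it is uniform over an arbitrary local ring rather than splitting into the classical cases (a field, or the two-element field). In particular one cannot invoke Lemma \ref{lemlinetranslation} when $r-1$ is non-invertible, since then no point is moved to a point apart from it; the reduction to $a=b=0$ followed by the single explicit configuration—whose triggering hypothesis depends only on the invertibility of $r$, which is automatic for a dilatation—is exactly what sidesteps this, making both the non-vacuousness of the hypothesis and the final equation $r-1=0$ fall out of one calculation.
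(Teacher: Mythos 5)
Your proof is correct, but both halves take a genuinely different route from the paper's. For the ``if'' direction the paper does not verify the definition directly: it invokes Lemma \ref{lemlinetranslation}, whose hypothesis demands some point $P$ with $P\#\tau(P)$, and therefore must split into the cases $(a,b)\#(0,0)$ and $(a,b)\#(1,0)$, handling the second by factoring the map as a composite of two translations already known to be translations. Your uniform verification of the two bullets via the constant displacement vector $(a,b)$ and the identity $D'=D+(Q-P)$ sidesteps that case split entirely, at the cost of discharging the apartness side-conditions by hand --- which you do correctly, appealing to the fact recorded just before the proof of Lemma \ref{lemlinetranslation}. For the ``only if'' direction the paper is slicker: having shown $\begin{pmatrix}1&0&a\\0&1&b\\0&0&1\end{pmatrix}$ is a translation, it notes that this map and the given translation both send $(0,0)$ to $(a,b)$, so they coincide by the theorem that a translation is uniquely determined by the image of one point, whence $r=1$ immediately. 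Your route --- cancel the translational part by composing with $\tau'$ (using closure of translations under composition, a lemma the paper also relies on for its ``if'' direction), then extract $r-1=0$ from one explicit instance of the second bullet with $P=(1,0)$, $Q=(0,1)$, $D=(0,0)$ --- is computationally heavier but buys independence from the uniqueness theorem, deriving the conclusion straight from the definition; and your observation that Lemma \ref{lemlinetranslation} is unusable when $r-1$ is non-invertible identifies exactly the constructive obstruction, for which the paper's uniqueness argument is the standard detour and your explicit configuration (triggered merely by the invertibility of $r$, automatic for a dilatation) is a valid alternative.
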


\begin{proof}
Let $\tau$ be a dilatation of the form $\begin{pmatrix} 1 & 0 & a \\
0 & 1 & b \\
0 & 0 & 1
\end{pmatrix}$ and note that $\tau(0,0)=(a,b)$. $(0,0)\#(1,0)$ hence $(a,b)$ is apart from at least one of $(0,0)$ and $(1,0)$. Let us first consider the case where $(a,b)\#(0,0)$, i.e. where at least one of $a$ and $b$ is invertible. Then for each point $(x,y)$ of $\mathbb A(R)$, $(x,y)\#\tau(x,y)=(x+a,y+b)$. Also the line through $(0,0)$ and $\tau(0,0)$ is represented $(b,-a,1)$ and is parallel to the line through $(x,y)$ and $\tau(x,y)$ which is the line represented by $(b,-a,-xb+ay)$. Hence, by Lemma \ref{lemlinetranslation}, $\tau$ is a translation. For the case where $(a,b)\#(1,0)$, we use the above case to show that $\begin{pmatrix} 1 & 0 & 1 \\
0 & 1 & 0 \\
0 & 0 & 1
\end{pmatrix}$ and $\begin{pmatrix} 1 & 0 & a-1 \\
0 & 1 & b \\
0 & 0 & 1
\end{pmatrix}$ are translations. Then, the composite $\begin{pmatrix} 1 & 0 & 1 \\
0 & 1 & 0 \\
0 & 0 & 1
\end{pmatrix}
\begin{pmatrix} 1 & 0 & a-1 \\
0 & 1 & b \\
0 & 0 & 1
\end{pmatrix}= 
\begin{pmatrix} 1 & 0 & a \\
0 & 1 & b \\
0 & 0 & 1
\end{pmatrix}$
is also a translation.

Conversely, suppose that a translation $\tau$ is of the form $\begin{pmatrix} r & 0 & a \\
0 & r & b \\
0 & 0 & 1
\end{pmatrix}$ where $r$ is invertible. A translation is uniquely determined by the image of a point. Both $\begin{pmatrix} r & 0 & a \\
0 & r & b \\
0 & 0 & 1
\end{pmatrix}$ and $\begin{pmatrix} 1 & 0 & a \\
0 & 1 & b \\
0 & 0 & 1
\end{pmatrix}$ are translations sending $(0,0)$ to $(a,b)$, therefore they are equal. Hence, $r=1$.
\end{proof}

\begin{lem}\label{lemexistrans}
In an affine plane, for any two points $P$ and $P'$ that are apart from each other there exists a (unique) translation sending $P$ to $P'$.
\end{lem}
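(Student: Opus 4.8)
Uniqueness is already in hand: the preceding theorem asserts that a translation is uniquely determined by the image of a single point, so two translations both sending $P$ to $P'$ must coincide. Thus the whole content is to \emph{construct} a translation $\tau$ with $\tau(P)=P'$. Since $P\#P'$, the line $\ovl{PP'}$ is defined, and the plan is to build $\tau$ by the parallelogram construction and then recognise it as a translation via Lemma \ref{lemlinetranslation}. First I would define $\tau$ on points $Q$ lying outside $\ovl{PP'}$: for such $Q$ the three points $P$, $P'$, $Q$ are non-collinear, so $\ovl{PP'}\#\ovl{PQ}$, and I set $\tau(Q)$ to be the intersection of the line through $Q$ parallel to $\ovl{PP'}$ with the line through $P'$ parallel to $\ovl{PQ}$. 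Two applications of the preaffine intersection axiom (the one stating that a line parallel to one of two apart, intersecting lines meets the other) show that these two lines are apart and do intersect, so $\tau(Q)$ is well defined; moreover by construction $\ovl{Q\tau(Q)}\parallel\ovl{PP'}$ and $Q\#\tau(Q)$, and $\tau(Q)\notin\ovl{PP'}$.

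Next I would define $\tau$ on the line $\ovl{PP'}$ itself, where the above recipe degenerates. For $Q\in\ovl{PP'}$ I would pick an auxiliary point $R\notin\ovl{PP'}$ (so $\tau(R)$ is already defined and lies off $\ovl{PP'}$), and set $\tau(Q)$ to be the intersection of the line through $Q$ parallel to $\ovl{PP'}$ with the line through $\tau(R)$ parallel to $\ovl{RQ}$. The crucial point is that this value must not depend on the choice of $R$: given a second off-line point $S$, the equality of the two candidates for $\tau(Q)$ follows from a parallelogram-chaining argument using Desargues' small axiom (Theorem \ref{thrmsmallDes}), applied to the configuration formed by the parallel directions $\ovl{PP'}$, $\ovl{R\tau(R)}$, $\ovl{S\tau(S)}$. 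This independence, together with the recurring need to verify that the relevant pairs of lines are genuinely apart before intersecting them, is the main obstacle, exactly as in the long proof of Lemma \ref{lemlinetranslation}.

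Finally, with the point map $\tau$ defined on all points, I would check the four hypotheses of Proposition \ref{proppreaffmor}: that $\tau$ preserves $\#$ on points, sends a point on $\ovl{AB}$ to a point on $\ovl{\tau(A)\tau(B)}$, sends non-collinear triples to non-collinear triples, and preserves $\parallel$ of joins. The apartness- and collinearity-preservation, and the preservation of parallelism, are established by the same style of parallelogram arguments invoking Desargues' big and small theorems (Theorems \ref{thrmdesbig} and \ref{thrmdesmall4}), and they yield that $\tau$ extends to an endomorphism of the affine plane carrying every line to a parallel line, i.e.\ a dilatation. Since by construction $Q\#\tau(Q)$ and $\ovl{Q\tau(Q)}\parallel\ovl{PP'}$ for every point $Q$ (with $P\#\tau(P)=P'$), Lemma \ref{lemlinetranslation} immediately upgrades this dilatation to a translation, which sends $P$ to $P'$ as required.
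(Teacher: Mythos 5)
Your overall strategy matches the paper's: build $\tau$ by the parallelogram construction, check it is a dilatation, and invoke Lemma \ref{lemlinetranslation} to upgrade it to a translation (uniqueness being immediate from the fact that a translation is determined by the image of one point). But there is a genuine gap in how you cover the domain. You define $\tau(Q)$ by one recipe when $Q\notin\ovl{PP'}$ and by another when $Q\in\ovl{PP'}$, tacitly assuming that every point either lies on or lies outside $\ovl{PP'}$. That dichotomy is \emph{not} available here: it is not an axiom of (pre)affine planes, and indeed the paper shows it holds for planes over a local ring $R$ precisely when $R$ is a geometric field. Over $\mathbb Z/(4)$, for instance, the point $(2,0)$ and the line $(1,0,0)$ give $\lambda_0 a_0+\lambda_1 a_1+\lambda_2 = 2$, which is neither zero nor invertible, so this point neither lies on nor outside the line. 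Consequently your $\tau$ is simply undefined on part of the plane, and no amount of Desargues-style consistency checking repairs that.

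The paper's proof fixes exactly this with one extra preliminary move: first construct $Q'$ (the image of a single auxiliary point $Q\notin\ovl{PP'}$) by your parallelogram recipe, so that $\ovl{PP'}$ and $\ovl{QQ'}$ are parallel and apart from each other; then the preaffine axiom $k\#l \wedge k\parallel l \vdash_{A,k,l} A\notin k \vee A\notin l$ gives a \emph{constructively exhaustive} case split: every point lies outside at least one of $\ovl{PP'}$, $\ovl{QQ'}$, and one defines $\tau$ by the parallelogram based at $(P,P')$ or at $(Q,Q')$ accordingly, with agreement on the overlap by Theorem \ref{thrmdesmall} — the same small-Desargues consistency idea you propose, applied to the right pair of charts. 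Your independence-of-$R$ argument would be the analogous check, but it only arises once the covering problem is solved. One further minor remark: verifying all four hypotheses of Proposition \ref{proppreaffmor} is unnecessary; since a dilatation is by definition just the point map, it suffices (as in the paper) to show $\tau$ preserves $\#$ and sends $\ovl{RS}$ to a parallel line $\ovl{\tau(R)\tau(S)}$, after which Lemma \ref{lemlinetranslation} applies exactly as you intend.
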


\begin{proof}
Let $P$ and $P'$ be points of an affine plane that are apart from each other. We pick a point $Q$ lying outside the line $\ovl{PP'}$. Let $Q'$ to be the intersection point of the line through $Q$ parallel to $\ovl{PP'}$ and the line through $P'$ parallel to $\ovl{PQ}$. It is clear that $Q\#Q'$. The lines $\ovl{PP'}$ and $\ovl{QQ'}$ are parallel and apart from each other, therefore any point lies outside at least one of them.

We define the map $\tau$ by sending a point $R$ that lies outside the line $\ovl{PP'}$ to the intersection point of the line through $R$ parallel to $\ovl{PP'}$ and the line through $P'$ parallel to $\ovl{PR}$. A point $R$ that lies outside the line $\ovl{QQ'}$ is sent to the intersection point of the line through $R$ parallel to $\ovl{QQ'}$ and the line through $Q'$ parallel to $\ovl{QR}$. Note that if a point lies outside both the lines $\ovl{PP'}$ and $\ovl{QQ'}$ the two definitions agree by Theorem \ref{thrmdesmall}. Also, notice that $\tau(P)=P'$ and $\tau(Q)=Q'$.

$R\#\tau(R)$ and $\ovl{R\tau(R)}$ is parallel to both $\ovl{PP'}$ and $\ovl{QQ'}$. Therefore by Lemma \ref{lemlinetranslation}, to prove that $\tau$ is a translation it suffices to show that it is a dilatation, i.e. we need to prove that for any two points $R$, $S$ that are apart from each other, $\tau(R)\#\tau(S)$, and $\ovl{RS}\parallel\ovl{\tau(R)\tau(S)}$.

Given a point $R$, it lies outside $\ovl{PP'}$ or outside $\ovl{QQ'}$ because the two lines are parallel and apart from each other. Without loss of generality, suppose that $R\notin \ovl{PP'}$. Let $R'$ be $\tau(R)$.

Then $\ovl{PR}\#\ovl{RR'}$ because $\ovl{PP'}\parallel\ovl{RR'}$, and the lines $\ovl{PR}$ and $\ovl{PP'}$ are apart from each other and intersect at $P$. For any point $S$ such that $R\#S$ we know that $S$ lies outside $\ovl{RR'}$ or outside $\ovl{PR}$. Let $S'$ be $\tau(S)$.

In the case where $S\notin\ovl{RR'}$, we know that $\ovl{SS'}$ is parallel and apart from $\ovl{RR'}$, so $S'$ lies outside $\ovl{RR'}$, hence $R'\#S'$. We also know that $S$ lies outside $\ovl{PP'}$ or $\ovl{QQ'}$.
In the case where $S\notin\ovl{PP'}$, we know that $\ovl{PR}\parallel\ovl{P'R'}$ and $\ovl{PS}\parallel\ovl{P'S'}$, therefore by Theorem \ref{thrmdesmall}, $\ovl{RS}\parallel\ovl{R'S'}$. 
In the case where $S\notin\ovl{QQ'}$, we know that $\ovl{PR}\parallel\ovl{P'R'}$, $\ovl{PQ}\parallel\ovl{P'Q'}$ and $\ovl{QS}\parallel\ovl{Q'S'}$, therefore by Theorem \ref{thrmdesmall4}, $\ovl{RS}\parallel\ovl{R'S'}$.

In the case where $S\notin\ovl{PR}$, we also know that $S$ must lie outside one of the two lines $\ovl{PP'}$ and $\ovl{RR'}$ because they're parallel lines that are apart from each other. We've already covered the case where $S$ is not on $\ovl{RR'}$, therefore we assume that $S$ lies outside both $\ovl{PP'}$ and $\ovl{PR}$. In this case $S'$ lies on the line $l$ through $P'$ parallel to $\ovl{PS}$, and $R'$ is on the line $m$ through $P'$ parallel to $\ovl{PR}$. $l\#m$ because these two lines are parallel to lines that are apart from each other and have an intersection point. $P'$ is the intersection point of $l$ and $m$ and $S'$ is apart from $P'$ therefore $S'\notin m$ and hence $S'\#R'$. Also, $\ovl{RS}$ is parallel to $\ovl{R'S'}$ by Theorem \ref{thrmdesmall}.
\end{proof}

For $P$ and $P'$ points of an affine plane which are apart from each other, we denote by $\tau_{PP'}$ the unique translation sending $P$ to $P'$. Note that by Lemma \ref{lemlinetranslation}, a translation $\tau$ is of the form $\tau_{PP'}$ for $P\#P'$ iff for some point $Q$, $Q\#\tau(Q)$. 

\begin{thrm} \label{thrmexistrans}
 In an affine plane, for any two points $P$ and $P'$ there exists a (unique) translation sending $P$ to $P'$.
\end{thrm}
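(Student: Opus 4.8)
The plan is to reduce to Lemma \ref{lemexistrans}, which already supplies a unique translation $\tau_{PP'}$ whenever $P\#P'$, and to treat the general case by factoring the desired translation through a suitable intermediate point. Uniqueness is immediate: since a translation is uniquely determined by the image of a single point, any two translations sending $P$ to $P'$ agree on $P$ and are therefore equal. So the real work is existence, and the obstacle is that constructively we may assume neither $P\#P'$ nor $P=P'$; we must manufacture an intermediate point apart from both.

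First I would produce a non-collinear triple. By the axioms of preaffine planes there are points $A$, $B$, $C$ and a line $l$ with $A\#B$, $A,B\in l$ and $C\notin l$; since $A\#B$ forces $l=\ovl{AB}$, Lemma \ref{lemnoncaff} together with the symmetry of non-collinearity shows that $A$, $B$, $C$ are non-collinear, hence pairwise apart: $A\#B\#C\#A$.

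Next I would show that every point is apart from at least two of $A$, $B$, $C$. Applying the apartness axiom $A\#B\vdash_{A,B,C}A\#C\vee B\#C$ three times, to the pairs $A\#B$, $B\#C$, $C\#A$ with the third point instantiated to a given point $X$, yields $A\#X\vee B\#X$, $B\#X\vee C\#X$ and $C\#X\vee A\#X$. A short case analysis on the first disjunction, using one of the remaining two in each branch, shows that $X$ is apart from at least two of $A$, $B$, $C$. Applying this to both $P$ and $P'$, and using that two two-element subsets of $\{A,B,C\}$ must meet, I obtain a point $Q\in\{A,B,C\}$ with $Q\#P$ and $Q\#P'$.

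Finally I would set $\tau=\tau_{QP'}\circ\tau_{PQ}$. Both factors exist by Lemma \ref{lemexistrans}, since $P\#Q$ (apartness is symmetric) and $Q\#P'$, and their composite is a translation because the composite of two translations is a translation; it sends $P\mapsto Q\mapsto P'$, which establishes existence. The only delicate point is the constructive extraction of the common intermediate point $Q$, and that is precisely what the non-collinear triple and the ``apart from at least two'' argument are designed to supply.
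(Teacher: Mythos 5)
Your proof is correct, and it shares the paper's overall strategy --- uniqueness from the fact that a translation is determined by the image of one point, existence by reducing to Lemma \ref{lemexistrans} and composing two translations through an intermediate point --- but your mechanism for producing the intermediate point is genuinely different. The paper's proof is lighter: it picks any point $Q$ with $P\#Q$ and applies cotransitivity once to $P\#Q$ with third point $P'$, obtaining $P\#P'\vee Q\#P'$; in the first branch $\tau_{PP'}$ itself works, and only in the second does it form $\tau_{QP'}\circ\tau_{PQ}$. You instead manufacture a single point $Q$ apart from \emph{both} $P$ and $P'$, by building a pairwise-apart triple $A\#B\#C\#A$ and running the ``every point is apart from at least two of three pairwise-apart points'' argument (this is precisely the argument of Lemma \ref{lemomega39}, transplanted to the affine setting), then intersecting the two two-element subsets --- a finite case analysis that is perfectly valid in coherent logic. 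What your route buys is a uniform output: the translation is always exhibited as the composite $\tau_{QP'}\circ\tau_{PQ}$, with no top-level case split on whether $P\#P'$; the cost is a heavier construction. Note also that you do not need full non-collinearity of $A$, $B$, $C$, only pairwise apartness, which follows from the axiom $\top\vdash\exists A,B,C,l.\,A\#B\wedge A,B\in l\wedge C\notin l$ together with $A\notin k\vdash_{A,B,k} A\#B\vee B\notin k$, so the detour through Lemma \ref{lemnoncaff} can be trimmed. By the uniqueness you established at the outset, both proofs of course produce the same translation; compare also Corollary \ref{cortwotrans}, which records the paper's two-case outcome in exactly this form.
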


\begin{proof}
 Given $P$, there exists a point $Q$ that is apart from $P$, hence $P\#P'$ or $P'\#Q$. In the first case, by the above theorem $\tau_{PP'}$ is the appropriate translation. In the second case, the translation is the composite $\tau_{QP'}\circ\tau_{PQ}$.
\end{proof}

For $P$ and $P'$ points of an affine plane, we denote by $\tau_{PP'}$ the unique translation sending $P$ to $P'$.

\begin{cor} \label{cortwotrans}
Let $\tau$ be a translation. Then, either there exist points $P\#Q$  such that $\tau$ is $\tau_{PQ}$ or there exist points $P\#Q\#R$ such that $\tau$ is $\tau_{QR}\circ\tau_{PQ}$.
\end{cor}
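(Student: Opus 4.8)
The plan is to reduce everything to the existence and uniqueness results already established for translations, and to let the two cases of the statement be selected by cotransitivity of the apartness relation rather than by a (non-constructive) case split on whether $\tau$ has a fixed point. Concretely, let $\tau$ be a translation, fix any point $P$, and set $P' = \tau(P)$. First I would produce a point $Q$ with $P\#Q$: the axioms guaranteeing enough points give points $A$, $B$ with $A\#B$, and cotransitivity of $\#$ applied to $A\#B$ with third point $P$ yields $A\#P \vee B\#P$, so one of $A$, $B$ serves as $Q$. This is exactly the device used at the start of the proof of Theorem \ref{thrmexistrans}.

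Next I would apply cotransitivity of $\#$ once more, now to $P\#Q$ with third point $P'$, obtaining the disjunction $P\#P' \vee Q\#P'$; these two alternatives furnish precisely the two cases of the corollary. In the branch $P\#P'$, Lemma \ref{lemexistrans} provides the translation $\tau_{PP'}$ sending $P$ to $P'$. Since $\tau$ also sends $P$ to $P'$ and both maps are translations, the fact that a translation is uniquely determined by the image of a single point forces $\tau = \tau_{PP'}$, which is of the first required form (with the two apart points $P$ and $P'$).

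In the branch $Q\#P'$, I would set $R = P'$ and form the composite $\tau_{QR}\circ\tau_{PQ}$. Both $\tau_{PQ}$ (using $P\#Q$) and $\tau_{QR} = \tau_{QP'}$ (using $Q\#P'$) exist by Lemma \ref{lemexistrans}; their composite is again a translation by the lemma asserting that translations are closed under composition; and it sends $P\mapsto Q\mapsto R = P' = \tau(P)$. Uniqueness of a translation given the image of one point then gives $\tau = \tau_{QR}\circ\tau_{PQ}$, while $P\#Q\#R$ holds by construction. This is the second required form.

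The argument is essentially immediate once the two translation theorems and the closure-under-composition lemma are in hand, so there is no serious computational obstacle. The one point that must be handled with care is the constructive bookkeeping: we cannot decide whether $\tau$ moves a given point, so the branching must come from cotransitivity of apartness, and one must correctly anticipate that the mediating point $R$ in the composite case has to be taken as $\tau(P)$, precisely so that $\tau_{QR}\circ\tau_{PQ}$ and $\tau$ agree on $P$ and uniqueness can be invoked.
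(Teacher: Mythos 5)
Your proof is correct and follows essentially the same route as the paper: pick $P$, set $R=\tau(P)$, choose $Q$ with $P\#Q$, and let cotransitivity of $\#$ applied to $P\#Q$ with third point $R$ select between $\tau=\tau_{PR}$ and $\tau=\tau_{QR}\circ\tau_{PQ}$, with uniqueness of a translation given the image of one point closing each branch. The only difference is that you spell out explicitly (the cotransitivity step and the uniqueness invocations) what the paper's two-line proof leaves implicit.
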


\begin{proof}
Given a translation $\tau$, pick a point $P$. Let $R=\tau(P)$, and let $Q$ be a point such that $P\#Q$. If $P\# R$, then $\tau=\tau_{PR}$. If $Q\#R$, then $\tau=\tau_{QR}\circ\tau_{PQ}$.
\end{proof}

\begin{lem}
 The inverse of a translation is a translation.
\end{lem}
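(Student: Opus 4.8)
The plan is to exhibit an explicit two-sided inverse for a given translation $\tau$ and then to recognise that inverse as a translation. First I would fix an arbitrary point $P$ and set $P'=\tau(P)$. By Theorem \ref{thrmexistrans} there is a unique translation $\sigma$ sending $P'$ back to $P$, and this $\sigma$ is the candidate for $\tau^{-1}$.

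The key observation is that \emph{a translation with a fixed point must be the identity}. Indeed, the composite $\sigma\circ\tau$ is again a translation, since the composite of two translations is a translation, and it fixes $P$ because $\sigma(\tau(P))=\sigma(P')=P$. The identity map is also a translation, and it too sends $P$ to $P$; since a translation is uniquely determined by the image of a single point, I conclude $\sigma\circ\tau=\mathrm{id}$. Running the symmetric argument on the other side, $\tau\circ\sigma$ is a translation fixing $P'$ (as $\tau(\sigma(P'))=\tau(P)=P'$), and the same uniqueness gives $\tau\circ\sigma=\mathrm{id}$.

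Hence $\tau$ is a bijection with inverse $\sigma$, and $\sigma$ is a translation by its very construction, which is exactly what the lemma asserts. The one point to be careful about is that the statement tacitly presupposes that $\tau$ is invertible in the first place; this is precisely why I would compute \emph{both} composites $\sigma\circ\tau$ and $\tau\circ\sigma$ and not merely one of them. I do not expect any serious obstacle: the substantive work has already been carried out in Theorem \ref{thrmexistrans} (existence of a translation between any ordered pair of points) together with the rigidity statement that a translation is pinned down by the image of one point, and the present argument only needs to assemble these two facts.
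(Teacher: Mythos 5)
Your proof is correct and follows essentially the same route as the paper: both fix $P$, set $P'=\tau(P)$, take $\tau_{P'P}$ as the candidate inverse via Theorem \ref{thrmexistrans}, and conclude by noting that each composite is a translation fixing a point, hence equals the identity by the uniqueness of a translation determined by the image of one point. You merely spell out explicitly the step (``a translation with a fixed point is the identity'') that the paper leaves implicit.
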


\begin{proof}
Given a translation $\tau$, pick a point $P$ and let $P'=\tau(P)$. Then, by Theorem \ref{thrmexistrans} $\tau=\tau_{PP'}$. The inverse of $\tau$ is $\tau_{P'P}$ because $P'$ is a fixed point for $\tau\tau_{P'P}$ and $P$ is a fixed point for $\tau_{P'P}\tau$.
\end{proof}

Translations are closed under composition and they are invertible, hence they form a group. Moreover by Theorem \ref{thrmexistrans}, the points of the affine plane are a torsor over translations via the natural action of translations on points.

\begin{lem}
The group of translations is abelian.
\end{lem}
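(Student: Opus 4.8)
The plan is to prove that the group of translations is abelian by leveraging the result (just established) that the points of the affine plane form a torsor over the group of translations. The key idea is the standard one from the theory of affine planes: translations that move points in different directions commute automatically via a parallelogram argument, and then a third direction can be used to handle translations in the same direction.

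First I would treat the generic case. Let $\sigma$ and $\tau$ be two translations and pick a point $P$. Set $Q=\tau(P)$ and $R=\sigma(P)$. Suppose first that $\ovl{P\tau(P)}$ and $\ovl{P\sigma(P)}$ are apart from each other (the ``different directions'' case), which requires $P\#Q$, $P\#R$ and these two lines to be genuinely distinct. By Lemma \ref{lemlinetranslation}, every point moves parallel to $\ovl{P\tau(P)}$ under $\tau$ and parallel to $\ovl{P\sigma(P)}$ under $\sigma$. I would then compute $\sigma\tau(P)$ and $\tau\sigma(P)$ and show they coincide: $\sigma\tau(P)=\sigma(Q)$ lies on the line through $Q$ parallel to $\ovl{P\sigma(P)}$, while $\tau\sigma(P)=\tau(R)$ lies on the line through $R$ parallel to $\ovl{P\tau(P)}$. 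Since $\tau(R)$ also lies on the line through $\sigma(P)=R$ parallel to $\ovl{P\tau(P)}=\ovl{PQ}$, and $\sigma(Q)$ lies on the line through $\tau(P)=Q$ parallel to $\ovl{PR}$, the two images are both the fourth vertex of the parallelogram $P,Q,R,\ast$, hence equal. As the two translations $\sigma\tau$ and $\tau\sigma$ agree on the single point $P$, by Theorem \ref{thrmexistrans} (a translation is uniquely determined by the image of one point) they are equal.

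Next I would handle the ``same direction'' case, where $\ovl{P\tau(P)}$ and $\ovl{P\sigma(P)}$ are not apart (this includes the case where one of $\sigma$, $\tau$ fixes $P$, or where both move $P$ along the same line). Here the standard trick is to introduce an auxiliary translation $\rho$ whose direction is apart from the common direction of $\sigma$ and $\tau$; such a $\rho$ exists because the affine plane contains a point off the line $\ovl{P\tau(P)}$, giving a translation moving $P$ in a transverse direction. By the generic case already proved, $\rho$ commutes with both $\sigma$ and $\tau$, and moreover $\rho\sigma$ and $\rho\tau$ each move $P$ in a direction apart from the common one (again using that the relevant parallelogram is non-degenerate). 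One then checks that $\sigma$ and $\tau$ both commute with $\rho\tau$ (or a similar composite) via the generic case, and concludes $\sigma\tau=\tau\sigma$ by cancelling $\rho$, which is legitimate since translations form a group. Constructively one must phrase this as: either the two directions are apart, in which case the generic argument applies directly, or we pass through the auxiliary transverse translation.

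The main obstacle I anticipate is organizing the constructive case distinction cleanly. Classically one simply says ``if the directions differ, argue directly; otherwise use a third direction,'' but constructively the apartness relation does not decompose into a clean dichotomy, so I must be careful to phrase everything in terms of the positive apartness data and the fact that transverse translations always exist (guaranteed by the existence of non-collinear points, Lemma \ref{lemomega4}, and Theorem \ref{thrmexistrans}). The parallelogram identification in the generic case is the geometric heart of the argument and relies on Lemma \ref{lemlinetranslation} together with the uniqueness statement of Theorem \ref{thrmexistrans}; verifying that the two constructed points are the \emph{same} intersection point, rather than merely both lying on appropriate lines, is where the apartness hypotheses ensuring the relevant lines are apart (so intersections are unique) must be invoked.
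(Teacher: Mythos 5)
Your generic (transverse) case is sound and is essentially the paper's opening move: the paper proves $\tau_{PQ}\tau_{QS}=\tau_{QS}\tau_{PQ}$ for $S\notin\ovl{PQ}$ by exactly your parallelogram argument, using the construction in Lemma \ref{lemexistrans} and the fact that a translation is determined by the image of one point. The gap is in everything after that. First, your case split ``the directions $\ovl{P\tau(P)}$ and $\ovl{P\sigma(P)}$ are apart / are not apart'' is a classical dichotomy; in this setting a case analysis must come from a \emph{positive} disjunction derived from the apartness axioms. You flag this obstacle yourself, but your proposed repair (``either the two directions are apart, or we pass through the auxiliary transverse translation'') is not such a disjunction: ``not apart'' is a negative condition, and nothing in your sketch tells you, positively, which branch you are in. The paper instead manufactures the usable disjunction from $R\#Q$ together with $\ovl{PQ}\#\ovl{QS}$, obtaining $R\notin\ovl{PQ}$ or $R\notin\ovl{QS}$, and then handles the second branch by a cancel-the-auxiliary computation of the kind you gesture at, but with every hypothesis positive at each step.

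Second, and more seriously, your plan presupposes that every translation has a direction line $\ovl{P\tau(P)}$, i.e.\ that $\tau(P)=P$ or $P\#\tau(P)$. Over a local ring this fails: in $\mathbb A(\mathbb Z/(4))$ the translation $(x,y)\mapsto (x+2,y)$ moves $P$ to a point that is neither equal to nor apart from $P$, so $\ovl{P\tau(P)}$ does not exist, the generic case can never be invoked against such a $\sigma$ (even via your auxiliary $\rho$, since ``the direction of $\sigma$ is apart from that of $\rho\tau$'' is then meaningless), and the same-direction trick cannot start. This is precisely what Corollary \ref{cortwotrans} is for, and you never use it: every translation is either of the form $\tau_{PQ}$ with $P\#Q$ or a composite $\tau_{QR}\circ\tau_{PQ}$ with $P\#Q\#R$. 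The paper first proves commutativity for translations of the first kind (your parallelogram case plus the positive case analysis above) and then extends to arbitrary pairs through this two-fold decomposition, since a product of pairwise-commuting factors commutes. Without that reduction, the ``infinitesimal'' translations are simply not covered by your argument.
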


\begin{proof}
Let us first consider translations of the form $\tau_{PQ}$ where $P\#Q$ (i.e. translations such that $\tau(P)\#P$ for some (any) point $P$). So we consider two translations such that $\tau_{PQ}$ and $\tau_{QR}$ such that $P\#Q\#R$.

There exists a point $S$ such that $S\notin \ovl{PQ}$. $\tau_{QS}(\tau_{PQ}(P))=S$ by the definition of the translations. $\tau_{QS}(P)$ is the unique intersection of the line through $P$ parallel to $\ovl{QS}$ and the line through $S$ parallel to $\ovl{PQ}$ by the construction of the translation in Lemma \ref{lemexistrans}. By the same construction (and because $\tau_{QS}(P)\notin\ovl{PQ}$),  $\tau_{PQ}(\tau_{QS}(P))$ is the unique intersection of the line through $\tau_{QS}(P)$ parallel to $\ovl{PQ}$ and the line through $Q$ parallel to $\ovl{P\tau_{QS}(P)}$. $S$ belongs to both these lines, therefore $\tau_{PQ}(\tau_{QS}(P))=S=\tau_{QS}(\tau_{PQ}(P))$. A translation is uniquely determined by the image of a point, therefore $\tau_{PQ}\tau_{QS}=\tau_{QS}\tau_{PQ}$.

Now, we return to the two translations $\tau_{PQ}$ and $\tau_{QR}$ such that $P\#Q\#R$. The lines $\ovl{PQ}$ and $\ovl{QS}$ are apart from each other and $R$ is apart from $Q$, therefore it either lies outside $\ovl{PQ}$ or outside $\ovl{QS}$.

In the case where $R\notin\ovl{PQ}$ we proceed as above and conclude that the two translations commute.

In the case where $R$ lies outside $\ovl{QS}$, it is sufficient to prove that  $\tau_{QR}\tau_{PQ}\tau_{SP}=\tau_{PQ}\tau_{QR}\tau_{SP}$ because $\tau_{SP}$ is invertible.
$\tau_{QR}\tau_{PQ}\tau_{SP}=\tau_{QR}\tau_{SQ}$ by definition and $\tau_{QR}\tau_{SQ}=\tau_{SQ}\tau_{QR}$ by the previous argument because $R\notin \ovl{QS}$. Notice that $\tau_{SQ}\tau_{QR}=\tau_{PQ}\tau_{SP}\tau_{QR}$. Let $R'=\tau_{PS}(R)$. Then, $R'\#R$ and $\ovl{RR'}\parallel\ovl{PS}$. $\ovl{PS}\#\ovl{PQ}$ and they intersect and $\ovl{RR'}\parallel\ovl{PS}$, hence $\ovl{PQ}\#\ovl{RR'}$. Hence, $\ovl{QR}$ is apart from $\ovl{PQ}$ or $\ovl{RR'}$. In the first case, we conclude that $R\notin \ovl{PQ}$ so we return to the previous argument. In the case where $\ovl{QR}\#\ovl{RR'}$, $R'$ lies outside $\ovl{QR}$, and since $\tau_{SP}=\tau_{RR'}$, we conclude that $\tau_{PQ}\tau_{SP}\tau_{QR}=\tau_{PQ}\tau_{RR'}\tau_{QR}=\tau_{PQ}\tau_{QR}\tau_{RR'}=\tau_{PQ}\tau_{QR}\tau_{SP}$, therefore $\tau_{QR}\tau_{PQ}=\tau_{PQ}\tau_{QR}$.

Hence, we have proved that $\tau\tau'=\tau'\tau$ for any two translations of the form $\tau_{PQ}$ with $P\#Q$. The result for any two translations follows from this result and Corollary \ref{cortwotrans}.
\end{proof}

\begin{prop} \label{proptn}
Given two points $A$ and $B$ of an affine plane $\ca A$ and translations $\tau_1$, $\tau_2$, $\tau_3$ and $\tau_4$ then the following hold:
\begin{enumerate}
\item $\tau_1(A)=\tau_2(A)$ implies that $\tau_1(B)=\tau_2(B)$.
\item $\tau_1(A)\#\tau_2(A)$ implies that $\tau_1(B)\#\tau_2(B)$.
\item $\tau_1(A)\#\tau_2(A)$ and $\tau_3(A)\in \ovl{\tau_1(A)\tau_2(A)}$ implies that $\tau_1(B)\#\tau_2(B)$ and that $\tau_3(B)\in \ovl{\tau_1(B)\tau_2(B)}$.
\item $\tau_1(A)$, $\tau_2(A)$, $\tau_3(A)$ non-collinear implies that $\tau_1(B)$, $\tau_2(B)$, $\tau_3(B)$ are non-collinear.
\item $\tau_1(A)\#\tau_2(A)$, $\tau_3(A)\#\tau_4(A)$ and $\ovl{\tau_1(A)\tau_2(A)}\parallel \ovl{\tau_3(A)\tau_4(A)}$ together imply that $\tau_1(B)\#\tau_2(B)$, $\tau_3(B)\#\tau_4(B)$ and $\ovl{\tau_1(B)\tau_2(B)}\parallel \ovl{\tau_3(B)\tau_4(B)}$.
\end{enumerate}
\end{prop}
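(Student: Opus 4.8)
The plan is to reduce all five assertions to a single structural observation and then invoke the properties of dilatations already established. The central point is that for any two points $A$ and $B$ of $\ca A$ there is, by Theorem \ref{thrmexistrans}, a translation $\tau_{AB}$ with $\tau_{AB}(A)=B$. Since the group of translations is abelian, for each index $i$ we compute
$$\tau_i(B)=\tau_i\bigl(\tau_{AB}(A)\bigr)=\tau_{AB}\bigl(\tau_i(A)\bigr).$$
Hence the passage from the point $\tau_i(A)$ to the point $\tau_i(B)$ is nothing other than the application of the one fixed map $\tau_{AB}$. As every translation is a dilatation, the whole proposition becomes the statement that the dilatation $\tau_{AB}$ preserves the various incidence relations, and each clause follows from a result proved earlier.

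Concretely, I would argue as follows. For (1), $\tau_{AB}$ is a function, so $\tau_1(A)=\tau_2(A)$ immediately gives $\tau_{AB}(\tau_1(A))=\tau_{AB}(\tau_2(A))$, i.e. $\tau_1(B)=\tau_2(B)$. For (2), the defining property of a dilatation says that points apart from each other are sent to points apart from each other, so $\tau_1(A)\#\tau_2(A)$ yields $\tau_1(B)\#\tau_2(B)$. For (3), I combine (2) with Proposition \ref{propdilhom}(1): if $\tau_1(A)\#\tau_2(A)$ and $\tau_3(A)\in\ovl{\tau_1(A)\tau_2(A)}$, then applying the dilatation $\tau_{AB}$ gives $\tau_3(B)\in\ovl{\tau_1(B)\tau_2(B)}$. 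For (4), Proposition \ref{propdilhom}(2) states that dilatations send non-collinear triples to non-collinear triples, so the images under $\tau_{AB}$ of $\tau_1(A),\tau_2(A),\tau_3(A)$ are non-collinear.

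For (5) I would use both the apartness-preservation and the fact (recorded in the remark following Proposition \ref{propdilhom}) that a dilatation carries every line to a parallel line. From $\tau_1(A)\#\tau_2(A)$ and $\tau_3(A)\#\tau_4(A)$ together with (2) we get $\tau_1(B)\#\tau_2(B)$ and $\tau_3(B)\#\tau_4(B)$, so the four relevant lines are defined. Moreover $\ovl{\tau_1(B)\tau_2(B)}=\ovl{\tau_{AB}(\tau_1(A))\tau_{AB}(\tau_2(A))}$ is parallel to $\ovl{\tau_1(A)\tau_2(A)}$, and likewise $\ovl{\tau_3(B)\tau_4(B)}$ is parallel to $\ovl{\tau_3(A)\tau_4(A)}$. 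Since $\parallel$ is an equivalence relation (symmetric and transitive), chaining
$$\ovl{\tau_1(B)\tau_2(B)}\parallel\ovl{\tau_1(A)\tau_2(A)}\parallel\ovl{\tau_3(A)\tau_4(A)}\parallel\ovl{\tau_3(B)\tau_4(B)}$$
gives $\ovl{\tau_1(B)\tau_2(B)}\parallel\ovl{\tau_3(B)\tau_4(B)}$. The only genuinely load-bearing step is the commutativity identity $\tau_i(B)=\tau_{AB}(\tau_i(A))$; everything after it is a direct citation of the dilatation lemmas, so there is no serious obstacle once abelianness of the translation group is invoked.
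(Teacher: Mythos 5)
Your proposal is correct and matches the paper's own proof, which reads in its entirety: ``For each one of them we use Proposition \ref{propdilhom} on the translation $\tau_{AB}$ and use that the group of translations is abelian.'' Your write-up simply makes explicit the key identity $\tau_i(B)=\tau_{AB}(\tau_i(A))$ and the clause-by-clause appeal to the dilatation properties, which is exactly what the paper's terse proof intends.
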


\begin{proof}
For each one of them we use Proposition \ref{propdilhom} on the translation $\tau_{AB}$ and use that the group of translations is abelian.
\end{proof}

\begin{rmk}
We will often use the above proposition in the case where $\tau_1$ is the identity.
\end{rmk}

Given an affine plane $\ca A$, we can view the set of translations of $\ca A$ as the set of points for a new affine plane which we denote by $\ca A_{\text{Tn}}$. We can define a $\#_{\text{Tn}}$ relation on these points by $\tau_1\#_{\text{Tn}} \tau_2$ when for some (any) point $A$ of $\ca A$, $\tau_2(A)\# \tau_2(A)$.

We define the set of lines to be the quotient of $\#_{\text{Tn}}$ by the relation which relates $(\tau_1,\tau_2)$ to $(\tau_3,\tau_4)$ when for some (any) point $A$ of $\ca A$, $\tau_3(A)\in \ovl{\tau_1(A)\tau_2(A)}$ and $\tau_4(A)\in \ovl{\tau_1(A)\tau_2(A)}$ (or equivalently when  $\ovl{\tau_1(A)\tau_2(A)}=\ovl{\tau_3(A)\tau_4(A)}$).

A translation $\tau_3$ lies on the line represented by $(\tau_1,\tau_2)$ iff $\tau_3(A)\in \ovl{\tau_1(A)\tau_2(A)}$ for some (any) $A$. A translation $\tau_3$ lies outside the line represented by $(\tau_1,\tau_2)$ iff $\tau_3(A)\notin \ovl{\tau_1(A)\tau_2(A)}$ for some (any) $A$. The lines represented by $(\tau_1,\tau_2)$ and $(\tau_3,\tau_4)$ are apart from each other iff for some (any) point $A$ of $\ca A$, the lines $\ovl{\tau_1(A)\tau_2(A)}\#\ovl{\tau_3(A)\tau_4(A)}$. The lines represented by $(\tau_1,\tau_2)$ and $(\tau_3,\tau_4)$ are parallel iff for some (any) point $A$ of $\ca A$, the lines $\ovl{\tau_1(A)\tau_2(A)}\parallel\ovl{\tau_3(A)\tau_4(A)}$. Notice that none of the definitions depends on the choice of representatives of the lines.

\begin{lem} \label{lemplanetrans}
Given an affine plane $\ca A$ and a point $A$, then there is a structure-preserving isomorphism from $\ca A_{\text{Tn}}$ to $\ca A$. 
\end{lem}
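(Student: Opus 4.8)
The plan is to use evaluation at the chosen point. Define a map $F$ on the points of $\ca A_{\text{Tn}}$ (that is, on the translations of $\ca A$) by $F(\tau)=\tau(A)$. Since by Theorem \ref{thrmexistrans} the points of $\ca A$ form a torsor under the group of translations, $F$ is a bijection on points: for each point $Q$ of $\ca A$ there is a unique translation $\tau_{AQ}$ with $\tau_{AQ}(A)=Q$. On lines I would send the class of a pair $(\tau_1,\tau_2)$ with $\tau_1\#_{\text{Tn}}\tau_2$ to the line $\ovl{\tau_1(A)\tau_2(A)}$ of $\ca A$. This is a genuine line because $\tau_1(A)\#\tau_2(A)$, and it is independent of the chosen representative precisely because the equivalence relation defining the lines of $\ca A_{\text{Tn}}$ was set up so that $(\tau_1,\tau_2)$ and $(\tau_3,\tau_4)$ are identified exactly when $\ovl{\tau_1(A)\tau_2(A)}=\ovl{\tau_3(A)\tau_4(A)}$.

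Next I would check that this line map is a bijection. For surjectivity, every line $l$ of $\ca A$ carries two points $P\#Q$ by the axioms of preaffine planes; writing $P=\tau_1(A)$ and $Q=\tau_2(A)$ for the unique translations furnished by the torsor property gives a pair with $\tau_1\#_{\text{Tn}}\tau_2$ whose image is $\ovl{PQ}=l$. Injectivity is immediate from the definition of the equivalence relation on lines of $\ca A_{\text{Tn}}$.

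The structure preservation is then essentially definitional. The relation $\#_{\text{Tn}}$ on translations, the apartness and parallelism of lines of $\ca A_{\text{Tn}}$, and its incidence and non-incidence relations were all defined by transporting the corresponding relation of $\ca A$ along evaluation at a point. Fixing the particular point $A$, each such relation holds in $\ca A_{\text{Tn}}$ if and only if the corresponding relation holds between the images under $F$ in $\ca A$; hence $F$ preserves and reflects $\#$ on points, $\#$ and $\parallel$ on lines, and both $\in$ and $\notin$. Combined with bijectivity on points and on lines, this exhibits $F$ as a structure-preserving isomorphism.

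The only point requiring genuine care is that the defining relations of $\ca A_{\text{Tn}}$ are phrased as ``for some (any) point,'' so I would record that their independence of the chosen point is exactly Proposition \ref{proptn}, applied with the translation $\tau_{AB}$ carrying evaluation at $A$ to evaluation at $B$. With that independence in hand, no further computation is needed, since $F$ is the identity at the level of the data defining $\ca A_{\text{Tn}}$; in particular this lemma simultaneously shows that $\ca A_{\text{Tn}}$ is itself an affine plane, being isomorphic to one.
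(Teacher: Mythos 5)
Your proof is correct and takes essentially the same route as the paper: both define the isomorphism by evaluation at $A$, sending $\tau$ to $\tau(A)$ on points and the class of $(\tau_1,\tau_2)$ to $\ovl{\tau_1(A)\tau_2(A)}$ on lines, with structure preservation immediate from the point-independent (via Proposition \ref{proptn}) definitions of the relations on $\ca A_{\text{Tn}}$. The only cosmetic difference is that the paper exhibits the explicit inverse $g_A$ (mapping $B$ to $\tau_{AB}$ and $l$ to the class of $(\tau_{AB},\tau_{AC})$), whereas you verify bijectivity on points and lines directly from the torsor property of Theorem \ref{thrmexistrans} — the same argument in slightly different packaging.
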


\begin{proof}
Let us consider the morphism $f_A: \ca A_{\text{Tn}}\to \ca A$ defined in the following way. The morphism on points sends a translation $\tau$ to the point $\tau(A)$. The morphism on lines sends a line represented by $(\tau_1,\tau_2)$ to $\ovl{\tau_1(A)\tau_2(A)}$ and this does not depend on the choice of representative for the line. It is clear from the above definitions that this is a structure-preserving homomorphism.

The inverse of the above morphism $g_A$, maps a point $B$ of $\ca A$ to the translation $\tau_{AB}$. Given a line $l$ of $\ca A$, there exist points $B\#C$ on $l$. $g_A$ maps the line $l$ to the line represented by $(\tau_{AB},\tau_{AC})$. Notice that this morphism does not depend on the choice of $B$ and $C$. This is also a structure-preserving homomorphism and it is easy to confirm that it is the inverse of the above morphism.
\end{proof}

An affine plane $\ca A$ contains a point $A$, therefore using that point we create a structure preserving isomorphism from the structure $\ca A$ to the structure $\ca A_{\text{Tn}}$. Hence, $\ca A_{\text{Tn}}$ satisfies the axioms of affine planes because $\ca A$ does.

By creating the group of translations we have constructed a new affine plane which comes with a chosen point. That means that it is not necessarily isomorphic to the original affine plane. In Chapter \ref{chaZG}, we see that the object of points of the generic affine plane is not pointed, hence it is not isomorphic to the one constructed using the group of translations.

\begin{thrm} \label{thrmexistdil}
 Let $P$, $Q$ and $Q'$ be points of an affine plane such that $Q\#P\#Q'$ and $Q'\in \ovl{PQ}$. There exists a unique dilatation $\sigma$ with $P$ a fixed point and $\sigma(Q)=Q'$.
\end{thrm}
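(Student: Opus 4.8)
The plan is to dispatch uniqueness at once and then build $\sigma$ by a central (homothety) construction, checking the two defining properties of a dilatation with the Desargues' theorems established above. Uniqueness is immediate: since $P\#Q$, any dilatation is determined by its values on the two points $P$ and $Q$ (Theorem~\ref{thrmundil}), so at most one dilatation fixes $P$ and sends $Q$ to $Q'$.

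For existence, put $k=\ovl{PQ}$, and note $k=\ovl{PQ'}$ since $Q'\in\ovl{PQ}$ and $P\#Q'$. A dilatation fixing $P$ must carry each point $X$ into $\ovl{PX}$ and must satisfy $\ovl{QX}\parallel\ovl{Q'\sigma(X)}$, so I would define, for every $X$ with $X\notin k$, the point $\sigma(X)$ to be the intersection of $\ovl{PX}$ with the line $n_X$ through $Q'$ parallel to $\ovl{QX}$. This is legitimate: from $X\notin k$ and $P,Q\in k$ one gets $X\#P$ and $X\#Q$, and by Lemma~\ref{lemnoncaff} the points $P,Q,X$ are non-collinear, whence $\ovl{PX}\#\ovl{QX}$; since $X$ lies on both $\ovl{PX}$ and $\ovl{QX}$ while $n_X\parallel\ovl{QX}$, the intersection axiom of preaffine planes furnishes a unique point $\ovl{PX}\cap n_X$. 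Setting $\sigma(P)=P$ one checks $\sigma(Q)=Q'$.

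The constructive difficulty is that ``$X\notin k$'' is not decidable, so this formula alone does not define $\sigma$ on all points (in particular not at $P$, which lies on every central line). To cover the plane I would fix two lines $l_1\parallel l_2$ with $l_1\#l_2$ and invoke the preaffine axiom $l_1\#l_2\wedge l_1\parallel l_2\vdash A\notin l_1\vee A\notin l_2$, so that every $X$ lies off at least one of them; placing a pair of apart reference points on each $l_i$, with $\sigma$-images already fixed by the off-$k$ construction, lets me define $\sigma(X)$ on each chart by triangulating from a reference pair with which $X$ is non-collinear, exactly as in the proof of Theorem~\ref{thrmundil}. On the overlap the two formulas agree by big Desargues (Theorem~\ref{thrmdesbig}), just as the two definitions of a translation are reconciled in Lemma~\ref{lemexistrans}.

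It then remains to show that $X\#Y$ implies $\sigma(X)\#\sigma(Y)$ and $\ovl{XY}\parallel\ovl{\sigma(X)\sigma(Y)}$. In the principal case, where $P,X,Y$ are non-collinear, this is big Desargues (Theorem~\ref{thrmdesbig}) applied to the three lines $\ovl{PX}$, $k$, $\ovl{PY}$ concurrent at $P$ and carrying the pairs $(X,\sigma(X))$, $(Q,Q')$, $(Y,\sigma(Y))$: the parallelisms $\ovl{XQ}\parallel\ovl{\sigma(X)Q'}$ and $\ovl{QY}\parallel\ovl{Q'\sigma(Y)}$ hold by construction and give $\ovl{XY}\parallel\ovl{\sigma(X)\sigma(Y)}$. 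The degenerate configurations, where $X$ and $Y$ share a line through $P$ or where some of the lines involved fail to be apart, are precisely what the sharper variant Theorem~\ref{thrmdes5} is designed to handle (hence the stated dependence on it), its auxiliary off-line point serving as the extra witness. I expect the real work, and the main obstacle, to be this constructive bookkeeping: at each invocation of a Desargues' theorem one must verify the apartness of the relevant lines and the outsideness of the relevant points, and one must arrange the construction so that $\sigma$ is simultaneously total and single-valued without ever deciding an equality of points. Granting these verifications, $\sigma$ is a dilatation fixing $P$ with $\sigma(Q)=Q'$, and uniqueness completes the proof.
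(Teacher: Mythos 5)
Your skeleton is the paper's: uniqueness from Theorem \ref{thrmundil}, the central construction $\sigma(X)=\ovl{PX}\cap n_X$ (with $n_X$ the parallel to $\ovl{QX}$ through $Q'$) for $X\notin\ovl{PQ}$, a chart-by-chart extension, and a Desargues-based verification. The genuine gap is in your covering device. You need, on each of $l_1,l_2$, a pair of apart reference points whose $\sigma$-images are \emph{already} given by the off-$\ovl{PQ}$ construction, i.e.\ two apart points of $l_i$ lying outside $\ovl{PQ}$. If you secure this by taking both $l_i$ parallel to and apart from $\ovl{PQ}$, you need three pairwise apart parallel lines, and these need not exist: the paper points out in Section \ref{secfurthdes} that $\mathbb A(\mathbb Z/(2^n))$ contains no such triple (this is exactly why the classical shortcut for Theorem \ref{thrmdesmall4} breaks down). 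Concretely, in $\mathbb A(\mathbb Z/(4))$ with $\ovl{PQ}=(1,0,0)$, the parallels apart from $\ovl{PQ}$ are $(1,0,c)$ with $c$ odd, and no two of those are apart from each other; if instead you take $l_1=(1,0,1)$ and $l_2=(1,0,2)$, the covering axiom applies, but every point $(2,t)$ of $l_2$ is neither on nor outside $\ovl{PQ}$, so $l_2$ carries no admissible reference pair at all. The paper avoids this trap with a triangle atlas rather than a parallel-pair atlas: pick $R\notin\ovl{PQ}$, define $R'=\sigma(R)$ by your formula, and then for an arbitrary $A$ use cotransitivity, $A\#P\vee A\#Q$; in the first case $A$ lies outside one of the apart intersecting lines $\ovl{PQ},\ovl{PR}$, in the second outside one of $\ovl{PQ},\ovl{QR}$, and in each chart $\sigma(A)$ is defined by intersecting the appropriate line through $P$, $Q'$ or $R'$, with overlap consistency supplied by Theorem \ref{thrmdesbig}. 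This covering needs no parallel triple and works uniformly.

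A second, smaller slip: Theorem \ref{thrmdesbig} takes $A\#C$ \emph{and} $A'\#C'$ among its hypotheses, so your principal-case invocation on the concurrent lines $\ovl{PX},\ovl{PQ},\ovl{PY}$ presupposes $\sigma(X)\#\sigma(Y)$, which is half of what you are trying to prove. In the paper this apartness is derived separately in each case before any Desargues theorem is applied (for instance, from $Y\notin\ovl{PX}$ one gets $\ovl{PX}\#\ovl{PY}$ and hence $\sigma(Y)\notin\ovl{PX}$, so $\sigma(X)\#\sigma(Y)$; in the case $Y\notin\ovl{XQ}$ it comes from the apartness of the parallels through $Q'$), and only then do Theorems \ref{thrmdesbig}, \ref{thrmdesbig4} and \ref{thrmdes5} deliver the parallelism $\ovl{XY}\parallel\ovl{\sigma(X)\sigma(Y)}$. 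You correctly anticipated the role of Theorem \ref{thrmdes5} in the degenerate configurations, but the apartness bookkeeping you deferred is not optional padding: it is load-bearing, and your atlas as stated cannot even be instantiated in some of the models the theory is designed to capture.
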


\begin{proof}
 A dilatation is uniquely determined by the images of two points that are apart from each other, so the uniqueness of $\sigma$ follows from that.

Pick a point $R$ lying outside $\ovl{PQ}$. Let $k$ be the line through $Q'$ parallel to $\ovl{QR}$. Define $\sigma(R)=R'$ to be the intersection point of $k$ with $\ovl{PR}$. $k\parallel\ovl{QR}\#\ovl{PQ}$, therefore $k\#\ovl{PQ}$ and they intersect at $Q'$. $P\#Q'$, therefore $P$ lies outside $k$. $R'\in k$ therefore $R'\#P$. Notice also that $R'\#Q'$.

Let $A$ be any point, then $A\#P$ or $A\#Q$.

If $A\#P$, then $A$ lies outside at least one of the two lines $\ovl{PQ}$ and $\ovl{PR}$. If it lies outside $\ovl{PQ}$ we define $\sigma(A)$ to be the intersection of $\ovl{PA}$ and the line through $Q'$ parallel to $\ovl{QA}$ and then $\sigma(A)\#P$ and $\sigma(A)\#Q'$ by a similar argument with the one used above. If it lies outside $\ovl{PR}$ we define $\sigma(A)$ to be the intersection of $\ovl{QR}$ and the line through $R'$ parallel to $\ovl{RA}$ and similarly $\sigma(A)\#P$ and $\sigma(A)\#R'$. Note that if $A$ lies outside both $\ovl{PQ}$ and $\ovl{PR}$, then the two definitions are equivalent by Theorem \ref{thrmdesbig}.

If $A\#Q$ then $A$ lies outside one of the two lines $\ovl{PQ}$ and $\ovl{QR}$. In the first case, we define $\sigma(A)$ as before. If $A\notin \ovl{QR}$, we define $\sigma(A)$ to be the intersection of the line through $Q'$ parallel to $\ovl{QA}$ and the line through $R'$ parallel to $\ovl{RA}$. Again if $A$ lies outside both $\ovl{PQ}$ and $\ovl{QR}$ or outside both $\ovl{PR}$ and $\ovl{QR}$, then the two definitions of $\sigma(A)$ agree by Theorem \ref{thrmdesbig}.

We now need to prove that $\sigma$ is a dilatation, i.e. for $A$ and $B$ two points such that $A\#B$, then $\sigma(A)\#\sigma(B)$ and $\ovl{AB}\parallel \ovl{\sigma(A)\sigma(B)}$

Let $A$, $B$ be any two points such that $A\#B$. Then, at least one of the two points is apart from $P$. Without loss of generality, suppose that $A\#P$ and that $A\notin \ovl{PQ}$. $\sigma(A)$ is the intersection point of $\ovl{PA}$ and the line through $Q'$ parallel to $\ovl{QA}$, and by a previous discussion $\sigma(A)$ is apart from both $P$ and $Q'$. $B\#A$, therefore $B$ lies outside at least one of the two lines $\ovl{PA}$ and $\ovl{AQ}$.

In the case where $B\notin\ovl{PA}$, then $B\#P$ therefore $\sigma(B)$ is apart from $P$ and it lies outside $\ovl{PA}$ (in both the case where $B\notin\ovl{PQ}$ and the case where $B\notin\ovl{PR}$). Since $B\notin\ovl{PA}$, we also know that $\ovl{PA}\#\ovl{PB}$, and they intersect at $P$ therefore $\sigma(B)\notin\ovl{PA}$, hence $\sigma(A)\#\sigma(B)$. In the case where $B\notin\ovl{PQ}$, by Theorem \ref{thrmdesbig} we know that $\ovl{AB}\parallel\ovl{\sigma(A)\sigma(B)}$. In the case where $B\notin\ovl{PR}$, then $\ovl{AB}\parallel\ovl{\sigma(A)\sigma(B)}$ by Theorem \ref{thrmdesbig4}.

In the case where $B\notin\ovl{AQ}$, then $B\#Q$, therefore $B$ lies outside at least one of the lines $\ovl{PQ}$ or $\ovl{QR}$ and in both cases $\sigma(B)$ is apart from $Q'$ and $\ovl{QB}\parallel\ovl{Q'\sigma(B)}$. Since $\ovl{Q'\sigma(A)}\parallel \ovl{QA}$ and $\ovl{QB}\#\ovl{QA}$, then $\ovl{Q'\sigma(B)}\#\ovl{Q'\sigma(A)}$, hence $\sigma(A)\#\sigma(B)$. In the case where $B\notin \ovl{PQ}$, $\ovl{AB}\parallel\ovl{\sigma(A)\sigma(B)}$ by Theorem \ref{thrmdesbig}. In the case where $B\notin \ovl{QR}$, then by Theorem \ref{thrmdes5}, $\ovl{AB}$ is parallel $\ovl{\sigma(A)\sigma(B)}$.
\end{proof}

\begin{rmk}
In all the cases where we are taking intersections above it is easy to show that the lines we are intersecting are apart from each other and have an intersection point (usually because they are parallel to lines with this property).
\end{rmk}

\begin{thrm}
 Given $P$, $Q$, $P'$, $Q'$ points of the affine plane such that $P\#Q$, $P'\#Q'$ and $\ovl{PQ}\parallel \ovl{P'Q'}$, there exists a unique dilatation $\sigma$ such that $\sigma(P)=P'$ and $\sigma(Q)=Q'$.
\end{thrm}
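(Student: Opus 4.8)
The plan is to get uniqueness for free from Theorem \ref{thrmundil} and to construct the required dilatation as a composite $\sigma=\rho\circ\tau$, where $\tau$ is a translation carrying $P$ to $P'$ and $\rho$ is a central dilatation fixing $P'$. For uniqueness, since $P\#Q$, Theorem \ref{thrmundil} tells us that a dilatation is determined by its values at $P$ and $Q$; hence at most one $\sigma$ can satisfy $\sigma(P)=P'$ and $\sigma(Q)=Q'$.

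For existence, first I would take $\tau=\tau_{PP'}$, the unique translation sending $P$ to $P'$ furnished by Theorem \ref{thrmexistrans}. As $\tau$ is in particular a dilatation and $P\#Q$, we obtain $P'=\tau(P)\#\tau(Q)$ together with $\ovl{PQ}\parallel\ovl{P'\tau(Q)}$. Combining this with the hypothesis $\ovl{PQ}\parallel\ovl{P'Q'}$ and the transitivity of $\parallel$, the lines $\ovl{P'\tau(Q)}$ and $\ovl{P'Q'}$ are parallel; since both pass through $P'$, the uniqueness of the parallel line through a point forces $\ovl{P'\tau(Q)}=\ovl{P'Q'}$, and therefore $Q'$ lies on $\ovl{P'\tau(Q)}$.

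Now the triple $(P',\tau(Q),Q')$ meets the hypotheses of Theorem \ref{thrmexistdil}: indeed $\tau(Q)\#P'\#Q'$ and $Q'\in\ovl{P'\tau(Q)}$. Hence there is a unique dilatation $\rho$ with $P'$ a fixed point and $\rho(\tau(Q))=Q'$. Setting $\sigma=\rho\circ\tau$, Lemma \ref{lemcompdil} guarantees that $\sigma$ is a dilatation, and one checks directly that $\sigma(P)=\rho(\tau(P))=\rho(P')=P'$ and $\sigma(Q)=\rho(\tau(Q))=Q'$, as required.

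The only step needing genuine care — and the sole place where the hypothesis $\ovl{PQ}\parallel\ovl{P'Q'}$ is actually used — is the verification that $Q'$ lies on $\ovl{P'\tau(Q)}$, which is exactly what licenses the application of Theorem \ref{thrmexistdil}; everything else is routine assembly. I would also remark that the degenerate case $P=P'$ (where $\tau$ is the identity) is subsumed by the same argument, since then $\ovl{P'Q'}=\ovl{PQ'}$ and $\ovl{PQ}\parallel\ovl{PQ'}$ again yield $Q'\in\ovl{PQ}$, so the central dilatation $\rho$ alone does the job.
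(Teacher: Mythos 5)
Your proof is correct and follows essentially the same route as the paper's: uniqueness from Theorem \ref{thrmundil}, and existence via the composite of the translation $\tau_{PP'}$ with the dilatation fixing $P'$ supplied by Theorem \ref{thrmexistdil}. The only difference is that you spell out the verification that $Q'\in\ovl{P'\tau_{PP'}(Q)}$ (via transitivity of $\parallel$ and uniqueness of the parallel through a point), which the paper asserts without detail.
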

 
\begin{proof}
 Let $R=\tau_{PP'}(Q)$. $R\# P'$ and $Q'\in \ovl{P'R}$. By Theorem \ref{thrmexistdil}, there exists a dilatation $\pi$ such that fixes $P'$ and sends $R$ to $Q'$. Hence $\pi\circ \tau_{PP'}$ is a dilatation which sends $P$ to $P'$ and $Q$ to $Q'$, and since a dilatation is uniquely determined by the images of two points that are apart from each other, $\pi\circ \tau_{PP'}$ is the unique such dilatation.
\end{proof}

A simple consequence of this theorem is the following lemma.

\begin{lem}
 Dilatations are invertible and their inverses are also dilatations.
\end{lem}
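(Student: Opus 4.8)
The final statement to prove is:

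\begin{lem}
Dilatations are invertible and their inverses are also dilatations.
\end{lem}

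\textbf{Plan of proof.} The statement follows almost immediately from the preceding theorem on the existence and uniqueness of a dilatation determined by the images of two points that are apart from each other. The strategy is to take an arbitrary dilatation $\sigma$, produce a candidate inverse dilatation using that existence theorem, and then verify that the composite in both orders is the identity by appealing to the uniqueness of dilatations fixing two points that are apart.

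\textbf{First step.} Let $\sigma$ be a dilatation. I would pick two points $P$, $Q$ with $P\#Q$. Since $\sigma$ is a dilatation, $\sigma(P)\#\sigma(Q)$ and $\ovl{PQ}\parallel\ovl{\sigma(P)\sigma(Q)}$. Setting $P'=\sigma(P)$ and $Q'=\sigma(Q)$, we therefore have $P'\#Q'$ and $\ovl{P'Q'}\parallel\ovl{PQ}$ (the parallel relation being symmetric). This is exactly the hypothesis of the immediately preceding theorem, applied with the roles of the starting and ending pairs reversed: by that theorem there exists a (unique) dilatation $\rho$ with $\rho(P')=P$ and $\rho(Q')=Q$.

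\textbf{Second step.} I would then show $\rho$ is a two-sided inverse of $\sigma$. By Lemma \ref{lemcompdil} the composites $\rho\circ\sigma$ and $\sigma\circ\rho$ are again dilatations. Now $\rho\circ\sigma$ fixes both $P$ and $Q$, since $(\rho\circ\sigma)(P)=\rho(P')=P$ and $(\rho\circ\sigma)(Q)=\rho(Q')=Q$; and $P\#Q$. The identity map is a dilatation (Example) that also fixes $P$ and $Q$, so by the Corollary to Theorem \ref{thrmundil} (a dilatation with two fixed points that are apart from each other is the identity), we conclude $\rho\circ\sigma=\mathrm{id}$. Symmetrically, $\sigma\circ\rho$ fixes both $P'$ and $Q'$ with $P'\#Q'$, hence equals the identity. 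Therefore $\sigma$ is invertible with $\sigma^{-1}=\rho$, and $\rho$ is a dilatation by construction.

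\textbf{Main obstacle.} There is essentially no obstacle here; the result is a direct corollary of the previous theorem together with the uniqueness corollary of Theorem \ref{thrmundil}. The only point requiring a little care is ensuring the hypotheses of the existence theorem are genuinely met — namely that $\sigma(P)\#\sigma(Q)$ and the parallelism $\ovl{P'Q'}\parallel\ovl{PQ}$ hold — but both are immediate from the definition of a dilatation applied to the chosen pair $P\#Q$, whose existence is guaranteed by the affine plane axioms.
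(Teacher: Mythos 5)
Your proof is correct and follows essentially the same route as the paper: apply the preceding existence-and-uniqueness theorem to the pair $(\sigma(P),\sigma(Q))$ to obtain the candidate inverse, then use the fact that a dilatation fixing two points that are apart from each other is the identity to verify both composites. Your version is, if anything, slightly more explicit than the paper's in citing Lemma \ref{lemcompdil} and the corollary to Theorem \ref{thrmundil}, which the paper invokes implicitly.
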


\begin{proof}
Given a dilatation $\sigma$, let $P$ and $Q$ be two points that are apart from each other, and let $P'$ and $Q'$ be their images. Then, $P'\#Q'$ and $\ovl{PQ}\parallel \ovl{P'Q'}$, therefore by the above theorem there exists a unique dilatation $\sigma'$ which sends $P'$ and $Q'$ to $P$ and $Q$ respectively. $P$ and $Q$ are fixed points of $\sigma'\circ \sigma$, therefore it is the identity. $P'$ and $Q'$ are fixed points of $\sigma \circ \sigma'$, therefore it is the identity. Hence, $\sigma'$ is the inverse of $\sigma$.
\end{proof}

\begin{lem} \label{lemnormtrans}
Translations form a normal subgroup of the group of dilatations, i.e. given a dilatation $\sigma$ and a translation $\tau$, the dilatation $\sigma\circ\tau\circ\sigma^{-1}$ is a translation. Moreover, given a dilatation $\sigma$ and points $P\#Q$ of an affine plane, then $\sigma\tau_{PQ}\sigma^{-1}(P)$ lies on $\ovl{PQ}$. 
\end{lem}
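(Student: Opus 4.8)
The plan is to prove both assertions by reducing to the well-behaved translations $\tau_{PQ}$ with $P\#Q$, for which the trace-line characterization of Lemma \ref{lemlinetranslation} is directly available. First note that $\rho:=\sigma\circ\tau\circ\sigma^{-1}$ is a dilatation, since $\sigma^{-1}$ is a dilatation (by the preceding lemma) and dilatations are closed under composition (Lemma \ref{lemcompdil}). The crucial preliminary reduction is the observation that conjugation by $\sigma$ is a group homomorphism, $\sigma(\tau_1\tau_2)\sigma^{-1}=(\sigma\tau_1\sigma^{-1})(\sigma\tau_2\sigma^{-1})$, and that translations form a group (closed under composition). By Corollary \ref{cortwotrans} every translation $\tau$ is either some $\tau_{PQ}$ with $P\#Q$, or a composite $\tau_{QR}\circ\tau_{PQ}$ with $P\#Q\#R$; in either case $\tau$ is a product of translations of the form $\tau_{PQ}$ with $P\#Q$. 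Hence it suffices to show that $\sigma\circ\tau_{PQ}\circ\sigma^{-1}$ is a translation whenever $P\#Q$. This reduction is exactly the point I expect to be the main obstacle: an arbitrary nontrivial translation need not move any point apart from its image (for instance $(x,y)\mapsto(x+2,y)$ on $\mathbb A(\mathbb Z/(4))$), so one cannot apply Lemma \ref{lemlinetranslation} to $\tau$ itself and must first split $\tau$ into ``nice'' factors.

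For the core step, fix $P\#Q$ and set $\rho=\sigma\circ\tau_{PQ}\circ\sigma^{-1}$. Applying the forward direction of Lemma \ref{lemlinetranslation} to the translation $\tau_{PQ}$ (which moves $P$ to $Q$ with $P\#Q$), we have $X\#\tau_{PQ}(X)$ and $\ovl{X\tau_{PQ}(X)}\parallel\ovl{PQ}$ for every point $X$. Now take an arbitrary point $Y$ and write $X=\sigma^{-1}(Y)$. Since $\sigma$ is a dilatation and $X\#\tau_{PQ}(X)$, we get $Y=\sigma(X)\#\sigma(\tau_{PQ}(X))=\rho(Y)$, and moreover $\ovl{Y\rho(Y)}=\ovl{\sigma(X)\sigma(\tau_{PQ}(X))}\parallel\ovl{X\tau_{PQ}(X)}\parallel\ovl{PQ}$, the last step using transitivity of $\parallel$. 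Thus $\rho$ is a dilatation moving every point apart from its image, with all trace lines $\ovl{Y\rho(Y)}$ parallel to the fixed line $\ovl{PQ}$; in particular $\sigma(P)\#\sigma(Q)=\rho(\sigma(P))$, so $\rho$ genuinely moves the point $\sigma(P)$. Since all trace lines share the direction of $\ovl{PQ}$, the hypotheses of the reverse direction of Lemma \ref{lemlinetranslation} are met, and therefore $\rho$ is a translation. Combined with the reduction above, this proves that $\sigma\circ\tau\circ\sigma^{-1}$ is a translation for every translation $\tau$, i.e. translations form a normal subgroup.

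For the ``moreover'' statement, keep $\rho=\sigma\circ\tau_{PQ}\circ\sigma^{-1}$ with $P\#Q$. As just established, $\rho$ is a translation moving every point apart from its image, so in particular $P\#\rho(P)$ and $\ovl{P\rho(P)}\parallel\ovl{PQ}$. Both $\ovl{P\rho(P)}$ and $\ovl{PQ}$ are lines through $P$ (the latter because $P\#Q$), and $\parallel$ is reflexive, so both are lines through $P$ parallel to $\ovl{PQ}$. By the uniqueness of the parallel through a point (the preaffine axiom $A\in k\wedge A\in l\wedge k\parallel l\vdash_{A,k,l}k=l$), we conclude $\ovl{P\rho(P)}=\ovl{PQ}$, whence $\rho(P)\in\ovl{P\rho(P)}=\ovl{PQ}$. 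This is precisely the claim that $\sigma\tau_{PQ}\sigma^{-1}(P)$ lies on $\ovl{PQ}$.
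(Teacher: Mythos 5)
Your proof is correct and follows essentially the same route as the paper's: both reduce to translations $\tau_{PQ}$ with $P\#Q$ via Corollary \ref{cortwotrans}, apply both directions of Lemma \ref{lemlinetranslation} together with the parallelism-preserving property of the dilatation $\sigma$, and extract the ``moreover'' clause from the resulting parallelism $\ovl{P\,\sigma\tau_{PQ}\sigma^{-1}(P)}\parallel \ovl{PQ}$ through $P$. The only cosmetic differences are that you perform the reduction to $\tau_{PQ}$ up front (the paper does it at the end) and anchor all trace lines to $\ovl{PQ}$ rather than to the trace at $P$.
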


\begin{proof}
Let $\sigma$ be a dilatation and let $\tau$ be a translation. $\sigma\circ\tau\circ\sigma^{-1}$ is a dilatation because dilatations are closed under composition and taking inverses.

Let us consider the case where the translation $\tau$ is of the form $\tau_{PQ}$ where $P\#Q$. Then, by Lemma \ref{lemlinetranslation}, $\tau(R)\#R$ for every point $R$. Therefore, $\sigma^{-1}(R)\#\tau(\sigma^{-1}(R))$ for every point $R$, and applying the dilatation $\sigma$ to both points, we know that $R\#\sigma(\tau(\sigma^{-1}(R)))$. Therefore, by Lemma \ref{lemlinetranslation} it is now sufficient to prove that for any point $R$, the line $\ovl{R\sigma(\tau(\sigma^{-1}(R)))}$ is parallel to $\ovl{P\sigma(\tau(\sigma^{-1}(P)))}$. The lines $\ovl{\sigma^{-1}(P)\tau(\sigma^{-1}(P))}$ and $\ovl{\sigma^{-1}(R)\tau(\sigma^{-1}(R))}$ are parallel because $\tau$ is a translation. Also, $\ovl{\sigma^{-1}(P)\tau(\sigma^{-1}(P))}\parallel\ovl{P\sigma(\tau(\sigma^{-1}(P)))}$ and $\ovl{\sigma^{-1}(R)\tau(\sigma^{-1}(R))}\parallel\ovl{R\sigma(\tau(\sigma^{-1}(R)}$ because $\sigma$ is a dilatation. Hence, $\ovl{R\sigma(\tau(\sigma^{-1}(R)))}\parallel \ovl{P\sigma(\tau(\sigma^{-1}(P)))}$
by the transitivity of $\parallel$, therefore $\sigma\tau\sigma^{-1}$ is a translation.

By the above and using the transitivity of $\parallel$, we can also see that $\ovl{P\sigma(\tau(\sigma^{-1}(P)))}\parallel \ovl{\sigma^{-1}(R)\tau(\sigma^{-1}(R))}$. For $R=\sigma(P)$ and since $\tau(P)=Q$, the above relation becomes $\ovl{P\sigma(\tau(\sigma^{-1}(P)))}\parallel \ovl{PQ}$. Hence, $\sigma\tau_{PQ}\sigma^{-1}(P)$ lies on $\ovl{PQ}$.

Let us now consider the case where $\tau$ is any translation. Then, either there exist points $P\#Q$ such that $\tau=\tau_{PQ}$ or there exist points $P\# Q\# R$  such that $\tau=\tau_{QR}\circ\tau_{PQ}$. The first case is covered above. In the second case, $\sigma \circ \tau \circ \sigma^{-1}= (\sigma \tau_{QR} \sigma^{-1}) \circ (\sigma \tau_{PQ} \sigma^{-1})$ and therefore it is a translation because it is the composite of two translations.
\end{proof}

\section{The local ring of trace preserving homomorphisms} \label{secaffring}

\begin{defn}
 Let $\sigma$ be a dilatation, and $P$ a point. Any line containing $P$ and $\sigma(P)$ is called a \emph{trace} of $\sigma$ at $P$. 
\end{defn}

\begin{rmk}
If $P\#\sigma (P)$ then the trace of $\sigma$ at $P$ is unique.

Given $A$ and $B$ points that are apart from each other, $\ovl{AB}$ is the unique trace of $\tau_{AB}$.
\end{rmk}

\begin{thrm} \label{thrmtrace}
 Let $\sigma$ be a dilatation, $P$ a point and $l$ a trace of $\sigma$ at $P$. If $Q\in l$, then $\sigma (Q)$ also lies on $l$.
\end{thrm}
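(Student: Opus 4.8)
The plan is to first isolate the case where the point in question is apart from $P$, and then to bootstrap to the general case using the constructive apartness (cotransitivity) axiom, since we cannot decide whether $Q=P$ or $Q\#P$.

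\textbf{Step 1 (the apart case).} First I would prove the auxiliary claim: \emph{if $T\in l$ and $T\#P$, then $\sigma(T)\in l$}. Since $T\#P$ and both $T,P$ lie on $l$, uniqueness of the line through two points that are apart gives $l=\ovl{PT}$. Because $\sigma$ is a dilatation, $\sigma(P)\#\sigma(T)$ and $\ovl{\sigma(P)\sigma(T)}\parallel \ovl{PT}=l$. Now $\sigma(P)\in l$ since $l$ is a trace of $\sigma$ at $P$, so $\ovl{\sigma(P)\sigma(T)}$ is a line through $\sigma(P)$ parallel to $l$; but $l$ is itself a line through $\sigma(P)$ parallel to $l$, and by the axiom of preaffine planes giving uniqueness of the parallel to a line through a given point (together with reflexivity and symmetry of $\parallel$) the two lines coincide. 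Hence $\sigma(T)\in \ovl{\sigma(P)\sigma(T)}=l$.

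\textbf{Step 2 (a good auxiliary base point).} By the axiom guaranteeing two apart points on any line, I would choose $A,B\in l$ with $A\#B$. Cotransitivity of $\#$ applied to $A\#B$ and the point $P$ yields $A\#P$ or $B\#P$; in either case I obtain a point $S\in l$ with $S\#P$, and Step 1 gives $\sigma(S)\in l$. Thus $l$ is simultaneously a trace of $\sigma$ at the two apart points $P$ and $S$.

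\textbf{Step 3 (the general point $Q$).} Given $Q\in l$, cotransitivity applied to $P\#S$ and $Q$ gives $Q\#P$ or $Q\#S$. If $Q\#P$, Step 1 with $T=Q$ yields $\sigma(Q)\in l$ directly. If $Q\#S$, I would run the Step 1 argument with base point $S$ in place of $P$: $l=\ovl{QS}$, $\ovl{\sigma(Q)\sigma(S)}\parallel l$, and $\sigma(S)\in l$, so once more the parallel through $\sigma(S)$ is unique and $\sigma(Q)\in l$. Either way the conclusion holds. The main obstacle is precisely this constructive point: the clean parallel argument only applies when $Q$ is apart from a base point whose image already lies on $l$, so the real content is manufacturing, via cotransitivity and the existence of two apart points on $l$, a pair of apart base points $P,S\in l$ with images on $l$, ensuring that an arbitrary $Q\in l$ is apart from at least one of them.
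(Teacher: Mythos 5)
Your proof is correct and follows essentially the same route as the paper: the paper likewise picks a point $R\in l$ apart from $P$, shows $\sigma(R)\in l$ by the uniqueness of the parallel to $l$ through $\sigma(P)$, and then handles a general $Q\in l$ by cotransitivity ($Q\#P$ or $Q\#R$) and repeating the argument. Your write-up merely makes explicit the cotransitivity step used to obtain the auxiliary apart point, which the paper leaves implicit.
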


\begin{proof}
There exists a point $R\in l$ with $P\#R$ (because any line contains two points which are apart from each other), therefore $l=\ovl{PR}$. $\ovl{\sigma(P)\sigma(R)}$ is parallel to $\ovl{PR}$, therefore the two lines must be equal because $l$ already contains $\sigma(P)$. Hence $\sigma(R)\in l$.

Since $P\#R$, $Q$ must be apart from at least one of the points $P$ or $R$. Hence, by repeating the above argument we prove that $\sigma(Q)\in l$.
\end{proof}

\begin{cor}
 If two traces of a dilatation have a unique intersection point then that is a fixed point for the dilatation.
\end{cor}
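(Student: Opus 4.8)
The plan is to reduce the statement to a direct application of Theorem~\ref{thrmtrace} together with the hypothesis of uniqueness of the intersection point. Let $\sigma$ be the dilatation in question, let $l$ and $m$ be the two traces, and let $A$ be their (assumed unique) intersection point, so that $A\in l$ and $A\in m$.

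The key observation is that each trace is an invariant line for $\sigma$ in the sense of Theorem~\ref{thrmtrace}: since $l$ is a trace of $\sigma$ (at some point $P$) and $A\in l$, that theorem gives $\sigma(A)\in l$; applying the same reasoning to the trace $m$ gives $\sigma(A)\in m$. Hence $\sigma(A)$ is a point lying on both $l$ and $m$, just as $A$ does. By hypothesis $A$ is the \emph{unique} point lying on both $l$ and $m$, so $\sigma(A)=A$, i.e.\ $A$ is a fixed point of $\sigma$.

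I do not expect any genuine obstacle here; the only point requiring a little care is the bookkeeping of what ``trace'' means, namely that each of $l$, $m$ is a trace of $\sigma$ at some point and that Theorem~\ref{thrmtrace} therefore applies to all points on it (in particular to $A$), so that $\sigma$ carries $A$ back into both lines. Once this invariance is in place, the uniqueness of the common point closes the argument immediately; no appeal to the apartness of $l$ and $m$ or to any further axioms is needed, since uniqueness of the intersection is exactly what is assumed.
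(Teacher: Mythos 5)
Your proof is correct and is exactly the argument the paper intends: the corollary is stated without proof precisely because it follows immediately from Theorem~\ref{thrmtrace} applied to each trace, followed by the uniqueness of the intersection point, which is what you wrote.
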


Given an endomorphism $\alpha$ of the group of translations, and a translation $\tau$, we write $\tau^\alpha$ for the image of $\tau$ via $\alpha$.

\begin{defn}
An endomorphism $\alpha$ of the group of translations is \emph{trace preserving} when it is a group homomorphism and for every translation $\tau$ the traces of $\tau$ are also traces for $\tau^{\alpha}$, i.e. given a point $P$ and a line $l$ such that $P, \tau(P)\in l$, then $\tau^{\alpha}(P)\in l$.
\end{defn}

\begin{egs}
\begin{enumerate}
\item The identity morphism on translations is a trace preserving homomorphism and it will be denoted by $1$.
\item The constant morphism mapping all translations to the identity translation is a trace preserving homomorphism and it will be denoted by $0$.
\end{enumerate}
\end{egs}

\begin{lem}
An endomorphism $\alpha$ of the group of translations is trace preserving iff it is a group homomorphism and for every two points $A$ and $B$ that are apart from each other $\tau_{AB}^{\alpha}(A)\in \ovl{AB}$.
\end{lem}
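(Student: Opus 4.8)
The plan is to prove the two implications separately, with the forward direction being a direct unwinding of definitions and the converse carrying essentially all of the work. Throughout I would keep the argument constructive, so the only permitted case analysis is an apartness disjunction whose branches both yield the same conclusion.

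For the forward implication, suppose $\alpha$ is trace preserving. Then by definition $\alpha$ is already a group homomorphism, so it remains to verify the stated incidence. Given $A \# B$, the line $\ovl{AB}$ passes through $A$ and through $\tau_{AB}(A) = B$, so $\ovl{AB}$ is a trace of $\tau_{AB}$ at $A$; trace preservation then gives $\tau_{AB}^{\alpha}(A) \in \ovl{AB}$ immediately.

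For the converse, assume $\alpha$ is a group homomorphism with $\tau_{AB}^{\alpha}(A) \in \ovl{AB}$ whenever $A \# B$, and let $\tau$ be an arbitrary translation, $P$ a point, and $l$ a line with $P, \tau(P) \in l$; the goal is $\tau^{\alpha}(P) \in l$. First I would reduce to a line spanned by an apart pair: since every line carries two points that are apart (an axiom of preaffine planes) and the apartness axiom forces $P$ to be apart from at least one of them, there is a point $Q \in l$ with $P \# Q$, whence $l = \ovl{PQ}$. Applying the apartness axiom to $P \# Q$ yields the disjunction $P \# \tau(P)$ or $Q \# \tau(P)$, and I treat the two disjuncts in turn. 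In the first case $\tau = \tau_{P\tau(P)}$ by uniqueness of translations, and since $P, \tau(P) \in \ovl{PQ}$ we have $\ovl{P\tau(P)} = \ovl{PQ}$; the hypothesis applied with $A = P$, $B = \tau(P)$ gives $\tau^{\alpha}(P) \in \ovl{P\tau(P)} = l$ at once.

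The second case $Q \# \tau(P)$ is the crux, and here I would factor $\tau$ through $Q$. Setting $\nu = \tau_{Q\tau(P)}$, the composite $\nu \circ \tau_{PQ}$ sends $P \mapsto Q \mapsto \tau(P)$, so by uniqueness it equals $\tau$, and therefore $\tau^{\alpha} = \nu^{\alpha} \circ \tau_{PQ}^{\alpha}$ because $\alpha$ is a homomorphism. The hypothesis gives $\tau_{PQ}^{\alpha}(P) \in \ovl{PQ}$, and also $\nu^{\alpha}(Q) = \tau_{Q\tau(P)}^{\alpha}(Q) \in \ovl{Q\tau(P)} = \ovl{PQ}$ (using $Q, \tau(P) \in \ovl{PQ}$ and $Q \# \tau(P)$). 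The latter says that $\ovl{PQ}$ is a trace of the dilatation $\nu^{\alpha}$ at $Q$, so Theorem \ref{thrmtrace} guarantees $\nu^{\alpha}$ maps every point of $\ovl{PQ}$ back into $\ovl{PQ}$; applying this to $\tau_{PQ}^{\alpha}(P) \in \ovl{PQ}$ yields $\tau^{\alpha}(P) = \nu^{\alpha}(\tau_{PQ}^{\alpha}(P)) \in \ovl{PQ} = l$. The main obstacle is exactly this second case: unlike the classical argument one cannot assume $P \# \tau(P)$, so the factorization $\tau = \nu \circ \tau_{PQ}$ through the auxiliary apart pair $(Q, \tau(P))$ is what lets both the hypothesis and the trace-transport of Theorem \ref{thrmtrace} be brought to bear, and the two disjuncts recombine to give the single conclusion $\tau^{\alpha}(P) \in l$.
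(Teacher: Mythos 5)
Your proof is correct and follows essentially the same route as the paper: the forward direction is the same unwinding of the definition, and the converse uses the identical case split ($P\#\tau(P)$ or $Q\#\tau(P)$ obtained from a point $Q\in l$ apart from $P$), the same factorization $\tau=\tau_{Q\tau(P)}\circ\tau_{PQ}$, and the same application of Theorem \ref{thrmtrace} to transport $\tau_{PQ}^{\alpha}(P)$ along the trace $l$ of $\tau_{Q\tau(P)}^{\alpha}$ at $Q$. No gaps; the only cosmetic difference is that the paper names $R=\tau(P)$ where you keep $\tau(P)$ explicit.
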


\begin{proof}
 The direct implication is true by the definition of trace preserving homomorphisms.

Let us suppose that $\alpha$ is an endomorphism of the group of translation which is a group homomorphism and such that for any two points $A$ and $B$ that are apart from each other $\tau_{AB}^{\alpha}(A)\in \ovl{AB}$.
Let $\tau$ be a translation and let $l$ be a trace for $\tau$ at a point $P$. Let $R$ be $\tau(P)$ and notice that $R\in l$. We pick a point $Q$ on $l$ which is apart from $P$. Then, either $P\#R$ or $Q\#R$.
In the case where $P\#R$, $\tau=\tau_{PR}$ and $l=\ovl{PR}$, hence $\tau^{\alpha}(P)\in l$ and $l$ is a trace for $\tau^\alpha$ at $P$.
In the case where $Q\#R$, then $l=\ovl{PQ}=\ovl{QR}$. $\tau=\tau_{PR}=\tau_{QR}\tau_{PQ}$, and since $\alpha$ is a group homomorphism $\tau^\alpha=\tau^\alpha_{QR}\tau^\alpha_{PQ}$. $\tau_{PQ}^\alpha(P)\in l$ and $l$ is a trace for $\tau^\alpha_{QR}$ at $Q$, hence by Theorem \ref{thrmtrace} $\tau^\alpha_{QR}(\tau^\alpha_{PQ}(P))\in l$. Hence, $\tau^\alpha(P)\in l$ and $l$ is also a trace for $\tau^\alpha$ at $P$. Therefore, $\alpha$ is a trace preserving homomorphism.
\end{proof}

\begin{cor}
 $\tau\mapsto \sigma\tau\sigma^{-1}$ is a trace preserving homomorphism.
\end{cor}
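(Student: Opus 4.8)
The plan is to combine Lemma \ref{lemnormtrans} with the characterization of trace preserving homomorphisms established immediately above, which says that an endomorphism $\alpha$ of the group of translations is trace preserving exactly when it is a group homomorphism and satisfies $\tau_{AB}^{\alpha}(A)\in \ovl{AB}$ for all points $A\#B$. Here $\sigma$ is a dilatation and we set $\alpha(\tau)=\sigma\circ\tau\circ\sigma^{-1}$. The first thing to check is that $\alpha$ is genuinely an endomorphism of the group of translations, i.e.\ that it sends translations to translations: this is precisely the first assertion of Lemma \ref{lemnormtrans}, since dilatations are closed under composition and inverses and $\sigma\circ\tau\circ\sigma^{-1}$ is shown there to be a translation.

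Next I would verify that $\alpha$ is a group homomorphism, which is a one-line algebraic computation: for translations $\tau_1$, $\tau_2$ we have
\[
\alpha(\tau_1\circ\tau_2)=\sigma\circ\tau_1\circ\tau_2\circ\sigma^{-1}=(\sigma\circ\tau_1\circ\sigma^{-1})\circ(\sigma\circ\tau_2\circ\sigma^{-1})=\alpha(\tau_1)\circ\alpha(\tau_2),
\]
using that $\sigma^{-1}\circ\sigma$ is the identity. This needs no geometry beyond the group structure on translations already established.

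Finally, to apply the characterization lemma it remains to check the trace condition $\tau_{AB}^{\alpha}(A)\in \ovl{AB}$ for arbitrary points $A\#B$. Unwinding the definition, $\tau_{AB}^{\alpha}(A)=\sigma\circ\tau_{AB}\circ\sigma^{-1}(A)$, and the ``moreover'' clause of Lemma \ref{lemnormtrans}, applied with $P=A$ and $Q=B$, states exactly that $\sigma\tau_{AB}\sigma^{-1}(A)$ lies on $\ovl{AB}$. Hence the trace condition holds, and by the characterization $\alpha$ is trace preserving. I do not expect any real obstacle here: the genuinely hard content—that conjugation by a dilatation carries a translation to a translation and respects traces—was already carried out in Lemma \ref{lemnormtrans}, so this corollary is essentially a repackaging of that lemma through the characterization criterion, together with the trivial homomorphism computation.
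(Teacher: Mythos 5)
Your proof is correct and follows essentially the same route as the paper: both invoke Lemma \ref{lemnormtrans} for the facts that $\sigma\tau\sigma^{-1}$ is a translation and that $\sigma\tau_{AB}\sigma^{-1}(A)\in\ovl{AB}$, and then apply the preceding characterization of trace preserving homomorphisms, with the group homomorphism property being the same trivial conjugation computation (which you spell out slightly more explicitly than the paper does).
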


\begin{proof}
The above morphism is an endomorphism of the group of translations by Lemma \ref{lemnormtrans}. It is a ring homomorphism and given points $A\#B$ of the affine plane, then $\sigma\tau_{AB}\sigma^{-1}(A)\in \ovl{AB}$ by Lemma \ref{lemnormtrans}. Hence, by the above Lemma, this endomorphism is a trace preserving homomorphism.
\end{proof}

\begin{lem}
 Let $\alpha$ be a trace preserving homomorphism and suppose that for some translation $\tau$ such that $P\#\tau(P)$, $\tau^\alpha(P)=P$. then $\alpha=0$.
\end{lem}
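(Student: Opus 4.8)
The plan is to deduce from the single equation $\tau^\alpha(P)=P$ that $\alpha$ kills every translation. First, since $\alpha$ is a group homomorphism, $\tau^\alpha$ is again a translation, and it fixes $P$; as a translation is determined by the image of one point and the identity translation fixes $P$, we conclude that $\tau^\alpha$ is the identity translation. Note also that $\tau$ is \emph{directional}, i.e. $R\#\tau(R)$ for every point $R$ (by Lemma \ref{lemlinetranslation}, using $P\#\tau(P)$). By Corollary \ref{cortwotrans} every translation is either directional or a composite of two directional ones, and $\alpha$ is a homomorphism, so it suffices to prove $\sigma^\alpha=\mathrm{id}$ for every directional translation $\sigma$.

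The geometric heart is the following claim: \emph{if $\rho$ is a directional translation with $\rho^\alpha=\mathrm{id}$, and $\sigma$ is a directional translation whose trace through $P$ is apart from that of $\rho$, i.e. $\ovl{P\sigma(P)}\#\ovl{P\rho(P)}$, then $\sigma^\alpha=\mathrm{id}$.} To prove it, put $Q=\rho(P)$ and $Q'=\sigma(Q)$. The traces $\ovl{QQ'}=\ovl{Q\sigma(Q)}$ and $\ovl{P\sigma(P)}$ of $\sigma$ are parallel, so the preaffine intersection axiom $A\in l,m\wedge k\parallel l\wedge l\#m\vdash k\#m$ (applied with $A=P$ on both $l=\ovl{P\sigma(P)}$ and $m=\ovl{PQ}$, and $k=\ovl{QQ'}$) yields $\ovl{PQ}\#\ovl{QQ'}$; together with $P\#Q\#Q'$ this makes $P,Q,Q'$ non-collinear by Lemma \ref{lemnonc}. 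Now set $\mu=\sigma\circ\rho$; then $\mu(P)=Q'$ and $\mu^\alpha=\sigma^\alpha\rho^\alpha=\sigma^\alpha$, since the translation group is abelian and $\rho^\alpha=\mathrm{id}$. Trace preservation for $\sigma$ gives $\sigma^\alpha(P)\in\ovl{P\sigma(P)}$, while trace preservation for $\mu$ gives $\sigma^\alpha(P)=\mu^\alpha(P)\in\ovl{P\mu(P)}=\ovl{PQ'}$. Applying the same intersection axiom once more (with $A=Q'$ on both $\ovl{QQ'}$ and $\ovl{PQ'}$, which are apart by the non-collinearity of $P,Q,Q'$, and $k=\ovl{P\sigma(P)}\parallel\ovl{QQ'}$) gives $\ovl{P\sigma(P)}\#\ovl{PQ'}$; these are two apart lines both containing $P$, so they meet in at most one point, forcing $\sigma^\alpha(P)=P$ and hence $\sigma^\alpha=\mathrm{id}$.

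Applying the claim with $\rho=\tau$ shows $\sigma^\alpha=\mathrm{id}$ for every directional $\sigma$ with $\ovl{P\sigma(P)}\#\ovl{PQ}$, where $Q=\tau(P)$. To cover the remaining directional translations I would fix one auxiliary direction: by the axiom that every line has an exterior point there is a point $X\notin\ovl{PQ}$, and $\rho:=\tau_{PX}$ is directional with $\ovl{P\rho(P)}=\ovl{PX}\#\ovl{PQ}$, so the claim gives $\rho^\alpha=\mathrm{id}$. Now for an arbitrary directional $\sigma$, cotransitivity of the apartness of lines applied to $\ovl{P\tau(P)}\#\ovl{P\rho(P)}$ with the third line $\ovl{P\sigma(P)}$ yields $\ovl{P\sigma(P)}\#\ovl{P\tau(P)}$ or $\ovl{P\sigma(P)}\#\ovl{P\rho(P)}$; in the first case the claim with $\rho=\tau$ applies, in the second the claim with the chosen $\rho$ applies, and either way $\sigma^\alpha=\mathrm{id}$. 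Combined with the reduction of the first paragraph, this proves $\alpha=0$.

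I expect the main obstacle to be the bookkeeping of line apartness and parallelism inside the claim: making sure that ``apart directions'' at the common point $P$ genuinely produces two lines through $P$ meeting only at $P$, and extracting $\ovl{P\sigma(P)}\#\ovl{PQ'}$ cleanly from the non-collinearity of $P,Q,Q'$ rather than from a bare parallelism. Each of these is a routine application of the preaffine axioms (the intersection axiom above, cotransitivity, and that apart lines have a unique common point), but they must be threaded carefully and constructively, avoiding any case-split on whether two directions happen to be equal or apart.
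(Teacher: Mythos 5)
Your proof is correct and follows essentially the same route as the paper's: your key claim, specialized to $\sigma=\tau_{QR}$, $\rho=\tau_{PQ}$ and $\mu=\tau_{PR}$, is exactly the paper's computation pinning down the $\alpha$-image at $P$ as the unique intersection of two apart traces through $P$, followed by the same bootstrap to a second direction and the same reduction of an arbitrary translation to a composite of two directional ones via Corollary \ref{cortwotrans}. The only cosmetic differences are that you apply trace preservation to the composite $\mu=\sigma\rho$ where the paper applies it to the factor $\tau_{QR}$ of $\tau_{PR}=\tau_{QR}\tau_{PQ}$, and that your case split uses cotransitivity of apartness on the three traces through $P$ where the paper argues that $\tau'(P)$ lies outside one of the two reference lines.
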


\begin{proof}
Let $Q=\tau(P)$ and pick a point $R$ lying outside the line $\ovl{P\tau(P)}$. Then $\tau_{PR}=\tau_{QR}\tau_{PQ}$, therefore $\tau_{PR}^\alpha=\tau_{QR}^\alpha\tau_{PQ}^\alpha$, hence $\tau_{PR}^\alpha(P)=\tau_{QR}^\alpha(P)$. $\tau_{PR}^\alpha(P)\in \ovl{PR}$ and $\tau_{QR}^\alpha(P)\in k$ where $k$ is the line through $P$ parallel to $\ovl{QR}$. The two lines are apart from each other and their unique intersection point is $P$, hence $\tau_{PR}^\alpha(P)=P$.

Let $\tau'$ be any translation. If $\tau'(P)\#P$, then $\tau'(P)$ lies outside at least one of the lines $\ovl{PQ}$ or $\ovl{PR}$, therefore by the above argument we again conclude that $(\tau')^\alpha(P)=P$.

Finally, any translation is a composite of two translations that send $P$ to a point apart from $P$, hence any translation is sent to the identity translation by $\alpha$. Therefore, $\alpha=0$.
\end{proof}

\begin{thrm} \label{thrmtraceunique}
 A trace preserving homomorphism is uniquely determined by the image of a translation $\tau$ where $P\#\tau(P)$ for some (any) point $P$. 
\end{thrm}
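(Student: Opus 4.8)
The plan is to reduce the statement to the preceding lemma (the one asserting that a trace preserving homomorphism sending some $\tau$ with $P\#\tau(P)$ to a translation fixing $P$ must be $0$). So suppose $\alpha$ and $\beta$ are trace preserving homomorphisms with $\tau^\alpha=\tau^\beta$ for a single fixed translation $\tau$ satisfying $P\#\tau(P)$; I must show $\alpha=\beta$. Since the group of translations is abelian, the set of its group endomorphisms carries a natural abelian group structure under pointwise composition, and I define the difference $\gamma$ by $\rho^\gamma=\rho^\alpha\circ(\rho^\beta)^{-1}$ for every translation $\rho$. Abelianness of the translation group makes $\gamma$ a group endomorphism of the translations: for translations $\rho_1,\rho_2$ one checks $(\rho_1\rho_2)^\gamma=\rho_1^\gamma\rho_2^\gamma$ precisely because the four factors $\rho_1^\alpha,(\rho_1^\beta)^{-1},\rho_2^\alpha,(\rho_2^\beta)^{-1}$ commute.

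The heart of the argument, and the step I expect to be the main obstacle, is showing that $\gamma$ is \emph{trace preserving}, so that the preceding lemma applies to it. Fix a translation $\rho$, a point $P'$, and a line $l$ that is a trace of $\rho$ at $P'$, i.e. $P',\rho(P')\in l$. Since $\alpha$ and $\beta$ are trace preserving, $\rho^\alpha(P')\in l$ and $\rho^\beta(P')\in l$, so $l$ is simultaneously a trace of the dilatations $\rho^\alpha$ and $\rho^\beta$ at $P'$. The delicate point is the inverse: I must verify that $l$ is also a trace of $(\rho^\beta)^{-1}$ at $P'$, that is $(\rho^\beta)^{-1}(P')\in l$. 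Writing $P''=(\rho^\beta)^{-1}(P')$, so $\rho^\beta(P'')=P'$, the trace of the translation $\rho^\beta$ at $P''$ is $\ovl{P''\rho^\beta(P'')}=\ovl{P''P'}$, which by Lemma \ref{lemlinetranslation} is parallel to $l$; as both lines pass through $P'$ they coincide, whence $P''\in l$. Now Theorem \ref{thrmtrace} shows that each of $\rho^\alpha$ and $(\rho^\beta)^{-1}$ maps $l$ into $l$, so
\[
\rho^\gamma(P')=\rho^\alpha\bigl((\rho^\beta)^{-1}(P')\bigr)\in l,
\]
which is exactly the trace preserving condition for $\gamma$.

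Finally I evaluate $\gamma$ on the distinguished translation. Because $\tau^\alpha=\tau^\beta$, we get $\tau^\gamma=\tau^\alpha\circ(\tau^\beta)^{-1}=\tau^\beta\circ(\tau^\beta)^{-1}=\mathrm{id}$, hence $\tau^\gamma(P)=P$ while $P\#\tau(P)$. Applying the preceding lemma to the trace preserving homomorphism $\gamma$ yields $\gamma=0$, i.e. $\rho^\alpha=\rho^\beta$ for every translation $\rho$, so $\alpha=\beta$. The ``for some (any)'' phrasing causes no difficulty, since the preceding lemma is itself stated in that form, and the only genuinely nonroutine verification is the trace preservation of the difference $\gamma$ handled above.
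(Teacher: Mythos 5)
Your overall architecture is exactly the paper's: the paper's proof is the one-liner ``$\tau^{\alpha-\beta}$ is the identity translation, hence $\tau^{\alpha-\beta}(P)=P$; by the above lemma $\alpha-\beta=0$,'' i.e.\ form the difference $\gamma$ with $\rho^\gamma=\rho^\alpha\circ(\rho^\beta)^{-1}$ and feed it to the preceding kernel lemma. You are also right that the only substantive point is that $\gamma$ is itself a trace preserving homomorphism: the paper leaves this implicit (closure of $\text{Tp}$ under $+$ and $\cdot$ is only proved in the ring-structure theorem that comes \emph{after} this result, and trace preservation of the inversion map $\tau\mapsto\tau^{-1}$ is never spelled out there at all), so making it explicit is the right instinct.

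However, your verification of the inverse step has a genuine constructive gap. Writing $\ovl{P''P'}$ and calling it ``the trace of $\rho^\beta$ at $P''$'' presupposes $P''\#P'$, equivalently $P'\#\rho^\beta(P')$, and likewise Lemma \ref{lemlinetranslation} only applies to translations that move some point apart, while identifying $l$ with $\ovl{P'\rho^\beta(P')}$ needs the same apartness. None of this is available: $\rho$ ranges over \emph{all} translations, and $\rho^\beta$ may for instance be the identity (take $\beta=0$, or $\rho=\mathrm{id}$). In the paper's constructive setting one cannot repair this by splitting into the cases $P'\#\rho^\beta(P')$ and $P'=\rho^\beta(P')$, since apartness is not decidable, so as written the key lemma-application fails exactly on the degenerate translations. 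The fix is one line and needs no apartness at all: since $P'$ and $\rho^\beta(P')$ both lie on $l$, the line $l$ is by definition a trace of the dilatation $(\rho^\beta)^{-1}$ at the point $\rho^\beta(P')$ --- it contains that point together with its image $(\rho^\beta)^{-1}(\rho^\beta(P'))=P'$ --- and then Theorem \ref{thrmtrace}, applied with $Q=P'\in l$, gives $(\rho^\beta)^{-1}(P')\in l$ directly. With that substitution your argument is complete and coincides with (and usefully fleshes out) the paper's proof.
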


\begin{proof}
 Let $\tau$ be a translation and $P$ a point such that $P\#\tau(P)$ and let $\alpha$ and $\beta$ be trace preserving homomorphisms such that $\tau^\alpha=\tau^\beta$. Then $\tau^{\alpha-\beta}$ is the identity translation, hence $\tau^{\alpha-\beta}(P)=P$. By the above lemma, $\alpha-\beta=0$, hence $\alpha=\beta$.
\end{proof}

Let $R$ be a local ring. The group of translations of $\mathbb A(R)$ is isomorphic to the group of $3\times 3$ matrices over $R$ of the form $\begin{pmatrix} 1 & 0 & a \\
0 & 1 & b \\
0 & 0 & 1 \end{pmatrix}$
where $a$ and $b$ are in $R$. Then, for each $r$ in $R$ the homomorphism $\alpha_r$ which sends $\begin{pmatrix} 1 & 0 & a \\
0 & 1 & b \\
0 & 0 & 1 \end{pmatrix}$ to $\begin{pmatrix} 1 & 0 & ra \\
0 & 1 & rb \\
0 & 0 & 1 \end{pmatrix}$ is a trace preserving homomorphism.
Moreover, any trace preserving homomorphism of $\mathbb A(R)$ is of the above form. Given a trace preserving homomorphism $\alpha$, consider the translation $\tau$ which sends $(0,0)$ to $(1,0)$. Then, $\tau^{\alpha}(0,0)$ lies on the line $(1,0,0)$ therefore it $\tau^{\alpha}(0,0)=(r,0)$ for some $r$ in $R$. Then, both $\alpha$ and $\alpha_r$ send $\tau$ to $\tau^{\alpha}$, hence by the above theorem $\alpha=\alpha_r$. Therefore, we have proved the following:

\begin{prop}
The set of trace preserving homomorphisms of the affine plane over a local ring $R$ is isomorphic to the elements of the local ring $R$.
\end{prop}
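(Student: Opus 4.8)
The proposition to prove is that the set of trace preserving homomorphisms of the affine plane over a local ring $R$ is isomorphic to the elements of $R$.

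The plan is to exhibit an explicit bijection and then check it respects the ring structure. First I would recall that, by the discussion immediately preceding the statement, the group of translations of $\mathbb{A}(R)$ is the group of matrices $\begin{pmatrix} 1 & 0 & a \\ 0 & 1 & b \\ 0 & 0 & 1 \end{pmatrix}$ with $a,b\in R$, which I will identify with the additive group $R^2$ via the assignment of such a matrix to $(a,b)$. For each $r\in R$ I would define $\alpha_r$ to be the endomorphism $(a,b)\mapsto (ra,rb)$ of this group, and verify that each $\alpha_r$ is indeed trace preserving: it is clearly a group homomorphism, and for two points $A\#B$ I would check that $\tau_{AB}^{\alpha_r}(A)\in\ovl{AB}$ by a direct coordinate computation (the translation $\tau_{AB}$ corresponds to the vector $B-A$, so $\tau_{AB}^{\alpha_r}$ corresponds to $r(B-A)$, and $A+r(B-A)$ manifestly lies on the line through $A$ and $B$). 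This establishes that $r\mapsto\alpha_r$ is a well-defined map $R\to\mathrm{Tp}$.

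Next I would show this map is surjective and injective, which is where the main work lies, and here I would lean on Theorem \ref{thrmtraceunique}. For surjectivity, given an arbitrary trace preserving homomorphism $\alpha$, I would consider the translation $\tau$ sending $(0,0)$ to $(1,0)$. Since $\alpha$ is trace preserving, $\tau^\alpha(0,0)$ must lie on the unique trace of $\tau$ at $(0,0)$, namely the line $(1,0,0)$ (the $x$-axis), so $\tau^\alpha(0,0)=(r,0)$ for some $r\in R$. Then $\alpha$ and $\alpha_r$ agree on $\tau$, and because $(0,0)\#(1,0)$ (as $1$ is invertible), Theorem \ref{thrmtraceunique} forces $\alpha=\alpha_r$. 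Injectivity follows from the same uniqueness theorem together with the fact that $\alpha_r$ and $\alpha_{r'}$ agreeing on $\tau$ gives $r=r'$ by reading off the first coordinate.

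Finally I would verify that the bijection $r\mapsto\alpha_r$ is a ring isomorphism onto $\mathrm{Tp}$, where addition of trace preserving homomorphisms is pointwise (using that the group of translations is abelian, so pointwise sums of homomorphisms are again homomorphisms) and multiplication is composition. Addition transports correctly since $(\alpha_r+\alpha_{r'})(a,b)=(ra+r'a,\,rb+r'b)=((r+r')a,(r+r')b)=\alpha_{r+r'}(a,b)$, and composition gives $(\alpha_r\circ\alpha_{r'})(a,b)=\alpha_r(r'a,r'b)=(rr'a,rr'b)=\alpha_{rr'}(a,b)$; the identity homomorphism $1$ corresponds to $r=1$ and the zero homomorphism $0$ to $r=0$. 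I expect the only genuine obstacle to be the surjectivity argument, specifically pinning down that the image of the chosen translation is constrained to lie on the axis and then invoking the uniqueness theorem correctly; the ring-structure checks are routine coordinate computations, and the well-definedness of each $\alpha_r$ as trace preserving is the one place needing the explicit line incidence verification.
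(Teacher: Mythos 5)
Your proof is correct and takes essentially the same approach as the paper: define $\alpha_r$ by scaling the translation matrices, then pin down an arbitrary trace preserving $\alpha$ via the translation sending $(0,0)$ to $(1,0)$ (whose trace forces $\tau^\alpha(0,0)=(r,0)$) and conclude $\alpha=\alpha_r$ from Theorem \ref{thrmtraceunique}. Your extra ring-structure verification is harmless but not required, since the paper introduces the ring operations on $\text{Tp}$ only after this proposition (so the statement is a bijection of sets); also, in the paper's conventions the trace in question is the line $(0,1,0)$ rather than $(1,0,0)$, a labelling slip you share with the paper's own text and which does not affect the argument.
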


We define $\text{Tp}$ to be the set of trace preserving endomorphisms of the group of translations, and we are going to give it a ring structure.

\begin{defn}
Let $T$ be the set of translations and let $\alpha$, $\beta$ be trace preserving preserving homomorphisms.

We define $\alpha+\beta:T\to T$ to be the endomorphism which sends a translation $\tau$ to the translation $\tau^{\alpha}\circ\tau^{\beta}$.

We define $\alpha\cdot \beta:T\to T$ to be the endomorphism which sends a translation $\tau$ to the translation $(\tau^{\beta})^{\alpha}$.

We define $0:T\to T$ to be the map sending all translations to the identity translation.

We define $1:T\to T$ to be the identity homomorphism.

We define $-1:T\to T$ to be the morphism sending a translation to its inverse (i.e. sending $\tau$ to $\tau^{-1}$ and the notation is not ambiguous).
\end{defn}

\begin{thrm}
 If $\alpha$ and $\beta$ belong to $\text{Tp}$, then $\alpha+\beta$ and $\alpha\cdot \beta$ belong to $\text{Tp}$. $\text{Tp}$ with $+$ and $\cdot$ becomes an associative ring with the identity as the multiplicative unit and the map that sends all the translations to the identity as the additive unit.
\end{thrm}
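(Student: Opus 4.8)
The plan is to verify that $\mathrm{Tp}$ is a ring by checking the axioms one at a time, relying heavily on the fact (established earlier) that the group of translations $T$ is abelian and that every trace preserving homomorphism is uniquely determined by the image of a single translation $\tau$ with $P \# \tau(P)$ (Theorem \ref{thrmtraceunique}). First I would confirm that $\alpha + \beta$ and $\alpha \cdot \beta$ actually land in $\mathrm{Tp}$. For $\alpha \cdot \beta$ this is almost immediate: the composite of two group homomorphisms is a group homomorphism, and if $l$ is a trace of $\tau$ at $P$, then $\tau^\beta(P) \in l$ since $\beta$ is trace preserving, so $l$ is a trace of $\tau^\beta$ at $P$, and then $(\tau^\beta)^\alpha(P) \in l$ since $\alpha$ is trace preserving; hence $l$ is a trace of $(\tau^\beta)^\alpha = \tau^{\alpha\cdot\beta}$ at $P$. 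For $\alpha + \beta$, the key point is that $\tau \mapsto \tau^\alpha \circ \tau^\beta$ is a homomorphism, and here I would use that $T$ is abelian: given $\tau_1, \tau_2$,
\[
(\tau_1 \tau_2)^{\alpha+\beta} = (\tau_1\tau_2)^\alpha (\tau_1\tau_2)^\beta = \tau_1^\alpha \tau_2^\alpha \tau_1^\beta \tau_2^\beta = \tau_1^\alpha \tau_1^\beta \tau_2^\alpha \tau_2^\beta = \tau_1^{\alpha+\beta}\tau_2^{\alpha+\beta},
\]
where commutativity of $T$ is used in the middle step. The trace preserving condition for $\alpha+\beta$ follows because if $l$ is a trace of $\tau$ at $P$, then both $\tau^\alpha(P)$ and $\tau^\beta(P)$ lie on $l$ by Theorem \ref{thrmtrace}, and since $l$ is itself a trace, $\tau^\alpha \circ \tau^\beta$ maps $P$ along $l$ (applying the theorem again to the dilatation $\tau^\alpha$ on the trace $l$), so $\tau^{\alpha+\beta}(P) \in l$.

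Next I would establish the additive structure. Commutativity and associativity of $+$ reduce to the corresponding properties of composition in the abelian group $T$: $\tau^{\alpha+\beta} = \tau^\alpha \tau^\beta = \tau^\beta \tau^\alpha = \tau^{\beta+\alpha}$, and similarly for associativity. The element $0$ is the additive identity since $\tau^{\alpha+0} = \tau^\alpha \circ \mathrm{id} = \tau^\alpha$, and $-\alpha$ defined by $\tau^{-\alpha} = (\tau^\alpha)^{-1}$ (equivalently $(-1)\cdot \alpha$) gives additive inverses, using that the inverse of a translation is a translation. I should check that $0$ and $1$ (and $-1$) genuinely lie in $\mathrm{Tp}$, which the examples preceding the statement already note. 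This makes $(\mathrm{Tp}, +, 0)$ an abelian group.

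Then I would verify the multiplicative axioms. Associativity of $\cdot$ is just associativity of composition of endomorphisms: $\tau^{(\alpha\cdot\beta)\cdot\gamma} = ((\tau^\gamma)^\beta)^\alpha = \tau^{\alpha\cdot(\beta\cdot\gamma)}$. The identity $1$ is the multiplicative unit since $\tau^{1\cdot\alpha} = (\tau^\alpha)^1 = \tau^\alpha$ and $\tau^{\alpha\cdot 1} = (\tau^1)^\alpha = \tau^\alpha$. Finally I would prove the two distributive laws. Right distributivity, $(\alpha+\beta)\cdot\gamma = \alpha\cdot\gamma + \beta\cdot\gamma$, follows by evaluating on a translation and unwinding the definitions, again without needing commutativity. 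Left distributivity, $\gamma\cdot(\alpha+\beta) = \gamma\cdot\alpha + \gamma\cdot\beta$, is where the abelian property of $T$ is essential: one computes
\[
\tau^{\gamma\cdot(\alpha+\beta)} = (\tau^{\alpha+\beta})^\gamma = (\tau^\alpha \circ \tau^\beta)^\gamma = (\tau^\alpha)^\gamma \circ (\tau^\beta)^\gamma = \tau^{\gamma\cdot\alpha}\circ \tau^{\gamma\cdot\beta} = \tau^{\gamma\cdot\alpha + \gamma\cdot\beta},
\]
where the crucial step $(\tau^\alpha \circ \tau^\beta)^\gamma = (\tau^\alpha)^\gamma \circ (\tau^\beta)^\gamma$ is precisely the statement that the group homomorphism $\gamma$ respects the product $\tau^\alpha \circ \tau^\beta$ in the abelian group $T$.

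The main obstacle, or rather the point requiring the most care, is the closure claim that $\alpha + \beta$ is again trace preserving, since addition is defined pointwise by composition rather than by a formula respecting traces in an obvious way; the argument must carefully invoke Theorem \ref{thrmtrace} twice (once for each summand lying on the common trace, once for the composite dilatation preserving that trace) rather than assuming it. Everywhere else the verifications are routine once one consistently exploits that $T$ is abelian — indeed that single fact is what forces all the homomorphism and distributivity conditions to hold — so I would foreground commutativity of $T$ at each step rather than re-deriving it.
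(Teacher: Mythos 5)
Your proof is correct and follows essentially the same route as the paper's: pointwise verification of each ring axiom by evaluating on a translation, with commutativity of the translation group securing that $\alpha+\beta$ is a group homomorphism, and a double application of Theorem \ref{thrmtrace} (in a trivially reordered form — you apply it to the dilatation $\tau^\alpha$ at $\tau^\beta(P)$, the paper applies it to $\tau$ at $\tau^\beta(P)$ and then invokes trace-preservation of $\alpha$) securing that $\alpha+\beta$ is trace preserving. One small correction: the examples preceding the theorem record only $0$ and $1$ as elements of $\text{Tp}$, not $-1$, so trace-preservation of $-1$ deserves its own (easy) check via $\tau(l)=l$ for any trace $l$ of $\tau$ — though the paper's proof leaves this implicit as well.
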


\begin{proof}

\begin{align*} (\tau\sigma)^{\alpha+\beta} &=(\tau\sigma)^{\alpha}(\tau\sigma)^{\beta} &\text{(by the definition of $\alpha+\beta$)}\\
&=\tau^{\alpha}\sigma^{\alpha}\tau^{\beta}\sigma^{\beta} &\text{(because $\alpha$ and $\beta$ are group homomorphisms)}\\
&=\tau^{\alpha}\tau^{\beta}\sigma^{\alpha}\sigma^{\beta} &\text{(because the group of translations is abelian)}\\
&=\tau^{\alpha+\beta}\sigma^{\alpha+\beta}
\end{align*}
Hence, $\alpha+\beta$ is a group homomorphism. Let $\tau$ be a translation, $P$ a point and let $l$ be a trace for $\tau$ at $P$. Then, $\tau^{\beta}(P)\in l$ because $\beta$ is trace preserving. Then $\tau(\tau^{\beta}(P))$ lies on $l$ by Theorem \ref{thrmtrace}, and since $\alpha$ is trace preserving $\tau^\alpha(\tau^{\beta}(P))$ lies on $l$, hence $\alpha+\beta$ is trace preserving an-d therefore belongs in $\text{Tp}$.

$(\tau\sigma)^{\alpha\cdot\beta}=((\tau\sigma)^{\beta})^{\alpha}=(\tau^{\beta}\sigma^{\beta})^{\alpha}=(\tau^{\beta})^\alpha(\sigma^{\beta})^{\alpha}=\tau^{\alpha\cdot\beta}\sigma^{\alpha\cdot\beta}$ by the definition of $\alpha\cdot \beta$ and the fact that $\alpha$ and $\beta$ are group homomorphism. Hence, $\alpha\cdot\beta$ is a group homomorphism. Also, suppose $l$ is a trace for a translation $\tau$ at $P$. Then, it is also a trace for $\tau^\beta$ because $\beta$ is trace preserving hence it is also a trace for $(\tau^\beta)^\alpha$ because $\alpha$ is trace preserving. Therefore, $\alpha\cdot \beta\in \text{Tp}$.

Proving that addition is associative: Let $\alpha, \beta, \gamma \in \text{Tp}$ and $\tau$ a translation. Then, $\tau^{(\alpha+\beta)+\gamma}=\tau^{\alpha+\beta}\tau^\gamma=(\tau^\alpha\tau^\beta)\tau^\gamma=\tau^\alpha(\tau^\beta\tau^\gamma)=\tau^\alpha\tau^{\beta+\gamma}=\tau^{\alpha+(\beta+\gamma)}$ by associativity of composition.

Proving that addition is commutative: Let $\alpha, \beta\in \text{Tp}$ and $\tau$ a translation. Then, $\tau^{\alpha+\beta}=\tau^\alpha\tau^\beta=\tau^\beta\tau^\alpha=\tau^{\beta+\alpha}$ because the group of translations is abelian.

$\alpha+0=\alpha$ because $\tau^{\alpha+0}=\tau^\alpha\tau^0=\tau^\alpha1=\tau^\alpha$.

$\alpha+(-1)\cdot\alpha=0$ because $\tau^{\alpha+(-1)\cdot\alpha}=\tau^\alpha\tau{(-1)\cdot\alpha}=\tau^\alpha(\tau^\alpha)^{-1}=1=\tau^0$.

$(\beta+\gamma)\alpha=\beta\alpha+\gamma\alpha$ because $\tau^{(\beta+\gamma)\alpha}=(\tau^\alpha)^{\beta+\gamma}=(\tau^\alpha)^\beta(\tau^\alpha)^\gamma=\tau^{\beta\alpha}\tau^{\gamma\alpha}=\tau^{\beta\alpha+\gamma\alpha}$.

$\alpha(\beta+\gamma)=\alpha\beta+\alpha\gamma$ because $\tau^{\alpha(\beta+\gamma)}=(\tau^{\beta+\gamma})^\alpha=(\tau^\beta\tau^\gamma)^\alpha=(\tau^\beta)^\alpha(\tau^\gamma)^\alpha=\tau^{\alpha\beta}\tau^{\alpha\gamma}=\tau^{\alpha\beta+\alpha\gamma}$.

$(\alpha\beta)\gamma=\alpha(\beta\gamma)$ because $\tau^{(\alpha\beta)\gamma}=(\tau^\gamma)^{\alpha\beta}=((\tau^\gamma)^\beta)^\alpha=(\tau^{\beta\gamma})^\alpha=\tau^{\alpha(\beta\gamma)}$.
\end{proof}

\begin{lem}
The ring $\text{Tp}$ satisfies that $\text{inv}(0)\vdash \bot$.
\end{lem}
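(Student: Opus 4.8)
The plan is to show that $\text{Tp}$ is a non-trivial ring by deriving a contradiction from the assumption that $0$ is invertible. First I would unpack what $\text{inv}(0)$ means: there would exist $\beta \in \text{Tp}$ with $0 \cdot \beta = 1$. From the definition of multiplication in $\text{Tp}$, for any translation $\tau$ we have $\tau^{0 \cdot \beta} = (\tau^{\beta})^{0}$, which is the identity translation by the definition of $0$; hence $0 \cdot \beta = 0$. Combining, $1 = 0 \cdot \beta = 0$ in $\text{Tp}$, so the identity homomorphism and the zero homomorphism coincide. Concretely this means that for every translation $\tau$,
$$\tau = \tau^{1} = \tau^{0} = \text{id},$$
i.e. the only translation of $\ca A$ would be the identity.

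The second step is to contradict this by exhibiting a non-identity translation. By the axioms of (pre)affine planes there exist points $A$ and $B$ with $A \# B$ (for instance from the axiom $\top \vdash \exists A,B,C,l.\, A\#B \wedge A,B\in l \wedge C\notin l$). By Lemma \ref{lemexistrans} there is a unique translation $\tau_{AB}$ sending $A$ to $B$. Since $\#$ is an apartness relation, $A \# B$ rules out $A = B$ (otherwise $A \# A$, contradicting irreflexivity), so $\tau_{AB}(A) = B \neq A$. Thus $\tau_{AB}$ is not the identity translation, contradicting the conclusion of the previous paragraph. Therefore $\text{inv}(0) \vdash \bot$.

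I do not expect any serious obstacle here, since the argument is short and uses only the definitions of $0$ and $\cdot$ in $\text{Tp}$ together with the already-established existence of translations. The only point requiring a little care is the constructive reading: I must derive $\bot$ from the \emph{hypothesis} $\text{inv}(0)$ rather than argue by cases, and I should make sure the move from $A \# B$ to $A \neq B$ (hence to $\tau_{AB} \neq \text{id}$) is justified purely by irreflexivity of the apartness relation, which holds in any preaffine plane. With those remarks the verification reduces to the two displayed computations above.
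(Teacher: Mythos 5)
Your proof is correct and follows essentially the same route as the paper: the paper also picks points $A\# B$ and observes that $\tau_{AB}^0(A)=A$ while $\tau_{AB}^1(A)=B$, so $0=1$ would force $A=B$, contradicting apartness. Your explicit unpacking of $\text{inv}(0)$ via $0\cdot\beta=0$ and the careful remark about deriving $A\neq B$ from irreflexivity of $\#$ are just slightly more detailed versions of steps the paper leaves implicit.
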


\begin{proof}
Pick points $A$, $B$ of the affine plane that are apart from each other. $\tau_{AB}^0(A)=A$ while $\tau_{AB}^1(A)=B$. Hence, $0=1$ implies that $A=B$ and therefore is false.
\end{proof}

\begin{thrm} \label{thrmtracedilatation}
If $\alpha$ is a trace preserving homomorphism and $P$, $Q$ are points that are apart from each other and $\tau_{PQ}^\alpha(P)\#P$, then there exists a unique dilatation $\sigma$ which has $P$ as a fixed point and such that $\tau^{\alpha}=\sigma \tau \sigma^{-1}$ for all translations $\tau$.
\end{thrm}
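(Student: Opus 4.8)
The plan is to construct the dilatation $\sigma$ explicitly from the data $(\alpha, P, Q)$ and then verify that conjugation by $\sigma$ realizes $\alpha$ on all translations, using the key results already established about translations forming an abelian group and about trace preserving homomorphisms being determined by a single value (Theorem \ref{thrmtraceunique}).

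First I would define $\sigma$. Let $P' = \tau_{PQ}^\alpha(P)$. By hypothesis $P \# P'$, and since $\alpha$ is trace preserving, $P'$ lies on $\ovl{PQ}$. Thus $P \# P' \# $ are collinear points with $P' \in \ovl{PQ}$, so by Theorem \ref{thrmexistdil} there is a unique dilatation $\sigma$ fixing $P$ with $\sigma(Q) = P'$. This $\sigma$ is the candidate. The task then reduces to showing $\tau^\alpha = \sigma \tau \sigma^{-1}$ for every translation $\tau$.

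The main step is the verification. By the Corollary following Theorem \ref{thrmtrace} (the one showing $\tau \mapsto \sigma\tau\sigma^{-1}$ is trace preserving), conjugation by $\sigma$ defines a trace preserving homomorphism $\beta$, where $\tau^\beta := \sigma\tau\sigma^{-1}$. Both $\alpha$ and $\beta$ are trace preserving homomorphisms, so by Theorem \ref{thrmtraceunique} it suffices to check that they agree on a single translation $\tau$ with $P \# \tau(P)$. I would take $\tau = \tau_{PQ}$, which satisfies $P \# Q = \tau_{PQ}(P)$. On one hand $\tau_{PQ}^\alpha(P) = P'$ by definition of $P'$. On the other hand, $\tau_{PQ}^\beta = \sigma \tau_{PQ} \sigma^{-1}$, and I would compute its effect on $P$: since $\sigma$ fixes $P$, we have $\sigma^{-1}(P) = P$, so $\sigma\tau_{PQ}\sigma^{-1}(P) = \sigma(\tau_{PQ}(P)) = \sigma(Q) = P'$. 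Hence $\tau_{PQ}^\beta(P) = P' = \tau_{PQ}^\alpha(P)$. Since a translation is determined by the image of one point (Theorem on translations), $\tau_{PQ}^\alpha = \tau_{PQ}^\beta$, and therefore $\alpha = \beta$ by Theorem \ref{thrmtraceunique}. This gives $\tau^\alpha = \sigma\tau\sigma^{-1}$ for all $\tau$.

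Finally, uniqueness of $\sigma$: suppose $\sigma'$ is another dilatation fixing $P$ with $\tau^\alpha = \sigma'\tau\sigma'^{-1}$ for all $\tau$. Applying this to $\tau_{PQ}$ and evaluating at $P$ gives $\sigma'(Q) = \sigma'\tau_{PQ}\sigma'^{-1}(P) = \tau_{PQ}^\alpha(P) = P'$, so $\sigma'$ fixes $P$ and sends $Q$ to $P'$; by the uniqueness clause of Theorem \ref{thrmexistdil}, $\sigma' = \sigma$. I expect the only delicate point to be confirming that the hypotheses of Theorem \ref{thrmexistdil} are genuinely met — namely that $Q \# P \# P'$ with $P' \in \ovl{PQ}$, where $Q \# P$ is given, $P \# P'$ is the standing hypothesis, and $P' \in \ovl{PQ}$ follows because $\ovl{PQ}$ is a trace of $\tau_{PQ}$ at $P$ and $\alpha$ preserves traces; everything else is a short formal manipulation using that the translation group is abelian and the uniqueness theorems.
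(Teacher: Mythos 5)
Your proposal is correct and follows essentially the same route as the paper's proof: construct $\sigma$ via Theorem \ref{thrmexistdil} with fixed point $P$ sending $Q$ to $\tau_{PQ}^\alpha(P)$, verify $\tau_{PQ}^\alpha=\sigma\tau_{PQ}\sigma^{-1}$ using that a translation is determined by the image of one point, and conclude $\alpha$ equals conjugation by $\sigma$ via Theorem \ref{thrmtraceunique}. Your explicit check that $Q\#P\#P'$ and $P'\in\ovl{PQ}$ (so that Theorem \ref{thrmexistdil} applies) is a point the paper leaves implicit, but it matches the paper's argument in substance.
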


\begin{proof}
Suppose such a $\sigma$ exists. Let $Q$ be a point. Then 
$$\tau_{PQ}^\alpha(P)=\sigma \tau_{PQ} \sigma^{-1}(P)=\sigma\tau_{PQ}(P)=\sigma(Q).$$
Therefore, if such a dilatation $\sigma$ exists it is unique and $\sigma(Q)=\tau_{PQ}^\alpha(P)$ for every point $Q$.

Given $\alpha$, $P$ and $Q$ as above, let $Q'$ be $\tau^\alpha_{PQ}(P)$. By Theorem \ref{thrmexistdil}, there exists a dilatation $\sigma$ with $P$ as a fixed point, sending $Q$ to $Q'$. Notice that $\tau_{PQ}^\alpha(P)=\sigma\tau_{PQ}\sigma^{-1}(P)$, therefore  $\tau_{PQ}^\alpha=\sigma\tau_{PQ}\sigma^{-1}$ because translations are uniquely determined by the image of a point. The map $\tau\mapsto\sigma\tau\sigma^{-1}$ is trace preserving homomorphism, therefore by Theorem \ref{thrmtraceunique}, $\sigma$ is the required dilatation.
\end{proof}

\begin{rmk}
Another way of phrasing the conclusion of the theorem is the following: there exists a unique dilatation $\sigma$ such that $\tau^\alpha_{PQ}=\tau_{P\sigma(Q)}$.
\end{rmk}

\begin{cor} \label{corinverse}
A trace preserving homomorphism $\alpha$ of the above form has an inverse.
\end{cor}

\begin{proof}
$\alpha(\tau)=\sigma\tau\sigma^{-1}$ for some dilatation $\sigma$ by the above theorem. Its inverse is the homomorphism sending a translation $\tau$ to $\sigma^{-1}\tau\sigma$.
\end{proof}

Notice that an element of $\text{Tp}$ is invertible as a homomorphism iff it is invertible as an element of the ring $\text{Tp}$.

\begin{prop} \label{propinv}
 Given a trace preserving homomorphism $\alpha$, the following are equivalent:
\begin{enumerate}
 \item $\alpha$ is invertible,
\item for some points $P$, $Q$ such that $P\#Q$, $\tau_{PQ}^\alpha(P)\#P$,
\item for any two points $P$, $Q$ such that $P\#Q$, $\tau_{PQ}^\alpha(P)\#P$.
\end{enumerate}

\end{prop}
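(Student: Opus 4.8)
The plan is to prove the cycle of implications $(1)\Rightarrow(3)\Rightarrow(2)\Rightarrow(1)$, which is the most economical route given the tools already assembled. The implication $(3)\Rightarrow(2)$ is immediate: condition (3) is a universally quantified statement and (2) is its existential weakening, and such a pair of points $P\#Q$ certainly exists in an affine plane (for instance by the axiom guaranteeing a point lying outside a given line, or directly from the existence of non-collinear points). So the only genuine work lies in establishing $(1)\Rightarrow(3)$ and $(2)\Rightarrow(1)$.

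For $(2)\Rightarrow(1)$, I would invoke Theorem \ref{thrmtracedilatation} together with Corollary \ref{corinverse} essentially verbatim. If $\tau_{PQ}^\alpha(P)\#P$ for some $P\#Q$, then Theorem \ref{thrmtracedilatation} produces a dilatation $\sigma$ fixing $P$ with $\tau^\alpha=\sigma\tau\sigma^{-1}$ for every translation $\tau$, and Corollary \ref{corinverse} then exhibits the inverse of $\alpha$ as the trace preserving homomorphism sending $\tau$ to $\sigma^{-1}\tau\sigma$. Thus $\alpha$ is invertible, giving (1).

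For $(1)\Rightarrow(3)$, I would argue contrapositively in the constructive-friendly form: I want to show that if $\alpha$ is invertible then $\tau_{PQ}^\alpha(P)\#P$ for \emph{every} pair $P\#Q$. Here the key observation is that $\tau_{PQ}^\alpha(P)$ always lies on the trace $\ovl{PQ}$ (since $\alpha$ is trace preserving), so $\tau_{PQ}^\alpha=\tau_{PP'}$ where $P'=\tau_{PQ}^\alpha(P)\in\ovl{PQ}$. If $\alpha$ has inverse $\beta$, then applying $\beta$ recovers $\tau_{PQ}$, i.e. $\tau_{PP'}^\beta=\tau_{PQ}$, and since $\beta$ is also trace preserving we get $\tau_{PP'}^\beta(P)=Q$ lying on the trace of $\tau_{PP'}$. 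The strategy is to show that $P'\#P$ must hold: if instead $P'$ failed to be apart from $P$, then $\tau_{PQ}^\alpha$ would fix $P$ up to the failure of apartness, and composing with $\beta$ would force $Q$ not to be apart from $P$, contradicting $P\#Q$. More cleanly, I would use the Lemma preceding Theorem \ref{thrmtraceunique} (that a trace preserving homomorphism fixing a point $P$ with $P\#\tau(P)$ for the relevant $\tau$ must be $0$): if $\alpha$ is invertible it cannot be $0$ (since $0$ is not invertible, as $\text{inv}(0)\vdash\bot$ in $\text{Tp}$), and invertibility propagates the apartness $\tau_{PQ}^\alpha(P)\#P$ uniformly across all pairs via Proposition \ref{proptn}.

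The main obstacle I anticipate is handling the constructive logic carefully in $(1)\Rightarrow(3)$: one cannot freely case-split on whether $P'\#P$ or $P'=P$, so rather than a proof by contradiction I would prefer to deduce the apartness $\tau_{PQ}^\alpha(P)\#P$ \emph{directly} from invertibility. Concretely, writing $\beta=\alpha^{-1}$, the relation $\tau_{P\,\tau_{PQ}^\alpha(P)}^{\beta}=\tau_{PQ}$ combined with the trace preserving property gives that $Q=\tau_{PQ}^\alpha(P)^{\beta\text{-image}}$ lies on $\ovl{P\,\tau_{PQ}^\alpha(P)}$; since $Q\#P$ and $Q$ lies on this line through $P$, the line must be apart from any line through $P$ missing $Q$, which forces its second defining point $\tau_{PQ}^\alpha(P)$ to be apart from $P$ by the apartness axioms relating incidence and $\#$. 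Pinning down exactly which incidence-apartness axiom delivers $\tau_{PQ}^\alpha(P)\#P$ from "$Q\in\ovl{P\,\tau_{PQ}^\alpha(P)}$ and $Q\#P$" is the delicate point, and I would lean on the axiom $A\notin k\vdash A\#B\vee B\notin k$ (or its consequences about non-collinear points) to extract the apartness constructively.
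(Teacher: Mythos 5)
Your treatment of $(3)\Rightarrow(2)$ and $(2)\Rightarrow(1)$ coincides with the paper's: the paper also dismisses $(3)\Rightarrow(2)$ as clear and gets $(2)\Rightarrow(1)$ from Corollary \ref{corinverse} (via Theorem \ref{thrmtracedilatation}). The genuine gap is in $(1)\Rightarrow(3)$, and it is twofold. First, your argument is circular: writing $Q'=\tau_{PQ}^\alpha(P)$, the notation $\ovl{PQ'}$ is only meaningful once $P\#Q'$ is known, which is the goal; what trace preservation of $\beta=\alpha^{-1}$ actually yields is only that \emph{every} line through both $P$ and $Q'$ contains $Q$ (since any such line is a trace of $\tau_{PQ'}$ at $P$, and $\tau_{PQ'}^\beta(P)=Q$). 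Second, the extraction step you hope to finish with --- from ``$Q$ lies on a line through $P$ and $Q'$, and $Q\#P$'' conclude $P\#Q'$ --- is simply invalid in this geometry: in $\mathbb A(\mathbb Z/(4))$ take $P=(0,0)$, $Q=(1,0)$, $Q'=(2,0)$; all three lie on the line $(0,1,0)$ and $Q\#P$, yet $P\#Q'$ fails because $2$ is not invertible. (This configuration is realized by the non-invertible trace preserving homomorphism corresponding to $2$, so it refutes your inference step, not the proposition.) The axiom $A\notin k\vdash A\#B\vee B\notin k$ only produces a disjunction, and nothing in your configuration excludes the wrong disjunct. Your fallback devices do not close the hole either: ``$\alpha$ invertible, hence $\alpha\neq 0$'' is far too weak, since nonzero non-invertible trace preserving homomorphisms exist (e.g.\ the one corresponding to $p$ over $\mathbb Z_{(p)}$, which moves $P$ to a point \emph{not} apart from it --- apartness is the constructive surrogate for invertibility, not for nonvanishing); and Proposition \ref{proptn} transports a \emph{fixed} translation between base points, whereas (3) quantifies over the distinct translations $\tau_{PQ}$ as the pair $(P,Q)$ varies.

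The idea you are missing is that invertibility must be exploited in a second, independent direction so that the apartness $P\#Q'$ can be manufactured positively. The paper chooses $A\notin\ovl{PQ}$ and sets $B=\tau_{PA}^\beta(P)$; note $A\#Q'$ is available immediately, since $Q'\in\ovl{PQ}$ and $A\notin\ovl{PQ}$. From $\tau_{PQ'}=\tau_{AQ'}\circ\tau_{PA}$ and $\tau_{PQ'}^\beta=\tau_{PQ}$ (using that $\beta$ is a group homomorphism) one gets $\tau_{AQ'}^\beta=\tau_{BQ}$; then $C=\tau_{AQ'}^\beta(A)$ lies on $\ovl{AQ'}$ by trace preservation, while $C=\tau_{BQ}(A)$, so Lemma \ref{lemlinetranslation} gives $\ovl{AQ'}\parallel\ovl{BQ}$. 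A short apartness chase finishes: $P\notin\ovl{AQ}$ yields $P\#Q'$ or $Q'\notin\ovl{AQ}$; in the latter case $Q\notin\ovl{AQ'}$ (symmetry of non-collinearity), so the parallel lines $\ovl{BQ}$ and $\ovl{AQ'}$ are apart, forcing $B\notin\ovl{AQ'}$, hence $Q'\notin\ovl{AB}$, and since $P\in\ovl{AB}$ this gives $P\#Q'$. Without applying $\beta$ to a translation $\tau_{PA}$ transverse to $\ovl{PQ}$, no combination of the incidence--apartness axioms can produce the required apartness from data confined to the single line $\ovl{PQ}$, as the $\mathbb Z/(4)$ configuration above shows.
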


\begin{proof}
 2 implies 1 by Corollary \ref{corinverse}.

3 implies 2 is clear.

\begin{tikzpicture}
\draw 
(0,0) node[left] {$P$} -- (6,3)
(0,0) -- (7,0);

\draw [fill] (4.75,1.125) circle [radius=0.03];

\draw 
(5,0) node[ below] {$Q'$}
--(4.5,2.25) node[above] {$A$}
 (2.5,0) node[ below] {$Q$}
--(2.25,1.125) node[above] {$B$}
(4.75, 1.125) node[right] {$C$};
\end{tikzpicture}

We now proceed to prove that 1 implies 3. Given a trace preserving homomorphism $\alpha$ which is invertible and $P$, $Q$ two points that are apart from each other, let $Q'=\tau_{PQ}^\alpha(P)$ (i.e. $\tau_{PQ}^\alpha=\tau_{PQ'}$) and let $\beta$ be $\alpha^{-1}$. We want to prove that $P\#Q'$. Let $A$ be a point of the affine plane lying outside $\ovl{PQ}$ and let $B$ be $\tau_{PA}^\beta(P)=B$ (i.e. $\tau_{PA}^\beta=\tau_{PB}$).

$A\notin \ovl{PQ}$ and $Q'\in \ovl{PQ}$, therefore $A\#Q'$, so we can consider the line $\ovl{AQ'}$.

$A\notin \ovl{PQ}$ therefore ($P\#A$ and) $Q\notin \ovl{PA}$. $B$ lies on $\ovl{PA}$, hence $Q\#B$ and we can consider the line $\ovl{BQ}$.

We claim that $\ovl{AQ'}$ is parallel to $\ovl{BQ}$.  $\tau_{PQ'}^\beta=\tau_{AQ'}^\beta\tau_{PA}^\beta$, therefore $\tau_{PQ}=\tau_{AQ'}^\beta\tau_{PB}$. Hence, $\tau_{AQ'}^\beta=\tau_{BQ}$. Let $C=\tau_{AQ'}^\beta(A)$, then $C\in \ovl{AQ'}$ because $\beta$ is trace preserving. $C$ is also equal to $\tau_{BQ}(A)$, hence by Lemma \ref{lemlinetranslation} $A\#C$ and $\ovl{BQ}\parallel \ovl{AC}$. $\ovl{AC}=\ovl{AQ'}$, hence $\ovl{BQ}\parallel \ovl{AQ'}$. 

$P\notin \ovl{AQ}$, therefore either $P\#Q'$ or $Q'\in \ovl{AQ}$. In the first case, our claim is proven, and in the latter we conclude that $Q\notin \ovl{AQ'}$. $Q$ lies on $\ovl{BQ}$, therefore $\ovl{BQ}\#\ovl{AQ'}$.
Since the lines $\ovl{BQ}$ and $\ovl{AQ'}$ are parallel and apart from each other $B$ lies outside one of them, hence $B\notin \ovl{AQ'}$. Hence, ($A\#B$ and) $Q'\notin \ovl{AB}$, and since $P\in\ovl{AB}$, we conclude that $P\#Q'$.
\end{proof}

\begin{thrm}
Given a trace preserving homomorphism $\alpha$, then $\alpha$ or $\alpha+1$ is invertible.
\end{thrm}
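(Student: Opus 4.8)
The plan is to use the characterisation of invertibility given in Proposition \ref{propinv}, which says that a trace preserving homomorphism $\beta$ is invertible precisely when $\tau_{PQ}^{\beta}(P)\# P$ for some (equivalently any) pair of points $P\# Q$. Thus it suffices to exhibit a single pair of points $P\# Q$ for which at least one of $\tau_{PQ}^{\alpha}(P)\# P$ and $\tau_{PQ}^{\alpha+1}(P)\# P$ holds; the disjunction will then transfer directly to the invertibility of $\alpha$ or of $\alpha+1$.

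First I would fix any two points $P$, $Q$ with $P\# Q$ and set $Q'=\tau_{PQ}^{\alpha}(P)$ and $Q''=\tau_{PQ}^{\alpha+1}(P)$. By the definition of addition in $\text{Tp}$, the homomorphism $\alpha+1$ sends each translation $\tau$ to $\tau^{\alpha}\circ\tau^{1}=\tau^{\alpha}\circ\tau$, so $\tau_{PQ}^{\alpha+1}=\tau_{PQ}^{\alpha}\circ\tau_{PQ}$ and hence $Q''=\tau_{PQ}^{\alpha}(\tau_{PQ}(P))=\tau_{PQ}^{\alpha}(Q)$. Both $Q'$ and $Q''$ lie on the trace $\ovl{PQ}$, since $\alpha$ is trace preserving and $\ovl{PQ}$ is a trace of $\tau_{PQ}$ at both $P$ and $Q$.

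The key observation is that $Q'\# Q''$. Indeed $\tau_{PQ}$ carries $Q'$ to $Q''$, and since $P\#\tau_{PQ}(P)=Q$, Proposition \ref{proptn} (transport of apartness, applied with the identity translation and $\tau_{PQ}$) gives $Q'\#\tau_{PQ}(Q')=Q''$. Now cotransitivity of the apartness relation on points, applied to $Q'\# Q''$ together with the third point $P$, yields $Q'\# P$ or $Q''\# P$. In the first case $\tau_{PQ}^{\alpha}(P)\# P$, so $\alpha$ is invertible by Proposition \ref{propinv}; in the second case $\tau_{PQ}^{\alpha+1}(P)\# P$, so $\alpha+1$ is invertible by Proposition \ref{propinv} applied to $\alpha+1$.

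The only subtle point, and the one I would be most careful about, is the pair of facts that $Q''=\tau_{PQ}^{\alpha}(Q)$ and that $Q'\# Q''$; the former rests on unwinding $\alpha+1$ through the definition of ring addition in $\text{Tp}$, and the latter on the transport-of-apartness statement of Proposition \ref{proptn}. Everything else is an immediate appeal to Proposition \ref{propinv} and to the apartness axioms, so no case analysis beyond the single cotransitivity split and no Desargues-type input are required.
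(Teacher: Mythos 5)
Your proof is correct and follows essentially the same route as the paper: fix $P\# Q$, show $\tau_{PQ}^{\alpha}(P)\#\tau_{PQ}^{\alpha+1}(P)$ using the fact that the translation $\tau_{PQ}$ moves every point apart from itself (the paper cites Lemma \ref{lemlinetranslation} where you cite Proposition \ref{proptn}, but these carry the same content), then apply cotransitivity with $P$ and the invertibility criterion (Proposition \ref{propinv}, which the paper invokes via Corollary \ref{corinverse}). The extra observations that $Q''=\tau_{PQ}^{\alpha}(Q)$ and that both points lie on $\ovl{PQ}$ are harmless but not needed.
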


\begin{proof}
We choose points $P$ and $Q$ that are apart from each other, then by Lemma \ref{lemlinetranslation} for any point $A$, we know that $\tau_{PQ}(A)\#A$, therefore $\tau_{PQ}^{\alpha+1}(P)=\tau_{PQ}(\tau_{PQ}^\alpha(P))\#\tau_{PQ}^\alpha(P)$. Hence, at least one of the points $\tau_{PQ}^{\alpha+1}(P)$ and $\tau_{PQ}^\alpha(P)$  is apart from $P$. Therefore by Corollary \ref{corinverse}, in the first case $\alpha+1$ is invertible and in the second $\alpha$ is invertible.
\end{proof}

\begin{lem} \label{lemtrprs}
Let $P$, $Q$ and $Q'$ be three points such that $Q\#P\#Q'$ and $Q'\in\ovl{PQ}$. Then, there exists a (unique) trace preserving homomorphism $\alpha$, such that $\tau^\alpha_{PQ}=\tau_{PQ'}$.
\end{lem}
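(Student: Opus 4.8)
The plan is to build $\alpha$ as conjugation by a suitable dilatation, reducing everything to the existence theorem for dilatations with a fixed point. First I would invoke Theorem \ref{thrmexistdil}: since $Q\#P\#Q'$ and $Q'\in\ovl{PQ}$, there is a unique dilatation $\sigma$ with $P$ a fixed point and $\sigma(Q)=Q'$. I then define $\alpha$ to be the endomorphism of the group of translations sending $\tau$ to $\sigma\tau\sigma^{-1}$. By Lemma \ref{lemnormtrans} this indeed maps translations to translations, and by the corollary stating that $\tau\mapsto\sigma\tau\sigma^{-1}$ is a trace preserving homomorphism, $\alpha$ is a genuine element of $\text{Tp}$.

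Next I would verify that $\tau^\alpha_{PQ}=\tau_{PQ'}$. Since $\sigma$ fixes $P$ we also have $\sigma^{-1}(P)=P$, so
\[
\sigma\tau_{PQ}\sigma^{-1}(P)=\sigma\tau_{PQ}(P)=\sigma(Q)=Q'.
\]
Thus $\tau^\alpha_{PQ}=\sigma\tau_{PQ}\sigma^{-1}$ is a translation carrying $P$ to $Q'$; because a translation is uniquely determined by the image of a single point, it must coincide with $\tau_{PQ'}$. This gives $\tau^\alpha_{PQ}=\tau_{PQ'}$, as required.

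For uniqueness I would use that $\tau_{PQ}$ moves $P$ to $Q$ with $P\#Q$, so by Theorem \ref{thrmtraceunique} a trace preserving homomorphism is completely determined by its value on $\tau_{PQ}$. Hence any two trace preserving homomorphisms both sending $\tau_{PQ}$ to $\tau_{PQ'}$ agree, which forces the $\alpha$ constructed above to be the unique one.

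I do not expect a serious obstacle here: the substantive geometric work is already carried out in Theorem \ref{thrmexistdil} and in the corollary that conjugation by a dilatation is trace preserving, while the identity $\sigma\tau_{PQ}\sigma^{-1}=\tau_{PQ'}$ is a short computation. Conceptually this lemma is simply the converse of Theorem \ref{thrmtracedilatation}, and the proof is the mirror image of that argument; the only point demanding a little care is keeping track of which direction (producing $\alpha$ from $\sigma$, rather than $\sigma$ from $\alpha$) is being proved.
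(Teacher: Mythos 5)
Your proof is correct and follows essentially the same route as the paper: both invoke Theorem \ref{thrmexistdil} to obtain the dilatation $\sigma$ fixing $P$ with $\sigma(Q)=Q'$, define $\alpha$ as conjugation $\tau\mapsto\sigma\tau\sigma^{-1}$, verify $\tau^\alpha_{PQ}=\tau_{PQ'}$ by evaluating at $P$, and derive uniqueness from Theorem \ref{thrmtraceunique}. Your write-up is in fact slightly more careful, since it makes explicit the citations (Lemma \ref{lemnormtrans} and the corollary on conjugation being trace preserving) that the paper leaves implicit.
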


\begin{proof}
By Theorem \ref{thrmexistdil} there exists a unique dilatation $\sigma$ sending $P$ to $P$ and $Q$ to $Q'$. Let $\alpha$ be the trace preserving homomorphism sending a translation $\tau$ to $\sigma\tau\sigma^{-1}$. Then $\sigma\tau_{PQ}\sigma^{-1}(P)= \sigma(Q)=Q'$, i.e. $\tau_{PQ}^\alpha=\tau_{PQ'}$.

The uniqueness of $\alpha$ follows from Theorem \ref{thrmtraceunique}.
\end{proof}

\begin{rmk}
The trace preserving homomorphism constructed above is denoted by $\alpha_{PQQ'}$.
\end{rmk}

\begin{thrm} \label{thrmtr}
Let $P$, $Q$ and $Q'$ be any three points such that $P\#Q$ and $Q'\in\ovl{PQ}$. Then, there exists a (unique) trace preserving homomorphism $\alpha$, such that $\tau^\alpha_{PQ}=\tau_{PQ'}$.
\end{thrm}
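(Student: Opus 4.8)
The plan is to reduce the general statement, where $Q'$ is an arbitrary point of $\ovl{PQ}$ with $P\#Q$, to the special case already handled in Lemma \ref{lemtrprs}, namely the case where additionally $Q'\#P$. The obstruction to applying that lemma directly is precisely that we may have $Q'$ \emph{not} apart from $P$; the hypothesis $Q\#P\#Q'$ of Lemma \ref{lemtrprs} is stronger than what we are given here. So the entire content of the theorem is removing the requirement $P\#Q'$.

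First I would observe that $Q'$ lies on $\ovl{PQ}$, and I want to build the desired $\alpha$ as a composite or difference of trace preserving homomorphisms each of which \emph{is} covered by Lemma \ref{lemtrprs}. The natural move is to pick an auxiliary point $R$ on $\ovl{PQ}$ with $P\#R$ and $R\#Q'$; such a point exists by Lemma \ref{lemomega39} applied on the line $\ovl{PQ}$ (which gives a point apart from both $P$ and from any given point, after using the apartness axioms to select appropriately). Then both $R$ and the ``shifted'' target are apart from $P$, so each elementary piece falls under Lemma \ref{lemtrprs}. Concretely, since $Q'\in\ovl{PQ}$ and $R\in\ovl{PQ}$ with $R\#P$, I can write the translation $\tau_{PQ'}$ in terms of translations whose source/target pairs are genuinely apart, using that all these points are collinear on $\ovl{PQ}$ and that the group of translations is abelian.

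The cleanest execution uses the ring structure on $\mathrm{Tp}$ together with additivity. By Lemma \ref{lemtrprs} there is a trace preserving homomorphism $\alpha_{PQR}$ with $\tau_{PQ}^{\alpha_{PQR}}=\tau_{PR}$ (valid since $R\#P$ and $R\in\ovl{PQ}$), and a homomorphism $\alpha_{PQ\,R'}$ handling the complementary displacement, where $R'$ is chosen on $\ovl{PQ}$ so that the two translations $\tau_{PR}$ and $\tau_{PR'}$ compose to $\tau_{PQ'}$, i.e. $\tau_{PR}\circ\tau_{PR'}=\tau_{PQ'}$ (this is where collinearity on $\ovl{PQ}$ and Lemma \ref{lemlinetranslation} guarantee that the composite is again a translation along the same trace). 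Since $+$ on $\mathrm{Tp}$ is defined so that $\tau^{\alpha+\beta}=\tau^{\alpha}\circ\tau^{\beta}$, setting $\alpha=\alpha_{PQR}+\alpha_{PQ\,R'}$ gives $\tau_{PQ}^{\alpha}=\tau_{PR}\circ\tau_{PR'}=\tau_{PQ'}$, exactly as required. One must check that $\alpha_{PQ\,R'}$ genuinely comes from Lemma \ref{lemtrprs}, which requires $R'\#P$; this can be arranged by choosing $R$ so that the residual displacement $R'$ is also apart from $P$, again via Lemma \ref{lemomega39} and the cotransitivity of apartness.

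Uniqueness is immediate and identical to the earlier cases: by Theorem \ref{thrmtraceunique} a trace preserving homomorphism is determined by its value on a single translation $\tau$ with $P\#\tau(P)$, and $\tau_{PQ}$ is such a translation because $P\#Q$. The main obstacle I anticipate is the bookkeeping in the decomposition step — verifying that one can split $Q'$ into two pieces $R,R'$ both strictly apart from $P$ while keeping everything on $\ovl{PQ}$, and confirming that the composite of the two elementary translations reproduces $\tau_{PQ'}$ rather than some other translation. This is entirely a matter of applying cotransitivity of $\#$ (axiom 3 of apartness) to the pair $P\#Q$ against the candidate points and invoking Lemma \ref{lemlinetranslation} to control traces; no deep new idea is needed beyond reducing to the $P\#Q'$ case already proved.
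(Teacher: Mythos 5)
Your overall strategy is sound, but one step as written is wrong: the appeal to Lemma \ref{lemomega39} to produce an auxiliary point $R\in\ovl{PQ}$ with $P\#R$ and $R\#Q'$. That lemma is a statement about \emph{preprojective} planes, and its proof relies on the axiom that every line carries three mutually apart points; preaffine planes only guarantee two apart points per line, and no affine analogue of the lemma holds. Indeed the uniform existence of your $R$ is false: in $\mathbb A(\mathbb Z/(2))$ a line has exactly two points, so when $Q'=Q$ there is no point of $\ovl{PQ}$ apart from both $P$ and $Q'$. Constructively one cannot first test whether $P\#Q'$ and only then look for $R$, so the decomposition cannot be set up the way you state it.

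The repair is already latent in your final paragraph: instead of choosing $R$ in advance, apply cotransitivity to $P\#Q$ against $Q'$ to get the disjunction $P\#Q'\vee Q\#Q'$. In the first branch Lemma \ref{lemtrprs} applies directly and no decomposition is needed. In the second branch take $R=Q$ (so $\alpha_1=1$) and $R'=\tau_{QQ'}(P)$; then $R'\#P$ by Lemma \ref{lemlinetranslation} (since $Q\#Q'$), and $R'\in\ovl{PQ}$ by Theorem \ref{thrmtrace} because $\ovl{QQ'}=\ovl{PQ}$ is a trace of $\tau_{QQ'}$. Lemma \ref{lemtrprs} gives $\alpha_2$ with $\tau_{PQ}^{\alpha_2}=\tau_{PR'}=\tau_{QQ'}$, and $\alpha=1+\alpha_2$ satisfies $\tau_{PQ}^{\alpha}=\tau_{PQ}\circ\tau_{QQ'}=\tau_{PQ'}$, since addition in $\text{Tp}$ is composition and translations commute. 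With this repair your argument is essentially the paper's proof in mirror image: the paper obtains its disjunction $P\#Q'\vee P\#\tau_{PQ}(Q')$ from $Q'\#\tau_{PQ}(Q')$ (Lemma \ref{lemlinetranslation} applied to $\tau_{PQ}$) and sets $\alpha=\beta-1$ where $\tau_{PQ}^{\beta}=\tau_{P\tau_{PQ}(Q')}$, i.e.\ it writes $\alpha=(\alpha+1)-1$ where you write $\alpha=1+(\alpha-1)$. Your uniqueness argument via Theorem \ref{thrmtraceunique} coincides with the paper's.
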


\begin{proof}
$Q'\#\tau_{PQ}(Q')$, therefore $P\#Q'$ or $P\#\tau_{PQ}(Q')$. In the first case, we define $\alpha$ as above.

In the case where $P\#\tau_{PQ}(Q')$, we define $\beta$ to be the unique translation such that $\tau^\beta_{PQ}=\tau_{P\tau_{PQ}(Q')}$, and then define $\alpha$ to be $\beta-1$. Then $\tau^\alpha_{PQ}(P)=\tau^{-1}_{PQ}(\tau^\beta_{PQ}(P))=\tau^{-1}_{PQ}(\tau_{PQ}(Q'))=Q'$.

In both cases, the uniqueness of $\alpha$ follows from Theorem \ref{thrmtraceunique}.
\end{proof}

The following theorem gives a way of introducing coordinates.

\begin{thrm} \label{thrmafco}
Given two translations $\tau_1$ and $\tau_2$ and a point $P$ such that $P$, $\tau_1(P)$ and $\tau_2(P)$ are non-collinear, then for any translation $\tau$ there exist unique $\alpha, \beta\in \text{Tp}$ such that $\tau=\tau_1^\alpha\tau_2^\beta$.
\end{thrm}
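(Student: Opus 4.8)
The plan is to prove existence and uniqueness of the coordinate decomposition $\tau = \tau_1^\alpha \tau_2^\beta$ by reducing everything to statements about points via the canonical action of translations on the chosen point $P$. The key observation is that, since $P$, $\tau_1(P)$ and $\tau_2(P)$ are non-collinear, the points $\tau_1(P)$ and $\tau_2(P)$ are both apart from $P$, and the lines $\ovl{P\tau_1(P)}$ and $\ovl{P\tau_2(P)}$ are apart from each other (by Lemma \ref{lemnonc}). Given an arbitrary translation $\tau$, I first want to locate its image $Q = \tau(P)$ and write $Q$ as the intersection of a line through the $\tau_1$-direction and a line through the $\tau_2$-direction passing through $P$.

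First I would construct the two coordinate lines. Let $k$ be the line through $P$ parallel to $\ovl{P\tau_2(P)}$ (which is just $\ovl{P\tau_2(P)}$ itself) and let $m$ be $\ovl{P\tau_1(P)}$. Since these are apart from each other, I can take the point $R_1$ to be the intersection of $m$ with the line through $Q=\tau(P)$ parallel to $\ovl{P\tau_2(P)}$, and $R_2$ the intersection of $\ovl{P\tau_2(P)}$ with the line through $Q$ parallel to $\ovl{P\tau_1(P)}$. These intersections exist and are unique because the relevant lines are parallel to lines that are apart and meet (the existence-of-intersection axiom for preaffine planes). Geometrically $R_1$ lies on the $\tau_1$-axis and $R_2$ on the $\tau_2$-axis, and $P$, $R_1$, $Q$, $R_2$ form a parallelogram. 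Now $R_1 \in \ovl{P\tau_1(P)}$, so by Theorem \ref{thrmtr} there is a unique $\alpha \in \text{Tp}$ with $\tau_1^\alpha(P) = R_1$, i.e. $\tau_1^\alpha = \tau_{PR_1}$; similarly there is a unique $\beta \in \text{Tp}$ with $\tau_2^\beta(P) = R_2$, i.e. $\tau_2^\beta = \tau_{PR_2}$. The composite $\tau_1^\alpha \tau_2^\beta$ sends $P$ first by $\tau_{PR_2}$ to $R_2$ and then by $\tau_{PR_1}$ to $\tau_{PR_1}(R_2)$; I would check using Lemma \ref{lemlinetranslation} (translations move every point along parallel traces) that $\tau_{PR_1}(R_2)$ is exactly the fourth vertex $Q$ of the parallelogram, since $\tau_{PR_1}$ translates $R_2$ along a line parallel to $\ovl{PR_1}$. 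Hence $(\tau_1^\alpha\tau_2^\beta)(P) = Q = \tau(P)$, and since a translation is determined by the image of one point (Theorem \ref{thrmexistrans}), $\tau = \tau_1^\alpha \tau_2^\beta$.

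For uniqueness, suppose $\tau_1^{\alpha'}\tau_2^{\beta'} = \tau_1^\alpha\tau_2^\beta$. Evaluating at $P$ and using that the group of translations is abelian, I would show $\tau_1^{\alpha'}(P)$ and $\tau_1^\alpha(P)$ both lie on the line $\ovl{P\tau_1(P)}$ while the $\tau_2$-parts lie on $\ovl{P\tau_2(P)}$; because these two lines meet only at $P$ and the parallelogram decomposition of a point into one coordinate on each axis is unique (again by the uniqueness of intersections), I get $\tau_1^{\alpha'}(P) = \tau_1^\alpha(P)$ and $\tau_2^{\beta'}(P)=\tau_2^\beta(P)$. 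Then Theorem \ref{thrmtraceunique} forces $\alpha' = \alpha$ and $\beta' = \beta$, since $\tau_1^\alpha$ and $\tau_2^\beta$ are trace preserving images of translations $\tau_1$, $\tau_2$ with $P \# \tau_i(P)$.

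The main obstacle I anticipate is the careful verification that the parallelogram closes up correctly, namely that $\tau_{PR_1}(R_2) = Q$, and that all the intersection points invoked genuinely exist and are unique. This is where the apartness hypotheses coming from non-collinearity of $P,\tau_1(P),\tau_2(P)$ do the real work: they guarantee the two axes are apart, so that the needed lines (parallels through $Q$) are apart from the axes and hence meet them in unique points. Handling the edge cases — where $Q$ happens to lie on one of the two axes, so that one of $R_1, R_2$ equals $P$ and the corresponding coordinate is $0$ — requires noting that Theorem \ref{thrmtr} already allows $Q'$ to coincide with $P$ (giving $\alpha = 0$), so no separate argument is needed; the same decomposition goes through uniformly.
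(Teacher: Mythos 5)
Your proposal is correct and is essentially the paper's own proof: your $R_1,R_2$ are the paper's $P_1,P_2$ (intersections of the two axes with the parallels through $\tau(P)$), existence follows by identifying $\tau_1^\alpha\tau_2^\beta(P)$ with the unique intersection of those parallels, and uniqueness goes through Theorem \ref{thrmtraceunique} exactly as in the text. The only adjustment needed is at the closure step: rather than applying Lemma \ref{lemlinetranslation} to $\tau_{PR_1}$ (which presupposes $P\#R_1$, and constructively you cannot case-split on $R_1=P$ versus $R_1\#P$), argue as the paper implicitly does that the trace of $\tau_1$ at $R_2$ is the line through $R_2$ parallel to $\ovl{P\tau_1(P)}$ (Lemma \ref{lemlinetranslation} applied to $\tau_1$ itself, for which $P\#\tau_1(P)$ holds) and use that $\alpha$ is trace preserving, together with commutativity of translations for the other coordinate, so the degenerate positions of $\tau(P)$ are handled uniformly without any case analysis.
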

 
\begin{proof}

\begin{tikzpicture}
\draw 
(0,0) node[left] {$P$} -- (4,0)
(0,0) -- (0,4)
(2,0) node[below] {$\tau_1(P)$}
(0,2) node[left] {$\tau_2(P)$}
(0.5,2.5) node[above] {$l$}
(1,1.25) node[right] {$k$};

\draw [fill] (0,2) circle [radius=0.03];

\draw [fill] (2,0) circle [radius=0.03];

\draw [dotted]
(0,2.5) node[left] {$P_2$}
-- (1,2.5) node[right] {$\tau(P)$}
--(1,0) node[below] {$P_1$};
\end{tikzpicture}

Let $k$ be the line through $\tau(P)$ parallel to $\ovl{P\tau_2(P)}$. $\ovl{P\tau_2(P)}\#\ovl{P\tau_1(P)}$ and they intersect, therefore $k\#\ovl{P\tau_1(P)}$ and they intersect. Let the unique intersection point of $k$ and $\ovl{P\tau_1(P)}$ be $P_1$. Let $\alpha$ be the unique trace preserving homomorphism sending $\tau_1$ to $\tau_{PP_1}$.

Let $l$ the line through $\tau(P)$ parallel to $\ovl{P\tau_1(P)}$. In a similar way as above, let $P_2$ be the unique intersection point of $l$ and $\ovl{P\tau_2(P)}$ and let $\beta$ be the unique trace preserving homomorphism such that $\tau_2^\beta=\tau_{PP_2}$.

$\tau(P)$ is the unique intersection of the lines $k$ and $l$ and so is the point $\tau_2^\beta(\tau_1^\alpha(P))$, so the two points are equal. A translation is uniquely determined by the image of a point, therefore $\tau=\tau_1^\alpha\tau_2^\beta$.

To see the uniqueness of $\alpha$ (and $\beta$), suppose that $\tau_1^\alpha\tau_2^\beta=\tau_1^\gamma\tau_2^\delta$. Then the point $\tau_1^{\alpha-\gamma}(\tau_2^\beta(P))=\tau_2^\delta(P)$ lies on $\ovl{P\tau_2(P)}$. $\tau_1^{\alpha-\gamma}(\tau_2^\beta(P))$ lies on the line through $\tau_2^\beta(P)$ parallel to $\ovl{P\tau_1(P)}$. The unique point lying on both these lines is $\tau_2^\beta(P)$. Therefore, $\alpha-\gamma=0$, or equivalently $\alpha=\gamma$, and by a symmetric argument $\beta=\delta$.
\end{proof}

\begin{rmk}
Note that an other way of phrasing the conclusion of the theorem is that for any point $A$ of the affine plane there exist unique $\alpha$ and $\beta$ such that $A=\tau_1^\alpha \tau_2^\beta (P)$ (and to prove this version we use the theorem for $\tau=\tau_{PA}$).
\end{rmk}

Notice that given translations $\tau_1$, $\tau_2$ such that there exists a point $Q$ for which $Q$, $\tau_1(Q)$ and $\tau_2(Q)$ are non-collinear, then by Proposition \ref{proptn} for any point $P$ the points $P$, $\tau_1(P)$, $\tau_2(P)$ are non-collinear.

The following lemma is going to be used when introducing coordinates to an affine plane in the next section of this chapter.

\begin{lem} \label{lemoutinv}
Let $P$ be a point of the affine plane $\ca A$ and let $\tau_1$, $\tau_2$ be two translations of the affine plane $\ca A$ such that there exists a point $Q$ for which $Q$, $\tau_1(Q)$, $\tau_2(Q)$ are non-collinear. Let $\alpha$ and $\beta$ be trace preserving homomorphisms, and let $A=\tau_1^{\alpha} \tau_2^{\beta}(P)$. Then, $A\notin \ovl{P\tau_2(P)}$ iff $\alpha$ is invertible.
\end{lem}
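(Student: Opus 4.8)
The plan is to read $A=\tau_1^{\alpha}\tau_2^{\beta}(P)$ as the result of first moving $P$ along the ``$\tau_2$-axis'' to the point $B:=\tau_2^{\beta}(P)$, and then translating $B$ by $\tau_1^{\alpha}$, which (when $\alpha$ is invertible) moves $B$ in the direction of the ``$\tau_1$-axis''. The whole statement then reduces to the fact that invertibility of $\alpha$ is exactly what makes the translation $\tau_1^{\alpha}$ move every point off itself (Proposition \ref{propinv} and Lemma \ref{lemlinetranslation}). Throughout I will use the standing fact, noted just before the lemma, that $P$, $\tau_1(P)$, $\tau_2(P)$ are non-collinear, so in particular $P\#\tau_1(P)$, $P\#\tau_2(P)$, and $\tau_1(P)\notin\ovl{P\tau_2(P)}$; the last of these yields $\ovl{P\tau_1(P)}\#\ovl{P\tau_2(P)}$ by applying the axiom $A\notin k\vdash_{A,k,l} k\#l\vee A\notin l$ with $A=\tau_1(P)$, $k=\ovl{P\tau_2(P)}$, $l=\ovl{P\tau_1(P)}$ and discharging the impossible disjunct $\tau_1(P)\notin\ovl{P\tau_1(P)}$. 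I will also record at the outset that, since $\beta$ is trace preserving and $\ovl{P\tau_2(P)}$ is the (unique) trace of $\tau_2$ at $P$, we have $B=\tau_2^{\beta}(P)\in\ovl{P\tau_2(P)}$.

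For the forward implication I would assume $\alpha$ invertible. Writing $\tau_1=\tau_{P\tau_1(P)}$ with $P\#\tau_1(P)$, Proposition \ref{propinv} gives $\tau_1^{\alpha}(P)\#P$. Since $\alpha$ is trace preserving, $\tau_1^{\alpha}(P)\in\ovl{P\tau_1(P)}$, so by uniqueness of the line through two points apart from each other, $\ovl{P\tau_1^{\alpha}(P)}=\ovl{P\tau_1(P)}$. As $\tau_1^{\alpha}$ is a translation moving $P$ off itself, Lemma \ref{lemlinetranslation} applied at $B$ gives $A=\tau_1^{\alpha}(B)\#B$ together with $\ovl{AB}\parallel\ovl{P\tau_1^{\alpha}(P)}=\ovl{P\tau_1(P)}$. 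Now I invoke the final preaffine axiom $A\in l,m\wedge k\parallel l\wedge l\#m\vdash_{A,k,l,m} k\#m\wedge(\exists B.\,B\in k\wedge B\in m)$ with the point $P$, $l=\ovl{P\tau_1(P)}$, $m=\ovl{P\tau_2(P)}$ and $k=\ovl{AB}$, whose hypotheses are exactly $P\in l$, $P\in m$, $\ovl{AB}\parallel\ovl{P\tau_1(P)}$ and $\ovl{P\tau_1(P)}\#\ovl{P\tau_2(P)}$; this yields $\ovl{AB}\#\ovl{P\tau_2(P)}$. Finally, feeding $A\#B$, $\ovl{AB}\#\ovl{P\tau_2(P)}$, $A,B\in\ovl{AB}$ and $B\in\ovl{P\tau_2(P)}$ into the $\notin$-introduction axiom $A\#B\wedge l\#m\vdash A\notin l\vee B\notin m\vee A\notin m\vee B\notin l$ and discharging the three impossible disjuncts gives $A\notin\ovl{P\tau_2(P)}$.

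For the reverse implication I would assume $A\notin\ovl{P\tau_2(P)}$. Since $B\in\ovl{P\tau_2(P)}$, the axiom $A\notin k\vdash_{A,B,k} A\#B\vee B\notin k$ forces $A\#B$, i.e. $\tau_1^{\alpha}(B)\#B$. Because $\tau_1^{\alpha}$ is a translation that moves the point $B$ off itself, Lemma \ref{lemlinetranslation} gives $\tau_1^{\alpha}(P)\#P$. Writing again $\tau_1=\tau_{P\tau_1(P)}$ with $P\#\tau_1(P)$, this is precisely condition (2) of Proposition \ref{propinv}, so $\alpha$ is invertible. This closes the equivalence.

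The main obstacle is the forward direction's production of the genuinely \emph{positive} outside relation $A\notin\ovl{P\tau_2(P)}$ rather than a negated incidence: constructively one cannot argue ``if $A$ were on the line then $\ovl{P\tau_1(P)}$ and $\ovl{P\tau_2(P)}$ would coincide''. The device that avoids this is to first manufacture the line-apartness $\ovl{AB}\#\ovl{P\tau_2(P)}$ from the transversal axes $\ovl{P\tau_1(P)}\#\ovl{P\tau_2(P)}$ via the intersection axiom, and only then convert it into an outside relation through the $\notin$-introduction axiom; the parallel-but-apart phenomenon of affine planes over local rings is exactly why the naive contradiction argument fails and this detour is needed. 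A secondary point requiring care is the repeated identification $\ovl{P\tau_1^{\alpha}(P)}=\ovl{P\tau_1(P)}$, which relies on combining trace preservation with $\tau_1^{\alpha}(P)\#P$ so that the two descriptions name the same line.
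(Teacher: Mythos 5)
Your proof is correct, but it routes the argument differently from the paper's. The paper sets $A_1=\tau_1^{\alpha}(P)$ and builds a single auxiliary line $k$, the parallel to $\ovl{P\tau_2(P)}$ through $A_1$; since translations commute, $A=\tau_2^{\beta}(A_1)\in k$, and both directions of the lemma are read off the one chain $A\notin\ovl{P\tau_2(P)}\Leftrightarrow k\#\ovl{P\tau_2(P)}\Leftrightarrow A_1\notin\ovl{P\tau_2(P)}\Leftrightarrow A_1\#P$, the last equivalence being Proposition \ref{propinv}; all the $\notin$-to-$\#$ traffic runs along the \emph{parallel} pair $(k,\ovl{P\tau_2(P)})$ via the axiom $k\#l\wedge k\parallel l\vdash_{A,k,l} A\notin k\vee A\notin l$ (with the four-disjunct axiom used once, at $(P,A_1)$ against the two axes, to seed $A_1\notin\ovl{P\tau_2(P)}$). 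You instead split in the literal composition order, $B=\tau_2^{\beta}(P)$ and $A=\tau_1^{\alpha}(B)$, and treat the two directions asymmetrically: forwards you construct the \emph{transversal} $\ovl{AB}$ --- which can only be named once invertibility is in hand, since its existence needs $A\#B$, and so cannot also carry the converse --- and you then need the intersection axiom to get $\ovl{AB}\#\ovl{P\tau_2(P)}$ followed by the four-disjunct axiom applied directly at $A$; backwards you dispense with auxiliary lines altogether, extracting $A\#B$ from $B\in\ovl{P\tau_2(P)}$ and transporting it to $\tau_1^{\alpha}(P)\#P$ through the homogeneity clause of Lemma \ref{lemlinetranslation} before invoking Proposition \ref{propinv}. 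What the paper's scheme buys is symmetry and economy --- one line and essentially one conversion axiom, run in both directions; what yours buys is a construction-free converse that is shorter than the paper's, plus a forward direction that makes fully explicit the constructive point you flag, namely that $A\notin\ovl{P\tau_2(P)}$ must be produced positively by first establishing line-apartness, never by a coincidence-by-contradiction argument.
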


\begin{proof}
First notice that by the above comment, $P$, $\tau_1(P)$, $\tau_2(P)$ are non-collinear. Let $A_1=\tau_1^\alpha (P)$ and let $k$ be the line through $A_1$ and parallel to $\ovl{P\tau_2(P)}$ and notice that $A\in k$.

Suppose that $A\notin \ovl{P\tau_2(P)}$.  $A\in k$, therefore the lines $k$ and $\ovl{P\tau_2(P)}$ are apart from each other (and parallel), hence $A_1$ lies outside at least one of $k$ and $\ovl{P\tau_2(P)}$.  $A_1$ lies on $k$, therefore $A_1\notin \ovl{P\tau_2(P)}$ and $A_1\# P$. $A_1=\tau_1^\alpha (P)\# P$, hence $\alpha$ is invertible by Proposition \ref{propinv}.

Suppose that $\alpha$ is invertible. Then,  $A_1=\tau_1^\alpha (P)\# P$ by Proposition \ref{propinv}. The lines $\ovl{P\tau_1(P)}$ and $\ovl{P\tau_2(P)}$ are apart from each other. Given a pair of points that apart from each other and a pair of lines that are apart from each other at least one of the points lies outside from at least one of the two lines. In this case, at least one of $P$ and $A_1$ lies outside from at least one of $\ovl{P\tau_1(P)}$ and $\ovl{P\tau_2(P)}$. $P$ is the intersection of the two lines and $A_1\in \ovl{P\tau_1(P)}$, therefore $A_1\notin \ovl{P\tau_2(P)}$. $A_1\in k$, therefore $k\# \ovl{P\tau_2(P)}$. That combined with $A\in k$ and $k\parallel \ovl{P\tau_2(P)}$ implies that $A\notin \ovl{P\tau_2(P)}$.
\end{proof}

\begin{lem}
Let $P$, $Q$ and $R$ be three non-collinear points, and  let $\alpha$, $\beta$ be trace preserving homomorphisms such that $\tau_{PQ}^\alpha(P)\#P$ and $\tau_{PR}^\beta(P)\#P$. Then $\alpha\cdot \beta=\beta\cdot \alpha$.
\end{lem}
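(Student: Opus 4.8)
The plan is to realise $\alpha$ and $\beta$ as conjugations by dilatations fixing $P$, reduce the identity $\alpha\cdot\beta=\beta\cdot\alpha$ to the commutation of these two dilatations, and then prove that commutation by a single application of Pappus' axiom. The non-collinearity of $P$, $Q$, $R$ will be used precisely to provide two lines through $P$ that are apart from each other, on which to build the Pappus hexagon.

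First, the hypotheses $\tau_{PQ}^\alpha(P)\#P$ and $\tau_{PR}^\beta(P)\#P$ say, by Proposition \ref{propinv}, that $\alpha$ and $\beta$ are invertible, so Theorem \ref{thrmtracedilatation} produces dilatations $\sigma$ and $\rho$, both fixing $P$, with $\tau^\alpha=\sigma\tau\sigma^{-1}$ and $\tau^\beta=\rho\tau\rho^{-1}$ for every translation $\tau$. Expanding the two products gives $\tau^{\alpha\cdot\beta}=(\tau^\beta)^\alpha=(\sigma\rho)\tau(\sigma\rho)^{-1}$ and $\tau^{\beta\cdot\alpha}=(\tau^\alpha)^\beta=(\rho\sigma)\tau(\rho\sigma)^{-1}$. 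Hence it suffices to prove $\sigma\rho=\rho\sigma$: once the two dilatations agree, the two conjugations agree on every $\tau$, and therefore $\alpha\cdot\beta=\beta\cdot\alpha$ in $\text{Tp}$.

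For the dilatation identity, note that since $\sigma$ and $\rho$ fix $P$ they send each line through $P$ to a parallel line through $P$, hence to itself; in particular they preserve the lines $k=\ovl{PQ}$ and $l=\ovl{PR}$, which are apart from each other because $P$, $Q$, $R$ are non-collinear. I would apply Pappus' axiom on $k$ and $l$ (which meet at $P$) to the six points $A=Q$, $B=\sigma(Q)$, $C=\rho\sigma(Q)$ on $k$ and $A'=R$, $B'=\rho(R)$, $C'=\sigma\rho(R)$ on $l$, all apart from $P$ because $\sigma$ and $\rho$ are invertible. The two parallelism hypotheses of Pappus hold because $\sigma$ and $\rho$ are dilatations: $\ovl{AB'}\parallel\ovl{\sigma(A)\sigma(B')}=\ovl{BC'}$ and $\ovl{A'B}\parallel\ovl{\rho(A')\rho(B)}=\ovl{B'C}$. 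Pappus then yields $\ovl{QR}=\ovl{AA'}\parallel\ovl{CC'}=\ovl{\rho\sigma(Q)\,\sigma\rho(R)}$.

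Finally I would feed this back into the dilatation $\rho\sigma$: being a dilatation it already gives $\ovl{QR}\parallel\ovl{\rho\sigma(Q)\,\rho\sigma(R)}$. Both this line and the line obtained from Pappus pass through $\rho\sigma(Q)$ and are parallel to $\ovl{QR}$, so by uniqueness of the parallel through a point they coincide, whence $\sigma\rho(R)$ lies on $\ovl{\rho\sigma(Q)\,\rho\sigma(R)}$; as both $\sigma\rho(R)$ and $\rho\sigma(R)$ then lie on this line and on $l$, and the line is apart from $l$, uniqueness of intersection points forces $\sigma\rho(R)=\rho\sigma(R)$. Since $\sigma\rho$ and $\rho\sigma$ also share the fixed point $P$ with $P\#R$, Theorem \ref{thrmundil} gives $\sigma\rho=\rho\sigma$, and hence $\alpha\cdot\beta=\beta\cdot\alpha$. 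The main obstacle is the constructive bookkeeping around this last step: one must verify that the six points really are apart from $P$, that the pairs used in the parallelisms are non-collinear so the dilatation relations apply, and above all that the line $\ovl{\rho\sigma(Q)\,\rho\sigma(R)}$ is genuinely apart from $l$, so that the coincidence $\sigma\rho(R)=\rho\sigma(R)$ can be extracted from uniqueness of intersections rather than from parallelism alone.
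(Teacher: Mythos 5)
Your proof is correct and is essentially the paper's own argument: the same reduction via Proposition \ref{propinv} and Theorem \ref{thrmtracedilatation} to commuting two dilatations fixing $P$, the same Pappus hexagon on $\ovl{PQ}$ and $\ovl{PR}$ (the paper's points $Q$, $Q'$, $C$ and $R$, $R'$, $D$ are exactly your $Q$, $\sigma(Q)$, $\rho\sigma(Q)$ and $R$, $\rho(R)$, $\sigma\rho(R)$), and the same finish identifying $\sigma\rho(R)=\rho\sigma(R)$ via the unique intersection of $\ovl{PR}$ with the line through $\rho\sigma(Q)$ parallel to $\ovl{QR}$, followed by uniqueness of dilatations on two points that are apart. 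The only cosmetic difference is that you route through uniqueness of the parallel line through a point before invoking uniqueness of intersections, where the paper identifies both composite images with that intersection point directly.
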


\begin{proof}
By Theorem \ref{thrmexistdil} there exists dilations $\sigma_\alpha$ and $\sigma_\beta$ such that $P$ is their fixed point, $\sigma_\alpha(Q)=Q'$, $\sigma_\beta(R)=R'$, and for any translation $\tau$, $\tau^\alpha=\sigma_\alpha\tau\sigma_\alpha^{-1}$ and $\tau^\beta=\sigma_\beta\tau\sigma_\beta^{-1}$. Hence, to prove that $\alpha\cdot\beta=\beta\cdot \alpha$ it is sufficient to show that $\sigma_\alpha\sigma_\beta=\sigma_\beta\sigma_\alpha$. 

Let $C=\sigma_\beta(Q')$. $R\#Q'$, hence $R'\#C$ and $\ovl{RQ'}\parallel\ovl{R'C}$. $\sigma_\beta$ fixes $P$, therefore it also fixes the line $\ovl{PQ}$. $Q'\in \ovl{PQ}$, therefore $C$ also lies on the line $\ovl{PQ}$. Also, $\ovl{PR}\#\ovl{RQ'}\parallel\ovl{R'C}$, hence $\ovl{PR}\#\ovl{R'C}$. $R'$ is the intersection of the two lines and $C$ is apart from it, therefore $C\notin \ovl{PR}$ and in particular $C$ is apart from $P$.

Let $D=\sigma_\alpha(R')$. Then by similar arguments as above, $Q'\#D$, $\ovl{QR'}\parallel\ovl{Q'D}$, $D\in \ovl{PR}$ and $D\#P$.

Hence, we can apply Pappus' axiom to the points $P$, $Q$, $Q'$, $R$, $R'$, $C$, $D$, to conclude that $\ovl{QR}\parallel\ovl{CD}$.

$\sigma_\beta\sigma_\alpha$ with $P$ as a fixed point, therefore $\sigma_\beta\sigma_\alpha(R)\in \ovl{PR}$ and $\sigma_\beta\sigma_\alpha(R)$ lies on the line $k$ through $C$ parallel to $\ovl{PQ}$. The line $k$ is apart from $\ovl{PR}$, therefore we have uniquely determined $\sigma_\beta\sigma_\alpha(R)$. $C$ also satisfies these conditions, therefore $C=\sigma_\beta\sigma_\alpha(R)$, i.e. $\sigma_\alpha\sigma_\beta(R)=\sigma_\beta\sigma_\alpha(R)$. Hence, the dilatations $\sigma_\beta\sigma_\alpha$ and $\sigma_\alpha\sigma_\beta$ are equal since they agree on the points $P$ and $R$ that are apart from each other.
\end{proof}

\begin{thrm}
Multiplication in $\text{Tp}$ is commutative.
\end{thrm}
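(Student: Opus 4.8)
The plan is to bootstrap from the preceding lemma, which already yields commutativity $\alpha\cdot\beta=\beta\cdot\alpha$ whenever both $\alpha$ and $\beta$ are invertible. Given two invertible trace preserving homomorphisms, I would first pick three non-collinear points $P$, $Q$, $R$ (these exist in any affine plane), so that in particular $P\#Q$ and $P\#R$; by Proposition \ref{propinv} invertibility of $\alpha$ and of $\beta$ gives $\tau_{PQ}^\alpha(P)\#P$ and $\tau_{PR}^\beta(P)\#P$, which are exactly the hypotheses of the preceding lemma. Hence any two invertible elements of $\text{Tp}$ commute.

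The next step is to remove the invertibility hypothesis one factor at a time, using the earlier theorem that for any trace preserving homomorphism $\gamma$ at least one of $\gamma$ and $\gamma+1$ is invertible, together with the distributive law and the fact that $1$ is the multiplicative unit (so $1\cdot\delta=\delta=\delta\cdot 1$ for every $\delta$). First I would show that an arbitrary $\alpha$ commutes with every invertible $\gamma$: if $\alpha$ itself is invertible this is the case just established; otherwise $\alpha+1$ is invertible, so $(\alpha+1)\gamma=\gamma(\alpha+1)$, and expanding both sides as $\alpha\gamma+\gamma$ and $\gamma\alpha+\gamma$ and cancelling $\gamma$ (the additive group of $\text{Tp}$ is a group) gives $\alpha\gamma=\gamma\alpha$.

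Finally I would treat two arbitrary elements $\alpha,\beta$. If $\beta$ is invertible, the previous step gives $\alpha\beta=\beta\alpha$. If not, then $\beta+1$ is invertible, so by the previous step $\alpha(\beta+1)=(\beta+1)\alpha$; expanding these as $\alpha\beta+\alpha$ and $\beta\alpha+\alpha$ and cancelling $\alpha$ yields $\alpha\beta=\beta\alpha$. This exhausts all cases and proves commutativity.

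There is no serious obstacle remaining once the invertible case is in hand: the argument is a formal ring-theoretic reduction exploiting the additive group structure and the fact, peculiar to this ring, that $\gamma$ or $\gamma+1$ is always invertible (a reflection of $\text{Tp}$ being local). The only point requiring a little care is to check that the preceding lemma is genuinely applicable, i.e. that three non-collinear points can be chosen and that Proposition \ref{propinv} converts invertibility into the apartness condition $\tau_{PQ}^\alpha(P)\#P$ used there.
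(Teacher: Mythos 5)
Your proof is correct and takes essentially the same route as the paper: both reduce to the invertible case handled by the preceding lemma (applicable because three non-collinear points exist and Proposition \ref{propinv} converts invertibility into the apartness hypotheses), using the fact that $\gamma$ or $\gamma+1$ is invertible together with distributivity and additive cancellation in $\text{Tp}$. The paper merely packages the same reduction as a single four-way case split with identities such as $\alpha\cdot\beta=(\alpha+1)\cdot(\beta+1)-\alpha-\beta$, where your version cancels one factor at a time; the only point to phrase carefully is that the case analysis is on the constructively given disjunction ``$\alpha$ invertible or $\alpha+1$ invertible,'' not on a classical dichotomy ``invertible or not.''
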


\begin{proof}
Let $\alpha$ and $\beta$ be trace preserving homomorphisms.

Pick three points $P$, $Q$ and $R$ that are all apart from each other and such that $R\notin\ovl{PQ}$. Then, $\tau_{PQ}^\alpha(P)\#\tau_{PQ}^{\alpha+1}(P)$, hence $P\#\tau_{PQ}^\alpha(P)$ or $P\#\tau_{PQ}^{\alpha+1}(P)$. Also, $\tau_{PR}^\beta(P)\#\tau_{PR}^{\beta+1}(P)$, therefore $P\#\tau_{PR}^\beta(P)$ or $P\#\tau_{PR}^{\beta+1}(P)$.

If $P\#\tau_{PQ}^\alpha(P)$ and $P\#\tau_{PR}^\beta(P)$, then $\alpha\cdot \beta=\beta \cdot \alpha$ by the above lemma.

If $P\#\tau_{PQ}^{\alpha+1}(P)$ and $P\#\tau_{PR}^\beta(P)$, then
$$\alpha\cdot \beta=(\alpha+1)\cdot \beta -\beta=\beta \cdot (\alpha+1) -\beta=\beta \cdot \alpha.$$
A symmetric argument gives the result when $P\#\tau_{PQ}^\alpha(P)$ and $P\#\tau_{PR}^{\beta+1}(P)$.

If $P\#\tau_{PQ}^{\alpha+1}(P)$ and
$P\#\tau_{PR}^{\beta+1}(P)$, then
$$\alpha \cdot \beta=(\alpha+1)\cdot (\beta+1) -\alpha -\beta=(\beta+1)\cdot (\alpha+1) -\alpha -\beta=\beta\cdot \alpha.$$
Hence, $\alpha\cdot \beta=\beta \cdot \alpha$ in all the cases.
\end{proof}

Putting all the previous results together we have the following:

\begin{thrm}
$\text{Tp}$ is a local ring.
\end{thrm}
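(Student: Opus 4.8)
The plan is to assemble the final statement, namely that $\text{Tp}$ is a local ring, purely by collecting the structural results already established for $\text{Tp}$ throughout Section \ref{secaffring}. By the definition of local ring given in the excerpt, I must verify three things: that $\text{Tp}$ is a commutative ring, that it is non-trivial (i.e.\ it satisfies $\text{inv}(0)\vdash \bot$), and that it satisfies the disjunction axiom $\text{inv}(x+y)\vdash_{x,y}\text{inv}(x)\vee\text{inv}(y)$.

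First I would recall that $\text{Tp}$ has already been shown to be an associative ring (with the identity homomorphism $1$ as multiplicative unit and the constant-to-identity map $0$ as additive unit) by the theorem establishing that $+$ and $\cdot$ make $\text{Tp}$ a ring, and that multiplication is commutative by the theorem immediately preceding this one. Hence $\text{Tp}$ is a commutative ring. Next, non-triviality is exactly the content of the lemma stating that $\text{Tp}$ satisfies $\text{inv}(0)\vdash\bot$, which was proved by picking points $A\#B$ and observing that $\tau_{AB}^0(A)=A$ while $\tau_{AB}^1(A)=B$, so that $0=1$ would force $A=B$.

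The remaining obligation is the local-ring disjunction. Here I would reduce it to the theorem already proved that for any trace preserving homomorphism $\alpha$, either $\alpha$ or $\alpha+1$ is invertible. Concretely, suppose $\alpha+\beta$ is invertible; I want to conclude $\alpha$ or $\beta$ is invertible. Setting $\gamma=(\alpha+\beta)^{-1}$, the element $\gamma\alpha+\gamma\beta=1$ is the unit, and by the cited theorem one of $\gamma\alpha$ or $\gamma\alpha+1=\gamma\beta$ (using $\gamma\alpha+\gamma\beta=1$) is invertible; since $\gamma$ is a unit, invertibility of $\gamma\alpha$ gives invertibility of $\alpha$, and invertibility of $\gamma\beta$ gives invertibility of $\beta$, yielding the disjunction. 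I would phrase this as a short derivation rather than reproving anything.

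The main obstacle, such as it is, is simply packaging the disjunction cleanly: the earlier ``$\alpha$ or $\alpha+1$ is invertible'' theorem is stated for a single element, so the work lies in multiplying through by the inverse of $\alpha+\beta$ to put the hypothesis into the form that theorem accepts, and in tracking that invertibility is preserved and reflected under multiplication by a unit. Everything else is a citation. A concise write-up would run as follows.

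\begin{proof}
By the preceding theorems, $\text{Tp}$ is an associative and commutative ring, with the identity homomorphism as multiplicative unit and the constant map sending every translation to the identity as additive unit. Moreover, $\text{Tp}$ satisfies $\text{inv}(0)\vdash\bot$, so it is a non-trivial commutative ring.

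It remains to verify the sequent $\text{inv}(x+y)\vdash_{x,y}\text{inv}(x)\vee\text{inv}(y)$. Let $\alpha,\beta\in\text{Tp}$ be such that $\alpha+\beta$ is invertible, and let $\gamma=(\alpha+\beta)^{-1}$. Then $\gamma\cdot\alpha+\gamma\cdot\beta=\gamma\cdot(\alpha+\beta)=1$, so $\gamma\cdot\beta=(\gamma\cdot\alpha)+1$. By the theorem that for any trace preserving homomorphism either it or its successor is invertible, at least one of $\gamma\cdot\alpha$ and $(\gamma\cdot\alpha)+1=\gamma\cdot\beta$ is invertible. Since $\gamma$ is invertible, invertibility of $\gamma\cdot\alpha$ forces $\alpha$ to be invertible, and invertibility of $\gamma\cdot\beta$ forces $\beta$ to be invertible. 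Hence $\text{inv}(\alpha)\vee\text{inv}(\beta)$ holds.

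Therefore $\text{Tp}$ is a non-trivial commutative ring satisfying $\text{inv}(x+y)\vdash_{x,y}\text{inv}(x)\vee\text{inv}(y)$, that is, a local ring.
\end{proof}
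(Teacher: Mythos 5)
Your overall strategy is exactly the paper's: the paper offers no separate argument for this theorem, merely stating it as the assembly of the ring-structure theorem, the lemma that $\text{Tp}$ satisfies $\text{inv}(0)\vdash\bot$, the commutativity theorem, and the theorem that for any $\alpha$ in $\text{Tp}$ either $\alpha$ or $\alpha+1$ is invertible. So the citations in your first two paragraphs are fine, and your instinct that the only real work is deriving the sequent $\text{inv}(x+y)\vdash_{x,y}\text{inv}(x)\vee\text{inv}(y)$ from the ``$\alpha$ or $\alpha+1$'' theorem is sound (indeed you make explicit a step the paper leaves implicit).

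However, the one step you actually argue contains an algebra error that makes it fail as written. From $\gamma\cdot\alpha+\gamma\cdot\beta=1$ you conclude $\gamma\cdot\beta=(\gamma\cdot\alpha)+1$; the correct identity is $\gamma\cdot\beta=1-\gamma\cdot\alpha$, and these agree only when $2\gamma\alpha=0$. Consequently, applying the cited theorem to $\gamma\cdot\alpha$ yields that $\gamma\alpha$ or $\gamma\alpha+1$ is invertible, and the second disjunct is \emph{not} $\gamma\beta$, so your case analysis does not close. The repair is one line: apply the theorem to $-\gamma\cdot\alpha$ instead (additive inverses exist in $\text{Tp}$, since $\alpha+(-1)\cdot\alpha=0$ is proved in the ring-structure theorem). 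Then either $-\gamma\alpha$ is invertible, whence $\gamma\alpha$ and hence $\alpha=\gamma^{-1}(\gamma\alpha)$ is invertible, or $-\gamma\alpha+1=\gamma\beta$ is invertible, whence $\beta$ is invertible. With that sign correction your write-up is complete and faithful to the paper.
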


\section{Introducing coordinates to an affine plane} \label{secaffintr}

Given an affine plane $\ca A$ and three non-collinear points $O$, $X$ and $Y$, and $\text{Tp}$ the local ring constructed in the previous section, let $\phi$ be the morphism from the points of $\mathbb A(\text{Tp})$ to the points of $\ca A$ which maps the point $(\alpha,\beta)$ of $\mathbb A(\text{Tp})$ to the point
$$\phi(\alpha,\beta)=\tau_{OX}^\alpha \tau_{OY}^\beta.$$
This is a bijection by Theorem \ref{thrmafco}.

By Proposition \ref{proppreaffmor}, $\phi$ extends to a unique isomorphism of affine planes iff:
\begin{enumerate}
\item $\phi$ preserves and reflects $\#$,
\item given three points $A$, $B$, $C$ of $\mathbb A(\text{Tp})$, then $A\#B$ and $C\in \ovl{AB}$ iff $\phi(A)\#\phi(B)$ and $\phi(C)\in \ovl{\phi(A)\phi(B)}$,
\item three points are non-collinear iff their images under $\phi$ are non-collinear,
\item given four points $A$, $B$, $C$, $D$ of $\mathbb A(\text{Tp})$, such that $A\#B$ and $C\#D$, then $\ovl{AB}\parallel \ovl{CD}$ iff $\ovl{\phi(A)\phi(B)}\parallel \ovl{\phi(C)\phi(D)}$.
\end{enumerate}

We shall prove the four above points to show that the above isomorphism extends to an isomorphism of affine planes.

\begin{lem}
$(a_1,a_2)\#(b_1,b_2)$ in $\mathbb A (\text{Tp})$ iff $\phi(a_1,a_2)\#\phi(b_1,b_2)$ in $\ca A$. 
\end{lem}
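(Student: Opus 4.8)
The plan is to recall the definition of the apartness relation on $\mathbb A(\text{Tp})$ and translate both sides of the claimed equivalence into geometric conditions on points of $\ca A$ via the map $\phi(\alpha,\beta)=\tau_{OX}^\alpha\tau_{OY}^\beta(O)$. By definition of $\#$ on the affine plane over a ring (here the ring $\text{Tp}$), two points $(a_1,a_2)$ and $(b_1,b_2)$ are apart exactly when at least one of $a_1-b_1$ and $a_2-b_2$ is invertible in $\text{Tp}$. So the first step is to reduce the question to: at least one of the trace preserving homomorphisms $a_1-b_1$, $a_2-b_2$ is invertible if and only if $\phi(a_1,a_2)\# \phi(b_1,b_2)$ in $\ca A$.

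\textbf{Key steps.} First I would set $A=\phi(a_1,a_2)$ and $B=\phi(b_1,b_2)$, and observe that $\tau_{AB}=\tau_{OX}^{\,b_1-a_1}\tau_{OY}^{\,b_2-a_2}\tau$ for the appropriate composite; more precisely, since the group of translations is abelian and $\phi(\alpha,\beta)=\tau_{OX}^\alpha\tau_{OY}^\beta(O)$, the translation carrying $A$ to $B$ is $\tau_{OX}^{\,b_1-a_1}\tau_{OY}^{\,b_2-a_2}$. This identifies $A\#B$ (equivalently $\tau_{AB}(A)\# A$, using Lemma~\ref{lemlinetranslation}) with the statement that the translation $\tau_{OX}^{\,b_1-a_1}\tau_{OY}^{\,b_2-a_2}$ moves some point off itself, i.e.\ is not the identity in the relevant sense. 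The crucial tool is Lemma~\ref{lemoutinv}, applied with $\tau_1=\tau_{OX}$, $\tau_2=\tau_{OY}$, $P=O$, and the homomorphisms $\alpha=b_1-a_1$, $\beta=b_2-a_2$: it tells us that $\tau_1^{\alpha}\tau_2^{\beta}(O)\notin \ovl{O\tau_2(O)}=\ovl{OY}$ precisely when $\alpha=b_1-a_1$ is invertible. By the symmetric statement (interchanging the roles of $X$ and $Y$), $\tau_1^{\alpha}\tau_2^{\beta}(O)\notin\ovl{OX}$ precisely when $b_2-a_2$ is invertible. Combining these with the characterization of $\#$ in terms of lying outside these two reference lines gives the equivalence.

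\textbf{Main obstacle.} The delicate point will be translating the symbolic condition ``$a_1-b_1$ or $a_2-b_2$ is invertible in $\text{Tp}$'' into the purely geometric apartness $A\# B$ in $\ca A$, handling the translation by $\tau_{AB}$ cleanly and justifying via Lemma~\ref{lemlinetranslation} that $A\#B$ is equivalent to $\tau_{AB}$ displacing $O$ (rather than some other point). I would argue as follows: $A\#B$ holds iff the translation $\tau=\tau_{OX}^{\,b_1-a_1}\tau_{OY}^{\,b_2-a_2}$ satisfies $\tau(A)\# A$, which by Lemma~\ref{lemlinetranslation} is equivalent to $\tau(O)\# O$, i.e.\ $\phi(b_1-a_1+a_1', \dots)$-type displacement of $O$. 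Then $\tau(O)\# O$ means $\tau(O)$ lies outside at least one of the lines $\ovl{OX}$, $\ovl{OY}$ (since $O$ is their intersection and these two lines are apart), and Lemma~\ref{lemoutinv} converts each of these ``lies outside'' conditions into invertibility of the corresponding coordinate difference. The one subtlety to get right is the bookkeeping of signs and the abelian composition, but no genuinely new geometric input is needed beyond Lemma~\ref{lemoutinv} and Lemma~\ref{lemlinetranslation}, so the argument should close without difficulty.
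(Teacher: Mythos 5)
Your proof is correct and follows essentially the same route as the paper's: identify $\tau_{AB}=\tau_{OX}^{\,b_1-a_1}\tau_{OY}^{\,b_2-a_2}$, transfer apartness between $\tau_{AB}(A)\#A$ and $\tau_{AB}(O)\#O$ (the paper does this implicitly, via the fact you cite from Lemma \ref{lemlinetranslation}), reduce $\tau_{AB}(O)\#O$ to lying outside one of the lines $\ovl{OX}$, $\ovl{OY}$ intersecting at $O$, and apply Lemma \ref{lemoutinv} in both directions. Apart from a stray $\tau$ in your first display of $\tau_{AB}$ and the vague ``$\phi(b_1-a_1+a_1',\dots)$'' aside, the bookkeeping is exactly as in the paper.
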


\begin{proof}
Let $A=\phi(a_1,a_2)$ and $B=\phi(b_1,b_2)$. Suppose that $A\#B$. Then, $A\#\tau_{AB}(A)$, therefore $O\#\tau_{AB}(O)$. $\ovl{OX}$ and $\ovl{OY}$ are apart from each other and they intersect at $O$, therefore $\tau_{AB}(O)$ lies outside at least one of them. Notice that $\tau_{AB}=\tau_{OB}\tau_{AO}=\tau_{OX}^{b_1-a_1}\tau_{OY}^{b_2-a_2}$. $\tau_{AB}(O)=\tau_{OX}^{b_1-a_1}\tau_{OY}^{b_2-a_2}(O)\notin \ovl{OX}$ implies that $b_2-a_2$ is invertible by Lemma \ref{lemoutinv}. Similarly, $\tau_{AB}(O)\notin \ovl{OY}$ implies that $b_1-a_1$ is invertible. In both cases, we conclude that $(a_1,a_2)\#(b_1,b_2)$.

Suppose that $(a_1,a_2)\#(b_1,b_2)$ in $\mathbb A(\text{Tp})$,  then either $a_1-b_1$ is invertible or $a_2-b_2$ is invertible. By symmetry (since the group of translations is abelian), we may assume that $a_1-b_1$ is invertible. Then, $\tau_{OX}^{b_1-a_1}\tau_{OY}^{b_2-a_2}(O)\notin \ovl{OY}$ by Lemma \ref{lemoutinv}. Therefore, $\tau_{AB}(O)\notin \ovl{OY}$ and $\tau_{AB}(O)\# O$. Hence, $\tau_{AB}(A)\#A$ and therefore $B\#A$.
\end{proof}

\begin{lem}
Let $A=\phi(a_1,a_2)$,  $B=\phi(b_1,b_2)$ and $C=\phi(c_1,c_2)$, with $A\#B$ (and $(a_1,a_2)\#(b_1,b_2)$). We claim that $(c_1,c_2)$ lies on the line passing through $(a_1,a_2)$ and $(b_1,b_2)$ iff $C$ lies on $\ovl{AB}$.
\end{lem}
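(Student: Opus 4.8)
The plan is to reduce the affine collinearity statement to the corresponding statement in the projective plane, where we already have the clean algebraic criterion from Proposition~\ref{propiffcollinear}, but working purely synthetically through the coordinate isomorphism $\phi$ built from translations. Since collinearity in a preaffine plane means $C \in \ovl{AB}$ for $A \# B$, and the previous lemma already shows $\phi$ preserves and reflects $\#$, the task is to show $\phi$ preserves and reflects the relation ``$C$ lies on the unique line through $A$ and $B$.''

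First I would set up notation: write $A = \tau_{OX}^{a_1}\tau_{OY}^{a_2}(O)$ and similarly for $B$, $C$, using the coordinatization of Theorem~\ref{thrmafco}. The key reduction is that $(c_1,c_2)$ lies on the line through $(a_1,a_2)$ and $(b_1,b_2)$ in $\mathbb A(\text{Tp})$ precisely when there exists $\gamma \in \text{Tp}$ with $(c_1 - a_1, c_2 - a_2) = (\gamma\cdot(b_1-a_1), \gamma\cdot(b_2-a_2))$, i.e. the affine coordinate difference of $C$ from $A$ is a $\text{Tp}$-multiple of that of $B$ from $A$. This is the affine analogue of Lemma~\ref{lemprojcol}, and I would verify it directly from the definition of the line through two points in $\mathbb A(\text{Tp})$ together with the fact that $\text{Tp}$ is a local ring so that one of $b_1-a_1$, $b_2-a_2$ is invertible (using that $(a_1,a_2)\#(b_1,b_2)$).

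Next I would translate this into a synthetic statement in $\ca A$. Applying the translation $\tau_{AO}$ (equivalently working relative to $A$ as origin), the condition ``$(c_1,c_2)$ on the line through $(a_1,a_2),(b_1,b_2)$'' becomes the existence of a trace preserving $\gamma$ with $\tau_{AC} = \text{(the $\gamma$-image of }\tau_{AB})$ in the sense that $\tau_{AB}^{\gamma}(A) = C$. By Theorem~\ref{thrmtr}, such a $\gamma$ with $\tau_{AB}^{\gamma}(A)$ equal to a prescribed point of $\ovl{AB}$ exists exactly when that point lies on $\ovl{AB}$; and conversely $\tau_{AB}^{\gamma}(A) \in \ovl{AB}$ always, since $\gamma$ is trace preserving and $\ovl{AB}$ is a trace of $\tau_{AB}$ at $A$. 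This is the crux: $C \in \ovl{AB}$ in $\ca A$ iff there is a trace preserving $\gamma$ with $\tau_{AB}^\gamma(A) = C$, and I would check both directions using Theorem~\ref{thrmtrace} (traces are preserved) for one direction and Theorem~\ref{thrmtr} (existence of the homomorphism) for the other.

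Finally I would knit the two equivalences together: the algebraic condition on coordinates holds iff the synthetic condition $\exists \gamma.\ \tau_{AB}^\gamma(A) = C$ holds, and the latter holds iff $C \in \ovl{AB}$. The main obstacle I anticipate is bookkeeping with the two coordinate directions simultaneously: because $\text{Tp}$ need not have all coordinate differences invertible, I must be careful to extract the single scalar $\gamma$ consistently from whichever of $b_1 - a_1$ or $b_2 - a_2$ is invertible, and to confirm that the same $\gamma$ governs both coordinates (this is where commutativity of $\text{Tp}$ and the uniqueness clauses of Theorem~\ref{thrmafco} enter). Once the scalar $\gamma$ is identified with the trace preserving homomorphism sending $\tau_{AB}$ to $\tau_{A C}$, both implications follow, giving the claimed equivalence.
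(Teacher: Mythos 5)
Your proposal is correct and is essentially the paper's own argument: the paper likewise reduces the coordinate statement to an algebraic collinearity criterion (via Lemma \ref{lemprojcol} restricted to $\mathbb A(\text{Tp})$, in the affine-combination form $\mathbf{c}=x\mathbf{a}+y\mathbf{b}$ with $x+y=1$), rewrites it in the translation calculus, and then uses trace preservation for one direction and Theorem \ref{thrmtr} for the other. The only cosmetic difference is that you anchor at $A$ with the difference form $\tau_{AB}^{\gamma}(A)=C$, while the paper anchors at $B$ and obtains $C=\tau_{BA}^{x}(B)$ --- these are the same identity up to the substitution $\gamma=1-x$.
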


\begin{proof}
By Lemma \ref{lemprojcol} (restricted to the affine plane over $\text{Tp}$), we see that $(c_1,c_2)$ lies on the line passing through $(a_1,a_2)$ and $(b_1,b_2)$ iff there exist $x$ and $y$ in $\text{Tp}$ such that $\mathbf{c}=x\mathbf{a}+y\mathbf{b}$, and $x+y=1$.

$C=\tau_{OX}^{c_1}\tau_{OY}^{c_2}(O)= \tau_{OX}^{xa_1+yb_1}\tau_{OY}^{xa_2+yb_2}(O)=\tau_{OX}^{xa_1}\tau_{OY}^{xa_2}\tau_{OX}^{yb_1}\tau_{OY}^{yb_2}(O)=\tau_{OA}^x \tau_{OB}^y(O)$
Since $y=-x+1$, $C=\tau_{OA}^x\tau_{OB}^{-x}\tau_{OB}(O)$, hence $C=(\tau_{OA}\tau_{BO})^x(B)=\tau_{BA}^x(B)$. $x$ is a trace preserving homomorphism, therefore $C\in \ovl{AB}$.

Conversely,  given that $C\in \ovl{AB}$, there exists a unique $\alpha$ such that $\tau_{BA}^\alpha(B)=C$ by Theorem \ref{thrmtr}.  Therefore, $$C=(\tau_{OA}\tau_{BO})^\alpha(B)=\tau_{OA}^\alpha\tau_{OB}^{1-\alpha}\tau_{BO}^{-1}(B).$$
Hence, $C=\tau_{OA}^\alpha\tau_{OB}^{1-\alpha}(O)$, which means that $\mathbf{c}=\alpha \mathbf{a}+(1-\alpha) \mathbf{b}$, and therefore $(c_1,c_2)$ lies on the line that passes through $(a_1,a_2)$ and $(b_1,b_2)$.
\end{proof}

\begin{lem}
Let $A=\phi(a_1,a_2)$,  $B=\phi(b_1,b_2)$ and $C=\phi(c_1,c_2)$, with $A\#B$ (and $(a_1,a_2)\#(b_1,b_2)$). We claim that $(c_1,c_2)$ lies outside the line passing through $(a_1,a_2)$ and $(b_1,b_2)$ iff $C$ lies outside $\ovl{AB}$.
\end{lem}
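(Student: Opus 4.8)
The plan is to establish this statement by leveraging the previous lemma, which gives the corresponding \emph{incidence} (rather than non-incidence) characterization: $(c_1,c_2)$ lies on the line through $(a_1,a_2)$ and $(b_1,b_2)$ iff $C\in\ovl{AB}$. Since we are working constructively, I cannot simply negate that lemma; instead I must exhibit, in each direction, an invertibility witness that captures lying \emph{outside} a line. The natural tool is the non-incidence criterion for the affine plane over a local ring, which says that a point lies outside a line exactly when the relevant inner product $\lambda_0 x + \lambda_1 y + \lambda_2$ is invertible, together with the fact that $\phi$ is already known (by the preceding two lemmas) to preserve and reflect both $\#$ and the incidence relation.

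First I would fix the notation $A=\phi(a_1,a_2)$, $B=\phi(b_1,b_2)$, $C=\phi(c_1,c_2)$ with $A\#B$, and recall that $\ovl{AB}$ corresponds under $\phi$ to the line in $\mathbb{A}(\text{Tp})$ through $(a_1,a_2)$ and $(b_1,b_2)$; this is exactly what the incidence lemma provides. The forward direction assumes $(c_1,c_2)$ lies outside that line in $\mathbb{A}(\text{Tp})$, meaning the value $\lambda_0 c_1 + \lambda_1 c_2 + \lambda_2$ is invertible where $\pt{\lambda}$ represents the line. I would translate invertibility of this element of $\text{Tp}$, via Proposition \ref{propinv} and Lemma \ref{lemoutinv}, into a geometric statement: the point $C$ lies outside $\ovl{AB}$ in $\ca A$. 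The cleanest route is to find a point $D$ on $\ovl{AB}$ such that $C\#D$ and then argue that the apartness of $C$ from a point of the line, combined with the axiom $A\notin k \vdash A\#B \vee B\notin k$ and its interaction with the incidence lemma, forces $C\notin\ovl{AB}$.

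The converse direction, assuming $C\notin\ovl{AB}$ in $\ca A$, would proceed by contraposition-free reasoning: using the axiom $A\notin l \vdash_{A,B,l} A\#B \vee B\notin l$ I can compare $C$ against points of the line and extract an invertible coordinate expression in $\text{Tp}$. Concretely, I would take the representative $\pt{\lambda}$ of the line through $(a_1,a_2)$ and $(b_1,b_2)$, and show that $\lambda_0 c_1+\lambda_1 c_2+\lambda_2$ is invertible by identifying it (up to an invertible factor) with a trace preserving homomorphism $\alpha$ satisfying $\tau^{\alpha}(P)\#P$, which is invertible by Proposition \ref{propinv}. Lemma \ref{lemoutinv} is the pivotal translation device here, since it precisely equates "lying outside $\ovl{P\tau_2(P)}$" with invertibility of the coefficient $\alpha$.

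The main obstacle I anticipate is the constructive bookkeeping: in classical logic "outside" is just the negation of "on", so the statement would follow trivially from the incidence lemma, but constructively $\notin$ is a positive (apartness-like) relation that must be witnessed by an invertible ring element at every step. The delicate point is ensuring that the geometric witness of non-incidence in $\ca A$ (a point of $\ovl{AB}$ apart from $C$, or a suitable parallel-line configuration) corresponds exactly to invertibility of $\lambda_0 c_1 + \lambda_1 c_2 + \lambda_2$ in $\text{Tp}$, rather than merely to its non-vanishing. I expect to route everything through Lemma \ref{lemoutinv} and Proposition \ref{propinv}, choosing coordinates so that one of the defining translations plays the role of $\tau_2$ and the coefficient governing distance from the corresponding line is exactly the inner product in question; reducing the general line to this normal form (via a change of the non-collinear basis $O$, $X$, $Y$, justified by Theorem \ref{thrmafco}) will be the step requiring the most care.
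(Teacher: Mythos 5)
There is a genuine gap, concentrated in your forward direction. You propose to witness $C\notin\ovl{AB}$ by ``finding a point $D$ on $\ovl{AB}$ with $C\#D$'' and then invoking the axiom $A\notin k \vdash_{A,B,k} A\#B \vee B\notin k$. This cannot work: apartness of $C$ from a single point of the line is far weaker than non-incidence (every point \emph{on} the line is apart from other points on the line), and the cited axiom has $\notin$ in its \emph{hypothesis}, so it consumes non-incidence rather than producing it. Constructively, the only way to manufacture the positive relation $\notin$ here is through invertibility data, i.e.\ through Lemma \ref{lemoutinv} itself — which you correctly identify as the pivot, but whose application you never actually set up. Your closing suggestion, to reduce the general line to the normal form of Lemma \ref{lemoutinv} ``via a change of the non-collinear basis $O$, $X$, $Y$, justified by Theorem \ref{thrmafco}'', is also unsupported: Theorem \ref{thrmafco} gives existence and uniqueness of coordinates relative to a \emph{chosen} triple, not a way to transport the statement; the coordinates $(a_1,a_2)$, $(b_1,b_2)$, $(c_1,c_2)$ in the lemma are tied to the fixed $\phi=\phi_{XYO}$, so changing basis would force you to track the coordinate transformation through $G(\text{Tp})$ — extra work, and the heart of the matter would still be unproved.

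The missing idea, which is the paper's actual mechanism, is to keep the basis fixed and instead change the \emph{decomposition} of $C$. Since $\ovl{OX}\#\ovl{OY}$, the line $\ovl{O\tau_{AB}(O)}$ is apart from at least one of them; in the case $\ovl{O\tau_{AB}(O)}\#\ovl{OY}$ one gets $b_1-a_1$ invertible, and $O$, $\tau_{AB}(O)$, $\tau_{OY}(O)$ are non-collinear, so Theorem \ref{thrmafco} (with base point $A$ and translations $\tau_{AB}$, $\tau_{OY}$) yields a unique decomposition $C=\tau_{AB}^{\alpha}\tau_{OY}^{\beta}(A)$. Comparing with $C=\tau_{OX}^{c_1-a_1}\tau_{OY}^{c_2-a_2}(A)$ and using $\tau_{AB}=\tau_{OX}^{b_1-a_1}\tau_{OY}^{b_2-a_2}$, one solves explicitly $\alpha=(c_1-a_1)(b_1-a_1)^{-1}$ and $\beta=-\alpha(b_2-a_2)+(c_2-a_2)$. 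Now Lemma \ref{lemoutinv} handles \emph{both} directions at once: $C\notin\ovl{AB}$ iff $\beta$ is invertible, iff $(b_1-a_1)\beta$ is invertible, and $(b_1-a_1)\beta$ is exactly the determinant of $\begin{pmatrix} a_1 & b_1 & c_1 \\ a_2 & b_2 & c_2 \\ 1 & 1 & 1 \end{pmatrix}$, whose invertibility is precisely the condition that $(c_1,c_2)$ lies outside the line through $(a_1,a_2)$ and $(b_1,b_2)$ in $\mathbb A(\text{Tp})$. The case $\ovl{O\tau_{AB}(O)}\#\ovl{OX}$ is symmetric. Your converse direction gestures at this identification but, without the explicit decomposition and the case split, neither direction of your argument goes through.
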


\begin{proof}
The lines $\ovl{OX}$ and $\ovl{OY}$ are apart from each other, therefore $\ovl{O\tau_{AB}(O)}$ is apart from at least one of them. Suppose $\ovl{O\tau_{AB}(O)}\#\ovl{OY}$, then $b_1-a_1$ is invertible. Moreover, $O$, $\tau_{AB}(O)$ and $\tau_{OY}(O)$ are non-collinear, therefore $C=\tau_{AB}^\alpha \tau_{OY}^\beta (A)$, for unique $\alpha$, $\beta$ in $\text{Tp}$.

$\tau_{AB}=\tau_{AO}\tau_{OB}=\tau_{OX}^{b_1-a_1}\tau_{OY}^{b_2-a_2}$, hence $C=\tau_{OX}^{\alpha(b_1-a_1)} \tau_{OY}^{\alpha(b_2-a_2)+\beta} (A)$. Notice that $C=\tau_{OX}^{c_1}\tau_{OY}^{c_2}(O)=\tau_{OX}^{c_1-a_1}\tau_{OY}^{c_2-a_2}(A)$, hence
$$\tau_{OX}^{c_1-a_1}\tau_{OY}^{c_2-a_2}(A) = \tau_{OX}^{\alpha(b_1-a_1)} \tau_{OY}^{\alpha(b_2-a_2)+\beta} (A).$$
$b_1-a_1$ is invertible, therefore we have the following equations for $\alpha$ and $\beta$:
\begin{displaymath}
\begin{split}
\alpha & =(c_1-a_1)(b_1-a_1)^{-1}, \\
\beta & =-\alpha(b_2-a_2)+(c_2-a_2).
\end{split}
\end{displaymath}

By Lemma \ref{lemoutinv}, $C$ lies outside $\ovl{AB}$ iff $\beta$ is invertible, hence iff $(b_1-a_1)\beta$ is invertible. Observe that 
$$(b_1-a_1)\beta=-(c_1-a_1)(b_2-a_2)+(b_1-a_1)(c_2-a_2)=\det \begin{pmatrix} a_1 & b_1 & c_1
\\ a_2 & b_2 & c_2
\\ 1 & 1 & 1  \end{pmatrix}.$$
Thus $C\notin \ovl{AB}$ iff $(c_1,c_2)$ lies outside the line through $(a_0,a_1)$ and $(b_0,b_1)$.

The case where $\ovl{O\tau_{AB}(O)}\#\ovl{OX}$ is symmetric.
\end{proof}

\begin{lem}
Let $A=\phi(a_1,a_2)$,  $B=\phi(b_1,b_2)$, $C=\phi(c_1,c_2)$ and $D=\phi(d_1,d_2)$ such that $A\#B$ and $C\#D$. Then $\ovl{AB}\parallel \ovl{CD}$ iff $\ovl{(a_1,a_2)(b_1,b_2)}\parallel \ovl{(c_1,c_2)(d_1,d_2)}$.
\end{lem}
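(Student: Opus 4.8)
The plan is to reduce the statement about parallelism to the previous lemmas by exploiting the fact that parallelism of lines in $\mathbb{A}(\text{Tp})$ can be detected by a determinant condition, exactly as collinearity was. Recall from Lemma \ref{lemparallel} that two affine lines represented by $(\lambda_0,\lambda_1,\lambda_2)$ and $(\mu_0,\mu_1,\mu_2)$ are parallel iff $\lambda_0\mu_1-\lambda_1\mu_0=0$. For the line through $(a_1,a_2)$ and $(b_1,b_2)$ we may take as homogeneous coordinates the cross product of $(a_1,a_2,1)$ and $(b_1,b_2,1)$, namely $(a_2-b_2,\ b_1-a_1,\ a_1b_2-a_2b_1)$, and similarly for the line through $(c_1,c_2)$ and $(d_1,d_2)$. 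So the parallelism condition $\ovl{(a_1,a_2)(b_1,b_2)}\parallel \ovl{(c_1,c_2)(d_1,d_2)}$ becomes the single equation
\[
(a_2-b_2)(d_1-c_1)-(b_1-a_1)(c_2-d_2)=0
\]
in $\text{Tp}$.

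First I would translate the geometric parallelism on the $\ca A$ side into the same algebraic form. The key observation is that $\ovl{AB}\parallel\ovl{CD}$ in $\ca A$ exactly when the translations $\tau_{AB}$ and $\tau_{CD}$ generate the same ``direction'', i.e. when $\tau_{CD}^{\gamma}=\tau_{AB}$ for some trace preserving homomorphism $\gamma$ (or symmetrically), since a translation's trace line records precisely the direction of the segment it moves points along. Writing $\tau_{AB}=\tau_{OX}^{b_1-a_1}\tau_{OY}^{b_2-a_2}$ and $\tau_{CD}=\tau_{OX}^{d_1-c_1}\tau_{OY}^{d_2-c_2}$, as I computed in the proof of the apartness lemma above, the condition $\ovl{AB}\parallel\ovl{CD}$ should translate, via Lemma \ref{lemlinetranslation} and the non-collinearity of $O,X,Y$, into a proportionality relation among the pairs $(b_1-a_1,b_2-a_2)$ and $(d_1-c_1,d_2-c_2)$ of elements of $\text{Tp}$. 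Concretely, $\ovl{P\tau_{AB}(P)}\parallel\ovl{P\tau_{CD}(P)}$ should hold iff one pair is a $\text{Tp}$-multiple of the other, which over the local ring $\text{Tp}$ is equivalent to the vanishing of the determinant displayed above (this is exactly the content of Lemma \ref{lemparallel} applied inside $\ca A_{\text{Tn}}\cong\ca A$).

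The cleanest route, therefore, is to invoke the isomorphism $f_O:\ca A_{\text{Tn}}\to\ca A$ of Lemma \ref{lemplanetrans} together with Theorem \ref{thrmafco}: the map $\phi$ is, up to this identification, the composite sending $(\alpha,\beta)$ to the translation $\tau_{OX}^{\alpha}\tau_{OY}^{\beta}$, and I would argue that the direction of $\ovl{\phi(A)\phi(B)}$ is recorded by the translation $\tau_{\phi(A)\phi(B)}=\tau_{OX}^{b_1-a_1}\tau_{OY}^{b_2-a_2}$. Since $\parallel$ depends only on directions, and directions correspond to lines of $\ca A_{\text{Tn}}$ through the identity, both sides of the claimed equivalence reduce to the same determinant condition in $\text{Tp}$, and the result follows. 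In effect this just re-runs the computation from the preceding lemma on the difference vectors $\mathbf{b}-\mathbf{a}$ and $\mathbf{d}-\mathbf{c}$ rather than checking incidence of a third point.

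The main obstacle I anticipate is handling the apartness side conditions carefully. Parallelism is only defined for genuine lines, so I must use $A\#B$ and $C\#D$ to guarantee the lines $\ovl{AB}$, $\ovl{CD}$, and their images exist, and I must know that $\phi$ preserves and reflects $\#$ (already established in the previous lemma) so that $(a_1,a_2)\#(b_1,b_2)$ and $(c_1,c_2)\#(d_1,d_2)$ hold and the affine lines on the $\mathbb{A}(\text{Tp})$ side are well defined. A secondary subtlety is that $\text{Tp}$ is only a local ring, not a field, so I cannot simply divide; the equivalence between ``one direction vector is a multiple of the other'' and ``the determinant vanishes'' must be justified using that at least one coordinate of each direction vector is invertible (which is exactly where apartness enters), precisely as in the proof of Lemma \ref{lemparallel}. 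Once those bookkeeping points are in place, the algebra is the same determinant manipulation as in the collinearity lemma, with $\mathbf{a},\mathbf{b}$ replaced by the difference vectors, so no new geometric input beyond Lemma \ref{lemlinetranslation}, Lemma \ref{lemoutinv}, and Theorem \ref{thrmafco} is required.
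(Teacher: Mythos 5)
Your proposal is correct and takes essentially the same route as the paper: the paper's proof likewise characterizes $\ovl{AB}\parallel\ovl{CD}$ by the existence of $\alpha$ in $\text{Tp}$ with $\tau_{CD}=\tau_{AB}^{\alpha}$, rewrites both translations in the form $\tau_{OX}^{(\cdot)}\tau_{OY}^{(\cdot)}$ using the difference coordinates, and concludes via uniqueness of these coordinates that parallelism is equivalent to $\det\begin{pmatrix} d_1-c_1 & b_1-a_1 \\ d_2-c_2 & b_2-a_2 \end{pmatrix}=0$, i.e.\ to parallelism in $\mathbb A(\text{Tp})$ by Lemma \ref{lemparallel}. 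Your detour through $\ca A_{\text{Tn}}$ and the explicit cross-product coordinates for the lines are cosmetic variants of the same computation, and your attention to the apartness hypotheses (needed both for the lines to exist and for the invertible-coordinate step over the local ring) matches where the paper implicitly uses them.
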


\begin{proof}
Let $A=\phi(a_1,a_2)$,  $B=\phi(b_1,b_2)$, $C=\phi(c_1,c_2)$ and $D=\phi(d_1,d_2)$ such that $A\#B$ and $C\#D$ then $\ovl{AB}\parallel \ovl{CD}$ iff there exists $\alpha$ in $\text{Tp}$ such that $\tau_{CD}=\tau_{AB}^\alpha$. In a similar way as before we rewrite the translations to see that $\tau_{CD}=\tau_{AB}^\alpha$ iff $\phi(d_1-c_1,d_2-c_2)=\phi(\alpha(b_1-a_1),\alpha(b_2-a_2))$. Hence, $\ovl{AB}\parallel \ovl{CD}$ iff there exists $\alpha$ in $\text{Tp}$ such that $(d_1-c_1,d_2-c_2)=(\alpha(b_1-a_1),\alpha(b_2-a_2))$ or equivalently when $\det \begin{pmatrix} d_1 - c_1 & b_1 - a_1
\\ d_2-c_2 & b_2 -a_2
\end{pmatrix}=0$, i.e. iff the corresponding lines in $\mathbb A(\text{Tp})$ are parallel.
\end{proof}

We are now ready to prove the following theorem:

\begin{thrm} \label{thrmafcoo}
Given an affine plane $\ca A$ with three points $O$, $X$ and $Y$ that are non-collinear, and $\text{Tp}$ the local ring constructed in the previous section, then there is an isomorphism of affine planes from $\mathbb A(\text{Tp})$ to $\ca A$ which sends $(0,0)$ to $O$, $(1,0)$ to $X$ and $(0,1)$ to $Y$.

Moreover, given two such isomorphisms $\phi_1, \phi_2: \mathbb A(\text{Tp})\rightrightarrows \ca A$, there exists a unique ring automorphism $f$ of $\text{Tp}$ such that $\phi_2=\phi_1\circ \mathbb A(f)$.
\end{thrm}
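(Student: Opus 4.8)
The plan is to prove existence first, then uniqueness. For existence, I would verify the four conditions listed just before the statement, since by Proposition \ref{proppreaffmor} these suffice to extend the bijection $\phi$ (sending $(\alpha,\beta)$ to $\tau_{OX}^\alpha\tau_{OY}^\beta(O)$) to an isomorphism of affine planes. Fortunately, each of these four conditions is exactly the content of the four lemmas immediately preceding this theorem: the lemma on $\#$, the two lemmas on incidence (lies on / lies outside $\ovl{AB}$), and the lemma on $\parallel$. So the existence half is essentially assembled from already-proven facts, together with the observation that $\phi(0,0)=\tau_{OX}^0\tau_{OY}^0(O)=O$, $\phi(1,0)=\tau_{OX}^1\tau_{OY}^0(O)=\tau_{OX}(O)=X$, and $\phi(0,1)=\tau_{OY}(O)=Y$, which pins down the images of the three base points as claimed.

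For the uniqueness statement, suppose $\phi_1,\phi_2:\mathbb A(\text{Tp})\to\ca A$ are two such isomorphisms. Then $\phi_1^{-1}\circ\phi_2$ is an automorphism of $\mathbb A(\text{Tp})$ which fixes $(0,0)$, $(1,0)$ and $(0,1)$. The plan is to invoke Theorem \ref{thrmaffmorph}, which describes any morphism of affine planes over local rings as $M\circ\mathbb A(\alpha)$ for a unique affine matrix $M\in G(\text{Tp})$ and a unique ring homomorphism $\alpha$. Since $\phi_1^{-1}\circ\phi_2$ fixes the three non-collinear points $(0,0),(1,0),(0,1)$, the matrix $M$ induced by Lemma \ref{lemmatrixautoaf} must be the identity; hence $\phi_1^{-1}\circ\phi_2=\mathbb A(f)$ for a unique ring homomorphism $f:\text{Tp}\to\text{Tp}$. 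Because $\phi_1^{-1}\circ\phi_2$ is an isomorphism of affine planes, $f$ must be a ring automorphism (its inverse comes from the inverse plane isomorphism, which by the same argument is $\mathbb A(f^{-1})$). Rearranging gives $\phi_2=\phi_1\circ\mathbb A(f)$, as required, and uniqueness of $f$ follows from the uniqueness clause of Lemma \ref{lemringpartaf} (equivalently, since $\mathbb A(f)(x,0)=(f(x),0)$ forces $f$ to be read off from the map on points).

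The main obstacle I anticipate is bookkeeping rather than conceptual depth: I must be careful that Theorem \ref{thrmaffmorph} and Lemma \ref{lemmatrixautoaf} apply, which they do since $\text{Tp}$ has been shown to be a local ring. One subtlety worth stating explicitly is why $f$ is an automorphism and not merely a homomorphism; the cleanest route is to apply the same decomposition to $\phi_2^{-1}\circ\phi_1$, obtaining $\mathbb A(g)$ with $g$ a ring homomorphism, and then observe that $\mathbb A(f)\circ\mathbb A(g)=\mathbb A(f\circ g)$ is the identity on $\mathbb A(\text{Tp})$, whence $f\circ g=\mathrm{id}$ and symmetrically $g\circ f=\mathrm{id}$, so $g=f^{-1}$. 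This uses that $\mathbb A(-)$ is faithful on ring homomorphisms, which is precisely the uniqueness already recorded in Lemma \ref{lemringpartaf}. With that in hand the proof is complete, and the whole argument is short because it piggybacks on the four preparatory lemmas and on the general classification of affine-plane morphisms.
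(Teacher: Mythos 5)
Your proof is correct, and the existence half coincides with the paper's: the four preceding lemmas verify exactly the hypotheses of Proposition \ref{proppreaffmor} for the bijection $\phi$ of Theorem \ref{thrmafco}, and the images of the three base points are computed just as you say. For uniqueness your route differs slightly from the paper's. The paper applies Lemma \ref{lemringpartaf} directly to the automorphism $\phi_1^{-1}\circ\phi_2$ of $\mathbb A(\text{Tp})$, which fixes $(1,0)$, $(0,1)$ and $(0,0)$, and concludes in one step that $\phi_1^{-1}\circ\phi_2=\mathbb A(f)$ for a unique $f$. You instead decompose $\phi_1^{-1}\circ\phi_2=M\circ\mathbb A(\alpha)$ via Theorem \ref{thrmaffmorph} and use the uniqueness clause of Lemma \ref{lemmatrixautoaf} to force $M$ to be the identity; this is valid (since $\mathbb A(\alpha)$ always fixes the three base points, $M$ must too, and the only element of $G(\text{Tp})$ doing so is the identity matrix), but it is an unnecessary detour: Lemma \ref{lemringpartaf} is precisely the statement you need, and Theorem \ref{thrmaffmorph} is itself proved from it. On the other hand, your explicit verification that $f$ is an automorphism --- applying the same decomposition to $\phi_2^{-1}\circ\phi_1$ to obtain $g$, then using $\mathbb A(f)\circ\mathbb A(g)=\mathbb A(f\circ g)=\mathrm{id}$ and the faithfulness of $\mathbb A(-)$ (i.e.\ the uniqueness in Lemma \ref{lemringpartaf}, or the observation $\mathbb A(f)(x,0)=(f(x),0)$) to get $f\circ g=g\circ f=\mathrm{id}$ --- supplies a detail the paper glosses over, since Lemma \ref{lemringpartaf} as stated only produces a ring \emph{homomorphism} while the paper's proof simply asserts ``automorphism''. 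In short: the paper's argument is one step shorter; yours is marginally more roundabout but self-auditing on the invertibility of $f$.
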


\begin{proof}
In the beginning of this section, we described an isomorphism $\phi$ from the points of $\mathbb A(\text{Tp})$ to the points of $\ca A$. By the above lemmas and Proposition \ref{proppreaffmor}, $\phi$ extends to a unique isomorphism of affine planes $\mathbb A(\text{Tp})\to \ca A$.

Given isomorphisms $\phi_1, \phi_2: \mathbb A(\text{Tp})\rightrightarrows \ca A$ as above, then $\phi_1^{-1}\phi_2: \mathbb A(\text{Tp}) \to \mathbb A(\text{Tp})$ is an isomorphism of affine planes which maps $(1,0)$, $(0,1)$ and $(0,0)$ to $(1,0)$, $(0,1)$ and $(0,0)$ respectively. Hence, by Lemma \ref{lemringpartaf} there exists a unique ring automorphism $f$ of $\text{Tp}$ such that $\mathbb A(f)=\phi_1^{-1}\phi_2$ or equivalently such that $\phi_1\circ \mathbb A(k)=\phi_2$.
\end{proof}

\begin{rmk}
Notice that the construction of the isomorphism of affine planes induced by $\phi$ only depends on the three non-collinear points $O$, $X$ and $Y$. Hence, even though the above theorem does not give a unique isomorphism of affine planes, there is a canonical one which we shall denote as $\phi_{XYO}$.
\end{rmk}

\begin{rmk}
Given an affine plane $\ca A$, let $\text{Tp}$ be the local ring of trace preserving homomorphisms of $\ca A$. There exist non-collinear points $O$, $X$, $Y$ in $\ca A$, and therefore by Theorem \ref{thrmafcoo} there exists an affine plane isomorphism from $\mathbb A(\text{Tp})$ to $\ca A$. Hence, $\ca A$ satisfies all the sentences satisfied by the affine plane over $\text{Tp}$. Therefore, the axioms of the theory of affine planes generate all the sentences satisfied by affine planes over local rings.
\end{rmk}

Let $G(\text{Tp})$ be the group of invertible matrices over $\text{Tp}$ of the form $\begin{pmatrix}
\alpha_0 & \beta_0 & \gamma_0 \\
\alpha_1 & \beta_1 & \gamma_1 \\
0 & 0 & 1
\end{pmatrix}$. Notice that an element $g$ of $G(\text{Tp})$ induces an automorphism of the affine plane $\mathbb A(\text{Tp})$ as in Lemma \ref{lemmatrixautoaf} which we shall also denote by $g$. This automorphism on points is left multiplication by the matrix $g$.

\begin{lem}
Let $O$, $X$, $Y$ be three non-collinear points of the affine plane $\ca A$ as above and let $\phi_{XYO}:\mathbb A(\text{Tp})\to \ca A$ be the morphism of affine planes which maps a point $(\alpha, \beta)$ to $\tau_{OX}^{\alpha} \tau_{OY}^{\beta}(O)$.

Let $A$, $B$, $C$ be three non-collinear points such that 
$$\phi^{-1}(A)=(a_0,a_1)\text{, }\phi^{-1}(B)=(b_0,b_1)\text{and }\phi^{-1}(C)=(c_0,c_1)$$
and let 
$\phi_{ABC}:\mathbb A(\text{Tp})\to \ca A$
be the morphism of affine planes which maps a point $(\alpha,\beta)$ to $\tau_{CA}^\alpha \tau_{CB}^\beta(C)$.

Let $g$ be the matrix $\begin{pmatrix}
a_0-c_0 & b_0-c_0 & c_0 \\
a_1-c_1 & b_1-c_1 & c_1 \\
0 & 0 & 1
\end{pmatrix}$. Then, $\phi_{ABC}=\phi_{XYO}\circ g$.
\end{lem}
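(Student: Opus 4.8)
The plan is to prove that $\phi_{ABC}$ and $\phi_{XYO}\circ g$ agree as morphisms of affine planes by showing that their point maps coincide; since both are morphisms of affine planes, Lemma \ref{lemaffptsen} then forces them to be equal. I would first observe that it is \emph{not} enough to check agreement merely on the three non-collinear points $(0,0)$, $(1,0)$, $(0,1)$: by the final clause of Theorem \ref{thrmafcoo} two isomorphisms carrying the same base points can still differ by $\mathbb A(f)$ for a nontrivial ring automorphism $f$, and $\mathbb A(f)$ fixes those three points. So I would compute the effect of both maps on an arbitrary point $(\alpha,\beta)$ of $\mathbb A(\text{Tp})$.

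The key preliminary step is to re-express the translations appearing in $\phi_{ABC}$ in terms of $\tau_{OX}$ and $\tau_{OY}$. From $A=\phi_{XYO}(a_0,a_1)=\tau_{OX}^{a_0}\tau_{OY}^{a_1}(O)$ and the fact that a translation is uniquely determined by the image of a single point, I get $\tau_{OA}=\tau_{OX}^{a_0}\tau_{OY}^{a_1}$, and similarly for $B$ and $C$. Writing $\tau_{CA}=\tau_{OA}\tau_{OC}^{-1}$ (valid because this composite sends $C$ to $A$) and using that the translation group is abelian together with the additive structure of $\text{Tp}$ (so that $\tau^{\mu}\tau^{\nu}=\tau^{\mu+\nu}$ and $(\tau^{\mu})^{-1}=\tau^{-\mu}$), I obtain
\[
\tau_{CA}=\tau_{OX}^{a_0-c_0}\tau_{OY}^{a_1-c_1},\qquad \tau_{CB}=\tau_{OX}^{b_0-c_0}\tau_{OY}^{b_1-c_1}.
\]

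Next I would apply the trace-preserving homomorphisms $\alpha$ and $\beta$. Since each is a group homomorphism on translations and $(\tau^{\mu})^{\nu}=\tau^{\nu\mu}$ by the definition of multiplication in $\text{Tp}$, this gives $\tau_{CA}^{\alpha}=\tau_{OX}^{\alpha(a_0-c_0)}\tau_{OY}^{\alpha(a_1-c_1)}$ and $\tau_{CB}^{\beta}=\tau_{OX}^{\beta(b_0-c_0)}\tau_{OY}^{\beta(b_1-c_1)}$. Substituting $C=\tau_{OX}^{c_0}\tau_{OY}^{c_1}(O)$ into $\phi_{ABC}(\alpha,\beta)=\tau_{CA}^{\alpha}\tau_{CB}^{\beta}(C)$ and collecting the $\tau_{OX}$- and $\tau_{OY}$-exponents (legitimate since translations commute and $\text{Tp}$ is commutative) yields
\[
\phi_{ABC}(\alpha,\beta)=\tau_{OX}^{\,\alpha(a_0-c_0)+\beta(b_0-c_0)+c_0}\,\tau_{OY}^{\,\alpha(a_1-c_1)+\beta(b_1-c_1)+c_1}(O).
\]
On the other hand, computing $g\begin{pmatrix}\alpha\\\beta\\1\end{pmatrix}$ by left matrix multiplication produces exactly the point whose two coordinates are those same exponents, so applying $\phi_{XYO}$ reproduces the identical right-hand side; hence $\phi_{ABC}(\alpha,\beta)=(\phi_{XYO}\circ g)(\alpha,\beta)$ for every $(\alpha,\beta)$.

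I expect the only real work to be the bookkeeping of the $\text{Tp}$-action on the abelian translation group: keeping straight the distinction between the additive law $\tau^{\mu+\nu}=\tau^{\mu}\tau^{\nu}$, the multiplicative law $(\tau^{\mu})^{\nu}=\tau^{\nu\mu}$, and the homomorphism property $(\sigma\tau)^{\alpha}=\sigma^{\alpha}\tau^{\alpha}$, and invoking the already-established commutativity of $\text{Tp}$ to see that the exponents $\alpha(a_0-c_0)$ match the matrix entries $(a_0-c_0)\alpha$. Once the two point maps are shown to coincide, Lemma \ref{lemaffptsen} closes the argument immediately.
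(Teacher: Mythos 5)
Your proposal is correct and takes essentially the same route as the paper's own proof: expressing $\tau_{CA}$ and $\tau_{CB}$ in terms of $\tau_{OX}$ and $\tau_{OY}$ via uniqueness of translations, pushing the $\text{Tp}$-action through using the additive and multiplicative laws, and matching the collected exponents against the coordinates of $g\cdot(\alpha,\beta,1)^T$. Your explicit appeal to Lemma \ref{lemaffptsen} at the end, and your caution that agreement on the three base points alone would not suffice, simply make precise a step the paper leaves implicit.
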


\begin{proof}
Let $X$, $Y$, $O$, $A$, $B$, $C$ and $g$ be as above. Then, $A=\tau_{OX}^{a_0} \tau_{OY}^{a_1}(O)$, $B=\tau_{OX}^{b_0} \tau_{OY}^{b_1}(O)$ and $C=\tau_{OX}^{c_0} \tau_{OY}^{c_1}(O)$. 

Notice that $\tau_{OX}^{a_0} \tau_{OY}^{a_1}\tau_{OX}^{-c_0} \tau_{OY}^{-c_1}(C)=A$, hence 
$$\tau_{OX}^{a_0-c_0} \tau_{OY}^{a_1-c_1}=\tau_{CA}$$
and similarly 
$$\tau_{OX}^{b_0-c_0} \tau_{OY}^{b_1-c_1}=\tau_{CB}.$$
$$\psi(\alpha,\beta)=\tau_{CA}^\alpha \tau_{CB}^\beta(C)=\tau_{OX}^{\alpha(a_0-c_0)} \tau_{OY}^{\alpha (a_1 - c_1}\tau_{OX}^{\beta (a_0 - c_0)} \tau_{OY}^{\beta (a_1- c_1)}(C).$$
Hence by replacing $\tau_{OX}^{c_0} \tau_{OY}^{c_1}(O)$ for $C$ we see that
$$\psi(\alpha,\beta)=\tau_{OX}^{\alpha(a_0-c_0)+\beta(b_0-c_0)+c_0} \tau_{OY}^{\alpha(a_1-c_1)+\beta(b_1-c_1)+c_1}(O).$$

\begin{displaymath}
\begin{split}
\phi(g\cdot(\alpha,\beta)) & =\phi(\alpha(a_0-c_0)+\beta(b_0-c_0)+c_0,\alpha(a_1-c_1)+\beta(b_1-c_1)+c_1) \\
& = \tau_{OX}^{\alpha(a_0-c_0)+\beta(b_0-c_0)+c_0} \tau_{OY}^{\alpha(a_1-c_1)+\beta(b_1-c_1)+c_1}(O).
\end{split}
\end{displaymath}
Thus, $\phi_{ABC}=\phi_{XYO} \circ g$.
\end{proof}


Let $\omega$ be the object of triples of non-collinear points of the affine plane $\ca A$. Then, we have the following:

\begin{thrm} \label{thrmgtorsor}
$\omega$ is a right $G(\text{Tp})$-torsor via the action where an element
$\begin{pmatrix}
\alpha_0 & \beta_0 & \gamma_0 \\
\alpha_1 & \beta_1 & \gamma_1 \\
0 & 0 & 1
\end{pmatrix}
$
of  $G$
acts by sending a triple $(A,B,C)$ of non-collinear points to $$(\tau_{CA}^{\alpha_0+\gamma_0}\tau_{CB}^{\alpha_1+\gamma_1} (C), \tau_{CA}^{\beta_0+\gamma_0}\tau_{CB}^{\beta_1+\gamma_1}(C), \tau_{CA}^{\gamma_0}\tau_{CB}^{\gamma_1}(C)).$$
\end{thrm}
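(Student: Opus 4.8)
The plan is to reduce the statement to the already-established fact that $G(\text{Tp})$ acts on itself by right multiplication freely and transitively, and to transport this action to $\omega$ along a canonical bijection. First I would rephrase the given action intrinsically. Letting $h\in G(\text{Tp})$ act on points of $\mathbb A(\text{Tp})$ by left matrix multiplication (Proposition \ref{prop382}), a direct expansion of $h\cdot(1,0)$, $h\cdot(0,1)$, $h\cdot(0,0)$ shows that the triple displayed in the statement is exactly
$$(A,B,C)\cdot h=\bigl(\phi_{ABC}(h\cdot(1,0)),\ \phi_{ABC}(h\cdot(0,1)),\ \phi_{ABC}(h\cdot(0,0))\bigr),$$
where $\phi_{ABC}\colon\mathbb A(\text{Tp})\to\ca A$ is the affine isomorphism of Theorem \ref{thrmafcoo} with $\phi_{ABC}(1,0)=A$, $\phi_{ABC}(0,1)=B$, $\phi_{ABC}(0,0)=C$. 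Since $h$ induces an automorphism of $\mathbb A(\text{Tp})$ and hence carries the non-collinear standard triple $((1,0),(0,1),(0,0))$ to a non-collinear triple, and $\phi_{ABC}$ preserves non-collinearity, the image $(A,B,C)\cdot h$ lies in $\omega$; so the formula does define a map $\omega\times G(\text{Tp})\to\omega$.

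Next I would fix one base triple $(X,Y,O)\in\omega$ and define $\Phi\colon G(\text{Tp})\to\omega$ by $\Phi(g)=(\phi_{XYO}(g\cdot(1,0)),\phi_{XYO}(g\cdot(0,1)),\phi_{XYO}(g\cdot(0,0)))$. This $\Phi$ is a bijection, being the composite of the isomorphism $G(\text{Tp})\to\omega(\text{Tp})$ from the remark following Theorem \ref{thrmleftGtors} (sending $g$ to $(g(1,0),g(0,1),g(0,0))$) with the bijection $\omega(\text{Tp})\to\omega$ obtained by applying the affine-plane isomorphism $\phi_{XYO}$ coordinatewise, since an isomorphism of affine planes preserves and reflects non-collinearity.

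The heart of the argument is the identity $\Phi(g)\cdot h=\Phi(gh)$. Writing $(A,B,C)=\Phi(g)$, the $\phi_{XYO}$-coordinates of $A$, $B$, $C$ are precisely $g\cdot(1,0)$, $g\cdot(0,1)$, $g\cdot(0,0)$; by the uniqueness clause of Lemma \ref{lemmatrixautoaf} the matrix attached to $(A,B,C)$ in the lemma preceding this theorem is exactly $g$, and that lemma then gives $\phi_{ABC}=\phi_{XYO}\circ g$. Substituting into the intrinsic formula of the first paragraph and using that left matrix multiplication is a left action yields $\phi_{ABC}(h\cdot v)=\phi_{XYO}((gh)\cdot v)$ for each standard point $v\in\{(1,0),(0,1),(0,0)\}$, so $\Phi(g)\cdot h=\Phi(gh)$. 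Taking $h$ the identity shows the identity acts trivially; hence the formula is a genuine right action which, through $\Phi$, is identified with right multiplication of $G(\text{Tp})$ on itself, an action that is visibly free and transitive. Therefore $\omega$ is a right $G(\text{Tp})$-torsor.

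I expect the main obstacle to be bookkeeping rather than anything conceptual: one must keep straight the three frames ($\phi_{XYO}$, $\phi_{ABC}$, and the standard frame), confirm via the uniqueness in Lemma \ref{lemmatrixautoaf} that the matrix of the preceding lemma coincides with $g$, and verify that the explicit entries $\alpha_i+\gamma_i$, $\beta_i+\gamma_i$, $\gamma_i$ in the statement really are the coordinates of $h\cdot(1,0)$, $h\cdot(0,1)$, $h\cdot(0,0)$. Once these identifications are in place, all the group-action axioms fall out of the single relation $\phi_{ABC}=\phi_{XYO}\circ g$.
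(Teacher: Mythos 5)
Your proof is correct and follows essentially the same route as the paper: both rest on the identity $\phi_{ABC}=\phi_{XYO}\circ g$ from the preceding lemma, translate the action into matrix multiplication through $\phi_{XYO}$ (the paper writes this as $(X,Y,O)\cdot g=\phi_{XYO}\bigl(g\cdot((1,0),(0,1),(0,0))\bigr)$, which is your intrinsic formula in column notation), and invoke the content of Theorem \ref{thrmleftGtors} for unique existence. Your only variation is packaging this as a bijection $\Phi\colon G(\text{Tp})\to\omega$ with $\Phi(g)\cdot h=\Phi(gh)$ and transporting right multiplication, which is a harmless reorganization of the same argument.
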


In the following proof we write $\begin{pmatrix}
x_0 & y_0 & z_0 \\
x_1 & y_1 & z_1 \\
1 & 1 & 1
\end{pmatrix}$ to denote the triple of non-collinear points $((x_0,x_1), (y_0,y_1), (z_0, z_1))$. 

\begin{proof}
We denote the right action of $g$ of $G$ on the triple $(A,B,C)$ of non-collinear points as $(A,B,C)\cdot g$.

Given $(X,Y,O)$, let $\phi_{XYO}$ be the isomorphism $\mathbb A(R)\to \ca A$ as constructed above. Given $g
=\begin{pmatrix}
\alpha_0 & \beta_0 & \gamma_0 \\
\alpha_1 & \beta_1 & \gamma_1 \\
0 & 0 & 1
\end{pmatrix}$ in $G$. Then, 
\begin{displaymath}
\begin{split}
\phi_{XYO}^{-1}((X,Y,O)\cdot g)
& =\phi_{XYO}^{-1}(\tau_{CA}^{\alpha_0+\gamma_0}\tau_{CB}^{\alpha_1+\gamma_1} (C), \tau_{CA}^{\beta_0+\gamma_0}\tau_{CB}^{\beta_1+\gamma_1}(C), \tau_{CA}^{\gamma_0}\tau_{CB}^{\gamma_1}(C)) \\
& =\begin{pmatrix}
\alpha_0+\gamma_0 & \beta_0 +\gamma_0 & \gamma_0 \\
\alpha_1+\gamma_1 & \beta_1 +\gamma_1 & \gamma_1 \\
1 & 1 & 1
\end{pmatrix} \\
& = g \begin{pmatrix}
1 & 0 & 0 \\
0 & 1 & 0 \\
1 & 1 &1
\end{pmatrix}.
\end{split}
\end{displaymath}
Therefore,
$$(X,Y,O)\cdot g=\phi_{XYO}(g \begin{pmatrix}
1 & 0 & 0 \\
0 & 1 & 0 \\
1 & 1 &1
\end{pmatrix}).$$

Let $(X',Y',O')$ be $(X,Y,O)\cdot g$. Then, $\phi_{X'Y'O'}=\phi\circ g$. Given $g'$ in $G$, then 
\begin{displaymath}
\begin{split}
((XYO)\cdot g)\cdot g' & =\phi_{X'Y'O'}(g' \begin{pmatrix}
1 & 0 & 0 \\
0 & 1 & 0 \\
1 & 1 &1
\end{pmatrix}) \\
& =\phi_{XYO}(gg' 
\begin{pmatrix}
1 & 0 & 0 \\
0 & 1 & 0 \\
1 & 1 &1
\end{pmatrix}) \\
&=(X,Y,O)\cdot (gg').
\end{split}
\end{displaymath}

Hence, the described morphism is a right group action.

Given $(X,Y,O)$ and $(X',Y',O')$ then $(XYO)\cdot g= (X',Y',O')$ iff 
$$\phi_{XYO}(g \begin{pmatrix}
1 & 0 & 0 \\
0 & 1 & 0 \\
1 & 1 &1
\end{pmatrix})=(X',Y',O')$$
or equivalently when
$$g \begin{pmatrix}
1 & 0 & 0 \\
0 & 1 & 0 \\
1 & 1 &1
\end{pmatrix}
= \phi_{XYO}^{-1}(X',Y',O').$$
By Theorem \ref{thrmleftGtors}, there is a unique such $g$ in $G(\text{Tp})$. Hence, this right $G(\text{Tp})$-action is a right $G(\text{Tp})$-torsor.
\end{proof}

\begin{rmk}
In the case where the affine plane is the affine plane over a local ring $R$, the action of an element
$\begin{pmatrix}
\alpha_0 & \beta_0 & \gamma_0 \\
\alpha_1 & \beta_1 & \gamma_1 \\
0 & 0 & 1
\end{pmatrix}$ of $G(R)$ on a triple of non-collinear points is right multiplication by
$$\begin{pmatrix}
1 & 0 & 0 \\
0 & 1 & 0 \\
-1 & -1 & 1
\end{pmatrix}
\begin{pmatrix}
\alpha_0 & \beta_0 & \gamma_0 \\
\alpha_1 & \beta_1 & \gamma_1 \\
0 & 0 & 1
\end{pmatrix}
\begin{pmatrix}
1 & 0 & 0 \\
0 & 1 & 0 \\
1 & 1 & 1
\end{pmatrix}.
$$
\end{rmk}

\section{Trace preserving homomorphisms and geometric morphisms} \label{seclocalpres}

We revisit the construction of a local ring from an affine plane to show that it is preserved by inverse images of geometric morphisms. 

Let us consider an affine plane in a topos $\ca E$ whose object of points is $\text{Pt}$ and whose object of lines is $\text{Li}$.

Let $\text{Tn}$ be the object of translations of this affine plane. The object of translations was defined as a subobject of $\text{Pt}^{\text{Pt}}$. Theorem \ref{thrmexistrans} shows that we have an epimorphism $\tau_{--}:\text{Pt}\times\text{Pt}\to \text{Tn}$ which sends a pair of points $(A,B)$ to the unique translation $\tau_{AB}$ which sends $A$ to $B$.

Let $\text{Tn}_{\#}$ be the object of translations $\tau$ such that $\tau(A)\#A$ for some (or equivalently any) point $A$. Then, $\text{Tn}_{\#}$ as a subobject of $\text{Tn}$ is isomorphic to the image of the composite $\#_{\text{Pt}}\rightarrowtail \text{Pt}\times\text{Pt}\twoheadrightarrow \text{Tn}$. The epimorphism $\#_{\text{Pt}}\to \text{Tn}_{\#}$ induces an equivalence relation on $\#_{\text{Pt}}$, which relates $(A,B)$ to $(A',B')$ iff $\tau_{AB}=\tau_{A'B'}$ or equivalently iff $\tau_{AB}(A')=B'$. 

\begin{lem}
The above equivalence relation is the transitive closure of $\sim_{\text{Tn}\#}$, where $(A,B)\sim_{\text{Tn}\#}(A',B')$ iff $(A'\notin \ovl{AB}) \wedge (B\# B') \wedge (\ovl{AB}\parallel \ovl{A'B'}) \wedge (\ovl{AA'}\parallel \ovl{BB'})$.
\end{lem}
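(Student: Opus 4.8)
The relation induced on $\#_{\text{Pt}}$ by the epimorphism $\#_{\text{Pt}}\twoheadrightarrow \text{Tn}_{\#}$ is its kernel pair $R$, where $(A,B)\,R\,(A',B')$ iff $\tau_{AB}=\tau_{A'B'}$, equivalently iff $\tau_{AB}(A')=B'$. The plan is to prove the asserted equality by showing mutual containment of $R$ and the transitive closure $\sim_{\text{Tn}\#}^{*}$. The easy containment is $\sim_{\text{Tn}\#}\subseteq R$. Suppose $(A,B)\sim_{\text{Tn}\#}(A',B')$, so $A'\notin\ovl{AB}$, $B\#B'$, $\ovl{AB}\parallel\ovl{A'B'}$ and $\ovl{AA'}\parallel\ovl{BB'}$. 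Since $A'\notin\ovl{AB}$, the construction of the translation in Lemma \ref{lemexistrans} computes $\tau_{AB}(A')$ as the intersection of the line through $A'$ parallel to $\ovl{AB}$ with the line through $B$ parallel to $\ovl{AA'}$; these are exactly $\ovl{A'B'}$ and $\ovl{BB'}$, whose intersection is $B'$. Hence $\tau_{AB}(A')=B'$, i.e. $\tau_{AB}=\tau_{A'B'}$, so $\sim_{\text{Tn}\#}\subseteq R$; as $R$ is transitive this gives $\sim_{\text{Tn}\#}^{*}\subseteq R$.

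For the reverse containment $R\subseteq\sim_{\text{Tn}\#}^{*}$, fix $(A,B),(A',B')$ with $\tau:=\tau_{AB}=\tau_{A'B'}\in\text{Tn}_{\#}$, so $B=\tau(A)$ and $B'=\tau(A')$. By Lemma \ref{lemlinetranslation} every point moves apart under $\tau$ and all its traces are parallel. First I would pick a point $P\notin\ovl{AB}$ (there are enough points) and check $(A,B)\sim_{\text{Tn}\#}(P,\tau(P))$: indeed $P\notin\ovl{AB}$ forces $A\#P$, so the dilatation property gives $B=\tau(A)\#\tau(P)$ and $\ovl{AP}\parallel\ovl{\tau(A)\tau(P)}=\ovl{B\tau(P)}$, while $\ovl{AB}\parallel\ovl{P\tau(P)}$ by Lemma \ref{lemlinetranslation}; these are the four required conditions. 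Next, $\ovl{P\tau(P)}\parallel\ovl{AB}$ and $P\in\ovl{P\tau(P)}$ with $P\notin\ovl{AB}$ make these two lines apart, so the preaffine axiom $k\#l\wedge k\parallel l\vdash_{A',k,l} A'\notin k\vee A'\notin l$ applied to $A'$ yields the disjunction $A'\notin\ovl{P\tau(P)}\ \vee\ A'\notin\ovl{AB}$. In the first case one checks, exactly as above, that $(P,\tau(P))\sim_{\text{Tn}\#}(A',B')$, producing the chain $(A,B)\sim_{\text{Tn}\#}(P,\tau(P))\sim_{\text{Tn}\#}(A',B')$; in the second case $(A,B)\sim_{\text{Tn}\#}(A',B')$ directly. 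In either branch $(A,B)\sim_{\text{Tn}\#}^{*}(A',B')$, which is all that is needed.

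The main obstacle is genuinely constructive: one cannot decide whether $A'$ lies on $\ovl{AB}$, so the two pairs need not be directly related by $\sim_{\text{Tn}\#}$. The device that overcomes this is the insertion of the auxiliary pair $(P,\tau(P))$ together with the axiom on a point and two apart parallel lines, which delivers the disjunction above; crucially \emph{both} disjuncts close the chain (of length at most two), so no case decision is ever required. The remaining work is the routine verification of the four defining conditions of $\sim_{\text{Tn}\#}$ in each instance, each reducing to the fact that dilatations preserve apartness and send lines to parallel lines, to transitivity of $\parallel$, and to Lemma \ref{lemlinetranslation}; I would record these but they are mechanical. Everything takes place in the internal logic of the topos, where the transitive closure $\sim_{\text{Tn}\#}^{*}$ is available since we work over a base with a natural number object.
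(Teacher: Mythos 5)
Your proof is correct and takes essentially the same route as the paper's: the forward inclusion via the uniqueness of $\tau_{AB}(A')$ as the intersection of the line through $A'$ parallel to $\ovl{AB}$ and the line through $B$ parallel to $\ovl{AA'}$ (which the hypotheses identify with $\ovl{A'B'}$ and $\ovl{BB'}$), and the reverse inclusion by inserting one auxiliary pair $(P,\tau(P))$ with $P\notin\ovl{AB}$ and deriving a disjunction both of whose branches close the chain in at most two steps. The only cosmetic difference is that the paper obtains its disjunction from cotransitivity of $\#$ on lines applied to $\ovl{A'B'}$ against $\ovl{AB}\#\ovl{CD}$, whereas you apply the parallel-and-apart-lines axiom to the point $A'$; the two moves are interchangeable.
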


\begin{proof}
Let $(A,B)$ and $(A',B')$ be such that $(A,B)\sim_{\text{Tn}\#}(A',B')$. There is a unique translation $\tau_{AB}$ sending $A$ to $B$. Let $C=\tau_{AB}(A')$. $A\#A'$, therefore $B\#C$ and $\ovl{AA'}\parallel \ovl{BC}$. $A\#B$, therefore $A'\#C$ and $\ovl{AB}\parallel \ovl{A'C}$ by Lemma \ref{lemlinetranslation}. Notice that $\ovl{A'C}\parallel \ovl{AB}\#\ovl{AA'}\parallel \ovl{B'C}$, hence $\ovl{A'C}\# \ovl{B'C}$. Therefore $C=\tau_{AB}(A')$ is uniquely determined as the unique intersection of the line through $A'$ parallel to $\ovl{AB}$ and the line through $B$ parallel to $\ovl{AA'}$. $(A,B)\sim_{\text{Tn}\#}(A',B')$ implies that $B'$ also satisfies this condition hence $B'=C$, and therefore $\tau_{AB}(A')=B'$.

Conversely, suppose $\tau_{AB}(A')=B'$. Let $C\notin\ovl{AB}$ and let $D=\tau_{AB}(C)$. Then, $(A,B)\sim_{\text{Tn}\#}(C,D)$ by the definition of translations. Notice that $\ovl{AB}$ and $\ovl{CD}$ are parallel and apart from each other. $\ovl{A'B'}$ is apart from a least one of the lines $\ovl{AB}$ and $\ovl{CD}$. In the first case, $(A,B)\sim_{\text{Tn}\#}(A',B')$ and in the second case $(C,D)\sim_{\text{Tn}\#}(A',B')$ both by the definition of translations. In both cases, the transitive closure of $\sim_{\text{Tn}\#}$ relates $(A,B)$ to $(A',B')$.
\end{proof}

Thus, the $\text{Tn}_{\#}$ is the quotient of $\#_{\text{Pt}}$ by a relation written in the language of affine planes and therefore it is preserved by inverse images of geometric morphisms.

The epimorphism $\tau_{--}:\text{Pt}\times\text{Pt}\to \text{Tn}$ is also induced by an equivalence relation on $\text{Pt}\times \text{Pt}$, which relates $(A,B)$ to $(A',B')$ iff $\tau_{AB}=\tau_{A'B'}$ or equivalently iff $\tau_{AB}(A')=B'$. 

\begin{lem}
The above relation relates $(A,B)$ and $(A',B')$ iff either $A\#B$, $A'\#B'$ and $\tau_{AB}(A')=B'$ or there exist points $C$, $D$ such that $A\#C\#B$, $A'\#D\#B'$ and $\tau_{AC}(A')=D$, $\tau_{CB}(D)=B'$.
\end{lem}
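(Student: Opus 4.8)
The plan is to first unwind what the relation actually says and then prove the two implications separately. By Theorem \ref{thrmexistrans} there is a unique translation sending one point to another, so for any four points $\tau_{AB}=\tau_{A'B'}$ holds iff $\tau_{AB}(A')=B'$: one direction is immediate, and for the other both $\tau_{AB}$ and $\tau_{A'B'}$ send $A'$ to $B'$, hence coincide by uniqueness. Thus it suffices to show that $\tau_{AB}(A')=B'$ is equivalent to the stated disjunction. Throughout I will use that the composite of two translations is again a translation and that $\tau_{CB}\circ\tau_{AC}=\tau_{AB}$, since the left-hand side sends $A$ to $B$ and a translation is determined by the image of a single point.

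For the backward implication I would argue directly. Suppose first that $A\#B$, $A'\#B'$ and $\tau_{AB}(A')=B'$; then $\tau_{AB}$ and $\tau_{A'B'}$ agree at $A'$ and so are equal, giving $\tau_{AB}=\tau_{A'B'}$. In the second case, with $C$, $D$ satisfying $A\#C\#B$, $A'\#D\#B'$, $\tau_{AC}(A')=D$ and $\tau_{CB}(D)=B'$, I would compute $\tau_{AB}(A')=\tau_{CB}(\tau_{AC}(A'))=\tau_{CB}(D)=B'$ using the factorization $\tau_{AB}=\tau_{CB}\circ\tau_{AC}$, and then conclude $\tau_{AB}=\tau_{A'B'}$ exactly as before.

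For the forward implication, assume $\tau_{AB}(A')=B'$. The delicate point is that constructively we may not decide whether $A\#B$, so instead of branching I would manufacture the required disjunction from the apartness axiom. First choose a point $C_0$ with $A\#C_0$; such a point exists because the axioms guarantee two points $P\#Q$, and the apartness axiom $P\#Q\vdash_{P,Q,A} P\#A\vee Q\#A$ furnishes one of them apart from $A$. Applying the apartness axiom $A\#C_0\vdash_{A,C_0,B} A\#B\vee C_0\#B$ then yields two cases. If $A\#B$, then $\tau_{AB}$ moves every point by Lemma \ref{lemlinetranslation} (since $\tau_{AB}(A)=B\#A$), so $B'=\tau_{AB}(A')\#A'$, and together with $\tau_{AB}(A')=B'$ this is precisely the first alternative. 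If instead $C_0\#B$, set $C=C_0$ and $D=\tau_{AC}(A')$; then $\tau_{CB}(D)=\tau_{CB}(\tau_{AC}(A'))=\tau_{AB}(A')=B'$, while Lemma \ref{lemlinetranslation} applied to $\tau_{AC}$ (which moves points because $A\#C$) gives $A'\#D$, and applied to $\tau_{CB}$ gives $D\#B'$, so the second alternative holds with these $C$, $D$.

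I expect the only genuine obstacle to be this forward case split: one cannot legitimately branch on the truth of $A\#B$, and the auxiliary point $C_0$ together with the cotransitivity form of the apartness axiom is exactly what produces an honest disjunction. Once the split is available, both branches reduce to the factorization $\tau_{AB}=\tau_{CB}\circ\tau_{AC}$ and to Lemma \ref{lemlinetranslation}, which supply the apartness side-conditions needed to land in one of the two geometrically expressed clauses; this is what makes the defining relation of $\mathrm{Tn}$ expressible in the language of affine planes.
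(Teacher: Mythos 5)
Your proof is correct and follows essentially the same route as the paper's: both directions hinge on choosing an auxiliary point $C$ apart from $A$, using cotransitivity of $\#$ to split into the cases $A\#B$ or $C\#B$, and then exploiting uniqueness of translations together with the factorization $\tau_{AB}=\tau_{CB}\circ\tau_{AC}$; the converse is the same direct computation in both. Your only (welcome) addition is that you make explicit, via Lemma \ref{lemlinetranslation}, the apartness side-conditions $A'\#B'$ and $A'\#D\#B'$, which the paper leaves implicit.
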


\begin{proof}
Suppose that $\tau_{AB}(A')=B'$. Then there exists a point $C$ such that $A\#C$ and either $A\#B$ or $C\#B$. In the first case $\tau_{AB}(A')=B'$  and the condition holds. In the second case, let $D=\tau_{AC}(A')$. Then, $\tau_{CB}=\tau_{AB}\circ \tau_{CA}=\tau_{A'B'}\circ\tau_{DA'}$, and therefore $\tau_{CB}(D)=B'$.

Conversely suppose that $(A,B)$ and $(A',B')$ satisfy the condition in the statement of the lemma. If $A\#B$ and $A'\#B'$ then $\tau_{AB}(A')=B'$. Let us suppose that there exist points $C$, $D$ such that $A\#C\#B$, $A'\#D\#B'$ and $\tau_{AC}(A')=D$, $\tau_{CB}(D)=B'$. Then $\tau_{AB}=\tau_{CB}\circ \tau_{AC}=\tau_{DB'} \circ \tau_{A'B'}$ and therefore $\tau_{AB}(A')=B'$ as required.
\end{proof}

Notice that by combining the above two lemmas, we prove that the relation which relates $(A,B)$ and $(A',B')$ when $\tau_{AB}(A')=B'$ is generated by a relation that can be written in the language of affine planes. Hence, $\text{Tn}$ which is isomorphic to the quotient of $\text{Pt}\times \text{Pt}$ by this relation is preserved by inverse images of geometric morphisms.

Let $\Delta:\text{Pt}\to \text{Pt}\times \text{Pt}$ be the diagonal map. Then, the image of the composite $\text{Pt}\xrightarrow{\Delta} \text{Pt}\times \text{Pt}\xrightarrow{\tau_{--}} \text{Tn}$ is isomorphic to $1\to \text{Tn}$ which is the map to the identity translation. Hence the group unit of $\text{Tn}$ is preserved by inverse images of geometric morphisms.

Consider the epimorphism $\text{Pt}\times\text{Pt} \times \text{Pt}\twoheadrightarrow \text{Tn}\times \text{Tn}$ which sends $(A,B,C)$ to $(\tau_{BC},\tau_{AB})$. The map $\text{Pt}\times\text{Pt} \times \text{Pt}\to \text{Pt}\times \text{Pt}$ sending $(A,B,C)$ to $(A,C)$ makes the diagram
\begin{displaymath}
\xymatrix{
\text{Pt}\times\text{Pt} \times \text{Pt} \ar @{->>}[d] \ar[r] & \text{Pt}\times\text{Pt} \ar @{->>}[d]^{\tau_{--}}
\\
\text{Tn}\times \text{Tn} \ar[r]^{-\circ-} & \text{Tn}
}
\end{displaymath}
commute. Hence, the composition of translations is also preserved by inverse images of geometric morphisms, and therefore the group structure of the object of translations is preserved by inverse images of geometric morphisms. Note that the composition morphism $\circ:\text{Tn}\times\text{Tn}\to \text{Tn}$ restricts to an epimorphism $\circ:\text{Tn}_{\#}\times\text{Tn}_{\#}\twoheadrightarrow \text{Tn}$.

Consider the epimorphism $(\tau_{--},\pi_1):\text{Pt} \times \text{Pt}\twoheadrightarrow \text{Tn}\times \text{Pt}$ which sends $(A,B)$ to $(\tau_{AB},A)$. There is a left $\text{Tn}$-action $a_{\text{Pt}}:\text{Tn}\times \text{Pt}\to \text{Pt}$ induced by the monomorphism $\text{Tn}\to \text{Pt}^{\text{Pt}}$. Then the following diagram
\begin{displaymath}
\xymatrix{
\text{Pt}\times\text{Pt} \ar @{->>}[d] \ar[r]^{\pi_2} & \text{Pt} \ar @{->>}[d]^{1_{\text{Pt}}}
\\
\text{Tn}\times \text{Pt} \ar[r]^{a_{\text{Pt}}} & \text{Pt}
}
\end{displaymath}
commutes. Hence the action $a_{\text{Pt}}$ is preserved by inverse images of geometric morphisms.

Let $\text{Tp}$ be the object of trace preserving homomorphisms as defined in Section \ref{secaffring}. $\text{Tp}$ was defined as a subobject of $\text{Tn}^{\text{Tn}}$. By Theorem \ref{thrmtr}, we have an epimorphism $\{(A,B,C):\text{Pt}^3| (A\#B) \wedge (C\in\ovl{AB})\}\to \text{Tp}$ which sends a triple of points $(A,B,C)$ to the unique trace preserving homomorphism sending $\tau_{AB}$ to $\tau_{AC}$ which we denote as $\alpha_{ABC}$. Therefore, $\text{Tp}$ is the quotient of $\{(A,B,C):\text{Pt}^3| (A\#B) \wedge (C\in\ovl{AB})\}$ by the equivalence relation which relates $(A,B,C)$ to $(A',B',C')$ iff $\alpha_{ABC}=\alpha_{A'B'C'}$ or equivalently iff $\tau_{A'B'}^{\alpha_{ABC}}=\tau_{A'C'}$.

\begin{lem} \label{lemsimtp}
The above equivalence relation is generated by the relation $\sim_{\text{Tp}}$, where $(A,B,C)\sim_{\text{Tp}}(A',B',C')$ when $\tau_{A'B'}(A)\notin \ovl{AB}$ and $(\exists l). (l\parallel \ovl{B\tau_{A'B'}(A)})\wedge (C,\tau_{A'C'}(A) \in l)$.
\end{lem}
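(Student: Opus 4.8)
The plan is to show the two inclusions between the kernel-pair equivalence relation $\approx$ (where $(A,B,C)\approx(A',B',C')$ iff $\alpha_{ABC}=\alpha_{A'B'C'}$) and the equivalence relation generated by $\sim_{\text{Tp}}$. Throughout I would fix notation: given a second triple $(A',B',C')$ with $\alpha:=\alpha_{A'B'C'}$, set $D=\tau_{A'B'}(A)$ and $E=\tau_{A'C'}(A)$. Since a translation is determined by the image of one point (Theorem \ref{thrmexistrans}), $\tau_{AD}=\tau_{A'B'}$ and $\tau_{AE}=\tau_{A'C'}$, and the defining relation $\tau_{A'B'}^\alpha=\tau_{A'C'}$ from Theorem \ref{thrmtr} becomes $\tau_{AD}^\alpha=\tau_{AE}$. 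Note also that because $A'\#B'$, Lemma \ref{lemlinetranslation} gives $D=\tau_{A'B'}(A)\#A$, so $\ovl{AD}$ is genuinely a line. Since $\approx$ is already an equivalence relation, the inclusion $\sim_{\text{Tp}}\subseteq{\approx}$ will immediately yield that the generated equivalence relation is contained in $\approx$, and the real work is the reverse containment.

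For the \emph{forward inclusion} $\sim_{\text{Tp}}\subseteq{\approx}$, assume $(A,B,C)\sim_{\text{Tp}}(A',B',C')$, so $D\notin\ovl{AB}$ (hence $B\#D$) and there is a line $l\parallel\ovl{BD}$ with $C,E\in l$. I want $\alpha_{ABC}=\alpha$, which by the uniqueness in Theorem \ref{thrmtr} amounts to checking $\tau_{AB}^\alpha=\tau_{AC}$. Write $C'':=\tau_{AB}^\alpha(A)$, so $\tau_{AB}^\alpha=\tau_{AC''}$ and, since $\alpha$ is trace preserving and $\ovl{AB}$ is a trace of $\tau_{AB}$, we have $C''\in\ovl{AB}$. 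The key computation uses that $\alpha$ is a group homomorphism of the abelian translation group: from $\tau_{BD}=\tau_{AD}\tau_{AB}^{-1}$ I get
\[
\tau_{BD}^\alpha=\tau_{AD}^\alpha(\tau_{AB}^\alpha)^{-1}=\tau_{AE}\tau_{C''A}=\tau_{C''E}.
\]
Trace preservation applied to $\tau_{BD}$ (whose trace at $B$ is $\ovl{BD}$) together with Lemma \ref{lemlinetranslation} forces the traces of $\tau_{C''E}$ to be parallel to $\ovl{BD}$, i.e. $C''$ lies on the (unique) parallel to $\ovl{BD}$ through $E$. The hypothesis puts $C$ on that same parallel $l$. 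Since $\ovl{BD}\#\ovl{AB}$ (as $D\notin\ovl{AB}$ while $B\in\ovl{AB}$), the line $l$ is apart from $\ovl{AB}$ and meets it in at most one point; as both $C$ and $C''$ lie on $l$ and on $\ovl{AB}$, I conclude $C=C''$, giving $\tau_{AB}^\alpha=\tau_{AC}$. Reading this computation in reverse proves the useful \emph{one-step converse}: if $\alpha_{ABC}=\alpha_{A'B'C'}$ \emph{and} $\tau_{A'B'}(A)\notin\ovl{AB}$, then $\tau_{BD}^\alpha=\tau_{CE}$ produces a line $l\parallel\ovl{BD}$ through $C$ and $E$, so $(A,B,C)\sim_{\text{Tp}}(A',B',C')$.

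For the \emph{reverse inclusion} $\approx\subseteq$ generated$(\sim_{\text{Tp}})$, I would connect two triples with equal $\alpha$ through one transverse intermediate, so that the one-step converse applies to each step. Given $(A,B,C)$ and $(A',B',C')$ with common value $\alpha$, set $F=\tau_{A'B'}(A)$; as above $F\#A$. I claim there is a point $B^\ast$ with $A\#B^\ast$, $B^\ast\notin\ovl{AB}$ and $F\notin\ovl{AB^\ast}$. Granting this, put $C^\ast=\tau_{AB^\ast}^\alpha(A)$, so $\alpha_{AB^\ast C^\ast}=\alpha$ by Theorem \ref{thrmtr}. Then the step $(A,B,C)\to(A,B^\ast,C^\ast)$ satisfies the precondition $\tau_{AB^\ast}(A)=B^\ast\notin\ovl{AB}$, and the step $(A,B^\ast,C^\ast)\to(A',B',C')$ satisfies $\tau_{A'B'}(A)=F\notin\ovl{AB^\ast}$; by the one-step converse both are $\sim_{\text{Tp}}$-steps, so $(A,B,C)$ and $(A',B',C')$ are related in the generated equivalence relation. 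To produce $B^\ast$ I would invoke the affine-plane axiom giving three lines $k_1,k_2,k_3$ through $A$ that are pairwise apart; since $\ovl{AB}$ is apart from at least two of the $k_i$ and likewise $\ovl{AF}$ (using that $j\#j'$ forces a given line to be apart from one of $j,j'$), a finite disjunction argument yields some $k_i$ apart from both $\ovl{AB}$ and $\ovl{AF}$, and any point $B^\ast\in k_i$ with $B^\ast\#A$ then lies outside both lines because $k_i$ meets each only at $A$.

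The main obstacle I anticipate is the constructive bookkeeping rather than the geometric idea. In the forward inclusion the clean intersection argument needs $C''\#E$ (and $B\#\tau_{C''E}(B)$) to invoke Lemma \ref{lemlinetranslation}, and the genuinely degenerate situation $E\in\ovl{AB}$ (where one cannot simply pass from $\neg(C''\# C)$ to $C''=C$) must be treated separately using uniqueness of the intersection of two apart lines and the fact that $\ovl{AB}$ is not parallel to $\ovl{BD}$. Likewise, the existence of the transverse direction $B^\ast$ must be carried out as an explicit general-position argument valid in constructive logic, avoiding any case split on whether $\ovl{AB}$ and $\ovl{A'B'}$ happen to be parallel. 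These are exactly the points where the proof will be noticeably longer than its classical counterpart, and I would isolate the degenerate configurations as short separate claims so the generic computation reads cleanly.
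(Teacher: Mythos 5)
Your proposal is correct, and its core mechanism is the paper's: the same translation decomposition plus trace preservation plus unique intersection of apart lines, run in mirror image. For the inclusion $\sim_{\text{Tp}}\subseteq{\approx}$ the paper evaluates $\alpha_{ABC}$ on the primed data, showing $\tau_{A'B'}^{\alpha_{ABC}}=\tau_{A'C'}$ by writing $\tau_{AY}=\tau_{BY}\circ\tau_{AB}$ (its $Y$ is your $D$) and intersecting the parallel $l$ with $\ovl{AY}$, whereas you evaluate $\alpha_{A'B'C'}$ on $\tau_{AB}$ and intersect $l$ with $\ovl{AB}$; by Theorem \ref{thrmtraceunique} the two orientations are interchangeable and both close the argument. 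The genuine divergence is in the converse. The paper case-splits: when $\tau_{A'B'}(A)\notin\ovl{AB}$ it takes a single $\sim_{\text{Tp}}$-step, and otherwise it picks any $Y\notin\ovl{AB}$, uses $\tau_{A'B'}(A)\# A$ together with $\ovl{AB}\#\ovl{AY}$ to place $\tau_{A'B'}(A)$ outside one of the two lines, and in the remaining case chains through $(A,Y,Z)$ with $Z=\tau_{AY}^{\alpha_{ABC}}(A)$ --- exactly your intermediate triple. You instead make the chain uniformly two steps by constructing $B^\ast$ simultaneously outside $\ovl{AB}$ and with $\tau_{A'B'}(A)\notin\ovl{AB^\ast}$, using the three-pairwise-apart-lines axiom and a finite pigeonhole; this buys a case-free chain at the cost of a heavier general-position step, and both versions are constructively sound. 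Finally, the degenerate situation you flag (needing $C''\# E$, or $E\in\ovl{AB}$) never actually requires separate treatment: Lemma \ref{lemlinetranslation} is invoked only for $\tau_{BD}$, where $B\# D$ follows from $D\notin\ovl{AB}$, so the trace of $\tau_{BD}$ at $C''$ is the unique parallel to $\ovl{BD}$ through $C''$; trace preservation places $E$ on it whether or not $C''\# E$, uniqueness of parallels identifies that line with $l$, and $C=C''$ then comes from the uniqueness of the intersection of the apart lines $l$ and $\ovl{AB}$, with no passage from $\neg(C\# C'')$ to equality needed.
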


\begin{proof}
Suppose that $(A,B,C)\sim_{\text{Tp}}(A',B',C')$ and let $Y=\tau_{A'B'}(A)$ and $Z=\tau_{A'C'}(A)$ and notice that $\tau_{A'B'}=\tau_{AY}$, $\tau_{A'C'}=\tau_{AZ}$ and  $Z\in \ovl{AY}$. The above condition says that $Z$ lies on the line $l$ through $C$ parallel to $\ovl{BY}$ and we need to show that $\tau_{AY}^{\alpha_{ABC}}(A)=Z$. $\tau_{AY}^{\alpha_{ABC}}(A)$ lies on $\ovl{AY}$. $\tau_{AY}=\tau_{BY}\circ \tau_{AB}$, therefore $\tau_{AY}^{\alpha_{ABC}}(A)=(\tau_{BY}\circ \tau_{AB})^{\alpha_{ABC}}(A)=\tau_{BY}^{\alpha_{ABC}}\circ \tau_{AC}(A)=\tau_{BY}^{\alpha_{ABC}}(C)$. Therefore $\tau_{AY}^{\alpha_{ABC}}(A)$ lies on the line $l$. Both $\tau_{AY}^{\alpha_{ABC}}(A)$ and $Z$ lie on both of the lines $\ovl{AY}$ and the line $l$. These two lines are apart from each other because they are parallel to the lines $\ovl{AY}$ and $\ovl{BY}$ which are apart from each other and intersect. Hence, $\ovl{AY}$ and $l$ have a unique intersection point and therefore  $\tau_{AY}^{\alpha_{ABC}}(A)=Z$.

Conversely, suppose $A$, $B$, $C$, $A'$, $B'$, $C'$ are points such that $A\#B$, $A'\#B'$, $C\in \ovl{AB}$ and $C'\in \ovl{A'B'}$ and such that $\tau_{A'B'}^{\alpha_{ABC}}=\tau_{A'C'}$.

Let us first consider the case where $\tau_{A'B'}(A)\notin \ovl{AB}$. Let $Y=\tau_{A'B'}(A)$ and $Z=\tau_{A'C'}(A)$. Then, as before $\tau_{A'B'}=\tau_{AY}$, $\tau_{A'C'}=\tau_{AZ}$ and $Z\in \ovl{AY}$. Hence, $Z=\tau_{AY}^{\alpha_{ABC}}(A)=(\tau_{BY}\circ \tau_{AB})^{\alpha_{ABC}}(A)=\tau_{BY}^{\alpha_{ABC}}\circ \tau_{AC}(A)=\tau_{BY}^{\alpha_{ABC}}(C)$ and therefore $Z$ lies on the line $l$ through $C$ parallel to $\ovl{BY}$. Hence, $(A,B,C)\sim_{\text{Tp}}(A',B',C')$.

Let us now consider the general case. There exists point $Y$ lying outside $\ovl{AB}$. Let $Z=\tau_{AY}^{\alpha_{ABC}}(A)$. The point $\tau_{A'B'}(A)$ is apart from $A$, therefore it lies outside at least one of the lines $\ovl{AB}$ and $\ovl{AY}$. In the first case, $(A,B,C)\sim_{\text{Tp}} (A',B',C')$ as we show above. In the second case, $(A,B,C)\sim_{\text{Tp}}(A,Y,Z)$ and $(A,Y,Z)\sim_{\text{Tp}}(A',B',C')$. Hence $(A,B,C)$ is related to $(A',B',C')$ in the transitive closure of the relation $\sim_{\text{Tp}}$.
\end{proof}

Therefore, the object of trace preserving homomorphisms is also preserved by inverse images of geometric morphisms. We now need to show that the ring structure of $\text{Tp}$ is also preserved by inverse images.

Let $\ovl{1}:1\to \text{Tp}$ be the exponential transpose of the identity on translations $1_{\text{Tn}}:\text{Tn}\to \text{Tn}$. Then, the morphism $\ovl{1}$ is isomorphic to the image of the morphism $\{(A,B):\text{Pt}^2|A\#B\}\to \{(A,B,C):\text{Pt}^3| (A\#B) \wedge (C\in\ovl{AB})\}$ which sends $(A,B)$ to $(A,B,B)$. Hence, $\ovl{1}$ is preserved by inverse images of geometric morphisms.

Let $\ovl{0}:1\to \text{Tp}$ be the exponential transpose of $0:\text{Tn}\to \text{Tn}$ as defined earlier, i.e. the map which sends all translations to the identity translation. Then, $\ovl{0}$ is the image of the morphism $\{(A,B):\text{Pt}^2|A\#B\}\to \{(A,B,C):\text{Pt}^3| (A\#B) \wedge (C\in\ovl{AB})\}$ which sends $(A,B)$ to $(A,B,A)$. Hence, $\ovl{0}$ is preserved by inverse images of geometric morphisms.

Let $+:\text{Tp}\times\text{Tp}\to \text{Tp}$ be addition as defined earlier. Consider the epimorphism $\{(A,B,C,D):\text{Pt}^4| (A\#B) \wedge (C,D\in\ovl{AB})\}\twoheadrightarrow \text{Tp}\times \text{Tp}$ which sends $(A,B,C,D)$ to $(\alpha_{ABC},\alpha_{ABD})$. Then, the morphism $$\{(A,B,C,D):\text{Pt}^4| (A\#B) \wedge (C,D\in\ovl{AB})\}\to \{(A,B,C):\text{Pt}^3| (A\#B) \wedge (C\in\ovl{AB})\}$$
which sends $(A,B,C,D)$ to $(A,B,\tau_{AC}(D)$ makes the square
\begin{displaymath}
\xymatrix{
\{(A,B,C,D):\text{Pt}^4| (A\#B) \wedge (C,D\in\ovl{AB})\} \ar @{->>}[d] \ar[r] & \{(A,B,C):\text{Pt}^3| (A\#B) \wedge (C\in\ovl{AB})\} \ar @{->>}[d]
\\
\text{Tp}\times \text{Tp} \ar[r]^+ & \text{Tp}
}
\end{displaymath}
commute. Hence, $+$ is preserved by inverse images of geometric morphisms.

Let $\circ :\text{Tp}\times\text{Tp}\to \text{Tp}$ be composition of trace preserving maps or equivalently multiplication of the ring structure on $\text{Tp}$ as defined in Section \ref{secaffring}. Let $P_5$ be the set of quintuples of points $(A,B,C,B',C')$ such that 
$$(A\#B) \wedge (C\in\ovl{AB})\wedge(A'\#B') \wedge (C'\in\ovl{A'B'})\wedge(B'\notin\ovl{AB}).$$
Consider the epimorphism 
$$P_5 \to \text{Tp}\times\text{Tp}$$
which sends $(A,B,C,B',C')$ to $(\alpha_{ABC},\alpha_{AB'C'})$. Now consider the morphism
$$P_5\to \{(A,B,C):\text{Pt}^3| (A\#B) \wedge (C\in\ovl{AB})\}$$
which sends $(A,B,C,B',C')$ to $(A,B',D)$ where $D$ is defined in the following way: From the above conditions we conclude that $B\#C'$. Let $l$ be the line through $C$ parallel to $\ovl{BC'}$. The lines $l$ and $\ovl{AB'}$ are apart from each other and they intersect and we define $D$ to be their intersection as in the picture:
\begin{center}
\begin{tikzpicture}
\draw
(0,0) node[left] {$A$} -- (5,3)
(0,0)--(5,-2);

\draw
(1,0.6) node[above] {$B$}--(2,-0.8)  node[below] {$C'$}
(2,1.2)  node[above] {$C$} --(4,-1.6) node[below] {$D$}
(3.1,-0.2) node[right] {$l$};

\draw
(2,1.2)--(1,-0.4) node[below] {$B'$};
\end{tikzpicture}
\end{center}
Hence, we have the following commutative diagram
\begin{displaymath}
\xymatrix{
P_5 \ar @{->>}[d] \ar[r] & \{(A,B,C):\text{Pt}^3| (A\#B) \wedge (C\in\ovl{AB})\}\ar @{->>}[d] \\
\text{Tp}\times \text{Tp} \ar[r]^{-\circ-} & \text{Tp}
}
\end{displaymath}
and therefore the multiplication map on $\text{Tp}$ is preserved by inverse images of geometric morphisms.

From the above results we conclude the following:

\begin{thrm}
The local ring of trace preserving homomorphisms is preserved by inverse images of geometric morphisms.
\end{thrm}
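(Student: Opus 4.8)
The plan is to assemble the chain of lemmas established in this section into a single statement about the inverse image functor $f^*$ of an arbitrary geometric morphism $f:\ca F\to\ca E$. The general fact I would invoke is that $f^*$ preserves finite limits and all small colimits; in particular it preserves subobjects defined by geometric formulae, epimorphisms, images, and coequalizers of equivalence relations, and it preserves the natural number object. Consequently, any object or morphism obtained from the data $(\text{Pt},\text{Li},\#,\in,\notin,\parallel)$ of the affine plane by a finite combination of finite limits, geometrically-definable subobjects, images, and quotients by geometrically-definable equivalence relations is carried by $f^*$ to the corresponding construction performed on $f^*$ of the affine plane.

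First I would record that $f^*$ sends the affine plane $\ca A$ to an affine plane, since the theory of affine planes is geometric; in particular $\text{Pt}$, $\text{Li}$ and the five relations are preserved. Then I would run through the constructions of the section in order, observing at each stage that the relevant diagram is built from preserved data: (i) $\text{Tn}$ as the quotient of $\text{Pt}\times\text{Pt}$ by the equivalence relation of the two displayed lemmas, whose generating relation is written in the language of affine planes and whose transitive closure is formed using the natural number object; (ii) the group unit, the composition $\circ:\text{Tn}\times\text{Tn}\to\text{Tn}$, and the action $a_{\text{Pt}}:\text{Tn}\times\text{Pt}\to\text{Pt}$, each exhibited above by a commuting square with epimorphic left-hand vertical issuing from a product of copies of $\text{Pt}$; (iii) $\text{Tp}$ as the quotient of $\{(A,B,C):\text{Pt}^3\mid A\#B\wedge C\in\ovl{AB}\}$ by the equivalence relation generated by the relation $\sim_{\text{Tp}}$ of Lemma \ref{lemsimtp}; and (iv) the ring operations $\ovl 0$, $\ovl 1$, $+$ and $\circ$, each again presented by a commuting square over a geometrically-definable domain. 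Since $f^*$ preserves finite limits, images, and the coequalizers computing these quotients, it commutes with each of these squares, so $f^*(\text{Tp})$ equipped with $f^*(\ovl 0)$, $f^*(\ovl 1)$, $f^*(+)$, $f^*(\circ)$ is precisely the ring $\text{Tp}$ built from the affine plane $f^*(\ca A)$; this is exactly the assertion that the construction is preserved.

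The main obstacle I anticipate is not any individual preservation step — those follow formally once the objects are presented as geometric quotients — but rather making rigorous that the equivalence relations defining $\text{Tn}$ and $\text{Tp}$ are themselves preserved. The kernel pairs involved are a priori described only by the self-referential conditions $\tau_{AB}=\tau_{A'B'}$ and $\alpha_{ABC}=\alpha_{A'B'C'}$, which mention the very quotients being constructed. The role of the two $\text{Tn}$-lemmas and of Lemma \ref{lemsimtp} is exactly to replace these conditions by transitive closures of relations written purely in the language of affine planes. The delicate point is therefore to confirm that forming such a transitive closure is itself a construction assembled from finite limits and colimits over the natural number object, so that it too commutes with $f^*$. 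Once this is secured, the theorem follows simply by chaining together the preserved commuting squares catalogued in step (ii)--(iv).
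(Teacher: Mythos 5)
Your proposal is correct and follows essentially the same route as the paper: the paper's proof of this theorem is precisely the catalogue you describe, presenting $\text{Tn}_{\#}$, $\text{Tn}$, $\text{Tp}$ as quotients of geometrically-definable objects of points by equivalence relations generated by relations written in the language of affine planes (the two translation lemmas and Lemma \ref{lemsimtp}), and then exhibiting the unit, composition, action, and ring operations $\ovl{0}$, $\ovl{1}$, $+$, $\circ$ via commuting squares with epimorphic legs issuing from such objects. The only difference is that you make explicit, via the natural number object, why forming the transitive closure of a geometrically-definable relation commutes with $f^*$ — a point the paper leaves implicit — and your handling of it is sound, since the closure is a colimit of images of iterated composites, all preserved by inverse image functors.
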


\begin{rmk}
$\text{Tp}$ acts on $\text{Tn}$ via the exponential transpose of the monomorphism $\text{Tp}\to \text{Tn}^{\text{Tn}}$. This action restricts to a morphism $\text{Tp}\times \text{Tn}_{\#}\to \text{Tn}$. Consider the epimorphism
$$\{(A,B,C):\text{Pt}^3| (A\#B) \wedge (C\in\ovl{AB})\}\twoheadrightarrow \text{Tp}\times \text{Tn}_{\#}$$
which sends $(A,B,C)$ to $(\alpha_{ABC}, \tau_{AB})$ and the morphism 
$$\{(A,B,C):\text{Pt}^3| (A\#B) \wedge (C\in\ovl{AB})\}\to \text{Tn}$$
which sends $(A,B,C)$ to $\tau_{AC}$. Then the following diagram 
\begin{displaymath}
\xymatrix{
\{(A,B,C):\text{Pt}^3| (A\#B) \wedge (C\in\ovl{AB})\}\ar @{->>}[d] \ar @{->>}[dr] \\
\text{Tp}\times \text{Tn}_{\#} \ar[r] & \text{Tn}
}
\end{displaymath}
commutes, where the bottom arrow is the action of $\text{Tp}$ on $\text{Tn}_{\#}$. Consider the composition of translations which restricts to an epimorphism $\text{Tn}_{\#}\times \text{Tn}_{\#}\twoheadrightarrow \text{Tn}$, hence we have the following commutative diagram:
\begin{displaymath}
\xymatrix{
\text{Tp}\times \text{Tn}_{\#}\times \text{Tn}_{\#} \ar @{->>}[d] \ar[r] & \text{Tn}_{\#}\times \text{Tn}_{\#} \ar @{->>}[d] \\
\text{Tp} \times \text{Tn} \ar[r] & \text{Tn},
}
\end{displaymath}
where the top arrow maps $(\alpha,\tau_1,\tau_2)$ to $(\tau_1^{\alpha},\tau_2^{\alpha})$, the left arrow maps $(\alpha,\tau_1,\tau_2)$ to $(\alpha,\tau_1\tau_2)$ and the bottom arrow is the action of $\text{Tp}$ on $\text{Tn}$. Therefore, the action of $\text{Tp}$ on $\text{Tn}$ is also preserved by inverse images of geometric morphisms.
\end{rmk}


\section{Alternative construction of the local ring}

In \cite[Chapter 3]{Seidenberg}, given an affine plane (in the classical sense) $\ca A$ and three non-collinear points, a field is constructed. This construction can be modified to give a local ring from a constructive affine plane with three non-collinear points $X$, $Y$, $O$. The local ring $R_{XYO}$ is constructed in the following way. The underlying set of $R_{XYO}$ is the set of points $\{A\in \ovl{OX}\}$. The ring operations are defined via geometric constructions. Moreover, there is a canonical isomorphism of affine planes $\rho_{XYO}: \mathbb A(R_{XYO})\to \ca A$, sending $(1,0)$, $(0,1)$ and $(0,0)$ to $X$, $Y$ and $O$ respectively.


Let $\omega$ be the object of triples of non-collinear points of the affine plane $\ca A$.

\begin{lem} \label{lemafext}
Given an affine plane $\ca A$, let $F$ be a functor from the discrete category $\omega$ to the category of local rings. Suppose that for each $(X,Y,O)$ in $\omega$, we have a choice of an isomorphism of affine planes $\sigma_{XYO}: \mathbb A(F(X,Y,O))\to \ca A$ which maps the points $(1,0)$, $(0,1)$ and $(0,0)$ to $X$, $Y$ and $O$ respectively. Then, $F$ can be canonically extended to a functor from $\text{Ind}(\omega)$ to the category of local rings, where $\text{Ind}(\omega)$ is the total preorder on $\omega$ (i.e. the category with object of objects $\omega$ and a unique morphism between any two objects).
\end{lem}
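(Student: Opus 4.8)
The plan is to identify a functor on $\text{Ind}(\omega)$ extending $F$ with exactly the data of a ring homomorphism $f_{ab}\colon F(a)\to F(b)$ for each ordered pair of triples $a=(X,Y,O)$ and $b=(X',Y',O')$ in $\omega$, subject to the two laws $f_{aa}=\mathrm{id}$ and $f_{bc}\circ f_{ab}=f_{ac}$. Since $\text{Ind}(\omega)$ has a unique morphism between any two objects, it is an indiscrete groupoid, so these two laws will automatically force each $f_{ab}$ to be an isomorphism with inverse $f_{ba}$; hence producing such a compatible family is all that is required. To define $f_{ab}$ I would use the two given isomorphisms of affine planes $\sigma_a\colon\mathbb A(F(a))\to\ca A$ and $\sigma_b\colon\mathbb A(F(b))\to\ca A$ and form the composite $\sigma_b^{-1}\circ\sigma_a\colon\mathbb A(F(a))\to\mathbb A(F(b))$, which is an isomorphism between affine planes over the local rings $F(a)$ and $F(b)$.

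Next I would apply Theorem \ref{thrmaffmorph} to this composite: it factors uniquely as $\sigma_b^{-1}\circ\sigma_a=M_{ab}\,\mathbb A(\alpha_{ab})$ for a unique $M_{ab}\in G(F(b))$ and a unique ring homomorphism $\alpha_{ab}\colon F(a)\to F(b)$, and I set $f_{ab}:=\alpha_{ab}$. The identity law follows at once from uniqueness, since $\sigma_a^{-1}\circ\sigma_a=\mathrm{id}=I\cdot\mathbb A(\mathrm{id})$ forces $\alpha_{aa}=\mathrm{id}$. For the composition law I would use the remark following Theorem \ref{thrmaffmorph}, which computes the composite of $M_{ab}\,\mathbb A(\alpha_{ab})$ and $M_{bc}\,\mathbb A(\alpha_{bc})$ as $\bigl(M_{bc}\circ\alpha_{bc}(M_{ab})\bigr)\circ\mathbb A(\alpha_{bc}\circ\alpha_{ab})$. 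Matching this against the unique decomposition of $\sigma_c^{-1}\circ\sigma_a=(\sigma_c^{-1}\circ\sigma_b)\circ(\sigma_b^{-1}\circ\sigma_a)=M_{ac}\,\mathbb A(\alpha_{ac})$ and invoking the uniqueness clause of Theorem \ref{thrmaffmorph} yields $\alpha_{ac}=\alpha_{bc}\circ\alpha_{ab}$, i.e. $f_{ac}=f_{bc}\circ f_{ab}$, as needed. This establishes functoriality, and the resulting functor is canonical given the chosen family $\{\sigma_{XYO}\}$.

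The step I expect to be the main obstacle is precisely the composition law, and the subtlety it exposes is that the composite $\sigma_b^{-1}\circ\sigma_a$ does \emph{not} in general fix the standard points $(1,0),(0,1),(0,0)$ (it sends them to $\sigma_b^{-1}(X),\sigma_b^{-1}(Y),\sigma_b^{-1}(O)$), so one cannot apply Lemma \ref{lemringpartaf} to obtain a pure ring homomorphism and must retain the nontrivial matrix factor $M_{ab}$. The heart of the argument is therefore to check that these matrix factors do not interfere with functoriality of the ring-homomorphism parts: in the composition formula the matrix contribution becomes the twisted product $M_{bc}\circ\alpha_{bc}(M_{ab})$, while the ring-homomorphism contribution composes cleanly as $\alpha_{bc}\circ\alpha_{ab}$, and it is exactly this clean separation, together with uniqueness of the factorisation, that makes the assignment $a\mapsto F(a)$, $(a\to b)\mapsto\alpha_{ab}$ into a well-defined functor extending $F$.
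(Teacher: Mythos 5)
Your proof is correct, and in fact it produces exactly the same extension as the paper, but by a more streamlined route. The paper does not invoke Theorem \ref{thrmaffmorph} at all: it first uses Lemma \ref{lemmatrixautoaf} to choose the unique $g\in G(F(X,Y,O))$ carrying $(1,0)$, $(0,1)$, $(0,0)$ to $\sigma_{XYO}^{-1}(X')$, $\sigma_{XYO}^{-1}(Y')$, $\sigma_{XYO}^{-1}(O')$, so that the corrected composite $\sigma_{X'Y'O'}^{-1}\circ\sigma_{XYO}\circ g$ fixes the three standard points, and then applies Lemma \ref{lemringpartaf} to obtain the unique ring isomorphism $f$ with $\mathbb A(f)=\sigma_{X'Y'O'}^{-1}\circ\sigma_{XYO}\circ g$; functoriality is then verified by an explicit diagram chase involving the matrix $f^{-1}(g')$. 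Your factorization $\sigma_b^{-1}\circ\sigma_a=M_{ab}\,\mathbb A(\alpha_{ab})$ places the matrix correction on the codomain side rather than the domain side, but the conjugation identity $\mathbb A(f)\circ g=f(g)\circ\mathbb A(f)$ (the same identity underlying the remark after Theorem \ref{thrmaffmorph}) shows the two constructions yield the same ring homomorphism: from $\mathbb A(f)=\sigma_b^{-1}\circ\sigma_a\circ g$ one gets $\sigma_b^{-1}\circ\sigma_a=f(g)^{-1}\,\mathbb A(f)$, so by uniqueness $\alpha_{ab}=f$ and $M_{ab}=f(g)^{-1}$. What your route buys is that the composition law becomes an immediate consequence of the uniqueness clause of Theorem \ref{thrmaffmorph} together with the composition formula $(k\circ\beta(g))\circ\mathbb A(\beta\circ\alpha)$ from the remark, replacing the paper's hand-built diagram; what the paper's route buys is an explicit description of the correcting matrix, which it reuses in the discussion following the lemma (the relation $\sigma_{X'Y'O'}=\sigma_{XYO}\circ g$ for the colimit ring $R_F$). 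Your observation that the indiscreteness of $\text{Ind}(\omega)$ forces every $f_{ab}$ to be invertible is correct but not strictly needed, since the lemma only asks for a functor into local rings; the paper gets isomorphisms directly because $\sigma_b^{-1}\circ\sigma_a\circ g$ is an isomorphism of affine planes.
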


\begin{proof}
Let $F$ be as in the statement of the lemma. We shall extend $F$ to a functor $\ovl{F}$ whose domain is $\text{Ind}(\omega)$

Let $(X,Y,O)$ and $(X',Y',O')$ be in $\omega$. Let $g$ be the unique element of the group $G(F(X,Y,O))$ which sends $(1,0)$, $(0,1)$, $(0,0)$ to $\sigma_{XYO}^{-1}(X')$, $\sigma_{XYO}^{-1}(Y')$, $\sigma_{XYO}^{-1}(O')$ respectively. Then, the composite
$$\sigma_{X'Y'O'}^{-1} \circ \sigma_{XYO} \circ g:\mathbb A(F(X,Y,O)\to \mathbb A(F(X',Y',O'))$$
sends $(1,0)$, $(0,1)$, $(0,0)$ to $(1,0)$, $(0,1)$, $(0,0)$ respectively. Hence, by Lemma \ref{lemringpartaf} there is a unique ring isomorphism 
$$f:F(X,Y,O)\to F(X',Y',O')$$
such that $\mathbb A(f)=\sigma_{X'Y'O'}^{-1} \circ \sigma_{XYO} \circ g$. We define $\ovl{F}$ of the unique morphism from $(X,Y,O)$ to $(X',Y',O')$ to be the ring isomorphism $f$. Notice that $f$ is the identity when $(X,Y,O)=(X',Y',O')$.

Given a third triple of non-collinear points $(O'',X'',Y'')$ in $\omega$, let $g'$ be the unique element of $G(F(X',Y',O'))$ which sends $(1,0)$, $(0,1)$, $(0,0)$ to $\sigma_{X'Y'O'}^{-1}(X'')$, $\sigma_{X'Y'O'}^{-1}(Y'')$, $\sigma_{X'Y'O'}^{-1}(O'')$ respectively. Let $f':F(X',Y',O')\to F(X'',Y'',O'')$ be the unique ring automorphism such that $\mathbb A(f')= \sigma_{X''Y''O''}^{-1} \circ \sigma_{X'Y'O'} \circ g'$. Let $f^{-1}(g')$ be the matrix we get when we apply $f^{-1}$ to each of the components of matrix $g'$. Then, the following diagram:
\begin{displaymath}
\xymatrix{
\mathbb A(F(X,Y,O)) \ar[r]^{f^{-1}(g')} \ar[dr]^{\mathbb A(f)} & 
\mathbb A(F(X,Y,O)) \ar[r]^g \ar[dr]^{\mathbb A(f)} &
\mathbb A(F(X,Y,O)) \ar[rr]^-{\sigma_{XYO}} &
&
\ca A \ar[d]^{1_{\ca A}} \\
&
\mathbb A(F(X',Y',O')) \ar[r]^{g'} \ar[dr]^{\mathbb A(f')} & 
\mathbb A(F(X',Y',O')) \ar[rr]^-{\sigma_{X'Y'O'}} &
&
\ca A \ar[d]^{1_{\ca A}} \\
&
&
\mathbb A(F(X'',Y'',O'')) \ar[rr]^-{\sigma_{X''Y''O''}} &
&
\ca A }
\end{displaymath}
commutes. Notice that $g\circ f^{-1}(g')$ is the unique element of $G(F(X,Y,O))$ which sends $(1,0)$, $(0,1)$ and $(0,0)$ to $\sigma_{XYO}^{-1}(X'')$, $\sigma_{XYO}^{-1}(Y'')$ and $\sigma_{XYO}^{-1}(O'')$ respectively.  $f'f:F(X,Y,O)\to F(X'',Y'',O'')$ is the unique ring isomorphism for which $\mathbb A(f'f)=\sigma_{X''Y''O''}^{-1} \circ \sigma_{XYO} \circ (g\circ f^{-1}(g'))$. Therefore, $f'f:F(X,Y,O)\to F(X'',Y'',O'')$ is the image under $\ovl{F}$ of the unique morphism from $(X,Y,O)$ to $(X'',Y'',O'')$. Hence, $\ovl{F}$ respects composition and therefore is a functor.
\end{proof}

Let $F$ be as in the statement of the above lemma, and let $\ovl{F}$ be its extension to a functor with domain $\text{Ind}(\omega)$. Then, both $F$ and the extension of $F$ are naturally isomorphic to the constant functor to the colimit of $\ovl{F}$. Hence, we have constructed a local ring $R_F$ without a choice of an element of $\omega$.

For each triple $(X,Y,O)$ in $\omega$, there exists a ring isomorphism
$$f_{XYO}: R_F\to F(X,Y,O)$$
which induces an isomorphism of affine planes 
$$\mathbb A(f_{XYO}): \mathbb A(R_F) \to \mathbb A(F(X,Y,O)).$$
Hence, we have an isomorphism of affine planes 
$$\ovl{\sigma}_{XYO}:\mathbb A(R_F)\xrightarrow{\mathbb A(f_{XYO})}\mathbb A(F(X,Y,O))\xrightarrow{\sigma_{XYO}} \ca A.$$
Given a second triple $(X',Y',O')$ in $\omega$ and the isomorphism $\ovl{\sigma}_{X'Y'O'}:\mathbb A(R_F)\to \ca A$ let $g$ be the unique element of $G(R_F)$ which maps the points $(1,0)$, $(0,1)$, $(0,0)$ to the points $\ovl{\sigma}_{X'Y'O'}^{-1}(X'')$, $\ovl{\sigma}_{X'Y'O'}^{-1}(Y'')$, $\ovl{\sigma}_{X'Y'O'}^{-1}(O'')$ respectively. Then, $\sigma_{X'Y'O'}=\sigma_{XYO}\circ g$.

\begin{defn} \label{defnaffring}
Let $R$ be a local ring and suppose that we have an $\omega$-indexed family of affine plane isomorphisms $\mathbb A(R)\to \ca A$:
$$\sigma:\omega \times \mathbb A(R)\to \ca A,$$
$$\ovl{\sigma}:\omega \times \ca A\to \mathbb A(R).$$
Suppose that $\sigma_{XYO}: \mathbb A(R)\to \ca A$ maps the points $(1,0)$, $(0,1)$, $(0,0)$ to the points $X$, $Y$, $O$ respectively. Suppose further that given two triples $(X,Y,O)$ and $(X',Y',O')$ in $\omega$ the  induced isomorphism of affine planes $\ovl{\sigma}_{X'Y'O'}\sigma_{XYO}$ is induced by a (unique) element $g$ of the group of affine transformations $G(R)$. A local ring $R$ with an $\omega$-indexed affine plane isomorphism $\sigma$ satisfying the above properties is called \emph{a coordinate ring of $\ca A$}.
\end{defn}

Notice that the local ring $R_F$ described above is a coordinate ring for $\ca A$ via the isomorphisms $\sigma_-$.

\begin{lem} \label{lemaffunique}
Let $R$ and $R'$ both be coordinate rings of an affine plane $\ca A$ via affine plane isomorphisms $\sigma_-$ and $\sigma'_-$ respectively. Then, $R$ is isomorphic to $R'$.
\end{lem}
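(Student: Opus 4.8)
The plan is to pin $R$ and $R'$ together through a single affine frame and then show the resulting comparison is independent of the frame, so that it descends to a global isomorphism. Fix a non-collinear triple $(X,Y,O)$ of $\ca A$, i.e.\ an element of $\omega$. Both $\sigma_{XYO}\colon \mathbb A(R)\to \ca A$ and $\sigma'_{XYO}\colon \mathbb A(R')\to \ca A$ send $(1,0),(0,1),(0,0)$ to $X,Y,O$, so the composite $(\sigma'_{XYO})^{-1}\circ \sigma_{XYO}\colon \mathbb A(R)\to \mathbb A(R')$ is an isomorphism of affine planes fixing the standard frame. By Lemma \ref{lemringpartaf} it equals $\mathbb A(f_{XYO})$ for a unique ring homomorphism $f_{XYO}\colon R\to R'$, and applying the same lemma to its inverse shows $f_{XYO}$ is a ring isomorphism. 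This already gives an isomorphism once a frame is chosen; the work is to make it canonical.

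The difficulty is that $\omega$ need not have a global point (this is precisely the situation of the generic affine plane), so I cannot simply select a frame. Instead I would view $f_{-}$ as an $\omega$-indexed family of ring isomorphisms $R\to R'$ and prove it is constant. So let $(X',Y',O')$ be a second triple and write $\tilde f=f_{X'Y'O'}$. The coherence built into Definition \ref{defnaffring} supplies $g\in G(R)$ and $g'\in G(R')$ with $\sigma_{XYO}=\sigma_{X'Y'O'}\circ g$ and $\sigma'_{XYO}=\sigma'_{X'Y'O'}\circ g'$. Substituting these together with $\sigma_{XYO}=\sigma'_{XYO}\circ \mathbb A(f_{XYO})$ and $\sigma_{X'Y'O'}=\sigma'_{X'Y'O'}\circ \mathbb A(\tilde f)$, and cancelling the isomorphism $\sigma'_{X'Y'O'}$, yields
\[
\mathbb A(\tilde f)\circ g = g'\circ \mathbb A(f_{XYO}).
\]
Now I would rewrite the left side using the conjugation identity $\mathbb A(\tilde f)\circ g=\tilde f(g)\circ \mathbb A(\tilde f)$ from the remark following Theorem \ref{thrmaffmorph}, where $\tilde f(g)\in G(R')$ is $g$ with $\tilde f$ applied to its entries, giving $\tilde f(g)\circ \mathbb A(\tilde f)=g'\circ \mathbb A(f_{XYO})$. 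Both sides are morphisms of affine planes $\mathbb A(R)\to \mathbb A(R')$ over the local ring $R'$, each presented as an element of $G(R')$ followed by $\mathbb A$ of a ring homomorphism; the uniqueness of this decomposition in Theorem \ref{thrmaffmorph} forces $\tilde f=f_{XYO}$ (and, as a by-product, $g'=f_{XYO}(g)$). Hence $f_{-}$ is constant.

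Finally, since the affine-plane axioms make $\omega$ inhabited, the map $\omega\twoheadrightarrow 1$ is an epimorphism; a family $f_{-}\colon \omega\to \mathrm{Isom}(R,R')$ that coequalizes the two projections $\omega\times\omega\rightrightarrows\omega$ therefore descends along it to a single global ring isomorphism $R\to R'$, proving the lemma. I expect the main obstacle to be the independence step: the algebraic bookkeeping that produces the displayed identity and, crucially, the correct appeal to the uniqueness of the $G(R')$-plus-$\mathbb A$ decomposition in Theorem \ref{thrmaffmorph} to split off both the matrix and the ring parts at once. Once $f_{-}$ is known to be constant, the descent along $\omega\twoheadrightarrow 1$ is routine.
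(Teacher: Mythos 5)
Your proposal is correct and follows essentially the same route as the paper: fix a frame in $\omega$, obtain $f_{XYO}$ from Lemma \ref{lemringpartaf}, and prove frame-independence via the $G$-coherence of Definition \ref{defnaffring} together with the conjugation identity $\mathbb A(\tilde f)\circ g=\tilde f(g)\circ\mathbb A(\tilde f)$ — the paper packages this as a commutative diagram and reuses the uniqueness clause of Lemma \ref{lemringpartaf}, whereas you invoke the uniqueness of the $G(R')$-plus-$\mathbb A(-)$ decomposition from Theorem \ref{thrmaffmorph}, which amounts to the same computation. Your explicit descent along the well-supported object $\omega\twoheadrightarrow 1$ is just a careful spelling-out of what the paper leaves implicit in its internal-language phrasing.
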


\begin{proof}
Pick $(X,Y,O)$ in $\omega$. Then, we have isomorphisms $\sigma_{XYO}: \mathbb A(R)\to \ca A$ and $\sigma'_{XYO}: \mathbb A(R')\to \ca A$. $\sigma'^{-1}_{XYO}\sigma_{XYO}$ maps $(1,0)$, $(0,1)$, $(0,0)$  to $(1,0)$, $(0,1)$, $(0,0)$  respectively. Hence, by Lemma \ref{lemringpart} there exists a unique ring isomorphism $f:R\to R'$ such that $\mathbb A(f)=\sigma'^{-1}_{XYO}\sigma_{XYO}$.

Given a second triple of points $(X',Y',O')$ in $\omega$. Let $g$ be the unique element of $G(R)$ which sends the points $(1,0)$, $(0,1)$, $(0,0)$  to $\sigma^{-1}_{XYO}(A')$, $\sigma^{-1}_{XYO}(B')$, $\sigma^{-1}_{XYO}(O')$ and $\sigma^{-1}_{XYO}(I')$ respectively. Let $f(g)$ be the element of $G(R')$ represented by the matrix we get when we apply $f$ to each of the components of a matrix representing $g$ (and notice that this morphism does not depend on the choice of representative of $g$). Then, $f(g)$ is the unique element of $G(R')$ sending $(1,0)$, $(0,1)$, $(0,0)$  to $\sigma'^{-1}_{XYO}(X')$, $\sigma'^{-1}_{XYO}(Y')$, $\sigma'^{-1}_{XYO}(O')$ respectively. Hence, we have the following commutative diagram
\begin{displaymath}
\xymatrix{
\mathbb A(R) \ar[r]^h \ar[d]^{\mathbb A(f)} & \mathbb A(R) \ar[rr]^{\sigma_{XYO}} \ar[d]^{\mathbb A(f)} && \ca A \ar[d]^{1_{\ca A}} \\
\mathbb A(R') \ar[r]^{f(g)} & \mathbb A(R') \ar[rr]^{\sigma'_{XYO}} && \ca A
}
\end{displaymath}
where the top row composes to $\sigma_{X'Y'O'}$ and bottom row composes to $\sigma'_{X'Y'O'}$. Hence $f$ is the unique ring homomorphism such that $\mathbb A(f)=\sigma'^{-1}_{X'Y'O'}\sigma'_{X'Y'O'}$. Therefore the ring isomorphism $f:R\to R'$ does not depend on the choice of element of $\omega$.
\end{proof}

Given an affine plane $\ca A$, using the construction of \cite[Chapter 3]{Seidenberg} we construct a functor from $\omega$ to the category of local rings. By Lemma \ref{lemafext} we extend this functor to one from $\text{Ind}(\omega)$ and we construct a local ring $R_{\ca A}$ as the colimit of this diagram (since $\omega$ is well-supported). The local ring $R_{\ca A}$ satisfies the properties of the above lemma. Recall the construction of the local ring $\text{Tp}$ from the affine plane $\ca A$ using trace preserving homomorphisms. By the results of Section \ref{secaffintr}, for each triple $(X,Y,O)$ of non-collinear points we have an isomorphism of affine planes $\phi_{XYO}:\mathbb A(\text{Tp}) \to \ca A$  and the local ring $\text{Tp}$ is a coordinate ring of $\ca A$ via the isomorphisms $\phi_-$. Hence, by Lemma \ref{lemaffunique} the local rings $R_{\ca A}$ and $\text{Tp}$ are isomorphic.

The properties required for the local ring in Definition \ref{defnaffring} are preserved under inverse images of geometric morphisms. The local ring of trace preserving homomorphisms satisfies these properties and therefore so does its inverse image under a geometric morphism. By Lemma \ref{lemaffunique} there is a unique such local ring up to isomorphism. Hence, we have proved again that the construction of the local ring of trace preserving homomorphisms of an affine plane is preserved under inverse images of geometric morphisms.


\section{Revisiting Desargues' theorem} \label{secfurdes}

In this section, we prove Theorem \ref{thrmpoly} which is another version of Desargues' theorem on the affine plane. Then, using Theorem \ref{thrmpoly} we prove Theorem \ref{thrmdes5} which was stated in Section \ref{secfurthdes}. We could have presented the following proofs right after Theorem \ref{thrmexistrans} and that is why we used Theorem \ref{thrmdes5} in the proof of Theorem \ref{thrmexistdil}.

\begin{thrm} \label{thrmpoly}
Let $\ca A$ be a preaffine plane satisfying big and small Desargues' axioms. Let $P$, $A_1$, $A_2$,\ldots $A_n$, $P'$, $A'_1$, $A'_2$,\ldots $A'_n$ be points, where $n\geq 3$. If the following are true:
\begin{enumerate}
\item $P$ is apart from all the points $A_1$, $A_2$,\ldots $A_n$,
\item $P'$ is apart from all the points $A'_1$, $A'_2$,\ldots $A'_n$,
\item for each $1\leq i \leq n$, $\ovl{PA_i}\parallel \ovl{P'A'_i}$,
\item \label{itempolygon} $A_i\# A_{i+1}$ for each $1\leq i \leq n-1$, and $A_n\#A_1$,
\item for each $1\leq i \leq n-1$, $\ovl{A_iA_{i+1}}\parallel \ovl{A'_i A'_{i+1}}$.
\end{enumerate}
Then  $\ovl{A_nA_1}$ is parallel to $\ovl{A'_nA'_1}$.
\end{thrm}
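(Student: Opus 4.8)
The plan is to argue by induction on $n$, taking the triangle case $n=3$ as the base and gluing on one vertex at a time. The whole statement is really about a single piece of geometric content — two $n$-gons perspective from a pair of points joined by a parallel pencil — so both the inductive hypothesis and the corner triangles that appear in the inductive step are instances of the same assertion; the only genuine work is the base case together with the constructive apartness bookkeeping.

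For the base case I would first collapse the two centres $P$ and $P'$ into one by translating. Let $\tau=\tau_{PP'}$ be the unique translation sending $P$ to $P'$ (Theorem \ref{thrmexistrans}), and put $B_i=\tau(A_i)$. Since $\tau$ is a dilatation it preserves apartness and sends every line to a parallel line, so from $P\#A_i$ we get $P'\#B_i$, and $\ovl{P'B_i}\parallel\ovl{PA_i}\parallel\ovl{P'A'_i}$. Two parallel lines through the common point $P'$ coincide, hence $\ovl{P'B_i}=\ovl{P'A'_i}$ is a single line $r_i$ carrying both $B_i$ and $A'_i$, each apart from $P'$. Likewise $\ovl{B_iB_{i+1}}\parallel\ovl{A_iA_{i+1}}\parallel\ovl{A'_iA'_{i+1}}$ and $B_i\#B_{i+1}$. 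This turns the data into two triangles $B_1B_2B_3$ and $A'_1A'_2A'_3$ inscribed on the three rays $r_1,r_2,r_3$ through the single centre $P'$ with two pairs of corresponding sides parallel, which is exactly the configuration of Theorem \ref{thrmdesbig}. Applying it yields $\ovl{B_3B_1}\parallel\ovl{A'_3A'_1}$, and since $\ovl{A_3A_1}\parallel\ovl{B_3B_1}$ we conclude $\ovl{A_3A_1}\parallel\ovl{A'_3A'_1}$.

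For the inductive step I would cut the polygon along the diagonal through $A_1$ and $A_{n-1}$. Applying the theorem for $n-1$ to the points $P,A_1,\dots,A_{n-1}$ and $P',A'_1,\dots,A'_{n-1}$ gives $\ovl{A_{n-1}A_1}\parallel\ovl{A'_{n-1}A'_1}$; feeding this parallelism, together with the given $\ovl{A_{n-1}A_n}\parallel\ovl{A'_{n-1}A'_n}$ and the three parallel pencil lines at $A_{n-1},A_n,A_1$, into the base case applied to the corner triangle $A_{n-1}A_nA_1$ then delivers $\ovl{A_nA_1}\parallel\ovl{A'_nA'_1}$, as required. When $n=4$ the same step may be carried out in one stroke, using the four-ray version Theorem \ref{thrmdesbig4} after the translation, which is convenient for keeping the apartness hypotheses manageable.

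The hard part will be constructive. The diagonal $\ovl{A_{n-1}A_1}$ presupposes $A_{n-1}\#A_1$, the rays $r_i$ must be pairwise apart before Theorem \ref{thrmdesbig} applies, and $A'_{n-1}\#A'_1$ must be available for the corner triangle — none of which is listed among the hypotheses, which only record apartness of adjacent vertices. I would dispatch these exactly as the paper handles the analogous gaps in Lemmas \ref{lempar1}--\ref{lempar4}: introduce auxiliary points apart from the given data (an intermediate point $C$ with $A_{n-1}\#C\#A_1$ in the spirit of Lemma \ref{lemomega39}, and an auxiliary ray through $P'$ apart from all the $r_i$ in the spirit of the extra line $l'$ used in Theorem \ref{thrmdesbig4}), and split into the finitely many cases determined by which apartnesses hold, reducing each to a non-degenerate instance of Theorem \ref{thrmdesbig} or \ref{thrmdesbig4}. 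The apartness of the derived primed vertices comes for free, since the very lines $\ovl{A'_iA'_{i+1}}$ named in condition (5) can only exist when $A'_i\#A'_{i+1}$; the corresponding non-adjacent apartnesses are then recovered through the rays $r_i$ once those are known to be apart, using that a point on $r_i$ apart from $P'$ lies outside any $r_j$ apart from $r_i$.
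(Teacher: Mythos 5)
Your base cases are exactly the paper's: for $n=3$ (and $n=4$) the paper likewise translates by $\tau_{PP'}$, observes that $\tau(A_i)$ lands on $\ovl{P'A'_i}$ because two parallel lines through the common point $P'$ coincide, and then invokes Theorem \ref{thrmdesbig} (resp.\ Theorem \ref{thrmdesbig4}). Up to that point you are on solid ground, though note that $n=4$ must be a genuine base case rather than a convenience, for the reason below.

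The gap is in the inductive step, and the repair you sketch does not close it. Cutting unconditionally along $\ovl{A_{n-1}A_1}$ presupposes $A_{n-1}\#A_1$, which, as you say, is not among the hypotheses; but your remedy is circular. Lemma \ref{lemomega39} produces a point $C$ with $A\#C\#B$ only from the prior assumption $A\#B$ (it selects $C$ on the line $\ovl{AB}$), so it cannot manufacture an intermediate point between $A_{n-1}$ and $A_1$ when their apartness is exactly what is in question; and inserting an auxiliary vertex into the polygon would in any case require producing its primed companion $C'$ and verifying the parallelism conditions at it, which your sketch does not address. The correct constructive move---the one the paper makes---is to apply cotransitivity not to the unknown apartness but to the \emph{hypothesis} $A_{n-2}\#A_{n-1}$, obtaining $A_1\#A_{n-2}$ or $A_1\#A_{n-1}$. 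In the second case your step goes through verbatim: induction on the $(n-1)$-gon $A_1,\dots,A_{n-1}$, then the triangle corner $A_1A_{n-1}A_n$. In the first case one instead applies the induction hypothesis to the $(n-2)$-gon $A_1,\dots,A_{n-2}$ and finishes with the quadrilateral corner $A_1A_{n-2}A_{n-1}A_n$---which is precisely why the $n=4$ case must be proved directly via $\tau_{PP'}$ and Theorem \ref{thrmdesbig4}. With that dichotomy substituted for your auxiliary-point construction, the rest of your argument, including recovering the primed and non-adjacent apartnesses through the pencil lines $r_i$, is sound.
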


Notice that in the presence of the other conditions, condition \ref{itempolygon} above is equivalent to  $\ovl{PA_i}\#\ovl{PA_{i+1}}$ for each $1\leq i \leq n-1$, and $\ovl{PA_n}\#\ovl{PA_1}$. It is also equivalent to  $\ovl{P'A'_i}\#\ovl{P'A'_{i+1}}$ for each $1\leq i \leq n-1$ and $\ovl{P'A'_n}\#\ovl{P'A'_1}$.

\begin{proof}
The proof is by induction on $n$. We first prove the result for $n=3$ and $n=4$.

Given the points and lines in the statement of Theorem \ref{thrmpoly} for $n=3$, let $\tau$ be the translation which sends $P$ to $P'$. $\tau$ is a translation and therefore sends lines to parallel lines therefore it sends the line $\ovl{PA}$ to a parallel line. $\ovl{P'A'_1}$ is the line through $P'=\tau(P)$ and parallel to $\ovl{PA_1}$. Therefore, $\tau$ sends $\ovl{PA_1}$ to $\ovl{P'A'_1}$, and $\tau(A_1)$ lies on $\ovl{P'A'_1}$. Similarly, $\tau(A_2)\in \ovl{P'A'_2}$ and $\tau(A_3)\in \ovl{P'A'_3}$. $\ovl{A'_1A'_2}\parallel \ovl{A_1A_2}\parallel \ovl{\tau(A_1)\tau(A_2)}$, hence $\ovl{A'_1A'_2}\parallel \ovl{\tau(A_1)\tau(A_2)}$. Similarly, $\ovl{A'_2A'_3}\parallel \ovl{\tau(A_2)\tau(A_3)}$. Hence, we have the following picture:
\begin{center}
\begin{tikzpicture}
\draw 
(0,0) node[left] {$P'$} -- (6,3)
(0,0) -- (7,0) 
(0,0) -- (6,-1.5);

\draw 
(3, -0.75) node[ below] {$A'_3$} 
--(5,0) node[ below] {$A'_2$}
(1.5,-0.375)  node[below] {$\tau(A_3)$} 
--(2.5,0) node[ below] {$\tau(A_2)$};

\draw 
(5,0)--(4.5,2.25) node[above] {$A'_1$}
 (2.5,0)--(2.25,1.125) node[above] {$\tau(A_1)$};

\draw[red, thick, densely dotted] 
(4.5,2.25)--(3,-0.75) 
(2.25,1.125)--(1.5,-0.375);
\end{tikzpicture}
\end{center}
The conditions of Theorem \ref{thrmdesbig} are satisfied, hence $\ovl{A'_1A'_3}$ is parallel to $\ovl{\tau(A_1)\tau(A_3)}$. $\tau$ is a translation, therefore $\ovl{\tau(A_1)\tau(A_3)}$ is parallel to $\ovl{A_1A_3}$. Hence, $\ovl{A'_1A'_3}$ is parallel to $\ovl{A_1A_3}$.

Given the points and lines in the statement of Theorem for $n=4$, let $\tau$ be the translation which sends $P$ to $P'$. $\tau$ is a translation and therefore sends lines to parallel lines therefore it sends the line $\ovl{PA}$ to a parallel line. $\ovl{P'A'_1}$ is the line through $P'=\tau(P)$ and parallel to $\ovl{PA_1}$. Therefore, $\tau$ sends $\ovl{PA_1}$ to $\ovl{P'A'_1}$, and $\tau(A_1)$ lies on $\ovl{P'A'_1}$. Similarly, $\tau(A_2)\in \ovl{P'A'_2}$, $\tau(A_3)\in \ovl{P'A'_3}$ and $\tau(A_4)\in \ovl{P'A'_4}$. $\ovl{A'_1A'_2}\parallel \ovl{A_1A_2}\parallel \ovl{\tau(A_1)\tau(A_2)}$, hence $\ovl{A'_1A'_2}\parallel \ovl{\tau(A_1)\tau(A_2)}$. Similarly, $\ovl{A'_2A'_3}\parallel \ovl{\tau(A_2)\tau(A_3)}$ and $\ovl{A'_3A'_4}\parallel\ovl{\tau(A_3)\tau(A_4)}$. Hence, we have the following picture:
\begin{center}
\begin{tikzpicture}
\draw 
(0,0) node[left] {$P'$} -- (7,3.5)
(0,0) -- (7,0) 
(0,0) -- (6,-1.5)  
(0,0) -- (5,6.25) ;

\draw 
(3, -0.75) node[ below] {$A'_3$} 
--(5,0) node[ below] {$A'_2$}
(1.5,-0.375)  node[below] {$\tau(A_3)$} 
--(2.5,0) node[ below] {$\tau(A_2)$};

\draw 
(5,0)--(6,3) node[above] {$A'_1$}
 (2.5,0)--(3,1.5) node[above] {$\tau(A_1)$};

\draw
(3, -0.75)--(4,5) node[above] {$A'_4$}
(1.5,-0.375)--(2,2.5) node[above] {$\tau(A_4)$};

\draw[red, thick, densely dotted] 
(6,3)--(4,5)
(3,1.5)--(2,2.5);
\end{tikzpicture}
\end{center}
The conditions of Theorem \ref{thrmdesbig4} are satisfied, hence $\ovl{A'_1A'_4}$ is parallel to $\ovl{\tau(A_1)\tau(A_4)}$. $\tau$ is a translation, therefore $\ovl{\tau(A_1)\tau(A_4)}$ is parallel to $\ovl{A_1A_4}$. Hence, $\ovl{A'_1A'_4}$ is parallel to $\ovl{A_1A_4}$.

Let us now consider the case where $n>5$. $A_{n-2}\#A_{n-1}$, therefore at least one of $A_{n-2}$ and $A_{n-1}$ is apart from $A_1$.

In the case where $A_{n-2}\#A_1$, by the induction hypothesis $\ovl{A_{n-2}A_1}\parallel \ovl{A'_{n-2}A'_1}$. Hence, we apply the theorem for the case $n=4$ proved above on the points $P$, $A_1$, $A_{n-2}$, $A_{n-1}$, $A_n$, $P'$, $A'_1$, $A'_{n-2}$, $A'_{n-1}$, $A'_n$ to conclude that $\ovl{A_nA_1}\parallel \ovl{A'_nA'_1}$.

In the case where $A_{n-1}\#A_1$, by the induction hypothesis $\ovl{A_{n-1}A_1}\parallel \ovl{A'_{n-1}A'_1}$. Hence, we apply the theorem for the case $n=3$ proved above on the points $P$, $A_1$, $A_{n-1}$, $A_n$, $P'$, $A'_1$, $A'_{n-1}$, $A'_n$ to conclude that $\ovl{A_nA_1}\parallel \ovl{A'_nA'_1}$.
\end{proof}

\noindent {\bf Proof of Theorem \ref{thrmdes5} using Theorem \ref{thrmpoly} for $n=4$:}
\vspace{0.05 in}

\noindent Recall Theorem \ref{thrmdes5}: Let $\ca A$ be a preaffine plane satisfying Desargues' big and small axioms.
Let $k$, $l$, $m$ be lines of $\ca A$ and let $Q$, $A$, $A'$, $B$, $B'$, $C$, $C'$,  $D$, $D'$ be points of $\ca A$. If the following are true:
\begin{enumerate}
\item $Q$ lies on all three lines $k,l,m$,
\item $k\#l\#m$,
\item $Q$ is apart from the points $A$, $A'$, $B$, $B'$, $C$ and $C'$,
\item $A, A'\in k$,
\item $B, B'\in l$,
\item $C, C'\in m$,
\item $D$ lies outside the lines $\ovl{AB}$ and $\ovl{BC}$,
\item $D'$ lies outside the lines $\ovl{A'B'}$ and $\ovl{B'C'}$,
\end{enumerate}
If in addition $\ovl{AB}\parallel\ovl{A'B'}$, $\ovl{BC}\parallel \ovl{B'C'}$, $\ovl{BD}\parallel \ovl{B'D'}$ and $\ovl{CD}\parallel \ovl{C'D'}$, then $\ovl{AD}$ is parallel to $\ovl{A'D'}$.
\begin{center}
 \begin{tikzpicture}
\draw 
(0,0) node[left] {$Q$} -- (7,3.5) node[right] {$k$} 
(0,0) -- (7,0) node[right] {$l$} 
(0,0) -- (6,-1.5)  node[right] {$m$};

\draw 
(3, -0.75) node[ below] {$C'$} 
--(5,0) node[ below] {$B'$}
(1.5,-0.375)  node[below] {$C$} 
--(2.5,0) node[ below] {$B$};

\draw 
(5,0)--(6,3) node[above] {$A'$}
 (2.5,0)--(3,1.5) node[above] {$A$};

\draw
(3, -0.75)--(4,5) node(D')[above] {$D'$}
(1.5,-0.375)--(2,2.5) node[above] {$D$};

\draw 
(5,0)--(4,5)
(2.5,0)--(2,2.5);

\draw[red, thick, densely dotted] 
(6,3)--(4,5)
(3,1.5)--(2,2.5);
\end{tikzpicture}
\end{center}

\begin{proof}
Let $P=B$, $A_1=A$, $A_2=Q$, $A_3=C$, $A_4=D$, $P'=B'$, $A'_1=A'$, $A'_2=Q$, $A'_3=C'$, $A'_4=D'$ as in the following picture.
\begin{center}
\begin{tikzpicture}
\draw 
(0,0) node[left] {$A_2=A'_2$} -- (7,3.5) 
(0,0) -- (7,0) 
(0,0) -- (6,-1.5) ;

\draw 
(3, -0.75) node[ below] {$A'_3$} 
--(5,0) node[ below] {$P'$}
(1.5,-0.375)  node[below] {$A_3$} 
--(2.5,0) node[ below] {$P$};

\draw 
(5,0)--(6,3) node[above] {$A'_1$}
 (2.5,0)--(3,1.5) node[above] {$A_1$};

\draw
(3, -0.75)--(4,5) node[above] {$A'_4$}
(1.5,-0.375)--(2,2.5) node[above] {$A_4$};

\draw 
(5,0)--(4,5)
(2.5,0)--(2,2.5);

\draw[red, thick, densely dotted] 
(6,3)--(4,5)
(3,1.5)--(2,2.5);
\end{tikzpicture}
\end{center}
Apply Theorem \ref{thrmpoly} for $n=4$ to the points $P$, $A_1$, $A_2$, $A_3$, $A_4$, $P'$, $A'_1$, $A'_2$, $A'_3$, $A'_4$. The conditions of Theorem \ref{thrmpoly} hold for this configuration, hence $\ovl{A_4A_1}\parallel \ovl{A'_4A'_1}$. Therefore, $\ovl{AD}\parallel \ovl{A'D'}$.
\end{proof}

\chapter{Introducing coordinates to a Projective Plane} \label{chaprojcoo}

In this chapter, we first construct a local ring from a projective plane $\ca P$ with a choice four points in general position and we then introduce coordinates to the projective plane using these four points. We then show how to construct a local ring without a choice of such points and how any such local ring when it satisfies certain properties is unique up to isomorphism. Given such a local ring $R_{\ca P}$, we also see the construction of a right $H(R_{\ca P})$-torsor. 

\section{The local ring of a projective plane}

Given a projective plane $\ca P$ and a line $l_\infty$, we construct the affine plane $\mathfrak A(\ca P, l_\infty)$ as in Section \ref{secprojtoaff}. We construct  the local ring of trace preserving homomorphisms of the affine plane $\mathfrak A(\ca P, l_\infty)$ as in Section \ref{secaffring}.

\begin{thrm} \label{thrmprojcor1}
Let $\ca P$ be a projective plane, let $l_\infty$ be a line of $\ca P$ and let $O$, $X$ and $Y$ be three non-collinear points of $\ca P$ which lie outside $l_\infty$. Let $\text{Tp}$ be the local ring constructed as above, then there exists an isomorphism of projective planes $\psi: \mathbb P(\text{Tp})\to \ca P$ (from the projective plane over $\text{Tp}$ to $\ca P$) which maps the points $(0,0,1)$, $(1,0,1)$, $(0,1,1)$ and the line $(0,0,1)$ to the points $O$, $X$, $Y$ and the line $l_\infty$ respectively.

Moreover, given two such isomorphisms $\psi_1, \psi_2:\mathbb P(\text{Tp})\to \ca P$, there exists a unique ring automorphism $f$ of $\text{Tp}$ such that $\psi_2=\psi_1 \circ \mathbb P(f)$. 
\end{thrm}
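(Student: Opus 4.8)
The plan is to reduce the statement to the affine case already settled in Theorem \ref{thrmafcoo}, transporting the affine isomorphism across the canonical identification $\mathbb A(\text{Tp})\cong \mathfrak A(\mathbb P(\text{Tp}),(0,0,1))$ and the extension result of Theorem \ref{thrmaffprojmorph}. First I would note that $O$, $X$, $Y$ are non-collinear points of the affine plane $\mathfrak A(\ca P,l_\infty)$: they lie outside $l_\infty$ by hypothesis, and since for instance $X\in \ovl{XY}$ and $X\notin l_\infty$ we get $\ovl{XY}\# l_\infty$, so $\ovl{XY}$ is a line of $\mathfrak A(\ca P,l_\infty)$ and their non-collinearity is inherited from $\ca P$ (via Lemma \ref{lemuniv} and Lemma \ref{lemnoncaff}). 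Applying Theorem \ref{thrmafcoo} to $\mathfrak A(\ca P,l_\infty)$ with these three points yields an isomorphism of affine planes $\phi:\mathbb A(\text{Tp})\to \mathfrak A(\ca P,l_\infty)$ sending $(0,0)$, $(1,0)$, $(0,1)$ to $O$, $X$, $Y$. Viewing $\phi$ through the identification $\mathbb A(\text{Tp})\cong \mathfrak A(\mathbb P(\text{Tp}),(0,0,1))$ as a morphism of preaffine planes built from the projective planes $\mathbb P(\text{Tp})$ and $\ca P$, I extend it by Theorem \ref{thrmaffprojmorph} to a morphism of projective planes $\psi:\mathbb P(\text{Tp})\to \ca P$.

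Next I would verify that $\psi$ has the required properties. It is an isomorphism: extending $\phi^{-1}$ gives a morphism $\psi':\ca P\to \mathbb P(\text{Tp})$, and the composites $\psi'\psi$ and $\psi\psi'$ restrict to the identity on the respective affine planes, so by the uniqueness clause of Theorem \ref{thrmaffprojmorph} they are the identity morphisms of projective planes. Since $\psi$ extends $\phi$ and the points $(0,0,1)$, $(1,0,1)$, $(0,1,1)$ are precisely the affine points $(0,0)$, $(1,0)$, $(0,1)$, it follows that $\psi(0,0,1)=O$, $\psi(1,0,1)=X$, $\psi(0,1,1)=Y$. For the line, I would pick two points $A\# B$ on the line $(0,0,1)$; two affine lines through $A$ (respectively $B$) are parallel, hence their $\phi$-images meet $l_\infty$, forcing $\psi(A),\psi(B)\in l_\infty$. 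As $\psi$ preserves apartness and incidence, it sends the line $(0,0,1)$ to the unique line through $\psi(A)$ and $\psi(B)$, which is $l_\infty$.

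For the \emph{moreover} part, given two such isomorphisms $\psi_1,\psi_2$, each fixes the line $(0,0,1)$ (mapping it to $l_\infty$) and so restricts to an affine isomorphism $\phi_i:\mathbb A(\text{Tp})\to \mathfrak A(\ca P,l_\infty)$ sending $(0,0),(1,0),(0,1)$ to $O,X,Y$. By the uniqueness statement of Theorem \ref{thrmafcoo} there is a unique ring automorphism $f$ of $\text{Tp}$ with $\phi_2=\phi_1\circ \mathbb A(f)$. Because $f$ fixes $(0,0,1)$ coordinatewise, $\mathbb P(f)$ restricts to $\mathbb A(f)$ on the affine plane, so $\psi_1\circ \mathbb P(f)$ restricts to $\phi_1\circ \mathbb A(f)=\phi_2$; since $\psi_2$ restricts to $\phi_2$ as well, the uniqueness of the projective extension in Theorem \ref{thrmaffprojmorph} gives $\psi_2=\psi_1\circ \mathbb P(f)$, and the uniqueness of $f$ is inherited from Theorem \ref{thrmafcoo}.

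I expect the main obstacle to be the bookkeeping of the restriction–extension correspondence: confirming that restricting an isomorphism of projective planes that sends $(0,0,1)$ to $l_\infty$ and then re-extending recovers it, that the extension operation is functorial (the extension of a composite is the composite of extensions, and the extension of the identity is the identity), and that $\psi$ genuinely carries the line at infinity of $\mathbb P(\text{Tp})$ onto $l_\infty$ as a line and not merely pointwise. Each of these rests on the uniqueness clause of Theorem \ref{thrmaffprojmorph}, so the effort lies in invoking it cleanly rather than in any new geometric content.
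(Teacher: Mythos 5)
Your construction of $\psi$ coincides with the paper's: apply Theorem \ref{thrmafcoo} to $\mathfrak A(\ca P,l_\infty)$ with the non-collinear points $O$, $X$, $Y$, extend the resulting affine isomorphism by Theorem \ref{thrmaffprojmorph}, and obtain invertibility by extending $\phi^{-1}$ and appealing to uniqueness of extensions; your explicit check that $\psi$ carries the line $(0,0,1)$ onto $l_\infty$ merely fills in a step the paper dispatches with ``notice that''. Where you genuinely diverge is the \emph{moreover} clause. The paper forms the automorphism $\psi_1^{-1}\psi_2$ of $\mathbb P(\text{Tp})$, observes that it fixes $(1,0,0)$, $(0,1,0)$, $(0,0,1)$ and $(1,1,1)$ (the first two arise as intersections of the fixed lines $\ovl{(0,0,1)(1,0,1)}$ and $\ovl{(0,0,1)(0,1,1)}$ with the fixed line $(0,0,1)$, and $(1,1,1)$ as a further intersection of fixed lines), and then invokes Lemma \ref{lemringpart} directly in the projective setting. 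You instead restrict $\psi_1,\psi_2$ to affine isomorphisms, extract $f$ from the uniqueness half of Theorem \ref{thrmafcoo} (ultimately Lemma \ref{lemringpartaf}), and climb back up via the uniqueness clause of Theorem \ref{thrmaffprojmorph}. Both routes are sound: the paper's is shorter once one grants the four fixed points in general position, while yours trades that incidence bookkeeping for the restriction--extension correspondence (that $\mathbb P(f)$ restricts to $\mathbb A(f)$ because $f$ fixes the coordinates of the line $(0,0,1)$, and that agreement of affine restrictions forces equality of the projective morphisms), all of which you correctly reduce to the uniqueness statement of Theorem \ref{thrmaffprojmorph}; your derivation of uniqueness of $f$ by restricting any competing $f'$ also goes through.
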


\begin{proof}
By Theorem \ref{thrmafcoo}, there exists a canonical isomorphism $\phi_{XYO}$ from the affine plane over $\text{Tp}$ to the affine plane $\mathfrak A(\ca P, l_\infty)$ which sends the points $(0,0,1)$, $(1,0,1)$, $(0,1,1)$ to the points $O$, $X$, $Y$. By Theorem \ref{thrmaffprojmorph}, we uniquely extend $\phi_{XYO}$ to a morphism $\psi$ from the projective plane over $\text{Tp}$ to the projective plane $\ca P$. Notice that $\psi$ necessarily sends the line $(0,0,1)$ to the line $l_\infty$.

By extending the inverse of $\phi_{XYO}$ to a morphism from $\ca P$ to the projective plane over $\text{Tp}$, we see that $\psi$ is an isomorphism.

Given isomorphisms $\psi_1, \psi_2: \mathbb P(\text{Tp})\rightrightarrows \ca P$ as above, then $\psi_1^{-1}\psi_2: \mathbb P(\text{Tp}) \to \mathbb P(\text{Tp})$ is an isomorphism of projective planes which maps the points $(1,0,0)$, $(0,1,0)$, $(0,0,1)$ and $(1,1,1)$ to $(1,0,0)$, $(0,1,0)$, $(0,0,1)$ and $(1,1,1)$, respectively. Hence, by Lemma \ref{lemringpart} there exists a unique ring automorphism $f$ of $\text{Tp}$ such that $\mathbb P(f)=\psi_1^{-1}\psi_2$ or equivalently such that $\psi_2=\psi_1 \circ \mathbb P(f)$.
\end{proof}

Let $A$, $B$, $O$ and $I$ be points in general position of a projective plane $\ca P$. There exist such four points by Lemma \ref{lemomega4}. Then, $B\notin \ovl{OA}$, therefore $\ovl{IB}\#\ovl{OA}$. Let $X=\ovl{IB}\cap \ovl{OA}$. Similarly $\ovl{IA}$ is apart from $\ovl{OB}$ and we define $Y$ be their intersection point.

\begin{lem}
Let $A$, $B$, $O$, $I$, $X$, $Y$ be points of a projective plane $\ca P$ as above. Then $O$, $X$ and $Y$ are non-collinear and they lie outside $\ovl{AB}$.
\end{lem}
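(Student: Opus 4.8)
The plan is to reduce everything to the non-collinearity hypotheses on $A$, $B$, $O$, $I$ and to feed these into the apartness/incidence axioms of preprojective planes together with Lemma \ref{lemnonc}. Since four points are in general position precisely when every triple among them is non-collinear, I first record the consequences I will use. From $O$, $A$, $B$ non-collinear I get $O \notin \ovl{AB}$ and, via condition (3) of Lemma \ref{lemnonc} applied to the permutations $(O,A,B)$, $(O,B,A)$, $(A,O,B)$ (non-collinearity being symmetric), the line-apartness facts $\ovl{OA} \# \ovl{AB}$, $\ovl{OB} \# \ovl{AB}$ and $\ovl{OA} \# \ovl{OB}$. From $A$, $B$, $I$ non-collinear I get $A \notin \ovl{IB}$ and $B \notin \ovl{IA}$; and from $O$, $B$, $I$ and $O$, $A$, $I$ non-collinear I get $O \notin \ovl{IB}$ and $O \notin \ovl{IA}$.

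Second, I would establish the apartness of $X$ from $A$ and from $O$. Since $X = \ovl{IB}\cap\ovl{OA}$ lies on $\ovl{IB}$, while $A \notin \ovl{IB}$ and $O \notin \ovl{IB}$, the complement axiom $A \notin k \vdash A \# P \vee P \notin k$ (with $P = X$) together with $X \in \ovl{IB}$ forces $A \# X$ and $O \# X$: the alternative disjunct $X \notin \ovl{IB}$ contradicts $X \in \ovl{IB}$ by the axiom $P \in l \wedge P \notin l \vdash \bot$. The symmetric argument using $Y = \ovl{IA}\cap\ovl{OB} \in \ovl{IA}$ and the facts $B, O \notin \ovl{IA}$ yields $B \# Y$ and $O \# Y$.

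Third, for the outside-$\ovl{AB}$ claims: $O \notin \ovl{AB}$ is immediate from Step 1. For $X$ I have $X \# A$, the line-apartness $\ovl{OA} \# \ovl{AB}$, and the incidences $X \in \ovl{OA}$, $A \in \ovl{OA}$, $A \in \ovl{AB}$. The self-dual axiom, applied to the apart points $X, A$ and the apart lines $\ovl{OA}, \ovl{AB}$, gives $X \notin \ovl{OA} \vee A \notin \ovl{AB} \vee X \notin \ovl{AB} \vee A \notin \ovl{OA}$, and the three outer disjuncts each contradict an incidence, leaving $X \notin \ovl{AB}$. The same axiom with $Y, B$ and $\ovl{OB}, \ovl{AB}$ gives $Y \notin \ovl{AB}$.

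Finally, for non-collinearity of $O$, $X$, $Y$: using $O \# X$ with $O, X \in \ovl{OA}$, the uniqueness-of-line axiom gives $\ovl{OX} = \ovl{OA}$, and likewise $\ovl{OY} = \ovl{OB}$; hence $\ovl{OX} \# \ovl{OY}$ since $\ovl{OA} \# \ovl{OB}$. Then $X \# O \# Y$ and $\ovl{XO} \# \ovl{OY}$ are exactly condition (3) of Lemma \ref{lemnonc} for the triple $(X,O,Y)$, so $O$, $X$, $Y$ are non-collinear. The obstacle to keep in mind is the constructive gap between $\neg(X \in \ovl{AB})$ and the positive relation $X \notin \ovl{AB}$: uniqueness of the intersection of $\ovl{OA}$ and $\ovl{AB}$ only rules out $X$ being on $\ovl{AB}$, so the positive outside-ness must be extracted through the self-dual four-way axiom. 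This is precisely why Step 2 (producing $X \# A$ and $Y \# B$) has to be carried out before Step 3.
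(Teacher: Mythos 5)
Your proof is correct and follows essentially the same route as the paper's: apartness of $X$ and $Y$ from $A$, $B$ and $O$ is extracted from the non-incidence facts supplied by general position, then $X,Y\notin \ovl{AB}$ is established, and non-collinearity of $O$, $X$, $Y$ follows from $\ovl{OX}=\ovl{OA}\#\ovl{OB}=\ovl{OY}$ via Lemma \ref{lemnonc}. The only difference is cosmetic: where the paper gets $X\notin\ovl{AB}$ by noting $B\notin\ovl{XA}=\ovl{OA}$ and appealing to the symmetry of non-collinearity (Proposition \ref{propnonc}/Lemma \ref{lemnonc}), you inline the self-dual four-way axiom directly, which is precisely the argument by which that proposition is proved.
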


\begin{proof}
$A\notin \ovl{BI}$ and $X\in \ovl{BI}$, therefore $A\#X$. Hence, $\ovl{XA}=\ovl{OA}$. $B\notin \ovl{OA}$, therefore $B\notin \ovl{XA}$ and therefore $X\notin\ovl{AB}$. Similarly, $Y\notin \ovl{AB}$.

$O\notin\ovl{IB}$ and $X\in\ovl{IB}$, hence $O\#X$, and similarly $O\#Y$. $\ovl{OX}=\ovl{OA}\#\ovl{OB}=\ovl{OY}$, hence $\ovl{OX}\#\ovl{OY}$ and the points $O$, $X$, $Y$ are non-collinear.
\end{proof}

\begin{rmk}
In a projective plane over a local ring, let $A=(1,0,0)$, $B=(0,1,0)$, $O=(0,0,1)$, $I=(1,1,1)$. Then, the above construction gives $X=(1,0,1)$ and $Y=(0,1,1)$.
\end{rmk}

Given a projective plane $\ca P$, let $\omega_4$ be the set of quadruples of points in general position of $\ca P$. We can now state Theorem \ref{thrmprojcor1} in the following form:

\begin{thrm} \label{thrmprojcor2}
Given a projective plane $\ca P$, and $(A,B,O,I)$ in $\omega_4$. Then, we can construct a local ring $\text{Tp}_{\ovl{AB}}$ and an isomorphism of projective planes $\psi_{ABOI}:$ $\mathbb P(\text{Tp}_{\ovl{AB}})\to \ca P$, such that $\psi_{ABOI}$ maps the points $(1,0,0)$, $(0,1,0)$, $(0,0,1)$ and $(1,1,1)$ to $A$, $B$, $O$ and $I$ respectively.
\end{thrm}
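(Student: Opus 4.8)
The plan is to reduce this statement to Theorem \ref{thrmprojcor1}. Given $(A,B,O,I)$ in $\omega_4$, I would take $l_\infty=\ovl{AB}$ and form the two auxiliary points $X=\ovl{IB}\cap\ovl{OA}$ and $Y=\ovl{IA}\cap\ovl{OB}$ exactly as in the construction preceding the statement. By the lemma just established, $O$, $X$, $Y$ are non-collinear and all lie outside $\ovl{AB}$, so the hypotheses of Theorem \ref{thrmprojcor1} are met with this line $l_\infty$ and this non-collinear triple. Applying that theorem produces the required local ring, which I name $\text{Tp}_{\ovl{AB}}$ (it is the ring of trace preserving homomorphisms of $\mathfrak A(\ca P,\ovl{AB})$, depending only on the line $\ovl{AB}$), together with an isomorphism of projective planes $\psi_{ABOI}:\mathbb P(\text{Tp}_{\ovl{AB}})\to\ca P$ sending the points $(0,0,1)$, $(1,0,1)$, $(0,1,1)$ and the line $(0,0,1)$ to $O$, $X$, $Y$ and $\ovl{AB}$ respectively.

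It then remains to check that this same $\psi_{ABOI}$ carries $(1,0,0)$, $(0,1,0)$, $(1,1,1)$ to $A$, $B$, $I$ (the image of $(0,0,1)$ is already $O$). The idea is that an isomorphism of projective planes commutes with the join $\ovl{\ \cdot\ }$ and meet $\cap$ operations, so I only need to express each of $(1,0,0)$, $(0,1,0)$, $(1,1,1)$ inside $\mathbb P(\text{Tp}_{\ovl{AB}})$ purely in terms of $(0,0,1)$, $(1,0,1)$, $(0,1,1)$ and the line $(0,0,1)$, and then observe that the parallel construction in $\ca P$ starting from $O$, $X$, $Y$ and $\ovl{AB}$ returns $A$, $B$, $I$. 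Concretely, inverting the computation recorded in the Remark (which shows that the standard points $(1,0,0),(0,1,0),(0,0,1),(1,1,1)$ produce $X=(1,0,1)$ and $Y=(0,1,1)$), one has in $\mathbb P(\text{Tp}_{\ovl{AB}})$ the identities $(1,0,0)=\ovl{OX}\cap l_\infty$, $(0,1,0)=\ovl{OY}\cap l_\infty$ and $(1,1,1)=\ovl{XB}\cap\ovl{YA}$. Applying $\psi_{ABOI}$ and using that it preserves joins and meets reduces the claim to the corresponding equalities in $\ca P$, namely $A=\ovl{OX}\cap\ovl{AB}$, $B=\ovl{OY}\cap\ovl{AB}$ and $I=\ovl{XB}\cap\ovl{YA}$.

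These three equalities in $\ca P$ follow from the defining properties of $X$ and $Y$ together with general position. Since $X\in\ovl{OA}$ and $O\#X$, we get $\ovl{OX}=\ovl{OA}$; as $A,B,O$ are non-collinear, $O\notin\ovl{AB}$ forces $\ovl{OA}\#\ovl{AB}$, and the unique common point of $\ovl{OA}$ and $\ovl{AB}$ is $A$. The identity for $B$ is symmetric, using $Y\in\ovl{OB}$. For the last one, $X\#B$ and $Y\#A$ (both $X$ and $Y$ lie outside $\ovl{AB}$, which contains $A$ and $B$), so $\ovl{XB}=\ovl{IB}$ and $\ovl{YA}=\ovl{IA}$; the non-collinearity of $A,I,B$ gives $\ovl{IA}\#\ovl{IB}$ by Lemma \ref{lemnonc}, and the unique intersection of these two lines is $I$. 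I expect the main obstacle to be not any single identity but the apartness bookkeeping that legitimises them: each join and meet invoked above must first be shown to exist and be unique, which is exactly where the general position of $(A,B,O,I)$ and the auxiliary lemma on $O,X,Y$ are used repeatedly. Once this is in place the three identities are immediate, and the theorem follows.
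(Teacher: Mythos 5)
Your proposal is correct and follows essentially the same route as the paper: construct $X=\ovl{IB}\cap\ovl{OA}$ and $Y=\ovl{IA}\cap\ovl{OB}$, apply Theorem \ref{thrmprojcor1} with $l_\infty=\ovl{AB}$ and the non-collinear triple $O$, $X$, $Y$, and conclude that the resulting isomorphism sends the four standard points to $A$, $B$, $O$, $I$. The only difference is that the paper dispatches the final verification with a bare ``Notice that,'' whereas you make it explicit via the join/meet identities $(1,0,0)=\ovl{OX}\cap l_\infty$, $(0,1,0)=\ovl{OY}\cap l_\infty$, $(1,1,1)=\ovl{XB}\cap\ovl{YA}$ and their counterparts in $\ca P$ --- a welcome elaboration, not a different argument.
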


\begin{proof}
Given $(A,B,O,I)$ in $\omega_4$, construct $X$ and $Y$ as above and apply Theorem \ref{thrmprojcor1} to $\ca P$ with $l_\infty=\ovl{AB}$ and $O$, $X$ and $Y$. Let $\text{Tp}_{\ovl{AB}}$ be the local ring constructed from the affine plane $\mathfrak A(\ca P,\ovl{AB})$, and let $\psi_{ABOI}$ be the canonical isomorphism $\mathbb P(\text{Tp})\to \ca P$ described in the proof of Theorem \ref{thrmprojcor1}. Notice that $\psi_{ABOI}$ sends the points $(1,0,0)$, $(0,1,0)$, $(0,0,1)$ and $(1,1,1)$ to the points $A$, $B$, $O$ and $I$ respectively.
\end{proof}

\begin{rmk}
Given a projective plane $\ca P$, there exists a quadruple of points $(A,B,O,I)$ in $\omega_4$ by Lemma \ref{lemomega4}. Hence, by the above theorem there exists a projective plane isomorphism from $\mathbb P(\text{Tp}_{\ovl{AB}})$ to $\ca P$. Hence, $\ca P$ satisfies all the sentences satisfied by the projective plane over $\text{Tp}_{\ovl{AB}}$. Therefore, the axioms of the theory of projective planes generate all the sentences satisfied by projective planes over local rings.
\end{rmk}

Given a projective plane $\ca P$, Theorem \ref{thrmprojcor2} constructs a functor from the discrete category $\omega_4$ to the category of local rings.

\begin{lem} \label{lemindcat}
Given a projective plane $\ca P$, let $F$ be a functor from the discrete category $\omega_4$ to the category of local rings. Suppose that for each $(A,B,O,I)$ in $\omega_4$, we have a choice of an isomorphism of projective planes $\chi_{ABOI}:\mathbb P(F(A,B,O,I))\to \ca P$ which maps the points $(1,0,0)$, $(0,1,0)$, $(0,0,1)$ and $(1,1,1)$ to $A$, $B$, $O$ and $I$ respectively. Then, $F$ can be extended canonically to a functor from $\text{Ind}(\omega_4)$ to the category of local rings, where $\text{Ind}(\omega_4)$ is the total preorder on $\omega_4$ (i.e. the category with object of objects $\omega_4$ and a unique morphism between any two objects).
\end{lem}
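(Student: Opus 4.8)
The plan is to transcribe the proof of Lemma~\ref{lemafext} to the projective setting, with triples of non-collinear points replaced by quadruples in general position, the affine group $G$ replaced by the projective general linear group $H$, the functor $\mathbb A$ replaced by $\mathbb P$, and the affine coordinate lemmas replaced by their projective counterparts, namely Lemma~\ref{lemmatrixauto} (there is a unique element of $H$ carrying the four standard points $(1,0,0)$, $(0,1,0)$, $(0,0,1)$, $(1,1,1)$ to any prescribed four points in general position) and Lemma~\ref{lemringpart} (a morphism of projective planes fixing those four standard points is $\mathbb P$ of a unique ring homomorphism). Since $\text{Ind}(\omega_4)$ has exactly one morphism between any two objects, extending $F$ amounts to choosing, for each ordered pair of quadruples, a ring isomorphism between the corresponding local rings, subject to the identity and composition laws.

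First I would define $\ovl F$ on morphisms. Fix $(A,B,O,I)$ and $(A',B',O',I')$ in $\omega_4$. As $\chi_{ABOI}$ is an isomorphism of projective planes it preserves the relation ``in general position'', so the four points $\chi_{ABOI}^{-1}(A')$, $\chi_{ABOI}^{-1}(B')$, $\chi_{ABOI}^{-1}(O')$, $\chi_{ABOI}^{-1}(I')$ are in general position in $\mathbb P(F(A,B,O,I))$. By Lemma~\ref{lemmatrixauto} there is then a unique $g\in H(F(A,B,O,I))$ whose induced automorphism sends the four standard points to these. The composite
$$\chi_{A'B'O'I'}^{-1}\circ\chi_{ABOI}\circ g: \mathbb P(F(A,B,O,I))\to \mathbb P(F(A',B',O',I'))$$
fixes the four standard points, so by Lemma~\ref{lemringpart} it equals $\mathbb P(f)$ for a unique ring isomorphism $f: F(A,B,O,I)\to F(A',B',O',I')$, and I set $\ovl F$ of the unique morphism to be $f$. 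Taking the two quadruples equal forces $g$ to be the identity of $H$ and hence $f$ to be the identity, establishing the identity law.

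The remaining content is functoriality with respect to composition, for which I would reproduce the three-row commutative diagram of Lemma~\ref{lemafext}. Given a third quadruple $(A'',B'',O'',I'')$, let $g'\in H(F(A',B',O',I'))$ carry the standard points to $\chi_{A'B'O'I'}^{-1}$ of the third quadruple and let $f'$ be the associated ring isomorphism. Writing $f^{-1}(g')$ for the matrix obtained by applying $f^{-1}$ to each entry of $g'$ (so that $f^{-1}(g')\in H(F(A,B,O,I))$ and $\mathbb P(f)\circ f^{-1}(g')=g'\circ\mathbb P(f)$), one checks that $g\circ f^{-1}(g')$ is precisely the unique element of $H(F(A,B,O,I))$ sending the four standard points to $\chi_{ABOI}^{-1}(A'')$, $\chi_{ABOI}^{-1}(B'')$, $\chi_{ABOI}^{-1}(O'')$, $\chi_{ABOI}^{-1}(I'')$. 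Consequently $f'\circ f$ is the unique ring isomorphism with $\mathbb P(f'\circ f)=\chi_{A''B''O''I''}^{-1}\circ\chi_{ABOI}\circ(g\circ f^{-1}(g'))$, which is exactly $\ovl F$ of the unique morphism $(A,B,O,I)\to(A'',B'',O'',I'')$; hence $\ovl F$ preserves composition.

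The only genuine obstacle is bookkeeping: keeping the three layers of the diagram aligned and confirming the intertwining identity $\mathbb P(f)\circ f^{-1}(g')=g'\circ\mathbb P(f)$ relating the $H$-action over $F(A,B,O,I)$ to the $H$-action over $F(A',B',O',I')$, exactly as in the affine proof. No new geometric input beyond the preservation of ``general position'' by projective-plane isomorphisms is required.
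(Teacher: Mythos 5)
Your proposal is correct and matches the paper's own proof essentially verbatim: the paper likewise transcribes Lemma \ref{lemafext} to the projective setting, defining $\ovl F$ on the unique morphism via the unique $h\in H(F(A,B,O,I))$ from Lemma \ref{lemmatrixauto} and the unique ring isomorphism from Lemma \ref{lemringpart}, and verifying composition with the same three-row diagram and the same entrywise-transport identity $\mathbb P(f)\circ f^{-1}(h')=h'\circ\mathbb P(f)$. No gaps; your observation that projective-plane isomorphisms (and their inverses) preserve general position is the only geometric input, exactly as in the paper.
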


\begin{proof}
Let $F$ be as in the statement of the lemma. We shall extend $F$ to a functor $\ovl{F}$ whose domain is $\text{Ind}(\omega_4)$.

Let $(A,B,O,I)$ and $(A',B',O',I')$ be in $\omega_4$. By Lemma \ref{lemmatrixauto}, there exists a unique $h$ in the projective general linear group $H(F(A,B,O,I))$ such that the corresponding automorphism of $\mathbb P(F(A,B,O,I))$ sends $(1,0,0)$, $(0,1,0)$, $(0,0,1)$ and $(1,1,1)$ to $\chi_{ABOI}^{-1}(A')$, $\chi_{ABOI}^{-1}(B')$, $\chi_{ABOI}^{-1}(O')$ and $\chi_{ABOI}^{-1}(I')$ respectively. Hence, the composite $\chi_{A'B'O'I'}^{-1}\circ \chi_{ABOI} \circ h:\mathbb P(F(A,B,O,I))\to \mathbb P(F(A',B',O',I'))$ maps the points $(1,0,0)$, $(0,1,0)$, $(0,0,1)$, $(1,1,1)$ to the points $(1,0,0)$, $(0,1,0)$, $(0,0,1)$, $(1,1,1)$ respectively. Therefore, by Lemma \ref{lemringpart} there is a unique ring isomorphism $f:F(A,B,O,I)\to F(A',B',O',I')$ such that $\mathbb P(f)= \chi_{A'B'O'I'}^{-1}\circ \chi_{ABOI} \circ h$. We define $\ovl{F}$ of the unique morphism from $(A,B,O,I)$ to $(A',B',O',I')$ to be the ring isomorphism $f$. Evidently, $f$ is the identity when $(A,B,O,I)=(A',B',O',I')$.

Given a third quadruple $(A'',B'',O'',I'')$ in $\omega_4$, let $h'$ be the unique element of $H(R_{O'X'Y'})$ which sends the points $(1,0,0)$, $(0,1,0)$, $(0,0,1)$ and $(1,1,1)$ to the points $\chi_{A'B'O'I'}^{-1}(A'')$, $\chi_{A'B'O'I'}^{-1}(B'')$, $\chi_{A'B'O'I'}^{-1}(O'')$ and $\chi_{A'B'O'I'}^{-1}(I'')$ respectively. Let $g:F(A',B',O',I')\to F(A'',B'',O'',I'')$ be the unique ring automorphism such that $\mathbb P(g)= \chi_{A''B''O''I''}^{-1} \circ \chi_{A'B'O'I'} \circ h'$. Let $f^{-1}(h')$ be the matrix we get when we apply $f^{-1}$ to each of the components of the matrix $h'$. Then, the following diagram:
\begin{displaymath}
\xymatrix{
\mathbb P(F(A,B,O,I)) \ar[r]^{f^{-1}(h')} \ar[dr]^{\mathbb P(f)} & 
\mathbb P(F(A,B,O,I)) \ar[r]^h \ar[dr]^{\mathbb P(f)} &
\mathbb P(F(A,B,O,I)) \ar[rr]^-{\chi_{ABOI}} &
&
\ca P \ar[d]^{1_{\ca P}} \\
&
\mathbb P(F(A',B',O',I')) \ar[r]^{h'} \ar[dr]^{\mathbb P(g)} & 
\mathbb P(F(A',B',O',I')) \ar[rr]^-{\chi_{A'B'O'I'}} &
&
\ca P \ar[d]^{1_{\ca P}} \\
&
&
\mathbb P(F(A'',B'',O'',I'')) \ar[rr]^-{\chi_{A''B''O''I''}} &
&
\ca P }
\end{displaymath}
commutes. Notice that $h\circ f^{-1}(h')$ is the unique element of $H(F(A,B,O,I))$ which sends $(1,0,0)$, $(0,1,0)$, $(0,0,1)$ and $(1,1,1)$ to $\chi_{ABOI}^{-1}(A'')$, $\chi_{ABOI}^{-1}(B'')$, $\chi_{ABOI}^{-1}(O'')$ and $\chi_{ABOI}^{-1}(I'')$ respectively. $gf:F(A,B,O,I)\to F(A'',B'',O'',I'')$ is the unique ring automorphism such that $\mathbb P(gf)=\chi_{A''B''O''I''}^{-1} \circ \chi_{ABOI} \circ (h\circ f^{-1}(h'))$. Therefore, $gf:F(A,B,O,I)\to F(A'',B'',O'',I'')$ is the image under $\ovl{F}$ of the unique morphism $(A,B,O,I)$ to $(A'',B'',O'',I'')$. Hence, $\ovl{F}$ respects composition and therefore it is a functor.
\end{proof}

Let $F$ be a functor as in the statement of the above lemma and let $\ovl{F}$ be its extension to a functor with domain $\text{Ind}(\omega_4)$. Then, both $F$ and the extension of $F$ are naturally isomorphic to the constant functor to the colimit of $\ovl{F}$. Hence, we have constructed a local ring $R_F$ without choosing an element of $\omega_4$.

For each quadruple of points $(A,B,O,I)$ in general position, there exists a ring isomorphism $f_{ABOI}: R_F\to F(A,B,O,I)$ which induces an isomorphism of projective planes $\mathbb P(f_{ABOI}): \mathbb P(R_F) \to \mathbb P(F(A,B,O,I))$. Hence, we have an isomorphism of projective planes $\ovl{\chi}_{ABOI}:\mathbb P(R_F)\xrightarrow{\mathbb P(f_{ABOI})}\mathbb P(F(A,B,O,I))\xrightarrow{\chi_{ABOI}} \ca P$. Given a second quadruple $(A',B',O',I')$ in $\omega_4$ and the isomorphism $\ovl{\chi}_{A'B'O'I'}:\mathbb P(R_F)\to \ca P$ let $h$ be the unique element of $H(R_F)$ which maps the points $(1,0,0)$, $(0,1,0)$, $(0,0,1)$ and $(1,1,1)$ to the points $\ovl{\chi}_{A'B'O'I'}^{-1}(A'')$, $\ovl{\chi}_{A'B'O'I'}^{-1}(B'')$, $\ovl{\chi}_{A'B'O'I'}^{-1}(O'')$ and $\ovl{\chi}_{A'B'O'I'}^{-1}(I'')$ respectively. Then, $\ovl{\chi}_{ABOI}h=\ovl{\chi}_{A'B'O'I'}$.

\section{The uniqueness of the local ring}

\begin{defn} \label{defnprojring}
Let $R$ be a local ring and suppose that we have an $\omega_4$-indexed family of projective plane isomorphisms $\mathbb P(R)\to \ca P$:
$$\psi:\omega_4\times \mathbb P(R)\to \ca P,$$
$$\ovl{\psi}:\omega_4\times \ca P\to \mathbb P(R).$$
Suppose that $\psi_{ABOI}: \mathbb P(R)\to \ca P$ maps the points $(1,0,0)$, $(0,1,0)$, $(0,0,1)$ and $(1,1,1)$ to the points $A$, $B$, $O$ and $I$ respectively. Suppose further that given two quadruples $(A,B,O,I)$ and $(A',B',O',I')$ in $\omega_4$ the  induced isomorphism of projective planes $\ovl{\psi}_{A'B'O'I'}\psi_{ABOI}$ is induced by a (unique) element $h$ of the projective linear group $H(R)$. A local ring $R$ with an $\omega_4$-indexed plane isomorphism $\psi$ satisfying the above properties is called \emph{a coordinate ring of $\ca P$}.
\end{defn}

Notice that the local ring $R_F$ described at the end of the previous section is a coordinate ring of $\ca P$ via the isomorphisms $\ovl{\chi}_-$.

Let us give an alternative description of the last condition of the above definition. Consider the $\omega_4\times \omega_4$-indexed projective plane isomorphism $\mathbb P(R)\to \mathbb P(R)$ given by the morphism
$$\tilde{\psi}:\omega_4\times \omega_4\times \mathbb P(R)\to \mathbb P(R),$$
which maps the triple $(A',B',O',I')$, $(A,B,O,I)$, $X$ to $\ovl{\psi}_{A'B'O'I'}\psi_{ABOI}(X)$. The last condition is equivalent to $\tilde{\psi}$ factoring through the left $H(R)$-indexed isomorphism of projective planes $a_{\mathbb P}:H(R)\times \mathbb P(R) \to \mathbb P(R)$ which is the right action of $H(R)$ on $\mathbb P(R)$, i.e. there exists a morphism $t:\omega_4\times \omega_4\to H(R)$ such that the triangle
\begin{displaymath}
\xymatrix{
\omega_4\times \omega_4\times \mathbb P(R) \ar[r]^-{\tilde{\psi}} \ar[d]_{t\times 1_{\mathbb P(R)}} & \mathbb P(R) \\
H(R)\times \mathbb P(R) \ar[ur]_{a_{\mathbb P}}
}
\end{displaymath}
commutes.

\begin{lem} \label{lemprojunique}
Let $R$ and $R'$ both be coordinate rings for a projective plane $\ca P$ via projective plane isomorphisms $\psi_-$ and $\psi'_-$ respectively. Then, $R$ is isomorphic to $R'$.
\end{lem}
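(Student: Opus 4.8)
The plan is to follow the pattern of the corresponding affine result, Lemma \ref{lemaffunique}, with the four standard points in general position playing the role of the three standard points used there.

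First I would fix a quadruple $(A,B,O,I)$ in $\omega_4$ and consider the composite $\mathbb P(R)\to \ca P\to \mathbb P(R')$ given by $\psi'^{-1}_{ABOI}\circ \psi_{ABOI}$ (where $\psi'^{-1}_{ABOI}=\ovl{\psi}'_{ABOI}$). Since both $\psi_{ABOI}$ and $\psi'_{ABOI}$ send $(1,0,0)$, $(0,1,0)$, $(0,0,1)$, $(1,1,1)$ to $A$, $B$, $O$, $I$ respectively, this composite is an isomorphism of projective planes fixing each of the four standard points. By Lemma \ref{lemringpart} there is a unique ring homomorphism $f:R\to R'$ with $\mathbb P(f)=\psi'^{-1}_{ABOI}\psi_{ABOI}$. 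Applying Lemma \ref{lemringpart} also to the inverse isomorphism $\psi^{-1}_{ABOI}\psi'_{ABOI}$ yields $g:R'\to R$ with $\mathbb P(g)=\mathbb P(f)^{-1}$; then $\mathbb P(gf)$ and $\mathbb P(fg)$ are identity maps, so by the uniqueness clause of Lemma \ref{lemringpart} we get $gf=\mathrm{id}_R$ and $fg=\mathrm{id}_{R'}$, hence $f$ is a ring isomorphism.

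The substance of the proof is to show that $f$ does not depend on the chosen quadruple. Given a second quadruple $(A',B',O',I')$ in $\omega_4$, the coordinate-ring hypothesis for $R$ provides a unique $h$ in $H(R)$ whose induced automorphism of $\mathbb P(R)$ equals $\psi^{-1}_{A'B'O'I'}\psi_{ABOI}$; equivalently $\psi_{ABOI}=\psi_{A'B'O'I'}\circ h$, and $h$ is the unique element of $H(R)$ sending the four standard points to $\psi^{-1}_{A'B'O'I'}(A),\psi^{-1}_{A'B'O'I'}(B),\psi^{-1}_{A'B'O'I'}(O),\psi^{-1}_{A'B'O'I'}(I)$. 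Likewise the hypothesis for $R'$ gives $h'$ in $H(R')$ with $\psi'_{ABOI}=\psi'_{A'B'O'I'}\circ h'$. Writing $f'$ for the isomorphism built exactly as above from $(A',B',O',I')$, I would substitute these two relations into $\mathbb P(f)=\psi'^{-1}_{ABOI}\psi_{ABOI}$ to obtain $\mathbb P(f)=h'^{-1}\circ \mathbb P(f')\circ h$, where $h$ and $h'$ now denote the induced automorphisms of $\mathbb P(R)$ and $\mathbb P(R')$.

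The key computation is that applying the ring homomorphism $f'$ entrywise to the matrix $h$ transports the matrix action along $\mathbb P(f')$: for any matrix $M$ over $R$ and any coordinate vector $v$ one has $\mathbb P(f')(M\cdot v)=f'(M)\cdot \mathbb P(f')(v)$, so $\mathbb P(f')\circ h=f'(h)\circ \mathbb P(f')$ as maps $\mathbb P(R)\to\mathbb P(R')$. Evaluating $f'(h)$ on the four standard points and using $\mathbb P(f')=\psi'^{-1}_{A'B'O'I'}\psi_{A'B'O'I'}$ (which fixes the standard points) shows that $f'(h)$ sends them to $\psi'^{-1}_{A'B'O'I'}(A),\ldots,\psi'^{-1}_{A'B'O'I'}(I)$, which are precisely the images defining $h'$; by the uniqueness in Lemma \ref{lemmatrixauto}, $f'(h)=h'$. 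Hence $\mathbb P(f)=h'^{-1}\circ f'(h)\circ \mathbb P(f')=\mathbb P(f')$, and since a ring homomorphism is determined by its induced map on projective planes (Lemma \ref{lemringpart}), $f=f'$. Thus $f$ is a well-defined ring isomorphism $R\to R'$ independent of all choices, proving the lemma. I expect the main obstacle to be exactly this independence step, namely keeping the three interacting group elements $h$, $h'$, $f'(h)$ straight and verifying $f'(h)=h'$ through the commutation of $\mathbb P$ with the matrix action.
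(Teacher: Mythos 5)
Your proof is correct and takes essentially the same approach as the paper: fix one quadruple, obtain $f$ from Lemma \ref{lemringpart}, then prove independence of the quadruple by combining the coordinate-ring hypothesis with the uniqueness in Lemma \ref{lemmatrixauto} and the intertwining identity $\mathbb P(f')\circ h = f'(h)\circ \mathbb P(f')$ — this is exactly the paper's commutative-diagram chase, merely rewritten as an algebraic cancellation with your $h$, $h'$ being the inverses of the paper's. Your explicit check that $f$ is invertible (via Lemma \ref{lemringpart} applied to the inverse isomorphism and its uniqueness clause) is a small detail the paper leaves implicit, and it is a welcome addition.
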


\begin{proof}
Pick $(A,B,O,I)$ in $\omega_4$. Then, we have isomorphisms $\psi_{ABOI}: \mathbb P(R)\to \ca P$ and $\psi'_{ABOI}: \mathbb P(R')\to \ca P$. $\psi'^{-1}_{ABOI}\psi_{ABOI}$ maps $(1,0,0)$, $(0,1,0)$, $(0,0,1)$ and $(1,1,1)$ to $(1,0,0)$, $(0,1,0)$, $(0,0,1)$ and $(1,1,1)$ respectively. Hence, by Lemma \ref{lemringpart} there exists a unique ring isomorphism $f:R\to R'$ such that $\mathbb P(f)=\psi'^{-1}_{ABOI}\psi_{ABOI}$.

Given a second quadruple of points $(A',B',O',I')$ in $\omega_4$. Let $h$ be the unique element of $H(R)$ which sends the points $(1,0,0)$, $(0,1,0)$, $(0,0,1)$ and $(1,1,1)$ to $\psi^{-1}_{ABOI}(A')$, $\psi^{-1}_{ABOI}(B')$, $\psi^{-1}_{ABOI}(O')$ and $\psi^{-1}_{ABOI}(I')$ respectively. Let $f(h)$ be the element of $H(R')$ represented by the matrix we get when we apply $f$ to each of the components of a matrix representing $h$ (and notice that this morphism does not depend on the choice of representative of $h$). Then $f(h)$ is the unique element of $H(R')$ sending $(1,0,0)$, $(0,1,0)$, $(0,0,1)$ and $(1,1,1)$ to $\psi'^{-1}_{ABOI}(A')$, $\psi'^{-1}_{ABOI}(B')$, $\psi'^{-1}_{ABOI}(O')$ and $\psi'^{-1}_{ABOI}(I')$ respectively. Hence we have the following commutative diagram
\begin{displaymath}
\xymatrix{
\mathbb P(R) \ar[r]^h \ar[d]^{\mathbb P(f)} & \mathbb P(R) \ar[rr]^{\psi_{ABOI}} \ar[d]^{\mathbb P(f)} && \ca P \ar[d]^{1_{\ca P}} \\
\mathbb P(R') \ar[r]^{f(h)} & \mathbb P(R') \ar[rr]^{\psi'_{ABOI}} && \ca P
}
\end{displaymath}
where the top row composes to $\psi_{A'B'O'I'}$ and bottom row composes to $\psi'_{A'B'O'I'}$. Hence, $f$ is the unique ring homomorphism such that $\mathbb P(f)=\psi'^{-1}_{A'B'O'I'}\psi'_{A'B'O'I'}$. Therefore the ring isomorphism $f:R\to R'$ does not depend on the choice of element of $\omega_4$.
\end{proof}

\begin{rmk}
Let $\ca P$ be a projective plane in a topos $\ca E$ and let $R$ and $R'$ be local rings in $\ca E$. Then $(\ca P,\pi_2)$  is a projective plane in $\ca E/\omega_4$ and $(R,\pi_2)$ and $(R',\pi_2)$ are local rings in $\ca E/\omega_4$. Suppose that in $\ca E$, for each $(A,B,O,I)$ in $\omega_4$, we have a projective plane isomorphism $\psi_{ABOI}: \mathbb P(R)\to \ca P$ which maps the points $(1,0,0)$, $(0,1,0)$, $(0,0,1)$ and $(1,1,1)$ to the points $A$, $B$, $O$ and $I$ respectively. This is an isomorphism of projective planes in $\ca E/\omega_4$ from $\mathbb P(R,\pi_2)$ to $(\ca P,\pi_2)$. Suppose that we also have such an isomorphism of projective planes from $\mathbb P(R',\pi_2)$ to $(\ca P,\pi_2)$. Then, we have an isomorphism of projective planes from $\mathbb P(R,\pi_2)$ to $\mathbb P(R',\pi_2)$ which maps the points $(1,0,0)$, $(0,1,0)$, $(0,0,1)$ and $(1,1,1)$ to the points $(1,0,0)$, $(0,1,0)$, $(0,0,1)$ and $(1,1,1)$ respectively. Hence, by Lemma \ref{lemringpart} it is induced by a unique ring isomorphism $(R,\pi_2)\to (R',\pi_2)$ in $\ca E/\omega_4$. The final condition of Definition \ref{defnprojring} (which in the notation used there states that $\ovl{\psi}_{A'B'O'I'}\psi_{ABOI}$ is in $H(R)$) makes sure that this ring isomorphism is induced by a ring isomorphism in $\ca E$.
\end{rmk}

\section{The $H$-torsor}

We have an isomorphism from the underlying object of the group $H(R)$ to $\omega_4(R)$ which sends an element $h$ of $H(R)$ to the quadruple of the points $h \begin{pmatrix} 1 \\ 0 \\ 0 \end{pmatrix}$, $ h\begin{pmatrix} 0 \\ 1 \\ 0 \end{pmatrix}$, $h\begin{pmatrix} 0 \\ 0 \\1 \end{pmatrix}$ and $h\begin{pmatrix} 1 \\ 1 \\ 1 \end{pmatrix}$. Let $R$ be a coordinate ring for $\ca P$ via projective isomorphisms $\psi_-$. Consider the morphism $\omega_4\times H(R)\to \omega_4$, sending the pair of $(A,B,O,I)$ in $\omega_4$ and $h$ in $H(R)$ to $\psi_{ABOI}(h)$ (where we view $h$ as an element $\omega_4(R)$).

\begin{lem} \label{lemprojtors}
The morphism $\omega_4\times H(R)\to \omega_4$ described above is a right $H(R)$-torsor.
\end{lem}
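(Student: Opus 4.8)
The statement to prove is Lemma \ref{lemprojtors}: the morphism $\omega_4 \times H(R) \to \omega_4$ sending $((A,B,O,I),h)$ to $\psi_{ABOI}(h)$ is a right $H(R)$-torsor. To establish this I must verify three things: (1) the map really is a right action of the group $H(R)$ on $\omega_4$; (2) the action is free and transitive in the internal (constructive) sense required for a torsor, namely that the shear map $\omega_4 \times H(R) \to \omega_4 \times \omega_4$ sending $((A,B,O,I),h)$ to $((A,B,O,I), \psi_{ABOI}(h))$ is an isomorphism; and (3) that $\omega_4$ is well-supported (inhabited), which follows immediately from Lemma \ref{lemomega4}. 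The plan closely parallels the affine case already carried out in Theorem \ref{thrmgtorsor}, so I would organize the computation the same way.

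**The action axioms.** First I would unwind the notation: viewing $h \in H(R)$ as the quadruple of points $\bigl(h(1,0,0)^T, h(0,1,0)^T, h(0,0,1)^T, h(1,1,1)^T\bigr)$ in $\omega_4(R)$ via the isomorphism $H(R) \cong \omega_4(R)$, the expression $\psi_{ABOI}(h)$ means applying the projective plane isomorphism $\psi_{ABOI}: \mathbb{P}(R) \to \ca P$ to this quadruple. To check that the identity of $H(R)$ acts trivially, note that the identity corresponds to the standard quadruple $\bigl((1,0,0),(0,1,0),(0,0,1),(1,1,1)\bigr)$, and by the defining property of $\psi_{ABOI}$ this is sent precisely to $(A,B,O,I)$. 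For the associativity/compatibility axiom, given $h, h' \in H(R)$ I would show
$$\psi_{ABOI}(h)\cdot h' = \psi_{ABOI}(h h').$$
The key mechanism is the relation $\psi_{A'B'O'I'} = \psi_{ABOI} \circ h$ whenever $(A',B',O',I') = \psi_{ABOI}(h)$, which holds because $\psi_{ABOI}\circ h$ is a projective isomorphism $\mathbb P(R)\to \ca P$ sending $(1,0,0),(0,1,0),(0,0,1),(1,1,1)$ to $A',B',O',I'$, and such an isomorphism is unique by Theorem \ref{thrmprojcor1}. Composing this identity with the left $H(R)$-action on $\mathbb P(R)$ and using that the $H(R) \to \omega_4(R)$ correspondence intertwines group multiplication with the action (the remark following Lemma \ref{lemmatrixauto}) gives the desired equality. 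This is the exact analogue of the computation displayed in the proof of Theorem \ref{thrmgtorsor}.

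**Freeness and transitivity.** For the torsor property I would produce the inverse of the shear map explicitly. Given two quadruples $(A,B,O,I)$ and $(A',B',O',I')$ in $\omega_4$, both isomorphisms $\psi_{ABOI}$ and $\psi_{A'B'O'I'}$ into $\ca P$ are available, so the composite $\ovl\psi_{ABOI}\,\psi_{A'B'O'I'}$ is an automorphism of $\mathbb P(R)$, and by the coordinate-ring hypothesis (Definition \ref{defnprojring}, via the factorization morphism $t:\omega_4\times\omega_4 \to H(R)$) this automorphism is induced by a unique element $h \in H(R)$. This $h$ is exactly the unique group element carrying $(A,B,O,I)$ to $(A',B',O',I')$, which simultaneously gives transitivity (such an $h$ exists) and freeness (it is unique). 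Writing $t(A,B,O,I,A',B',O',I')$ for this element gives a morphism $\omega_4 \times \omega_4 \to \omega_4 \times H(R)$ that is a two-sided inverse to the shear map, completing the verification.

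**The main obstacle.** The genuinely delicate point is not any single computation but ensuring all the arguments are phrased as morphisms of the topos (not merely pointwise claims about elements), since this is what the later chapters require; in particular the uniqueness invoked to define $t$ must come from the coordinate-ring structure of Definition \ref{defnprojring} rather than from choosing points, so that $t$ is a genuine morphism. I expect the cleanest route is to cite the isomorphism $H(R) \cong \omega_4(R)$ together with Theorem \ref{thrmprojcor1} to reduce the whole statement to the internal freeness/transitivity of the $H(R)$-action on $\omega_4(R)$, which is essentially the torsor statement already recorded for $\omega_4(R)$ earlier, transported along the $\omega_4$-indexed family $\psi_-$.
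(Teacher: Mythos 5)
Your proof is correct and essentially identical to the paper's: the unit and associativity laws are verified via the relation $\psi_{A'B'O'I'}=\psi_{ABOI}\circ h$ for $(A',B',O',I')=\psi_{ABOI}(h)$, and freeness together with transitivity follows from the existence of a unique element of $H(R)$ carrying one quadruple in general position to another, which is exactly Lemma \ref{lemmatrixauto} (equivalently, the factorization morphism $t$ of Definition \ref{defnprojring} that you use to invert the shear map). One small correction: the strict uniqueness needed in your associativity step comes from the coordinate-ring condition of Definition \ref{defnprojring} combined with Lemma \ref{lemmatrixauto}, not from Theorem \ref{thrmprojcor1} alone, which only gives uniqueness up to a ring automorphism of $\text{Tp}$ --- but since you invoke the coordinate-ring hypothesis where it matters, the argument stands.
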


\begin{proof}
Let $(A,B,O,I)$ be in $\omega_4$ and let $h$ be in $H(R)$. Then, $\psi_{ABOI}(h)$ is the quadruple of the points $A'=\psi_{ABOI}(h\begin{pmatrix} 1 \\ 0 \\ 0 \end{pmatrix})$,
$B'=\psi_{ABOI}(h\begin{pmatrix} 0 \\ 1 \\ 0 \end{pmatrix})$,
\newline
$O'=\psi_{ABOI}(h\begin{pmatrix} 0 \\ 0 \\1 \end{pmatrix})$ and
$I'=\psi_{ABOI}(h\begin{pmatrix} 1 \\ 1 \\ 1 \end{pmatrix})$. Notice that when $h$ is the identity of the group, then $\psi_{ABOI}(h)=(A,B,O,I)$. Also given $h'$ in $H(R)$, then \begin{displaymath}
\begin{split}
((A,B,O,I)\cdot h)\cdot h' & =(A',B',O',I')\cdot h' \\
& =\psi_{A'B'O'I'}(h') \\
& =\psi_{ABOI}(h(h')) \\
& =\psi_{ABOI}(hh') \\
& =(A,B,O,I)\cdot (hh').
\end{split}
\end{displaymath}
Hence, the described morphism is a right $H(R)$-action on $\omega_4$.

Let $(A,B,O,I)$ and $(A',B',O',I')$ be in $\omega_4$ and let $h$ be in $H(R)$. Then,
\begin{displaymath}
\begin{split}
(A,B,O,I)\cdot h &=(A',B',O',I') \text{ iff} \\
\psi_{ABOI}\circ h &=\psi_{A'B'O'I'}
\end{split}
\end{displaymath}
which is true iff $h$ is the element of $H(R)$ which maps  $(1,0,0)$, $(0,1,0)$, $(0,0,1)$ and $(1,1,1)$ to $\psi_{ABOI}^{-1}(A')$, $\psi_{ABOI}^{-1}(B')$, $\psi_{ABOI}^{-1}(O')$ and $\psi_{ABOI}^{-1}(I')$ respectively. By Lemma \ref{lemmatrixauto} there exists unique $h$ in $H(R)$, hence the described action is a right $H(R)$-torsor.
\end{proof}

\begin{rmk}
Let $R$ and $R'$ both be coordinate rings for a projective plane $\ca P$ via projective plane isomorphisms $\psi_-$ and $\psi'_-$ respectively. The ring isomorphism $R\to R'$ constructed in Lemma \ref{lemprojunique} induces a group isomorphism $H(R)\to H(R')$. Then, the right $H(R')$-action on $\omega_4$ described in the above lemma induces a new right $H(R)$-action on $\omega_4$ via this isomorphism $H(R)\to H(R')$. By the properties of $R$, $R'$, $\psi_-$ and $\psi'_-$ , this new $H(R)$-action is isomorphic to the one constructed in the above lemma.
\end{rmk}

Let us summarize. Let $\ca P$ be a projective plane in a topos $\ca E$. By Theorem \ref{thrmprojcor2}, there exists a functor from the discrete category $\omega_4$ to the category $\mathbf{LocRing}(\ca E)$. By Lemma \ref{lemindcat}, there exists an extension of this functor to a functor from $\text{Ind}(\omega_4)$ to $\mathbf{LocRing}(\ca E)$. $\omega_4$ is well-supported by Lemma \ref{lemomega4}, hence we can define a local ring without a choice of an element of $\omega_4$ by taking the colimit of the underlying internal diagram $\text{Ind}(\omega_4) \to \ca E$. We denote this local ring as $R_{\ca P}$ and we know that it is a coordinate ring for $\ca P$. By Lemma \ref{lemprojunique}, the coordinate ring of $\ca P$ is unique up to a chosen isomorphism. Since these properties which make $R_{\ca P}$ a coordinate ring are preserved by inverse images of geometric morphisms, we see that the construction of the local ring $R_{\ca P}$ is preserved by inverse images of geometric morphisms. Also, by Lemma \ref{lemprojtors}, we have a right $H(R_{\ca P})$-action on $\omega_4$ which makes $\omega_4$ an $H(R_{\ca P})$-torsor.

\part{Classifying toposes}




\chapter{Discussion on Diaconescu's theorem} \label{chaDiac}

Let $\mathbb C$ be an internal category in a topos $\ca S= \set[\mathbb T]$ (the classifying topos of a geometric theory $\mathbb T$). Let $[\mathbb C,\ca S]$ be the topos of internal diagrams in $\ca S$. The goal of this chapter is to give a geometric theory classified by the topos $[\mathbb C, \ca S]$. 

\section{Background and notation}

We consider a coherent theory of \emph{categories} as in \cite{Lawverecategories}. It consists of:
\begin{enumerate}
\item a sort $O$ (objects),
\item a sort $M$ (morphisms),
\item  two functions $\text{dom}, \text{cod}:M\to O$,
\item a function $\text{id}:O\to M$,
\item a ternary relation $T$ on $M$ (to be interpreted as $T(x,y,z)$ when $x$, $y$ are composable and $z$ is the composite "$y\circ x$").
\end{enumerate}

The axioms are:
\begin{enumerate}
\item $\top\vdash_a \text{dom}(\text{id}(a))=a$,
\item $\top\vdash_a \text{cod}(\text{id}(a))=a$,
\item $T(x,y,z)\vdash_{x,y,z} \text{dom}(x)=\text{cod}(y)$,
\item $T(x,y,z)\vdash_{x,y,z} \text{cod}(x)=\text{cod}(z)$,
\item $T(x,y,z)\vdash_{x,y,z} \text{dom}(y)=\text{cod}(z)$,
\item $T(x,y,z)\wedge T(x,y,w) \vdash_{x,y,z,w} (z=w)$,
\item $\text{dom}(x) = \text{cod}(y)\vdash_{x,y}(\exists z)T(x,y,z)$,
\item $\top\vdash_{x} T(x,\text{id}(\text{dom}(x)),x)$,
\item $\top\vdash_{x} T(\text{id}(\text{cod}(x)),x,x)$,
\item $T(x,y,p)\wedge T(y,z,q) \wedge T(p,z,r) \wedge T(x,p,s) \vdash_{x,y,z,p,q,r,s} r=s$.
\end{enumerate}

Notice that in a topos $\ca S$, an internal category is exactly a model of the theory of categories, and an internal functor is exactly a morphism of models.

Let us consider an internal category $\mathbb C$ in a topos $\ca S$ and fix the notation of this chapter. Following the notation from \cite[2.1]{PTJtopos}, $\mathbb C$ consists of the objects:
\begin{itemize}
 \item  $C_0$ as its object of objects,
\item $C_1$ as its object of morphisms,
\item $C_2=C_1\times_{C_0} C_1$ as its object of composable pairs of morphisms (i.e. the following square:
\begin{displaymath}
 \xymatrix{
C_2 \ar[d]^{\pi_1} \ar[r]^-{\pi_2} & C_1 \ar[d]^{d_0} \\
C_1 \ar[r]^{d_1} & C_0
}
\end{displaymath}
is a pullback),
\end{itemize}
and the morphisms:
\begin{itemize}
 \item  $d_0, d_1:C_1\to C_0$ as codomain and domain,
\item $i:C_0\to C_1$ as the inclusion of identities,
\item $m:C_2\to C_1$ as the composite of a composable pair.
\end{itemize}


Given an internal category $\mathbb C$ as above, we obtain the dual category $\mathbb C^{\text{op}}$ by interchanging $d_0$ with $d_1$.

We use the notion of filtered categories as defined in \cite{PTJtopos}.
\begin{defn} \label{defnfiltered}
An internal category $\mathbb C$ in a topos $\ca S$ is \emph{filtered} when the following conditions are satisfied:
\begin{enumerate}
\item $C_0\to 1$ is a cover,
\item the map $(d_0\pi_1,d_0\pi_2):P\to C_0\times C_0$ is a cover, where $P$ is the pullback
\begin{displaymath}
 \xymatrix{
P \ar[d]^{\pi_1} \ar[r]^-{\pi_2} & C_1 \ar[d]^{d_1} \\
C_1 \ar[r]^{d_1} & C_0 ,
}
\end{displaymath}
\item the map $(\pi_1\pi_1, \pi_1\pi_2):T\to R$ is a cover, where $R$ and $T$ are the pullbacks
\begin{displaymath}
 \xymatrix{
R \ar[d]^{\pi_1} \ar[r]^-{\pi_2} & C_1 \ar[d]^{(d_0,d_1)} \\
C_1 \ar[r]^{(d_0,d_1)} & C_0\times C_0
}
\text{and }
\xymatrix{
T \ar[d]^{\pi_1} \ar[r]^-{\pi_2} & C_2 \ar[d]^{(\pi_2,m)} \\
C_2 \ar[r]^-{(\pi_2,m)} & C_1 \times C_1.
}
\end{displaymath}
\end{enumerate}
\end{defn}

The theory of \emph{filtered categories} is a quotient of the theory of categories given above, with the following three extra axioms:
\begin{enumerate}
\item $\top\vdash (\exists a:O)\top$,
\item $\top \vdash_{a,b} (\exists x,y) (\text{dom}(x)=a)\wedge(\text{dom}(y)=b)\wedge(\text{cod}(x)=\text{cod}(y))$,
\item $(\text{dom}(x)=\text{dom}(y))\wedge (\text{cod}(x)=\text{cod}(y))\vdash_{x,y}(\exists z,w) T(x,z,w) \wedge T(y,z,w)$.
\end{enumerate}

It is easily verified that the internal filtered categories correspond to models of the theory of filtered categories via the correspondence between internal categories in a topos and models of the theory of categories.

An internal diagram $(f,\ovl{\phi})$ on $\mathbb C$ consists of a morphism $f:F\to C_0$ in $\ca S$ and an arrow $\ovl{\phi} :C_1\times_{C_0} F\to F$, where
\begin{displaymath}
 \xymatrix{
C_1\times_{C_0} F \ar[d]^{\pi_1} \ar[r]^-{\pi_2} & F \ar[d]^f \\
C_1 \ar[r]^{d_1} & C_0
}
\end{displaymath}
is a pullback, the square
\begin{displaymath}
 \xymatrix{
C_1\times_{C_0} F \ar[d]^{\pi_1} \ar[r]^-{\ovl{\phi}} & F \ar[d]^f \\
C_1 \ar[r]^{d_0} & C_0
}
\end{displaymath}
commutes,
\begin{displaymath}
 \xymatrix{
F \ar[r]^-{(s_0f,1_F)} & C_1\times_{C_0} F \ar[r]^-{\ovl{\phi}} & F
}
\end{displaymath}
is the identity and the two composites
\begin{displaymath}
 \xymatrix{
C_1\times_{C_0} C_1 \times_{C_0}  F \ar@<-.5ex>[r]_-{d_1\times 1} \ar@<.5ex>[r]^-{1\times \ovl{\phi}}& C_1\times_{C_0} F \ar[r]^---{\ovl{\phi}} & F
}
\end{displaymath}
are equal.

An internal diagram on $\mathbb C^{\text{op}}$ is defined in the same way as above but we interchange $d_0$ with $d_1$.

A natural transformation from a $\mathbb C$-diagram $(f:F\to C_0,\ovl{\phi} :C_1\times_{C_0} F\to F)$ to a $\mathbb C$-diagram $(f':F'\to C_0,\ovl{\phi'} :C_1\times_{C_0} F'\to F')$ is a morphism $a:F\to F'$ such that the following two triangles
\begin{displaymath}
\xymatrix{
F \ar[r]^{a} \ar[d]^f & F' \ar[dl]^{f'} \\
C_0 & &, & 
}
\xymatrix{
C_1\times_{C_0} F \ar[r]^{{C_1}\times_{C_0} a} \ar[d]^f & C_1\times_{C_0} F'  \ar[d]^{f'}\\
F \ar[r]^a & F'
}
\end{displaymath}
commute.

The category of $\mathbb C$-diagrams in $\ca S$ and natural transformations of diagrams is denoted by $[\mathbb C,\ca S]$, and it is a topos (see \cite[2.33]{PTJtopos}).

\begin{defn}
An internal functor $f:\mathbb G\to \mathbb H$ in a topos is a \emph{discrete opfibration} when the square
\begin{displaymath}
 \xymatrix{
G_1 \ar[r]^{d_1} \ar[d]^{f_1} & G_0 \ar[d]^{f_0} \\
H_1 \ar[r]^{d_1} & H_0
}
\end{displaymath}
is a pullback. $f$ is a \emph{discrete fibration} when the square
\begin{displaymath}
 \xymatrix{
G_1 \ar[r]^{d_0} \ar[d]^{f_1} & G_0 \ar[d]^{f_0} \\
H_1 \ar[r]^{d_0} & H_0
}
\end{displaymath}
is a pullback.
\end{defn}

We denote the category of internal categories and discrete opfibrations between them as  $\mathbf{doFib}(\ca S)$. Then, for any internal category $\mathbb C$ in $\ca S$, the slice category $\mathbf{doFib}(\ca S)/{\mathbb C}$ is equivalent to $[\mathbb C,\ca S]$ as described in \cite[B2.5.3]{Elephant1}. Let $\mathbf{dFib}(\ca S)$ be the category of internal categories and discrete fibrations between them. Then, the category $\mathbf{dFib}(\ca S)/{\mathbb C}$ is equivalent to $[\mathbb C^{\text{op}},\ca S]$.

\begin{defn}
Given an internal category $\mathbb C$ in a topos $\ca S$, a \emph{$\mathbb C$-torsor} in $\ca S$ is a $\mathbb C^{\text{op}}$-diagram in $\ca S$ which corresponds via the above equivalence to a discrete fibration whose source is a filtered category.
\end{defn}

The category $\mathbf{Tors}(\mathbb C, \ca S)$ is the full subcategory of $[\mathbb C^{\text{op}},\ca S]$ whose objects are $\mathbb C$-torsors. Notice that $\mathbf{Tors}(\mathbb C, \ca S)$ is equivalent to the full subcategory of $\mathbf{dFib}(\ca S)/{\mathbb C}$ whose objects are discrete fibrations $\mathbb F\to \mathbb C$ such that $\mathbb F$ is filtered as an internal category in $\ca S$. We denote this subcategory of $\mathbf{dFib}(\ca S)/{\mathbb C}$ as $\mathbf{fidFib}(\mathbb C, \ca S)$. 

Notice that we have constructed the object part of a pseudofunctor $\mathbf{Cat}(\ca S)\to \mathfrak{Top}$ which sends an internal category $\mathbb C$ to the topos $[\mathbb C,\ca S]$ or equivalently to the topos $\mathbf{dFib}(\ca S)/\mathbb C$. In this chapter, we prefer to use discrete opfibrations instead of internal diagrams, hence we shall express the action of the functor on morphisms of discrete opfibrations. Given an internal functor $F:\mathbb C\to \mathbb D$, the pullback functor $F^*:\mathbf{Cat}(\ca S)/\mathbb D\to \mathbf{Cat}(\ca S)/\mathbb C$  sends discrete opfibrations to discrete opfibrations, hence we have a functor $F^*:\mathbf{dFib}(\ca S)/\mathbb D \to \mathbf{dFib}(\ca S)/\mathbb C$. This functor has both a left and a right adjoint as explained in \cite[B2.5]{Elephant1}. Hence $F^*$ is the inverse image of a geometric morphism. By considering the inverse images of the geometric morphisms it is clear that this assignment is pseudofunctorial, hence we have constructed a pseudofunctor $\mathbf{Cat}(\ca S)\to \mathfrak{Top}$.

There is a unique (internal) functor from $\mathbb C$ to the terminal (internal) category $\mathbf 1$ of $\ca S$. This functor induces a geometric morphism $p_{\mathbb C}: [\mathbb C, \ca S]\to \ca S$ by identifying $[\mathbf 1, \ca S]$ with $\ca S$. Hence, we can view the above pseudofunctor as a pseudofunctor $\mathbf{Cat}(\ca S)\to \mathfrak{Top}/\ca S$.

\section{Diaconescu's theorem}
Diaconescu's theorem as given in \cite[B3.2.7]{Elephant1} states the following:

\begin{thrm} \label{thrmDiac1}
 Let $\ca S$ be a topos, $\mathbb C$ an internal category in $\ca S$ and $f:\ca E \to \ca S$ a geometric morphism. Then there is an equivalence of categories
$$ \mathfrak{Top}/\ca S(\ca E,[\mathbb C,\ca S])\simeq \mathbf{Tors}(f^*(\mathbb C), \ca E),$$
which is natural in $\ca E$, in the sense that, if $g:\ca F\to \ca E$ is a geometric morphism over $\ca S$, then the square
\begin{displaymath}
\xymatrix{
\mathfrak{Top}/\ca S(\ca E, [\mathbb C, \ca S]) \ar[r]^{(-)\circ g} \ar[d]^{\simeq} & \mathfrak{Top}/\ca S(\ca F, [\mathbb C, \ca S]) \ar[d]^{\simeq} \\
\mathbf{Tors}(f^*(\mathbb C), \ca E) \ar[r]^-{g^*} & \mathbf{Tors}(g^*(f^*(\mathbb C)),\ca F)
}
\end{displaymath}
commutes up to coherent natural isomorphism.
\end{thrm}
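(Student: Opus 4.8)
The plan is to exhibit $[\mathbb C,\ca S]$ as the internal presheaf topos on $\mathbb C^{\text{op}}$ with the trivial coverage, and then to run the relative, site-theoretic version of the slogan ``geometric morphisms into a presheaf topos $=$ flat functors''. First I would isolate a \emph{generic torsor}: the tautological $p_{\mathbb C}^*(\mathbb C)$-torsor $\mathbb G$ living in $[\mathbb C,\ca S]$, which under the equivalence $[\mathbb C,\ca S]\simeq\mathbf{doFib}(\ca S)/\mathbb C$ recalled above corresponds to the canonical diagrammatic data. Given a geometric morphism $g\colon\ca E\to[\mathbb C,\ca S]$ over $\ca S$, applying $g^*$ to $\mathbb G$ and transporting along the coherence isomorphism $g^*p_{\mathbb C}^*(\mathbb C)\cong f^*(\mathbb C)$ produces an $f^*(\mathbb C)$-torsor $g^*(\mathbb G)$ in $\ca E$; since $g^*$ preserves finite limits and colimits it carries discrete fibrations with filtered source to discrete fibrations with filtered source, so $g^*(\mathbb G)$ genuinely lies in $\mathbf{Tors}(f^*(\mathbb C),\ca E)$. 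A $2$-cell $g\Rightarrow g'$, that is a natural transformation $g^*\Rightarrow g'^*$, evaluates at $\mathbb G$ to a morphism of torsors, so the assignment is functorial and defines the comparison functor.

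For the quasi-inverse I would use a tensor (coend) construction. Given a torsor $T$ over $f^*(\mathbb C)$ in $\ca E$, define a functor on $[\mathbb C,\ca S]$ by the internal colimit formula $g^*(X)=f^*(X)\otimes_{f^*(\mathbb C)}T$, the internal coend of the $\mathbb C$-diagram $f^*(X)$ against the $\mathbb C^{\text{op}}$-diagram $T$. Being defined by a colimit, $g^*$ automatically preserves all colimits and hence has a right adjoint, yielding a geometric morphism $\ca E\to[\mathbb C,\ca S]$; the constant-diagram case supplies the coherence isomorphism $p_{\mathbb C}\circ g\cong f$, so that $g$ lives over $\ca S$. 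That the two constructions are mutually quasi-inverse follows because every object of $[\mathbb C,\ca S]$ is an internal colimit of representable diagrams and $g^*$ preserves colimits, so $g^*$ is recovered from its value $g^*(\mathbb G)=T$ on the generic torsor, while conversely the tensor of $\mathbb G$ with $f^*(X)$ returns $X$.

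Naturality in $\ca E$ amounts to checking that for $g\colon\ca F\to\ca E$ the inverse image $g^*$ commutes, up to coherent isomorphism, with the tensor formula and with $f^*$; this holds because $g^*$ preserves finite limits and colimits, so pulling a torsor back along $g$ corresponds exactly to precomposing the associated geometric morphism with $g$. Throughout I would phrase torsors as filtered discrete fibrations via the equivalence $\mathbf{Tors}(\mathbb C,\ca S)\simeq\mathbf{fidFib}(\mathbb C,\ca S)$ already noted, which keeps the bookkeeping close to the internal-category data $C_0,C_1,C_2,d_0,d_1,i,m$.

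The hard part will be verifying that the tensor functor $g^*(X)=f^*(X)\otimes_{f^*(\mathbb C)}T$ preserves \emph{finite limits}: this is precisely the point at which the torsor condition (filteredness of the source of the discrete fibration, equivalently flatness of $T$) is indispensable, and it must be established internally in $\ca E$ rather than pointwise. Concretely it reduces to the internal statement that filtered colimits commute with finite limits, proved in the relative setting over an arbitrary base through the internal logic and the explicit covering conditions of Definition \ref{defnfiltered}. The remaining verifications — functoriality on $2$-cells, the triangle identities for the adjunction, and the $2$-categorical coherence of the naturality square — are comparatively routine once this commutation is in hand.
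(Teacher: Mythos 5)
You should first be aware that the paper does not prove Theorem \ref{thrmDiac1} at all: it is quoted verbatim from \cite[B3.2.7]{Elephant1} and used as a black box, the paper's only contributions nearby being the restatement in terms of filtered discrete fibrations (Theorem \ref{thrmDiac2}) and the explicit identification of the generic torsor $Y(\mathbb C)$ and of the functors $\Phi$, $\Psi$. Your proposal reconstructs the architecture of the cited proof itself: generic torsor, $g\mapsto g^*(Y(\mathbb C))$ in one direction, the internal tensor $f^*(-)\otimes_{f^*(\mathbb C)}T$ in the other, co-Yoneda identifications for the triangle identities, and filteredness/flatness as the source of left exactness. As a route this is the correct one --- precisely the one the paper's citation points at --- and you correctly locate the crux in the internal commutation of filtered colimits with finite limits.

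Two steps as written would not survive over an arbitrary base $\ca S$ and need repair. First, ``being defined by a colimit, $g^*$ automatically preserves all colimits and hence has a right adjoint'' invokes an adjoint functor theorem that is not available for toposes over an arbitrary base; in the standard proof the right adjoint is constructed explicitly (an internal end/hom of equivariant maps out of $T$), or one restricts to bounded $\ca S$-toposes where a relative adjoint functor theorem applies --- which would suffice here, since the Introduction restricts attention to bounded toposes over the base. Second, the deferred internal statement that filtered colimits commute with finite limits is not a routine appendix but the bulk of the actual work: it must be deduced in the internal logic from the three covering conditions of Definition \ref{defnfiltered}, since pointwise arguments are unavailable over a general $\ca S$, and together with the verification that torsors are exactly the diagrams whose tensor functor is left exact it occupies most of the corresponding section of \cite{Elephant1}. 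A minor further omission: full faithfulness of the comparison on $2$-cells needs the density argument made explicit --- a geometric transformation over $\ca S$ is determined by its component at $Y(\mathbb C)$ because every object of $[\mathbb C,\ca S]$ is an internal colimit of representables and inverse images preserve that decomposition; you state the object-level consequence but not this.
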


Let $Y(\mathbb C)$ be the Yoneda profunctor $\mathbb C \looparrowright \mathbb C$ in $\ca S$ viewed as a diagram of shape $\mathbb C^{\text{op}}$ in $[\mathbb C,\ca S]$, as described in \cite[B2.7.2]{Elephant1}. $Y(\mathbb C)$ is a $p^*_{\mathbb C}(\mathbb C)$-torsor in $[\mathbb C,\ca S]$ and the above equivalence sends a geometric morphism $g: \ca E \to [\mathbb C,\ca S]$ over $\ca S$ to the $g^*p_{\mathbb C}^*(\mathbb C)$-torsor $g^*(Y(\mathbb C))$ in $\ca E$.
The $p^*_{\mathbb C}(\mathbb C)$-torsor $Y(\mathbb C)$ corresponds to a discrete fibration. We will denote this discrete fibration by the internal functor $\mathbb F\to p^*_{\mathbb C}(\mathbb C)$ (where $\mathbb F$ is filtered). We will denote $\mathbb F$'s object of objects as $F_0$, its object of morphisms as $F_1$ and the components of the functor $r:\mathbb F \to p^*_{\mathbb C}(\mathbb C)$ as $r_0:F_0\to p^*_{\mathbb C}(C_0)$ and $r_1:F_1\to p^*_{\mathbb C}(C_1)$.

We have mentioned the equivalence $\mathbf{Tors}(f^*(\mathbb C), \ca E)\simeq \mathbf{fidFib}(f^*(\mathbb C), \ca E)$. In this chapter it is more convenient to work with discrete fibrations whose source is a filtered category instead of torsors. Hence, we restate Diaconescu's theorem in the following form:

\begin{thrm} \label{thrmDiac2}
Let $\ca S$ be a topos, $\mathbb C$ an internal category in $\ca S$ and $f:\ca E \to \ca S$ a geometric morphism. Then there is an equivalence of categories
$$ \mathfrak{Top}/\ca S(\ca E,[\mathbb C,\ca S])\simeq \mathbf{fidFib}(f^*(\mathbb C), \ca E),$$
which is natural in $\ca E$, in the sense that, if $g:\ca F\to \ca E$ is a geometric morphism over $\ca S$, then the square
\begin{displaymath}
\xymatrix{
\mathfrak{Top}/\ca S(\ca E, [\mathbb C, \ca S]) \ar[r]^{(-)\circ g} \ar[d]^{\simeq} & \mathfrak{Top}/\ca S(\ca F, [\mathbb C, \ca S]) \ar[d]^{\simeq} \\
\mathbf{fidFib}(f^*(\mathbb C), \ca E) \ar[r]^-{g^*} & \mathbf{fidFib}(g^*(f^*(\mathbb C)),\ca F)
}
\end{displaymath}
commutes up to coherent natural isomorphism.
\end{thrm}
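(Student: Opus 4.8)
The plan is to obtain Theorem \ref{thrmDiac2} as an immediate transport of the classical Diaconescu's theorem (Theorem \ref{thrmDiac1}) across the equivalence $\mathbf{Tors}(f^*(\mathbb C),\ca E)\simeq \mathbf{fidFib}(f^*(\mathbb C),\ca E)$ that was already recorded in the Background section. Recall that a $\mathbb C$-torsor was \emph{defined} as a $\mathbb C^{\text{op}}$-diagram corresponding, under the equivalence $[\mathbb C^{\text{op}},\ca S]\simeq \mathbf{dFib}(\ca S)/\mathbb C$, to a discrete fibration whose source is a filtered internal category; thus by definition $\mathbf{Tors}(f^*(\mathbb C),\ca E)$ is the full subcategory of $[ (f^*\mathbb C)^{\text{op}},\ca E]$ cut out by this condition, and $\mathbf{fidFib}(f^*(\mathbb C),\ca E)$ is the corresponding full subcategory of $\mathbf{dFib}(\ca E)/f^*(\mathbb C)$. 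The restriction of the ambient equivalence $[(f^*\mathbb C)^{\text{op}},\ca E]\simeq \mathbf{dFib}(\ca E)/f^*(\mathbb C)$ to these full subcategories is therefore an equivalence
$$\Phi_{\ca E}:\mathbf{Tors}(f^*(\mathbb C),\ca E)\xrightarrow{\ \simeq\ }\mathbf{fidFib}(f^*(\mathbb C),\ca E).$$
First I would make this explicit and compose it with the equivalence of Theorem \ref{thrmDiac1} to obtain the desired equivalence of categories, which is the content of the first assertion.

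The substantive part is the naturality square, and here the plan is to paste the naturality square of Theorem \ref{thrmDiac1} against the naturality of the family $\Phi$ in the base. Concretely, for a geometric morphism $g:\ca F\to\ca E$ over $\ca S$ I would form the diagram
\begin{displaymath}
\xymatrix{
\mathfrak{Top}/\ca S(\ca E,[\mathbb C,\ca S]) \ar[r]^{(-)\circ g} \ar[d]^{\simeq} & \mathfrak{Top}/\ca S(\ca F,[\mathbb C,\ca S]) \ar[d]^{\simeq} \\
\mathbf{Tors}(f^*(\mathbb C),\ca E) \ar[r]^-{g^*} \ar[d]^{\Phi_{\ca E}} & \mathbf{Tors}(g^*f^*(\mathbb C),\ca F) \ar[d]^{\Phi_{\ca F}} \\
\mathbf{fidFib}(f^*(\mathbb C),\ca E) \ar[r]^-{g^*} & \mathbf{fidFib}(g^*f^*(\mathbb C),\ca F)
}
\end{displaymath}
The top square commutes up to coherent natural isomorphism by Theorem \ref{thrmDiac1}. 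The outer rectangle is exactly the square claimed in Theorem \ref{thrmDiac2}, so it suffices to show the bottom square commutes up to coherent isomorphism; pasting the two then yields the result. Thus the whole proof reduces to one compatibility statement: that the torsor-to-discrete-fibration equivalence $\Phi$ commutes, up to coherent isomorphism, with the inverse image functors $g^*$.

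The key step is therefore to verify that $g^*$, viewed on one hand as the inverse image on diagram categories $[(f^*\mathbb C)^{\text{op}},\ca E]\to[(g^*f^*\mathbb C)^{\text{op}},\ca F]$ and on the other as pullback on discrete (op)fibrations $\mathbf{dFib}(\ca E)/f^*(\mathbb C)\to\mathbf{dFib}(\ca F)/g^*f^*(\mathbb C)$, is intertwined by the equivalence $\mathbf{dFib}(\ca S)/\mathbb C\simeq[\mathbb C^{\text{op}},\ca S]$ of \cite[B2.5.3]{Elephant1}. This is essentially the statement that the equivalence between internal diagrams and discrete fibrations is stable under inverse image functors, which follows because it is constructed from finite-limit data (pullbacks, the fibration square, the action map) that $g^*$ preserves; I would cite or adapt \cite[B2.5.3, B2.7.2]{Elephant1} for this, checking that the filtered-source condition is preserved on the nose (which it is, since $g^*$ preserves covers and the finite limits appearing in Definition \ref{defnfiltered}). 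The main obstacle I anticipate is purely bookkeeping: one must track the \emph{coherence} of the resulting natural isomorphism, i.e. that the isomorphism filling the bottom square is compatible with further composition $\ca G\to\ca F\to\ca E$ and with the coherence data already furnished by Theorem \ref{thrmDiac1}. This is where care is needed, but no new idea is required beyond the naturality of the defining equivalences; the geometric content is entirely carried by Theorem \ref{thrmDiac1}, and Theorem \ref{thrmDiac2} is a restatement across a base-stable equivalence.
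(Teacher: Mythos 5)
Your proposal is correct and follows essentially the same route as the paper: the paper offers no separate proof of Theorem \ref{thrmDiac2}, explicitly presenting it as a restatement of Theorem \ref{thrmDiac1} across the equivalence $\mathbf{Tors}(f^*(\mathbb C),\ca E)\simeq \mathbf{fidFib}(f^*(\mathbb C),\ca E)$ that comes from the definition of torsors via \cite[B2.5.3]{Elephant1}. You simply make explicit the transport the paper leaves implicit, including the (correct) verification that the diagram-to-fibration equivalence commutes with inverse images because it is built from finite-limit data and covers, all preserved by $g^*$.
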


The equivalence in the above form sends a geometric morphism $g: \ca E \to [\mathbb C,\ca S]$ over $\ca S$ to the discrete fibration $g^*(r):g^*(\mathbb F) \to g^*(p^*_{\mathbb C}(\mathbb C))$ where $g^*(\mathbb F)$ is filtered because $\mathbb F$ is filtered.

Let $\ca S$ be a topos, $\mathbb C$ an internal category in $\ca S$ and $f:\ca E \to \ca S$ a geometric morphism. Let
$$\Phi: \mathfrak{Top}/\ca S(\ca E,[\mathbb C,\ca S])\to \mathbf{fidFib}(f^*(\mathbb C), \ca E)$$
be the functor that sends a geometric morphism $g: \ca E \to [\mathbb C,\ca S]$ over $\ca S$ to the discrete fibration $g^*(r):g^*(\mathbb F) \to g^*(p^*_{\mathbb C}(\mathbb C))$. Then, $\Phi$ is one half of an equivalence. An explicit description of the second half of the equivalence is given in the proof of Diaconescu's theorem in \cite{Elephant1} and we denote it by
$$\Psi: \mathbf{fidFib}(f^*(\mathbb C), \ca E)\to \mathfrak{Top}/\ca S(\ca E,[\mathbb C,\ca S]).$$

In both of the above statements of Diaconescu's theorem, we work in $\mathfrak{Top}/S$.
We are also interested in the category $\mathfrak{Top}(\ca E,[\mathbb C,\ca S])$ where $\ca E$ and $[\mathbb C,\ca S]$ are not viewed as $\ca S$-toposes. Theorem \ref{thrmDiac2} already gives an explicit presentation of the objects of this category but not of the morphisms. In particular, we need to consider geometric transformations between pairs of geometric morphisms from $\ca E$ to $[\mathbb C,\ca S]$ which do not necessarily give isomorphic geometric morphisms $\ca E\to \ca S$ when postcomposed with the geometric morphism $p_{\mathbb C}:[\mathbb C,\ca S]\to \ca S$.

For $\mathbb C$ an internal category in a topos $\ca S$ and $\ca E$ a second topos we define the category $\mathbf{FDFIB}(\mathbb C, \ca E)$ in the following way.
\begin{itemize}
 \item An object of $\mathbf{FDFIB}(\mathbb C, \ca E)$ is a pair $(f,s:\mathbb G\to f^*(\mathbb C))$ where $f:\ca E \to \ca S$ is a geometric morphism, $\mathbb G$ is a filtered category in $\ca E$ and $s$ is a discrete fibration in $\ca E$.
\item A morphism from $(f,s:\mathbb G\to f^*(\mathbb C) )$ to $(f',s':\mathbb G'\to f'^*(\mathbb C))$ is a pair $(\alpha,q)$ where $\alpha: f^*\to f'^*$ is a geometric transformation and $q:\mathbb G \to \mathbb G'$ is an internal functor in $\ca E$ such that the square
\begin{displaymath}
 \xymatrix{
\mathbb G \ar[r]^q \ar[d]^s & \mathbb G' \ar[d]^{s'} \\
f^*(\mathbb C) \ar[r]^{\alpha_{\mathbb C}} & f'^*(\mathbb C)
}
\end{displaymath}
commutes where $\alpha_{\mathbb C}: f^*(\mathbb C) \to f'^*(\mathbb C)$ is the internal functor induced by the geometric transformation $\alpha$.
\end{itemize}

A geometric morphism $g:\ca F \to \ca E$ induces a functor 
$$\mathbf{FDFIB}(\mathbb C, \ca E) \to \mathbf{FDFIB}(\mathbb C,\ca F)$$
which sends an object $(f,s:\mathbb G \to f^*(\mathbb C))$ to the object $(fg,g^*(s):g^*(\mathbb G) \to g^*(f^*(\mathbb C)))$ and a morphism $(\alpha,q)$ to $(\alpha\circ g, g^*(q))$. We denote this functor by $g^*$.

In the proof of the following lemma, we consider the topos $[\mathbf 2,\ca E]$ which is the arrow category of $\ca E$. In the 2-category $\mathfrak{Top}$, $[\mathbf 2,\ca E]$ is the cocomma object of the identities on $\ca E$. Hence, it is equipped with a pair of geometric morphisms and a natural transformation as in the following diagram
\begin{displaymath}
 \xymatrix@C=.6in{\ca E \rtwocell^{d_0}_{d_1}{\alpha^{\ca E}} & [\mathbf 2,\ca E]}
\end{displaymath}
where $d_0^*(A\stackrel{h}\to B)=A$, $d_1^*(A\stackrel{h}\to B)=B$ and $\alpha^{\ca E}_h=h:A\to B$.

Given a geometric transformation
\begin{displaymath}
 \xymatrix@C=.6in{\ca E \rtwocell^{f}_{g}{\alpha} & \ca F}
\end{displaymath}
there exists a (unique up to isomorphism) geometric morphism $k:[\mathbf 2,\ca E]\to \ca F$, such that $\alpha$ is isomorphic to the composite
\begin{displaymath}
\xymatrix@C=.6in{\ca E \rtwocell^{d_0}_{d_1}{\alpha^{\ca E}} & [\mathbf 2,\ca E] \ar[r]^k & \ca F .}
\end{displaymath}
In particular, $k$ can be chosen so that $\alpha^{\ca E}\circ k^*=\alpha:f^*\Rightarrow g^*$ by defining $k^*(A)$ to be $\alpha_A:f^*(A)\to g^*(A)$ and in the obvious way on morphisms.

An internal category in $[\mathbf 2,\ca E]$ is an internal functor $r:\mathbb D \to \mathbb E$ between internal categories in $\ca E$. Given a second internal category $r':\mathbb D' \to \mathbb E'$ in $[\mathbf 2,\ca E]$, an internal functor from $r'$ to $r$ is a pair $(s,t)$ of internal functors in $\ca E$ such that the square
\begin{displaymath}
\xymatrix{
\mathbb D' \ar[d]^-s \ar[r]^{r'} & \mathbb E' \ar[d]^-t \\
\mathbb D \ar[r]^-r & \mathbb E
}
\end{displaymath}
commutes. The internal functor $(s,t)$ is a discrete fibration iff both $s$ and $t$ are discrete fibrations. Also, the internal category $r'$ is filtered iff both $\mathbb D'$ and $\mathbb E'$ are filtered.

Let us consider the functor
$$\Phi':\mathfrak{Top}(\ca E,[\mathbb C,\ca S])\to \mathbf{FDFIB}(\mathbb C, \ca E).$$
$\Phi'$ sends a geometric morphism $f:\ca E\to [\mathbb C,\ca S]$ to the pair of the geometric morphism $p_{\mathbb C} \circ f:\ca E\to \ca S$ and the discrete fibration $\Phi(f)=f^*(r): f^*(\mathbb F)\to f^*p_{\mathbb C}^*(\mathbb C)$.
Given a geometric transformation $\alpha$ as in the diagram:
\begin{displaymath}
\xymatrix@C=.6in{\ca E \rtwocell^f_g{\alpha}& [\mathbb C,\ca S]},
\end{displaymath}
 we define $\Phi'(\alpha)$ to be $(p_{\mathbb C} \circ \alpha, \alpha_{\mathbb F})$. This is a morphism in $\mathbf{FDFIB}$ because the square
\begin{displaymath}
 \xymatrix{
f^*(\mathbb F) \ar[d]^{f^*(r)} \ar[r]^{\alpha_{\mathbb F}} & g^*(\mathbb F) \ar[d]^{g^*(r)} \\
f^*p_{\mathbb C}^*(\mathbb C) \ar[r]^{\alpha_{p_{\mathbb C}^*(\mathbb C)}} & g^*p_{\mathbb C}^*(\mathbb C)
}
\end{displaymath}
commutes by the naturality of $\alpha$.

\begin{lem} \label{lemECS}
Let $\ca S$ and $\ca E$ be toposes and $\mathbb C$ an internal category in $\ca S$. Then, the functor
$$\Phi':\mathfrak{Top}(\ca E,[\mathbb C,\ca S])\to \mathbf{FDFIB}(\mathbb C, \ca E)$$
 as defined above is one half of an equivalence of categories.
\end{lem}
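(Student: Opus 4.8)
The plan is to deduce this enriched (non-sliced) form of Diaconescu's theorem from Theorem \ref{thrmDiac2} together with the universal property of the cocomma object $[\mathbf 2,\ca E]$, which converts geometric transformations into honest geometric morphisms. First I would record that $\Phi'$ is pseudonatural in $\ca E$: for any geometric morphism $g:\ca F\to\ca E$ the square relating $(-)\circ g$ with $g^*$ commutes up to canonical isomorphism, since on objects $\Phi'(fg)=(p_{\mathbb C}fg,(fg)^*(r))=(p_{\mathbb C}f\circ g,\,g^*f^*(r))=g^*\Phi'(f)$ using $(fg)^*=g^*f^*$, and on a transformation $\alpha$ one has $\Phi'(\alpha\ast g)=(p_{\mathbb C}\ast\alpha\ast g,\,g^*(\alpha_{\mathbb F}))=g^*\Phi'(\alpha)$. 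I will apply this naturality to $g=d_0,d_1:\ca E\to[\mathbf 2,\ca E]$.

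Essential surjectivity is then immediate from Diaconescu. Given an object $(f,s:\mathbb G\to f^*(\mathbb C))$ of $\mathbf{FDFIB}(\mathbb C,\ca E)$, the discrete fibration $s$ is an object of $\mathbf{fidFib}(f^*(\mathbb C),\ca E)$, so $\Psi(s):\ca E\to[\mathbb C,\ca S]$ is a geometric morphism over $\ca S$ with $p_{\mathbb C}\circ\Psi(s)\cong f$ and $\Phi(\Psi(s))\cong s$; hence $\Phi'(\Psi(s))=(p_{\mathbb C}\Psi(s),\Phi(\Psi(s)))\cong(f,s)$.

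For full faithfulness, fix $f,g:\ca E\to[\mathbb C,\ca S]$, write $e=p_{\mathbb C}f$ and $e'=p_{\mathbb C}g$, and stratify both hom-sets by the induced transformation on structure maps: a $2$-cell $\alpha:f\Rightarrow g$ has underlying $p_{\mathbb C}\ast\alpha:e\Rightarrow e'$, a morphism $(\beta,q)$ of $\mathbf{FDFIB}(\mathbb C,\ca E)$ has underlying $\beta$, and $\Phi'$ sends $\alpha$ to a morphism with underlying $p_{\mathbb C}\ast\alpha$. It therefore suffices to fix a transformation $\beta:e\Rightarrow e'$ and produce a bijection, compatible with $\Phi'$, between the $2$-cells $\alpha:f\Rightarrow g$ with $p_{\mathbb C}\ast\alpha=\beta$ and the morphisms $\Phi'(f)\to\Phi'(g)$ with underlying $\beta$. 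Let $h_\beta:[\mathbf 2,\ca E]\to\ca S$ classify $\beta$, so $h_\beta d_0\cong e$ and $h_\beta d_1\cong e'$. By the universal property of $[\mathbf 2,\ca E]$ recalled in the excerpt (taking $k^*(A)=\alpha_A$), the $2$-cells $\alpha$ with $p_{\mathbb C}\ast\alpha=\beta$ correspond to geometric morphisms $k:[\mathbf 2,\ca E]\to[\mathbb C,\ca S]$ over $\ca S$ with respect to $h_\beta$ and satisfying $kd_0\cong f$, $kd_1\cong g$. Applying Theorem \ref{thrmDiac2} to the topos $[\mathbf 2,\ca E]$ with structure map $h_\beta$, such $k$ correspond to objects of $\mathbf{fidFib}(h_\beta^*(\mathbb C),[\mathbf 2,\ca E])$; and by the naturality of $\Phi'$ along $d_0,d_1$ the constraints $kd_0\cong f$, $kd_1\cong g$ become the requirement that the $d_0$- and $d_1$-restrictions of this fibration be isomorphic to $\Phi(f)=s_f$ and $\Phi(g)=s_g$.

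Finally I would invoke the dictionary recalled in the excerpt: an internal category in $[\mathbf 2,\ca E]$ is an internal functor in $\ca E$, $h_\beta^*(\mathbb C)$ is the internal functor $\beta_{\mathbb C}:e^*(\mathbb C)\to e'^*(\mathbb C)$, and a discrete fibration in $[\mathbf 2,\ca E]$ with filtered source is exactly a commuting square
\[
\xymatrix{
\mathbb K_0 \ar[r]^{q} \ar[d] & \mathbb K_1 \ar[d] \\
e^*(\mathbb C) \ar[r]^{\beta_{\mathbb C}} & e'^*(\mathbb C)
}
\]
of discrete fibrations in $\ca E$ with $\mathbb K_0,\mathbb K_1$ filtered, the verticals being the $d_0$- and $d_1$-edges. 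Under the identifications $\mathbb K_0=f^*(\mathbb F)$, $\mathbb K_1=g^*(\mathbb F)$ with left edge $s_f$ and right edge $s_g$, such a square is precisely a morphism $(\beta,q):\Phi'(f)\to\Phi'(g)$ of $\mathbf{FDFIB}(\mathbb C,\ca E)$ with underlying $\beta$. Composing the chain of bijections and using $k^*(A)=\alpha_A$ shows that the functor $q$ obtained from $\alpha$ is $\alpha_{\mathbb F}$ and the bottom edge is $(p_{\mathbb C}\ast\alpha)_{\mathbb C}$, so the composite correspondence is exactly $\alpha\mapsto\Phi'(\alpha)$. Hence $\Phi'$ is a bijection on $2$-cells over each $\beta$, so fully faithful, and with essential surjectivity it is one half of an equivalence. \emph{The main obstacle} is this last verification: confirming that the abstractly produced bijection agrees on the nose with the concretely defined $\Phi'(\alpha)=(p_{\mathbb C}\ast\alpha,\alpha_{\mathbb F})$, which forces one to track carefully, through the cocomma factorization and the unpacking of discrete fibrations in $[\mathbf 2,\ca E]$, how $\alpha_{\mathbb F}$ and $p_{\mathbb C}\ast\alpha$ are recovered.
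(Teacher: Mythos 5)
Your proposal is correct, and it runs on the same two engines as the paper's proof --- Theorem \ref{thrmDiac2} for the object level, and the cocomma topos $[\mathbf 2,\ca E]$ together with the dictionary identifying internal categories and discrete fibrations in $[\mathbf 2,\ca E]$ with internal functors and squares of discrete fibrations in $\ca E$ for the morphism level --- but you decompose the statement differently. The paper builds an explicit pseudo-inverse $\Psi'$: on objects $\Psi'(f,s)=\Psi(s)$, and on a morphism $(\alpha,q)$ it forms the fibration $(s,t)$ over $k^*(\mathbb C)=\alpha_{\mathbb C}$, takes $\Psi(s,t):[\mathbf 2,\ca E]\to[\mathbb C,\ca S]$, and defines $\Psi'(\alpha,q)$ as $\alpha^{\ca E}\circ\Psi(s,t)$ conjugated by the isomorphisms $\Psi(s,t)\circ d_0\cong\Psi(s)$ and $\Psi(s,t)\circ d_1\cong\Psi(t)$, then checks $\Phi'\Psi'\cong 1$ and $\Psi'\Phi'\cong 1$. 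You instead prove essential surjectivity (identically, via $\Psi$) and full faithfulness, stratified over the underlying transformation $\beta$; your strict computation that the canonical classifier $k_\alpha$ with $k_\alpha^*(A)=\alpha_A$ satisfies $d_0^*k_\alpha^*=f^*$, $d_1^*k_\alpha^*=g^*$, $p_{\mathbb C}k_\alpha=h_\beta$, and $\Phi_{[\mathbf 2,\ca E]}(k_\alpha)=k_\alpha^*(r)$ equal on the nose to the square with top $\alpha_{\mathbb F}$ and bottom $\beta_{\mathbb C}$, is a clean point the paper leaves implicit and settles the forward direction of your fiberwise comparison exactly. The one place you gloss is the phrase ``chain of bijections'': the passage through Theorem \ref{thrmDiac2} is an equivalence of categories, not a bijection on objects, so within the fiber over $\beta$ fullness needs the $2$-cell part of the sliced theorem (to upgrade $\Phi(k')\cong(\beta,q)$ to isomorphisms $k'd_0\cong f$ and $k'd_1\cong g$ over $\ca S$ realizing the strict edge identifications, whence $\alpha$ is obtained by whiskering and conjugation), and faithfulness needs the observation that a $2$-cell $\theta$ over $\ca S$ with $\Phi(\theta)=\mathrm{id}$ has $d_0\ast\theta=\mathrm{id}_f$ and $d_1\ast\theta=\mathrm{id}_g$ by faithfulness of the sliced Diaconescu over $\ca E$, followed by an interchange argument to conclude $\alpha=\alpha'$. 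Both pieces are supplied by Theorem \ref{thrmDiac2}, so this is bookkeeping rather than a gap --- precisely the bookkeeping the paper absorbs into its naturality checks for $\Phi'\Psi'\cong 1$ and $\Psi'\Phi'\cong 1$. What each approach buys: the paper's route is constructive and hands you $\Psi'$ for reuse elsewhere, while yours isolates the hom-set comparison and makes transparent that, over each $\beta$, the functor $\Phi'$ is literally $\alpha\mapsto(p_{\mathbb C}\ast\alpha,\alpha_{\mathbb F})$.
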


\begin{proof}
Consider the functor 
$$\Psi':\mathbf{FDFIB}(\mathbb C, \ca E) \to \mathfrak{Top}(\ca E,[\mathbb C,\ca S]).$$
Given an object $(f:\ca E\to \ca S,s:\mathbb G\to f^*(\mathbb C))$ of $\mathbf{FDFIB}(\mathbb C, \ca E)$, by Diaconescu's theorem the discrete fibration $s$ corresponds to a (unique up to isomorphism) geometric morphism $\Psi(s):\ca E\to [\mathbb C,\ca S]$ in such a way so that $\Phi'(\Psi(s))$ is isomorphic to $(f,s)$, hence we define $\Psi'(f,s)$ to be $\Psi(s)$.

Let $(\alpha,q):(f,s:\mathbb G\to f^*(\mathbb C))\to (g,t:\mathbb H\to g^*(\mathbb C))$ be a morphism in $\mathbf{FDFIB}(\mathbb C, \ca E)$. $[\mathbf 2,\ca E]$ is the cocomma object of the identities on $\ca E$, therefore $\alpha$ induces a (unique up to isomorphism) geometric morphism $k:[\mathbf 2,\ca E]\to \ca S$, such that $\alpha$ is isomorphic to the composite
\begin{displaymath}
\xymatrix@C=.6in{\ca E \rtwocell^{d_0}_{d_1}{\alpha^{\ca E}} & [\mathbf 2,\ca E] \ar[r]^k & \ca S}
\end{displaymath}
and as shown before we choose $k$ so that $\alpha^{\ca E}\circ k^*=\alpha:f^*\Rightarrow g^*$. As mentioned above, an internal category in $[\mathbf 2,\ca E]$ corresponds to an internal functor of internal categories in $\ca E$ and in that notation $k^*(\mathbb C)$ is the functor $\alpha_{\mathbb C}: f^*(\mathbb C)\to g^*(\mathbb C)$. $s:\mathbb G\to f^*(\mathbb C)$ and $t:\mathbb H\to g^*(\mathbb C)$ are discrete fibrations and the square
\begin{displaymath}
\xymatrix{
 \mathbb G \ar[r]^{q} \ar[d]^-s &  \mathbb H \ar[d]^-t \\
f^*(\mathbb C) \ar[r]^{\alpha_{\mathbb C}} &  g^*(\mathbb C)
}
\end{displaymath}
commutes. Hence $(s,t)$ is a discrete fibration over $k^*(\mathbb C)$. The source of the fibration is filtered because both $\mathbb G$ and $\mathbb H$ are filtered. Therefore, by Diaconescu's theorem this discrete fibration corresponds to the geometric morphism $\Psi(s,t):[2,\ca E]\to [\mathbb C,\ca S]$ which is such that $p_{\mathbb C}\circ \Psi(s,t) \cong k$ and such that the discrete fibration $\Phi(\Psi(s,t))=\Psi(s,t)^*(r)$ is isomorphic to the discrete fibration $(s,t)$ in $[\mathbf 2, \ca E]$ described above. Therefore, the discrete fibration $\Phi (\Psi(s,t)\circ d_0)= d_0^*(\Phi(\Psi(s,t)))$ in $\ca E$ is isomorphic to the discrete fibration $d_0^*(s,t)=s$ and the discrete fibration $\Phi (\Psi(s,t)\circ d_1)= d_1^*(\Phi(\Psi(s,t)))$ in $\ca E$ is isomorphic to the discrete fibration $d_1^*(s,t)=t$. By applying $\Psi$ to the two described isomorphisms (and using the fact that $\Psi\Phi$ is naturally isomorphic to the identity) we construct natural isomorphisms $\Psi(s,t)\circ d_0 \cong \Psi(s)$ and $\Psi(s,t)\circ d_1 \cong \Psi(t)$. Hence, we define $\Psi'(\alpha,q):\Psi(s)\to \Psi(t)$ to be the composite of these two isomorphisms with 
\begin{displaymath}
  \xymatrix@C=.6in{\ca E \rtwocell^{d_0}_{d_1}{\alpha^{\ca E}} & [\mathbf 2,\ca E] \ar[r]^{\Psi(s,t)} & [\mathbb C,\ca S]}.
\end{displaymath}

Given an object $(f:\ca E\to \ca S,s:\mathbb G\to f^*(\mathbb C))$ of $\mathbf{FDFIB}(\mathbb C, \ca E)$, then 
\begin{displaymath}
\begin{split}
\Phi'\Psi'(f:\ca E\to \ca S, s:\mathbb G\to f^*(\mathbb C))
& =\Phi'(\Psi(s):\ca E \to [\mathbb C,\ca S]) \\
& =(p_{\mathbb C}\circ (\Psi(s)), \Phi(\Psi(s)))
\cong (f,s).
\end{split}
\end{displaymath}
We show that this isomorphism is natural in $(f,s)$ by spelling out the definitions of $\Phi'$ and $\Psi'$ on morphisms and using the equivalence $(\Phi,\Psi)$. Given a geometric morphism $f:\ca E\to [\mathbb C, \ca S]$, then
\begin{displaymath}
\begin{split}
\Psi'\Phi'(f) & =\Psi'(p_{\mathbb C}\circ f,\Phi(f)) \\
 & =\Psi(\Phi(f))\cong f.
\end{split}
\end{displaymath}
We prove that this isomorphism is natural in $f$ by spelling out the definitions of $\Phi'$ and $\Psi'$ on morphisms and using the equivalence $(\Phi,\Psi)$.
Therefore, $(\Phi',\Psi')$ is an equivalence of categories.
\end{proof}

\section{$[\mathbb C, \ca S]$ as a classifying topos}

Let $\ca E$ be the classifying topos of a geometric theory $\mathbb S$, and let $\mathbb C$ be an internal category in $\ca E$. Then, we can construct a geometric theory $\mathbb S_{\mathbb C}$ which is Morita equivalent to $\mathbb S$ and written in an extension of the signature of $\mathbb S$ which contains sorts, functions and a relation whose interpretation in the generic model of $\mathbb S_{\mathbb C}$ in $\ca E$ is isomorphic to the internal category $\mathbb C$. This can be thought as an extension of Giraud's theorem as presented in \cite[C2.2.8]{Elephant2}.

Let $\ca S$ be the classifying topos for a geometric theory $\mathbb T$ and let $\mathbb C$ be an internal category in $\ca S$. Let us assume that the theory $\mathbb T$ is over a language $\Sigma$ which includes two sorts $O$ and $M$, functions $\text{dom}, \text{cod}:M\to O$ and $\text{id}:O\to M$, and a ternary relation $T$ on $M$. Let us also assume that the interpretation of the above sorts, functions and relation in the generic model of $\mathbb T$ in $\ca S$ gives an internal category in $\ca S$ isomorphic to $\mathbb C$. If the theory $\mathbb T$ does not satisfy these conditions, then by the previous paragraph we can construct a Morita equivalent theory which does. Notice that the axioms of categories for $O$, $M$, $\text{dom}$, $\text{cod}$, $\text{id}$ and $T$ are derivable from $\mathbb T$ because they are satisfied in the generic model of $\mathbb T$.

Let  $\mathbb T'$ be the theory over the language $\Sigma'$ which is  the extension of $\Sigma$ by sorts $O'$ and $M'$, functions $\text{dom}', \text{cod}':M'\to O'$ and $\text{id}':O'\to M'$, a ternary relation $T'$ on $M'$, and functions $f_O:O'\to O$ and $f_M:M'\to M$. The theory $\mathbb T'$ contains the axioms of the theory $\mathbb T$ and the additional following axioms:
\begin{enumerate}
\item Axioms for $(O', M', \text{dom}', \text{cod}', \text{id}', T')$ to be a filtered category.
\item Axioms for $f_O:O'\to O$, $f_M:M'\to M$ to be a functor:
\begin{itemize}
\item $\top\vdash_x f_O(\text{dom}'(x))=\text{dom}(f_M(x))$,
\item $\top\vdash_x f_O(\text{cod}'(x))=\text{cod}(f_M(x))$,
\item $\top\vdash_a f_M(\text{id}'(a))=\text{id}(f_O(a))$ and
\item $T'(x,y,z) \vdash_{x,y,z} T(f_M(x),f_M(y),f_M(z))$.
\end{itemize}
\item Axioms for the above functor to be a discrete fibration:
\begin{itemize}
\item $(\text{cod}'(x)=\text{cod}'(y)) \wedge (f_M(x)=f_M(y)) \vdash_{x,y} x=y$,
\item $f_O(a)=\text{cod}(x) \vdash_{a,x} (\exists x') ((\text{cod}'(x')=a) \wedge (f_M(x')=x))$.
\end{itemize}
\end{enumerate}

\begin{lem} \label{lemTprime}
Given $\mathbb T$, $\mathbb T'$, $\ca S$ and $\mathbb C$  as above and a $\set$-topos $\ca E$, we have an equivalence
$$\mathbf{FDFIB}(\ca E, \mathbb C)\simeq  \mathbb T'-\mathbf{Mod}(\ca E),$$
which is natural in $\ca E$, in the sense that, if $k:\ca F\to \ca E$ is a geometric morphism, then the square
\begin{displaymath}
\xymatrix{
\mathbf{FDFIB}(\mathbb C, \ca E) \ar[r]^{k^*} \ar[d]^{\simeq} &\mathbf{FDFIB}(\mathbb C,\ca F) \ar[d]^{\simeq} \\
\mathbb T'\text{-}\mathbf{Mod}(\ca E) \ar[r]^{k^*} & \mathbb T'\text{-}\mathbf{Mod}(\ca F)
}
\end{displaymath}
commutes up to natural isomorphism.
\end{lem}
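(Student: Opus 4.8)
The plan is to read off a $\mathbb{T}'$-model in $\ca E$ as exactly the data of an object of $\mathbf{FDFIB}(\mathbb C,\ca E)$, by splitting such a model into its $\Sigma$-reduct and the remaining structure, and then invoking the two dictionaries already in play: on one hand the defining equivalence $\mathbb{T}\text{-}\mathbf{Mod}(\ca E)\simeq \mathfrak{Top}(\ca E,\ca S)$ of the classifying topos, and on the other the correspondence (recalled in the Background section) between internal categories, internal filtered categories, internal functors, discrete fibrations and models of the respective geometric theories. First I would define a comparison functor $\Theta:\mathbb{T}'\text{-}\mathbf{Mod}(\ca E)\to \mathbf{FDFIB}(\mathbb C,\ca E)$ and then check that it is essentially surjective and fully faithful, before addressing naturality.

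On objects: given a $\mathbb{T}'$-model $N$ in $\ca E$, its reduct to $\Sigma$ is a $\mathbb{T}$-model, since the extra axioms of $\mathbb{T}'$ do not constrain the old sorts beyond those of $\mathbb{T}$, and hence corresponds under the classifying-topos equivalence to a geometric morphism $f:\ca E\to\ca S$. The crucial point is that the interpretation in this reduct of the distinguished fragment $(O,M,\text{dom},\text{cod},\text{id},T)$ is isomorphic to $f^*(\mathbb C)$: the reduct is $f^*$ applied to the generic $\mathbb{T}$-model, and by hypothesis the generic model interprets that fragment as $\mathbb C$, while $f^*$ preserves the interpretation of sorts, function symbols and geometric relations. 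Next, the interpretation of $(O',M',\text{dom}',\text{cod}',\text{id}',T')$ together with the filtered-category axioms is, by the recalled correspondence, exactly a filtered internal category $\mathbb G$ in $\ca E$; the functions $f_O,f_M$ with the functor axioms give an internal functor $s:\mathbb G\to f^*(\mathbb C)$; and the two discrete-fibration axioms are precisely the geometric sequents expressing that the relevant square for $s$ is a pullback, so that $s$ is a discrete fibration. Thus $\Theta(N)=(f,s:\mathbb G\to f^*(\mathbb C))$ is an object of $\mathbf{FDFIB}(\mathbb C,\ca E)$, and conversely every such object arises this way, giving essential surjectivity.

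On morphisms: a homomorphism $N\to N'$ of $\mathbb{T}'$-models restricts on the $\Sigma$-reduct to a $\mathbb{T}$-homomorphism, i.e.\ under the classifying-topos equivalence to a geometric transformation $\alpha:f^*\Rightarrow f'^*$, whose component on the distinguished fragment is the induced internal functor $\alpha_{\mathbb C}:f^*(\mathbb C)\to f'^*(\mathbb C)$. On the new sorts the homomorphism is an internal functor $q:\mathbb G\to\mathbb G'$ (preservation of $\text{dom}',\text{cod}',\text{id}',T'$), and preservation of the function symbols $f_O,f_M$ says exactly that the square relating $s,q,\alpha_{\mathbb C},s'$ commutes. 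This is precisely a morphism $(\alpha,q)$ of $\mathbf{FDFIB}(\mathbb C,\ca E)$, and the assignment is evidently bijective on hom-sets, so $\Theta$ is fully faithful.

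For naturality in $\ca E$, I would observe that for a geometric morphism $k:\ca F\to\ca E$ the functor $k^*$ acts on both sides by inverse image: it sends a $\mathbb{T}'$-model to its pullback and an object $(f,s)$ to $(fk,k^*s)$, and since $(fk)^*\cong k^*f^*$ and $k^*$ preserves finite limits, epimorphisms and the interpretation of geometric formulae, it preserves internal categories, functors, discrete fibrations and filteredness; hence $\Theta$ commutes with $k^*$ up to the canonical isomorphisms. The main obstacle I expect is not any single hard step but the bookkeeping in the objects paragraph: pinning down rigorously that the $\Sigma$-reduct interprets the fragment $(O,M,\dots)$ as $f^*(\mathbb C)$ itself, rather than merely as some abstractly isomorphic internal category, and tracking these identifications coherently enough for the equivalence to be natural. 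This rests on the compatibility of $f^*$ with the structure of the generic model and on the dictionary between the geometric axioms of (filtered) categories and discrete fibrations and their semantic counterparts, both of which are available from the Background section.
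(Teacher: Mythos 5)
Your proposal is correct and takes essentially the same approach as the paper: the paper explicitly constructs a pair of mutually quasi-inverse functors $\Phi''$ and $\Psi''$ (your $\Theta$ is its $\Psi''$, with your essential-surjectivity argument playing the role of $\Phi''$), using the same two dictionaries you invoke. The subtlety you flag at the end is real, and the paper resolves it exactly as you suggest, by transporting the discrete fibration along the isomorphism between the interpreted internal category $\mathbb D$ and $g^*(\mathbb C)$.
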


\begin{proof}
Let $\ca M$ be the generic model of $\mathbb T$ in $\ca S$.

Given a geometric morphism $g:\ca E\to S$, a filtered category $\mathbb G$ in $\ca E$ and a discrete fibration $s: \mathbb G\to g^*(\mathbb C)$ in $\ca E$, we construct a $\mathbb T'$-model in the following way: $g^*(\ca M)$ gives an interpretation of the sorts, functions and relations that are in $\Sigma$, and moreover the interpretation of $(O, M, \text{dom}, \text{cod}, \text{id}, T)$ is the internal category $f^*(\mathbb C)$. The internal category $\mathbb G$ gives us a way of interpreting $(O', M', \text{dom}', \text{cod}', \text{id}', T')$ and $f_O$  and $f_M$ are interpreted as $s_0:G_0\to g^*(C_0)$ and $s_1:G_1\to g^*(C_1)$. The axioms of $\mathbb T$ are satisfied because $g^*(\ca M)$ satisfies them, and the additional axioms are satisfied because $\mathbb G$ is a filtered category, $s_0$, $s_1$ are the components of the functor $s:\mathbb G\to g^*(\mathbb C)$, and $s$ is a discrete fibration. Hence, let us consider the functor
$$\Phi'':\mathbf{FDFIB}(\ca E, \mathbb C)\to   \mathbb T'\text{-}\mathbf{Mod}(\ca E),$$
which sends an object $(g,s:\mathbb G\to f^*(\mathbb C))$ to the $\mathbb T'$-model described above and which sends a morphism $(\alpha,q):(g,s:\mathbb G\to g^*(\mathbb C) ) \to (g',s':\mathbb G'\to g'^*(\mathbb C))$ to the morphism $\alpha_{\ca M}$ on the sorts that are contained in $\Sigma$. The morphisms on the interpretation of the sorts $O$ and $M$ are $q_0$ and $q_1$ respectively, where $q_0$ and $q_1$ are the components of the functor $q: \mathbb G \to \mathbb G'$. It is easily verified that the described morphisms give a morphism of $\mathbb T'$-models.

Conversely, given a $\mathbb T'$-model $\ca N$ in $\ca E$, by restricting it to the interpretation $\Sigma$ we have a $\mathbb T$-model $\ca N_\Sigma$. $\ca S$ is the classifying topos for $\mathbb T$, therefore the $\mathbb T$-model $\ca N_\Sigma$ corresponds to a geometric morphism $g:\ca E\to \ca S$ so that $g^*(\ca M)$ is isomorphic to $\ca N_\Sigma$. Hence, the interpretation of $(O, M, \text{dom}, \text{cod}, \text{id}, T)$ in $\ca N$ is an internal category $\mathbb D$ isomorphic to the internal category $g^*(\mathbb C)$ of $\ca E$. The interpretation of $(O', M', \text{dom}', \text{cod}', \text{id}', T')$ gives an internal filtered category $\mathbb G$ in $\ca E$. The interpretation of $f_O$, $f_M$ give an internal functor $\mathbb G\to \mathbb D$ which is a discrete fibration and using the isomorphism $\mathbb D\to g^*(\mathbb C)$ we have a discrete fibration $s:\mathbb G\to g^*(\mathbb C)$. Hence, let us consider the functor
$$\Psi'': \mathbb T'\text{-}\mathbf{Mod}(\ca E)\to \mathbf{FDFIB}(\ca E, \mathbb C),$$
which sends a $\mathbb T'$-model $\ca N$ to $(g,s:\mathbb G\to g^*(\mathbb C))$ as described above. Let $\ca N'$ also be a $\mathbb T'$-model and let $\Psi(\ca N')$ be $(g',s':\mathbb G'\to g'^*(\mathbb C'))$. Given a morphism of $\mathbb T'$-models $\ca N \to \ca N'$, it restricts to a morphism of the $\mathbb T$-models $u:\ca N_\Sigma\to \ca N'_\Sigma$, and therefore a geometric transformation $\alpha^u: g\Rightarrow g'$. The morphisms between the interpretations of $\mathbb O'$ and between the interpretations of $\mathbb M'$ give an internal functor $q^u:\mathbb G\to \mathbb G'$, so that $(\alpha^u, q^u): (g,s:\mathbb G\to g^*(\mathbb C))\to (g',s':\mathbb G'\to g'^*(\mathbb C))$ is an arrow in $\mathbf{FDFIB}(\ca E, \mathbb C)$.

It is clear from the above constructions that given an object  $(g,s:\mathbb G\to f^*(\mathbb C)$ of $\mathbf{FDFIB}(\ca E, \mathbb C)$, $\Psi''\circ\Phi''(g,s)$ is isomorphic to $(g,s)$. Also, given $\ca N$ a $\mathbb T'$-model in $\ca E$, then $\Phi''\circ \Psi''(\ca N)$ is isomorphic to $\ca N$. Thus, the equivalence of the lemma holds. Also, the naturality is clear from the above constructions.
\end{proof}

By combining Lemma \ref{lemECS} and Lemma \ref{lemTprime} we conclude the following theorem:
\begin{thrm}
Given $\mathbb T$, $\mathbb T'$, $\ca S$ and $\mathbb C$  as above, then $[\mathbb C,\ca S]$ is the classifying topos for the theory $\mathbb T'$.
\end{thrm}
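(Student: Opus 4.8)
The plan is to recognise the statement as a direct consequence of chaining the two equivalences just established. Recall that a (bounded) topos $\mathcal F$ is the classifying topos of a geometric theory $\mathbb T'$ precisely when there is an equivalence $\mathfrak{Top}(\ca E,\mathcal F)\simeq \mathbb T'\text{-}\mathbf{Mod}(\ca E)$ natural in $\ca E$. Taking $\mathcal F=[\mathbb C,\ca S]$, I would simply compose the equivalence $\Phi'\colon \mathfrak{Top}(\ca E,[\mathbb C,\ca S])\xrightarrow{\simeq}\mathbf{FDFIB}(\mathbb C,\ca E)$ of Lemma \ref{lemECS} with the equivalence $\Phi''\colon \mathbf{FDFIB}(\mathbb C,\ca E)\xrightarrow{\simeq}\mathbb T'\text{-}\mathbf{Mod}(\ca E)$ of Lemma \ref{lemTprime}, obtaining an equivalence $\Phi''\circ\Phi'\colon \mathfrak{Top}(\ca E,[\mathbb C,\ca S])\xrightarrow{\simeq}\mathbb T'\text{-}\mathbf{Mod}(\ca E)$.

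First I would fix, once and for all, the presentation of $\mathbb T$ used both for the intermediate category $\mathbf{FDFIB}(\mathbb C,\ca E)$ and for the construction of $\mathbb T'$: as noted before Lemma \ref{lemTprime}, $\mathbb T$ may be assumed to carry sorts $O$, $M$, the functions $\text{dom}$, $\text{cod}$, $\text{id}$ and the relation $T$ whose interpretation in the generic model is $\mathbb C$, replacing $\mathbb T$ by a Morita equivalent theory if necessary. This is harmless: a Morita equivalent theory has the same classifying topos $\ca S$, so neither $[\mathbb C,\ca S]$ nor the statement to be proved is affected.

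The substance of the argument is the naturality in $\ca E$. For any geometric morphism $k\colon\ca F\to\ca E$ I must check that the outer rectangle
\begin{displaymath}
\xymatrix{
\mathfrak{Top}(\ca E,[\mathbb C,\ca S]) \ar[r]^-{\Phi'} \ar[d]_{(-)\circ k} & \mathbf{FDFIB}(\mathbb C,\ca E) \ar[r]^-{\Phi''} \ar[d]_{k^*} & \mathbb T'\text{-}\mathbf{Mod}(\ca E) \ar[d]^{k^*} \\
\mathfrak{Top}(\ca F,[\mathbb C,\ca S]) \ar[r]^-{\Phi'} & \mathbf{FDFIB}(\mathbb C,\ca F) \ar[r]^-{\Phi''} & \mathbb T'\text{-}\mathbf{Mod}(\ca F)
}
\end{displaymath}
commutes up to coherent natural isomorphism. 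The right-hand square commutes by the naturality statement of Lemma \ref{lemTprime}. For the left-hand square I would unwind the definition of $\Phi'$: it sends $f$ to the pair $(p_{\mathbb C}\circ f, f^*(r))$, where $f^*(r)=\Phi(f)$ is the discrete fibration supplied by Diaconescu's theorem in the form of Theorem \ref{thrmDiac2}. Since $(f\circ k)^*(r)\cong k^*(f^*(r))$ and $p_{\mathbb C}\circ(f\circ k)=(p_{\mathbb C}\circ f)\circ k$, commutativity of the left square is exactly the naturality of the Diaconescu equivalence in the domain topos, which Theorem \ref{thrmDiac2} asserts. Pasting the two coherent isomorphisms yields naturality of $\Phi''\circ\Phi'$, which is what establishes the classifying property.

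I expect the only delicate point to be bookkeeping the $2$-categorical naturality: one must verify that the isomorphisms filling the two squares are coherent and that their horizontal pasting respects the relevant whiskering, rather than merely that the squares commute on objects. Everything else is a formal composition of equivalences, so once the naturality is confirmed the conclusion that $[\mathbb C,\ca S]$ classifies $\mathbb T'$ is immediate.
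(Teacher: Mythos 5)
Your proposal is correct and takes essentially the same route as the paper, whose entire proof is the single sentence ``By combining Lemma \ref{lemECS} and Lemma \ref{lemTprime} we conclude the following theorem''. Your additional unwinding of the naturality in $\ca E$ (via the naturality clause of Lemma \ref{lemTprime} and of Theorem \ref{thrmDiac2} for the $\Phi'$ square) is sound and in fact makes explicit a detail the paper leaves implicit, since the final statement of Lemma \ref{lemECS} does not record naturality.
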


\chapter{Results about $\ca E[G]$} \label{chaEG}

In this chapter, we present some background material about toposes of the form $\ca E[G]$ where $G$ is a group in $\ca E$ and about geometric morphisms $\ca E[G]\to \ca E[H]$ induced by group homomorphisms $G\to H$ in $\ca E$. The main goal of the chapter is to show that a group monomorphism $G\to H$ in a topos $\ca E$ induces a local homeomorphism $\ca E[G]\to \ca E[H]$.

\section{$\mathbf{Group}(\ca E)\to \mathfrak{Top}$}

Let $\ca E$ be a topos and $G$ a group object in $\ca E$ with group multiplication 
$$m_G:G\times G\to G,$$ unit 
$$\text{id}_G:1\to G,$$ and the inverse morphism 
$$\text{inv}_G:G\to G.$$

\begin{defn}
 A left \emph{$G$-action} on an object $A$ of $\ca E$ is a morphism $a_A:G\times A\to A$  such that the following diagrams:
\begin{displaymath}
 \xymatrix{
1\times A \ar[dr]^{\pi_2} \ar[d]_{\text{id}_G\times 1_A} \\
G\times A \ar[r]^{a_A} & A}
\text{, }
 \xymatrix{
G\times G \times A \ar[d]_{m_G\times 1_A} \ar[r]^-{1_G\times a_A} & G\times A \ar[d]^{a_A} \\
G\times A \ar[r]^{a_A} & A}
\end{displaymath}
 commute.

A (left) \emph{$G$-object} is an object $A$ with a left $G$-action $a_A$, denoted by $(A,a_A)$.

A \emph{$G$-morphism} from a $G$-object $(A,a_A)$ to a $G$-object $(B,a_B)$ is a morphism $f:A\to B$ in $\ca E$ such that the following diagram:
\begin{displaymath}
 \xymatrix{
G\times A \ar[d]^{a_A} \ar[r]^{1_G\times f} & G\times B \ar[d]^{a_B} \\
A \ar[r]^f & B}
\end{displaymath}
commutes.
\end{defn}

We denote by $\ca E[G]$ the category of $G$-objects and $G$-morphisms.
Note that $\ca E[G]$ can be thought as the category of algebras of the monad 
$$G\times -:\ca E \to \ca E,$$ with multiplication 
$$m_G\times 1_A: G\times G\times A\to G\times A,$$ and unit 
$$A\xrightarrow{(!,1_A)}1\times A\xrightarrow{\text{id}_G\times 1_A} G\times A.$$

The underlying functor of this monad has a right adjoint $(-)^G:\ca E\to \ca E$ and the adjunction induces a comonad structure on this right adjoint by \cite{Adjointtriples}, so that the category of coalgebras of this comonad is equivalent to the category of algebras of the monad. Hence, the forgetful functor $\ca E[G]\to \ca E$ has left and right adjoints, with the left adjunction being monadic and the right one comonadic.

The comonad is cartesian, hence the category of its coalgebras (which is equivalent to $\ca E[G]$) is a topos by \cite[A4.2.1]{Elephant1}. Moreover, the forgetful functor $\ca E[G]\to \ca E$ is the inverse image of a surjective essential geometric morphism $\ca E\to \ca E[G]$. The left adjoint to the forgetful functor sends an object $A$ of $\ca E$ to 
$$(G\times A,m_G\times 1_A: G \times G \times A \to G\times A).$$ And the direct image of the geometric morphism sends an object $A$ of $\ca E$ to 
$$(A^G, a_{A^G}: G\times A^G\to A^G)$$ where $a_{A^G}$ is the transpose of the morphism 
$$A^G\xrightarrow{A^{m_G}} A^{G\times G}\simeq (A^G)^G.$$

The forgetful functor $\ca E[G]\to \ca E$ is both monadic and comonadic, therefore it creates all limits and colimits which exist in $\ca E$.

The subobject classifier of $\ca E[G]$ is 
$$(\Omega, \pi_2: G\times \Omega \to \Omega),$$ where $\Omega$ is the subobject classifier of $\ca E$.

Given $(A,a_A)$ and $(B,b_B)$ in $\ca E[G]$, the exponential $(A,a_A)^{(B,a_B)}$ is $(A^B,a_{A^B})$, where the left $G$-action $a_{A^B}: G\times A^B \to A^B$, where $a_{A^B}$ is the transpose of the composite 
$$G\times A^B \times B\xrightarrow{(\pi_1,\pi_2,\text{inv}{\scriptscriptstyle{\circ}}\pi_1,\pi_3)} G\times A^B\times G \times B \xrightarrow{1_{G\times A^B}\times a_B} G\times A^B \times B \xrightarrow{1_G\times \text{ev}}G\times A \xrightarrow{a_A} A.$$
Using generalized elements, we can describe the $G$-action on $A^B$ to be the one sending an element $(g,u)$ of $G\times A^B$ to the element $a_{A^B}(g,u)$ of $A^B$, such that for $b$ an element of $B$, 
$$a_{A^B}(g,u)(b)=a_A(g,u(a_B(\text{inv}_G(g),b))).$$
To make this more readable we can denote the actions $a_A$, $a_B$ and $a_{A^B}$ by $*$, and the morphism $\text{inv}_G$ by $(-)^{-1}$, and then 
$$(g*u)(b)= g*(u(g^{-1}*b)).$$

Let $H$ be a second group object of $\ca E$ with group multiplication $m_H:H\times H\to H$, unit $\text{id}_H:1\to H$ and the inverse morphism $\text{inv}_H:H\to H$, and let $\theta: G\to H$ be a group homomorphism in $\ca E$.

In \cite[VII 3.1]{Sheaves}, there is a description of the essential geometric morphism $\theta:\ca E[G]\to \ca E[H]$, whose inverse image $\theta^*$ sends an object $(A,a_A)$ of $\ca E[H]$ to the object $(A, a_A(\theta\times 1_A))$. The left adjoint $\theta_{!}$ of $\theta^*$ sends an object $(B,a_B)$ of $\ca E[G]$ to $(B',a_{B'})$ where $(H\times B,m_H\times 1_B) \stackrel{c_B}\twoheadrightarrow (B',a_{B'})$ is the coequalizer of the following diagram:
$(H\times G\times B,m_H\times 1_{G\times B}) \rightrightarrows (H\times B,m_H\times 1_B)$. The two arrows we are coequalizing are $m_H(1_H\times \theta)\times 1_B$ and $1_H\times a_B$ and they both commute with the left $H$-actions. The inverse image of the induced geometric morphism is faithful, therefore the geometric morphism is a geometric surjection. 

Notice that given another group homomorphism $\theta': H \to K$ in $\ca E$, the geometric morphism induced by the composite group homomorphism $\theta'\theta:G\to K$  is isomorphic to the composite of the two induced geometric morphisms $\theta: \ca E[G]\to \ca E[H]$ and $\theta': \ca E[H]\to \ca E[K]$ (by comparing the inverse image part of the two geometric morphisms). Therefore, (up to isomorphism) there is no ambiguity when we are talking about the geometric morphism $\theta'\theta$.

Thus, we have a pseudofunctor 
$$\mathbf{Group}(\ca E)\to \mathfrak{Top}.$$ Also, notice that the trivial group object $1$ (where $1$ is the terminal object of $\ca E$) is the terminal object of the category $\mathbf{Group}(\ca E)$. $\ca E[1]$ is equivalent to $\ca E$, therefore we also have a pseudofunctor  
$$\mathbf{Group}(\ca E)\to \mathfrak{Top}/\ca E.$$
More precisely, given a group object $G$, the $\ca E$-topos given by this pseudofunctor is the geometric morphism $\ca E[G]\to \ca E$ whose inverse image sends an object $A$ of $\ca E$ to $(A,\pi_2: G\times A\to A)$, i.e. the object $A$ with a trivial $G$-action.

Notice that given a group $G$ we also have a group homomorphism $\text{id}_G:1\to G$ which induces a geometric morphism $\ca E\to \ca E[G]$, which is the one induced by the monads and comonads mentioned above and whose inverse image is the forgetful functor.

Let us go back to the essential geometric morphism $\theta: \ca E[G]\to \ca E[H]$. Let $(B,a_B)$ be in $\ca E[G]$ and let $\theta_!(B,a_B)=(B',a_{B'})$ and $c_B:H\times B\to B'$ be as in the description of $\theta_!$ above.  Then, the unit $\eta$ of the adjunction $\theta_{!}\dashv \theta^*$ at $(B,a_B)$ is given by the morphism
$$\eta_B:B\cong 1\times B \stackrel{\text{id}_H\times 1_B}\longrightarrow H\times B \stackrel{c_B}\twoheadrightarrow B'.$$
This arrow commutes with the left $G$-actions $a_B$ and $a_{B'}(\theta\times 1_{B'})$, hence it is indeed a morphism in $\ca E[G]$ from $(B,a_B)$ to $(B',a_{B'}(\theta\times 1_{B'}))=\theta^*\theta_!(B,a_B)$.

\begin{defn} \label{defnorbits}
Let $a_A:A\times G'\to A$ be a right action of a group $G'$ on a set $A$. Then the \emph{object of orbits} of $a_A$ is the coequalizer of the morphisms 
$$a_A, \pi_1: A\times G' \rightrightarrows A.$$
\end{defn}

In $\ca E[H]$, the group $(H,\pi_2)$ acts on the right of $(H,m_H)$ via multiplication. The group homomorphism $(G,\pi_2)\to (H,\pi_2)$ induces a right $(G,\pi_2)$-action on $(H,m_H)$. Notice that $\theta_{!}(1)$ is the object of orbits of the right $(G,\pi_2)$-action on $(H,m_H)$. We denote $\theta_{!}(1)$ by $(L,a_L)$.

\begin{lem}
 $\theta: \ca E[G]\to \ca E[H]$ is an atomic geometric morphism.
\end{lem}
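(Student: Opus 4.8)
The plan is to show that $\theta$ is atomic by proving that its inverse image $\theta^*:\ca E[H]\to \ca E[G]$ is a \emph{logical} functor, since by the standard characterization of atomic geometric morphisms (a geometric morphism is atomic precisely when its inverse image is logical) this is exactly what is needed. Recall that $\theta^*$ sends a left $H$-object $(A,a_A)$ to $(A,a_A(\theta\times 1_A))$; that is, it leaves the underlying object of $\ca E$ untouched and merely restricts the action along the group homomorphism $\theta:G\to H$. As the inverse image of a geometric morphism, $\theta^*$ automatically preserves finite limits, so the only remaining tasks are to check that it preserves the subobject classifier and exponentials.

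For the subobject classifier I would argue as follows. The subobject classifier of $\ca E[H]$ is $(\Omega,\pi_2:H\times\Omega\to\Omega)$, where $\Omega$ is the subobject classifier of $\ca E$. Applying $\theta^*$ yields $(\Omega,\pi_2(\theta\times 1_\Omega))$, and since precomposing the second projection with $\theta\times 1_\Omega$ returns the second projection $\pi_2:G\times\Omega\to\Omega$, this equals $(\Omega,\pi_2)$, the subobject classifier of $\ca E[G]$. For exponentials I would use the explicit description of the $H$-action on $(A,a_A)^{(B,a_B)}=(A^B,a_{A^B})$ recorded earlier, which on generalized elements reads $(h\ast u)(b)=h\ast(u(h^{-1}\ast b))$. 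Restricting along $\theta$, the $G$-action on $\theta^*(A^B)$ sends $(g,u)$ to the map $b\mapsto \theta(g)\ast(u(\theta(g)^{-1}\ast b))$. On the other hand, the exponential $(\theta^*(A,a_A))^{\theta^*(B,a_B)}$ formed inside $\ca E[G]$ carries the action $(g\ast u)(b)=g\ast'(u(g^{-1}\ast' b))$, where $\ast'$ denotes the restricted actions $g\ast'(-)=\theta(g)\ast(-)$; because $\theta$ is a group homomorphism we have $\theta(g^{-1})=\theta(g)^{-1}$, so the two formulae coincide. Hence $\theta^*$ preserves exponentials, and is therefore logical.

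The conceptual reason all of this works, which I would note to organise the argument, is that both forgetful functors $U_G:\ca E[G]\to\ca E$ and $U_H:\ca E[H]\to\ca E$ are themselves logical (the subobject classifier and exponentials of $\ca E[G]$ have underlying $\ca E$-objects $\Omega$ and $A^B$), and $U_G\theta^*=U_H$ since $\theta^*$ is the identity on underlying objects. Thus the only content is that the action formulae defining the topos structure of $\ca E[H]$ pull back along $\theta$ to the corresponding formulae for $\ca E[G]$.

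The one step that genuinely requires care is the exponential computation, where one must correctly track the inverse appearing in the action formula and invoke the homomorphism property $\theta(g)^{-1}=\theta(g^{-1})$; the subobject-classifier case and the preservation of finite limits are immediate from the descriptions already given. Once $\theta^*$ is seen to be logical, the conclusion that $\theta$ is atomic is formal.
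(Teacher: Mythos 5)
Your proposal is correct and takes essentially the same route as the paper's own proof: both invoke the criterion that $\theta$ is atomic iff $\theta^*$ preserves the subobject classifier and exponentials, verify the subobject classifier case by noting $\theta^*(\Omega,\pi_2)=(\Omega,\pi_2)$, and verify exponentials by the same generalized-element computation using $\theta(g)^{-1}=\theta(g^{-1})$. Your closing remark that the forgetful functors are logical and commute with $\theta^*$ is a pleasant organizational gloss not present in the paper, but it adds no new mathematical content.
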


\begin{proof}
A geometric morphism is atomic iff its inverse image preserves the subobject classifier and exponentials.

The subobject classifier of $\ca E[H]$ is $(\Omega, \pi_2:H\times \Omega\to \Omega)$ where $\Omega$ is the subobject classifier of $\ca E$. $\theta^*(\Omega,\pi_2:H\times \Omega\to \Omega)$ is $(\Omega, \pi_2:G\times \Omega\to \Omega)$, the subobject classifier of $\ca E[G]$.

Given $(A,a_A)$ and $(B,a_B)$ in $\ca E[H]$, the exponential $(A,a_A)^{(B,a_B)}$ is $(A^B,a_{A^B})$, where the action $a_{A^B}:H\times A^B\to A^B$ as described above in terms of elements sends $(h,u)$ to $h*u$ in $A^B$ such that for $b$ an element of $B$, 
$$(h*u)(b)= h*(u(h^{-1}*b)).$$

$\theta^*(A,a_A)^{\theta^*(B,a_B)}$ is $(A,a_A(\theta\times 1_A))^{(B,a_B(\theta \times 1_B))}$. This is the object $A^B$ with the left $G$-action $b_{A^B}:G\times A^B \to A^B$ sending an element $(g,u)$ to $b_{A^B}(g,u)$ such that for an element $b$ of $B$, 
$$b_{A^B}(g,u)(b)=\theta(g)*(u(\theta(g)^{-1}*b))=\theta(g)*(u(\theta(g^{-1})*b)).$$

$\theta^*(A^B,a_{A^B})$ is $(A^B, a_{A^B}(\theta \times 1_{A^B}))$, where the $G$-action $a_{A^B}(\theta \times 1_{A^B}):G\times A^B\to A^B$ sends an element $(g,u)$ of $G\times A^B$ to $a_{A^B}(\theta \times 1_{A^B})(g,u)$ so that for $b$ an element of $B$, 
$$(a_{A^B}(\theta \times 1_{A^B})(g,u))(b)=(a_{A^B}(\theta(g), u))(b)=\theta(g)*(u(\theta(g^{-1})*b)).$$
Hence, $a_{A^B}(\theta \times 1_{A^B})= b_{A^B}$, and therefore $\theta^*$ preserves exponentials.

$\theta^*$  preserves exponentials and the subobject classifier, therefore $\theta$ is an atomic geometric morphism.
\end{proof}

\begin{rmk}
Notice that the above lemma does not generalize to the case where $G$ and $H$ are monoids.
\end{rmk}

If $\theta$ is a monomorphism $\theta^* (L,a_L)=(L,\pi_2)$ and the unit on $1$ is the morphism $1\xrightarrow{\text{id}_H} H \xrightarrow{c_1} L$ where $c_1:H\to L$ is the coequalizer given in the description of $\theta_!(B,a_B)$ for $(B,a_B)=(1,!)$. It commutes with the appropriate $G$-actions, therefore it defines a morphism $\eta_1:(1,!)\to (L,\pi_2)$.

\begin{lem} \label{lemifflocalic}
 $\theta: \ca E[G]\to \ca E[H]$ is a localic geometric morphism iff $\theta: G\to H$ is a monomorphism.
\end{lem}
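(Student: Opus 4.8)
The plan is to use the standard characterization that a geometric morphism $f:\ca F\to\ca S$ is localic precisely when every object of $\ca F$ is a subquotient of one of the form $f^*(X)$, i.e. a quotient of a subobject of some $f^*(X)$. Throughout, write $K$ for the kernel of $\theta$, the pullback of $\text{id}_H:1\to H$ along $\theta$, with monomorphism $\iota:K\rightarrowtail G$; recall that for group objects $\theta$ is a monomorphism iff $K\cong 1$. The key invariant is the following: for any $H$-object $(A,a_A)$, the $G$-object $\theta^*(A,a_A)=(A,a_A(\theta\times 1_A))$ has $K$ acting trivially, since $\theta\iota$ factors through $\text{id}_H$ and hence $a_A(\theta\iota\times 1_A)=\pi_2$. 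Moreover the class of $G$-objects on which $K$ acts trivially is closed under subobjects and under quotients (for a quotient $q$ one uses that $1_K\times q$ is again epi), hence under subquotients. Thus every object in the essential image of $\theta^*$, and every subquotient of such, has $K$ acting trivially.

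For the implication that a monomorphism $\theta$ induces a localic morphism, I would first observe that $\theta$ is itself a $G$-morphism $(G,m_G)\to\theta^*(H,m_H)$, since $\theta(gg')=\theta(g)\theta(g')$ is exactly the action-via-$\theta$ applied to $(g,\theta(g'))$; as the faithful forgetful functor $\ca E[G]\to\ca E$ reflects monomorphisms, $\theta$ mono in $\ca E$ yields a subobject $(G,m_G)\rightarrowtail\theta^*(H,m_H)$ in $\ca E[G]$, so $(G,m_G)$ lies in the class $\ca C$ of subquotients of $\theta^*$-objects. Next I would note that $\ca C$ is closed under quotients (a quotient of a subquotient is a subquotient) and under finite products (because $\theta^*$ preserves products and a product of subquotients is a subquotient of the product). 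Since $p_G^*(A)=\theta^*(p_H^*(A))$ lies in $\ca C$ and $F(A):=(G\times A,m_G\times 1_A)=(G,m_G)\times p_G^*(A)$, every free object $F(A)$ lies in $\ca C$. Finally, $U:\ca E[G]\to\ca E$ is monadic, so each $(B,a_B)$ is a (regular epi) quotient of $F(U(B))$, whence $(B,a_B)\in\ca C$. Thus every object is a subquotient of a $\theta^*$-object, and $\theta$ is localic.

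For the converse, suppose $\theta:\ca E[G]\to\ca E[H]$ is localic. Applying the criterion to $(G,m_G)$, it is a subquotient of some $\theta^*(A,a_A)$, so by the invariant above $K$ acts trivially on $(G,m_G)$; that is, $m_G(\iota\times 1_G)=\pi_2:K\times G\to G$. Precomposing with $1_K\times\text{id}_G$ (evaluating the second variable at the group unit) and using the right unit law gives $\iota=\text{id}_G\circ{!_K}$, so $\iota$ factors through $\text{id}_G:1\to G$. Since $\iota$ is monic this forces ${!_K}:K\to 1$ to be monic, i.e. $K$ subterminal; but $K$ carries the global point given by the unit of the group object $K$, and a subterminal object with a global point is terminal, so $K\cong 1$. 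Hence $\theta$ has trivial kernel: for any $x,y:T\to G$ with $\theta x=\theta y$ the element $xy^{-1}$ satisfies $\theta(xy^{-1})=\text{id}_H\circ{!_T}$ and so factors through $K\cong 1$, giving $xy^{-1}=\text{id}_G\circ{!_T}$ and thus $x=y$; therefore $\theta$ is a monomorphism.

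The main obstacle is the forward direction's generation argument, namely verifying that the class of subquotients of $\theta^*$-objects already absorbs all free objects and hence everything, together with keeping the ``trivial action'' manipulations genuinely internal rather than element-theoretic — in particular the closure of the trivial-action class under quotients and the final step deducing $K\cong 1$ from $K$ being subterminal with a point. Choosing the subquotient form of the localic criterion is what makes both directions short and symmetric.
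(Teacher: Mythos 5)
Your proof is correct, and while your forward direction is essentially the paper's argument in different packaging, your converse is a genuinely different route. For ``mono implies localic'' the paper directly exhibits each $(A,a_A)$ as the quotient, via $a_A$, of the free object $(G\times A,\, m_G\times 1_A)$, which embeds into $\theta^*(H\times A,\, m_H\times 1_A)$ via $\theta\times 1_A$; you rebuild the same subquotient presentation indirectly, from $(G,m_G)\rightarrowtail \theta^*(H,m_H)$ together with closure of the class $\ca C$ under products and quotients and the monadic presentation $F(UB)\twoheadrightarrow (B,a_B)$ --- note that your $F(A)=(G,m_G)\times p_G^*(A)$ sits inside $\theta^*\bigl((H,m_H)\times p_H^*(A)\bigr)=\theta^*(H\times A,\, m_H\times 1_A)$, so the two arguments produce literally the same mono--epi data, yours just with more bookkeeping. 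The converse is where you truly diverge: the paper argues with generalized elements, taking $g,g'$ with $\theta(g)=\theta(g')$, choosing (locally, along the cover witnessing that $k$ is epi) a preimage $x$ of $f(1_G)$, computing $f(g)=k(a_A(\theta(g),x))=k(a_A(\theta(g'),x))=f(g')$, and cancelling the mono $f$; you instead isolate the diagrammatic invariant that the kernel $K$ acts trivially on every $\theta^*$-object and that trivial $K$-action passes to subquotients, whence trivial $K$-action on $(G,m_G)$ forces $\iota=\text{id}_G\circ {!_K}$, so $K$ is subterminal with a global point, hence $K\cong 1$, hence $\theta$ is mono. Your route buys you a choice-free, fully internal argument (no preimage $x$, no implicit descent behind the paper's parenthetical that ``the final result does not depend on the choice of $x$'') and it explains conceptually why the criterion works: trivial $K$-action is exactly the invariant of the subquotient-closure of the essential image of $\theta^*$. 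The paper's route buys brevity --- a two-line computation once one accepts Kripke--Joyal-style element reasoning --- and avoids your kernel machinery and the subterminal-with-a-point lemma. I checked the delicate steps on your side and they all hold: $1_K\times q$ epi gives quotient-closure of the trivial-action class, $a_B$ is epi in $\ca E[G]$ because the faithful forgetful functor reflects epis and $U(a_B)$ is split epi, and ``trivial kernel implies mono'' via $xy^{-1}$ is valid for group objects read on generalized elements.
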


\begin{proof}
Suppose that $\theta: G\to H$ is a monomorphism. Given an object $(A,a_A)$ in $\ca E[G]$, consider the epimorphism 
$$a_A: (G\times A, m_G\times 1_A)\twoheadrightarrow (A,a_A)$$ and the monomorphism 
$$\theta\times 1_A: (G\times A, m_G\times 1_A) \rightarrowtail (H\times A, (m_H(\theta\times 1_H))\times 1_A)= \theta^*(H\times A, m_H\times 1_A).$$
Therefore, $(A,a_A)$ is a subquotient of $\theta^*(H\times A, m_H\times 1_A)$ and the geometric morphism $\theta$ is localic.

For the direct implication let us suppose that $\theta: \ca E[G] \to \ca E[H]$ is localic. The object $(G,m_G)$ is a subquotient of some $\theta^*(A,a_A)=(A,a_A(\theta \times 1_A)$ for $(A,a_A)$ an object of $\ca E[H]$, i.e. there exists an object $(B,a_B)$ of $\ca E[G]$, a monomorphism $f:(G,m_G)\rightarrowtail (B,a_B)$ and an epimorphism $k:(A,a_A(\theta\times 1_A)\twoheadrightarrow (B,a_B)$. Let $g$, $g'$ be in $G$ and suppose that $\theta(g)=\theta(g')$. $k$ is an epimorphism so there exists an element $x$ of $A$ such that $f(1_G)=k(x)$ (and the final result does not depend on the choice of $x$).
\begin{displaymath}
\begin{split}
f(g) =a_B(g,f(1_G)) & =a_B(g,k(x))=k(a_A(\theta(g),x)) = \\
& = k(a_A(\theta(g'),x))=a_B(g',k(x))=a_B(g',f(1_G))=f(g').
\end{split}
\end{displaymath}
$f$ is a monomorphism therefore $g=g'$, hence $\theta$ is also a monomorphism.
\end{proof}

\begin{rmk}
Notice that the above lemma is a special case of the fact that for an internal functor $f: \mathbb C\to \mathbb D$ in a topos $\ca E$, the induced geometric morphism $\ca E[\mathbb C] \to \ca E[\mathbb D]$ is localic iff $f$ is a faithful functor. This is proved for $\ca E = \set$ in \cite[A4.6.2(c)]{Elephant1}.
\end{rmk}



\begin{thrm} \label{thrmlocal}
 If $\theta:G\to H$ is a group monomorphism in a topos $\ca E$, then $\theta: \ca E[G]\to \ca E[H]$ is a local homeomorphism, and in particular $\ca E[G]$ is equivalent to $\ca E[H]/(L,a_L)$.
\end{thrm}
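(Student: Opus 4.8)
The plan is to exhibit an explicit equivalence of categories $\ca E[G]\simeq \ca E[H]/(L,a_L)$ compatible with the two projections to $\ca E[H]$; since the slice projection $\pi\colon \ca E[H]/(L,a_L)\to \ca E[H]$ is a local homeomorphism by definition, this will display $\theta$ as a local homeomorphism and simultaneously prove the ``in particular'' clause. The monomorphism hypothesis has already been used to guarantee that $\theta$ is localic, by Lemma \ref{lemifflocalic}, which is a prerequisite for being a local homeomorphism; the construction below refines this. Recall from the description of $\theta_{!}$ that $(L,a_L)=\theta_{!}(1)$ is the object of orbits (Definition \ref{defnorbits}) of the right $G$-action on $(H,m_H)$ induced by $\theta$, with quotient map $q\colon (H,m_H)\twoheadrightarrow (L,a_L)$ and basepoint $\eta\colon 1\xrightarrow{\mathrm{id}_H} H\xrightarrow{q} L$.

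I would define two functors. First, the induction functor
$$\Theta\colon \ca E[G]\to \ca E[H]/(L,a_L),\qquad \Theta(B,a_B)=\bigl(\theta_{!}(B,a_B)\xrightarrow{\theta_{!}(!_B)}\theta_{!}(1)=(L,a_L)\bigr),$$
using that $\theta_{!}$ is a functor and that the unique map $(B,a_B)\to 1$ induces the structure map to $(L,a_L)$. Second, the fibre functor
$$\Xi\colon \ca E[H]/(L,a_L)\to \ca E[G],$$
sending $\bigl((A,a_A),f\colon A\to L\bigr)$ to the pullback $A_0=\eta^{*}A$ of $f$ along $\eta$, equipped with the $G$-action arising from the stabiliser identity $a_L\circ(\theta\times\eta)=\eta\circ{!}_G$ (which holds because $q$ identifies each right $G$-orbit, so $\theta(g)$ fixes the basepoint $\eta$). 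A short computation then gives compatibility: for $(A,a_A)$ in $\ca E[H]$ one has $\Xi\pi^{*}(A,a_A)=\eta^{*}\bigl((L,a_L)\times(A,a_A)\bigr)\cong (A,a_A(\theta\times 1_A))=\theta^{*}(A,a_A)$, naturally, so that the inverse image of the equivalence $E$ with $E^{*}=\Xi$ satisfies $E^{*}\pi^{*}\cong\theta^{*}$, i.e. $\pi\circ E\cong\theta$.

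It remains to prove that $\Theta$ and $\Xi$ are quasi-inverse. For $\Xi\Theta\cong \mathrm{id}$, I would compute the fibre of $\theta_{!}(B,a_B)\to (L,a_L)$ over $\eta$ directly from the coequalizer presentation of $\theta_{!}$: writing $\theta_{!}(B)=(H\times B)/G$ for the right $G$-action $(h,b)\cdot g=(h\theta(g),g^{-1}b)$, every orbit lying over $\eta$ meets $\{e\}\times B$, and $(e,b_1)$, $(e,b_2)$ lie in the same orbit iff $\theta(g)=e$ and $g^{-1}b_1=b_2$ for some $g$; here the hypothesis that $\theta$ is \emph{monic} forces $g=1$, whence the fibre is isomorphic to $(B,a_B)$ as a $G$-object. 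This is the place where injectivity of $\theta$ is essential. For $\Theta\Xi\cong\mathrm{id}$, I would show that the canonical $H$-equivariant comparison map over $L$,
$$c\colon \theta_{!}\Xi(A,f)=(H\times A_0)/G\to A,\qquad [h,x]\mapsto a_A(h,x),$$
is an isomorphism; this is the assertion that every $H$-object over the coset object $(L,a_L)$ is induced from its basepoint fibre.

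The main obstacle is this last isomorphism, and I would handle it by descent along the equivariant epimorphism $q\colon (H,m_H)\twoheadrightarrow (L,a_L)$. Two ingredients are needed. First, the base case $G=1$ of the theorem itself: slicing over the free object gives $\ca E[H]/(H,m_H)\simeq \ca E$ via taking the fibre over $e$, under which the pullback of any $f\colon A\to L$ along $q$ trivialises by $(h,x)\mapsto (h,a_A(h,x))\colon H\times A_0\xrightarrow{\sim} H\times_L A$. Second, the standard fact that pulling back along an epimorphism in a topos yields a conservative functor $q^{*}$ on slices. Under $q^{*}$ both $\theta_{!}\Xi(A,f)$ and $A$ trivialise to the constant object on $A_0$, and $c$ pulls back to the identity; since $q^{*}$ reflects isomorphisms, $c$ is itself an isomorphism. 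Assembling the naturality of $\Theta$, $\Xi$ and $c$ (all routine), $\Theta$ and $\Xi$ are mutually inverse equivalences, and combined with the compatibility above this realises $\theta$ as the slice projection $\pi$, hence as a local homeomorphism with $\ca E[G]\simeq \ca E[H]/(L,a_L)$.
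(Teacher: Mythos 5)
Your proof is correct, but it takes a genuinely different route from the paper's. The paper's own proof is essentially a citation: having established in the two preceding lemmas that $\theta:\ca E[G]\to \ca E[H]$ is atomic (its inverse image preserves the subobject classifier and exponentials) and that it is localic precisely when $\theta$ is monic, it invokes \cite[C3.5.4(iii)]{Elephant2} (``an atomic localic morphism is a local homeomorphism'') and extracts the factorization $\ca E[G]\simeq \ca E[G]/(L,\pi_2)\simeq\ca E[H]/(L,a_L)\to\ca E[H]$ by following the proof of that cited result. You instead build the equivalence by hand: induction $\Theta=\bigl(\theta_!(-)\to\theta_!(1)\bigr)$ one way, the fibre over the basepoint coset the other way, with monicity of $\theta$ entering exactly where it must (the identification $(e,b_1)\sim(e,b_2)$ in $\theta_!(B)$ forces $\theta(g)=e$, hence $g=1$), and with the comparison map $c$ shown invertible by descent along the epimorphism $q:(H,m_H)\twoheadrightarrow(L,a_L)$. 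Your route avoids atomicity altogether and makes both the equivalence and the role of the hypothesis completely explicit --- indeed your $\Xi$ agrees with the inverse image of the paper's composite equivalence, namely the fibre of $\theta^*(A)$ over the unit $\eta_1:1\to(L,\pi_2)$ --- at the cost of more verification; the paper's route is shorter but leaves the equivalence implicit in the reference. Two points to make explicit if you write this up, neither of which is a gap: your element computations must be read in the internal language, which is legitimate here because the statements involved are geometric and, by exactness of the topos, the kernel pairs of the quotients $H\times B\twoheadrightarrow\theta_!(B)$ and $H\twoheadrightarrow L$ are exactly the orbit equivalence relations (these relations are equivalence relations since $G$ is a group), so the orbit calculus is justified; and the conservativity of $q^*$ on slices is standard (epimorphisms in a topos are of effective descent, or argue directly: $q^*$ is faithful because $q^*X\to X$ is epi, and faithfulness together with preservation of kernel pairs and pullback-stability of epis yields reflection of isomorphisms, since mono plus epi implies iso in a topos). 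A last cosmetic remark: monicity is not actually needed in your uniqueness check for the trivialisation of $q^*\theta_!\Xi(A,f)$, since $\theta(g)=e$ already forces $\theta(g)^{-1}y_1=y_1$ there; its only essential use is in $\Xi\Theta\cong\mathrm{id}$, just as you flagged.
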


\begin{proof}
By \cite[C3.5.4(iii)]{Elephant2} an atomic morphism that is also localic is a local homeomorphism.

By following the proof, $\theta$ is isomorphic to the geometric morphism 
$$\ca E[G]\xrightarrow{\eta_1} \ca E[G]/(L,\pi_2)\xrightarrow{\theta/(L,a_L)}\ca E[H]/(L,a_L)\xrightarrow{p} \ca E[H]$$
where the geometric morphism $p$ is the local homeomorphism induced by the unique morphism $(L,a_L)\to (1,!)$ in $\ca E[H]$. In the same proof it is shown that 
$$\ca E[G]\xrightarrow{\eta_1} \ca E[G]/(L,\pi_2)\xrightarrow{\theta/(L,a_L)} \ca E[H]/(L,a_L)$$ is an equivalence.
\end{proof}

\begin{rmk}
 Note that the converse of the theorem is also true. A local homeomorphism is always localic. Therefore, by Lemma \ref{lemifflocalic} $\theta: \ca E[G]\to \ca E[H]$  being a local homeomorphism implies that $\theta: G\to H$ is a monomorphism.
\end{rmk}

\section{The $H$-endomorphisms of $L$}

Theorem \ref{thrmlocal} says that given a group monomorphism $\theta:G\to H$ in a topos $\ca E$, the induced geometric morphism $\theta:\ca E[G]\to \ca E[H]$  induces a local homeomorphism and in particular $\ca E[G]$ factors through the local homeomorphism $\ca E[H]/(L,a_L)\to \ca E[H]$, where $(L,a_L)$ is $\theta_{!}(1)$. In this section, we are interested in all the factorizations of $\theta$ through $\ca E[H]/(L,a_L)\to \ca E[H]$.

As mentioned in \cite[B3.2.8(b)]{Elephant1}, for a topos $\ca F$, the functor $\ca F\to \mathfrak{Top}/\ca F$ which sends $I$ to $\ca F/I$, is a (2-categorical) full embedding. Given an object $X$ of $\ca F$, we spell out the details of this statement for the endomorphisms of the local homeomorphism $\ca F/X\to \ca F$ (when viewed as an object of $\mathfrak{Top}/\ca F$).

Given a topos $\ca F$ and an object $X$ of $\ca F$, let $p: \ca F/X\to \ca F$ be the local homeomorphism. By \cite[B3.2.8(b)]{Elephant1}, $$\mathfrak{Top}/\ca F(p,p)$$ is equivalent to the discrete category whose objects are the morphisms in $\ca F/X$ from the terminal object $1_X$ to $p^*(X)=(X\times X\xrightarrow{\pi_2} X)$. This equivalence sends a geometric morphism $g:\ca F/X\to \ca F/X$ to the inverse image of the diagonal map $\Delta_X: 1_X\to p^*(X)$.

Conversely, suppose we are given a morphism $1_X\to p^*(X)$, i.e. a morphism $X\to X\times X$ which makes the following diagram:
\begin{displaymath}
 \xymatrix{
X\ar[d]^{1_X} \ar[r] & X\times X \ar[dl]^{\pi_2} \\
X
}
\end{displaymath}
commute. This morphism is of the form $(\phi,1_X):X\to X\times X$. We claim that the geometric morphism that corresponds to it is $\phi:\ca F/X\to \ca F/X$ (whose inverse image is pullback by $\phi$). $\phi^*$ sends the diagonal map $\Delta_X:1_X\to p^*(X)$ to $(\phi,1_X)$ because in the following diagram
\begin{displaymath}
 \xymatrix{
X \ar[d]^{(\phi,1_X)} \ar[r]^{\phi} & X \ar[d]^{\Delta_X} \\
X\times X \ar[r]^{1_X\times \phi} \ar[d]^{\pi_2} &  X\times X \ar[d]^{\pi_2} \\
X \ar[r]^{\phi} & X
}
\end{displaymath}  
the top square is a pullback (since the bottom one and the whole rectangle are).

Given a group monomorphism $\theta:G\to H$ in a topos $\ca E$ as in the previous section, let $\theta$ be the induced geometric morphism $\ca E[G]\to \ca E[H]$. Let $p:\ca E[H]/(L,a_L)\to \ca E[H]$ be the local homeomorphism of the previous section. $G$ is a subgroup of $H$ via the monomorphism $\theta$. We say that $G$ is a \emph{normal subgroup} of $H$ when for any $g$ in $G$ and $h$ in $H$, $hgh^{-1}$ is in $G$. Then, the following two lemmas hold.

\begin{lem}
The category of endomorphisms of $p: \ca E[H]/(L,a_L)\to \ca E[H]$ (when $p$ is viewed as an object of  $\mathfrak{Top}/\ca E[H]$) is equivalent to the discrete category whose objects are (external) endomorphisms of $(L,a_L)$ in $\ca E[H]$. Furthermore, any factorization of $p$ through $p$ is a geometric surjection, and if $G$ is a normal subgroup of $H$ then any factorization of $p$ through $p$ is an equivalence.
\end{lem}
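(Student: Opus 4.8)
The plan is to treat the three assertions in turn, relying throughout on the explicit description of $(L,a_L)=\theta_!(1)$, of the coequalizer $c_1:(H,m_H)\twoheadrightarrow(L,a_L)$, and of the local homeomorphism $p:\ca E[H]/(L,a_L)\to\ca E[H]$ obtained in Theorem \ref{thrmlocal}. For the first assertion I would simply specialise the discussion preceding the lemma to the topos $\ca F=\ca E[H]$ and the object $X=(L,a_L)$. That discussion identifies $\mathfrak{Top}/\ca E[H](p,p)$ with the discrete category of morphisms $1_X\to p^*(X)$ in $\ca E[H]/(L,a_L)$, and shows these are exactly the $(\phi,1_X)$ for $\phi$ an endomorphism of $(L,a_L)$ in $\ca E[H]$; moreover the geometric endomorphism of $p$ corresponding to $\phi$ has inverse image equal to pullback along $\phi$, which I write $\ovl\phi:\ca E[H]/(L,a_L)\to\ca E[H]/(L,a_L)$. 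This yields the claimed equivalence.

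For the second assertion, a factorisation of $p$ through $p$ is precisely such an $\ovl\phi$, and the geometric morphism between slices induced by a morphism $\phi:X\to X$ of $\ca E[H]$ is a surjection iff its inverse image (pullback along $\phi$) is faithful, i.e. iff $\phi$ is an epimorphism. So it suffices to prove that every endomorphism $\phi$ of $(L,a_L)$ in $\ca E[H]$ is epi. Since the forgetful functor $\ca E[H]\to\ca E$ creates colimits and hence reflects epimorphisms, I would test this on underlying maps in $\ca E$. There $c_1$ is a (regular) epimorphism and, as $c_1$ is $H$-equivariant, one computes $c_1=a_L\circ(1_H\times\eta_1)$ under $H\cong H\times 1$, where $\eta_1:1\to L$ is the underlying base point $c_1\circ\mathrm{id}_H$. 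Equivariance of $\phi$ (the square $\phi\circ a_L=a_L\circ(1_H\times\phi)$ in $\ca E$) then gives $\phi\circ c_1=a_L\circ(1_H\times(\phi\eta_1))$, the orbit map $o_s$ of the point $s:=\phi\eta_1$. As $\phi\circ c_1$ epi forces $\phi$ epi, it remains to see that every such orbit map is an epimorphism.

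This last claim is the technical heart, and the step I expect to be the main obstacle, since it must be run constructively rather than by choosing a representative. I would argue as follows in $\ca E$: pull back the epimorphism $c_1:H\to L$ along $s:1\to L$ to obtain a well-supported object $P\rightarrowtail H$ (the fibre of $c_1$ over $s$) with $k:P\to H$ satisfying $c_1\circ k=s\circ{!}_P$. Using $a_L(h,c_1(y))=c_1(m_H(h,y))$, over $P$ the orbit map becomes $(h,x)\mapsto c_1(h\,k(x))$; and $(h,x)\mapsto h\,k(x)$ factors as $\pi_H$ post-composed with the automorphism $(h,x)\mapsto(h\,k(x),x)$ of $H\times P$, so it is epi because $\pi_H:H\times P\to H$ is the pullback of the cover $P\to 1$. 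Composing with the epimorphism $c_1$ shows $o_s\circ\pi_H$ is epi, whence $o_s$ is epi; this proves $\phi$ is epi and $\ovl\phi$ a surjection.

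Finally, for the third assertion I would use that when $G$ is normal the object $L$ carries the structure of the quotient group $H/G$: the map $c_1$ becomes a group homomorphism $q:H\to L$ and $a_L=m_L\circ(q\times 1_L)$ is left translation. Given an equivariant endomorphism $\phi$ of $(L,a_L)$, set $\phi_0:=\phi\circ\eta_1:1\to L$; taking $l=\eta_1$ in the equivariance equation gives $\phi(q(h))=a_L(h,\phi_0)=q(h)\cdot_L\phi_0$ for all $h$, and since $q$ is epi this means $\phi$ equals right multiplication $\rho_{\phi_0}$ by $\phi_0$ in the group $L$. As $\rho_{\phi_0}$ commutes with the left action and is invertible with inverse $\rho_{\phi_0^{-1}}$ (via $\mathrm{inv}_L$), it is an isomorphism in $\ca E[H]$, so every such $\phi$, and hence every factorisation $\ovl\phi$ of $p$ through $p$, is an equivalence.
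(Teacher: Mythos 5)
Your proposal is correct. For the first assertion you do exactly what the paper does: specialise the discussion of endomorphisms of a local homeomorphism to $\ca F=\ca E[H]$ and $X=(L,a_L)$, identifying $\mathfrak{Top}/\ca E[H](p,p)$ with the discrete category of equivariant endomorphisms of $(L,a_L)$. For surjectivity your argument agrees with the paper's in substance: the paper chases generalized elements, writing $x=c_1(\mathrm{id}_H)$ and using transitivity of the $H$-action on $L$ to produce, for each $y$, an $h$ with $y=a_L(h,f(x))=f(a_L(h,x))$, whereas you externalise that same transitivity by pulling back $c_1$ along $s=\phi\eta_1$ and exhibiting $o_s\circ\pi_H$ as $c_1$ composed with an automorphism-twisted projection; both routes show every equivariant endomorphism of $(L,a_L)$ is epi and hence induces a geometric surjection between the slices. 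Where you genuinely diverge is the normal case. The paper proves $f$ is additionally a monomorphism by a direct element chase: from $f(y)=f(y')$ and $hy=y'$ it deduces $k^{-1}hk\in G$, hence $h\in G$ by normality, hence $y\sim y'$, and then concludes via epi plus mono equals iso in a topos. You instead endow $L$ with the quotient group structure $H/G$ (legitimate internally, since normality makes the relation defining $L$ a congruence for $m_H$), observe that $a_L$ is left translation along $q=c_1$, and compute from equivariance at $\eta_1$ that $\phi\circ q=\rho_{\phi_0}\circ q$, so that $q$ epi forces $\phi=\rho_{\phi_0}$, right translation by $\phi_0=\phi\eta_1$, which is invertible outright. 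Your route buys a sharper statement than the lemma asks for --- the endomorphisms of $(L,a_L)$ form a group, isomorphic to the global elements of $H/G$ acting by right translation --- at the cost of first setting up the group structure on $L$; the paper's mono argument is more elementary and needs no structure on $L$ beyond the $H$-action.
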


\begin{proof}
By the above discussion, an endomorphism of $p$ is induced by a morphism $(L,a_L)\to (L,a_L)$ in $\ca E[H]$, i.e. a morphism $f:L\to L$ in $\ca E$ which commutes with the $H$-action $a_L$.

Recall the definition of $(L,a_L)$. $L$ is the quotient $H/\sim$, where $h\sim h'$ iff there exists $g$ in $G$ such that $hg=h'$. This quotient of $H$ inherits the left $H$-action of $H$ via left multiplication. Let $f:(L,a_L)\to (L,a_L)$ be a morphism in $\ca E[H]$. $L$ contains the equivalence class of the unit of $H$ as an element which we shall denote by $x$. Given $y$ in $L$, there exists $h$ in $H$ such that $a_L(h,f(x))=y$, and therefore $y=f(a_L(h,x))$. Hence, $f$ is an epimorphism, and therefore the induced geometric morphism $f:\ca E[H]/(L,a_L)\to \ca E[H]/(L,a_L)$ is a geometric surjection.

Suppose that $G$ is a normal subgroup of $H$, and let $f:(L,a_L)\to (L,a_L)$ be a morphism in $\ca E[H]$. Let $y$, $y'$ be elements of $H$ representing elements of $L$ and suppose that $f(y)=f(y')$ represented by the element $k$ of $H$. Let $h$ be in $H$ such that $hy=y'$. Then, $a_L(h,f(y))=f(y')$ or equivalently $hk\sim k$ which implies that $k^{-1}hk$ is an element of $G$. $G$ is a normal subgroup of $H$, therefore $h$ is also in $G$, and therefore $y$ and $y'$ represent the same element of $L$. Hence, $f$ is a monomorphism, and therefore it is an isomorphism since we have already shown that it is an epimorphism. Therefore, the induced geometric morphism $f: \ca E[H]/(L,a_L)\to \ca E[H]/(L,a_L)$ is an equivalence.
\end{proof}

\begin{lem}
 $\mathfrak{Top}/\ca E[H](\theta,p)$ is equivalent to the discrete category whose objects are the (external) endomorphisms of $(L,a_L)$ in $\ca E[H]$. Furthermore, any factorization of $\theta$ through $p$ is a geometric surjection, and if $G$ is a normal subgroup of $H$ then any factorization of $\theta$ through $p$ is an equivalence.
\end{lem}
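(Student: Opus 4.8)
The plan is to reduce the statement to the previous lemma by observing that Theorem \ref{thrmlocal} already exhibits $\theta$ and $p$ as \emph{isomorphic objects} of the slice $2$-category $\mathfrak{Top}/\ca E[H]$. Indeed, the proof of Theorem \ref{thrmlocal} shows that $\theta$ is isomorphic to the composite $p\circ e$, where
$$e:\ca E[G]\xrightarrow{\eta_1}\ca E[G]/(L,\pi_2)\xrightarrow{\theta/(L,a_L)}\ca E[H]/(L,a_L)$$
is an equivalence of toposes. First I would record that the isomorphism $p\circ e\cong\theta$ furnished there is precisely a $2$-cell over $\ca E[H]$, so that $e$ is a $1$-morphism from $\theta$ to $p$ in $\mathfrak{Top}/\ca E[H]$; since the forgetful $2$-functor $\mathfrak{Top}/\ca E[H]\to\mathfrak{Top}$ reflects equivalences and $e$ is an equivalence of the underlying toposes, $e$ is an equivalence $\theta\to p$ in $\mathfrak{Top}/\ca E[H]$.

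Next, precomposition with the equivalence $e$ yields an equivalence of hom-categories
$$\mathfrak{Top}/\ca E[H](p,p)\xrightarrow{(-)\circ e}\mathfrak{Top}/\ca E[H](\theta,p),$$
because precomposition with an equivalence in any $2$-category induces an equivalence of the corresponding hom-categories. Combining this with the previous lemma, which identifies $\mathfrak{Top}/\ca E[H](p,p)$ with the discrete category of $H$-endomorphisms of $(L,a_L)$ in $\ca E[H]$, gives the first assertion; discreteness is transported since a category equivalent to a discrete one is again (equivalent to) discrete.

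For the second part, I would trace a factorization $q'\in\mathfrak{Top}/\ca E[H](\theta,p)$ back through the equivalence: $q'$ is isomorphic to $q\circ e$ for an endomorphism $q$ of $p$. By the previous lemma $q$ is a geometric surjection, and it is an equivalence whenever $G$ is a normal subgroup of $H$. Since $e$ is an equivalence, and the composite of an equivalence with a geometric surjection (respectively with an equivalence) is again a geometric surjection (respectively an equivalence), the underlying geometric morphism of $q'=q\circ e$ inherits the corresponding property, which yields the remaining claims.

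The main obstacle here is bookkeeping rather than anything substantial: one must check that the isomorphism supplied by Theorem \ref{thrmlocal} genuinely lives over $\ca E[H]$ (so that $e$ is a morphism in the slice, not merely in $\mathfrak{Top}$), and that the stability of surjections and equivalences under composition with $e$ is applied to the underlying geometric morphisms. All of this is formal once the equivalence $\theta\cong p$ over $\ca E[H]$ and the preceding lemma are in hand.
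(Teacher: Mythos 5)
Your proposal is correct and takes essentially the same route as the paper: both invoke Theorem \ref{thrmlocal} to identify $\theta$ and $p$ as isomorphic objects of $\mathfrak{Top}/\ca E[H]$, deduce the equivalence $\mathfrak{Top}/\ca E[H](\theta,p)\simeq\mathfrak{Top}/\ca E[H](p,p)$, and then transport the conclusions of the preceding lemma to obtain the discreteness, surjectivity, and (in the normal case) equivalence claims. The only difference is that you make explicit the bookkeeping the paper leaves implicit, namely that the isomorphism $p\circ e\cong\theta$ exhibits $e$ as an equivalence in the slice and that surjections and equivalences are stable under composition with $e$.
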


\begin{proof}
By Theorem \ref{thrmlocal}, we know that $\theta:\ca E[G] \to \ca E[H]$ and the local homeomorphism $p:\ca E[H]/(L,a_L)\to \ca E[H]$ are isomorphic as objects of $\mathfrak{Top}/\ca E[H]$. Hence,
$$\mathfrak{Top}/\ca E[H](\theta,p)\simeq\mathfrak{Top}/\ca E[H](p,p).$$
Therefore, by the above lemma $\mathfrak{Top}/\ca E[H](\theta,p)$ is equivalent to the discrete category whose objects are endomorphisms of $(L,a_L)$ in $\ca E[H]$.

Thus, by the above lemma all factorizations $\ca E[G]\to \ca E[H]/(L,a_L)$ of $\theta$ through $p$ are geometric surjections and when $G$ is a normal subgroup they are  all equivalences.
\end{proof}

\chapter{The classifying topos for affine planes} \label{chaZG}

The main result of this chapter is the identification of the classifying topos for the theory of affine planes as the topos $\ca Z[G]$, where $\ca Z$ is the Zariski topos and $G$ is the group of affine transformations of the generic local ring. The proof of this result contains an explicit construction of an affine plane from a pair of a local ring $R$ and a $G(R)$-torsor. The proof also involves applying some of the constructions of Chapter \ref{chalocal} to an affine plane of the topos $\ca Z[G]$. In the beginning of the chapter, we prove that the internal group of automorphisms of the generic local ring in the Zariski topos is the trivial group which enables us to view $G$ as the group of automorphisms of the affine plane over the generic local ring.

\section{The internal automorphism group of the generic local ring}

Let $M$ be the generic local ring in the Zariski topos $\ca Z$. $\ca Z$ is the Grothendieck topos $\mathbf{Sh}(\mathbf{Ring}_{\text{fp}}^{\text{op}}, J_{\text{Zar}})$, where $\mathbf{Ring}_{\text{fp}}$ is the category of finitely presented rings and $J_{\text{Zar}}$ is the Zariski topology. The Zariski topology is subcanonical. The underlying object of the generic local ring which we also denote by $M$ is the sheaf $\mathbf{Ring}_{\text{fp}}(\mathbb Z[X],-)$ (or equivalently the forgetful functor which sends a finitely presented ring to its underlying set). The exponential object $M^M$ is the sheaf sending a ring $A$ to the polynomial ring $A[X]$ and a morphism $f:A\to B$ to the morphism $A[X]\to B[X]$ which sends a polynomial $P=a_0+a_1 X+\ldots+a_n X^n$ to the polynomial $f(P)=f(a_0)+f(a_1) X+\ldots+f(a_n) X^n$. Moreover, the evaluation morphism $M^M\times M\to M$ is the natural transformation which at the object $A$ is the morphism $(A[X],A)\to A$ sending $(P,a)$ to the evaluation of the polynomial $P$ at $a$.

We define the automorphism group $\text{Aut}(M)$ of the generic local ring to be the subobject of $M^M$ described in the following way $\{P:M^M|(P(0)=0)\wedge(P(1)=1)\wedge((\forall a,b).(P(a+b)=P(a)+P(b)))\wedge((\forall a,b).(P(a\cdot b)=P(a)\cdot f(b))) \wedge ((\forall a,b).(P(a)=P(b)\Rightarrow a=b)) \wedge ((\forall a)(\exists x).(P(x)=a))\}$. This subobject of $M^M$ in the Zariski topos can be described as the sheaf sending a finitely presented ring $A$ to the subset of $A[X]$ consisting of polynomials $P$ such that for every morphism $f:A\to B$, the polynomial $f(P)$ (viewed as a morphism $B\to B$) is a ring isomorphism.

\begin{thrm} \label{thrmautgen}
$\text{Aut}(M)$ is the trivial group.
\end{thrm}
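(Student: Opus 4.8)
The plan is to prove that the unique morphism $\text{Aut}(M)\to 1$ is an isomorphism. Since the identity polynomial $X$ always lies in $\text{Aut}(M)(A)$, the sheaf $\text{Aut}(M)$ is inhabited, so it suffices to show it is subterminal: every section $P=\sum_i a_i X^i\in\text{Aut}(M)(A)\subseteq A[X]$ over a finitely presented ring $A$ equals $X$. Because equality of sections of a sheaf is a local condition, I am free throughout to pass to any Zariski covering family of $\text{Spec}(A)$, and the group structure on the resulting terminal object is then automatically trivial.

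First I would translate the internal ring-automorphism axioms for $P$ into polynomial identities over $A$, using the Kripke--Joyal semantics of $\ca Z$ together with the fact that $M$ is the generic object: a universally quantified equation over $M$ holds at stage $A$ if and only if the corresponding identity holds at the generic stages $A[Y]$ and $A[Y_1,Y_2]$, with the quantified variables replaced by the indeterminates. Concretely, $P(0)=0$ forces $a_0=0$; $P(1)=1$ forces $\sum_{i\ge 1}a_i=1$; and multiplicativity, read as $P(Y_1Y_2)=P(Y_1)P(Y_2)$ in $A[Y_1,Y_2]$, forces $a_i^2=a_i$ and $a_ia_j=0$ for $i\neq j$ upon comparing coefficients. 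Hence the nonzero coefficients among $\{a_i\}_{i\ge 1}$ form a complete system of orthogonal idempotents.

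Next I would exploit this idempotent decomposition. Writing $1=\sum_{i\in S}a_i$ with the $a_i$ orthogonal idempotents (for a finite $S\subseteq\{1,2,\dots\}$) yields a product decomposition $A\cong\prod_{i\in S}Aa_i$, equivalently a Zariski covering family $\{A\to A[a_i^{-1}]=Aa_i\}_{i\in S}$. On the factor $Aa_d$ the idempotent $a_d$ maps to $1$ and every other $a_j$ to $0$, so $P$ restricts to the monomial $X^{d}$. By locality it therefore suffices to show that on each factor the exponent satisfies $d=1$.

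Finally I would rule out $d\ge 2$ using internal surjectivity of $P$. Renaming such a factor to $A$ (which is nonzero, for otherwise the corresponding piece of $\text{Spec}(A)$ is empty and contributes nothing) and writing $P=X^{d}$, internal surjectivity applied via Kripke--Joyal to the generic element $t$ at stage $A[t]$ forces $t$ to become a $d$-th power Zariski-locally on $\mathbb{A}^1_A$. Choosing a prime $\mathfrak{p}\subseteq A$ with residue field $\kappa$ and base-changing, one member of the resulting cover of $\mathbb{A}^1_\kappa$ must contain the point $t=0$, so an equation $t=x^d$ holds in the local ring $\mathcal{O}_{\mathbb{A}^1_\kappa,0}=\kappa[t]_{(t)}$, a discrete valuation ring with uniformizer $t$; comparing valuations gives $1=v(t)=d\cdot v(x)$, impossible for $d\ge 2$. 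Thus $d=1$ on every factor, so $P=X$ and $\text{Aut}(M)\cong 1$. The delicate point throughout is the internal-to-external translation: one must verify both that the universal equational axioms become genuine polynomial identities at the generic stages (so that the orthogonal-idempotent structure is truly forced) and that the $\forall\exists$ surjectivity statement demands solutions only after a Zariski cover, since it is exactly this local existence that the valuation computation contradicts for monomials of degree at least $2$.
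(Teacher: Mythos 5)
Your proof is correct, and its first half coincides with the paper's: you derive $a_0=0$ and $\sum_{i\ge 1}a_i=1$ from $P(0)=0$, $P(1)=1$, and obtain the orthogonal idempotent relations $a_i^2=a_i$, $a_ia_j=0$ by comparing coefficients of the multiplicativity identity at the two-indeterminate stage (the paper uses $A[Y,Z]$ where you use $A[Y_1,Y_2]$; same computation). Where you genuinely diverge is the endgame. The paper stays at the stage $A[Y]$ and uses surjectivity of $P$ there \emph{on the nose}: since $P$ is internally bijective it is an isomorphism of sheaves, hence bijective at every stage, so there is $Q=b_0+b_1Y+\dots+b_mY^m$ with $P(Q)=Y$; additivity of $P$ lets one read off that the coefficient of $Y$ is $a_1b_1$, whence $a_1b_1=1$, and an invertible idempotent equals $1$, so $a_1=1$ and orthogonality kills all other $a_i$ — a short, cover-free, choice-free coefficient computation. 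You instead split $A\cong\prod_i Aa_i$ along the complete orthogonal system of idempotents (correctly noting this is a Zariski cover of the site), reduce to $P=X^d$ on each nonzero factor, and rule out $d\ge 2$ by forcing the surjectivity axiom at the stage $A[t]$ on the generic element $t$, base-changing to a residue field $\kappa$ at a prime of $A$, and comparing valuations in the discrete valuation ring $\kappa[t]_{(t)}$, where $t=x^d$ gives $1=d\cdot v(x)$. What your route buys: you use only the honest Kripke--Joyal reading of the $\forall\exists$ axiom (local solvability), so you never need the auxiliary fact that internal bijectivity yields stage-wise surjectivity, and the geometric picture — $t\mapsto t^d$ fails to be even locally surjective on $\mathbb{A}^1_\kappa$ — is illuminating. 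What it costs: the choice of a prime ideal in a nonzero ring is a classical step (mildly at odds with the constructive spirit of the thesis, though harmless here since this is an external statement about the site), and you need the side verifications that $\{A\to Aa_i\}$ is a covering family and that covers of $A[t]$ refine to principal localizations — checks your write-up rightly flags, but which the paper's more elementary argument avoids entirely.
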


\begin{proof}
Suppose that $P=a_0+a_1 X+\ldots+a_n X^n$ is in $\text{Aut}(M)(A)$. Then, $P$ viewed as a morphism of $A$ is a ring homomorphism. Therefore, $P(0)=0$ which implies that $a_0=0$, and $P(1)=1$ which implies that $a_1+a_2+\ldots+a_n=1$.

Consider the inclusion $g:A\to A[Y,Z]$. Then, $g(P)$ is a ring homomorphism of $A[Y,Z]$ and therefore $P(Y)\cdot P(Z)= P(Y\cdot Z)$. Hence, $(a_1Y+a_2Y^2+\ldots+a_nY^n)\cdot (a_1Z+a_2Z^2+\ldots+a_nZ^n)=(a_1YZ+a_2Y^2Z^2+\ldots+a_nY^nZ^N)$. By comparing coefficients, we can see that $a_i^2=a_i$ for $1\leq i\leq n$, and $a_ia_j=0$ when $1\leq i<j\leq n$.

Let us also consider the inclusion $h:A\to A[Y]$. Then, $h(P)$ is an ring automorphism of $A[Y]$ and in particular $P(b_0+b_1 Y+\ldots +b_m Y^m)= Y$ for some $b_0,b_1,\ldots,b_m$ in $A$. $P(b_0+b_1 Y+\ldots b_m Y^m)=P(b_0)+P(b_1Y)+\ldots+P(b_mY^m)$ and therefore the coefficient of $Y$ is $a_1b_1$. Hence, $a_1b_1=1$ which implies that $a_1$ is invertible. $a_1^2=a_1$, therefore $a_1=1$. For $2\leq i\leq n$, $a_1a_i=0$, therefore $a_i=0$. Hence $P=X$, and therefore $\text{Aut}(M)(A)$ is the singleton. Therefore, $\text{Aut}(M)$ is the trivial group.
\end{proof}

\begin{rmk}
The automorphism group of the generic ring in $[\mathbf{Ring}_{\text{fp}},\set]$ is also trivial. The proof is identical to the one above.
\end{rmk}

Notice that we do not use the above result in any of our proofs. We still present it because it enables us to prove that the group of automorphisms of the affine plane over the generic local ring is the group $G$ (the group of affine transformations of the generic local ring). We also use it in the next chapter to show that the group of automorphisms of the projective plane over the generic local ring is the group $H$ (the projective general linear group of the generic local ring).

\section{Affine planes in $\ca Z$ and $\ca Z[G]$}

In Chapter \ref{chaaffplanes}, we construct an affine plane over a given local ring in a topos. Hence, we construct an affine plane over the generic local ring $M$ of $\ca Z$. We denote this affine plane as $\mathbb A(M)$.

Let $G$ be the group of affine transformations over the generic local ring, i.e. $G$ is the group of invertible matrices over the generic local ring of the form 
\begin{displaymath}
\begin{pmatrix}
\alpha_0 & \beta_0 & \gamma_0 \\
\alpha_1 & \beta_1 & \gamma_1 \\
0 & 0 & 1
\end{pmatrix} .
\end{displaymath}

By Theorem \ref{thrmaffmorph}, an automorphism of the affine plane over the generic local ring is uniquely of the form $g\circ \mathbb A(\alpha)$ where $g$ is in $G$ and $\alpha$ is an automorphism of the generic local ring. In fact, by Theorem \ref{thrmautgen} the unique automorphism of the generic local ring is the identity. Hence, the group of automorphisms of $\mathbb A(M)$ is $G$. However, all we need to know is that the affine plane $\mathbb A(M)$ has a left $G$-action.

$G$ acts on points via left matrix multiplication (when viewing a point $(a,b)$ as $(a,b,1)$). We denote this action by $a_{\text{pt}}:G\times\mathbb A_{\text{pt}}(M)\to \mathbb A_{\text{pt}}(M)$.

An element $g$ of $G$ acts on a line represented by $\pt{\lambda}$ via left matrix multiplication by  $(g^{-1})^T $. We denote this action by $a_{\text{li}}:G\times\mathbb A_{\text{li}}(M)\to \mathbb A_{\text{li}}(M)$.

We shall denote the described $G$-action on $\mathbb A(M)$ by $a_{\mathbb A}$. $a_{\mathbb A}$ is the $G$-action induced by the fact that $G$ is the group of automorphisms of $\mathbb A(M)$. Hence, it preserves the subobjects $\#_{\text{pt}}$, $\#_{\text{li}}$, $\parallel$, $\in$, $\notin$ of $\mathbb A_{\text{pt}}(M)\times \mathbb A_{\text{pt}}(M)$, $\mathbb A_{\text{li}}(M)\times \mathbb A_{\text{li}}(M)$, $\mathbb A_{\text{li}}(M)\times \mathbb A_{\text{li}}(M)$, $\mathbb A_{\text{pt}}(M) \times \mathbb A_{\text{li}}(M)$, $\mathbb A_{\text{pt}}(M)\times \mathbb A_{\text{li}}(M)$ respectively.

This structure in $\ca Z[G]$ satisfies the axioms of affine planes because the forgetful functor $\ca Z[G]\to \ca Z$ reflects monomorphisms and it maps this structure to the affine plane over the generic local ring in $\ca Z$ (which satisfies the axioms for affine planes). We denote this affine plane of $\ca Z[G]$ as $(\mathbb A(M), a_{\mathbb A})$. Let $\mathbf{Aff}$ be the classifying topos of the theory of affine planes and let $\ca Ag$ be its generic model. Then, the affine plane $(\mathbb A(M), a_{\mathbb A})$ of $\ca Z[G]$ corresponds to a geometric morphism 
$$f_{\mathbb A}: \ca Z[G]\to \mathbf{Aff}$$
such that the $f_{\mathbb A}^*(\ca Ag)$ is isomorphic to $(\mathbb A(M), a_{\mathbb A})$.


\section{Explicit construction of an affine plane from a $G$-torsor} \label{secGtoraff}

Let $R$ be a local ring in a topos $\ca E$, and let $a_A:A\times G(R)\to G(R)$ be a right $G(R)$-torsor in $\ca E$. We shall describe a structure in $\ca E$ in the language of affine planes (where instead of a relation on lines that are parallel we have a relation on lines that are parallel and apart from each other). We shall prove later in this chapter that this structure is an affine plane. The intentional interpretation of the objects and morphisms that will follow is the following: $A_{\text{pt}1}$, $A_{\text{pt}2}$, $A_{\text{pt}3}$ are all isomorphic and will be the object of points of the affine plane. $A_{\text{li}}$ and $A_{\text{li}'}$ are isomorphic and they will be the object of lines of the affine plane. $A_{\#\text{pt}}\to A_{\text{pt}2}\times A_{\text{pt}3}$ is a monomorphism and it will be the subobject of pairs of points that are apart from each other. $A_{\#\text{li}}\to A_{\text{li}}\times A_{\text{li}'}$ is a monomorphism and it will be the subobject of pairs of lines that are apart from each other. $A_{\in}\to A_{\text{pt}3}\times A_{\text{li}}$ is a monomorphism and it will be the subobject of pairs $(P,l)$ of a point $P$ lying on a line $l$. $A_{\notin}\to A_{\text{pt}1}\times A_{\text{li}}$ is a monomorphism and it will be the subobject of pairs $(P,l)$ of a point $P$ lying outside a line $l$. $A_{\parallel \#}\to A_{\text{li}'}\times A_{\text{li}}$ is a monomorphism and it will be the subobject of pairs of lines that are parallel and apart from each other.

Notice that the structure we are describing has a binary relation on lines whose interpretation is as pairs of lines which are parallel and apart from each other. In an affine plane, two lines $k$ and $l$ are parallel iff they are parallel and apart from each other or there exists a third line $m$ such that $k$ and $l$ are both parallel and apart from $m$. Hence, the theory of affine planes can also be formulated in a language where the parallel relation is replaced by this new relation.

Recall Definition \ref{defnorbits} of orbits of a group action. Let $G'$ be a group and $a_X:X\times G'\to X$ a right $G'$-action. Notice that a group homomorphism $k:G''\to G'$ induces a right $G''$-action $a_X(1_X\times k)$ on $X$. The epimorphism from $X$ to the object of $G'$-orbits coequalizes the arrows $a_X(1_X\times k), \pi_2: X\times G''\rightrightarrows X$, and therefore it induces a morphism from the object of $G''$-orbits to the object of $G'$-orbits. We shall use this fact to define morphisms between objects of orbits.

Let $G_3(R)$ be the group of invertible matrices over the local ring $R$ of the form $$\begin{pmatrix}
\alpha_0 & \beta_0 & 0 \\
\alpha_1 & \beta_1 & 0 \\
0 & 0 & 1
\end{pmatrix}.$$
Notice that $G_3(R)$ is a subgroup of $G(R)$. Hence, the right $G(R)$-action $a_A$ on $A$ induces a right $G_3(R)$-action on $A$. Let $A_{\text{pt}3}$ be the object of $G_3(R)$-orbits of $a_A$.

Let $G_1(R)$ be the group of invertible matrices over the local ring $R$ of the form
$$\begin{pmatrix} 1-\gamma_0 & \beta_0 & \gamma_0 \\ -\gamma_1 & \beta_1 & \gamma_1 \\ 0 & 0 & 1 \end{pmatrix}.$$
Let $A_{\text{pt}1}$ be the object of $G_1(R)$-orbits of $a_A$

Let $G_2(R)$ be the group of invertible matrices over the local ring $R$ of the form 
$$\begin{pmatrix} \alpha_0 & -\gamma_0 & \gamma_0 \\ \alpha_1 & 1-\gamma_1 & \gamma_1 \\ 0 & 0 & 1 \end{pmatrix}.$$
Let $A_{\text{pt}2}(M),a_{\text{pt}})$ be the object of $G_2(R)$-orbits of $a_A$

Let $G_{23}(R)$ be the group of invertible matrices over the local ring $R$ of the form
$$\begin{pmatrix}
\alpha_0 & 0 & 0 \\
\alpha_1 & 1 & 0 \\
0 & 0 & 1
\end{pmatrix}.$$
Let $A_{\#\text{pt}}$ be the object of $G_{23}(R)$-orbits of $a_A$. Notice that the group monomorphism $G_{23}(R)\to G_2(R)$ and $G_{23}(R)\to G_3(R)$ induce morphisms $A_{\#\text{pt}}\to A_{\text{pt}2}$ and $A_{\#\text{pt}}\to A_{\text{pt}3}$ respectively. Hence a morphism $A_{\#\text{pt}}\to A_{\text{pt}2}\times A_{\text{pt}3}$ is induced.

Let $G_\text{li}(R)$ be the group of invertible matrices over the local ring $R$ of the form
$$\begin{pmatrix}
\alpha_0 & 0 & 0 \\
\alpha_1 & \beta_1 & \gamma_1 \\
0 & 0 & 1
\end{pmatrix}.$$
Let $A_{\text{li}}$ be the object of $G_{\text{li}}(R)$-orbits of $a_A$.

Let $G_{\text{li}'}(R)$ be the group of invertible matrices over the local ring $R$ of the form
$$\begin{pmatrix}
1-\gamma_0 & 0 & \gamma_0 \\
\alpha_1 & \beta_1 & \gamma_1 \\
0 & 0 & 1
\end{pmatrix}.$$
Let $A_{\text{li}'}$ be the object of $G_{\text{li}'}(R)$-orbits of $a_A$.

Let $G_{\#\text{li}}(R)$ be the group of invertible matrices over the local ring $R$ of the form
$$\begin{pmatrix}
1 & 0 & 0 \\
\alpha_1 & \beta_1 & \gamma_1 \\
0 & 0 & 1
\end{pmatrix}.$$
Let $A_{\#\text{li}}$ be the object of $G_{\#\text{li}}(R)$-orbits of $a_A$. The group monomorphisms $G_{\#\text{li}}(R)\to G_{\text{li}}(R)$ and $G_{\#\text{li}}(R)\to G_{\text{li}'}(R)$ induce morphisms $A_{\#\text{li}}\to A_{\text{li}}$ and $A_{\#\text{li}}\to A_{\text{li}'}$ respectively. Hence, a morphism $A_{\#\text{li}}\to A_{\text{li}}\times A_{\text{li}'}$ is induced.

Let $G_{\in}(R)$ be the group of invertible matrices over the local ring $R$ of the form
$$\begin{pmatrix}
\alpha_0 & 0 & 0 \\
\alpha_1 & \beta_1 & 0 \\
0 & 0 & 1
\end{pmatrix}.$$
Let $A_{\in}$ be the object of $G_{\in}(R)$-orbits of $a_A$. The group monomorphism $G_{\in}(R)\to G_3(R)$ induces a morphism $A_{\in}\to A_{\text{pt}3}$. The group monomorphism $G_{\in}(R)\to G_{\text{li}}(R)$ induces a morphism $A_{\in}\to A_{\text{li}}$. Hence, a morphism $A_{\in}\to A_{\text{pt}3}\times A_{\text{li}}$ is induced.

Let $G_{\notin}(R)$ be the group of invertible matrices over the local ring $R$ of the form
$$\begin{pmatrix}
1 & 0 & 0 \\
-\gamma_1 & \beta_1 & \gamma_1 \\
0 & 0 & 1
\end{pmatrix}.$$
Let $A_{\notin}$ be the object of $G_{\notin}(R)$-orbits of $a_A$. The group monomorphism $G_{\notin}(R)\to G_1(R)$ induces a morphism $A_{\in}\to A_{\text{pt}1}$. The group monomorphism $G_{\notin}(R)\to G_{\text{li}}(R)$ induces a morphism $A_{\notin}\to A_{\text{li}}$. Hence, a morphism $A_{\notin}\to A_{\text{pt}1}\times A_{\text{li}}$ is induced.

Let $G_{\parallel\#}(R)$ be the group of invertible matrices over the local ring $R$ of the form
$$\begin{pmatrix}
1 & 0 & 0 \\
\alpha_1 & \beta_1 & \gamma_1 \\
0 & 0 & 1
\end{pmatrix}.$$
Let $A_{\parallel\#}$ be the object of $G_{\parallel\#}(R)$-orbits of $a_A$. The group monomorphisms $G_{\parallel\#}(R)\to G_{\text{li}'}(R)$ and $G_{\parallel\#}(R)\to G_{\text{li}}(R)$ induce morphisms $A_{\parallel\#}\to A_{\text{li}'}$ and $A_{\parallel\#}\to A_{\text{li}'}$ respectively. Hence, a morphism $A_{\parallel\#} \to A_{\text{li}'}\times A_{\text{li}}$ is induced.

To construct the above structure from the local ring $R$ and the $G(R)$-torsor $a_A$, we have only used finite limits and colimits. Therefore, the construction is preserved by inverse images of geometric morphisms.

\begin{lem}
Let $\ca A$ be an affine plane, and let $R_{\ca A}$ be its ring of trace preserving homomorphisms, and let $\omega(\ca A)$ be the $G(R_{\ca A})$-torsor as constructed in Theorem \ref{thrmgtorsor}. Then, the above construction applied to the local ring $R_{\ca A}$ and the $G(R_{\ca A})$-torsor $\omega(\ca A)$ gives a structure isomorphic to the affine plane $\ca A$.
\end{lem}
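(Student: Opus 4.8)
The plan is to identify each of the subgroups $G_{\bullet}(R_{\ca A})$ appearing in the construction as the stabilizer, inside the affine group $G(R_{\ca A})$, of a specific piece of geometric data of the standard affine plane $\mathbb A(R_{\ca A})$, and then to use the fact that $\omega(\ca A)$ is a $G(R_{\ca A})$-torsor (Theorem \ref{thrmgtorsor}) together with the isomorphisms $\phi_{XYO}\colon \mathbb A(R_{\ca A})\to \ca A$ of Theorem \ref{thrmafcoo} to turn each orbit object $A_{\bullet}$ into the corresponding object of $\ca A$. The guiding principle is that if one feeds the construction the \emph{standard} torsor (the group $G(R)$ acting on itself by right multiplication, with base triple the standard points $(1,0),(0,1),(0,0)$) over a local ring $R$, then each orbit object is a coset space $G(R)/G_{\bullet}(R)$ and the construction simply returns $\mathbb A(R)$ with its incidence structure; the content of the lemma is that the same holds for the (not globally trivial) torsor $\omega(\ca A)$, with $\mathbb A(R_{\ca A})$ replaced by $\ca A$ via $\phi_{XYO}$.

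First I would treat the point objects. Writing an element $g\in G(R_{\ca A})$ as in Theorem \ref{thrmgtorsor} and using the explicit formula there for the action on a triple, a direct computation shows that $G_3$, $G_1$ and $G_2$ are exactly the subgroups fixing the third, first and second component respectively: for instance the third component $\tau_{OX}^{\gamma_0}\tau_{OY}^{\gamma_1}(O)$ equals $O$ precisely when $\gamma_0=\gamma_1=0$, which is the defining shape of $G_3$. Hence $[(X,Y,O)]\mapsto O$ is constant on $G_3$-orbits and defines a morphism $A_{\text{pt}3}\to \ca A_{\text{pt}}$; it is surjective because every point lies in a non-collinear triple (the axioms of affine planes, via Lemma \ref{lemomega4}), and injective on orbits because if two triples share their third point then, by transitivity of the torsor, the connecting element fixes that point and so lies in $G_3$. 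The same argument gives $A_{\text{pt}1}\cong A_{\text{pt}2}\cong A_{\text{pt}3}\cong \ca A_{\text{pt}}$. Since $G_{23}=G_2\cap G_3$ is the stabilizer of the pair $(Y,O)$, the induced monomorphism $A_{\#\text{pt}}\to A_{\text{pt}2}\times A_{\text{pt}3}$ has image exactly the pairs arising from a non-collinear triple, i.e.\ the pairs with $Y\# O$, so $A_{\#\text{pt}}\cong {\#_{\text{pt}}}$.

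Next come the lines and incidence relations. A similar computation identifies $G_{\text{li}}$ as the stabilizer of the line $\ovl{OY}$ and $G_{\text{li}'}$ as the stabilizer of the line through $X$ parallel to $\ovl{OY}$: for $g\in G_{\text{li}}$ both the second and third components of $(X,Y,O)\cdot g$ remain on $\ovl{OY}$, so $[(X,Y,O)]\mapsto \ovl{OY}$ descends to an isomorphism $A_{\text{li}}\cong \ca A_{\text{li}}$, and likewise for $A_{\text{li}'}$. The remaining subgroups are intersections of these: $G_{\in}=G_3\cap G_{\text{li}}$, $G_{\notin}=G_1\cap G_{\text{li}}$ and $G_{\#\text{li}}=G_{\parallel\#}=G_{\text{li}}\cap G_{\text{li}'}$. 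Consequently $A_{\in}\to A_{\text{pt}3}\times A_{\text{li}}$ picks out the pairs $(O,\ovl{OY})$, which are exactly the incident pairs and all arise this way, so $A_{\in}\cong {\in}$; similarly $A_{\notin}\cong {\notin}$ via the pairs $(X,\ovl{OY})$ with $X\notin\ovl{OY}$. In each case well-definedness is the observation that the distinguished component lies on (resp.\ outside) the distinguished line, and injectivity on orbits is torsor transitivity combined with the stabilizer description.

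Finally I would assemble these component isomorphisms into a single isomorphism of structures and identify the main obstacle. The bijections on the underlying objects are forced; what must be checked is that the morphisms between orbit objects induced by the subgroup inclusions are carried, under these bijections, precisely onto the relation-inclusions of $\ca A$. For the point relations and for ${\in},{\notin}$ this is the routine matching just described. The delicate part is the line side: the construction uses two isomorphic copies $A_{\text{li}}, A_{\text{li}'}$ of the object of lines, and $A_{\#\text{li}}$ and $A_{\parallel\#}$ come from the \emph{same} orbit object $G_{\text{li}}\cap G_{\text{li}'}$ mapped into the product in the two possible orders. Here one must keep careful track of which geometric line each copy encodes — $\ovl{OY}$ for $A_{\text{li}}$ versus the parallel line through $X$ for $A_{\text{li}'}$ — and verify that the resulting subobject of $\ca A_{\text{li}}\times\ca A_{\text{li}}$ is the parallel-and-apart relation used in this presentation of the language (apartness of lines then being the definable relation $k\# l\iff\exists A.\,A\in k\wedge A\notin l$). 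This bookkeeping, rather than any single computation, is the crux. Surjectivity of every component map rests on $\ca A$ having enough points and lines and on $\omega(\ca A)\to 1$ being a cover, while the monomorphism and coequalizer properties come from the freeness of the torsor action; none of this uses more than the axioms of affine planes and the torsor property of $\omega(\ca A)$ from Theorem \ref{thrmgtorsor}.
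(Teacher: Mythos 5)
Your proposal is correct and takes essentially the same route as the paper's proof, which exhibits precisely your maps as explicit epimorphisms from $\omega(\ca A)$ to each component of $\ca A$ — namely $(A,B,C)\mapsto C$, $A$, $B$, $(B,C)$, $\ovl{BC}$, $(C,\ovl{BC})$, $(A,\ovl{BC})$, the parallel to $\ovl{BC}$ through $A$, and the parallel--apart pair — so your stabilizer identifications (e.g.\ $G_{23}=G_2\cap G_3$, $G_{\in}=G_3\cap G_{\text{li}}$, $G_{\parallel\#}=G_{\text{li}}\cap G_{\text{li}'}$) together with torsor transitivity just make explicit the verifications the paper leaves implicit. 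One cosmetic point: the existence of a non-collinear triple through any given point is the affine Lemma \ref{lemnoncdaff} (plus the axiom providing three non-collinear points), not the projective Lemma \ref{lemomega4}.
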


\begin{proof}
For the affine plane $\ca A$, we write $\ca A_{\text{pt}}$ for its object of points and $\ca A_{\text{li}}$ for its object of lines. We write $\ca A_{\#\text{pt}}$ for the subobject of $\ca A_{\text{pt}}\times \ca A_{\text{pt}}$ of points that are apart from each other. We write $\ca A_{\in}$ for the subobject of $\ca A_\times \ca A_{\text{li}}$ of pairs $(A,l)$ such that $A\in l$. We write $\ca A_{\notin}$ for the subobject of $\ca A_{\text{pt}}\times \ca A_{\text{li}}$ of pairs $(A,l)$ such that $A\notin l$. We write $\ca A_{\parallel \#}$ for the subobject of $\ca A_{\text{li}}\times \ca A_{\text{li}}$ of pairs of lines that are parallel and apart from each other.

Let $R_{\ca A}$ be the ring of trace preserving homomorphisms of $\ca A$ and let $\omega(\ca A)$ be the object of triples of non-collinear points of $\ca A$. The $G(R_{\ca A})$-torsor constructed in Chapter \ref{chalocal} is the right $G(R_{\ca A})$-action on $\omega(\ca A)$ where an element  $\begin{pmatrix}
\alpha_0 & \beta_0 & \gamma_0 \\
\alpha_1 & \beta_1 & \gamma_1 \\
0 & 0 & 1
\end{pmatrix}$ of $G(R_{\ca A})$ acts on $\omega(\ca A)$ by mapping a triple $(A,B,C)$ of non-collinear points to $(\tau_{CA}^{\alpha_0+\gamma_0}\tau_{CB}^{\alpha_1+\gamma_1} (C), \tau_{CA}^{\beta_0+\gamma_0}\tau_{CB}^{\beta_1+\gamma_1}(C), \tau_{CA}^{\gamma_0}\tau_{CB}^{\gamma_1}(C))$.

The epimorphisms we describe below, exhibit the isomorphism from the affine plane $\ca A$ to the affine plane constructed from the local ring $R_{\ca A}$ and the $G(R_{\ca A})$-torsor $\omega(\ca A)$ as above.

The object of $G_3(R_{\ca A})$-orbits is $\ca A_{\text{pt}}$ via the epimorphism $\omega(\ca A) \twoheadrightarrow \ca A_{\text{pt}}$ which maps $(A,B,C)$ of $\omega(\ca A)$ to $C$.

The object of $G_1(R_{\ca A})$-orbits is $\ca A_{\text{pt}}$ via the epimorphism $\omega(\ca A)\twoheadrightarrow \ca A_{\text{pt}}$ which maps $(A,B,C)$ of $\omega(\ca A)$ to $A$.

The object of $G_2(R_{\ca A})$-orbits is $\ca A_{\text{pt}}$ via the epimorphism $\omega(\ca A)\twoheadrightarrow \ca A_{\text{pt}}$ which maps $(A,B,C)$ of $\omega(\ca A)$ to the point $B$.

The object $G_{23}(R_{\ca A})$-orbits is $\ca A_{\#\text{pt}}$ via the epimorphism $\omega(\ca A)\twoheadrightarrow \ca A_{\#\text{pt}}$ which maps $(A,B,C)$ of $\omega(\ca A)$ to the pair of points $(B,C)$. Moreover, the two projections from $\mathbb A_{\#\text{pt}}$ to $\mathbb A_{\text{pt}}$ are the ones induced by the group monomorphisms $G_{23}(R_{\ca A})\to G_2(R_{\ca A})$ and $G_{23}(R_{\ca A})\to G_3(R_{\ca A})$.

The object of $G_\text{li}(R_{\ca A})$-orbits is $\ca A_{\text{li}}$ via  the epimorphism $\omega(\ca A)\twoheadrightarrow \ca A_{\text{li}}$ which maps $(A,B,C)$ of $\omega(\ca A)$ to the line $\ovl{BC}$.

The object of $G_{\in}(R_{\ca A})$-orbits is $\ca A_{\in}$ via the epimorphism $\omega(\ca A)\twoheadrightarrow \ca A_{\in}$ which maps $(A,B,C)$ of $\omega(\ca A)$ to the pair of the point $C$ and the line $\ovl{BC}$. Moreover, the projection from $\mathbb A_{\in}$ to $\mathbb A_{\text{pt}}$ is the morphism induced by the monomorphism $G_{\in}(R_{\ca A})\to G_3(R_{\ca A})$ and the projection $\mathbb A_{\in} \to \mathbb A_{\text{li}}$ is the morphism induced by the monomorphism $G_{\in}(R_{\ca A})\to G_{\text{li}}(R_{\ca A})$.

The object of $G_{\notin}(R_{\ca A})$-orbits is $\ca A_{\notin}$ via  the epimorphism $\omega(\ca A)\twoheadrightarrow \ca A_{\notin}$ which maps $(A,B,C)$ of $\omega(\ca A)$ to the pair of the point $A$ and the line $\ovl{BC}$. The projection from $\mathbb A_{\notin}$ to $\mathbb A_{\text{pt}}$ is the morphism induced by the monomorphism $G_{\notin}(R_{\ca A})\to G_1(R_{\ca A})$ and the projection $\mathbb A_{\notin}$ to $\mathbb A_{\text{li}}$ is the morphism induced by the monomorphism $G_{\notin}(R_{\ca A})\to G_{\text{li}}(R_{\ca A})$.

The object of $G_{\text{li}'}(R_{\ca A})$-orbits is $\ca A_{\text{li}}$ via  the epimorphism $\omega(\ca A)\twoheadrightarrow \ca A_{\text{li}}$ which maps $(A,B,C)$ of $\omega(\ca A)$ to the line through $A$ and parallel to $\ovl{BC}$.

The object of $G_{\parallel\#}(R_{\ca A})$-orbits is $\ca A_{\parallel\#}$ via the epimorphism $\omega(\ca A)\twoheadrightarrow \ca A_{\parallel\#}$ which maps $(A,B,C)$ of $\omega(\ca A)$ to the pair of the line through $A$ and parallel to $\ovl{BC}$, and the line $\ovl{BC}$. The two projections from $\mathbb A_{\parallel\#}$ to $\mathbb A_{\text{li}}$ are the ones induced by the monomorphisms $G_{\parallel\#}(R_{\ca A})\to G_{\text{li}'}(R_{\ca A})$ and $G_{\parallel\#}(R_{\ca A})\to G_{\text{li}}(R_{\ca A})$.
\end{proof}

\begin{lem} \label{lemgag}
In $\ca Z[G]$, the above construction applied to the local ring $(M,\pi_2)$ and the generic $G$-torsor $m_G:(G,m_G)\times (G,\pi_2)\to (G,m_G)$ gives a structure which is isomorphic to the affine plane $(\mathbb A(M),a_{\mathbb A})$.
\end{lem}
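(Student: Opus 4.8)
The plan is to deduce this from the preceding (unlabelled) lemma applied to the affine plane $\ca A = (\mathbb A(M), a_{\mathbb A})$ of $\ca Z[G]$. That lemma says that the construction of Section \ref{secGtoraff} applied to $R_{\ca A} = \text{Tp}(\ca A)$ and to the right $G(R_{\ca A})$-torsor $\omega(\ca A)$ of Theorem \ref{thrmgtorsor} returns a structure isomorphic to $\ca A$. Since the construction of Section \ref{secGtoraff} is built entirely from finite limits and colimits, it sends isomorphic input data to isomorphic output; hence it suffices to produce an isomorphism of local rings $R_{\ca A} \cong (M, \pi_2)$ in $\ca Z[G]$ under which the right $G(R_{\ca A})$-torsor $\omega(\ca A)$ is carried to the generic $G$-torsor $m_G \colon (G, m_G) \times (G, \pi_2) \to (G, m_G)$.

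First I would identify $R_{\ca A}$. The forgetful functor $U \colon \ca Z[G] \to \ca Z$ is the inverse image of a geometric morphism, and $\text{Tp}$ is preserved by inverse images of geometric morphisms; therefore $U(R_{\ca A}) = \text{Tp}(\mathbb A(M))$, which is $M$ by the proposition identifying the trace preserving homomorphisms of $\mathbb A(R)$ with $R$. Thus $R_{\ca A} = (M, b)$ for some left $G$-action $b$ on $M$. Since $R_{\ca A}$ is a ring object of $\ca Z[G]$, its operations are $G$-equivariant, so for each section $g$ of $G$ the map $b(g, -)$ is a ring endomorphism of $M$, and it is invertible because $b$ is a group action; hence $b$ corresponds to an internal classifying map $G \to \text{Aut}(M)$ in $\ca Z$. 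By Theorem \ref{thrmautgen}, $\text{Aut}(M)$ is trivial, so $b$ factors through $1$ and is the trivial action $\pi_2$. This gives $R_{\ca A} \cong (M, \pi_2)$, and correspondingly $G(R_{\ca A}) \cong G((M, \pi_2)) = (G, \pi_2)$.

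Next I would identify the torsor. Again $U(\omega(\ca A)) = \omega(\mathbb A(M)) = \omega(M)$, and the remark after Theorem \ref{thrmleftGtors} supplies an isomorphism $\Lambda \colon G \to \omega(M)$, $g \mapsto (g(1,0), g(0,1), g(0,0))$, which intertwines left multiplication on $G$ with the left $G(M)$-action on $\omega(M)$ by automorphisms of $\mathbb A(M)$. This automorphism action is exactly the one induced by $a_{\mathbb A}$, so $\Lambda$ is an isomorphism $(G, m_G) \cong \omega(\ca A)$ of left $G$-objects in $\ca Z[G]$. It remains to check that $\Lambda$ also carries right multiplication by $(G, \pi_2)$ to the right $G(R_{\ca A})$-torsor structure of Theorem \ref{thrmgtorsor}. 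Writing a non-collinear triple as the matrix of its three points (with bottom row all ones), the image $\Lambda(g)$ has matrix $gP$, where $P = \begin{pmatrix} 1 & 0 & 0 \\ 0 & 1 & 0 \\ 1 & 1 & 1 \end{pmatrix}$, while by the remark after Theorem \ref{thrmgtorsor} the right action of $g'$ is right multiplication by $P^{-1} g' P$. The key computation is $(gP)(P^{-1} g' P) = (g g')P$, that is $\Lambda(g) \cdot g' = \Lambda(g g')$, so $\Lambda$ intertwines right multiplication on $G$ with the torsor action, the conjugation by $P$ being absorbed exactly by the base point $\Lambda(\text{id})$. Hence $\omega(\ca A)$ is isomorphic, as a right $G(R_{\ca A})$-torsor in $\ca Z[G]$, to the generic $G$-torsor.

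Combining the two identifications, the pair $(R_{\ca A}, \omega(\ca A))$ is isomorphic to $((M, \pi_2), m_G)$ as input data for the construction. By the preceding lemma the construction of Section \ref{secGtoraff} on the former yields $(\mathbb A(M), a_{\mathbb A})$, so on the latter it yields an isomorphic structure, which is the assertion. The main obstacle is the torsor identification: one must reconcile the left $G$-torsor structure coming from $a_{\mathbb A}$ with the right $G(\text{Tp})$-torsor structure of Theorem \ref{thrmgtorsor} and verify that together they match the generic torsor, the delicate point being that the torsor action is plain right multiplication only once the conjugation by $P$ has been absorbed into the choice of isomorphism $\Lambda$.
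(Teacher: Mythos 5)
Your proof is correct, but it follows a genuinely different route from the paper's. The paper proves the lemma by direct computation: for each of the eleven subgroups $G_1,\ldots,G_{\parallel\#}$ it exhibits an explicit epimorphism out of $(G,m_G)$ (e.g.\ sending a matrix to $(c_0,c_1)$ for the $G_3$-orbits) identifying the orbit object with the corresponding component of $(\mathbb A(M),a_{\mathbb A})$, and checks that the induced projections are the maps coming from the subgroup inclusions. You instead deduce the lemma from the preceding unlabelled lemma (the construction applied to $(\mathrm{Tp}(\ca A),\omega(\ca A))$ recovers $\ca A$), read internally in $\ca Z[G]$ with $\ca A=(\mathbb A(M),a_{\mathbb A})$, reducing everything to two identifications: $\mathrm{Tp}((\mathbb A(M),a_{\mathbb A}))\cong (M,\pi_2)$ and $\omega((\mathbb A(M),a_{\mathbb A}))\cong (G,m_G)$ as a torsor. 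Both identifications are sound, and it is worth noting that they are precisely what the paper establishes \emph{later}, in Section \ref{secaffoverZ} (by explicit colimit computations, using that the forgetful functor creates colimits) and in Section \ref{secgid} (via the same isomorphism $X\mapsto XP^{-1}$ and the same absorption of the conjugation by $P$ that you carry out with $\Lambda$); so your route essentially front-loads that later material, while the paper's direct orbit computations keep Lemma \ref{lemgag} independent of it. Two small points deserve flagging. First, your step identifying the $G$-action $b$ on $M$ as trivial invokes Theorem \ref{thrmautgen}; this is legitimate, but the paper explicitly remarks that it never uses that theorem in any proof, and indeed Section \ref{secaffoverZ} obtains $b=\pi_2$ by computing the defining quotients directly — if you wanted to match the paper's self-imposed constraint you would substitute that computation. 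Second, your use of the preceding lemma, of the preservation of $\mathrm{Tp}$ under inverse images, and of the remarks after Theorems \ref{thrmleftGtors} and \ref{thrmgtorsor} all rely on reading those statements in the internal logic of $\ca Z[G]$ (in particular, transporting the torsor formula along the faithful forgetful functor), which is consistent with the thesis's stated conventions but should be said explicitly, as you partly do. With those caveats, the key computation $(gP)(P^{-1}g'P)=(gg')P$ correctly reconciles the left $G$-action with the right $G(\mathrm{Tp})$-torsor structure, and the argument goes through. What each approach buys: yours is shorter and more conceptual once the two identifications are available, and it explains \emph{why} the lemma holds (the generic torsor is the torsor of the generic plane); the paper's is self-contained, constructive at the level of orbit objects, and yields the explicit epimorphisms that are reused in the surrounding sections.
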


\begin{proof}
For the affine plane $(\mathbb A(M),a_{\mathbb A})$, we write $(\mathbb A_{\#\text{pt}},a_{\#\text{pt}})$ for the subobject of $(\mathbb A_{\text{pt}},a_{\text{pt}})\times (\mathbb A_{\text{pt}},a_{\text{pt}})$ of points that are apart from each other. We write $(\mathbb A_{\in},a_{\in})$ for the subobject of $(\mathbb A_{\text{pt}},a_{\text{pt}})\times (\mathbb A_{\text{li}},a_{\text{li}})$ of pairs $(A,l)$ such that $A\in l$. We write $(\mathbb A_{\notin},a_{\notin})$ for the subobject of $(\mathbb A_{\text{pt}},a_{\text{pt}})\times (\mathbb A_{\text{li}},a_{\text{li}})$ of pairs $(A,l)$ such that $A\notin l$. We write $(\mathbb A_{\parallel \#},a_{\parallel \#})$ for the subobject of $(\mathbb A_{\text{li}},a_{\text{li}})\times (\mathbb A_{\text{li}},a_{\text{li}})$ of pairs of lines that are parallel and apart from each other.

Observe that $(\mathbb A_{\text{pt}}(M),a_{\text{pt}})$ is the object of $G_3(M,\pi_2)$-orbits via the epimorphism $(G,m_G)\twoheadrightarrow (\mathbb A_{\text{pt}}(M),a_{\text{pt}})$ which sends $\begin{pmatrix}
a_0 & b_0 & c_0\\
a_1 & b_1 & c_1 \\
0 & 0 & 1
\end{pmatrix}$ to the point $(c_0,c_1)$.

Observe that $(\mathbb A_{\text{pt}}(M),a_{\text{pt}})$ is the object of $G_1(M,\pi_2)$-orbits via the epimorphism $(G,m_G)\twoheadrightarrow (\mathbb A_{\text{pt}}(M),a_{\text{pt}})$ which sends $\begin{pmatrix}
a_0 & b_0 & c_0\\
a_1 & b_1 & c_1 \\
0 & 0 & 1
\end{pmatrix}$ to the point $(a_0+c_0,a_1+c_1)$.

Observe that $(\mathbb A_{\text{pt}}(M),a_{\text{pt}})$ is the object of $G_2(M,\pi_2)$-orbits via the epimorphism $(G,m_G)\twoheadrightarrow (\mathbb A_{\text{pt}}(M),a_{\text{pt}})$ which sends $\begin{pmatrix}
a_0 & b_0 & c_0\\
a_1 & b_1 & c_1 \\
0 & 0 & 1
\end{pmatrix}$ to the point $(b_0+c_0,b_1+c_1)$.

Observe that $(\mathbb A_{\#\text{pt}}(M),a_{\#\text{pt}})$ is the object of $G_{23}(M,\pi_2)$-orbits via the epimorphism $(G,m_G)\twoheadrightarrow (\mathbb A_{\#\text{pt}}(M),a_{\#\text{pt}})$ which sends $\begin{pmatrix}
a_0 & b_0 & c_0\\
a_1 & b_1 & c_1 \\
0 & 0 & 1
\end{pmatrix}$ to the pair of points $(b_0+c_0,b_1+c_1)$ and $(c_0,c_1)$. Moreover, the two projections from $(\mathbb A_{\#\text{pt}}(M),a_{\#\text{pt}})$ to $(\mathbb A_{\text{pt}}(M),a_{\text{pt}})$ are the ones induced by the group monomorphisms $G_{23}(M,\pi_2)\to G_2(M,\pi_2)$ and $G_{23(M,\pi_2)}\to G_3(M,\pi_2)$.

Observe that $(\mathbb A_{\text{li}}(M),a_{\text{li}})$ is the object of $G_{\text{li}}(M,\pi_2)$-orbits via the epimorphism $(G,m_G)\twoheadrightarrow (\mathbb A_{\text{li}}(M),a_{\text{li}})$ which sends $\begin{pmatrix}
a_0 & b_0 & c_0\\
a_1 & b_1 & c_1 \\
0 & 0 & 1
\end{pmatrix}$ to the (equivalence class of the) line $(b_1,-b_0,b_0c_1-b_1c_0)$.

Observe that $(\mathbb A_{\text{li}}(M),a_{\text{li}})$ is the object of $G_{\text{li}'}(M,\pi_2)$-orbits via the epimorphism $(G,m_G)\twoheadrightarrow (\mathbb A_{\text{li}}(M),a_{\text{li}})$ which sends $\begin{pmatrix}
a_0 & b_0 & c_0\\
a_1 & b_1 & c_1 \\
0 & 0 & 1
\end{pmatrix}$ to the line $(b_1,-b_0,a_1b_0-a_0b_1+b_0c_1-b_1c_0)$.

Observe that $(\mathbb A_{\#\text{li}}(M),a_{\#\text{li}})$ is the object of $G_{\#\text{li}}(M,\pi_2)$-orbits via the epimorphism $(G,m_G)\twoheadrightarrow (\mathbb A_{\text{li}}(M),a_{\text{li}})$ which sends $\begin{pmatrix}
a_0 & b_0 & c_0\\
a_1 & b_1 & c_1 \\
0 & 0 & 1
\end{pmatrix}$ to the pair of lines $(b_1,-b_0,b_0c_1-b_1c_0)$ and $(b_1,-b_0,a_1b_0-a_0b_1+b_0c_1-b_1c_0)$ which are apart from each other. Moreover, the two projections from $(\mathbb A_{\#\text{li}}(M),a_{\#\text{li}})$ to $(\mathbb A_{\text{li}}(M),a_{\text{li}})$ are the ones induced by the group monomorphisms $G_{\#\text{li}}(M,\pi_2)\to G_{\text{li}}(M,\pi_2)$ and $G_{\#\text{li}}(M,\pi_2)\to G_{\text{li}'}(M,\pi_2)$.

Observe that $(\mathbb A_{\in}(M),a_{\in})$ is the object of $G_{\in}(M,\pi_2)$-orbits via the epimorphism $(G,m_G)\twoheadrightarrow (\mathbb A_{\in}(M),a_{\in})$ which sends $\begin{pmatrix}
a_0 & b_0 & c_0\\
a_1 & b_1 & c_1 \\
0 & 0 & 1
\end{pmatrix}$ to the pair of the point $(c_0,c_1)$ and the line $(b_1,-b_0,b_0c_1-b_1c_0)$. Moreover, the projection from $(\mathbb A_{\in}(M),a_{\in})$ to $(\mathbb A_{\text{pt}}(M),a_{\text{pt}})$ is the morphism induced by the group monomorphism $G_{\in}(M,\pi_2)\to G_3(M,\pi_2)$ and the projection $(\mathbb A_{\in}(M),a_{\in})$ to $(\mathbb A_{\text{li}}(M),a_{\text{li}})$ is the morphism induced by the group monomorphism $G_{\in}(M,\pi_2)\to G_{\text{li}}(M,\pi_2)$.

Observe that $(\mathbb A_{\notin}(M),a_{\notin})$ is the object of $G_{\notin}(M,\pi_2)$-orbits via the epimorphism $(G,m_G)\twoheadrightarrow (\mathbb A_{\notin}(M),a_{\notin})$ which sends $\begin{pmatrix}
a_0 & b_0 & c_0\\
a_1 & b_1 & c_1 \\
0 & 0 & 1
\end{pmatrix}$ to the pair of the point $(a_0+c_0,a_0+c_1)$ and the line $(b_1,-b_0,b_0c_1-b_1c_0)$. The projection from $(\mathbb A_{\notin}(M),a_{\notin})$ to $(\mathbb A_{\text{pt}}(M),a_{\text{pt}})$ is the morphism induced by the group monomorphism $G_{\notin}\to G_1$ and the projection $(\mathbb A_{\notin}(M),a_{\notin})$ to $(\mathbb A_{\text{li}}(M),a_{\text{li}})$ is the morphism induced by the group monomorphism $G_{\notin}\to G_{\text{li}}$.

Observe that $(\mathbb A_{\parallel\#}(M),a_{\text{pt}})$ is the object of $G_{\parallel\#}(M,\pi_2)$-orbits via the epimorphism $(G,m_G)\twoheadrightarrow (\mathbb A_{\parallel\#}(M),a_{\parallel\#})$ which sends $\begin{pmatrix}
a_0 & b_0 & c_0\\
a_1 & b_1 & c_1 \\
0 & 0 & 1
\end{pmatrix}$ to the pair of lines $(b_1,-b_0,a_1b_0-a_0b_1+b_0c_1-b_1c_0)$ and $(b_1,-b_0,b_0c_1-b_1c_0)$. The two projections from $(\mathbb A_{\parallel\#}(M),a_{\parallel\#})$ to $(\mathbb A_{\text{li}}(M),a_{\text{li}})$ are the ones induced by the group monomorphisms $G_{\parallel\#}(M,\pi_2)\to G_{\text{li}'}(M,\pi_2)$ and $G_{\parallel\#}(M,\pi_2)\to G_{\text{li}}(M,\pi_2)$.
\end{proof}


We now return to the construction from the beginning of this section. The local ring $R$ in topos $\ca E$ corresponds to a geometric morphism $f:\ca E\to \ca Z$ such that $f^*(M)$ is isomorphic to $R$. By Diaconescu's theorem, a right $G(R)$-torsor (or equivalently a right $f^*(G)$-torsor) $a_A:A\times G(R)\to A$ corresponds to a factorization $f':\ca E \to \ca Z[G]$ of $f$ through the geometric morphism $\ca Z[G]\to \ca Z$ which is such that $f'^*$ maps the generic $G$-torsor to the $G(R)$-torsor $a_A$. The construction of the affine plane structure is preserved by inverse images of geometric morphisms, therefore the construction applied to the local ring $R$ and the $G(R)$-torsor $a_A$ is isomorphic to $f'^*(\mathbb A(M),a_{\mathbb A})$ or equivalently $f'^*f_{\mathbb A}^*(\ca Ag)$. Hence, since the inverse image of an affine plane is an affine plane we have the following:

\begin{thrm}
The structure constructed in the beginning of this section is an affine plane.
\end{thrm}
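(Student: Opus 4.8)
The plan is to leverage the two preceding lemmas together with the functoriality of the whole construction under inverse image functors, thereby reducing the claim to the fact that affine planes are the models of a \emph{geometric} theory.

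First I would record that the Zariski topos $\ca Z$ classifies the theory of local rings, so the given local ring $R$ in $\ca E$ is, up to isomorphism, $f^*(M)$ for an essentially unique geometric morphism $f:\ca E\to \ca Z$. Next, since $\ca Z[G]\to \ca Z$ is the $\ca Z$-topos classifying $G$-torsors, Diaconescu's theorem (Theorem \ref{thrmDiac2}, in its $G$-torsor form) produces a factorization $f':\ca E\to \ca Z[G]$ of $f$ through $\ca Z[G]\to \ca Z$ such that $f'^*$ sends the generic $G$-torsor $m_G:(G,m_G)\times (G,\pi_2)\to (G,m_G)$ to the prescribed torsor $a_A:A\times G(R)\to A$.

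Then I would invoke the observation made when the construction was set up, namely that each of the objects $A_{\text{pt}1}$, $A_{\text{pt}2}$, $A_{\text{pt}3}$, $A_{\text{li}}$, $A_{\text{li}'}$, $A_{\#\text{pt}}$, $A_{\#\text{li}}$, $A_{\in}$, $A_{\notin}$, $A_{\parallel\#}$ is an object of orbits (a coequalizer) and each structural monomorphism between them is induced from a subgroup inclusion; all of this is assembled from finite limits and finite colimits. Since $f'^*$, being an inverse image functor, preserves finite limits and all colimits, the structure obtained by running the construction on $(R,a_A)$ in $\ca E$ is canonically isomorphic to $f'^*$ applied to the structure obtained by running the construction on $((M,\pi_2),m_G)$ in $\ca Z[G]$.

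Finally, by Lemma \ref{lemgag} this latter structure in $\ca Z[G]$ is the affine plane $(\mathbb A(M),a_{\mathbb A})=f_{\mathbb A}^*(\ca Ag)$, so the structure built from $(R,a_A)$ is $f'^* f_{\mathbb A}^*(\ca Ag)$. As the theory of affine planes is geometric, all of its axioms are preserved by inverse image functors, and hence $f'^* f_{\mathbb A}^*(\ca Ag)$ is again an affine plane, which is the claim. The main obstacle in this chain is not visible at this point because it has been discharged in advance: it is the explicit bookkeeping of Lemma \ref{lemgag}, matching each orbit object under the generic $G$-torsor with the correct component of $\mathbb A(M)$ and checking that the induced projections realise the incidence, apartness, and parallelism relations. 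Granting that lemma, the only remaining point demanding care is confirming that the Diaconescu correspondence carries the generic $G$-torsor to $a_A$ exactly, so that the isomorphism of the third step can be asserted; this is exactly the naturality clause of Theorem \ref{thrmDiac2}.
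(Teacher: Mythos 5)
Your proof is correct and follows essentially the same route as the paper: represent $R$ by a geometric morphism $f:\ca E\to \ca Z$, use Diaconescu's theorem to factor it through $\ca Z[G]$ so that $f'^*$ carries the generic $G$-torsor to $a_A$, use preservation of the finite-limit/colimit construction under inverse images together with Lemma \ref{lemgag} to identify the result with $f'^*f_{\mathbb A}^*(\ca Ag)$, and conclude because the theory of affine planes is geometric. No gaps to report.
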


\section{Geometric morphisms over $\ca Z$} \label{secaffoverZ}

In Chapter \ref{chalocal}, given an affine plane $\ca A$, we construct the local ring of trace preserving homomorphisms. Let $R_{\ca Ag}$ be the local ring of trace preserving homomorphisms of the generic affine plane $\ca Ag$ of $\mathbf{Aff}$. The local ring $R_{\ca Ag}$ in $\mathbf{Aff}$ corresponds to a geometric morphism 
$$R_{\mathbb A}:\mathbf{Aff}\to \ca Z$$
such that $R_{\mathbb A}^*(M)$ is isomorphic to $R_{\ca Ag}$. Thus, $\mathbf{Aff}$ is a $\ca Z$-topos via $R_{\mathbb A}$. $\ca Z[G]$ is also a $\ca Z$-topos via the geometric morphism induced by the group homomorphism $G\to 1$. In Theorem \ref{thrmgtorsor}, we also construct a $G(R_{\ca Ag})$-torsor in $\mathbf{Aff}$. By Diaconescu's theorem, this $G(R_{\ca Ag})$-torsor corresponds to a geometric morphism 
$$f_G: \mathbf{Aff}\to \ca Z[G]$$
over $\ca Z$.

We wish to show that the geometric morphism $f_{\mathbb A}: \ca Z[G]\to \mathbf{Aff}$ is also a geometric morphism over $\ca Z$. The construction of the local ring of trace preserving homomorphisms is preserved under inverse images of geometric morphisms. Therefore, it suffices to show that the local ring of trace preserving homomorphisms of the affine plane $(\mathbb A(M), a_{\mathbb A})$ is isomorphic to $(M,\pi_2)$.

In Chapter \ref{chalocal}, we gave explicit descriptions of the objects of translations and trace preserving homomorphisms of an affine plane over a local ring. We can apply these constructions to the affine plane over the generic local ring $\mathbb A(M)$ in the Zariski topos. In Section \ref{seclocalpres}, we show how these constructions are equivalent to constructions based on limits and colimits. The forgetful $\ca Z[G]\to \ca Z$ creates limits and colimits. In the following discussion, we describe cocones of specific diagrams in $\ca Z[G]$. The forgetful functor maps them to colimits in $\ca Z$, therefore the cocones where already colimits in $\ca Z[G]$.

Recall the construction of the local ring $\text{Tp}$ in Section \ref{seclocalpres}. The object $\text{Tn}_\#$ is defined as a quotient of pairs of points that are apart from each other. In the case of the affine plane $(\mathbb A(M),a_{\mathbb A})$, $\text{Tn}_{\#}(\mathbb A(M),a_{\mathbb A})$ is the object $\{(x,y,0):M^3|\text{inv}(x)\vee\text{inv}(y)\}$ whose left $G$-action is matrix multiplication.  It is a quotient of the object of pairs of points that are apart from each other via the epimorphism which sends such a pair $(a_0,a_1,1)$ and $(b_0,b_1,1)$ to $(b_0-a_0,b_1-a_1,0)$, or equivalently to the product $\begin{pmatrix}
a_0 & b_0 \\
a_1 & b_1 \\
1 & 1
\end{pmatrix}
\begin{pmatrix}
-1 \\
1
\end{pmatrix}$.

The object of translations is defined as a quotient of the object of pairs of points. In the case of the affine plane $(\mathbb A(M),a_{\mathbb A})$, $\text{Tn}(\mathbb A(M),a_{\mathbb A})$ is the object $\{(x,y,0):M^3\}$ whose left $G$-action is matrix multiplication.  It is a quotient of the object of pairs of points via the epimorphism which sends the pair of points $(a_0,a_1,1)$ and $(b_0,b_1,1)$ to $(b_0-a_0,b_1-a_1,0)$, or equivalently to the product $\begin{pmatrix}
a_0 & b_0 \\
a_1 & b_1 \\
1 & 1
\end{pmatrix} \begin{pmatrix}
-1 \\
1
\end{pmatrix}$.

$\text{Tn}$ has a group structure. In the case of the affine plane $(\mathbb A(M),a_{\mathbb A})$, this group structure is given in the following way:
The unit of the group is the morphism $(1,!)\to \text{Tn}(\mathbb A(M),a_{\mathbb A})$ which sends the unique element of $1$ to $(0,0,0)$ (and notice that this commutes with the two $G$-actions). The group operation is addition of the vectors.

The $\text{Tn}(\mathbb A(M),a_{\mathbb A})$-action on $(\mathbb A_{\text{pt}}(M),a_{\text{pt}})$ sends a pair of a translation $(x,y,0)$ and a point $(a_0,a_1,1)$ to $(a_0+x,a_0+y,1)$.

The object of trace preserving homomorphisms is defined as a quotient of the object $\{(A,B,C)| (A\#B) \wedge (C\in \ovl{AB})\}$. We construct this in two steps here. Let us consider the interpretation of the relation $C\in \ovl{AB}$ in $(\mathbb A(M),a_{\mathbb A})$. It is the object of matrices over the generic local ring of the form 
$ \begin{pmatrix}
a_0 & b_0 & c_0 \\
a_1 & b_1 & c_1 \\
1 & 1 & 1
\end{pmatrix}$ whose determinant is $0$ and such that at least one of $b_0-a_0$ and $b_1-a_1$ is invertible. The $G$-action is left matrix multiplication. Right multiplication of the above object by $\begin{pmatrix}
-1 & -1 \\
1 & 0 \\
0 & 1
\end{pmatrix}$ gives an epimorphism in $\ca Z[G]$ to the object of matrices over the generic local ring of the form $\begin{pmatrix}
x_0 & y_0 \\
x_1 & y_1 \\
0 & 0
\end{pmatrix}$, where at least one of $x_0$ and $x_1$ is invertible and $x_0 y_1=x_1 y_0$. The $G$-action on this object is again left matrix multiplication. By the above discussion, this is isomorphic the object $\{(\tau,\tau'): \text{Tn}^2| (\exists A) (A\#\tau(A))\wedge (\tau'(A)\in \ovl{A\tau(A)})\}$.

We have an epimorphism from the above to the generic local ring in the following way: Given $\begin{pmatrix}
x_0 & y_0 \\
x_1 & y_1 \\
0 & 0
\end{pmatrix}$, if $x_0$ is invertible it is sent to $x_0^{-1}y_0$ and if $x_1$ is invertible it is sent to $x_1^{-1}y_1$. Notice that the two definitions agree when both $x_0$ and $x_1$ are invertible. Also notice that the morphism commutes with the two actions.

The composite of the two above epimorphisms
$$\{(A,B,C)| (A\#B) \wedge (C\in \ovl{AB})\}\to \{(\tau,\tau'): \text{Tn}^2| (\exists A) (A\#\tau(A))\wedge (\tau'(A)\in \ovl{A\tau(A)})\}\to M$$
is the quotient of $\{(A,B,C)| (A\#B) \wedge (C\in \ovl{AB})\}$
via the equivalence relation $\sim_{\text{Tp}}$ described in Lemma \ref{lemsimtp}. Hence, the object $\text{Tp}(\mathbb A(M),a_{\mathbb A})$ of trace preserving homomorphisms of $(\mathbb A(M),a_{\mathbb A})$ is isomorphic to the object $(M,\pi_2)$. It is easy to show that the ring operations on $(M,\pi_2)$ coincide with the ones on $\text{Tp}(\mathbb A(M),a_{\mathbb A})$ described in Section \ref{seclocalpres}. Thus, we have the following:

\begin{lem}
The geometric morphism $f_{\mathbb A}: \ca Z[G]\to \mathbf{Aff}$ is a geometric morphism over $\ca Z$.
\end{lem}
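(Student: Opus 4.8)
The plan is to reduce the assertion to an identification of local rings. Since $\ca Z$ classifies local rings, a geometric morphism into $\ca Z$ is determined up to (coherent) isomorphism by the local ring it classifies, namely the inverse image of the generic local ring $M$. Saying that $f_{\mathbb A}$ is a morphism over $\ca Z$ means precisely that $R_{\mathbb A}\circ f_{\mathbb A}$ is isomorphic, as a geometric morphism to $\ca Z$, to the structural morphism $p:\ca Z[G]\to \ca Z$ induced by $G\to 1$. The structural morphism $p$ classifies the local ring $(M,\pi_2)$, i.e.\ $M$ with trivial $G$-action, while the composite $R_{\mathbb A}\circ f_{\mathbb A}$ classifies $f_{\mathbb A}^*(R_{\mathbb A}^*(M))\cong f_{\mathbb A}^*(R_{\ca Ag})$, where $R_{\ca Ag}$ is the local ring of trace preserving homomorphisms of the generic affine plane $\ca Ag$. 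So the whole statement comes down to exhibiting an isomorphism of local rings $f_{\mathbb A}^*(R_{\ca Ag})\cong (M,\pi_2)$.

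The next step is to move the computation onto the affine plane $(\mathbb A(M),a_{\mathbb A})$ itself. Here I would invoke the theorem that the local ring of trace preserving homomorphisms is preserved by inverse images of geometric morphisms: applying $f_{\mathbb A}^*$ to $R_{\ca Ag}$ yields the local ring of trace preserving homomorphisms of $f_{\mathbb A}^*(\ca Ag)$, and by construction $f_{\mathbb A}^*(\ca Ag)\cong (\mathbb A(M),a_{\mathbb A})$. Thus $f_{\mathbb A}^*(R_{\ca Ag})\cong \text{Tp}(\mathbb A(M),a_{\mathbb A})$, and the remaining task is the concrete identification $\text{Tp}(\mathbb A(M),a_{\mathbb A})\cong (M,\pi_2)$ as rings in $\ca Z[G]$.

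This last identification is exactly the content already assembled in the preceding discussion of this section, so the proof is essentially a packaging of those calculations. Following Section \ref{seclocalpres}, I would realise $\text{Tn}_{\#}$, the object of translations $\text{Tn}$, its abelian group structure, its unit, and its action on points as explicit objects-with-$G$-action (vectors $(x,y,0)$ over $M$ with matrix multiplication), using that the forgetful functor $\ca Z[G]\to \ca Z$ creates the relevant limits and colimits, so that a cocone need only be checked to be colimiting in $\ca Z$. Then $\text{Tp}(\mathbb A(M),a_{\mathbb A})$ is obtained as the quotient of $\{(A,B,C)\mid (A\#B)\wedge (C\in \ovl{AB})\}$ by the equivalence relation $\sim_{\text{Tp}}$ of Lemma \ref{lemsimtp}, which the two-step epimorphism described above factors through, landing on $(M,\pi_2)$; one finally checks that the ring operations on $(M,\pi_2)$ match those defined on $\text{Tp}(\mathbb A(M),a_{\mathbb A})$. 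The main obstacle is this explicit tracking of the quotients and their $G$-equivariance, but it is substantially eased by the creation of colimits along the forgetful functor, which reduces the verification to the already-carried-out computation in the Zariski topos.
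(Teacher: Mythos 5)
Your proposal is correct and takes essentially the same route as the paper: it reduces the claim to the isomorphism of local rings $f_{\mathbb A}^*(R_{\ca Ag})\cong (M,\pi_2)$ via the preservation of the trace-preserving-homomorphism construction under inverse images, and then identifies $\text{Tp}(\mathbb A(M),a_{\mathbb A})\cong (M,\pi_2)$ through the explicit computations of Section \ref{seclocalpres} (the two-step epimorphism factoring through $\sim_{\text{Tp}}$ of Lemma \ref{lemsimtp}), using that the forgetful functor $\ca Z[G]\to \ca Z$ creates the relevant limits and colimits. The only difference is cosmetic: you spell out the classifying-topos principle that a geometric morphism to $\ca Z$ is determined up to isomorphism by its classified local ring, which the paper leaves implicit.
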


\section{$\ca Z[G]\xrightarrow{f_{\mathbb A}} \mathbf{Aff}\xrightarrow{f_G} \ca Z[G]$ is isomorphic to the identity} \label{secgid}

By the results of the previous section, $\ca Z[G]\xrightarrow{f_{\mathbb A}} \mathbf{Aff}\xrightarrow{f_G} \ca Z[G]$ is a geometric morphism over $\ca Z$. By Diaconescu's theorem, to prove that this composite is isomorphic to the identity it suffices to show that $f_{\mathbb A}^*f_G^*$ maps the generic $G$-torsor to a $G$-torsor isomorphic to the generic one. We have shown that the local ring of trace preserving homomorphisms constructed from the affine plane $(\mathbb A(M),a_{\mathbb A})$ is isomorphic to the local ring $(M,\pi_2)$. Therefore it suffices to show that the $G(\text{Tp}(\mathbb A(M),a_{\mathbb A}))$-torsor constructed from $(\mathbb A(M),a_{\mathbb A})$ is isomorphic to the generic $G$-torsor in $\ca Z[G]$.

Notice that the object of three non-collinear points of $(\mathbb A(M),a_{\mathbb A})$ is the object $(\omega(M),a_{\omega})$, where $\omega(M)$ is the object of invertible matrices of the form 
$$\begin{pmatrix}
a_0 & b_0 & c_0\\
a_1 & b_1 & c_1 \\
1 & 1 & 1
\end{pmatrix}$$
and the action $a_{\omega}$ is left matrix multiplication. Theorem \ref{thrmgtorsor}, constructs a right $G(\text{Tp}(\mathbb A(M),a_{\mathbb A}))$-torsor whose underlying object is  $(\omega(M),a_{\omega})$. Via the isomorphism of $\text{Tp}(\mathbb A(M),a_{\mathbb A})$ and $(M,\pi_2)$ from the previous section $G(\text{Tp}(\mathbb A(M),a_{\mathbb A}))\cong (G,\pi_2)$ and therefore $(\omega(M),a_{\omega})$ is a $(G,\pi_2)$-torsor. Using the results from the previous section, we can explicitly describe the $(G,\pi_2)$-action on  $(\omega(M),a_{\omega})$ as the morphism $(\omega(M),a_{\omega})\times (G,\pi_2)$ which maps the pair $\begin{pmatrix}
a_0 & b_0 & c_0\\
a_1 & b_1 & c_1 \\
1 & 1 & 1
\end{pmatrix}$ of $\omega(M)$ and  $g$ of $G$ to 
$$\begin{pmatrix}
a_0 & b_0 & c_0\\
a_1 & b_1 & c_1 \\
1 & 1 & 1
\end{pmatrix}\begin{pmatrix}
1 & 0 & 0 \\
0 & 1 & 0 \\
-1 & -1 & 1
\end{pmatrix}
g
\begin{pmatrix}
1 & 0 & 0 \\
0 & 1 & 0 \\
1 & 1 & 1
\end{pmatrix}.$$

Recall the isomorphism $(\omega(M),a_{\omega})\to (G,m_G)$ which maps $\begin{pmatrix}
a_0 & b_0 & c_0\\
a_1 & b_1 & c_1 \\
1 & 1 & 1
\end{pmatrix}$ to the product $\begin{pmatrix}
a_0 & b_0 & c_0\\
a_1 & b_1 & c_1 \\
1 & 1 & 1
\end{pmatrix}
\begin{pmatrix}
1 & 0 & 0 \\
0 & 1 & 0 \\
-1 & -1 & 1
\end{pmatrix}$
or equivalently to 
$\begin{pmatrix}
a_0-c_0 & b_0-c_0 & c_0\\
a_1-c_1 & b_1-c_1 & c_1 \\
0 & 0 & 1
\end{pmatrix}$.
This isomorphism commutes with the two left $G$-actions $a_{\omega}$ and $m_G$.
Furthermore, it commutes with the right $(G,\pi_2)$-actions that make $(\omega(M),a_{\omega})$ and $(G,m_G)$ right $G$-torsors. Hence, the $G$-torsor corresponding to the composite $\ca Z[G]\to \mathbf{Aff}\to \ca Z[G]$ is isomorphic to the generic $G$-torsor, and therefore the composite geometric morphism is isomorphic to the identity.

\section{$\mathbf{Aff}\xrightarrow{f_G} \ca Z[G] \xrightarrow{f_{\mathbb A}} \mathbf{Aff}$ is isomorphic to the identity}

The geometric morphism $f_{\mathbb A}f_G$ corresponds to an affine plane in $\mathbf{Aff}$. Let $\ca Ag$ be the generic affine plane in $\mathbf{Aff}$ and let $\omega(\ca Ag)$ be the object of triples of non-collinear points. The geometric morphism $f_G$ corresponds to the pair of the local ring $R_{\ca Ag}$ of trace preserving homomorphisms and the right $G(R_{\ca Ag})$-torsor $\omega(\ca Ag)$ as constructed in Chapter \ref{chalocal}.

Hence, the affine plane corresponding to the composite $\mathbf{Aff}\xrightarrow{f_G} \ca Z[G] \xrightarrow{f_{\mathbb A}} \mathbf{Aff}$ is isomorphic to the affine plane constructed from the pair of the local ring $R_{\ca Ag}$ and the right $G(R_{\ca Ag})$-torsor $\omega(\ca Ag)$ as in Section \ref{secGtoraff}. By Lemma \ref{lemgag}, this new affine plane is isomorphic to $\ca A$. Therefore, the geometric  morphism is isomorphic to the identity.

We have already proved in Section \ref{secgid} that $f_G f_{\mathbb A}$ is isomorphic to the identity, hence we conclude the following:

\begin{thrm}
The classifying topos for the theory of affine planes $\mathbf{Aff}$ is equivalent to the topos $\ca Z[G]$.
\end{thrm}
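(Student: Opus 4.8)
The plan is to assemble the two geometric morphisms already constructed in the preceding sections into a single equivalence of toposes, so that the proof itself is purely formal. Recall that the affine plane $(\mathbb A(M), a_{\mathbb A})$ in $\ca Z[G]$ induces, by the universal property of the classifying topos, a geometric morphism $f_{\mathbb A}: \ca Z[G] \to \mathbf{Aff}$ with $f_{\mathbb A}^*(\ca Ag) \cong (\mathbb A(M), a_{\mathbb A})$, while the right $G(R_{\ca Ag})$-torsor $\omega(\ca Ag)$ of Theorem \ref{thrmgtorsor} induces, via Diaconescu's theorem, a geometric morphism $f_G: \mathbf{Aff}\to \ca Z[G]$ over $\ca Z$. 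Since an equivalence of toposes is exactly a pair of geometric morphisms whose two composites are isomorphic to the respective identities, the two facts already established, namely $f_G \circ f_{\mathbb A} \cong 1_{\ca Z[G]}$ (Section \ref{secgid}) and $f_{\mathbb A} \circ f_G \cong 1_{\mathbf{Aff}}$ (the previous section), deliver precisely the data of the equivalence $\ca Z[G]\simeq \mathbf{Aff}$. First I would simply record that these two round-trip isomorphisms are all that is required, and conclude that $f_{\mathbb A}$ and $f_G$ are mutually pseudo-inverse.

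For the reader I would then recall why each round trip reduced to one clean comparison. Because the local ring of trace preserving homomorphisms and its associated $G$-torsor are built only from finite limits and colimits (Section \ref{seclocalpres}), they are preserved by inverse images of geometric morphisms; hence each composite is determined by its effect on the generic data. For $f_G \circ f_{\mathbb A}$ this amounts to the isomorphism $(\omega(M), a_{\omega})\cong (G, m_G)$ respecting both the left and the right $G$-actions, showing that the torsor rebuilt from $(\mathbb A(M), a_{\mathbb A})$ is the generic $G$-torsor. For $f_{\mathbb A}\circ f_G$ it amounts to Lemma \ref{lemgag} applied fibrewise, showing that the affine plane rebuilt from the pair $(R_{\ca Ag}, \omega(\ca Ag))$ is isomorphic to $\ca Ag$.

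The main obstacle lies not in this final assembly, which is formal, but in the two comparisons carried out earlier: in particular the verification (Section \ref{secaffoverZ}) that the ring of trace preserving homomorphisms of $(\mathbb A(M), a_{\mathbb A})$ is isomorphic, as a local ring, to $(M, \pi_2)$. This is what guarantees that $f_{\mathbb A}$ is a morphism over $\ca Z$, and hence that Diaconescu's theorem may be invoked to compare the two $G$-torsors rather than merely the two underlying toposes. Once this identification of coordinate rings is secured, the matching of torsors and of affine planes follows by tracking the explicit orbit descriptions of the objects $A_{\text{pt}}$, $A_{\text{li}}$, $A_{\in}$, $A_{\notin}$, $A_{\parallel\#}$ through the epimorphisms of Lemma \ref{lemgag}, which is bookkeeping rather than a genuine difficulty.
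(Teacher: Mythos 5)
Your proposal is correct and follows essentially the same route as the paper: the theorem is obtained exactly by pairing the two round-trip isomorphisms $f_G\circ f_{\mathbb A}\cong 1_{\ca Z[G]}$ and $f_{\mathbb A}\circ f_G\cong 1_{\mathbf{Aff}}$, the latter reduced via preservation of the trace-preserving ring and torsor constructions under inverse images to the comparison of the rebuilt affine plane with $\ca Ag$. Your only slight slip is attributing the $f_{\mathbb A}\circ f_G$ comparison to Lemma \ref{lemgag} (which concerns the generic torsor in $\ca Z[G]$) rather than to the unlabelled lemma of Section \ref{secGtoraff} for an arbitrary affine plane, but the paper makes the same citation, so this is immaterial.
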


\section{$\mathbf{Aff}_{\text{Pt}}\simeq \ca Z[G_3]$} \label{secaffpt}

Let $\mathbf{Aff}_{\text Pt}$ be the classifying topos for the theory of affine planes with a chosen point.

Let $G_3$ be as before the group in $\ca Z$ of matrices over the generic local ring of the form:
\begin{displaymath}
\begin{pmatrix}
\alpha_0 & \beta_0 & 0 \\
\alpha_1 & \beta_1 & 0 \\
0 & 0 & 1
\end{pmatrix} .
\end{displaymath}
Notice that $G_3$ is a subgroup of $G$, therefore the inclusion $\iota : G_3\to G$ induces a geometric morphism $\iota: \ca Z[G_3]\to \ca Z[G]$. This geometric morphism sends the generic affine plane of $\ca Z[G]$ to an affine plane in $\ca Z[G_3]$. The object of points of this affine plane is the one we get when we restrict the various $G$-actions of the generic affine plane in $\ca Z[G]$ to $G_3$-actions. The point $(0,0)$ is stable under the $G_3$-action therefore it is a global section of the object of points of this affine plane.

This affine plane with a chosen point in $\ca Z[G_3]$ induces a geometric morphism $\ca Z[G_3]\to \mathbf{Aff}_{\text Pt}$. We claim that this is an equivalence and we can prove this in a similar way to the way we proved that $\mathbf{Aff}\simeq \ca Z[G]$.

We give an alternative proof of this result, using Theorem \ref{thrmlocal}. $G_3\to G$ is a group monomorphism, therefore $\ca Z[G_3]\to \ca Z[G]$ is a local homeomorphism and in particular $\ca Z[G_3]$ is equivalent to $\ca Z[G]/J$, where $J$ is the coequalizer of the morphisms $m_{G_3}(1_{G_3}\times \iota),\pi_1:(G\times G_3,m_G\times 1_{G_3}) \rightrightarrows (G,m_G)$, or equivalently the object of $G_3$-orbits of $(G,m_G)$. As we have also mentioned in the proof of Theorem \ref{lemgag} the object of $G_3$-orbits is the object $(\mathbb A_{\text{pt}}(M),a_{\text{pt}})$ of points of the generic affine plane in $\ca Z[G]$. Thus, we have the following:

\begin{thrm}
$\mathbf{Aff}_{\text{Pt}}\simeq \ca Z[G_3]$.
\end{thrm}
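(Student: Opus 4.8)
The plan is to exhibit $\mathbf{Aff}_{\text{Pt}}$ as a slice of $\mathbf{Aff}$, to transport that slice through the already-established equivalence $f_{\mathbb A}:\ca Z[G]\xrightarrow{\simeq}\mathbf{Aff}$, and then to recognise the resulting slice of $\ca Z[G]$ as $\ca Z[G_3]$ by means of Theorem \ref{thrmlocal}. Since the discussion preceding the statement has already carried out the one genuinely computational step, the remaining work is mostly the coherent assembly of three equivalences.

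First I would invoke the standard universal property of adjoining a constant to a geometric theory: writing $\ca Ag$ for the generic affine plane of $\mathbf{Aff}$ and $\ca Ag_{\text{pt}}$ for its object of points, the slice $\mathbf{Aff}/\ca Ag_{\text{pt}}$ classifies affine planes equipped with a chosen point. Indeed, a geometric morphism $\ca F\to \mathbf{Aff}/\ca Ag_{\text{pt}}$ is precisely a geometric morphism $g:\ca F\to \mathbf{Aff}$ (an affine plane in $\ca F$) together with a global section of $g^*(\ca Ag_{\text{pt}})$ (a chosen point of that plane), which is exactly the data classified by $\mathbf{Aff}_{\text{Pt}}$. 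Hence $\mathbf{Aff}_{\text{Pt}}\simeq \mathbf{Aff}/\ca Ag_{\text{pt}}$, naturally in $\ca F$.

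Next I would transport this slice along $f_{\mathbb A}$. Because $f_{\mathbb A}^*(\ca Ag)\cong(\mathbb A(M),a_{\mathbb A})$, the inverse image of $\ca Ag_{\text{pt}}$ is the object of points $(\mathbb A_{\text{pt}}(M),a_{\text{pt}})$ of the generic affine plane in $\ca Z[G]$; since slicing is preserved by equivalences of toposes, $\mathbf{Aff}/\ca Ag_{\text{pt}}\simeq \ca Z[G]/(\mathbb A_{\text{pt}}(M),a_{\text{pt}})$. Finally, Theorem \ref{thrmlocal} applied to the group monomorphism $\iota:G_3\hookrightarrow G$ gives $\ca Z[G_3]\simeq \ca Z[G]/(L,a_L)$, where $(L,a_L)=\iota_!(1)$ is the object of orbits of the right $G_3$-action on $(G,m_G)$ induced by $\iota$; and this object of $G_3$-orbits was identified in the proof of Lemma \ref{lemgag} with $(\mathbb A_{\text{pt}}(M),a_{\text{pt}})$, via the epimorphism $(G,m_G)\twoheadrightarrow(\mathbb A_{\text{pt}}(M),a_{\text{pt}})$ sending a matrix to the point $(c_0,c_1)$ recorded by its last column. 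Chaining the three equivalences yields $\mathbf{Aff}_{\text{Pt}}\simeq\ca Z[G_3]$.

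The main obstacle will be checking that this abstract chain agrees with the concrete geometric morphism $\ca Z[G_3]\to\mathbf{Aff}_{\text{Pt}}$ described just before the statement, namely the one classifying the generic affine plane of $\ca Z[G]$ with its $G$-actions restricted along $\iota$, together with the $G_3$-fixed point $(0,0)$. Concretely, one must verify that under the identification $(L,a_L)\cong(\mathbb A_{\text{pt}}(M),a_{\text{pt}})$ the canonical global section $1\to(L,a_L)$ arising from the unit of $\iota_!$ (the orbit of the identity matrix of $G$) corresponds to the point $(0,0)$, and that the affine-plane structure carried across the local homeomorphism is exactly the restricted one. Both are matters of bookkeeping against the explicit orbit computations of Lemma \ref{lemgag} rather than new arguments; alternatively, one could sidestep this matching altogether by imitating the direct two-composite argument used to prove $\mathbf{Aff}\simeq\ca Z[G]$.
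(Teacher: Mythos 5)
Your proposal is correct and is essentially the paper's own proof: both identify $\mathbf{Aff}_{\text{Pt}}$ with the slice of the classifying topos over the object of points (via \cite[B3.2.8(b)]{Elephant1}), apply Theorem \ref{thrmlocal} to the monomorphism $G_3\hookrightarrow G$, and use the identification of the $G_3$-orbit object of $(G,m_G)$ with $(\mathbb A_{\text{pt}}(M),a_{\text{pt}})$ from the proof of Lemma \ref{lemgag}. Your closing caveat about matching the abstract chain with the concrete geometric morphism is reasonable but not needed for the statement as given, and the paper likewise does not carry out that matching, contenting itself with the equivalence of toposes.
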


\begin{proof}
By Theorem \ref{thrmlocal} and the above discussion we have a geometric isomorphism $\ca Z[G_3]\to \ca Z[G]/(\mathbb A_{\text{pt}}(M),a_{\text{pt}})$ where $(\mathbb A_{\text{pt}}(M),a_{\text{pt}})$ is the object of points of the generic affine plane in $\ca Z[G]$. $\ca Z[G]$ is the classifying topos for the theory of affine planes, therefore by \cite[B3.2.8(b)]{Elephant1}, $\ca Z[G]/(\mathbb A_{\text{pt}}(M),a_{\text{pt}})$ is the classifying topos for the theory of projective planes with a chosen point. $\ca Z[G_3]\simeq \ca Z[G]/(\mathbb A_{\text{pt}}(M),a_{\text{pt}})$, therefore $\ca Z[G_3]\simeq \mathbf{Aff}_{\text{Pt}}$.
\end{proof}

\chapter{The classifying topos for projective planes} \label{chaZH}

In this chapter, we prove that the classifying topos of the theory of projective planes is equivalent to the topos $\ca Z[H]$, where $\ca Z$ is the Zariski topos and $H$ is the projective general linear group of the generic local ring. The proof contains a construction of a projective plane from a pair of a local ring $R$ and an $H(R)$-torsor. It also uses results from Chapter \ref{chaprojcoo}.

\section{Projective planes in $\ca Z$ and $\ca Z[H]$} \label{secprojZH}

In Chapter \ref{chaprojplanes}, we construct a projective plane over a given local ring in a topos. Hence, we construct a projective plane over the generic local ring $M$ of $\ca Z$. We denote this projective plane as $\mathbb P(M)$.

Let $H$ be the projective general linear group over the generic local ring, i.e. $H$ is the quotient by scalar multiplication of the object of invertible matrices over the generic local ring of the form 
\begin{displaymath}
\begin{pmatrix}
\alpha_0 & \beta_0 & \gamma_0 \\
\alpha_1 & \beta_1 & \gamma_1 \\
\alpha_2 & \beta_2  & \gamma_2
\end{pmatrix}.
\end{displaymath}

By Theorem \ref{thrmprojmorph}, an automorphism of the projective plane over the generic local ring is uniquely of the form $h\circ \mathbb P(\alpha)$ where $h$ is in $H$ and $\alpha$ is an automorphism of the generic local ring. In fact, by Theorem \ref{thrmautgen} the unique automorphism of the generic local ring is the identity, and therefore the group of automorphisms of $\mathbb P(M)$ is $H$. However, all we need to know is that the projective plane $\mathbb P(M)$ has a left $H$-action.

$H$ acts on points via left matrix multiplication and we denote this action by $a_{\text{pt}}:H\times\mathbb P_{\text{pt}}(M)\to \mathbb P_{\text{pt}}(M)$.

An element of $H$ represented by a matrix $h$ acts on a line represented by $\pt{\lambda}$ via left matrix multiplication by $(h^{-1})^T $ (and this does not depend on the choice of representatives). We denote this action by $a_{\text{li}}:H\times\mathbb P_{\text{li}}(M)\to \mathbb P_{\text{li}}(M)$.

We shall denote the described $H$-action on $\mathbb P(M)$ by $a_{\mathbb P}$. $a_{\mathbb P}$ is the $H$-action induced by the fact that $H$ is the group of automorphisms of $\mathbb P(M)$. Hence, this $H$-action preserves the subobjects $\#_{\text{pt}}$, $\#_{\text{li}}$, $\in$, $\notin$ of $\mathbb P_{\text{pt}}(M)\times \mathbb P_{\text{pt}}(M)$, $\mathbb P_{\text{li}}(M)\times \mathbb P_{\text{li}}(M)$, $\mathbb P_{\text{pt}}(M) \times \mathbb P_{\text{li}}(M)$, $\mathbb P_{\text{pt}}(M)\times \mathbb P_{\text{li}}(M)$ respectively.

This structure in $\ca Z[H]$ satisfies the axioms of projective planes because the forgetful functor $\ca Z[H]\to \ca Z$ reflects monomorphisms and it maps this structure to the projective plane over the generic local ring in $\ca Z$ (which satisfies the axioms for projective planes). We denote this projective plane of $\ca Z[H]$ as $(\mathbb P(M),a_{\mathbb P})$. Let $\mathbf{Proj}$ be the classifying topos of the theory of projective planes and let $\ca Pg$ be its generic projective plane. Then, the projective plane $(\mathbb P(M), a_{\mathbb P})$ of  $\ca Z[H]$, corresponds to a geometric morphism 
$$f_{\mathbb P}: \ca Z[H]\to \mathbf{Proj}$$
such that  $f_{\mathbb P}^*(\ca Pg)$ is isomorphic to $(\mathbb P(M),a_{\mathbb P})$.

\section{Explicit construction of a projective plane from an $H$-torsor} \label{secGtoproj}
Let $R$ be a local ring in a topos $\ca E$, and let $a_A:A\times H(R)\to H(R)$ be a right $H(R)$-torsor in $\ca E$. We shall describe a structure in $\ca E$ in the language of projective planes with the intention of proving later in this chapter that it is a projective plane. The intentional interpretation of the objects and morphisms that will follow is the following: $A_{\text{pt}1}$, $A_{\text{pt}2}$, $A_{\text{pt}3}$ are all isomorphic and will be the object of points of the projective plane. $A_{\text{li}}$ and $A_{\text{li}'}$ are isomorphic and they will be the object of lines of the projective plane. $A_{\#\text{pt}}\to A_{\text{pt}1}\times A_{\text{pt}2}$ is a monomorphism and it will be the subobject of pairs of points that are apart from each other. $A_{\#\text{li}}\to A_{\text{li}}\times A_{\text{li}'}$ is a monomorphism and it will be the subobject of pairs of lines that are apart from each other. $A_{\in}\to A_{\text{pt}1}\times A_{\text{li}}$ is a monomorphism and it will be the subobject of pairs $(P,l)$ of a point $P$ lying on a line $l$. $A_{\notin}\to A_{\text{pt}3}\times A_{\text{li}}$ is a monomorphism and it will be the subobject of pairs $(P,l)$ of a point $P$ lying outside a line $l$.

Let $H_1(R)$ be the group of invertible matrices over the local ring $R$ of the form $$\begin{pmatrix}
\alpha_0 & \beta_0 & \gamma_0 \\
0 & \beta_1 & \gamma_1 \\
0 & \beta_2  & \gamma_2
\end{pmatrix},$$
and let $A_{\text{pt}1}$ be the object of $H_1(R)$-orbits of $a_A$.

Let $H_2(R)$ be the group of invertible matrices over the local ring $R$ of the form $$\begin{pmatrix}
\alpha_0 & 0 & \gamma_0 \\
\alpha_1 & \beta_1 & \gamma_1 \\
\alpha_2 & 0  & \gamma_2
\end{pmatrix},$$
and let $A_{\text{pt}2}$ be the object of $H_1(R)$-orbits of $a_A$.

Let $H_3(R)$ be the group of invertible matrices over the local ring $R$ of the form $$\begin{pmatrix}
\alpha_0 & \beta_0 & 0 \\
\alpha_1 & \beta_1 & 0 \\
\alpha_2 & \beta_2  & \gamma_2
\end{pmatrix},$$
and let $A_{\text{pt}3}$ be the object of $H_3(R)$-orbits of $a_A$.

Let $H_{12}(R)$ be the group of invertible matrices over the local ring $R$ of the form
$$\begin{pmatrix}
\alpha_0 & 0 & \gamma_0 \\
0 & \beta_1 & \gamma_1 \\
0 & 0  & \gamma_2
\end{pmatrix},$$
and let $A_{\#\text{pt}}$ be the object of $H_{12}(R)$-orbits of $a_A$. Notice that the group monomorphisms $H_{12}(R)\to H_1(R)$ and $H_{12}(R)\to H_2(R)$ induce morphisms $A_{\#\text{pt}}\to A_{\text{pt}1}$ and $A_{\#\text{pt}}\to A_{\text{pt}2}$ respectively. Hence, a morphism $A_{\#\text{pt}}\to A_{\text{pt}1}\times A_{\text{pt}2}$ is also induced.

Let $H_\text{li}(R)$ be the group of invertible matrices over the local ring $R$ of the form
$$\begin{pmatrix}
\alpha_0 & \beta_0 & \gamma_0 \\
\alpha_1 & \beta_1 & \gamma_1 \\
0 & 0 & \gamma_2
\end{pmatrix},$$
and let $A_{\text{li}}$ be the object of $H_{\text{li}}(R)$-orbits of $a_A$.

Let $H_{\text{li}'}(R)$ be the group of invertible matrices over the local ring $R$ of the form
$$\begin{pmatrix}
\alpha_0 & \beta_0 & \gamma_0 \\
0 & \beta_1 & 0 \\
\alpha_2 & \beta_2 & \gamma_2
\end{pmatrix},$$
and let $A_{\text{li}'}$ be the object of $H_{\text{li}}(R)$-orbits of $a_A$.

Let $H_{\#\text{li}}(R)$ be the group of invertible matrices over the local ring $R$ of the form
$$\begin{pmatrix}
\alpha_0 & \beta_0 & \gamma_0 \\
0 & \beta_1 & 0 \\
0 & 0 & \gamma_2
\end{pmatrix},$$
and let $A_{\#\text{li}}$ be the object of $H_{\#\text{li}}(R)$-orbits of $a_A$. The group monomorphisms $H_{\#\text{li}}(R)\to H_{\text{li}}(R)$ and $H_{\#\text{li}}(R)\to H_{\text{li}'}(R)$ induce morphisms $A_{\#\text{li}}\to A_{\text{li}}$ and $A_{\#\text{li}}\to A_{\text{li}'}$ respectively. Hence, a morphism $A_{\#\text{li}}\to A_{\text{li}}\times A_{\text{li}'}$ is also induced.

Let $H_{\in}(R)$ be the group of invertible matrices over the local ring $R$ of the form
$$\begin{pmatrix}
\alpha_0 & \beta_0 & \gamma_0 \\
0 & \beta_1 & \gamma_1 \\
0 & 0 & \gamma_2
\end{pmatrix},$$
and let $A_{\in}$ be the object of $H_{\in}(R)$-orbits of $a_A$. The group monomorphism $H_{\in}(R)\to H_1(R)$ induces a morphism $A_{\in}\to A_{\text{pt}1}$. The group monomorphism $H_{\in}(R)\to H_{\text{li}}(R)$ induces a morphism $A_{\in}\to A_{\text{li}}$. Hence a morphism $A_{\in}\to A_{\text{pt}1}\times A_{\text{li}}$ is induced.

Let $H_{\notin}(R)$ be the group of invertible matrices over the local ring $R$ of the form
$$\begin{pmatrix}
\alpha_0 & \beta_0 & 0 \\
\alpha_1 & \beta_1 & 0 \\
0 & 0 & \gamma_2
\end{pmatrix}.$$
and let $A_{\notin}$ be the object of $H_{\notin}(R)$-orbits of $a_A$. The group monomorphism $H_{\notin}(R)\to H_3(R)$ induces a morphism $A_{\notin}\to A_{\text{pt}3}$. The group monomorphism $H_{\notin}(R)\to H_{\text{li}}(R)$ induces a morphism $A_{\notin}\to A_{\text{li}}$. Hence a morphism $A_{\notin}\to A_{\text{pt}3}\times A_{\text{li}}$ is induced.

To construct the above structure from the local ring $R$ and the $H(R)$-torsor $a_A$, we have only used finite limits and colimits. Therefore, the construction is preserved by inverse images of geometric morphisms. 

\begin{lem} \label{lemhph}
Let $\ca P$  be a projective plane, and let $R_{\ca P}$ be the local ring and $\omega_4(\ca P)$ the $H(R_{\ca P})$-torsor as constructed in Chapter \ref{chaprojcoo}. Then, the above construction applied to the local ring $R_{\ca P}$ and the $H(R_{\ca P})$-torsor $\omega_4(\ca P)$ gives a structure isomorphic to the projective plane $\ca P$.
\end{lem}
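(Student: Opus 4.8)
The plan is to mirror the proof of the corresponding lemma in the affine case: for each of the subgroups $H_1(R_{\ca P})$, $H_2(R_{\ca P})$, $H_3(R_{\ca P})$, $H_{12}(R_{\ca P})$, $H_{\text{li}}(R_{\ca P})$, $H_{\text{li}'}(R_{\ca P})$, $H_{\#\text{li}}(R_{\ca P})$, $H_{\in}(R_{\ca P})$, $H_{\notin}(R_{\ca P})$ used in the construction of Section \ref{secGtoproj}, I would exhibit an explicit epimorphism out of $\omega_4(\ca P)$ which realises the relevant orbit object as the corresponding component of $\ca P$, and then check that the induced projection and incidence morphisms agree with the ones coming from the group inclusions. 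First I would recall the $H(R_{\ca P})$-torsor structure on $\omega_4(\ca P)$ from Lemma \ref{lemprojtors}: an element $h$ of $H(R_{\ca P})$, viewed through the isomorphism $H(R_{\ca P})\cong \omega_4(R_{\ca P})$ as the quadruple $h(1,0,0), h(0,1,0), h(0,0,1), h(1,1,1)$, acts on $(A,B,O,I)$ by $\psi_{ABOI}(h)$, where $\psi_{ABOI}:\mathbb P(R_{\ca P})\to \ca P$ is the canonical isomorphism sending $(1,0,0),(0,1,0),(0,0,1),(1,1,1)$ to $A,B,O,I$ respectively.

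The key observation is that each of the nine subgroups is precisely the stabiliser in $H(R_{\ca P})$ of the appropriate incidence datum among the four base points and the lines $\ovl{AB}=(0,0,1)$ and $\ovl{AO}=(0,1,0)$ of $\mathbb P(R_{\ca P})$. Reading off the matrix forms (left multiplication on points, multiplication by $(h^{-1})^T$ on lines): $H_1$, $H_2$, $H_3$ stabilise the points $(1,0,0)$, $(0,1,0)$, $(0,0,1)$; $H_{\text{li}}$ and $H_{\text{li}'}$ stabilise the lines $(0,0,1)$ and $(0,1,0)$; $H_{12}$ stabilises the pair of points $((1,0,0),(0,1,0))$; $H_{\#\text{li}}$ stabilises the pair of lines $((0,0,1),(0,1,0))$; $H_{\in}$ stabilises the point $(1,0,0)$ together with the incident line $(0,0,1)$; and $H_{\notin}$ stabilises the point $(0,0,1)$ together with the line $(0,0,1)$, which that point lies outside. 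Consequently the maps $\omega_4(\ca P)\to \ca P_{\text{pt}}$ sending $(A,B,O,I)$ to $A$, to $B$, to $O$, the maps $\omega_4(\ca P)\to \ca P_{\text{li}}$ sending it to $\ovl{AB}$ and to $\ovl{AO}$, and the corresponding maps to the pairs $(A,B)$, $(\ovl{AB},\ovl{AO})$, $(A,\ovl{AB})$, $(O,\ovl{AB})$, each coequalise the relevant subgroup action, since $\psi_{ABOI}$ is fixed and the stabilised datum is constant along the subgroup orbit; hence each factors through the respective orbit object.

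Next I would check that each such map actually exhibits its target as the object of orbits, i.e. that its fibres are exactly the subgroup orbits. If two quadruples have the same image, the torsor property of Lemma \ref{lemprojtors} supplies a unique $h$ in $H(R_{\ca P})$ carrying one to the other; since $\psi_{ABOI}$ is an isomorphism of projective planes, hence injective on points and lines, equality of images forces $h$ to fix the corresponding datum, so $h$ lies in the appropriate subgroup, while conversely any subgroup element evidently preserves the image. Surjectivity follows from the ``enough points and lines'' axioms for $\ca P$ (every point, line, apart pair, or incident/non-incident pair extends to a quadruple in general position, using Lemma \ref{lemomega4} together with the existence axioms). Finally, since the projection and incidence morphisms of the constructed structure were defined in Section \ref{secGtoproj} as the maps induced by the group monomorphisms $H_{12}\to H_1,H_2$, $H_{\#\text{li}}\to H_{\text{li}},H_{\text{li}'}$, $H_{\in}\to H_1,H_{\text{li}}$, $H_{\notin}\to H_3,H_{\text{li}}$, I would verify that under the identifications above these become the genuine projections and incidence inclusions of $\ca P$ (for instance $A_{\#\text{pt}}\to A_{\text{pt}1}$ becomes $(A,B)\mapsto A$, and $A_{\in}\to A_{\text{li}}$ becomes $(A,\ovl{AB})\mapsto \ovl{AB}$). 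This assembles into an isomorphism of the constructed structure with $\ca P$.

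The main obstacle I expect is bookkeeping rather than conceptual: keeping the left-action-on-points versus $(h^{-1})^T$-action-on-lines conventions consistent throughout, and confirming in each of the nine cases that the stated subgroup is \emph{exactly} the stabiliser, not merely contained in it, so that fibres coincide with single orbits. It is the freeness half of the torsor property that upgrades ``contained in the stabiliser'' to ``equal to one orbit'', and this is the point at which the argument must be made with care.
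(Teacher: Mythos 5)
Your proposal is correct and follows essentially the same route as the paper's proof: you exhibit exactly the same epimorphisms out of $\omega_4(\ca P)$ (sending $(A,B,O,I)$ to $A$, $B$, $O$, $(A,B)$, $\ovl{AB}$, $\ovl{AO}$, $(A,\ovl{AB})$, $(O,\ovl{AB})$) and identify each target with the corresponding orbit object exactly as the paper does. The only difference is one of detail: the paper simply asserts that these maps present the orbit objects, whereas you spell out the verification --- each subgroup is the exact stabiliser in $H(R_{\ca P})$ of the relevant incidence datum among the base points and the lines $(0,0,1)$, $(0,1,0)$, torsor freeness upgrades this to fibres being single orbits, and the existence axioms give surjectivity --- correctly locating where the care is needed.
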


\begin{proof}
For the projective plane $\ca P$, let us write $\ca P_{\text{pt}}$ for its object of points. We write $\ca P_{\text{li}}$ for its object of lines. We write $\ca P_{\#\text{pt}}$ for the subobject of $\ca P_{\text{pt}}\times \ca P_{\text{pt}}$ of points that are apart from each other. We write $\ca P_{\in}$ for the subobject of $\ca P_{\text{pt}}\times \ca P_{\text{li}}$ of pairs $(A,l)$ such that $A\in l$. We write $\ca P_{\notin}$ for the subobject of $\ca P_{\text{pt}}\times \ca P_{\text{li}}$ of pairs $(A,l)$ such that $A\notin l$.

Let $R_{\ca P}$ be the local ring as constructed in Chapter \ref{chaprojcoo}, and given $(A,B,O,I)$ let $\psi_{ABOI}:\mathbb P(R_{\ca P})\to \ca P$ be the projective plane isomorphism as constructed in Chapter \ref{chaprojcoo}. The local ring $R_{\ca P}$ satisfies the conditions of Definition \ref{defnprojring} via the projective plane isomorphisms $\psi_-$.

Recall that the right $H(R_{\ca P})$-torsor is the object $\omega_4(\ca P)$ of quadruples of points of $\ca P$ in general position. The right $H(R_{\ca P})$-action on $\omega_4(\ca P)$ is the following: $h$ acts on $(A,B,O,I)$ by mapping it to $\psi_{ABOI}(h)$ which is the quadruple of the points
$A'=\psi_{ABOI} \begin{pmatrix} h\begin{pmatrix} 1 \\ 0 \\ 0 \end{pmatrix}\end{pmatrix}$,
$B'=\psi_{ABOI} \begin{pmatrix}h\begin{pmatrix} 0 \\ 1 \\ 0 \end{pmatrix}\end{pmatrix}$,
$O'=\psi_{ABOI} \begin{pmatrix} h\begin{pmatrix} 0 \\ 0 \\1 \end{pmatrix}\end{pmatrix}$ and
$I'=\psi_{ABOI} \begin{pmatrix} h\begin{pmatrix} 1 \\ 1 \\ 1 \end{pmatrix}\end{pmatrix}$. 

The epimorphisms we describe below, exhibit the isomorphism from the projective plane $\ca P$ to the one constructed from the local ring $R_{\ca P}$ and the $H(R_{\ca P})$-torsor $\omega_4$ above.

The object of $H_1(R_{\ca P})$-orbits is $\ca P_{\text{pt}}$ via the epimorphism $\omega_4(\ca P)\twoheadrightarrow \ca P_{\text{pt}}$ which sends $(A,B,O,I)$ to the point $A$.

The object of $H_2(R_{\ca P})$-orbits is $\ca P_{\text{pt}}$ via the epimorphism $\omega_4(\ca P)\twoheadrightarrow \ca P_{\text{pt}}$ which sends $(A,B,O,I)$ to the point $B$.

The object of $H_3(R_{\ca P})$-orbits is $\ca P_{\text{pt}}$ via the epimorphism $\omega_4(\ca P)\twoheadrightarrow \ca P_{\text{pt}}$ which sends $(A,B,O,I)$ to the point $O$.

The object of $H_{12}(R_{\ca P})$-orbits is $\ca P_{\#\text{pt}}$ via the epimorphism $H(R_{\ca P})\twoheadrightarrow \ca P_{\#\text{pt}}$ which sends $(A,B,O,I)$ to the pair of points $A$ and $B$. The two projections from $\ca P_{\#\text{pt}}$ to $\ca P_{\text{pt}}$ are the morphisms induced by the group monomorphisms $H_{12}\to H_1$ and $H_{12}\to H_2$.

The object of $H_\text{li}(R_{\ca P})$-orbits is $\ca P_{\text{li}}$ via the epimorphism $\omega_4(\ca P)\twoheadrightarrow \ca P_{\text{li}}$ which sends $(A,B,O,I)$ to the line $\ovl{AB}$.

The object of $H_{\in}(R_{\ca P})$-orbits is $\ca P_{\in}$ via the epimorphism $\omega_4(\ca P)\twoheadrightarrow \ca P_{\in}$ which sends $(A,B,O,I)$ to the pair of the point $A$ and the line $\ovl{AB}$. The projection $\ca P_{\in}\to \ca P_{\text{pt}}$ is the morphism induced by the group monomorphism $H_{\in}\to H_1$ and the projection $\ca P_{\in}\to \ca P_{\text{li}}$ is the morphism induced by the group monomorphism $H_{\in}\to H_{\text{li}}$.

The object of $H_{\notin}(R_{\ca P})$-orbits is $\ca P_{\notin}$ via the epimorphism $\omega_4(\ca P)\twoheadrightarrow \ca P_{\notin}$ which sends $(A,B,O,I)$ to the pair of the point $O$ and the line $\ovl{AB}$. The projection $\ca P_{\notin}\to \ca P_{\text{pt}}$ is the morphism induced by the group monomorphism $H_{\notin}\to H_3$ and the projection $\ca P_{\notin}\to \ca P_{\text{li}}$ is the morphism induced by the group monomorphism $H_{\notin}\to H_{\text{li}}$.
\end{proof}

\begin{lem} \label{lemhah}
The above construction in $\ca Z[H]$, applied to the local ring $(M,\pi_2)$ and the generic $H$-torsor $m_H:(H,m_H)\times (H,\pi_2)\to (H,m_H)$ gives a structure which is isomorphic to the projective plane $(\mathbb P(M),a_{\mathbb P})$.
\end{lem}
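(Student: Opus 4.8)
The plan is to follow the pattern of Lemma \ref{lemgag} essentially verbatim, replacing the affine group $G$ and its subgroups by the projective general linear group $H$ and the nine subgroups $H_1(M,\pi_2),H_2(M,\pi_2),H_3(M,\pi_2),H_{12}(M,\pi_2),H_{\text{li}}(M,\pi_2),H_{\text{li}'}(M,\pi_2),H_{\#\text{li}}(M,\pi_2),H_{\in}(M,\pi_2),H_{\notin}(M,\pi_2)$ used in Section \ref{secGtoproj}. For each such subgroup I will exhibit an $H$-equivariant epimorphism from the generic torsor $(H,m_H)$ onto the corresponding component of $(\mathbb P(M),a_{\mathbb P})$, and check that this epimorphism presents that component as the object of orbits of the relevant right action. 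Since the forgetful functor $\ca Z[H]\to\ca Z$ reflects monomorphisms and the whole relational structure, it suffices to produce these epimorphisms together with the verification that the induced maps between orbit objects agree with the projections recorded in Section \ref{secGtoproj}.

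Concretely, write a matrix $h\in H$ with columns $(\alpha_0,\alpha_1,\alpha_2)$, $(\beta_0,\beta_1,\beta_2)$, $(\gamma_0,\gamma_1,\gamma_2)$. Over the local ring $M$ the matrix $h$ is invertible, so each column is unimodular and hence represents a point of $\mathbb P(M)$; I will send $h$ to the point represented by the first column (for $A_{\text{pt}1}$), by the second (for $A_{\text{pt}2}$), and by the third (for $A_{\text{pt}3}$). For the line components I send $h$ to the line through the first two columns, computed as $(\alpha_1\beta_2-\alpha_2\beta_1,\ \alpha_2\beta_0-\alpha_0\beta_2,\ \alpha_0\beta_1-\alpha_1\beta_0)$ as in Proposition \ref{propexistsuniqueline} (for $A_{\text{li}}$), and to the line through the first and third columns (for $A_{\text{li}'}$). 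The mixed components $A_{\#\text{pt}}$, $A_{\#\text{li}}$, $A_{\in}$, $A_{\notin}$ are then sent to the corresponding pairs of the above data, e.g.\ $h\mapsto$ (point of first column, line through first two columns) for $A_{\in}$, and $h\mapsto$ (point of third column, line through first two columns) for $A_{\notin}$.

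For each of these maps two things must be checked. First, invariance: right multiplication of $h$ by an element of the defining subgroup rescales, or takes linear combinations within, precisely those columns whose direction or span is used, leaving the image point or line unchanged — this is a direct column computation dictated by the zero-pattern defining the subgroup. Second, and this is the main point, that the fibres of each map coincide with the orbits, so that the map really exhibits the component as the object of orbits. Here I will use that over a local ring the columns of an invertible matrix form a basis, so that distinct columns give points that are apart: expanding $\det h$ along the relevant column and invoking locality produces an invertible $2\times 2$ minor, exactly as in the identification of $A_{\#\text{pt}}$ with pairs of apart points; and I will use that an element of $H$ is uniquely determined by the frame it produces, which is Lemma \ref{lemmatrixauto}, to see that the fibres are single orbits. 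The same computations show that the incidence and apartness relations transport correctly and that the projections induced by the inclusions $H_{12}(M,\pi_2)\to H_1(M,\pi_2)$, $H_{\in}(M,\pi_2)\to H_{\text{li}}(M,\pi_2)$, and the rest, match the two projections of each mixed component.

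The hard part will be bookkeeping rather than any single deep step: one must keep straight, for each of the nine zero-patterns, which columns are fixed in direction and which spans are preserved under right multiplication, and confirm that the resulting point or line together with the recorded apartness or incidence relation is the intended one. The only genuinely structural ingredients are the simple transitivity of the $H$-action on frames (Lemma \ref{lemmatrixauto}) and the locality of $M$, which together guarantee unimodularity of the columns, apartness of distinct columns, and that each orbit map is an epimorphism with fibres equal to orbits.
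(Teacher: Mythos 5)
Your proposal is correct and takes essentially the same route as the paper: the paper's proof exhibits exactly the epimorphisms you describe (first, second and third columns of $h$ for the three point objects; the lines through columns one--two and one--three for $A_{\text{li}}$ and $A_{\text{li}'}$; the corresponding pairs for $A_{\#\text{pt}}$, $A_{\#\text{li}}$, $A_{\in}$, $A_{\notin}$) and matches the projections with the same subgroup inclusions. The paper records each identification only as an observation, so your equivariance and fibre-equals-orbit checks (via locality of $M$ and the torsor property) merely supply detail the paper omits; incidentally, your pair of the first two columns for $A_{\#\text{pt}}$ is the intended one, the paper's printed formula at that point being an evident leftover from the affine Lemma \ref{lemgag}.
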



\begin{proof}
For the projective plane $(\mathbb P(M),a_{\mathbb P})$, let us write $(\mathbb P_{\#\text{pt}},a_{\#\text{pt}})$ for the subobject of $(\mathbb P_{\text{pt}},a_{\text{pt}})\times (\mathbb P_{\text{pt}},a_{\text{pt}})$ of points that are apart from each other. We write $(\mathbb P_{\#\text{li}},a_{\#\text{li}})$ for the subobject of $(\mathbb P_{\text{li}},a_{\text{li}})\times (\mathbb P_{\text{li}},a_{\text{li}})$ of lines that are apart from each other. We write $(\mathbb P_{\in},a_{\in})$ for the subobject of $(\mathbb P_{\text{pt}},a_{\text{pt}})\times (\mathbb P_{\text{li}},a_{\text{li}})$ of pairs $(A,l)$ such that $A\in l$. We write $(\mathbb P_{\notin},a_{\notin})$ for the subobject of $(\mathbb P_{\text{pt}},a_{\text{pt}})\times (\mathbb P_{\text{li}},a_{\text{li}})$ of pairs $(A,l)$ such that $A\notin l$.

Observe that $(\mathbb P_{\text{pt}}(M),a_{\text{pt}})$ is the object of $H_1(M,\pi_2)$-orbits via the epimorphism $(H,m_H)\twoheadrightarrow (\mathbb P_{\text{pt}}(M),a_{\text{pt}})$ which maps $\begin{pmatrix}
a_0 & b_0 & c_0\\
a_1 & b_1 & c_1 \\
a_2 & b_2 & c_2
\end{pmatrix}$ to the point $(a_0,a_1,a_2)$.

Observe that $(\mathbb P_{\text{pt}}(M),a_{\text{pt}})$ is the object of $H_2(M,\pi_2)$-orbits via the epimorphism $(H,m_H)\twoheadrightarrow (\mathbb P_{\text{pt}}(M),a_{\text{pt}})$ which maps $\begin{pmatrix}
a_0 & b_0 & c_0\\
a_1 & b_1 & c_1 \\
a_2 & b_2 & c_2
\end{pmatrix}$ to the point $(b_0,b_1,b_2)$.

Observe that $(\mathbb P_{\text{pt}}(M),a_{\text{pt}})$ is the object of $H_3(M,\pi_2)$-orbits via the epimorphism $(H,m_H)\twoheadrightarrow (\mathbb P_{\text{pt}}(M),a_{\text{pt}})$ which maps $\begin{pmatrix}
a_0 & b_0 & c_0\\
a_1 & b_1 & c_1 \\
a_2 & b_2 & c_2
\end{pmatrix}$ to the point $(c_0,c_1,c_2)$.

Observe that $ (\mathbb P_{\#\text{pt}}(M),a_{\#\text{pt}})$ is the object of $H_{12}(M,\pi_2)$-orbits via the epimorphism $(H,m_H)\twoheadrightarrow (\mathbb P_{\#\text{pt}}(M),a_{\#\text{pt}})$ which maps $\begin{pmatrix}
a_0 & b_0 & c_0\\
a_1 & b_1 & c_1 \\
a_2 & b_2 & c_2
\end{pmatrix}$ to the pair of points $(b_0+c_0,b_1+c_1)$ and $(c_0,c_1)$. The two projections from $(\mathbb P_{\#\text{pt}}(M),a_{\#\text{pt}})$ to $ (\mathbb P_{\text{pt}}(M),a_{\text{pt}})$ are the ones induced by the group monomorphisms $H_{12}\to H_1$ and $H_{12}\to H_2$.

Observe that $(\mathbb P_{\text{li}}(M),a_{\text{li}})$ is the object of $H_{\text{li}}(M,\pi_2)$-orbits via the epimorphism $(H,m_H)\twoheadrightarrow (\mathbb P_{\text{li}}(M),a_{\text{li}})$ which maps $\begin{pmatrix}
a_0 & b_0 & c_0\\
a_1 & b_1 & c_1 \\
a_2 & b_2 & c_2
\end{pmatrix}$ to the line $(a_1b_2-a_2b_1,a_2b_0-a_0b_2,a_0b_1-a_1b_0)$.

Observe that $(\mathbb P_{\text{li}}(M),a_{\text{li}})$ is the object of $H_{\text{li}'}(M,\pi_2)$-orbits via the epimorphism $(H,m_H)\twoheadrightarrow (\mathbb P_{\text{li}}(M),a_{\text{li}})$ which maps $\begin{pmatrix}
a_0 & b_0 & c_0\\
a_1 & b_1 & c_1 \\
a_2 & b_2 & c_2
\end{pmatrix}$ to the line $(a_1c_2-a_2c_1,a_2c_0-a_0c_2,a_0c_1-a_1c_0)$.

Observe that $(\mathbb P_{\#\text{li}}(M),a_{\#\text{li}})$ is the object of $H_{\#\text{li}}(M,\pi_2)$-orbits via the epimorphism $(H,m_H)\twoheadrightarrow (\mathbb P_{\text{li}}(M),a_{\text{li}})$ which maps $\begin{pmatrix}
a_0 & b_0 & c_0\\
a_1 & b_1 & c_1 \\
a_2 & b_2 & c_2
\end{pmatrix}$ to the pair of lines $(a_1b_2-a_2b_1,a_2b_0-a_0b_2,a_0b_1-a_1b_0)$ and $(a_1c_2-a_2c_1,a_2c_0-a_0c_2,a_0c_1-a_1c_0)$ which are apart from each other. The two projections from $(\mathbb P_{\#\text{li}}(M),a_{\#\text{li}})$ to $ (\mathbb P_{\text{li}}(M),a_{\text{li}})$ are the ones induced by the group monomorphisms $H_{\#\text{li}}\to H_{\text{li}}$ and $H_{\#\text{li}}\to H_{\text{li}'}$.

Observe that $(\mathbb P_{\in}(M),a_{\in})$ is the object of $H_{\in}(M,\pi_2)$-orbits via the epimorphism $(H,m_H)\twoheadrightarrow (\mathbb P_{\in}(M),a_{\in})$ which maps $\begin{pmatrix}
a_0 & b_0 & c_0\\
a_1 & b_1 & c_1 \\
a_2 & b_2 & c_2
\end{pmatrix}$ to the pair of the point $(a_0,a_1,a_2)$ and the line $(a_1b_2-a_2b_1,a_2b_0-a_0b_2,a_0b_1-a_1b_0)$. The projection $(\mathbb P_{\in}(M),a_{\in})\to (\mathbb P_{\text{pt}}, a_{\text{pt}})$ is the morphism induced by the group monomorphism $H_{\in}(M,\pi_2)\to H_1(M,\pi_2)$ and the projection $(\mathbb P_{\in}(M),a_{\in})\to (\mathbb P_{\text{li}}, a_{\text{li}})$ is the morphism induced by the group monomorphism $H_{\in}(M,\pi_2)\to H_{\text{li}}(M,\pi_2)$.

Observe that $(\mathbb P_{\notin}(M),a_{\notin})$ is the group of $H_{\notin}(M,\pi_2)$-orbits via the epimorphism $(H,m_H)\twoheadrightarrow (\mathbb P_{\notin}(M),a_{\notin})$ which maps $\begin{pmatrix}
a_0 & b_0 & c_0\\
a_1 & b_1 & c_1 \\
a_2 & b_2 & c_2
\end{pmatrix}$ to the pair of the point $(c_0,c_1,c_2)$ and the line $(a_1b_2-a_2b_1,a_2b_0-a_0b_2,a_0b_1-a_1b_0)$. Notice that the projection $(\mathbb P_{\notin}(M),a_{\notin})\to (\mathbb P_{\text{pt}}, a_{\text{pt}})$ is the morphism induced by the group monomorphism $H_{\notin}(M,\pi_2)\to H_3(M,\pi_2)$ and the projection $(\mathbb P_{\notin}(M),a_{\notin})\to (\mathbb P_{\text{li}}, a_{\text{li}})$ is the morphism induced by the group monomorphism $H_{\notin}(M,\pi_2)\to H_{\text{li}}(M,\pi_2)$.
\end{proof}

Let us now return to the construction from the beginning of the section. The local ring $R$ in $\ca E$ corresponds to a geometric morphism $f:\ca E\to \ca Z$ such that $f^*(M)$ is isomorphic to $R$. By Diaconescu's theorem, the right $H(R)$-torsor (or equivalently the right $f^*(H)$-torsor) $a_A:A\times H(R)\to A$ corresponds to a factorization $f':\ca E \to \ca Z[H]$ of $f$ through the geometric morphism $\ca Z[H]\to \ca Z$ which is such that $f'^*$ maps the generic $H$-torsor to the $H(R)$-torsor $a_A$. Since, the construction of the projective plane structure is preserved by inverse images of geometric morphisms it means that the construction for the local ring $R$ and the $H(R)$-torsor $a_A$ is isomorphic to $f'^*(\mathbb P(M),a_{\mathbb P})$ or equivalently to $f'^*f_{\mathbb P}^*(\ca Pg)$. Hence, since the inverse image of a projective plane is a projective plane, we have the following:

\begin{thrm}
The structure constructed in the beginning of the section is a projective plane.
\end{thrm}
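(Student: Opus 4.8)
The plan is to observe that all the real work has already been done by the two preceding lemmas together with Diaconescu's theorem, so that the proof is essentially a transport-of-structure argument along a geometric morphism. First I would let $R$ be the given local ring in the topos $\ca E$ and $a_A:A\times H(R)\to A$ the given right $H(R)$-torsor. Since $\ca Z$ is the classifying topos of the theory of local rings, the local ring $R$ corresponds (up to isomorphism) to a geometric morphism $f:\ca E\to \ca Z$ with $f^*(M)\cong R$. The key point is then that an $H(R)$-torsor is precisely an $f^*(H)$-torsor, so by Diaconescu's theorem it corresponds to a factorization $f':\ca E\to \ca Z[H]$ of $f$ through the canonical geometric morphism $\ca Z[H]\to \ca Z$, chosen so that $f'^*$ carries the generic $H$-torsor $m_H$ to the given torsor $a_A$.

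Next I would invoke the crucial preservation property established just before the statement: the whole construction of Section \ref{secGtoproj} (the objects $A_{\text{pt}1}$, $A_{\text{li}}$, $A_{\#\text{pt}}$, $A_{\in}$, $A_{\notin}$, etc., and the induced monomorphisms between them) uses only finite limits and finite colimits, hence is preserved by inverse images of geometric morphisms. Therefore applying $f'^*$ to the structure built from $(M,\pi_2)$ and the generic torsor yields exactly the structure built from $R$ and $a_A$. By Lemma \ref{lemhah}, the structure built in $\ca Z[H]$ from $(M,\pi_2)$ and the generic torsor is isomorphic to the projective plane $(\mathbb P(M),a_{\mathbb P})=f_{\mathbb P}^*(\ca Pg)$. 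Consequently the structure built from $R$ and $a_A$ is isomorphic to $f'^*f_{\mathbb P}^*(\ca Pg)$.

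Finally I would close the argument by noting that $\ca Pg$ is the generic projective plane in the classifying topos $\mathbf{Proj}$, so it satisfies the axioms of projective planes, and these axioms are geometric sequents (as established in Section \ref{secthryproj}, both Desargues' and Pappus' axioms are geometric). Geometric sequents are preserved by inverse images of geometric morphisms, so $f'^*f_{\mathbb P}^*(\ca Pg)$ is again a model of the theory of projective planes; being isomorphic to it, the structure constructed at the beginning of the section is a projective plane. I do not anticipate a genuine obstacle here: the two lemmas and the finite-limit/finite-colimit preservation remark do all the heavy lifting, and the only care needed is to state the correspondence between $H(R)$-torsors and factorizations through $\ca Z[H]$ precisely (matching the generic torsor to $a_A$) so that the chain of isomorphisms $f'^*(\text{construction from }(M,\pi_2))\cong\text{construction from }R$ and $f_{\mathbb P}^*(\ca Pg)\cong(\mathbb P(M),a_{\mathbb P})$ lines up correctly.
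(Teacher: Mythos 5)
Your proposal is correct and follows essentially the same route as the paper: the paper's own argument (given in the discussion immediately preceding the theorem) likewise uses the classifying-topos correspondence for $R$, Diaconescu's theorem to obtain the factorization $f'$ through $\ca Z[H]$ matching the generic $H$-torsor to $a_A$, the finite-limit/finite-colimit preservation of the construction, and Lemma \ref{lemhah} to identify $f'^*(\mathbb P(M),a_{\mathbb P})\cong f'^*f_{\mathbb P}^*(\ca Pg)$ with the structure built from $R$ and $a_A$, concluding via preservation of the geometric axioms. Your added care in spelling out why the axioms are geometric sequents only makes explicit what the paper leaves implicit.
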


\section{Geometric morphisms over $\ca Z$}

In Chapter \ref{chaprojcoo}, we construct a local ring from a projective plane $\ca P$ in a topos. This construction applied to the generic projective plane $\ca Pg$ in $\mathbf{Proj}$ gives the local ring $R_{\ca Pg}$. The local ring $R_{\ca Pg}$ corresponds to a geometric morphism 
$$R_{\mathbb P}: \mathbf{Proj}\to \ca Z$$
such that $R_{\mathbb P}^*(M)$ is isomorphic to $R_{\ca Pg}$. Thus, $\mathbf{Proj}$ is a $\ca Z$-topos via $R_{\mathbb P}$. $\ca Z[H]$ is also a $\ca Z$-topos via the geometric morphism induced by the group homomorphism $H\to 1$. We use the construction from Lemma \ref{lemprojtors} to construct an $H(R_{\ca Ag})$-torsor in $\mathbf{Proj}$. By Diaconescu's theorem, this $H(R_{\ca Pg})$-torsor corresponds to a geometric morphism 
$$f_H: \mathbf{Proj}\to \ca Z[H]$$
over $\ca Z$.

We wish to show that the geometric morphism $f_{\mathbb P}: \ca Z[H]\to \mathbf{Proj}$ is also a geometric morphism over $\ca Z$. Equivalently, we wish to show that the local ring $f_{\mathbb P}^*(R_{\ca Pg})$ is isomorphic to $(M,\pi_2)$. $f_{\mathbb P}^*(R_{\ca Pg})$ is a coordinate ring of $\ca Pg$. Therefore, by Lemma \ref{lemprojunique} to construct a ring isomorphism $f_{\mathbb P}^*(R_{\ca Pg})\to (M,\pi_2)$ it suffices to prove that the local ring $(M,\pi_2)$ also satisfies these properties.

In $\ca Z[H]$, let $(\omega_4(M),a_{\omega4})$ be the object of quadruples of points in general position of $(\mathbb P(M),a_{\mathbb P})$. There is an isomorphism $(H,m_H)\to (\omega_4(M),a_{\omega4})$  which maps an element of $H$ represented by a matrix $h$ to the quadruple of points $h(1,0,0)$, $h(0,1,0)$, $h(0,0,1)$ and $h(1,1,1)$. This morphism commutes with the two left $H$-actions. Its inverse is the morphism which sends a quadruple of points $(A,B,O,I)$ in $\omega_4(M)$ to the unique element of $H$ which maps the points $(1,0,0)$, $(0,1,0)$, $(0,0,1)$ and $(1,1,1)$ to $A$, $B$, $O$ and $I$. Henceforth, we shall identify $(H,m_H)$ with $(\omega_4(M),a_{\omega4})$ via this isomorphism.

For each element of $(H,m_H)$ we have an isomorphism from the projective plane over the local ring $(M,\pi_2)$ to $(\mathbb P(H), a_{\mathbb P})$ via the following two morphisms:
$$a_{\text{pt}}: (H,m_H)\times (\mathbb P_{\text{pt}}(M),\pi_2)\to (\mathbb P_{\text{pt}}(M),a_{\text{pt}}).$$

$$a_{\text{li}}: (H,m_H)\times (\mathbb P_{\text{li}}(M),\pi_2)\to (\mathbb P_{\text{li}}(M),a_{\text{li}}).$$

For each quadruple $(A,B,O,I)$ of points in $(\omega_4(M),a_{\omega4})$ (or equivalently of $(H,m_H)$), these two arrows give an isomorphism of projective planes. This isomorphism maps $(1,0,0)$ to $A$ since the composite 
$$(\omega_4(M),a_{\omega4}) \cong (H,m_H)\times (1,!)\xrightarrow{1_H\times (1,0,0)} (H,m_H)\times (\mathbb P_{\text{pt}}(M),\pi_2)\xrightarrow{a_{\text{pt}}} (\mathbb P_{\text{pt}}(M),a_{\text{pt}})$$
maps $(A,B,O,I)$ to $A$. Similarly, this projective plane isomorphism maps $(0,1,0)$, $(0,0,1)$ and $(1,1,1)$ to $B$, $O$ and $I$ respectively.

The inverses of this isomorphisms are given by the following two morphisms: 
$$b_{\text{pt}}: (H,m_H)\times (\mathbb P_{\text{pt}}(M),a_{\text{pt}}) \to (\mathbb P_{\text{pt}}(M),\pi_2),$$
which maps the pair $h$, $\pt{x}$ to $a_{\text{pt}}(h^{-1},\pt{x})$, and 
$$b_{\text{li}}: (H,m_H)\times (\mathbb P_{\text{li}}(M),a_{\text{li}})\to (\mathbb P_{\text{li}}(M),\pi_2),$$
which maps the pair $h$, $\pt{\lambda}$ to $a_{\text{li}}(h^{-1},\pt{\lambda})$.

Adopting the notation from Chapter \ref{chaprojcoo}, given $(A,B,O,I)$ in $(\omega_4(M),a_{\omega4})$ corresponding to $h$ in $(H,m_H)$, we write $\chi_{ABOI}(X)$ for $a_{\text{pt}}(h, X)$ and $\chi_{ABOI}^{-1}(X)$ for $b_{\text{pt}}(h, X)$.

Let us consider the $(\omega_4(M),a_{\omega4})\times (\omega_4(M),a_{\omega4})$-indexed projective plane isomorphism given by 
$$(\omega_4(M),a_{\omega4})\times (\omega_4(M),a_{\omega4})\times (\mathbb P(M),a_{\mathbb P})\to (\mathbb P(M),a_{\mathbb P}),$$
which maps the triple $(A',B',O',I')$, $(A,B,O,I)$, $X$ to $\chi_{A'B'O'I'}^{-1}\chi_{ABOI}(X)$. Via the isomorphism $(\omega_4(M),a_{\omega4})\cong (H,m_H)$ this becomes the morphism
$$\tilde{\chi}:(H,m_H)\times (H,m_H)\times (\mathbb P(M),a_{\mathbb P})\to (\mathbb P(M),a_{\mathbb P})$$
where $(h,k)$ in $(H,m_H)\times (H,m_H)$ induces the projective plane isomorphism $h^{-1}k$ of $(H,\pi_2)$.
Consider the morphism $t:(H,m_H)\times (H,m_H)\to (H,\pi_2)$ mapping $(h,k)$ to $h^{-1}k$. Then, the triangle 
\begin{displaymath}
\xymatrix{
(H,m_H)\times (H,m_H) \times (\mathbb P(M),a_{\mathbb P}) \ar[rr]^-{\tilde{\chi}} \ar[d]^{t\times 1_{\mathbb P(M)}} & & \mathbb P(M) \\
(H,\pi_2)\times (\mathbb P(M),a_{\mathbb P}) \ar[urr]_{a_{\mathbb P}}
}
\end{displaymath}
commutes. Hence, all the conditions of Definition \ref{defnprojring} are satisfied for the local ring $(M,\pi_2)$, and therefore it is a coordinate ring for $(\mathbb P(M),a_{\mathbb P})$.

Hence, by Lemma \ref{lemprojunique} $(M,\pi_2)$ is isomorphic to $f_{\mathbb P}^*(R_{\ca Pg})$, and therefore we have proved the following:

\begin{lem}
The geometric morphism $\ca Z[H] \to \mathbf{Proj}$ is a geometric morphism over $\ca Z$.
\end{lem}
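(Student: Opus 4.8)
The plan is to reduce the statement to producing a ring isomorphism $f_{\mathbb P}^*(R_{\ca Pg})\cong (M,\pi_2)$, where $(M,\pi_2)$ is the generic local ring equipped with the trivial $H$-action. Since $\ca Z$ classifies local rings, the structure morphism $s\colon\ca Z[H]\to \ca Z$ (induced by $H\to 1$) corresponds to the local ring $s^*(M)=(M,\pi_2)$, while $R_{\mathbb P}\circ f_{\mathbb P}$ corresponds to $f_{\mathbb P}^*R_{\mathbb P}^*(M)\cong f_{\mathbb P}^*(R_{\ca Pg})$. Thus an isomorphism $f_{\mathbb P}^*(R_{\ca Pg})\cong (M,\pi_2)$ of local rings is exactly the data exhibiting $f_{\mathbb P}$ as a geometric morphism over $\ca Z$.

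First I would invoke the fact established in Chapter \ref{chaprojcoo} that the construction of the coordinate ring of a projective plane is preserved by inverse images of geometric morphisms. Applying $f_{\mathbb P}^*$ to $R_{\ca Pg}$ therefore yields the coordinate ring built from $f_{\mathbb P}^*(\ca Pg)\cong (\mathbb P(M),a_{\mathbb P})$, so $f_{\mathbb P}^*(R_{\ca Pg})$ is a coordinate ring of $(\mathbb P(M),a_{\mathbb P})$. By the uniqueness statement Lemma \ref{lemprojunique}, any two coordinate rings of the same projective plane are canonically isomorphic, so it suffices to exhibit $(M,\pi_2)$ itself as a coordinate ring of $(\mathbb P(M),a_{\mathbb P})$ in the sense of Definition \ref{defnprojring}.

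To do this I would produce the required $\omega_4$-indexed family of projective plane isomorphisms. The key move is the isomorphism $(H,m_H)\cong (\omega_4(M),a_{\omega4})$ sending the class of a matrix $h$ to the quadruple $h(1,0,0),h(0,1,0),h(0,0,1),h(1,1,1)$ of points in general position, which commutes with the two left $H$-actions. Using the action morphisms $a_{\text{pt}}$ and $a_{\text{li}}$, each quadruple yields an isomorphism $\psi_{ABOI}\colon \mathbb P(M,\pi_2)\to (\mathbb P(M),a_{\mathbb P})$ carrying $(1,0,0),(0,1,0),(0,0,1),(1,1,1)$ to $A,B,O,I$, with inverse family $\bar\psi$ given by the morphisms $b_{\text{pt}},b_{\text{li}}$ acting through $h^{-1}$. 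The three easier defining conditions of Definition \ref{defnprojring} are read off directly from these formulas.

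The main obstacle, and the only part requiring real care, is the last condition of Definition \ref{defnprojring}: that for two quadruples the composite $\bar\psi_{A'B'O'I'}\psi_{ABOI}$ is induced by a (necessarily unique) element of the projective linear group $H(M,\pi_2)=(H,\pi_2)$. I would verify this in its equivalent factorization form, constructing the morphism $t\colon (H,m_H)\times(H,m_H)\to (H,\pi_2)$, $(h,k)\mapsto h^{-1}k$, and checking that the triangle expressing $\tilde\chi = a_{\mathbb P}\circ(t\times 1_{\mathbb P(M)})$ commutes, where $\tilde\chi$ is the $\omega_4\times\omega_4$-indexed isomorphism. Once this is confirmed, $(M,\pi_2)$ satisfies all conditions of Definition \ref{defnprojring}, so Lemma \ref{lemprojunique} supplies the isomorphism $f_{\mathbb P}^*(R_{\ca Pg})\cong (M,\pi_2)$, and the lemma follows.
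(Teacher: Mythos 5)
Your proposal is correct and follows essentially the same route as the paper: both reduce the claim to exhibiting a ring isomorphism $f_{\mathbb P}^*(R_{\ca Pg})\cong (M,\pi_2)$, and both obtain it by showing $(M,\pi_2)$ is a coordinate ring of $(\mathbb P(M),a_{\mathbb P})$ in the sense of Definition \ref{defnprojring} --- via the $H$-equivariant identification $(H,m_H)\cong(\omega_4(M),a_{\omega4})$, the indexed isomorphisms built from $a_{\text{pt}}$, $a_{\text{li}}$ (with inverses from $b_{\text{pt}}$, $b_{\text{li}}$), and the factorization through $t\colon(h,k)\mapsto h^{-1}k$ --- before invoking the uniqueness Lemma \ref{lemprojunique}. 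No gaps; this matches the paper's argument step for step.
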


\section{$\ca Z[H]\xrightarrow{f_{\mathbb P}} \mathbf{Proj}\xrightarrow{f_H} \ca Z[H]$ is isomorphic to the identity} \label{sechid}

By the results of the previous section, $\ca Z[H]\xrightarrow{f_{\mathbb P}} \mathbf{Proj}\xrightarrow{f_H} \ca Z[H]$ is a geometric morphism over $\ca Z$. By Diaconescu's theorem to prove that this composite is isomorphic to the identity it suffices to show that $f_{\mathbb P}^*f_H^*$ sends the generic $H$-torsor to an $H$-torsor isomorphic to the generic one.

The inverse image of $f_Hf_{\mathbb P}$ sends the generic $H$-torsor to the torsor $(\omega_4(M), a_{\omega4})$ with the right $(H,\pi_2)$-action described in Lemma \ref{lemprojtors}: an element $h$ of $(H,\pi_2)$ acts on $(A,B,O,I)$ by mapping it to the quadruple of points $\chi_{ABOI}(h(1,0,0))$, $\chi_{ABOI}(h(0,1,0))$, $\chi_{ABOI}(h(0,0,1))$, $\chi_{ABOI}(h(1,1,1))$. Via the isomorphism from $(H,m_H)$ to $(\omega_4(M),a_{\omega4})$, this is the right $(H,\pi_2)$-action on $(H,m_H)$ via right group multiplication. Hence the $(H,\pi_2)$-torsor corresponding to the geometric morphism $f_Hf_{\mathbb P}$ is isomorphic to the generic $H$-torsor of $\ca Z[H]$. Therefore, the geometric morphism $f_Hf_{\mathbb P}$ is isomorphic to the identity.

\section{$\mathbf{Proj}\xrightarrow{f_H} \ca Z[H] \xrightarrow{f_{\mathbb P}} \mathbf{Proj}$ is isomorphic to the identity}

The geometric morphism $f_{\mathbb P}f_H$ corresponds to a projective plane in $\mathbf{Proj}$. Let $\ca Pg$ be the generic affine plane in $\mathbf{Proj}$ and let $\omega_4$ be the object of quadruples of points in general position. The geometric morphism $f_H$ corresponds to the pair of the local ring $R_{\ca Pg}$ and the right $H(R_{\ca Pg})$-torsor $\omega_4(\ca Pg)$ in $\mathbf{Proj}$ as constructed in Chapter \ref{chaprojcoo}.

Hence, the projective plane corresponding to the composite $\mathbf{Proj}\xrightarrow{f_H} \ca Z[H] \xrightarrow{f_{\mathbb P}} \mathbf{Proj}$ is isomorphic to the projective plane constructed from the pair of the local ring $R_{\ca Pg}$ and the right $H(R_{\ca Pg})$-torsor $\omega_4(\ca Pg)$ as in Section \ref{secGtoproj}. By Lemma \ref{lemhph}, this new projective plane is isomorphic to $\ca P$. Therefore, the geometric  morphism is isomorphic to the identity.

We have already seen in Section \ref{sechid}  that $f_Hf_{\mathbb P}$  is isomorphic to the identity, therefore we conclude the following theorem.

\begin{thrm}
The classifying topos for the theory of projective planes $\mathbf{Proj}$ is equivalent to the topos $\ca Z[H]$.
\end{thrm}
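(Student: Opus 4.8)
The plan is to show that the classifying topos $\mathbf{Proj}$ for the theory of projective planes is equivalent to $\ca Z[H]$ by exhibiting a pair of geometric morphisms between them and proving that both composites are isomorphic to the respective identities. The two morphisms are already constructed in the preceding sections: $f_{\mathbb P}:\ca Z[H]\to \mathbf{Proj}$ classified by the projective plane $(\mathbb P(M),a_{\mathbb P})$ in $\ca Z[H]$, and $f_H:\mathbf{Proj}\to \ca Z[H]$ classified by the pair consisting of the coordinate ring $R_{\ca Pg}$ of the generic projective plane and the right $H(R_{\ca Pg})$-torsor $\omega_4(\ca Pg)$. By the universal property of classifying toposes, establishing the equivalence reduces entirely to verifying the two round-trips.

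First I would invoke the result of Section \ref{sechid}, which has already established that the composite $f_H\circ f_{\mathbb P}:\ca Z[H]\to \ca Z[H]$ is isomorphic to the identity. The argument there uses that this composite is a geometric morphism over $\ca Z$ (so that Diaconescu's theorem applies relative to the Zariski topos), together with the identification $f_{\mathbb P}^*(R_{\ca Pg})\cong (M,\pi_2)$ and the computation that the resulting $H$-torsor is the generic one $(H,m_H)$ with right multiplication. Second I would treat the other composite $f_{\mathbb P}\circ f_H:\mathbf{Proj}\to \mathbf{Proj}$. This composite classifies a projective plane in $\mathbf{Proj}$, namely the one obtained by feeding the pair $(R_{\ca Pg},\omega_4(\ca Pg))$ into the explicit construction of Section \ref{secGtoproj}. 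The key input is Lemma \ref{lemhph}, which says precisely that this construction applied to the local ring $R_{\ca P}$ and the $H(R_{\ca P})$-torsor $\omega_4(\ca P)$ recovers the original projective plane $\ca P$ up to isomorphism. Applying this with $\ca P = \ca Pg$ the generic projective plane yields that $f_{\mathbb P}\circ f_H$ classifies a plane isomorphic to $\ca Pg$, hence is isomorphic to the identity geometric morphism on $\mathbf{Proj}$.

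The two composites being identities then gives the equivalence $\mathbf{Proj}\simeq \ca Z[H]$ directly. I expect the main conceptual obstacle to have already been absorbed into the earlier lemmas, so the statement itself is essentially a bookkeeping assembly: the genuine work lies in (i) the identification $f_{\mathbb P}^*(R_{\ca Pg})\cong (M,\pi_2)$ via the uniqueness of coordinate rings (Lemma \ref{lemprojunique}) and the verification that $(M,\pi_2)$ satisfies Definition \ref{defnprojring}, and (ii) the orbit computations of Lemma \ref{lemhah} and Lemma \ref{lemhph} showing the construction is mutually inverse to coordinatization. The subtle point to watch is that both composites must be checked to be morphisms \emph{over} $\ca Z$, since Diaconescu's theorem is applied in the slice $\mathfrak{Top}/\ca Z$; this is exactly where the fact that $f_{\mathbb P}$ is a $\ca Z$-morphism (established in the previous section) is indispensable, because without it one could not reduce the comparison of geometric morphisms to the comparison of the induced $H$-torsors.
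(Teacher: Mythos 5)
Your proposal is correct and is essentially the paper's own proof: the same pair of geometric morphisms $f_{\mathbb P}$ and $f_H$, the same appeal to the torsor computation of Section \ref{sechid} for $f_H\circ f_{\mathbb P}\cong 1_{\ca Z[H]}$, and the same application of Lemma \ref{lemhph} to the generic plane $\ca Pg$ for $f_{\mathbb P}\circ f_H\cong 1_{\mathbf{Proj}}$. One small refinement: the over-$\ca Z$ reduction via Diaconescu's theorem is genuinely needed only for the composite on $\ca Z[H]$, while the composite $f_{\mathbb P}\circ f_H$ is compared with the identity directly through the projective plane it classifies, using the universal property of $\mathbf{Proj}$ over the base rather than over $\ca Z$.
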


\chapter{Geometric morphisms between $\ca Z$, $\mathbf{Aff}$ and $\mathbf{Proj}$} \label{chageom}

In this chapter, we conclude the thesis by describing geometric morphisms between the various toposes we have considered. We have identified our leading toposes as extensions of the Zariski topos by the groups $G$ and $H$ and we describe the geometric morphisms in these terms.

\section{Overview}

Let $\ca Z$ be the Zariski topos, and let $G$ and $H$ be the group objects in $\ca Z$ as defined earlier. Note that $G$ is a subgroup of $H$. $\mathbf{Aff}\simeq \ca Z[G]$ and $\mathbf{Proj}\simeq \ca Z[H]$. Let $1$ be the trivial group in the Zariski topos. Then, $\ca Z$ is equivalent to $\ca Z[1]$. $1$ is a subgroup of both $G$ and $H$ and we have unique group homomorphisms from both $G$ and $H$ to $1$. Hence, we have the following group homomorphisms:

\begin{displaymath}
\xymatrix{
1\ar[r] &  G \ar[r] & H \ar[r] & 1.
}
\end{displaymath}

These induce the following geometric morphisms (where the vertical arrows are the equivalences mentioned above):
\begin{displaymath}
\xymatrix{
\ca Z[1]\ar[r] \ar[d]^{\simeq} &  \ca Z[G] \ar[r] \ar[d]^{\simeq} & \ca Z[H] \ar[r] \ar[d]^{\simeq} & \ca Z[1] \ar[d]^{\simeq} \\
\ca Z \ar[r] & \mathbf{Aff} \ar[r] & \mathbf{Proj}  \ar[r] & \ca Z .
}
\end{displaymath}

Moreover, there is a group isomorphism $d:H\to H$ which sends a matrix to the transpose of its inverse  and that induces a geometric morphism $\ca Z[H]\to \ca Z[H]$ and therefore also a geometric morphism $\mathbf{Proj}\to \mathbf{Proj}$.

\section{$\ca Z$ as a slice of the topos $\mathbf{Aff}$}

In $\ca Z$, there is a unique group monomorphism from the trivial group to the group $G$. This induces a geometric morphism over $\ca Z$ from $\ca Z[1]\simeq \ca Z$ to $\ca Z[G]$. By Theorem \ref{thrmlocal}, this geometric morphism $\ca Z\to \ca Z[G]$ is a local homeomorphism. In particular, we have a geometric isomorphism $\ca Z\to \ca Z[G]/(G,m_G)$ such that the triangle 
\begin{displaymath}
 \xymatrix{
\ca Z\ar[r] \ar[d] & \ca Z[G]/(G,m_G) \ar[dl]\\
\ca Z[G]
}
\end{displaymath}
commutes.

Via the equivalence $\ca Z[G]\simeq \mathbf{Aff}$ from Chapter \ref{chaZG}, $(G,m_G)$ corresponds to the object $\omega$ of triples of non-collinear points of the generic affine plane in $\mathbf{Aff}$. Hence, we have an equivalence $\mathbf{Aff}/\omega\simeq \ca Z$. By \cite[B3.2.8(b)]{Elephant1}, $\mathbf{Aff}/\omega$ is the classifying topos for the theory of affine planes with three added constants of sorts of points, and an axiom stating that these three points are non-collinear. Hence, the theory of local rings is Morita equivalent to the theory of affine planes with three chosen points which are non-collinear.

Via the equivalence $\mathbf{Aff}/\omega \simeq \ca Z$, the generic local ring in $\ca Z$ corresponds to the local ring of trace preserving homomorphisms of the generic affine plane in $\mathbf{Aff}$. Via the same equivalence, the generic affine plane with a choice of three non-collinear points corresponds to the affine plane over the generic local ring $M$ in $\ca Z$ with the choice of the three non-collinear points $(0,0)$, $(1,0)$ and $(0,1)$. Notice that by Lemma \ref{lemmatrixautoaf} any choice of three non-collinear points would have given an isomorphic model of the theory of affine planes with a choice of three non-collinear points.

We have already seen the geometric morphism $\mathbf{Aff}\to \ca Z$. By the equivalence $\ca Z \simeq \mathbf{Aff}/\omega$, we have a geometric morphism $\mathbf{Aff}\to \mathbf{Aff}/\omega$. Given an affine plane $\ca A$ in a topos $\ca E$, we have a (unique up to isomorphism) geometric morphism $f:\ca E\to \mathbf{Aff}$ such that $\ca A$ is isomorphic to $f^*(\ca Ag)$. By postcomposing $f$ with the above geometric morphism $\mathbf{Aff}\to \mathbf{Aff}/\omega$, and taking the inverse image of generic affine plane with a choice of three non-collinear points, we construct an affine plane with a choice of three non-collinear points in $\ca E$. This new affine plane is the affine plane over the local ring of trace preserving homomorphisms of $\ca A$ and the choice of three non-collinear points is $(0,0)$, $(1,0)$ and $(0,1)$ (and as before any choice of three non-collinear points would have given an isomorphic affine plane with a choice of a triple of non-collinear points).

\section{$\ca Z$ as a slice of the topos $\mathbf{Proj}$}

In $\ca Z$, there is a unique group monomorphism from the trivial group to the group $H$. This induces a geometric morphism over $\ca Z$ from $\ca Z[1]\simeq \ca Z$ to $\ca Z[H]$. By Theorem \ref{thrmlocal}, this geometric morphism $\ca Z\to \ca Z[H]$ is a local homeomorphism. In particular, we have a geometric isomorphism $\ca Z\to \ca Z[H]/(H,m_H)$ such that the triangle 
\begin{displaymath}
 \xymatrix{
\ca Z\ar[r] \ar[d] & \ca Z[H]/(H,m_H) \ar[dl]\\
\ca Z[H]
}
\end{displaymath}
commutes.

Via the equivalence $\ca Z[H]\simeq \mathbf{Proj}$ from Chapter \ref{chaZH}, $(H,m_H)$ corresponds to the object $\omega_4$ of quadruples of points in general position of the generic projective plane $\ca Pg$ in $\mathbf{Proj}$. Hence, we have an equivalence $\mathbf{Proj}/\omega_4\simeq \ca Z$. By \cite[B3.2.8(b)]{Elephant1}, $\mathbf{Proj}/\omega_4$ is the classifying topos for the theory of projective planes with four added constants for points, and an axiom stating that they are in general position. Hence, the theory of local rings is Morita equivalent to the theory of projective planes with a choice of a quadruple of points in general position.

Via the equivalence $\mathbf{Proj}/\omega_4 \simeq \ca Z$, the generic local ring in $\ca Z$ corresponds to the local ring constructed from the generic projective plane of $\mathbf{Proj}$ as in Chapter \ref{chaprojcoo}. Via the same equivalence, the generic projective plane with a choice of an element of $\omega_4$ corresponds to the projective plane over the generic local ring $M$ in $\ca Z$ with the choice of the quadruple of points $(1,0,0)$, $(0,1,0)$, $(0,0,1)$ and $(1,1,1)$. Notice that by Lemma \ref{lemmatrixauto} any other choice of a quadruple of points in general position would have given an isomorphic model of the theory of projective planes with a choice of a quadruple of points in general position.

We have already seen the geometric morphism $\mathbf{Proj}\to \ca Z$. By the equivalence $\ca Z \simeq \mathbf{Proj}/\omega_4$, we have a geometric morphism $\mathbf{Proj}\to \mathbf{Proj}/\omega_4$. Given a projective plane $\ca P$ in a topos $\ca E$, we have a (unique up to isomorphism) geometric morphism $f:\ca E\to \mathbf{Proj}$ such that $\ca P$ is isomorphic to $f^*(\ca Pg)$. By postcomposing $f$ with the above geometric morphism $\mathbf{Proj}\to \mathbf{Proj}/\omega_4$, and taking the inverse image of the generic projective plane with a choice of quadruple of points in general position, we construct a projective plane with such a choice of four points in $\ca E$. This new projective plane is the projective plane over the local ring of coordinates of $\ca P$ and the choice of the four points is $(1,0,0)$, $(0,1,0)$, $(0,0,1)$ and $(1,1,1)$ (and as before any choice a quadruple of points in general position would have given an isomorphic projective plane with such a choice of points).

\section{$\mathbf{Aff}\to \mathbf{Proj}$}

Consider the group monomorphism $\theta: G\to H$ in $\ca Z$. It induces a geometric morphism $\ca Z[G]\to \ca Z[H]$. This group homomorphism is injective, and therefore by Theorem \ref{thrmlocal} the induced geometric morphism is a local homeomorphism. By going through the proof of \ref{thrmlocal}, we see that $\ca Z[G]$ is equivalent to $\ca Z[H]/K$ where $K$ is the coequalizer in $\ca Z[H]$ of the diagram: $(H\times G,m_H\times 1_G) \rightrightarrows (H,m_H)$ where the two arrows are $m_H(1_H\times \theta)$ and $\pi_1$. Equivalently, $K$ is the object of $(G,\pi_2)$-orbits of $(H,m_H)$. Notice that $(G,\pi_2)$ is the group $H_{\text{li}}(M,\pi_2)$ and as mentioned in the proof of Lemma \ref{lemhah}, the object of $H_{\text{li}}(M,\pi_2)$-orbits is the object $(\mathbb P_{\text{li}},a_{\text{li}})$ of lines of the generic projective plane in $\ca Z[H]$.

By Theorem \ref{thrmlocal} and the above discussion, we have a geometric isomorphism $\ca Z[G]\to \ca Z[H]/(\mathbb P_{\text{li}}(M),a_{\text{li}})$ such that the triangle:
\begin{displaymath}
 \xymatrix{
\ca Z[G]\ar[r] \ar[d]^{\theta} & \ca Z[H]/(\mathbb P_{\text{li}}(M),a_{\text{li}}) \ar[dl]\\
\ca Z[H]
}
\end{displaymath}
commutes (up to isomorphism). Via the equivalences $\ca Z[G]\simeq \mathbf{Aff}$ and $\ca Z[H]\simeq  \mathbf{Proj}$, the geometric isomorphism $\ca Z[G]\to \ca Z[H]/(\mathbb P_{\text{li}}(M),a_{\text{li}})$ becomes a geometric isomorphism $\mathbf{Aff}\to \mathbf{Proj}/\ca P_{\text{li}}$ where $\ca P_{\text{li}}$ is the line object of the generic projective plane in $\mathbf{Proj}$. Moreover, this geometric morphism is such that the triangle
\begin{displaymath}
 \xymatrix{
\mathbf{Aff}\ar[d] \ar[r] & \mathbf{Proj}/\ca P_{\text{li}} \ar[dl]\\
\mathbf{Proj} 
}
\end{displaymath}
commutes.

\begin{thrm}
The theory of affine planes is Morita equivalent to the theory of projective planes with a choice of line.
\end{thrm}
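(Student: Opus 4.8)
The plan is to deduce this Morita equivalence as an immediate consequence of the slice equivalence established in the preceding discussion. Recall that the final theorem to be proved asserts that the theory of affine planes is Morita equivalent to the theory of projective planes with a choice of line. Two geometric theories are Morita equivalent precisely when their classifying toposes are equivalent, so I would reduce the statement to producing an equivalence of classifying toposes and then identifying what each side classifies.

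First I would invoke the equivalences $\mathbf{Aff}\simeq \ca Z[G]$ and $\mathbf{Proj}\simeq \ca Z[H]$ from Chapters \ref{chaZG} and \ref{chaZH}, together with the group monomorphism $\theta:G\to H$ in $\ca Z$. Applying Theorem \ref{thrmlocal} to $\theta$ gives that $\ca Z[G]\to \ca Z[H]$ is a local homeomorphism and, more precisely, that $\ca Z[G]$ is equivalent to the slice $\ca Z[H]/K$, where $K=\theta_!(1)$ is the object of $(G,\pi_2)$-orbits of $(H,m_H)$. The computation already carried out in this section identifies $(G,\pi_2)$ with the subgroup $H_{\text{li}}(M,\pi_2)$ and, via Lemma \ref{lemhah}, identifies its object of orbits with the line object $(\mathbb P_{\text{li}}(M),a_{\text{li}})$ of the generic projective plane in $\ca Z[H]$. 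Transporting along the two equivalences, this yields a geometric equivalence $\mathbf{Aff}\simeq \mathbf{Proj}/\ca P_{\text{li}}$, where $\ca P_{\text{li}}$ is the line object of the generic projective plane $\ca Pg$ in $\mathbf{Proj}$.

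The remaining step is to interpret the right-hand side as a classifying topos. Since $\mathbf{Proj}$ is the classifying topos of the theory of projective planes and $\ca P_{\text{li}}$ is (the interpretation in the generic model of) the sort of lines, the general fact \cite[B3.2.8(b)]{Elephant1} says that the slice $\mathbf{Proj}/\ca P_{\text{li}}$ is the classifying topos of the theory of projective planes augmented by a single added constant of the sort of lines, that is, the theory of projective planes with a chosen line. Combining this identification with the equivalence $\mathbf{Aff}\simeq \mathbf{Proj}/\ca P_{\text{li}}$ and the fact that $\mathbf{Aff}$ classifies the theory of affine planes gives the claimed Morita equivalence.

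I do not expect a serious obstacle here, since all the substantive work has been done upstream: the hard analytic content lies in Theorem \ref{thrmlocal} and in the orbit computation of Lemma \ref{lemhah}, both of which may be assumed. The only point requiring minor care is the compatibility of the slice equivalence with the structural geometric morphisms to $\mathbf{Proj}$, which is exactly the commuting triangle recorded just before the theorem; this ensures that the object being sliced by is genuinely the generic line object and not some twisted variant. With that in hand, the proof is essentially a one-line appeal to the preceding equivalence together with \cite[B3.2.8(b)]{Elephant1}.
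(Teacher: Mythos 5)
Your proposal is correct and follows essentially the same route as the paper: the equivalence $\mathbf{Aff}\simeq \mathbf{Proj}/\ca P_{\text{li}}$ obtained from Theorem \ref{thrmlocal} applied to $G\to H$ together with the orbit identification of the line object, followed by the appeal to \cite[B3.2.8(b)]{Elephant1} to read off the slice as the classifying topos for projective planes with a chosen line. The only difference is presentational --- you fold the preceding discussion into the proof, whereas the paper keeps it outside and gives a two-line proof --- but the mathematical content is identical.
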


\begin{proof}
By \cite[B3.2.8(b)]{Elephant1}, $\mathbf{Proj}/\ca P_{\text{li}}$ is the classifying topos for the theory of projective planes with an added constant for the sort of lines. By the above discussion $\mathbf{Aff}\simeq \mathbf{Proj}/\ca P_{\text{li}}$, hence the result.
\end{proof}

The construction of an affine plane from a projective plane with a line corresponding to the equivalence $\mathbf{Aff}\simeq \mathbf{Proj}/\ca P_{\text{li}}$ is the one described in Section \ref{secprojtoaff}.

The construction of a projective plane with a line from an affine plane is more complicated. Using the classifying toposes and the geometric morphism $\ca Z[G]\to \ca Z[H]$, we can see that we can construct the points of the projective plane as a quotient of pairs of lines of the affine plane that are apart from each other. This is also the case in classical geometric algebra and in particular in the construction of a projective plane from an affine plane in \cite{Harts}.

\section{$\mathbf{Aff}\to \mathbf{Aff}_{\text{Pt}}$}

We have already seen in Section \ref{secaffpt} that the classifying topos $\mathbf{Aff}_{\text{Pt}}$ for the theory of affine planes with a chosen point is the topos $\ca Z[G_3]$. In Lemma \ref{lemplanetrans}, we have proved that the object of translations can be viewed as the object of points of an affine plane with a chosen point. This construction of an affine plane with a chosen point from an affine plane corresponds to a geometric morphism $\mathbf{Aff}\to \mathbf{Aff}_{\text{Pt}}$. Via the equivalences $\mathbf{Aff}\simeq \ca Z[G]$ and $\mathbf{Aff}_{\text{Pt}}\simeq \ca Z[G_3]$, this geometric morphism corresponds to a geometric morphism $\ca Z[G]\to \ca Z[G_3]$.

\begin{thrm}
The geometric morphism $\ca Z[G]\to \ca Z[G_3]$ described above is isomorphic to the geometric morphism $k:\ca Z[G]\to \ca Z[G_3]$ induced by the group homomorphism $k:G\to G_3$ which sends $\begin{pmatrix}
\alpha_0 & \beta_0 & \gamma_0 \\
\alpha_1 & \beta_1 & \gamma_1 \\
0 & 0 & 1
\end{pmatrix}$ to $\begin{pmatrix}
\alpha_0 & \beta_0 & 0 \\
\alpha_1 & \beta_1 & 0 \\
0 & 0 & 1
\end{pmatrix}$.
\end{thrm}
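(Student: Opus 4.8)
The goal is to identify two geometric morphisms $\ca Z[G]\to \ca Z[G_3]$: one arising abstractly from the construction of the affine plane of translations (with its canonical point) applied to the generic affine plane, and the other induced by the explicit group homomorphism $k:G\to G_3$ that forgets the translation part of a matrix. Both are geometric morphisms over $\ca Z$, so by Diaconescu's theorem it suffices to compare the $G_3$-torsors in $\ca Z[G]$ that they produce when we pull back the generic $G_3$-torsor. Concretely, I would show that the inverse images of the two morphisms send the generic $G_3$-torsor to isomorphic right $G_3$-torsors in $\ca Z[G]$, via an isomorphism that respects the local ring structure (which is preserved on both sides, as both morphisms are over $\ca Z$ and the local ring of trace preserving homomorphisms is preserved by inverse images of geometric morphisms).

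\textbf{Key steps.} First I would recall from Section \ref{secaffpt} that $\mathbf{Aff}_{\text{Pt}}\simeq \ca Z[G_3]$ and, more importantly, the explicit identification: the generic affine plane with a chosen point in $\ca Z[G_3]$ arises by restricting the $G$-actions on the generic affine plane of $\ca Z[G]$ to $G_3$-actions and choosing the stable point $(0,0)$. Second, I would unwind the abstract morphism $\mathbf{Aff}\to \mathbf{Aff}_{\text{Pt}}$ coming from Lemma \ref{lemplanetrans}: given an affine plane $\ca A$, its object of translations $\text{Tn}(\ca A)$ is the object of points of a new affine plane $\ca A_{\text{Tn}}$ equipped with the canonical point given by the identity translation. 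Applied to the generic affine plane $(\mathbb A(M),a_{\mathbb A})$ of $\ca Z[G]$, using the explicit description of $\text{Tn}(\mathbb A(M),a_{\mathbb A})$ from Section \ref{secaffoverZ} as the object $\{(x,y,0):M^3\}$ with the matrix-multiplication $G$-action and unit the identity translation $(0,0,0)$, I would compute the corresponding $G_3$-torsor in $\ca Z[G]$. Third, I would compute the $G_3$-torsor associated to the morphism $k:\ca Z[G]\to \ca Z[G_3]$ induced by the group homomorphism $k:G\to G_3$; by the description of induced geometric morphisms in Chapter \ref{chaEG}, $k^*$ acts on the generic $G_3$-torsor $(G_3,m_{G_3})$ by restricting scalars along $k$, producing a $G_3$-torsor whose underlying object is a suitable quotient/pullback of $(G,m_G)$. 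Finally I would exhibit an explicit $G_3$-equivariant isomorphism between these two torsors and check it commutes with the right $G_3$-actions, concluding via Diaconescu's theorem that the two geometric morphisms are isomorphic.

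\textbf{The main obstacle.} The delicate part will be matching the two $G_3$-torsors precisely, because the affine-plane-of-translations construction selects the distinguished point as the \emph{identity} translation, whereas the homomorphism $k$ selects the distinguished point implicitly through the coset structure of $G_3$ inside $G$. I expect the crux to be verifying that the canonical point of $\ca A_{\text{Tn}}$ corresponds, under the identifications of Section \ref{secaffpt}, exactly to the $G_3$-stable point $(0,0)$, and that the forgetting of the translation column $\begin{pmatrix} \gamma_0 \\ \gamma_1 \end{pmatrix}$ performed by $k$ matches the passage from a point $A$ of $\ca A$ to the translation $\tau_{OA}$ (for $O$ the chosen base point) under the torsor isomorphism $G(\text{Tp})\to \omega$ of Theorem \ref{thrmgtorsor}. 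Once this correspondence of base points and the compatibility of the $G_3$-actions are pinned down, the equivariance of the isomorphism and commutativity with the right $G_3$-torsor structure should follow by the same style of direct matrix computation used in Lemma \ref{lemgag} and Section \ref{secgid}; I would not expect genuinely new difficulties beyond careful bookkeeping.
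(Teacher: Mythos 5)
Your strategy is plausible and you have correctly located the crux (the identity translation must correspond to the $G_3$-stable point $(0,0)$, and forgetting the translation column of a matrix must match the passage $A\mapsto\tau_{OA}$), but as written there is a genuine gap at the very first reduction. Diaconescu's theorem only lets you compare geometric morphisms $\ca Z[G]\to\ca Z[G_3]$ by comparing torsors if both morphisms are known to be morphisms \emph{over} $\ca Z$, and for the translations morphism this is not established anywhere: it amounts to showing that the ring of trace preserving homomorphisms of the \emph{translations plane} of the generic affine plane is isomorphic to $(M,\pi_2)$, which is a new computation (Section \ref{secaffoverZ} proves this only for the generic plane itself, and Lemma \ref{lemplanetrans} gives an isomorphism $\ca A_{\text{Tn}}\cong\ca A$ only after choosing a point, which the generic plane in $\ca Z[G]$ does not have globally). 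Your parenthetical justification — that the ring is preserved ``as both morphisms are over $\ca Z$'' — is circular on exactly this point. A second concrete gap: you never say what the $G_3$-torsor classifying the translations morphism actually \emph{is}. You appeal to the torsor isomorphism $G(\text{Tp})\to\omega$ of Theorem \ref{thrmgtorsor}, but that is a $G$-torsor built from triples of non-collinear points; the paper contains no $G_3$-analogue (built, say, from pairs $(A,B)$ with $(A,B,O)$ non-collinear for the chosen point $O$), so your step two would require constructing and verifying such a torsor from scratch.

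The paper's proof sidesteps both issues by working at the level of models rather than torsors: since $\ca Z[G_3]\simeq\mathbf{Aff}_{\text{Pt}}$ is a classifying topos, isomorphic affine-planes-with-point in $\ca Z[G]$ classify isomorphic geometric morphisms, with no over-$\ca Z$ hypothesis needed. It then compares the two models directly: the translations plane has point object $\{(x,y,0):M^3\}$ with $G$ acting by left matrix multiplication (this is the Section \ref{secaffoverZ} computation you cite), while $k^*$ of the generic pointed plane has point object $\{(x,y,1):M^3\}$ with $g$ acting via left multiplication by $k(g)$; the map $(x,y,0)\mapsto(x,y,1)$ is $G$-equivariant, sends the identity translation to the chosen point $(0,0,1)$, and extends to a plane isomorphism by tracing through Lemma \ref{lemplanetrans}. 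So the explicit matrix computation you defer to the end is in fact the whole proof, and the Diaconescu scaffolding you propose adds two unproven prerequisites rather than removing difficulty. Your route could be repaired — prove $\text{Tp}(\mathbb A_T(M),a_{\mathbb AT})\cong(M,\pi_2)$ and build the $G_3$-torsor — but each repair is at least as long as the paper's entire argument.
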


\begin{proof}
In $\ca Z[G]$, let $(\mathbb A_T(M),a_{\mathbb AT})$ be the affine plane of translations constructed as in Lemma \ref{lemplanetrans} from the generic affine plane $(\mathbb A(M),a_{\mathbb A})$ of $\ca Z[G]$ (and notice that it comes with a choice of a point which is the unit of the group of translations).

Also in $\ca Z[G]$, let $(\mathbb A_k(M),a_{\mathbb Ak})$ be the image under $k^*$ of the generic affine plane with a chosen point of $\ca Z[G_3]$.

To prove that the two geometric morphisms are isomorphic it suffices to show that these two affine planes with their choices of points are isomorphic.

The object of points of $(\mathbb A_T(M),a_{\mathbb AT})$ is the object of translations of $(\mathbb A(M),a_{\mathbb A})$. In Section \ref{secaffoverZ}, we show that this object of translations is the object $\{(a_0,a_1,0):M^3\}$ whose left $G$-action is left matrix multiplication.

The object of points of $(\mathbb A_k(M),a_{\mathbb Ak})$ is the object $\{(a_0,a_1,1):M^3\}$ with an element  $\begin{pmatrix}
\alpha_0 & \beta_0 & \gamma_0 \\
\alpha_1 & \beta_1 & \gamma_1 \\
0 & 0 & 1
\end{pmatrix}$ of $G$ acting on it via left matrix multiplication by $\begin{pmatrix}
\alpha_0 & \beta_0 & 0 \\
\alpha_1 & \beta_1 & 0 \\
0 & 0 & 1
\end{pmatrix}$.

These objects of points are isomorphic via the morphism which maps $(a_0,a_1,0)$ to $(a_0,a_1,1)$ (which commutes with the two actions). By going through the construction of the affine plane of translations in Lemma \ref{lemplanetrans}, we show that this isomorphism can be extended to an isomorphism of affine planes. Hence, these two affine planes are isomorphic, and therefore the two geometric morphisms are isomorphic.
\end{proof}

\section{Duality of the projective plane}

Notice that the theory of projective planes is self-dual.  Moreover, projective planes over local rings are isomorphic to their duals. However, that is not the case for all projective planes. We explain this subtle point in this final section.

Consider the group isomorphism $d:H\to H$ which sends a matrix to the transpose of its inverse. Notice that $d\circ d=1_H$. The isomorphism $d$ induces a functor $\ca Z[H]\to \ca Z[H]$ which sends an $H$-object $a:H\times A\to A$ to $H\times A\xrightarrow{d\times 1_A} H\times A \xrightarrow{a} A$. This functor is self-adjoint and therefore it is the inverse and direct image of a geometric morphism 
$$d:\ca Z[H]\to \ca Z[H].$$
Notice that the geometric morphism $d\circ d$ is the identity on $\ca Z[H]$.

The inverse image of $d$ sends the generic projective plane to its dual plane. The dual plane is not isomorphic to the generic projective plane. In particular, we have the following:

\begin{thrm}
The object of points of the generic projective plane is not isomorphic to its object of lines, and moreover there is no morphism from the object of points to the object of lines. Dually, there is no morphism from the object of lines to the object of points.
\end{thrm}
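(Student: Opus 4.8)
The plan is to exploit the equivalence $\mathbf{Proj}\simeq\ca Z[H]$ of Chapter \ref{chaZH}, under which the object of points of the generic projective plane $\ca Pg$ is $(\mathbb P_{\text{pt}}(M),a_{\text{pt}})$ and its object of lines is $(\mathbb P_{\text{li}}(M),a_{\text{li}})$. Since an isomorphism is in particular a morphism, the non-isomorphism assertion follows once we prove the stronger claim that there is no morphism $\ca Pg_{\text{pt}}\to\ca Pg_{\text{li}}$ in $\mathbf{Proj}$. The dual statement (no morphism $\ca Pg_{\text{li}}\to\ca Pg_{\text{pt}}$) then follows either by rerunning the argument with the roles of points and lines exchanged, or by precomposing a hypothetical such morphism with the duality geometric morphism $d:\mathbf{Proj}\to\mathbf{Proj}$ induced by $d:H\to H$, whose inverse image interchanges $(\mathbb P_{\text{pt}}(M),a_{\text{pt}})$ and $(\mathbb P_{\text{li}}(M),a_{\text{li}})$. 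So it is enough to treat the single direction.

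First I would transfer the question from the topos to ordinary models. Because $\mathbf{Proj}$ classifies the theory of projective planes and $\ca Pg_{\text{pt}}$, $\ca Pg_{\text{li}}$ are the interpretations of the two sorts of that theory in the generic model, a morphism $\phi:\ca Pg_{\text{pt}}\to\ca Pg_{\text{li}}$ pulls back along each point $p:\set\to\mathbf{Proj}$ to a function $\eta_{\ca P}:\ca P_{\text{pt}}\to\ca P_{\text{li}}$, where $\ca P=p^*(\ca Pg)$ is the corresponding projective plane in $\set$. Naturality of the equivalence $\mathbf{Proj}\text{-mod}(\set)\simeq\mathfrak{Top}(\set,\mathbf{Proj})$ shows that the family $(\eta_{\ca P})$ is natural in $\ca P$: every morphism of projective planes induces a geometric transformation of the associated points, and the corresponding naturality square for $\eta$ commutes. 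In particular, taking automorphisms of a fixed model $\ca P$, the map $\eta_{\ca P}$ is equivariant for the action of $\mathrm{Aut}(\ca P)$ on points and on lines.

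Next I would specialize to $\ca P=\mathbb P(\mathbb Q)$. By Theorem \ref{thrmprojmorph} every automorphism of $\mathbb P(\mathbb Q)$ is uniquely of the form $g\circ\mathbb P(\alpha)$ with $g$ in $H(\mathbb Q)$ and $\alpha$ a ring automorphism of $\mathbb Q$; as $\mathbb Q$ has only the identity automorphism, $\mathrm{Aut}(\mathbb P(\mathbb Q))=H(\mathbb Q)$, which acts transitively on points. Fixing the point $P_0=(1,0,0)$, equivariance of $\eta=\eta_{\mathbb P(\mathbb Q)}$ gives, for every $g$ in the stabilizer $\mathrm{Stab}(P_0)$, that $g\cdot\eta(P_0)=\eta(g\cdot P_0)=\eta(P_0)$, so the line $\eta(P_0)$ is fixed by all of $\mathrm{Stab}(P_0)$. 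The contradiction I expect to extract is that $\mathrm{Stab}(P_0)$ fixes no line whatsoever: $\mathrm{Stab}(P_0)$ is the maximal parabolic $H_1(\mathbb Q)$, and a short matrix computation (for instance, the elements fixing $(1,0,0)$ move every line through $P_0$ among one another and translate every line not through $P_0$) shows it has no fixed line; equivalently, a point-stabilizer and a line-stabilizer are distinct maximal parabolic subgroups of $H(\mathbb Q)$, so neither is contained in the other. Hence $\eta(P_0)$ cannot exist, no morphism $\ca Pg_{\text{pt}}\to\ca Pg_{\text{li}}$ exists, and a fortiori $\ca Pg_{\text{pt}}\not\cong\ca Pg_{\text{li}}$.

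The group-theoretic computation that a point-stabilizer in $H(\mathbb Q)$ fixes no line is entirely routine (and is the concrete shadow of the fact, visible already in the orbit descriptions of Chapter \ref{chaZH}, that $(\mathbb P_{\text{pt}}(M),a_{\text{pt}})$ is the object of $H_1$-orbits while $(\mathbb P_{\text{li}}(M),a_{\text{li}})$ is the object of $H_{\text{li}}$-orbits). The main obstacle, and the step I would write out most carefully, is the genericity transfer of the second paragraph: one must verify that a morphism between these two \emph{definable} sort-objects in the classifying topos genuinely descends, for each $\set$-model $\ca P$, to a map on the fibre that is natural in $\ca P$ and therefore $\mathrm{Aut}(\ca P)$-equivariant. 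As an alternative to this transfer, I could argue internally in $\ca Z[H]$ throughout: a morphism of $H$-objects from the $H_1(M)$-orbit object to the $H_{\text{li}}(M)$-orbit object would be classified by a generalized element conjugating $H_1$ into $H_{\text{li}}$ (compare Lemma \ref{lemmatrixauto}), and no such element exists because $H_1$ and $H_{\text{li}}$ are distinct maximal parabolic subgroups; the field computation above is exactly the concrete witness of this obstruction.
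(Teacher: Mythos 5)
Your proof is correct, but it reaches the contradiction by a genuinely different route than the paper. The paper argues entirely internally in $\ca Z[H]$: it takes a hypothetical morphism $f$ from the points object to the lines object (both with underlying object $A$), evaluates it at the element represented by $(1,0,0)$, and plays equivariance against three explicit integer matrices in the stabiliser of $(1,0,0)$ to force the image $\mathbf b$ to satisfy first $b_1=b_2$, then $b_1=b_2=0$, then $b_0=0$, contradicting invertibility of some coordinate. You instead externalise: you pull the hypothetical morphism back along the single point $\set\to\mathbf{Proj}$ classifying $\mathbb P(\mathbb Q)$, observe that naturality of geometric transformations makes the resulting map $\eta$ equivariant for $\mathrm{Aut}(\mathbb P(\mathbb Q))=H(\mathbb Q)$ (via Theorem \ref{thrmprojmorph} and the triviality of $\mathrm{Aut}(\mathbb Q)$), and conclude from the classical fact that a point-stabiliser in $\mathrm{PGL}_3(\mathbb Q)$ fixes no line. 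The two arguments share the same kernel --- the paper's three matrices are precisely witnesses that the stabiliser of $(1,0,0)$ fixes no line --- but your transfer step is a real structural difference: it buys you a computation over a field, free of the local-ring bookkeeping (the ``without loss of generality $b_0$ is invertible'' step, which in $\ca Z$ is really a passage to a Zariski cover), and it correctly needs only one point of $\mathbf{Proj}$, not enough points. Conversely, the paper's internal computation appeals neither to points of the classifying topos nor to the model correspondence, so it transports verbatim to an arbitrary base topos, in line with the thesis's stated scope. Your treatment of the dual statement via $d^*$ matches the paper's setup. The one place deserving more care is your sketched internal alternative (classifying equivariant maps between the $H_1$- and $H_{\text{li}}$-orbit objects by a conjugating element): over the generic local ring that correspondence requires a descent argument you do not supply; since your main route is complete, this is an aside rather than a gap.
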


\begin{proof}
Consider the object of vectors
$\begin{pmatrix}
a_0 \\ a_1 \\ a_2
\end{pmatrix}$
over the generic local ring in $\ca Z$
where at least one of $a_0$, $a_1$, $a_2$ is invertible. We now consider the quotient of the above by the equivalence relation which relates a vector with a scalar multiple of it by (an invertible) element of the generic local ring and we denote this object of $\ca Z$ by $A$. We equip $A$ with two left $H$-actions. The first one is left matrix multiplication and this gives the object of points of the generic projective plane in $\ca Z[H]$. The second one is left matrix multiplication by the transpose of the inverse of matrices of $H$ and it gives the object of lines of the projective plane in $\ca Z[H]$.

A morphism from the object of points to the object of lines is a morphism $A\to A$ which commutes with the two $H$-actions. Let $f$ be such a morphism. $f$ sends the element of $A$ represented by $(1,0,0)$ to an element represented by $\pt{b}$. For a matrix $h$ representing an element of $H$, the commutativity of the actions implies that 
$$f \begin{pmatrix}h \begin{pmatrix}
1 \\ 0 \\ 0
\end{pmatrix}
\end{pmatrix}= r(h^{-1})^T
\begin{pmatrix}
b_0 \\ b_1 \\ b_2
\end{pmatrix}$$
for an invertible scalar $r$. At least one of $b_0$, $b_1$ and $b_2$ is invertible, therefore by symmetry let us suppose that $b_0$ is invertible.

In what follows we shall only consider matrices $h$ such that $h\begin{pmatrix}
1 \\ 0 \\ 0
\end{pmatrix}=\begin{pmatrix}
1 \\ 0 \\ 0
\end{pmatrix}$, hence the above equation becomes  $$\begin{pmatrix}
b_0 \\ b_1 \\ b_2
\end{pmatrix}=
r(h^{-1})^T
\begin{pmatrix}
b_0 \\ b_1 \\ b_2
\end{pmatrix}.$$

Let $h$ be $\begin{pmatrix}
1 & 0 &0 \\
0 & 0 &1 \\
0 & 1 & 0 
\end{pmatrix}$. Then, $(h^{-1})^T = h$ and by the commutativity of the actions we can see that
$\begin{pmatrix}
b_0 \\ b_1 \\ b_2
\end{pmatrix}$ is a multiple of
$(h^{-1})^T \mathbf{b}=\begin{pmatrix}
b_0 \\ b_2 \\ b_1
\end{pmatrix}$. $b_0$ is invertible, hence $b_1=b_2$.

By letting $h$ be $\begin{pmatrix}
1 & 0 &0 \\
0 & 1 & 0 \\
0 & -1 & 1 
\end{pmatrix}$ and since $(h^{-1})^T = \begin{pmatrix}
1 & 0 &0 \\
0 & 1 & 1 \\
0 & 0 & 1 
\end{pmatrix}$, we conclude that
$\begin{pmatrix}
b_0 \\ b_1 \\ b_2
\end{pmatrix}$ is a multiple of
$\begin{pmatrix}
b_0 \\ b_1+b_2 \\ b_2
\end{pmatrix}$. $b_0$ is invertible, hence $b_1+b_2=b_1$. Therefore, $b_1=0$ and by the above $b_2$ is also $0$.

For $h=\begin{pmatrix}
1 & 1 &0 \\
0 & 1 & 0 \\
0 & 0 & 1 
\end{pmatrix}$, $(h^{-1})^T=\begin{pmatrix}
1 & 0 &0 \\
-1 & 1 & 0 \\
0 & 0 & 1 
\end{pmatrix}$. Therefore,
$\begin{pmatrix}
b_0 \\ b_1 \\ b_2
\end{pmatrix}$ is a multiple of
$\begin{pmatrix}
b_0 \\ -b_0 + b_1 \\ b_2
\end{pmatrix}$
and since $b_0$ is invertible we conclude that $b_1=-b_0+b_1$. Hence $b_0=0$ giving a contradiction.

Thus, there exists no morphism from the object of points to the object of lines.
\end{proof}

An alternative way to see that there is no isomorphism from the object of points to the object of lines is by considering the model of projective planes in $\ca Z[G]$ corresponding to the geometric morphism $\ca Z[G]\to\ca Z[H]\simeq \mathbf{Proj}$ (induced by the group monomorphism $G\to H$). The object of lines of this projective plane in $\ca Z[G]$ has a global section but its object of points does not. Therefore, they are not isomorphic. Hence, there is a projective plane whose object of points is not isomorphic to its object of lines. Thus, the object of points of the generic projective plane is not isomorphic its object of lines.

\backmatter


\bibliographystyle{alpha}
\bibliography{refthesis}

\begin{thebibliography}{Joh02b}

\bibitem[Art88]{Artin}
E.~Artin.
\newblock {\em {Geometric algebra}}.
\newblock {Wiley Classics Library}. John Wiley \& Sons, Inc., New York, 1988.
\newblock Reprint of the 1957 original, A Wiley-Interscience Publication.

\bibitem[Bac78]{Kling1}
P.~Y. Bacon.
\newblock Desarguesian {K}lingenberg planes.
\newblock {\em Trans. Amer. Math. Soc.}, 241:343--355, 1978.

\bibitem[BL10]{Kling2}
T.~Bisztriczky and J.~W. Lorimer.
\newblock Translations in affine {K}lingenberg spaces.
\newblock {\em J. Geom.}, 99(1-2):15--42, 2010.

\bibitem[Car14]{Oliviafraisse}
Olivia Caramello.
\newblock Fra\"\i ss\'e's construction from a topos-theoretic perspective.
\newblock {\em Log. Univers.}, 8(2):261--281, 2014.

\bibitem[CJ09]{Oliviafields}
Olivia Caramello and Peter Johnstone.
\newblock De {M}organ's law and the theory of fields.
\newblock {\em Adv. Math.}, 222(6):2145--2152, 2009.

\bibitem[Dra68]{Hj2}
David~A. Drake.
\newblock Projective extensions of uniform affine {H}jelmslev planes.
\newblock {\em Math. Z.}, 105:196--207, 1968.

\bibitem[EM65]{Adjointtriples}
Samuel Eilenberg and John~C. Moore.
\newblock Adjoint functors and triples.
\newblock {\em Illinois Journal of Mathematics}, 9(3):381--398, 09 1965.

\bibitem[Har67]{Harts}
Robin Hartshorne.
\newblock {\em {Foundations of projective geometry}}, volume 1966/67 of {\em
  {Lecture Notes, Harvard University}}.
\newblock W. A. Benjamin, Inc., New York, 1967.

\bibitem[Hey28]{Heytinggerm}
A.~Heyting.
\newblock Zur intuitionistischen {A}xiomatik der projektiven {G}eometrie.
\newblock {\em Math. Ann.}, 98(1):491--538, 1928.

\bibitem[Hey59]{Heytingaff}
A.~Heyting.
\newblock {Axioms for intuitionistic plane affine geometry}.
\newblock In {\em {The axiomatic method. {W}ith special reference to geometry
  and physics. {P}roceedings of an {I}nternational {S}ymposium held at the
  {U}niv. of {C}alif., {B}erkeley, {D}ec. 26, 1957-{J}an\ 4, 1958 (edited by
  {L}. {H}enkin, {P}. {S}uppes and {A}. {T}arski)}}, {Studies in Logic and the
  Foundations of Mathematics}, pages 160--173. North-Holland Publishing Co.,
  Amsterdam, 1959.

\bibitem[Hey80]{Heytingproj}
A.~Heyting.
\newblock {\em {Axiomatic projective geometry}}.
\newblock {Bibliotheca Mathematica [Mathematics Library], V}. Wolters-Noordhoff
  Scientific Publications, Ltd., Groningen; North-Holland Publishing Co.,
  Amsterdam-New York, second edition, 1980.

\bibitem[Hil59]{Hilbert}
David Hilbert.
\newblock {\em {The foundations of geometry}}.
\newblock {Authorized translation by E. J. Townsend. Reprint edition}. The Open
  Court Publishing Co., La Salle, Ill., 1959.

\bibitem[Joh77]{PTJtopos}
P.~T. Johnstone.
\newblock {\em {Topos theory}}.
\newblock Academic Press [Harcourt Brace Jovanovich, Publishers], London-New
  York, 1977.
\newblock London Mathematical Society Monographs, Vol. 10.

\bibitem[Joh02a]{Elephant1}
Peter~T. Johnstone.
\newblock {\em {Sketches of an elephant: a topos theory compendium. {V}ol. 1}},
  volume~43 of {\em {Oxford Logic Guides}}.
\newblock The Clarendon Press Oxford University Press, New York, 2002.

\bibitem[Joh02b]{Elephant2}
Peter~T. Johnstone.
\newblock {\em {Sketches of an elephant: a topos theory compendium. {V}ol. 2}},
  volume~44 of {\em {Oxford Logic Guides}}.
\newblock The Clarendon Press Oxford University Press, Oxford, 2002.

\bibitem[Koc77]{Kockproj}
Anders Kock.
\newblock Universal projective geometry via topos theory.
\newblock {\em J. Pure Appl. Algebra}, 9(1):1--24, 1976/77.

\bibitem[Kre91]{Hj1}
Alexander Kreuzer.
\newblock A system of axioms for projective {H}jelmslev spaces.
\newblock {\em J. Geom.}, 40(1-2):125--147, 1991.

\bibitem[Law66]{Lawverecategories}
F.~William Lawvere.
\newblock The category of categories as a foundation for mathematics.
\newblock In {\em Proc. {C}onf. {C}ategorical {A}lgebra ({L}a {J}olla,
  {C}alif., 1965)}, pages 1--20. Springer, New York, 1966.

\bibitem[Man07]{Mandcoo}
Mark Mandelkern.
\newblock Constructive coordinatization of {D}esarguesian planes.
\newblock {\em Beitr\"age Algebra Geom.}, 48(2):547--589, 2007.

\bibitem[Man13]{Mandcom}
Mark Mandelkern.
\newblock The common point problem in constructive projective geometry.
\newblock {\em Indag. Math. (N.S.)}, 24(1):111--114, 2013.

\bibitem[Man14]{Mandext}
Mark Mandelkern.
\newblock Constructive projective extension of an incidence plane.
\newblock {\em Trans. Amer. Math. Soc.}, 366(2):691--706, 2014.

\bibitem[Mar02]{Marker}
David Marker.
\newblock {\em Model theory}, volume 217 of {\em Graduate Texts in
  Mathematics}.
\newblock Springer-Verlag, New York, 2002.
\newblock An introduction.

\bibitem[MM94]{Sheaves}
Saunders {Mac Lane} and Ieke Moerdijk.
\newblock {\em {Sheaves in geometry and logic}}.
\newblock {Universitext}. Springer-Verlag, New York, 1994.
\newblock A first introduction to topos theory, Corrected reprint of the 1992
  edition.

\bibitem[Sei12]{Seidenberg}
A.~Seidenberg.
\newblock {\em {Lectures in Projective Geometry}}.
\newblock {Dover books on mathematics}. Dover Publications, 2012.

\bibitem[vD96]{Dalenoutside}
Dirk van Dalen.
\newblock ``{O}utside'' as a primitive notion in constructive projective
  geometry.
\newblock {\em Geom. Dedicata}, 60(1):107--111, 1996.

\end{thebibliography}

\end{document}